%% file: ccro2.tex
\documentclass[11pt]{amsart}

\def\flnm{ccro2}

\input{ccro2-switches}
\input{ccro2-not}

\usepackage[backend=biber,style=mathrefnum, dashed=true]{biblatex}
\begin{document}

\input{ccro2-titlepage}

\setcounter{tocdepth}{1}
\tableofcontents

\input{ccro2-newintro}


\input{ccro2-2-group-symsp}

\input{ccro2-3-autf}

\input{ccro2-4-cohom}

\input{ccro2-5-ps}

\input{ccro2-6-cf-coh}

\input{ccro2-7-asp}

\input{ccro2-8-bgrms}

\input{ccro2-9-cohbg}

\input{ccro2-10-coh-cf}

\input{ccro2-11-inj}

\input{ccro2-12-gef}

\input{ccro2-13-concl}


\defbibheading{bibliography}[\bibname]{%
\section*{References}%
}

\printbibliography
\input{ccro2-ind}

\setlength{\parindent}{0pt}

\end{document}

%% file: ccro2-not.tex
\newcommand{\Fpb}[1]{F^\pb_{\!\!#1}}
\newcommand{\rmetric}{\hbox{$g^{(R)}$}} 

\usepackage{amssymb,amsmath,amsthm}

\usepackage[all]{xy}
\usepackage{graphicx}

\usepackage{enumitem}

\def\be#1\ee{\begin{equation}#1\end{equation}}
\def\bad#1\ead{\be\begin{aligned}#1\end{aligned}\ee}
\def\badl#1#2\eadl{\be\label{#1}\begin{aligned}#2\end{aligned}\ee}

\newtheorem{thm}{Theorem}[section]
\newtheorem{prop}[thm]{Proposition}
\newtheorem{cor}[thm]{Corollary}
\newtheorem{lem}[thm]{Lemma}

\newtheorem{mainthm}{Theorem}

\theoremstyle{definition}
\newtheorem{defn}[thm]{Definition}
\newtheorem{remark}[thm]{Remark}

\newcommand\rmrk[1]{\medskip\par\noindent\emph{#1. }}

\theoremstyle{remark}

\usepackage{color}
\definecolor{blue}{rgb}{0,0,1}
\definecolor{red}{rgb}{1,0,0}
\definecolor{green}{rgb}{0,.6,.2}
\definecolor{purple}{rgb}{1,0,1}
\definecolor{brown}{rgb}{.59,.29,0}

\long\def\red#1\endred{{\color{red}#1}}
\long\def\blue#1\endblue{{\color{blue}#1}}
\long\def\purple#1\endpurple{{\color{purple} #1}}
\long\def\green#1\endgreen{{\color{green}#1}}
\long\def\brown#1\endbrown{{\color{brown}#1}}

\newcommand\al{\alpha}

\newcommand\bt{\beta}
\newcommand\Gm{\Gamma}
\newcommand\gm{\gamma}
\newcommand\Dt{\Delta}
\newcommand\dt{\delta}

\newcommand\e{\varepsilon}
\newcommand\z{{\zeta}}

\newcommand\tht{\theta}
\newcommand\thv{\vartheta}
\newcommand\kp{\kappa}
\newcommand\Ld{\Lambda}
\newcommand\ld{\lambda}
\newcommand\x{\xi}

\newcommand\s{\sigma}

\newcommand\Ph{\Phi}
\newcommand\ph{\varphi}
\newcommand\ch{\chi}
\newcommand\Ps{\Psi}
\newcommand\ps{\psi}
\newcommand\Om{\Omega}
\newcommand\om{\omega}

\DeclareMathOperator{\re}{Re}
\DeclareMathOperator{\im}{Im}
\DeclareMathOperator{\SL}{SL}
\DeclareMathOperator{\PSL}{PSL}
\DeclareMathOperator{\GL}{GL}

\newcommand\PSU{\mathrm{PSU}}
\DeclareMathOperator{\PU}{PU}

\newcommand\CC{\mathbb{C}}

\newcommand\NN{\mathbb{N}}

\newcommand\RR{\mathbb{R}}
\newcommand\ZZ{\mathbb{Z}}

\makeatletter
\newcommand\rmatc[4]{\left( {#1\@@atop #3}{#2\@@atop #4}\right)}
\newcommand\rmatr[4]{\left( {\hfill #1\@@atop\hfill #3}{\hfill
#2\@@atop\hfill #4}\right)}
\newcommand\matc[4]{\left[ {#1\@@atop #3}{#2\@@atop #4}\right]}
\newcommand\matr[4]{\left[ {\hfill #1\@@atop\hfill #3}{\hfill
#2\@@atop\hfill #4}\right]}
\makeatother

\let\setminus\smallsetminus

\newcommand\lijn{\bigskip\par\hrule\bigskip\par}

\let\uhp\hp

\newcommand\hypg[2]{{}_{#1}F_{\!#2}}

\DeclareMathOperator{\supp}{Supp}
\newcommand\Gf{\Gamma}
\newcommand\oh{{\mathrm{O}}}
\newcommand\proj[1]{\mathbb{P}^1_{\!#1}}

\newcommand\Lie{\mathrm{Lie}}

\newcommand\intitle[1]{\texorpdfstring{$#1$}{}}

\numberwithin{equation}{section}

\ifhyperref
\newcommand\tocrgl[2]{
\addtocontents{toc}{\string\contentsline{section}
{\blue\quad ${}^{\thefootnote}$ #1, #2\endblue} {\thepage}
{}}}
\else
\newcommand\tocrgl[2]{\addtocontents{toc}%
{\string\contentsline{section}%
{\blue\quad ${}^{\thefootnote}$ #1, #2 \endblue}
{\thepage}%
{}}}
\fi

\numberwithin{footnote}{section}
\numberwithin{table}{section}
\numberwithin{figure}{section}
\renewcommand\thefootnote{\arabic{section}.\arabic{footnote}}

\usepackage{multicol}
\makeatletter
\@ifundefined{ifindexindication}{\let\ifindexindication=\iffalse}{}
\ifindexindication
 \newcommand\indexindication[2]
 {\raisebox{1.6ex}{\makebox[.4em][#2]{\tiny #1}}}
 \newcommand\il[2]{\label{i-#1}\indexindication{#2}{l}}
 \newcommand\ir[2]{\indexindication{$#2$}{r}}
\else
 \newcommand\il[2]{\label{i-#1}}
 \newcommand\ir[2]{}
\fi
\makeatother

\ifhyperref
  \usepackage[colorlinks,breaklinks,pdfpagelabels,hypertexnames=false]{hyperref}
\else
  \newcommand\texorpdfstring[2]{#1}
\fi

\newcommand\am{\mathrm{a}}

\newcommand\nm{\mathrm{n}}
\newcommand\wm{\mathrm{w}}

\newcommand\ii{\mathbf{i}}

\DeclareMathOperator{\diag}{diag}
\newcommand\X{\mathfrak{X}}
\newcommand\bu{{\mathsf b}}
\newcommand\cu{{\mathsf c}}
\newcommand\du{{\mathsf d}}
\DeclareMathOperator{\Ad}{Ad}
\DeclareMathOperator{\ad}{ad}
\newcommand\nul{\text{\textit{\textbf o}}}

\newcommand\orgn{\text{\textit{\textbf e}}}
\newcommand\inft{{\pmb\infty}}

\newcommand\alie{{\mathfrak a}}
\newcommand\glie{{\mathfrak g}}
\newcommand\klie{{\mathfrak k}}
\newcommand\mlie{{\mathfrak m}}
\newcommand\nlie{{\mathfrak n}}
\newcommand\plie{{\mathfrak p}}

\renewcommand\H{{\mathbf H}}
\newcommand\XX{{\mathbf X}}
\newcommand\YY{{\mathbf Y}}
\newcommand\Z{{\mathbf Z}}
\newcommand\UU{{\mathbf U}}

\newcommand\WW[1]{{\mathbf{W}_{\!#1}}}
\newcommand\VV[1]{{\mathbf{V}_{\!#1}}}

\newcommand\A{{ \mathcal A}}

\newcommand\Gmm[1]{\Gm_{\!#1}}
\newcommand\Sie{{\mathfrak S}}
\newcommand\tess{{\mathcal T}}

\newcommand\sh{{\mathcal F}}
\newcommand\Cu{{\mathrm{Cu}(\Gm)}}
\newcommand\fd{{\mathfrak{F}}}
\newcommand\st{{\mathrm{st}}}
\newcommand\pb{\mathrm{pb}}
\newcommand\Sast{S^{\!\ast}}
\newcommand\dist{{\mathrm{d}_S\!}}
\newcommand\dtst{{\dt_S}}
\newcommand\Ta[1]{\mathrm{T}_{\!#1}}
\newcommand\Tr{\mathrm{Tr}}

\newcommand\Cas{{\mathsf{Cas}}}

\newcommand\SY{S_{\!\Gm}(Y)}

\newcommand\V[2]{\mathcal{V}^{#1}_{\!\!#2}}
\newcommand\W[2]{\mathcal{W}^{#1}_{\!\!#2}}
\newcommand\omi{{\om^0,\infty}}
\newcommand\MS{{\mathrm{MS}}}
\DeclareMathOperator{\grad}{grad}

\renewcommand\div{\mathrm{div}}

\newcommand\C{{\mathcal C}_{\Gm\backslash S}}
\newcommand\E{{\mathcal E}}
\newcommand\Q{{\mathcal Q}}
\newcommand\Y{{\mathcal Y}}
\newcommand\exc{{\mathrm{exc}}}
\newcommand\excg{{\mathrm{excg}}}

\newcommand\F[1]{{\mathcal F}_{\!#1}}

\newcommand\alw{\al^{\!\scriptscriptstyle{\mathcal W}}}
\newcommand\btv{\bt^{\!\scriptscriptstyle{\mathcal V}}}
\newcommand\btw{\bt^{\!\scriptscriptstyle{\mathcal W}}}
\newcommand\omv{\om^{\!\scriptscriptstyle{\mathcal V}}}
\newcommand\omw{\om^{\!\scriptscriptstyle{\mathcal W}}}

\newcommand\DS{D_{S,\nu}}

\newcommand\bsing{\mathsf{bdSing}}
\newcommand\sing{\mathsf{Sing}}
\newcommand\bdc{{\mathrm{bdc}}}

\newcommand\N[2]{\mathcal{N}^{#1}_{#2}}
\newcommand\G[2]{{\mathcal{G}^{#1}_{#2}}}

\newcommand\kI{\mathbf{k}_{\mathrm{o}}}
\newcommand\tI{\mathbf{t}_{\mathrm{o}}}
\newcommand\nI{\mathbf{n}_{\mathrm{o}}}
\newcommand\kJ{\mathbf{k}}
\newcommand\tJ{\mathbf{t}}
\newcommand\nJ{\mathbf{n}}

\newcommand\pr{{\mathrm{pr}}}

\newcommand\Utess{\mathcal{U}}

\newcommand\sttess{{\tess_{\!\st}}}

\newcommand\Lt{\mathcal{L}}

\newcommand\deqref[2]{(\ref{#1}:\ref{#2})}

\newcommand\poiss[1]{\mathsf{P}_{\!#1}}

\newcommand{\prj}{\mathrm{prj}}
\newcommand{\lnm}{\mathrm{lnm}}

\newcommand\PI[1]{P_{\!#1}}

\newcommand\tlref[2]{(\ref{#1}-\ref{#2})}

\newcommand \anal {\mathsf{An}(0)}

\newcommand\ds{{\mathrm d}}

%% file: ccro2-titlepage.tex

\author{Roelof Bruggeman}
\address{Roelof Bruggeman, Mathematisch Instituut, Universiteit Utrecht, Postbus 80010,
3508 TA Utrecht, Nederland}
\email{r.w.bruggeman@uu.nl}

\author{YoungJu Choie}
\address{YoungJu Choie, Department of Mathematics and PMI, Postech, Pohang, Korea
790--784}
\email{yjc@postech.ac.kr}

\author{Roberto  Miatello}
\address{Roberto  Miatello, FAMAF-CIEM, Universidad Nacional de C\'or\-do\-ba,
C\'or\-do\-ba~5000, Argentina}
\email{miatellorj@gmail.com}

\author{Anke Pohl}
\address{Anke Pohl, University of Bremen, Department 3: Mathematics and Computer Science, Institute
for Dynamical Systems, Bibliothekstr.~5, 28359 Bremen, Germany}
\email{apohl@uni-bremen.de}

\title[Cusp forms and cohomology for rank one]{Cusp forms and parabolic
cohomology classes for symmetric spaces of rank one}

\begin{abstract}
For any rank-one Riemannian symmetric space~$S$ of non-compact type and any
discrete, cofinite, non-cocompact, torsion-free group~$\Gm$ of
orienta\-tion-pre\-serv\-ing Riemannian isometries on~$S$, we develop a
cohomological interpretation for the cusp forms of~$\Gm$.
To that end, we identify certain $\Gm$-submodules of smooth semi-analytic vectors in the spherical principal series representation with spectral parameter~$\nu$ as well as certain subspaces of parabolic cohomology spaces of~$\Gm$ of degree $\dim S-1$ with these $\Gm$-submodules.
We provide explicit isomorphisms between the spaces of cusp forms of spectral parameter~$\nu$ and these specific cohomology subspaces. The isomorphisms from cusp forms to cohomology are given by an integral transform, and the explicit form of the inverse isomorphism takes advantage of a certain reproducing property of the integral transform.
The result is uniform for all these symmetric spaces and does not rely
on their classification.
\end{abstract}

\date{\today}\subjclass[2020]{Primary: 11F75, 32N15; Secondary: 22E30, 22E40
}

\keywords{rank one symmetric space, cusp form, spherical principal series, parabolic cohomology, analytic boundary germs,  extension of symmetric spaces 
}

\subjclass[2020]{Primary: 11F75, 32N15; Secondary: 11F55, 22E40}

\maketitle

%% file: ccro2-newintro.tex

\def\flnm{ccro2-newintro}

\section{Introduction}\label{sect-intro}

Let $S$ be a Riemannian symmetric space of non-compact type and of rank
one, and let $\Gm$ be a discrete group of orientation-preserving
Riemannian isometries of~$S$ that is torsion-free and cofinite, but not
cocompact. In a nutshell, our main result is a cohomological interpretation of cusp forms for~$\Gm$.
In what follows, we will provide a more detailed description of this result. By its very nature, the proof of this result is rather lengthy and involved. For the convenience of the reader we will therefore first deliver a reduced explanation that allows us to provide, with Theorem~\ref{thm:main1}, a partial statement of our main result that can be stated and surveyed (but not proved) with a reduced number of entities used for intermediate steps. We will then deepen this explanation by surveying the key steps in our approach, therefore giving a glimpse on some of the additional entities used, and finally state, with Theorem~\ref{mainthm}, the full version of our main result. Thereafter we will discuss our motivation and provide a brief survey of the relation between our main result to previous results in this research area. Afterwards we will give an overview of the structure of this article.

\subsection{Main result} Throughout let $S$ denote an arbitrary rank-one
Riemannian symmetric space of non-compact type.

According to \'E.\@ Cartan's magnificent classification of Lie groups
and symmetric spaces, there are four types of rank-one
(irreducible) Riemannian symmetric spaces of non-compact type, namely the three families of the hyperbolic spaces
over the real numbers, the complex numbers, and the quaternions, and
the one exceptional space, often considered to be the two-dimensional
hyperbolic space over the Cayley numbers.

For our purposes here, we will use the presentation of Riemannian
symmetric spaces as quotients of connected Lie groups by certain
compact subgroups. We refer to Section~\ref{sect2} for details. In our
situation it means that we may and shall identify $S$ with the
Riemannian symmetric space~$G/K$, where $G$ is the connected component
of the identity element of the group of Riemannian isometries on~$S$,
and $K$ is the stabilizer group in~$G$ of an arbitrarily chosen point
$\orgn\in S$. Then $G$ is the group of orientation-preserving Riemannian isometries of~$S$ and a connected simple Lie group of real rank one
with trivial center, $K$ is a maximal compact subgroup of~$G$, and the
Riemannian structure on $G/K$ as a symmetric space is essentially
unique. Using this presentation, our approach and our proofs do not rely on the classification of Riemannian symmetric spaces mentioned above.

Throughout, we let $\Gm$ be a discrete, torsion-free subgroup of~$G$
that is cofinite, but not cocompact, and we let $\Delta$ denote the (positive) Laplacian on~$S$.  The cusp forms for~$\Gamma$ are the $\Gamma$-invariant eigenfunctions of~$\Delta$ that have quick decay at each cusp of~$\Gamma$. (Of course, we could define these forms as being the eigenfunctions of the Laplacian on the Riemannian locally symmetric space $\Gm\backslash S = \Gm\backslash G/K$ with quick decay towards each end. However, working with ``lifted'' functions and asking for invariance has some advantages in our considerations.)

For each cusp form~$u$ we parametrize its $\Delta$-eigenvalue~$\lambda=\lambda(u)$ as $\rho^2-\nu^2$, where $\rho$ is
essentially half the sum of the positive roots of~$S$. More precisely, for convenience, we deviate here slightly from standard conventions and define $\rho$ to be the positive half-integer that corresponds to the half sum of the positive roots using standard normalizations (see Section~\ref{subs2-Lie-alg}). The element~$\nu$ is a complex number depending on~$u$, unique only up to sign. Each choice of~$\nu$ is called a \emph{spectral parameter} of~$u$. We denote the space of cusp forms for~$\Gamma$ with spectral parameter~$\nu$ by~$\A^0(\Gm;\nu)$.

For the cohomological interpretation of the cusp forms, the set of cusps~$\Cu$ of~$\Gm$, considered as a set of points in the geodesic boundary $\partial S$ of~$S$, plays a prominent role. Using the resolution~$F^\pb_\bullet$ with $\Fpb j = \CC\bigl[\Cu^{j+1}\bigr]$ for $j\in\NN_0$, the parabolic cohomology space~$H^i_\pb(\Gm;V)$ of degree~$i\in\NN_0$ and $\CC[\Gm]$-module~$V$ arises in the same manner as the standard cohomology spaces of~$\Gm$ arise from the standard resolution. See Section~\ref{sec:parabcohom}. However, as the action of~$\Gm$ on~$\Cu$ is not free, parabolic cohomology spaces are in general not isomorphic to the standard cohomology spaces. For our purposes we require a realization of the parabolic cohomology spaces using a resolution based on a $\Gm$-invariant tessellation of~$\Sast=S\cup \Cu$ into cells of different dimensions such that the boundary components of higher dimensional cells are made of cells of lower dimensions in a highly controlled way.
With this somewhat refined resolution, the geometry of~$\Sast$ and the action of~$\Gm$ on~$\Sast$ can be sufficiently tracked via the resolution and the parabolic cohomology spaces. We refer to Sections~\ref{sect4-tess}-\ref{prf-pcshc}, and in particular to Proposition~\ref{prop-pcht}, for detailed discussions. In what follows, the tessellation is denoted $\tess = (\X^\tess_i)_{i=0}^{\ds}$ with $\X^\tess_i$ being the collection of $i$-dimensional cells for $i=0,\ldots, \ds$ where $\ds=\dim S$, and the associated resolution is denoted $F^\tess_\bullet$.

For the choice of suitable $\CC[\Gm]$-modules~$V$, we take advantage of the relation between cusp forms for~$\Gm$ and principal series representations. We initially use the $\CC[\Gm]$-module $\V\omi\nu$ of smooth semi-analytic vectors in the spherical principal series representation with spectral parameter~$\nu$. We realize these vectors as (germs of) smooth, semi-analytic functions on the geodesic boundary~$\partial S$ of~$S$.
The points of (potential) non-analyticity of elements of~$\V\omi\nu$ are contained in~$\Cu$. See Section~\ref{sect6} for a full definition.

We develop a certain differential form on~$S$ defined in detail in~\eqref{omnu}, which is denoted here~$\omv_\nu(f; b, \cdot)$, depending on~$f\in C^\infty(S)$, $\nu\in \CC$ and $b\in \partial S$. It is built by means of a generalization of Green's form and of a kernel function on~$(\partial S)\times G$ of  Poisson-type defined in~\eqref{rnudef}, where the latter is based on a real-analytic vector in the spherical principal series representation.
For $f$ being a cusp form (i.e., for $f\in \A^0(\Gm;\nu)$) the form~$\omv_\nu(f; b, \cdot)$ is closed. Using the absolutely convergent integral
\begin{equation}\label{eq:intro_integral}
I(f,X;b) = \int_{x\in X} \omv_\nu(f;b,x)
\end{equation}
for $f\in \A^0(\Gm;\nu)$, $X\in  \X^\tess_{\ds-1}$ and $b\in\partial S$, the map
\begin{equation}\label{eq:intro_psi}
\psi^f \colon \X^\tess_{\ds-1} \to \V\infty\nu(\partial S)\,,\quad X \mapsto I(f,X;\cdot)\,,
\end{equation}
assigns to $f$ the cocycle~$\ps^f$ in the space~$Z^{\ds-1}\bigl( F^\tess_\bullet;\V\infty\nu(\partial S)\bigr)$. Here, $\V\infty\nu(\partial S)$ denotes the $G$-module of smooth vectors on~$\partial S$ in the principal series.
The assignment in~\eqref{eq:intro_psi} descends to the linear map
\begin{equation}\label{eq:intro_beta1}
\btv_\nu\colon \A^0(\Gm;\nu) \rightarrow H^{\ds-1}_\pb\bigl( \Gm; \V \omi \nu \bigr)\,,\quad f\mapsto [\ps^f]\,,
\end{equation}
where $[\ps^f]$ denotes the cohomology class of~$\ps^f$. See Section~\ref{sect6}.

The map~$\btv_\nu$ typically is not bijective, and hence we are interested in better understanding its image and under which conditions it is injective. We first observe that the strong decay of cusp forms and their derivatives towards the cusps of~$\Gm$ yields that for any $f\in \A^0(\Gm;\nu)$, the cocycle~$\ps^f$ maps into the sheaf of smooth semi-analytic vectors~$\V\omi\nu$ in the specific manner that
\[
\ps^f(X) \in \V\om\nu\bigl( (\partial S) \setminus (X\cap \Cu) \bigr)
\]
for each $X\in\X^\tess_{\ds-1}$.
Thus, the points of potential non-analyticity of~$\ps^f(X)$ are precisely those cusps that are ``seen'' by~$X$. We say that cocycles with this property satisfy the \emph{boundary condition}~$\bdc$. Thus, the map~$\btv_\nu$ in~\eqref{eq:intro_beta1} descends to the map
\begin{equation}\label{eq:intro_beta2}
\btv_\nu\colon \A^0(\Gm;\nu) \rightarrow H^{\ds-1}_\pb\bigl( \Gm; \V \omi \nu \bigr)^\bdc\,,\quad f\mapsto [\ps^f]\,,
\end{equation}
for any $\nu\in\CC$. See Theorem~\ref{thm-btv}. We emphasize that we continue to denote all specifications of~$\btv_\nu$ from~\eqref{eq:intro_beta1} by the same symbol.

Furthermore, at least for spectral parameters~$\nu\in\CC$ obeying $\nu\notin\tfrac12\ZZ_{\leq -1}$ and $\re\nu>-\rho$,
the regularity of cusp forms or, equivalently, of smooth semi-analytic vectors in combination with the contraction property of the action of~$\Gm$ towards cusps yields that the cocycles~$\ps^f$ for $f\in\A^0(\Gm;\nu)$ extend real-analytically and with highly controlled growth to the part in~$S$ of so-called excised neighborhoods of~$(\partial S)\setminus E$, where $E$ is the finite set of cusps of non-analyticity of~$\ps^f$. We refer to Section~\ref{sect-spacesgerms}, in particular to Definitions~\ref{exc-nbh} and \ref{def-Woe} as well as Proposition~\ref{prop-omfintW}, for details.
We denote by~$\V{\om^0,\infty,\excg}\nu$ the $\CC[\Gm]$-module (or the sheaf) of elements in~$\V{\omi}\nu$ with these additional extension properties. Then the map~$\btv_\nu$ in~\eqref{eq:intro_beta2} restricts to the map
\begin{equation}\label{eq:intro_beta3}
\btv_\nu\colon \A^0(\Gm;\nu) \rightarrow H^{\ds-1}_\pb\bigl( \Gm; \V{\om^0,\infty,\excg}\nu \bigr)^\bdc\,,\quad f\mapsto [\ps^f]\,.
\end{equation}
We emphasize that in~\eqref{eq:intro_beta3} we changed our initial choice of the $\CC[\Gm]$-module~$V$  being $\V\omi\nu$ to its submodule $\V{\om^0,\infty,\excg}\nu$.

With further restrictions on the values for~$\nu$, we can show that the map~$\btv_\nu$ in~\eqref{eq:intro_beta3} is indeed an isomorphism, as stated in the following partial statement of our main theorem.

\begin{mainthm}[partial statement of main theorem]\label{thm:main1}
Let $S$ be a rank-one Riemannian symmetric of
non-compact type and let $\Gm$ be a cofinite, torsion-free discrete
group of orientation-preserving Riemannian isometries of~$S$ that is
not cocompact. For any (spectral) parameter
$\nu \in\CC\setminus \frac12\ZZ$ with $|\re \nu| < \rho$, the map
\[
\btv_\nu\colon \A^0(\Gm;\nu) \rightarrow H^{\ds-1}_\pb\bigl(\Gm;\V{\om^0,\infty,\excg}\nu\bigr)^\bdc\,,\quad  f\mapsto [\ps^f]\,,
\]
is an isomorphism of vector spaces.
\end{mainthm}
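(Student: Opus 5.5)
The proof splits into injectivity and surjectivity of $\btv_\nu$. Well-definedness into $H^{\ds-1}_\pb\bigl(\Gm;\V{\om^0,\infty,\excg}\nu\bigr)^\bdc$ is already supplied by Theorem~\ref{thm-btv} together with Proposition~\ref{prop-omfintW}. These give the boundary condition and the excised-neighborhood extension properties for $\ps^f$.

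\emph{Injectivity.} Suppose $\ps^f$ is a coboundary, $\ps^f = d\phi$ with $\phi$ a $(\ds-2)$-cochain on $\X^\tess_{\ds-2}$ valued in $\V{\om^0,\infty,\excg}\nu$. Sum $\ps^f$ over the $(\ds-1)$-cells bounding a fundamental domain $\fd$ of the tessellation $\tess$. Since $\omv_\nu(f;b,\cdot)$ is closed, Stokes' theorem turns this sum into an integral over $\partial\fd$. The generalized Green's form, paired against the Poisson-type kernel in~\eqref{rnudef}, should then reproduce a pairing $b\mapsto \int_{\Gm\backslash S} f\cdot(\text{automorphic kernel})$. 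This is the ``reproducing property'' mentioned in the abstract. On the other side, the coboundary $d\phi$ contributes only boundary terms. These are analytic on all of $\partial S$ apart from cusp contributions, and the excised-neighborhood growth conditions should make them vanish in the limit. The conclusion would be that $f$ is orthogonal to a family of kernels that spans a dense set in $\A^0(\Gm;\nu)$, so $f=0$. The hypothesis $|\re\nu|<\rho$ is what I expect to give convergence of the kernel integrals and exclude residual contributions. The hypothesis $\nu\notin\frac12\ZZ$ should guarantee that the relevant intertwining/Poisson map $\poiss\nu$ is injective on the module.

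\emph{Surjectivity.} Given a class $[c]$ with $c$ satisfying $\bdc$ and the excg extension property, extend each $c(X)$ analytically into excised neighborhoods in $S$. Then glue these extensions along the tessellation using the cocycle relation to produce a $\Gm$-equivariant object on a neighborhood of $\partial S$ in the compactification. Applying the Poisson transform $\poiss\nu$ to this object, with cells handled via a partition-of-unity/Mayer--Vietoris argument, yields a $\Gm$-invariant $\Delta$-eigenfunction $u$ on $S$ with eigenvalue $\rho^2-\nu^2$, modulo corrections that are coboundaries. The controlled growth from $\V{\om^0,\infty,\excg}\nu$, combined with $|\re\nu|<\rho$, should force $u$ to have quick decay at every cusp: the constant term along each cusp would be a combination of $y^{\rho\pm\nu}$, and the boundary condition should kill both terms. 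Hence $u\in\A^0(\Gm;\nu)$. Finally, compute $\btv_\nu(u)$ via the reproducing property and show that it differs from $[c]$ by a coboundary.

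The main obstacle will be the surjectivity step, specifically controlling behaviour at the cusps. I need to show that a cocycle which is merely semi-analytic at the cusps seen by each cell produces a cusp form, not just a Maass form of moderate growth, and that no Eisenstein-type classes survive. I would first attempt this by computing the cohomology locally at each cusp, i.e.\ for the parabolic stabilizer $\Gm_\cu$ acting on germs at $\cu$. The aim is to show that the excg condition makes the relevant $H^{\ds-1}$ of $\Gm_\cu$ with coefficients in these germs vanish, which removes the constant-term obstruction. This is also where I expect $\nu\notin\frac12\ZZ$ to enter essentially.
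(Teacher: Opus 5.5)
\textbf{Injectivity.} Your injectivity argument does not work as stated. Since $\omv_\nu(f;b,\cdot)$ is closed, Stokes over the boundary of a $\ds$-cell only returns $\sum_{X\in B(\fd)}\ps^f(X)=0$. The form is closed because $\MS(f,r_\nu(b;\cdot))=0$ and $r_\nu(b;\cdot)$ is smooth on all of $S$. That identity is just the cocycle relation of Proposition~\ref{prop-omfint}\,(iii). No pairing of $f$ with a kernel over $\Gm\backslash S$ arises, so there is no ``dense family'' for $f$ to be orthogonal to.

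The reproducing property used in the paper comes from a different kernel: the resolvent kernel $q_\nu$ of Miatello--Wallach, which is singular on the diagonal. For it, $\int_{\partial D}\omw_\nu(h;x,\cdot)=2\nu c(\nu)h(x)$ at interior points $x$ (Proposition~\ref{prop-psC}). Using it requires Theorem~\ref{thm-isoVW}, which identifies $\V\om\nu$ with the $\nu$-analytic boundary germs $\W\om\nu$, with $\rho_\nu q_\nu(\cdot;x)=r_\nu(\cdot;x)$. This gives cocycle values representatives that are eigenfunctions near $\partial S$. One then lifts $\ps$ to a cochain $\tilde\ps$ with values in $\G{\om^0,\excg}\nu$ whose $\nu$-singularities stay near the cells. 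One sets $u_\ps=\frac{1}{2\nu c(\nu)}\tilde\ps(\partial\tilde Z)$ on $Z$ for sufficiently large unions $\tilde Z$ of cells (Proposition~\ref{prop-cohE}).

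Injectivity of $\btv_\nu$ then follows from $u_{\ps^f}=f$ (Proposition~\ref{prop-pi}). It also uses the purely algebraic fact that a coboundary $d\eta$ admits the lift $d\tilde\eta$, which kills $\tilde\ps(\partial\tilde Z)$. No limiting argument with growth conditions is needed. The stronger condition $\nu\notin\frac12\ZZ$ plays no role in this half; $\nu\notin\frac12\ZZ_{\leq -1}$ and $|\re\nu|<\rho$ suffice.

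\textbf{Surjectivity.} Here you have put the difficulty in the wrong place. Showing that the reconstructed $\Gm$-invariant eigenfunction is a cusp form is the easy step. The excg condition gives boundedness and $\oh(t^{-\rho-\re\nu})$ at the cusps, and Lemma~\ref{lem-cusp-crit} applies because $-\rho-\re\nu<\rho-|\re\nu|$. Local cohomology of $\Gmm\cu$ and Eisenstein classes never enter, and neither does a Poisson transform of a glued cocycle; the construction is the one sketched above.

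The real crux is your final sentence, that $[c]$ and $\btv_\nu(u_c)$ differ by a coboundary, i.e.\ that $[c]\mapsto u_c$ has no kernel on these classes. The paper identifies this kernel in Propositions~\ref{prop-upxcoinv}, \ref{prop-qIwe} and \ref{prop-dsc}. The ingredients are:
\begin{itemize}
\item the long exact sequence of $0\to\N{\om^0,\excg}\nu\to\G{\om^0,\excg}\nu\to\W{\om^0,\excg}\nu\to0$;
\item the description of $H^\ds_\pb$ as coinvariants;
\item a soft-sheaf argument giving $H^{\ds-1}_\pb(\Gm;\G{\om^0,\excg}\nu)\cong H^{\ds-1}_\pb(\Gm;\E^{\om^0,\excg}_\nu)$.
\end{itemize}
The outcome is that the kernel is the image of $H^{\ds-1}_\pb(\Gm;\E^{\om^0,\infty,\excg}_\nu)^\bdc$. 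This space is built from global, non-invariant, bounded eigenfunctions on $S$ that are $\nu$-analytic at $\partial S$ away from finitely many cusps.

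For $\ds\geq3$ there is no formal reason for this space to vanish. It vanishes because $\E^{\om^0,\excg}_\nu=\{0\}$ (Theorem~\ref{thm-E}). The proof writes such a function as the Poisson transform of a distribution (Lewis). It shows that $\nu$-analyticity at a boundary point excludes that point from the support (Proposition~\ref{prop-supp-sing}). Finally, it shows that Poisson transforms of nonzero point-supported distributions are unbounded (Proposition~\ref{prop-Edist}).

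Your instinct that $\nu\notin\frac12\ZZ$ enters through the Poisson transform is right, but it belongs here. Without an argument of this kind, the surjectivity half of your proposal is unsupported.
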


For the proof of Theorem~\ref{thm:main1} we employ several further entities and establish intermediate isomorphisms, as surveyed in what follows. In this way, we prove a stronger version of Theorem~\ref{thm:main1} in the sense that we provide an explicit formulation of the inverse isomorphism.

To establish the map~$\btv_\nu$ as in~\eqref{eq:intro_beta2}, i.e., as a map into~$H^{\ds-1}_\pb\bigl( \Gm; \V \omi \nu \bigr)^\bdc$, we proceed via a careful analysis of the properties of cusp forms and the integral in~\eqref{eq:intro_integral}. Additional entities are used for determining the image of~$\btv_\nu$, i.e., establishing it as the map in~\eqref{eq:intro_beta3}, and showing that it is an isomorphism for a suitable restricted regime for~$\nu$.

In a first step, we relate the sheaf~$\V \om \nu$ to $\nu$-analytic boundary germs. In a nutshell, the space of boundary germs with spectral parameter~$\nu$ forms a sheaf on~$\partial S$, which is the direct limit of smooth Laplace eigenfunctions with spectral parameter~$\nu$. A $\nu$-analytic boundary germ on an open subset~$I$ of~$\partial S$ is a boundary germ that extends analytically across~$\partial S$ in the following sense: We use an analytic extension~$\hat S$ of~$S$ in which $\partial S$, and hence $I$, is in the interior. The $\nu$-analytic boundary germ is then represented by a smooth Laplace eigenfunction~$u$ on~$U\cap S$ for some open neighborhood~$U$ of~$I$ in~$\hat S$ with the additional property that there is an analytic function~$A_u$ on~$U$ such that
\[
u(x) = t(x)^{-(\rho+\nu)}A_u(x)
\]
for all $x\in U\cap S$, where $t(x)$ is, in a certain sense, the vertical distance of $x$ from~$\partial S$. For details see Section~\ref{sect8}. We denote by $\W \om \nu (I)$ the space of $\nu$-analytic boundary germs on~$I$. The restriction map $\rho_\nu\colon u\mapsto A_u$ is a $G$-equivariant sheaf morphism $\W \om \nu\rightarrow \V \om \nu$ for all~$\nu\in\CC$; for $\nu\notin\tfrac 12\ZZ_{\leq -1}$ it is an isomorphism. See Theorem~\ref{thm-isoVW}. These properties are preserved for the extension to $\nu$-semi-analytic entities and induce morphisms $\W {\om^0} \nu \to \V {\om^0} \nu$ as well as $\W \omi \nu \to \V \omi \nu$ (isomorphisms for $\nu\notin\tfrac12\ZZ_{\leq -1}$), both denoted $\rho_\nu$.

For $\nu\notin \tfrac12\ZZ_{\leq -1}$, the isomorphism $\rho_\nu$ between $\W \omi \nu$ and $\V \omi \nu$ in combination with the map~$\btv_\nu$ in~\eqref{eq:intro_beta2} yields the map
\begin{equation}\label{eq:intro_beta4}
\btw_\nu = \rho_\nu^{-1}\circ\btv_\nu\colon \A^0(\Gm;\nu) \rightarrow H^{\ds-1}_\pb\bigl( \Gm; \W \omi \nu \bigr)^\bdc\,,\quad f\mapsto [\rho_\nu^{-1}(\ps^f)]\,.
\end{equation}
For $\btv_\nu$ in this ``lifted'' formulation, a thorough analysis of the preimage of the differential form~$\omv_\nu(f; \cdot, x)$ and the integral~$I(f,X;\cdot)$ from~\eqref{eq:intro_integral} under~$\rho_\nu$ allows us to detect that the image of~$\btw_\nu$ is indeed in $H^{\ds-1}_\pb\bigl(\Gm;\W{\om^0,\infty,\excg}\nu\bigr)^\bdc$, at least for $\nu\in\CC$ obeying $\nu\notin \tfrac12\ZZ_{\leq -1}$ and $\re \nu > -\rho$. See Section~\ref{sect9}, in particular Propositions~\ref{prop-omfintW} and~\ref{prop-btimg}. As $\W{\om^0,\infty,\excg}\nu$ is, by definition, the preimage of~$\V{\om^0,\infty,\excg}\nu$ under the isomorphism~$\rho_\nu$, at this stage we have established the cohomology space in the image of~$\btv_\nu$ in Theorem~\ref{thm:main1}.

In the next step, to show that the map~$\btv_\nu$ is an isomorphism between $\A^0(\Gm;\nu)$ and $H^{\ds-1}_\pb\bigl( \Gm; \V{\om^0,\infty,\excg}\nu \bigr)^\bdc$ for $\nu\in\CC\setminus\tfrac12\ZZ_{\leq 0}$ obeying $|\re \nu|< \rho$, we work again with the (isomorphic) map~$\btw_\nu$ and show that it is an isomorphism between~$\A^0(\Gm;\nu)$ and $H^{\ds-1}_\pb\bigl( \Gm; \W{\om^0,\infty,\excg}\nu \bigr)^\bdc$. To that end we  construct an explicit inverse map to~$\btw_\nu$ for $\nu\in\CC\setminus\tfrac12\ZZ_{\leq 0}$ obeying $|\re \nu|< \rho$.

We start with providing, for
$\nu\in\CC\setminus\tfrac12\ZZ_{\leq-1}$ with $|\re\nu|<\rho$, a map
\begin{equation}\label{eq:intro_alpha}
\alw_\nu\colon H^{\ds-1}_\pb\bigl( \Gm; \W{\om^0,\excg}\nu\bigr)^\bdc\to \A^0(\Gm;\nu)
\end{equation}
that is left inverse to $\ii\circ\btw_\nu$, where $\ii$ is the inclusion map from $H^{\ds-1}_\pb\bigl( \Gm; \W{\om^0,\infty,\excg}\nu\bigr)^\bdc$ to $H^{\ds-1}_\pb\bigl( \Gm; \W{\om^0,\excg}\nu\bigr)^\bdc$, and where $ \W{\om^0,\excg}\nu$ denotes the sheaf of $\nu$-analytic boundary germs that satisfy the extension property mentioned above but for which the extension map is not necessarily smooth at the points of non-analyticity.

To establish this map we provide a specific construction that assigns to each cocycle class $[\psi]\in H^{\ds-1}_\pb\bigl( \Gm; \W{\om^0,\excg}\nu\bigr)^\bdc$ a Laplace eigenfunction~$u_\psi$ with spectral parameter~$\nu$ that is defined on all of~$S$, not only on some subset of~$S$ as \emph{a priori} requested by the definition of boundary germs, such that the $\W{}{}$-version of the integral transform~$I$ in~\eqref{eq:intro_integral} is locally reproducing for~$u_\psi$. See Proposition~\ref{prop-cohE}. This is the most crucial result for building the inverse map. The definition of~$\alw_\nu$ is constructive, rather explicit and takes advantage of the realization of the cohomology spaces using a tessellation of~$\Sast$. Further, under our restrictions on~$\nu$, the map $u_\psi$ is indeed in~$\A^0(\Gm;\nu)$, and if $[\psi]$ arises from an element $u\in\A^0(\Gm;\nu)$, then $u_\psi = u$. See Propositions~\ref{prop-cohE} and \ref{prop-pi}.

While these results show that $\alw_\nu$ is surjective and a left-inverse to~$\ii\circ\btw_\nu$, it still admits the possibility that two different cocycle classes in~$H^{\ds-1}_\pb\bigl( \Gm; \W{\om^0,\excg}\nu\bigr)^\bdc$, say $[\psi_1], [\psi_2]$ with $[\psi_1]\not=[\psi_2]$, are mapped to the same element in~$\A^0(\Gm;\nu)$, thus $u_{\psi_1}=u_{\psi_2}$. At this point in the argumentation, we cannot even rule out this possibility for cocycle classes in~$H^{\ds-1}_\pb\bigl( \Gm; \W{\om^0,\infty,\excg}\nu\bigr)^\bdc$. Hence, this possibility still prevents us from concluding if~$\btw_\nu$ is an isomorphism or not. Therefore, in Section~\ref{sect11} we explicitly determine the kernel of~$\alw_\nu$ for $\nu\in\CC$, $|\re\nu|<\rho$, satisfying $\nu\notin\tfrac12\ZZ_{\leq-1}$. See Proposition~\ref{prop-qIwe}.

By means of sheaf cohomology we relate this kernel, which uses a certain space of global representatives for the $\nu$-analytic boundary germs with sufficient regularity properties, to a cohomology space with a certain module, $\E^ {\om^0, \excg} _\nu$ resp.\@ $\E^ {\om^0,\infty,\excg}_ \nu$, of Laplace eigenfunctions. See~\eqref{HisoEG} and Proposition~\ref{prop-dsc}. In Section~\ref{sect12} we show that under the additional condition that $\nu\notin\tfrac12\ZZ$, the kernel is indeed trivial. See Theorem~\ref{thm-E}. Combining these results with the isomorphism~$\rho_\nu$ gives the full statement of our main result, which we can state now. We emphasize that \eqref{mnthm:nsp} in this theorem handles more values of~$\nu$ than~\eqref{mnthm:nugrl}.

\begin{mainthm}[full statement of main result]\label{mainthm}
Let $S$ be a rank-one Riemannian symmetric of
non-compact type and let $\Gm$ be a cofinite, torsion-free discrete
group of orientation-preserving Riemannian isometries of~$S$ that is
not cocompact. Let $\nu \in \CC$, $|\re\nu|<\rho$.
\begin{enumerate}[label=$\mathrm{(\roman*)}$, ref=$\mathrm{\roman*}$]
\item \label{mnthm:nsp} If $\nu \not \in \frac12\ZZ_{\leq -1}$, then the map
\[
\bt_\nu\colon\A^0(\Gm;\nu) \rightarrow H^{\ds-1}_\pb\bigl(
\Gm;\V{\om^0,\infty,\excg}\nu\bigr)^\bdc
\]
from~\eqref{eq:intro_beta3} induces a linear isomorphism between $\A^0(\Gm;\nu)$ and the quotient
\[
H^{\ds-1}_\pb\bigl( \Gm;\V{\om^0,\infty,\excg}\nu\bigr)^\bdc \bigm/
H^{\ds-1}_\pb\bigl( \Gm;\rho_\nu\E^{\om^{0},\infty,\excg}_\nu\bigr)^\bdc\,.
\]
\item \label{mnthm:nugrl} If $\nu \not \in  \frac12\ZZ$, then $\bt_\nu$
is a linear bijection.
\end{enumerate}
\end{mainthm}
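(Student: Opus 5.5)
The plan is to assemble Theorem~\ref{mainthm} from the intermediate results surveyed above, working throughout on the $\W{}{}$-side and transporting back with the isomorphism $\rho_\nu$ of Theorem~\ref{thm-isoVW}. The transport is legitimate because $\nu\notin\tfrac12\ZZ_{\leq -1}$ in both parts. First, Theorem~\ref{thm-btv} together with Propositions~\ref{prop-omfintW} and~\ref{prop-btimg} shows that $\btw_\nu=\rho_\nu^{-1}\circ\btv_\nu$ maps $\A^0(\Gm;\nu)$ into $H^{\ds-1}_\pb\bigl(\Gm;\W{\om^0,\infty,\excg}\nu\bigr)^\bdc$; this uses $\re\nu>-\rho$. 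Since $\rho_\nu$ restricts to an isomorphism $\W{\om^0,\infty,\excg}\nu\to\V{\om^0,\infty,\excg}\nu$ by definition, and it also maps $\E^{\om^0,\infty,\excg}_\nu$ onto $\rho_\nu\E^{\om^0,\infty,\excg}_\nu$, the statement reduces to the corresponding statement for $\btw_\nu$ with $\E^{\om^0,\infty,\excg}_\nu$ in the quotient.

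Next, I would use the map $\alw_\nu$ of~\eqref{eq:intro_alpha}. Proposition~\ref{prop-cohE} attaches to each class a global eigenfunction $u_\psi$. Proposition~\ref{prop-pi} gives $u_\psi\in\A^0(\Gm;\nu)$, and it gives $\alw_\nu\circ\ii\circ\btw_\nu=\mathrm{id}$; this requires $|\re\nu|<\rho$. Restricting $\alw_\nu$ to the $\infty$-subspace, that is, composing it with $\ii$, yields a map $\al\colon H^{\ds-1}_\pb\bigl(\Gm;\W{\om^0,\infty,\excg}\nu\bigr)^\bdc\to\A^0(\Gm;\nu)$ with $\al\circ\btw_\nu=\mathrm{id}$. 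Hence $\btw_\nu$ is injective, $\al$ is surjective, and the target splits as $\operatorname{im}\btw_\nu\oplus\ker\al$. Consequently $\btw_\nu$ induces an isomorphism of $\A^0(\Gm;\nu)$ onto the quotient of the cohomology space by $\ker\al$.

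The key step is to identify $\ker\al$ with $H^{\ds-1}_\pb\bigl(\Gm;\E^{\om^0,\infty,\excg}_\nu\bigr)^\bdc$, or more precisely with its image in the cohomology with values in $\W{\om^0,\infty,\excg}\nu$. For this I would invoke Proposition~\ref{prop-qIwe}, which describes $\ker\alw_\nu$ through cocycles admitting global eigenfunction representatives. I would then apply the sheaf-cohomology comparison~\eqref{HisoEG} and Proposition~\ref{prop-dsc}, in their version with the $\infty$-condition. One point must be checked: the map from $H^{\ds-1}_\pb(\Gm;\E^{\om^0,\infty,\excg}_\nu)^\bdc$ into the cohomology of $\W{\om^0,\infty,\excg}\nu$ should be injective, or else the quotient should be read as a quotient by its image. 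I also need to verify that intersecting $\ker\alw_\nu$ with the $\infty$-subspace yields exactly the $\E^{\om^0,\infty,\excg}_\nu$-term and not only the $\E^{\om^0,\excg}_\nu$-term. This compatibility of the regularity conditions is where I expect the main obstacle in part~(i). I would handle it by tracking smoothness at the cusps through the representatives in Proposition~\ref{prop-dsc}. This proves~(\ref{mnthm:nsp}).

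For~(\ref{mnthm:nugrl}), the additional assumption $\nu\notin\tfrac12\ZZ$ lets Theorem~\ref{thm-E} apply. It gives the vanishing of $H^{\ds-1}_\pb\bigl(\Gm;\E^{\om^0,\infty,\excg}_\nu\bigr)^\bdc$, so the quotient in~(\ref{mnthm:nsp}) is the full cohomology space and $\bt_\nu$ is bijective. Since~(ii) is immediate once Theorem~\ref{thm-E} is in hand, the real difficulty of the whole theorem lies in the earlier results Proposition~\ref{prop-cohE} and Theorem~\ref{thm-E}. At the level of this assembly, the delicate point remains the matching of $\ker\al$ with the $\E$-cohomology in the previous step.
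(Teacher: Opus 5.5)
Your assembly is the paper's own. You transport through $\rho_\nu$, use $\alw_\nu\circ\ii$ as a left inverse of $\btw_\nu$, identify the complementary summand via Proposition~\ref{prop-qIwe}, \eqref{HisoEG} and Proposition~\ref{prop-dsc}, and obtain~(ii) from Theorem~\ref{thm-E}, since $\E^{\om^0,\infty,\excg}_\nu\subset\E^{\om^0,\excg}_\nu=\{0\}$. The first decomposition in Proposition~\ref{prop-dsc} is exactly the $\infty$-version you worry about: the paper gets it by ``pulling back along~$\ii$'' and, as you suggest, reads the $\E$-term as an image; the point can be checked by rerunning Section~\ref{sect11} with the preimage of $\W{\om^0,\infty,\excg}\nu$ in $\G{\om^0,\excg}\nu$. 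One small misattribution: $u_\psi\in\A^0(\Gm;\nu)$ is Proposition~\ref{prop-cE}, while Proposition~\ref{prop-pi} supplies $u_\psi=u$ for cocycles coming from cusp forms, i.e.\ the left-inverse property.
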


\subsection{Motivation and relation to previous results}

Our investigations in this article are a natural continuation of the studies of cohomological interpretations of Maass cusp forms and more general automorphic forms in, e.g., \cite{Ei57,LZ01, BLZ15,Bruggeman_lewiseq, BCD, tahA, ABC_rwt}. We refer the reader to the introductional sections of these references for a survey of the development of this research field and for further references, and we restrict ourselves here to a brief explanation of the relation of the present article to this previous results. The results in~\cite{Ei57,LZ01, BLZ15,Bruggeman_lewiseq, BCD, tahA} are for the situation of Fuchsian groups and hyperbolic orbisurfaces, whereas \cite{ABC_rwt} is the (to our knowledge) first result beyond hyperbolic orbisurfaces, namely for Jacobi Maass cusp forms. In the present article we vastly go beyond hyperbolic orbisurfaces (hence two-dimensional spaces) and study arbitrary rank-one locally symmetric Riemannian symmetric spaces of non-compact type that are non-compact but of finite volume. In view of the length of this article, we here restrict to torsion-free fundamental groups to avoid to spend space on additional, necessary discussions.

In our approach, we follow as much as possible the approach in~\cite{BLZ15}. However, the presence of more dimensions in the considered spaces as well as the more involved geometry in particular for hyperbolic spaces over the complex numbers or the quaternions as well as for the exceptional space yield new obstacles to overcome and require to adapt several steps.

In the situation of the hyperbolic plane and Fuchsian groups, as handled in~\cite{BLZ15} and also in~\cite{Ei57}, the passage from (Maass) cusp forms to cocycle classes is given by an integral transform for which the considered cusp form is integrated in a suitably chosen differential form against a suitably chosen kernel function along a well-chosen path (here, a geodesic) with a certain geometric meaning. See, in particular, \cite[\S2.1]{Ei57} and \cite[\S5]{BLZ15}. This structure is also visible in~\cite{LZ01} (also a cohomological interpretation is discussed in there) and it is highly present in essentially all further articles discussing cohomological interpretations of automorphic forms. We could preserve this structure also in the much more general rank-one situations that we consider here by developing a suitable kernel and exchanging the geodesics used as path of integration by tesselation elements of dimension $\ds-1$.

Moreover, the cohomology space in the image of the map~$\btv_\nu$ is more involved than in dimension~$\ds=2$. See Theorem~\ref{mainthm}.
In particular, for $\ds=2$, the kernel of the map~$\alw_\nu$ in~\eqref{eq:intro_alpha} is trivial for all $\nu\notin\tfrac12\ZZ_{\leq -1}$,
whereas for higher dimensions this is not necessarily the case. This issue leads to a much more involved argumentation than in the case of hyperbolic surfaces. In Section~\ref{sect13} we provide a more elaborate comparison.

\subsection{Outline of article}
In Section~\ref{sect2} we provide a survey of the necessary background material on Lie theory and symmetric spaces. In Section~\ref{sect3} we discuss the class of discrete groups that we consider here and we present the notion of automorphic forms and cusp forms that we will use. With Section~\ref{sect4} the main bulk of work in this article starts. In Section~\ref{sect4} we present the necessary concept of subgroup-invariant tessellation and develop parabolic cohomology based on a tessellation. In Section~\ref{sect5} we discuss the spherical principal series and their realization on~$\partial S$. These preparations allow us to develop, in Section~\ref{sect6}, a kernel function that generalizes the kernel function from~\cite{BLZ15} and further to construct an integral transform that associates a cocycle with values in the spherical principal series to any given cusp form. This provides the passage from cusp forms to cocycle classes. In Sections~\ref{sect7}--\ref{sect12} we study the image of this map and discuss its inverse map under the restrictions on the spectral parameter also presented above.
In the concluding Section~\ref{sect13} we comment on our approach and its relation to~\cite{BLZ15} and discuss as well some potential continuations of this work.

\subsection{Acknowledgements}
YJC was partially supported by the grants NRF-2022\-R1A2B5B0100187111  and BSRI-2021R1A6A1A1004294412. Further, we would like to thank the Mathematisches Forschungsinstitut Oberwolfach (MFO) for the 2025 workshop ``Cohomology theories for Automorphic forms and Enumerative Algebra'' and its hospitality, which provided us with an opportunity to meet in person for an intensive period of research time.

%% file: ccro2-2-group-symsp.tex

\def\flnm{ccro2-2-group-symsp}
\bigskip

\section{Lie groups and symmetric spaces}\label{sect2}
In this section we provide a review of background information on
Riemannian symmetric spaces and Lie groups needed for our purpose. For
proofs and further details, we usually refer to~\cite{Wal88}.
Alternative standard references are, e.g., \cite{Hel00, Hel62,
Eberlein}.

We start with a survey of the most essential concepts. We then provide
further discussion of technical facts concerning the Riemannian metric
as well as of notations and methods that we will use.
\smallskip

\subsection{Lie groups and symmetric spaces of rank
one}\label{sect2-gr-smsp}
By de Rham decomposition, any Riemannian symmetric space of
non-compact type decomposes into a direct product of irreducible
Riemannian symmetric spaces of non-compact type. Any irreducible
Riemannian symmetric space~$S$ of non-compact type can be presented as a
quotient space $G/K$ with a non-compact connected simple real Lie group
$G$ and a maximal compact subgroup $K$. For any given~$S$, the choices
of~$G$ and~$K$ are not unique, not even up to isomorphisms of Lie
groups. However, this non-uniqueness is highly controlled. Namely, the
Lie group $G$ may have a nontrivial center, which is then automatically
contained in $K$. By choosing \il{G}{$G$}$G$ connected and with a
trivial center, which we may and shall do here, $G$ as well as $K$ are
unique up to isomorphisms of Lie groups.

Regarding this choice, $G$ is isomorphic to the group of
orientation-preserving isometries of~$S\cong G/K$, and it is also
isomorphic to a subgroup of $\GL(\glie)$ for the Lie algebra
\il{glie}{$\glie$}$\glie$ of $G$. The rank of the Riemannian symmetric
space~$S$ coincides with the real rank of the Lie group~$G$. Thus,
throughout we will choose $G$ to be a non-compact simple connected real
Lie group of real rank one with trivial center.

We will take advantage of several standard decompositions of~$G$, which
we will survey in what follows.

\rmrk{The Iwasawa decomposition of \intitle{G}} We fix an
\il{Id}{Iwasawa decomposition}{\em Iwasawa decomposition} \il{N}{$N$}
\il{A}{$A$} \il{K}{$K$} $G=NAK$, with $N$ maximal unipotent in $G$, $K$
maximal compact in $G$ and $A\cong \RR^\ast_{>0}$. All Iwasawa
decompositions are conjugate to each other. See,
e.g.,~\cite[\S2.1]{Wal88}.

The \il{ssp}{symmetric space}{\em symmetric space} of $G$ is the
quotient \il{S}{$S=G/K$} $G/K=:S$. The group $G$ acts on $S$ by
multiplication on the left. We denote its real dimension by
\il{d}{$\ds=\dim G/K$}$\ds$.

The dimension of $A$ is the real rank of $G$. In this paper, this rank
is $1$, and $A$ is isomorphic to $\RR^\ast_{>0}$. \il{ro}{rank one}We
choose a continuous group homomorphism
\il{am}{$\am(t)\in A$}$\am\colon\RR_{>0}^\ast \rightarrow A$. Then
there is a unique element \il{H0}{$\H_0 \in \alie$}$\H_0$ in the Lie
algebra $\alie$ of $A$ such that $\am(e^x) = \exp( x\H_0)$.

The unipotent group $N$ is isomorphic to $\RR^{\ds-1}$ as an analytic
manifold. For many groups $G$ of rank one the subgroup $N$ is not
abelian. Then $N$ is not isomorphic to the additive group
$\RR^{\ds-1}$. In all cases, the commutator subgroup of $N$ is equal to
the center of $N$.

An important subgroup is \il{M}{$M$\ centralizer of
$A$}$M = \bigl\{ m\in K \;:\; m a m^{-1}= a \text{ for all } a\in A\bigr\}$.
We can choose an element $\wm\in K$ such that
\il{wm}{$\wm\in K$}$\wm \am(t)\wm^{-1}=\am(1/t)$ for all $t>0$. We can
and shall arrange the choice in $\wm M$ such that $\wm^{-1}=\wm$. See
\cite[Lemma 1.5]{Hel70}.

The Iwasawa decomposition is unique: it gives a diffeomorphism
\[ N \times A \times K \rightarrow G\colon (n,a,k) \mapsto nak\,. \]

\rmrk{Two more decompositions for \intitle{G}}The \il{Cd}{Cartan
decomposition}{\em Cartan decomposition} $G=KAK$ leads to a
diffeomorphism $(K/M) \times A^+ \times K\cong G\setminus K$ with
\il{A+}{$A^+$}$A^+ = \bigl\{\am(t)\;:\; t> 1 \bigr\}$. See
\cite[\S~2.4.2]{Wal88} for these facts.

For rank one groups the \il{Bd}{Bruhat decomposition}{\em Bruhat
decomposition} takes the form
\be\label{Bd} G = NAM \sqcup N \wm MAN\,.\ee
See \cite[Theorem 2.2.10]{Wal88}. We note that $M$ and $A$ commute, and
that both normalize $N$. So we have $MAN=NAM=NMA$.
\rmrk{Analytic structure} A function on $G$ is \il{anal}{analytic}{\em
analytic} if it can locally be given as an analytic function of the
matrix entries in the realization of $G \subset \GL(\glie)$ (with a
choice of a basis). We use `analytic' to mean `real-analytic': locally
given by power series in analytic coordinates. This provides $G$ with
the structure of an analytic manifold.

This induces an analytic structure on closed subgroups of $G$. For
instance the coordinate function $t\leftrightarrow \am(t)$ on $A$ is
analytic. The diffeomorphism $S\cong NA$ determines an analytic
structure on~$S$.

\rmrk{The symmetric space and its boundary}We denote by
\il{orgn}{$\orgn\in S$}$\orgn\in S$ the image of the unit element
$e\in G$.

The map $n a\mapsto n a \,\orgn$ (based on the Iwasawa decomposition)
gives a diffeomorphism $NA \rightarrow S$. Similarly the Cartan
decomposition gives the map $(k,t) \mapsto k \am(t) \,\orgn$, giving a
diffeomorphism $(K/M) A^+ \rightarrow S\setminus\{\orgn\}$. As
$t\uparrow \infty$ the point $k \am(t)\, \orgn$ approaches the boundary
$\partial S=K/M$
of $S$. \il{bdS}{$\partial S$}
We name two special points of $\partial S$:\il{inftnul}{$\inft$, \; \nul
\text{ points of $\partial S$}}
\be \label{inft-nul}
\inft = \lim_{t\rightarrow\infty } \am(t)\,\orgn\quad\text{and}\quad
\nul = \lim_{t\rightarrow \infty} \wm \am(t)\,\orgn= \wm\,\inft =
\lim_{t\downarrow 0} \am(t) \orgn\,. \ee
We use the boldface version $\inft$ for the particular point of
$\partial S$ fixed by $MAN$; by $\infty $ we mean the standard infinity
in the two-point compactification of $\RR$.

The Iwasawa decomposition can be inverted to give the opposite Iwasawa
decomposition $G=KAN$. This implies that $\partial S= K/M$ is
isomorphic to $G/MAN$. The subgroup of $G$ fixing $\inft$ is $MAN$. The
Bruhat decomposition~\eqref{Bd} shows that
\be\label{Brh-pS} \partial S = \bigl\{\inft\}\sqcup N \,\nul\,.\ee

\rmrk{Action of \intitle{G} on functions on \intitle{G} and on
\intitle{S}}\label{sect2-trdf} The group $G$ acts on itself by left and
by right translations. That induces representations of $G$ in
$C^\infty(G)$:\il{L}{$L$} \il{R}{$R$}
\badl{RLg} R(g_1) f &\colon g \mapsto f(gg_1)\,,\\
L(g_1) f &\colon g\mapsto f(g_1^{-1}g)\,.
\eadl
The inverse in the definition of $L$ is necessary if one works with left
representations, which satisfy $L(g_1 g_2) = L(g_1) L(g_2)$.

\rmrk{ Action of the Lie algebra} The Lie algebra $\glie$, which will be
discussed in~\S\ref{subs2-Lie-alg} in more detail, acts on functions in
$C^\infty(G)$ by left and right differentiation: For each
$\XX\in \glie$ there is a group homomorphism
$\RR \rightarrow G\colon t\mapsto \exp(t\XX) $ that has at $t=0$ the
direction corresponding to $\XX$. We denote the corresponding derived
actions of $\XX\in \glie$ by the same symbols $R$ and $L$:
\badl{RLa} R(\XX) f &\colon g\mapsto \partial_t f\bigl( g \exp(t\XX)
\bigr)\bigm|_{t=0}\,,\\
L(\XX) f &\colon g\mapsto \partial_t f \bigl( \exp(-t\XX) g\bigr)
\bigm|_{t=0}\,.
\eadl
These actions extend $\CC$-linearly to elements of the complex Lie
algebra \il{gliec}{$\glie_c$}$ \glie_c = \CC\otimes_\RR \glie$. We note
that the actions $R$ by right translation and $L$ by left translation
commute.

Functions $f$ on $S$ correspond to functions on $G$ that satisfy
$R(k)f = f$ for all $k\in K$. Thus, we can use $L$ to denote the action
of $G$ on functions on $S$ and the action of $\glie$ on differentiable
functions on~$S$.

A function $f\in C^\infty(G)$ is \emph{right}
\il{Kfin}{$K$-finite}\emph{$K$-finite} if the functions $R(k)f$ with
$k\in K$ generate a finite-dimensional space, which is then
$R(K)$-invariant. By \il{Gi_K}{$C^\infty(G)_K$}$C^\infty(G)_K$ we
denote the subspace of right $K$-finite functions in $C^\infty(G)$.
This subspace is invariant under $R(\XX)$ for all $\XX\in \glie$, but
not under $R(g)$ for all $g\in G$.

\rmrk{The Riemannian structure of \intitle{S}}The Lie algebra $\glie$ of
$G$ can be defined as the tangent space $\Ta e G$ at the unit element
$e$ of $G$, and the tangent space \il{Ta}{$\Ta x S$ tangent space of
$S$ at $x$}$\Ta \orgn S$ at $\orgn\in S$ can be identified with a
linear subspace of $\glie$. The Lie algebra structure provides
$\Ta \orgn S$ with a positive definite bilinear form, which can be
transported by the action of $G$ on $S$ to all tangent spaces
$\Ta x S$, thus providing $S$ with a $G$-invariant linear Riemannian
structure.  We will specify it in \eqref{gRi}. 
\begin{description}

\item[Laplace operator]\il{Dt0}{$\Dt$}The Riemannian structure
determines the Laplace operator. This is a second order differential
operator on $S$. It commutes with the action of $G$:
\be \Dt L(g) f = L(g) \Dt f \ee
for $g\in G$ and $f\in C^2(U)$ with $U\subset S$ open.

The operator $\Dt$ is an elliptic differential operator with analytic
coefficients. Hence its eigenfunctions are analytic.

\item[Distance function]The Riemannian metric determines a distance
function
\[ \dist\colon S\times S\rightarrow \RR_{\geq 0} \]
which turns $S$ into a metric space. It satisfies
$\dist (g x, g y) = \dist(x,y)$ for all $g\in G$, $x,y\in S$.

\item[Volume form]The Riemannian structure also determines a $\ds$-form
\il{ztS}{$\z_s$}$\z_S$ on $S$ that satisfies $ \z_S \circ l_g = \z_S$
for all $g\in G$, where $l_g: x\mapsto g\, x$. By
$f\mapsto \int_S f\, \z_S$, $f\in C_c(S)$, we obtain an invariant
positive measure \il{dmu}{$d\mu$ invariant measure}$d\mu$ on $S$ that
is invariant under $f\mapsto L(g) f$ for all $g\in G$.
\end{description}

\rmrk{Some references}The facts mentioned above concern the common
structure of Lie groups of real rank one, as discussed in \cite{Wal88},
\cite{JoWa77}, \cite{Johns76} and also \cite{Hel00}. One may consult
also \cite{CDKR1,CDKR2}, and \cite{Hof79} for alternative approaches.

\subsection{The Lie algebra of \intitle{G}}\label{subs2-Lie-alg}The
tangent space $\Ta e G = \glie$ has the structure of a Lie algebra,
given by a bilinear operation
$\glie\times\glie\rightarrow\glie: (\XX,\YY) \mapsto [\XX,\YY]$. To
closed subgroups correspond Lie subalgebras, which we denote by the
corresponding lower case fraktur letter: $\alie = \Lie(A)$,
$\nlie=\Lie(N)$, etc. \il{nlie}{$\nlie$}
\il{alie}{$\alie$}\il{klie}{$\klie$} The Iwasawa decomposition implies
that the vector space $\glie $ is the direct sum
$ \nlie \oplus \alie\oplus \klie$ of vector spaces.

The choice of the isomorphism $\am\colon \RR^\ast_{>0}\rightarrow A$
determines an element $\H_0 \in \alie$. The operator
$\ad(\H_0):\XX\mapsto [\H_0,\XX]$ in $\glie$ can be brought into
diagonal form, with eigenvalues in $\{-2,-1,0,1,2\}$. This corresponds
to the fact that $(\glie,\alie)$ has a one-dimensional root system of
the form $\{\al,0,-\al\}$ or $\{ 2\al,\al,0,-\al,-2\al\}$, where $\al$
is the linear form on $\alie$ that satisfies $\al(\H_0)=1$. For any
$j\in\{0,\pm 1, \pm2\}$, the root space $\glie_{j\al} \subset \glie$
consists of the elements $\XX\in \glie$ such that
$\bigl[\H,\XX\bigr] = j \,\al(\H)\XX$ for all $\H \in \alie$. This
gives a root space decomposition \il{glieal}{$\glie_{j\al}$}
\be \label{rtdecomp} \glie = \glie_{-2\al}\oplus \glie_{-\al} \oplus
\glie_0 \oplus \glie_\al \oplus \glie_{2\al}\,. \ee

The simple root \il{al}{$\al$ simple root for $(\glie,\alie)$}$\al$ is a
linear form on the one-dimensional space~$\alie$. Each $\nu \in \CC$
determines a character of $A$ given by
\be\label{am-char} \am(t)\mapsto \am(t)^{\nu\al} \coloneqq t^\nu\,.\ee

We have $\nlie = \glie_\al\oplus \glie_{2\al}$. If $N$ is abelian then
$\glie_{2\al}=\{0\}$, otherwise $\glie_{2\al}$ is the Lie algebra of
the center \il{Z}{$Z_X$ center of $X$}$Z_N$ of~$N$. The \il{Wg}{Weyl
group}{\em Weyl group} of $(\glie,\alie)$ is a group of order two; the
non-trivial element can be represented by $\wm$. This element $\wm$
normalizes the group $M$. We have
\il{Ad}{$\Ad(g)\colon \X \mapsto g \X g^{-1}$}
$\Ad(\wm) \glie_{j\al} = \glie_{-j\al}$. The sum
$\glie_{-\al}\oplus \glie_{-2\al}$ is the Lie algebra
$\nlie^- = \Ad(\wm) \nlie$ of the opposite unipotent group $N^-$, which
is equal to $\wm N \wm^{-1}$. Furthermore,
$\glie_0 = \alie \oplus \mlie$.

We put \il{rho}{$\rho$}\il{pp}{$p$}\il{qq}{$q$}
\be\label{qq} \rho=\rho_G = \Bigl(\frac p2+q\Bigr)\,,\qquad p=\dim_\RR
\glie_\al\,,\quad q= \dim_\RR \glie_{2\al}\,. \ee
The linear form $\rho \al$ on $\alie$ is half the sum of the positive
roots (with multiplicities).

\subsubsection{Killing form and choice of basis}\label{sect2-Kfbg}For
each $\YY\in \glie$ we have the operator
\il{ad}{$\ad(\YY)\colon \XX\mapsto [\YY,\XX]$}$\ad(\YY) : \XX\mapsto [\YY,\XX]$
in $\glie$. The Killing form is the bilinear form on $\glie$ given by
$(\XX,\YY) \mapsto \Tr\bigl( \ad(\XX) \ad(\YY)\bigr)$. For the
semisimple group $G$ this form is non-degenerate. We multiply it by a
positive quantity such that $(\H_0,\H_0)\mapsto 1$, and denote this
\il{Kf}{Killing form, normalized}{\em normalized Killing form}
by~$B$.\il{B}{$B(\cdot,\cdot)$ normalized Killing form}

A \il{Ci}{Cartan involution}{\em Cartan involution}
$\glie\rightarrow \glie$ is an involution of $\glie$ that preserves the
Lie algebra structure. See \cite[\S2.1.1, \S2.1.4]{Wal88}. The choice
of a Cartan involution corresponds to the choice of a maximal compact
subgroup, on which the Cartan involution has eigenvalue $1$. We use the
Cartan involution \il{tht}{$\tht$ Cartan involution}$\tht$ such that
$\klie$ is the $1$-eigenspace of $\tht$. By $\plie$ we denote the
$(-1)$-eigenspace. It contains $\RR\H_0 = \alie$. We can find a basis
of $\glie$\il{XXi}{$\XX_i \in \nlie$}\il{UUi}{$\UU_j\in \mlie$}
\be \label{basX} \bigl\{ \H_0,
\XX_1,\ldots,\XX_{p+q},\tht\XX_1,\ldots,\tht\XX_{p+q}, \UU_1,\ldots,
\UU_{\dim(\mlie)} \bigr\}\ee
with the following properties
\begin{enumerate}[label=$\mathrm{(\roman*)}$, ref=$\mathrm{\roman*}$]
\item $\XX_1,\ldots,\XX_p$ span $ \glie_\al\subset \nlie$, and
$\XX_{p+1},\ldots, \XX_{p+q} $ span $\glie_{2\al}=\Lie(Z_N)$;
\item $\tht\XX_1,\ldots,\tht\XX_p$ span $ \glie_{-\al}\subset \nlie^-$,
and $\tht\XX_{p+1},\ldots, \tht\XX_{p+q} $ span
$\glie_{-2\al}=\Lie(Z_{N^-})$;
\item $\UU_1,\ldots,\UU_{\dim(\mlie)}$ span $\mlie\subset \glie_0$;
\item the basis in \eqref{basX} and the basis
\be\label{basXd} \bigl\{ \H_0,
-\tht\XX_1,\ldots,-\tht\XX_{p+q},-\XX_1,\ldots,-\XX_{p+q},
-\UU_1,\ldots,
-\UU_{\dim(\mlie)} \bigr\} \ee
are dual with respect to $B$. (This means that $B(a,b)=1$ if $a$ and $b$
have the same position in both bases, and $B(a,b)=0$ otherwise.)
\item \label{bchXX} $\bigl[\XX_i,\tht \XX_i\bigr] = \begin{cases} - \H_0
&\text{ for }1\leq i \leq p\,,\\
-2 \H_0 &\text{ for }p+1\leq i \leq p+q\,.
\end{cases}$
\end{enumerate}

Based on the choice of the basis in \eqref{basX} we define for
$i=1,\ldots,p+q$\il{Vi}{$\VV i \in \plie$}\il{Wi}{$\WW i \in \klie$}
\be \label{VWdef}\VV i = \frac1{\sqrt 2}(\XX_i - \tht \XX_i)\,,
\qquad \WW i = \frac1{\sqrt 2}(\XX_i+\tht\XX_i)\,.\ee
The elements $\VV 1,\ldots,\VV{p+q}, \H_0$ form an orthonormal basis for
the positive definite bilinear form $(X,Y) \mapsto -B(X,\tht Y)$ on
$\plie$. The elements
\be \WW 1,\ldots,\WW{p+q}, \UU_1,\ldots, \UU_{\dim(\mlie)}\ee
form an orthonormal basis of $\klie$ for the positive definite bilinear
form $-B$.

The fact that $\glie=\klie\oplus\plie$ corresponds to the splitting in
eigenspaces of the Cartan involution $\tht$ for the eigenvalues $1$ and
$-1$, implies that $[\Z,\XX] \in \plie$ for $\Z\in \klie$ and
$\XX\in \plie$. In other words $\Z\mapsto \ad(\Z)$ determines a Lie
algebra representation of $\klie$ on $\plie$. The form $B$ satisfies
$ B\bigl(\ad(\Z)\XX,\YY\bigr) = - B \bigl(\XX,\ad(\Z) \YY\bigr)$, and
hence
\be -B\bigl( \XX,\ad(\Z)\YY\bigr)-B\bigl( \ad(\Z)\XX,\YY\bigr)=0\qquad
\text{ for }\Z\in \klie,\; \XX,\YY \in \plie\,.\ee
See, eg, \cite[Chap~II, \S6, (1)]{Hel62}. This shows that the Lie
algebra representation $\ad$ of $\klie$ on $\plie$ preserves the
Hilbert space structure of $\plie$ corresponding to $-B$, and that the
corresponding representation $\Ad$ of $K$ on~$\plie$ is orthogonal for
$-B$.

\subsection{Coordinates on \intitle{S}}\label{sect2-hsphco} We consider
two systems of coordinates on $S$ and give their relation.

\rmrk{Polar coordinates}\il{polco}{polar coordinates}The Cartan
decomposition gives a diffeomorphism between $S\setminus\{\orgn\}$ and
$(K/M) \times(1,\infty)$.

\rmrk{Coordinates on \intitle{N}} Let us take coordinates
$(x_1,\ldots,x_{p+q}) $ on $N$ such that the derivation
$(\partial_{x_i})_ e $ at the unit element $e$ is equal to the
derivation $\sqrt 2\, R(\XX_i)_e$. By $\nm(x_1,\ldots,x_{p+q})$ we
denote the corresponding element of $N$. We put
\il{x12}{$x^{(1)}\in \RR^p,\; x^{(2)} \in \RR^q$}
\be x^{(1)}=(x_1,\ldots, x_p)\,,\qquad x^{(2)}=(x_{p+1},\ldots,
x_{p+q})\,,\ee
and write \il{nm}{$\nm(x) = \nm(x^{(1)},x^{(2)})\in
N$}$\nm(x_1,\ldots,x_{p+q})=\nm(x^{(1)},x^{(2)})$. The coordinates can
be chosen such that
\begin{align}
\label{anai} &\am(t) \nm(x^{(1)},x^{(2)}) \am(t)^{-1} =
\nm(tx^{(1)},t^2x^{(2)})\,,\\
\label{nprhc}
&\nm(x^{(1)},x^{(2)}) \, \nm(u^{(1)},u^{(2)}) = \nm\bigl(
x^{(1)}+u^{(1)}, x^{(2)} + u^{(2)} + b(x^{(1)},u^{(1)})\bigr)\,,
\end{align}
where $b$ is a bilinear map $\RR^p \times\RR^p \rightarrow  \RR^q $.
This can be arranged by letting
\be\label{nc-exp} \nm(x^{(1)},x^{(2)}) = \exp\Bigl( \sqrt 2
\sum_{i=1}^{p+q} x_i \, \XX_i \Bigr)\,.\ee
This can also be written as $\nm(x^{(1)}, x^{(2)}) = \exp(\XX)\exp(\YY)$
with $\XX=\sqrt 2\sum_{i=1}^p x_i \XX_i$ and
$\YY= \sqrt2 \sum_{i=p+1}^{p+q} x_j\XX_j$. (Note that $\exp(\XX)$ and
$\exp(\YY)$ commute.)
A consequence is
\be \nm(-x^{(1)},-x^{(2)}) = \nm(x^{(1)},x^{(2)})^{-1}\,.\ee

\rmrk{Horospherical coordinates} \il{nhorsco}{normalized horospherical
coordinates} We take coordinates $(x_1,\ldots,x_{p+q},y)$ on $S$ such
that
\be (x_1,\ldots,x_{p+q},y) \in \RR^{p+q}\times\RR_{>0}
\;\leftrightarrow\; \nm(x_1,\ldots,x_{p+q}) \am(y)\, \orgn \in S\,.\ee
We call these coordinates \emph{normalized horospherical coordinates}.

We call $\bigl( (x')^{(1)},(x')^{(2)}\bigr)$ \il{horsco}{horospherical
coordinates}\emph{horospherical coordinates} if $(x')^{(1)}$,
respectively $(x')^{(2)}$, depends on $x^{(1)}$, respectively on
$x^{(2)}$, by a bijective linear transformation. The properties
\eqref{anai} and \eqref{nprhc} stay valid for general horospherical
coordinates.
\smallskip

The Lie algebra $\glie$ can be viewed as the tangent space $\Ta e G$ at
the unit element $e$. Under the natural projection $G
\rightarrow G/K=S$ the subspace $\klie \subset \glie$ is mapped to
zero, and $\plie$ is mapped isomorphically onto the tangent space $\Ta
\orgn S$. We use that $\VV i = \sqrt 2\, \XX_i - \WW i$ to see that for
$f\in C^\infty(G/K)$, on which $R(\klie)$ acts trivially,
\[ R(\VV i ) f = \sqrt 2\, R(\XX_i) f - R(\WW i )f = \sqrt 2\, R(\XX_i)
f\,.\]
We have arranged the definitions so that the derivation $R(\VV i)_\orgn$
at $\orgn$ corresponds to $\partial_{x_i} $
at the point $(0, \cdots,0,1)$ in normalized horospherical coordinates.
This explains the use of $\sqrt 2$ in the introduction of the
coordinates on~$N$. The derivation $R(\H_0)_\orgn$ corresponds to
$\partial_y$ at the point $(0,\cdots,0,1)$.

\rmrk{Relation between coordinatizations}

\begin{prop}\label{prop-Id} {\rm Projection functions for Iwasawa
decomposition}
\begin{enumerate}[label=$\mathrm{(\roman*)}$, ref=$\mathrm{\roman*}$]
\item \label{Id:nak}
There are unique analytic maps
\bad \nJ: G &\rightarrow N\,,& \qquad \tJ: G&\rightarrow (0,\infty)\,,\\
\kJ:G&\rightarrow K\,,
\ead
such that $g=\nJ(g)\, \am\bigl( \tJ(g)\bigr) \kJ(g)$ for all $g\in G$.
There are also unique analytic maps for the opposite Iwasawa
decomposition
\bad \nI: G &\rightarrow N\,,& \qquad \tI: G&\rightarrow (0,\infty)\,,\\
\kI:G&\rightarrow K\,,
\ead
such that $g=\kI(g)\, \am\bigl( \tI(g)\bigr) \nI(g)$ for all $g\in G$.

\item \label{Id:expl}
The following explicit expression does not depend on the choice of
$\wm$:
\be \label{tJexpl}\tJ\bigl ( \wm \nm(x^{(1)},x^{(2)}) \am(y) \bigr)
= \frac y {\sqrt{ \bigl(y^2+c \|x^{(1)}\|^2 \bigr)^2 + 4c \|x^{(2)}\|^2
}}\,. \ee
By $\|\cdot\|$ we denote the norm $\|x\|^2=\sum_{i=1}^n x_i^2$ on
$\RR^n$ and $c$ is a positive constant.\il{c}{$c>0$ explicit constant
in Iwasawa decomposition}
\end{enumerate}
\end{prop}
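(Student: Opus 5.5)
Part (i) follows from the Iwasawa decomposition already recalled in \S\ref{sect2-gr-smsp}. The multiplication map $N\times A\times K\to G$, $(n,a,k)\mapsto nak$, is an analytic diffeomorphism. To see that the inverse is analytic, I use the analytic inverse function theorem: the map is analytic, since it is built from multiplication in $G\subset\GL(\glie)$. Its differential is bijective everywhere, because $\glie=\nlie\oplus\alie\oplus\klie$ and the map is equivariant under left translation by $NA$ and right translation by $K$. Composing the inverse with the projections, and with the analytic coordinate $\am(t)\mapsto t$ on $A$, gives analytic $\nJ,\tJ,\kJ$; uniqueness is uniqueness of the decomposition. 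For the opposite decomposition I apply this to $g^{-1}$. If $g^{-1}=n a k$, then $g=k^{-1}a^{-1}n^{-1}$, so I set $\kI(g)=\kJ(g^{-1})^{-1}$, $\tI(g)=\tJ(g^{-1})^{-1}$ and $\nI(g)=\nJ(g^{-1})^{-1}$. These are analytic since inversion is analytic.

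For part (ii), independence of the choice of $\wm$ comes first. Any other choice is $\wm m$ with $m\in M$. Since $m$ normalizes $N$ and commutes with $A$, we have $\wm m\,\nm(x)\am(y)=\wm\,\nm(x')\am(y)\,m$, where $x'=\Ad(m)x$. The factor $m\in K$ on the right does not change $\tJ$. Also $\Ad(m)$ acts orthogonally on $\glie_\al$ and on $\glie_{2\al}$, being orthogonal for $-B(\cdot,\tht\cdot)$ and preserving root spaces. So $\|x^{(1)}\|$ and $\|x^{(2)}\|$ are unchanged, and the right-hand side of \eqref{tJexpl} is invariant.

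Next I reduce in $y$. We have $\tJ(g\am(y))=\tJ(g)\,y$ only up to the $K$-part, so this is not immediate. Instead I write $\nm(x)\am(y)=\am(y)\nm(x^{(1)}/y,x^{(2)}/y^2)$ by \eqref{anai}, and use $\wm\am(y)=\am(1/y)\wm$. Since $\tJ(\am(s)h)=s\,\tJ(h)$, this gives $\tJ(\wm\nm(x)\am(y))=y^{-1}\tJ(\wm\nm(x^{(1)}/y,x^{(2)}/y^2))$. This is consistent with the claimed homogeneity, so it suffices to prove $\tJ(\wm\nm(x))=\bigl((1+c\|x^{(1)}\|^2)^2+4c\|x^{(2)}\|^2\bigr)^{-1/2}$.

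The main obstacle is computing $\tJ(\wm n)$ uniformly, without the classification. My plan is to use a $K$-invariant function that detects the $A$-part. Let $\pi=\Ad$ acting on $\glie$ with the $K$-invariant inner product $\langle X,Y\rangle=-B(X,\tht Y)$. The $N$-part fixes the highest weight direction: $\Ad(n)^{-1}$ preserves $\glie_{2\al}$-components in a suitable dual sense, or one works with $\tht$-adjoints. Concretely I would use $\phi(g)=\|\Ad(g^{-1})^{*}\,\mathbf{v}\|$ with $\mathbf v$ a unit vector in the lowest root space $\glie_{-2\al}$ (or $\glie_{-\al}$ if $q=0$). Then $\phi(nak)=\phi(na)$, the vector is scaled by $a$ with the character $t^{\mp2}$, and the unipotent part acts trivially. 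This yields $\phi(g)=\tJ(g)^{\pm2}$ up to normalization. With $g=\wm n$ this becomes $\|\Ad(n^{-1})\,\mathbf v'\|$ with $\mathbf v'\in\glie_{2\al}$ or $\glie_{-2\al}$. I will expand $\exp(\ad)$ as a finite sum of degree $\le4$, using the bracket relations (v) of the basis together with $B$-orthogonality of the root spaces. I expect the squared norm to equal $(1+c\|x^{(1)}\|^2)^2+4c\|x^{(2)}\|^2$, where $c$ comes from the normalization in (v). Averaging over the $q$ directions, or choosing $\mathbf v$ appropriately, should remove any dependence on $x^{(2)}$ other than through $\|x^{(2)}\|$; here $\Ad(M)$-transitivity on spheres in $\glie_{2\al}$ for $q\ge1$ (or a direct trace computation) would be used. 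The case $q=0$ is the same computation with $\glie_{\pm\al}$ and a squared norm, which gives the formula with the $x^{(2)}$ term absent.
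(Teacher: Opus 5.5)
Your part (i), your argument that the formula does not depend on the choice of $\wm$, and your reduction $\tJ(\wm\nm(x)\am(y))=y^{-1}\tJ(\wm\nm(x^{(1)}/y,x^{(2)}/y^2))$ are correct and agree with the paper. The gap is the remaining identity $\tJ(\wm\nm(x))^{-2}=(1+c\|x^{(1)}\|^2)^2+4c\|x^{(2)}\|^2$, which is the real content of (ii). The paper does not compute it. It imports Helgason's formula for $\tI(\nm^-(x))$, \cite[Theorem~1.14]{Hel70}, and transports it via $\tJ(\wm n)=\tI(\wm n^{-1}\wm^{-1})^{-1}$. You only announce a computation and what you expect it to give, and the setup has concrete errors.
\begin{itemize}
\item Your detector is not right $K$-invariant: $\|\Ad((gk)^{-1})^*\mathbf v\|=\|\Ad(g^{-1})^*\Ad(k)\mathbf v\|$. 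The correct choice is $\phi(g)=\|\Ad(g^{-1})\mathbf v\|$ with $\mathbf v$ a unit vector in $\glie_{2\al}$, which is fixed by $\Ad(N)$; then $\phi=\tJ^{-2}$.
\item For $q\geq1$ it is therefore the norm $\|\Ad(\nm(x)^{-1})\mathbf v'\|$ itself, with $\mathbf v'=\Ad(\wm)^{-1}\mathbf v\in\glie_{-2\al}$, that must equal $(1+c\|x^{(1)}\|^2)^2+4c\|x^{(2)}\|^2$, not its square. The squared norm contains $\|\tfrac1{24}(\ad X)^4\mathbf v'\|^2$ and so has degree $8$. Your expectation is right only for $q=0$.
\item Relation (v) and orthogonality of root spaces do not by themselves evaluate $\|(\ad X)^k\mathbf v'\|$. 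They also say nothing about cross terms between the $X$- and $Y$-contributions, such as $\langle(\ad X)^4\mathbf v',(\ad Y)^2\mathbf v'\rangle$ in $\glie_{2\al}$.
\item $\Ad(M)$ is not transitive on the unit sphere of $\glie_{2\al}$ when $q=1$; for $\PU(n,1)$ it acts trivially there. Transitivity on each sphere separately would not give joint dependence on $(\|x^{(1)}\|,\|x^{(2)}\|)$ anyway.
\end{itemize}

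Your route can be completed without the classification, and this computation is what is missing. Write $\nm(x)^{-1}=\exp(X+Y)$ with $X\in\glie_\al$, $Y\in\glie_{2\al}$, and $r=\|X\|>0$. The elements $e=\sqrt2X/r$, $f=-\sqrt2\tht X/r$, $h=2\H_0$ form an $\mathfrak{sl}_2$-triple with $\ad f=(\ad e)^*$; this follows from $[X,\tht X]=-r^2\H_0$ and invariance of $B$. The vector $\mathbf v'$ is a lowest weight vector of weight $-4$, so $\|e^{\ad X}\mathbf v'\|^2=\sum_k\|(\ad X)^k\mathbf v'\|^2/(k!)^2=(1+r^2/2)^4$.

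For $Y\neq0$ you may take $\mathbf v'=\tht Y/\|Y\|$, since $\phi$ does not depend on which unit vector is used. Then:
\begin{itemize}
\item $(\ad Y)\mathbf v'=-2\|Y\|\H_0$ and $(\ad Y)^2\mathbf v'=4\|Y\|\,Y$;
\item $e^{\ad(X+Y)}=e^{\ad X}e^{\ad Y}$, because $[X,Y]=0$;
\item $(\ad X)^2\H_0=0$, so by invariance of $B$ one has $(\ad X)^2\mathbf v'\in\mlie$;
\item $(\ad X)^4\tht Y=6r^4Y$. Indeed, $P=(\ad e)^4\circ\tht$ is self-adjoint on $\glie_{2\al}$, $P^2=(\ad e)^4(\ad f)^4=576$, and $\langle PY,Y\rangle=\|(\ad e)^2\tht Y\|^2\geq0$, so $P=24$.
\end{itemize}
Adding up the root-space components gives $\|e^{\ad(X+Y)}\mathbf v'\|^2=\bigl((1+r^2/2)^2+2\|Y\|^2\bigr)^2$. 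Since $r^2=2\|x^{(1)}\|^2$ and $\|Y\|^2=2\|x^{(2)}\|^2$, this is the asserted formula with $c=1$ in the normalization of \S\ref{sect2-Kfbg}. The case $q=0$ is the analogous weight $-2$ computation. Without such an argument, or the citation the paper relies on, part (ii) is not established.
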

\il{tIJ}{$\tJ(g),\; tI(g)$ determine $A$ comp. in Iw. decomp}
\il{kIJ}{$K$-component in Iw.dec.} \il{nIJ}{$N$-component in Iw.dec.}
\begin{proof}The existence of the functions in part~\eqref{Id:nak} is a
direct consequence of the Iwasawa decomposition. We note that
\be \label{ktni}
\kI(g) = \kJ(g^{-1})^{-1}\,,\ee and analogously for $\tI$ and~$\nI$.

Part~\eqref{Id:expl} depends on \cite[Theorem 1.14]{Hel70}
\[ \tI\bigl( \nm^-(x^{(1)},x^{(2)}) \bigr)= \sqrt{ (1+c\|x^{(1)}\|^2)^2
+ 4 c\|x^{(2)}\|^2 }\,.\]
Helgason describes the element $\nm(x^{(1)},x^{(2)})$ as
$\exp(\XX)\exp(\YY)$ with $\XX\in \glie_\al$, $\YY \in \glie_{2\al}$.
It turns out that $ |\XX|=2 \|x^{(1)}\|^2$ and $|\YY|= 2\|x^{(2)}\|^2$.
So our $c$ differs from the constant given by Helgason. Conjugation of
$\nm(x^{(1)},x^{(2)})$ by elements of $M$ and taking the inverse does
not change the norms $\|x^{(j)}\|$.

To get the relation between $\nm^-(x)$ and $\nm(x)$ we write these
elements as $\exp(\XX_-)$ and $\exp(\XX)$ with $\XX\in \nlie $ and
$\XX_- = \tht \XX\in \nlie^-$. We have chosen $\wm\in K$ such that
$\wm \am(t)\wm^{-1}= \am(1/t)$. Conjugation by $\wm$ gives an
isomorphism $N \cong N^-$ as well. The element
$\wm \nm(x^{(1)},x^{(2)}) \wm^{-1}$ has the form
$\nm^- ( \tilde x^{(1)}, \tilde x^{(2)})$ with orthogonal
transformations $x^{(j)}\mapsto \tilde x^{(j)}$. So for formulas
involving only these norms on $\RR^p$ and $\RR^q$, we can work with
$\wm \nm(x) \wm^{-1}$ as well as with $\nm^-(x)$.

In the following computation we use that $\tI(g)$ is invariant under
$g\mapsto kg$ with $k\in K$.
\begin{align*}
\tJ \bigl( \wm \nm(x^{(1)},x^{(2)}) \bigr) &= \tI\bigl(
\nm(x^{(1)},x^{(2)})^{-1} \wm^{-1}\bigr)^{-1} = \tI\bigl( \wm
\nm(x^{(1)},x^{(2)})^{-1} \wm^{-1}\bigr)^{-1}\\
&= \tI\bigl( \nm^-(x^{(1)}, x^{(2)}) ^{-1} \bigr)^{-1} = \Bigl(
(1+c\|x^{(1)}\|^2)^2+ 4 c \|x^{(2)}\|^2\Bigr)^{-1/2}\,;
\displaybreak[0]\\
\tJ\bigl( \wm \nm(x^{(1)},x^{(2)}) \am(y) \bigr) &= \tJ\bigl(
\am(y)^{-1}\wm \nm(y^{-1}x^{(1)}, y^{-2} x^{(2)}\bigr)
\displaybreak[0]\\
&= y^{-1}\, \Bigl( \bigl( 1+ y^{-2}\|x^{(1)}\|^2\bigr)^2+ 4 c y^{-4}
\|x^{(2)}\|^2\Bigr)^{-1/2}\displaybreak[0]\\&= y \,\Bigl( \bigl( y^2+c
\|x^{(1)}\|^2\bigr)^2+ 4 c \|x^{(2)}\|^2\Bigr)^{-1/2}\,.
\end{align*}
This is relation \eqref{tJexpl}.
\end{proof}

\rmrk{Relation in \intitle{\partial S}}We have
$N\,\nul \subset K/M=\partial S$ with
\be n\wm\,\inft = n \,\nul =\kI(n\wm) \,\inft\quad\text{ for }n\in
N\,.\ee
Hence the embedding $N\,\nul \rightarrow \partial S$ is given by
\be \label{N0->pS} \nm(x)\,\nul \mapsto \kI\bigl(\nm(x)
w\bigr)\,\inft\,.\ee

\subsection{Riemannian structure of \intitle{S}}\label{sect2-Rie}We
recall that the Lie algebra $\glie$ can be viewed as the tangent space
$\Ta e G$ at the unit element \il{e}{$e\in G$ \text{ unit element}}$e$.
The action of $K$ by conjugation preserves this tangent space, and is
orthogonal on $\plie \subset \glie$ for the scalar product induced by
$(\XX,\YY) = -B(\XX,\tht Y)$.

The subspace $\plie$ can be identified with the tangent space
$\Ta \orgn S$. It inherits the scalar product, and $K$ acts
  orthogonally on $\Ta\orgn S$. The derivations
  $\sqrt 2 \,R(\XX_i)_\orgn$, $1\leq i \leq p+q$, and $R(\H_0)_\orgn$
form an orthonormal basis of $\Ta \orgn S$. In normalized horospherical
coordinates these derivations correspond to $(\partial_{x_i})_\orgn$
and $(\partial_y)_\orgn$.

We transport the scalar product on $\Ta \orgn S$ to $\Ta x S$ for all
$x\in S$ by the translation $L(g)$ with $g\in G$ such that
$g\orgn = x$. Since the scalar product on $\Ta\orgn S$ is preserved by
the action of $K$, any choice of $g$ in $gK$ determines the same scalar
product. The image of an orthonormal basis is an orthonormal basis
which does depend on the choice of $g$.\smallskip

Let $g = \nm(x^{(1)},x^{(2)})\am(y)$ (normalized horospherical
coordinates). Since
\[g\, \am(e^\xi) = \nm(x^{(1)},x^{(2)})\am(ye^\xi)\,,\]
the tangent vector $(\partial_y)_\orgn$ is transported to
$y(\partial_y)_{g\,\orgn}$.

Let $\e_j=(0,\ldots,0,1,0,\ldots,0)$ with $1$ at position $j$ be the
$j$-th unit vector in $ \RR^{p+q}$. If $j\geq p+1$, then for
$\eta\in \RR$, with use of \eqref{anai},
\begin{align*}g \, \nm(\eta\e_j) &= \nm(x_1,\ldots,x_{p+q}) \, \am(y)
\,\nm(\eta\e_j)
\\
&= \nm(x_1,\ldots,x_{p+q}) \,\nm(y^2 \eta \e_j)
\,\am(y)\\
&= \nm(x_1,\cdots, x_{j-1}, x_j+y^2\eta,x_{j+1},\cdots,
x_{p+q})\,\am(y)\,,\end{align*}
and $\sqrt 2 (\XX_i)_\orgn\leftrightarrow \partial_{x_j}$ is transported
to $y^2 (\partial_{x_j})_{g\orgn}$.

If $1\leq j \leq p$, then we have to take into account the bilinear map
$b $ in \eqref{nprhc}:
\[ g \, \nm(\eta \e_j) = \nm\bigl( x_1 \cdots,x_j + y \eta,\cdots x_p,
x^{(2)} + y \xi \,b(x^{(1)}, \e_j)
\bigr)\am(y)\,.\]
So $\sqrt 2 (\XX_j)_\orgn \leftrightarrow \partial_{x_j}$ is transported
to
\[ y\, (\partial_{x_j})_{g\orgn}
+ \sum_{i=p+1}^q y\, b\bigl( x^{(1)}, \e_j)_i
\,(\partial_{x_i})_{g\orgn}
\,,\]
with linear forms \il{cji}{$c_{j,i}$ \text{ linear forms on
$\RR^p$}}$c_{j,i}: x^{(1)} \mapsto b\bigl( x^{(1)}, \e_j)_i$ on
$\RR^p$, $1\leq j \leq p$, $p+1\leq i \leq p+q$.

Thus, we have obtained an orthogonal basis of $\Ta {g\orgn}S$ at each
point $g\,\orgn$ of $S$:
\badl{mj} m_j &= y\, (\partial_{x_j})_{g\orgn}
+ \sum_{i=p+1}^{p+q} y\, c_{j,i}(x^{(1)})\, (\partial_{x_i})_{g\orgn}&
&1\leq j \leq p\,,\\
m_j &= y^2\, (\partial_{x_j})_{g\orgn}&&p+1\leq j \leq p+q\,,\\
m_\ds=m_{p+q+1} &= y\,(\partial_y)_{g\orgn}\,,
\eadl
with the linear function $c_{j,i}$ defined above. This basis depends on
the choice of $g\in gK$, but the corresponding scalar product is
independent of this choice, since $K$ acts orthogonally on
$\Ta \orgn S$.

Inverting the relations \eqref{mj} we arrive at
\badl{ir} (\partial_y)_{g\orgn} &= y^{-1} m_d\,,\\
(\partial_{x_j})_{g\orgn} &= y^{-2} m_j && \text{ for } p+1\leq j \leq
p+q\,,\\
(\partial_{x_j})_{g\orgn} &= y^{-1} m_j - \sum_{i=p+1}^{p+q} y^{-2}
c_{j,i}(x^{(1)})\, m_i&&\text{ for }1\leq j \leq p\,.
\eadl

So the scalar product $\rmetric(\cdot,\cdot)$ on $\Ta{g\orgn}S$
satisfies
\bad \rmetric\bigl( (\partial_y)_{g\orgn},(\partial_y)_{g\orgn}\bigr)
&=y^{-2}\,,\\
\rmetric\bigl( (\partial_y)_{g\orgn},(\partial_{x_j})_{g\orgn}\bigr)
&=0& & \text{ for } 1\leq j \leq p+q\,,\\
\rmetric\bigl( (\partial_{x_i})_{g\orgn},(\partial_{x_j})_{g\orgn}\bigr)
&= y^{-4} \, \dt_{i,j} &&\text{ for }p+1\leq i,j \leq p+q\,.
\ead
For $1\leq j \leq p$ and $p+1\leq l \leq p+q$
\be \rmetric\bigl(
(\partial_{x_j})_{g\orgn},(\partial_{x_l})_{g\orgn}\bigr) =
- y^{-4} c_{j,l}(x^{(1)})\,. \ee
If both $j$ and $l$ are in $[1,p]$, then
\bad \rmetric\bigl(
(\partial_{x_j})_{g\orgn},(\partial_{x_l})_{g\orgn}\bigr) = y^{-2}\,
\dt_{j,l} + y^{-4} \sum_{i=p+1}^{p+q}c_{j,i}(x^{(1)})
c_{l,i}(x^{(1)})\,.
\ead

These formulas hold at all points $g\,\orgn$ in $S$. We write the matrix
describing the scalar product in $(p,q,1)$ block
form:\il{Riemat}{$\rmetric$}
\be\label{Rmat} \rmetric =
\begin{pmatrix} y^{-2} I_p + y^{-4} C(x^{(1)})\, C(x^{(1)})^t & - y^{-4}
C(x^{(1)}) & 0\\
-y^{-4} C(x^{(1)})^t & y^{-4}I_q & 0\\
0&0&y^{-2}
\end{pmatrix}\,,\ee
with $I_n$ the unit matrix of size $n\times n$, and
\il{Cx1}{$C(x^{(1)})$}$C(x^{(1)})$ the $p\times q$ matrix with
$c_{j,l}(x^{(1)})$ at the position $(j,l-p)$.

If we use another coordinate system on an open set in $S$, then we get
other matrices describing the Riemannian structure. We will still
denote those matrices by $\rmetric$, although they need not have
exactly the structure in \eqref{Rmat}. When dealing with discrete
  subgroups of $G$ it will be useful to take coordinates
$x'=(x_1',\ldots x_{p+q}')$ on $N$ that depend linearly on
$\bigl( x^{(1)}, x^{(2)}\bigr)$, and to keep the coordinate $y$. Then
the Riemannian structure is given by a $(p+q) \times 1$ block matrix of
the form
\be \label{Rmat'} \begin{pmatrix} Q(x',y) & 0\\0&y^{-2} \end{pmatrix}\ee
with a positive definite matrix
\il{Qxay}{$Q(x',y)$}$Q(x',y) = Q_1(x')y^{-2}+ Q_2(x')y^{-4}$ where
$Q_1$ and $Q_2$ depend on $x'$ by quadratic polynomials.

\rmrk{Distance function} The Riemannian structure describes the length
of a path $t\mapsto p(t)$, $t_1\leq t\leq t_2$, as
\be\label{length} \int_{t=t_1}^{t_2} \sqrt{ \rmetric(p'(t))}\, dt\,,\ee
where $\rmetric(p'(t)) $ is a short-hand notation for
$(\rmetric)_{p(t)}(p'(t),p'(t))$, i.e., the inner product on
$\Ta {p(t)}S$ that is given by the Riemannian metric at $p(t)$,
evaluated on the tangent vector $p'(t)$. The distance
\il{dist}{$\dist(\cdot,\cdot)$}$\dist\bigl(x_1,x_2\bigr)$
is the minimum of these lengths for all paths with $p(t_1)=x_1$ and
$p(t_2) = x_2$. One can check that for $y_1, y_2>0$
\be \label{distln}\dist\bigl( \am(y_1)\,\orgn, \am(y_2)\, \orgn\bigr) =
\bigl|\log(y_2/y_1)\bigr|\,.\ee
This relation stays valid for the description of the Riemannian
structure with the matrix in~\eqref{Rmat'}.

\rmrk{Invariant \intitle{\ds}-form} The matrix elements $g_{i,j}$ are
functions on $S$, and the $\ds$-form\il{ups}{$\z_S$}
\be\label{ups} \z_S = (\det \rmetric)^{1/2} \, dx_1\wedge dx_2\wedge
\cdots
\wedge dx_d\ee
satisfies $\z_S \circ h = \z_S$ for all $h\in G$. Integration of
functions on $S$ against $\z_S$ gives a $G$-invariant measure on $S$.
In normalized horospherical coordinates
$\det (\rmetric)^{ 1/2 } = y^{-2\rho-1}$, with $\rmetric$ as in
\eqref{Rmat}.

\subsection{Laplace operator}\label{sect2-Lapl}In general, the
Laplace--Beltrami operator is described as in \cite[Chap X, \S2.1,
equation (4)]{Hel62}:\il{Laop}{Laplace operator}
\be\label{Dtp} \Dt_+ = (\det \rmetric)^{-1/2}\, \sum_k
\partial_{x_k} \sum_i (\rmetric)^{-1}_{i,k} \sqrt{\det \rmetric}\,
\partial_{x_i}\,,\ee
with respect to any choice of coordinates $x_1,\ldots x_\ds$. The matrix
$\rmetric$ describes the inner products of the basis vectors
$\partial_{x_1},\ldots,\partial_{x_\ds}$. The expression in
\eqref{Rmat} is for normalized horospherical coordinates.

By {\sl loc.~cit.}~equations (2) and
(3),\il{div}{$\div$}\il{grad}{$\grad$} we can obtain $\Dt_+$ as a
product of first order differential operators:
\badl{divgrad} \Dt_+ f &= \div\,\grad \,f\,,\\
\grad &f = \sum_{i,j} (\rmetric)^{-1}_{i,j} \frac{\partial f}{\partial
x_{ j}}\,
\partial_{x_i}\,,\\
\div \sum_i a_i \,\partial_{x_i} &= \bigl( \det \rmetric)^{-1/2} \sum_i
\frac\partial{\partial x_i} \bigl( (\det \rmetric)^{1/2}\, a_i\bigr)\,.
\eadl
We prefer to use \il{Dt}{$\Dt, \;\Dt_+$}
\be\label{Dt} \Dt = -\Dt_+\,.\ee
Since the Riemannian metric is invariant under the action of $G$ on $S$,
the Laplace operator commutes with the action of $G$ on~$S$:
\be \Dt L(g) f = L(g)\Dt f\,.\ee

\subsubsection{Laplace operator in horospherical
coordinates}\label{sect2-Lapl-horco}To apply the definition in
\eqref{Dtp} we need the inverse of the matrix $\rmetric$. In normalized
horospherical coordinates we can check that
\be \label{gRi}(\rmetric)^{-1} = \begin{pmatrix}
y^2 I_p & y^2 C(x^{(1)}) & 0\\
y^2 C(x^{(1)})^t& y^4 I_q + y^2 C(x^{(1)})^t C(x^{(1)}) & 0\\
0&0& y^2
\end{pmatrix}\,,\ee
with the matrix $C(x^{(1)})$ in~\eqref{Rmat}. Left multiplication by
\[ \begin{pmatrix} I_p & C(x^{(1)})& 0 \\
0&I_q&0\\0&0&1\end{pmatrix}\]
brings $\rmetric$ into a form that shows that
\be \label{detrm} \det \rmetric= y^{-4\rho-2}\,.\ee

The form of this matrix implies that the differentiation with respect to
the last variable $y$ does not interact with the other coordinates. We
conclude that
\be \Dt = - y^2\,\partial_y ^2+(2\rho-1) y \,\partial y - y^2 L_1 - y^4
L_2\,,\ee
with second order differential operators in the variables
$x_1, \ldots, x_{p+q}$. Actually, we have
$L_2 = - \sum_{i=p+1}^{p+q} \partial_{x_i}^2$. The operator $L_1$ is
more complicated.

\subsubsection{Ellipticity of the Laplace operator}\label{sect3-ell}
The matrix elements of $\rmetric$ are analytic functions on $S$ provided
we work with coordinates that are analytic. Furthermore, the matrix
$\rmetric$ and its inverse are positive definite. The principal symbol
of $\Dt_+$ is
\[(\det\rmetric)^{-1/2}\, \sum_{i,k}
  (\rmetric)^{-1}_{i,k} \sqrt{\det \rmetric}\, X_i X_k\,.\]
  It is obtained by taking the highest order terms and replacing partial
derivatives by variables. The resulting polynomial is the principal
symbol of the differential operator. In this way we see that the
principal symbol of $\Dt_+$ is positive definite, which implies that
$\Dt_+$, and hence $\Dt$ as well, are elliptic differential operators.

All eigenfunctions of $\Dt$ are automatically smooth by elliptic
regularity, as discussed for instance by Folland \cite[Theorem
(6.33)]{Fol95} or Lang \cite[Appendix 4]{Lng-sl2r}. By elliptic
regularity, any function or distribution that is an eigenvector of
$\Dt$ is a $C^\infty$-function. Analytic elliptic regularity implies
that if the coefficients of the elliptic operator are analytic, then
the eigenvectors are analytic functions. Lang refers to Bers and
Schechter \cite{BeSc64} for analytic elliptic regularity.

\subsection{The Casimir element}\label{sect2-Cas}
The Lie algebra $\glie$ generates the universal enveloping algebra
$U(\glie)$. We can view the elements of \il{Ug}{$U(\glie)$}$U(\glie)$
as non-commutative polynomials in elements of $\glie$. The
representations $R$ and $L$ of $\glie$ in $C^\infty(G)$ by right and
left differentiation extend as representations of $U(\glie)$ where the
differential operators $R(u)$, $u\in U(\glie)$, commute with $L(g)$,
$g\in G$, and the differential operators $L(u)$ commute with $R(g)$. If
we apply $R(u)$ for $u \in U(\glie)$ to elements of $C^\infty(G/K)$,
i.e., of $C^\infty(S)$, we end up with functions that are in general
not right $K$-invariant.

The \il{Ce}{Casimir element}Casimir element
\il{Cas}{$\Cas$}$\Cas \in U(\glie)$ is in the center of the universal
enveloping algebra $U(\glie)$. It can be described starting from any
basis $\bigl\{ \XX_j\bigr\}$ of $\glie$ and its dual basis
$\bigl\{\XX_j'\bigr\}$ with respect to the normalized Killing form as
\be \Cas=\sum_j \XX_j \XX_j' \,.\ee

In the representation $R$ of $\glie$ in $C^\infty(G)$ by right
differentiation, we call $R(\Cas)$ the {\em Casimir operator}. It is
the unique second order differential operator that commutes with all
operators $R(\XX)$ for $\XX\in \klie$, and has vanishing constant term.
Hence $R(\Cas)$ preserves the space of right $K$-invariant functions,
which we can identify with $C^\infty(S)$. Since right differentiation
commutes with left translation, we have a differential operator
$R(\Cas)$ in $C^\infty(S)$ commuting with $L(g)$.
\begin{prop}\label{prop-Cas-Lapl} The differential operators $R(\Cas)$
and $\Dt_+$ in $C^\infty(S)$ coincide.\end{prop}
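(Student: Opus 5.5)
The plan is to compute $R(\Cas)$ explicitly with the dual bases \eqref{basX} and \eqref{basXd}. Then use right $K$-invariance to discard the $\klie$-part, and compare the resulting operator with $\Dt_+$ at the single point $\orgn$. Both operators commute with $L(g)$ for all $g\in G$: $\Dt_+$ by invariance of the metric, and $R(\Cas)$ because right and left actions commute. So equality at $\orgn$ for all $f\in C^\infty(S)$ implies equality everywhere, since $(Df)(g\orgn)=(D L(g^{-1})f)(\orgn)$.

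\textbf{Step 1: rewrite the Casimir element.} With the dual bases \eqref{basX}, \eqref{basXd} we get
\[
\Cas = \H_0^2 - \sum_{i=1}^{p+q}\bigl(\XX_i\,\tht\XX_i + \tht\XX_i\,\XX_i\bigr) - \sum_j \UU_j^2 .
\]
By \eqref{VWdef} we have $\XX_i=(\VV i+\WW i)/\sqrt2$ and $\tht\XX_i=(\WW i-\VV i)/\sqrt2$. A short computation in $U(\glie)$, where the cross terms cancel in the symmetrized sum, gives $\XX_i\tht\XX_i+\tht\XX_i\XX_i=\WW i^2-\VV i^2$. Hence
\[
\Cas=\H_0^2+\sum_{i}\VV i^2-\sum_i\WW i^2-\sum_j\UU_j^2 .
\]

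\textbf{Step 2: right $K$-invariance.} For $f\in C^\infty(S)$, viewed as right $K$-invariant on $G$, we have $R(\Z)f=0$ for every $\Z\in\klie$. Therefore $R(\Z)^2f=R(\Z)\bigl(R(\Z)f\bigr)=0$; the rightmost factor acts first. Consequently
\[
R(\Cas)f=R(\H_0)^2f+\sum_i R(\VV i)^2 f .
\]
Here $\H_0,\VV 1,\dots,\VV{p+q}$ is an orthonormal basis of $\plie\cong\Ta\orgn S$ (\S\ref{sect2-Kfbg}, \S\ref{sect2-Rie}).

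\textbf{Step 3: identify with $\Dt_+$ at $\orgn$.} For $\YY\in\plie$ we have $R(\YY)^2f(e)=\partial_t^2 f\bigl(\exp(t\YY)\,\orgn\bigr)|_{t=0}$. I will use the standard fact for symmetric spaces that $t\mapsto\exp(t\YY)\,\orgn$ is the geodesic through $\orgn$ with initial velocity $\YY$, i.e.\ the Riemannian exponential map at $\orgn$ is $\YY\mapsto\exp(\YY)\orgn$ (see \cite{Hel62}). Thus the linear coordinates on $\plie$ with respect to the orthonormal basis above are Riemannian normal coordinates at $\orgn$. In these coordinates $\rmetric=I$ and all first derivatives of $\rmetric$ vanish at $\orgn$. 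Formula \eqref{Dtp} then reduces at $\orgn$ to $\sum_k\partial_{k}^2$, and each $\partial_k^2 f(\orgn)$ is exactly $R(\YY_k)^2f(e)$ for the corresponding basis element $\YY_k$. This yields $R(\Cas)f(\orgn)=\Dt_+f(\orgn)$, and the invariance argument above finishes the proof.

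\textbf{Main obstacle.} The delicate point is Step 3: justifying that the one-parameter curves $\exp(t\YY)\orgn$ are geodesics, so that normal coordinates apply and the Christoffel terms in \eqref{Dtp} vanish at $\orgn$. I would simply cite this from \cite{Hel62}.

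A fallback, avoiding geodesics, is to verify directly in normalized horospherical coordinates at $(0,\dots,0,1)$. By \S\ref{sect2-hsphco}, $R(\VV i)_\orgn\leftrightarrow\partial_{x_i}$ and $R(\H_0)_\orgn\leftrightarrow\partial_y$. One would then compute the second-order and first-order terms of $\sum_i R(\VV i)^2$ via the curves $\exp(t\VV i)\orgn$ to second order, and compare with the expression for $\Dt$ in \S\ref{sect2-Lapl-horco}. The second-order terms match immediately; the first-order term $(1-2\rho)\partial_y$ is where the real bookkeeping lies.
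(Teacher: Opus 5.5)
Your proof is correct. The only external input is the standard fact that the curves $t\mapsto\exp(t\YY)\,\orgn$, $\YY\in\plie$, are the geodesics through $\orgn$, so that $\YY\mapsto\exp(\YY)\,\orgn$ gives normal coordinates.

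The paper gives no argument of its own here. It cites Exercise~C5 of \cite{Hel62}, and then records the action \eqref{RCas-Kinv} of $R(\Cas)$ on right $K$-invariant functions, $R(\H_0)^2+\sum_i 2R(\XX_i)^2-2\rho\,R(\H_0)$. That form is adapted to horospherical coordinates rather than to an orthonormal basis of $\plie$. Your route (sum of squares over an orthonormal basis of $\plie$, normal coordinates, $G$-equivariance) is coordinate-free and needs nothing about the metric beyond its value at $\orgn$. The paper's form is the natural tool for your fallback.

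The two forms agree. Since $R(\WW i)f=0$, and $[\WW i,\XX_i]$ equals $\tfrac1{\sqrt2}\H_0$ for $i\leq p$ and $\sqrt2\,\H_0$ for $i>p$, one finds $R(\VV i)^2f=2R(\XX_i)^2f-R(\H_0)f$, respectively $2R(\XX_i)^2f-2R(\H_0)f$. Summing gives $-(p+2q)R(\H_0)f=-2\rho\,R(\H_0)f$.

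That commutator term is exactly the first-order bookkeeping you anticipated in the fallback. On $NA$ the operator $R(\H_0)$ acts as $y\partial_y$, so $R(\H_0)^2-2\rho R(\H_0)=y^2\partial_y^2+(1-2\rho)y\partial_y$. This is the $y$-part of $\Dt_+$ obtained from \eqref{Dtp}, \eqref{gRi} and \eqref{detrm}. Meanwhile $\sqrt2\,R(\XX_i)$ acts on functions on $NA\cong S$ as the vector field $m_i$ of \eqref{mj}, and $\sum_i m_i^2$ reproduces the $x$-part of $\Dt_+$. This uses that the $c_{j,i}$ depend only on $x^{(1)}$ and that the determinant \eqref{detrm} does not depend on $x$. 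So the fallback would also succeed, just with more computation.
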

This is Exercise C5 in \cite{Hel62}.

With the bases of $\glie$ in \eqref{basX} and \eqref{basXd} we find the
Casimir element
\be \Cas = \H_0^2 - \sum_j \UU_j^2 - \sum_i \XX_i(\tht\XX_i) -
\sum_i(\tht\XX_i)\XX_i\,,\ee
with $1\leq j \leq \dim \mlie$, and $1\leq i \leq p+q$. For application
on right $K$-invariant functions we can use that $R(\UU_j)$ acts as
zero. Using that $\XX_i + \tht \XX_i \in \klie$ acts as zero as well,
we arrive at the action
\be \label{RCas-Kinv}
R(\H_0)^2 + \sum_{i=1}^{p+q} 2 R(\XX_i)^2 - 2 \rho\, R(\H_0)\,.\ee

%% file: ccro2-3-autf.tex

\def\flnm{ccro2-3-autf}\bigskip

\section{Automorphic forms}\label{sect3}

After describing in \S\ref{sect3-discr-subgr} a class of discrete
subgroups of $G$, we define automorphic forms in \S\ref{sect3-autf}.
The last subsection gives some proofs.

\subsection{Discrete subgroup}\label{sect3-discr-subgr} We fix a
discrete subgroup \il{GM}{$\Gm$}$\Gm\subset G$ satisfying the following
conditions:
\begin{enumerate}[label=$\mathrm{(\roman*)}$, ref=$\mathrm{\roman*}$]
\item $\Gm$ is cofinite, but not cocompact.

Cofinite means that the quotient $\Gm\backslash S$ has finite measure
for the measure induced by $d\mu$. Since the group is not cocompact, it
has cusps, discussed below.

\item $\Gm$ is torsion-free. This means that the sole $\gm\in \Gm$ with
finite order is the unit element $e$ of~$\Gm$.

By requiring $\Gm$ to be torsion-free, we avoid some technical
difficulties.
\end{enumerate}

\rmrk{Cusps} Each point $g\,\inft \in \partial S$ is fixed by the group
$g N g^{-1}$. The element $g\in G$ is determined by the point
$g\,\inft$ up to right multiplication by $NAM$. A \il{cusp}{cusp}cusp
\il{cu}{$\cu\in \Cu$}$\cu =  g_\cu \,\inft$ of $\Gm$ is a point of
$\partial S$ for which $N_\cu:=g_\cu N g_\cu^{-1} $ intersects $\Gm$ in
a lattice.
(A discrete subgroup $\Ld\subset N$ is called a
(uniform)
\il{latt}{lattice}{\em lattice} if the quotient $\Ld\backslash N$ is
compact.)

All maximal unipotent subgroups in $G$ are conjugate to each other. So
there exist elements \il{gcu}{$g_\cu$}$g_\cu \in G$ such that
\il{Ncu}{$N_\cu$}$N_\cu= g_\cu N g_\cu^{-1}$. The requirement that the
intersection $\Gm\cap N_\cu$ is a lattice in $N_\cu$ is that the
quotient $\left(\Gm\cap N_\cu\right) \backslash N_\cu$ is compact. The
set \il{Cu}{$\Cu$}$\Cu$ of all cusps of~$\Gm$ is invariant under the
action of $\Gm $ on $\partial S$. It consists of finitely many
$\Gm$-orbits.

By \il{Gmcu}{$\Gmm\cu$}$\Gmm \cu$ we denote the subgroup $\Gmm\cu$ of
$\Gm$ stabilizing $\cu\in \Cu$.
\begin{lem}\label{lem-Gmcu}The condition that $\Gm$ is torsion-free
implies that $\Gmm \cu = \Gm \cap N_\cu$.
\end{lem}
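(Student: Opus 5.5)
We have to show $\Gmm\cu = \Gm\cap N_\cu$. Conjugating by $g_\cu$ reduces everything to the point $\inft$: put $\Gm' = g_\cu^{-1}\Gm g_\cu$. This is again a discrete, torsion-free subgroup of $G$. Its stabilizer of $\inft$ is $g_\cu^{-1}\Gmm\cu g_\cu$, and $\Gm'\cap N$ is a lattice in $N$. The stabilizer of $\inft$ in $G$ is $MAN$. So it suffices to show: if $\Ld := \Gm'\cap N$ is a lattice in $N$ and $\Gm'$ is torsion-free, then $\Gm' \cap MAN \subset N$. The inclusion $\Gm\cap N_\cu\subset\Gmm\cu$ is clear, since $N$ fixes $\inft$.

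\emph{Step 1: the $A$-component is trivial.} Let $\gm = nam\in\Gm'\cap MAN$ with $a=\am(t)$. Since $M$ and $A$ normalize $N$, $\gm$ normalizes $N$. Because $\gm\in\Gm'$, it also normalizes $\Gm'\cap N=\Ld$. Conjugation by $\gm$ therefore induces a homeomorphism of the compact quotient $\Ld\backslash N$.

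Conjugation by $am$ scales the Haar measure of $N$ by the modulus $t^{2\rho}$: by \eqref{anai} the coordinates scale by $t$ and $t^2$, and $M$ acts orthogonally. Conjugation by $n$ is unimodular. Hence conjugation by $\gm$ maps $\Ld$ onto $\Ld$ while multiplying the Haar measure by $t^{2\rho}$, so the covolume of $\Ld$ changes by the factor $t^{2\rho}$. As the lattice is unchanged, $t^{2\rho}=1$, so $t=1$ and $\gm = nm\in NM$.

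An alternative is to iterate: if $t<1$, then $\gm^{k}\ld\gm^{-k}\to e$ for every $\ld\in\Ld$, and this contradicts the discreteness of $\Gm'$.

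\emph{Step 2: the $M$-component forces torsion or triviality.} Now $\gm = nm$ with $m\in M$ compact. The main obstacle is to show that $m$ acts trivially, that is, that $\gm\in N$.

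First, $\gm$ normalizes $\Ld$, and conjugation by $\gm$ preserves $\Ld\backslash N$. The automorphism $\ld\mapsto \gm\ld\gm^{-1}$ of $\Ld$ is, modulo inner automorphisms of $N$, the map $\Ad(m)$. A lattice in the nilpotent group $N$ spans the Lie algebra rationally (Malcev), and $\Ad(m)$ lies in a compact group while preserving the lattice $\log$-structure. The induced automorphism group of $\Ld/[\Ld,\Ld]$ (a lattice in $\glie_\al$) is both discrete and compact, hence finite. So some power $\gm^{k}$ acts on $\Ld$ by an inner automorphism of $N$. Modifying by an element of $\Ld$ and passing to the Zariski-dense hull, one sees that $m^{k}$ centralizes $N$ up to the component from $n$.

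The cleaner route is to use the structure of $NM$ directly. The group $\langle \gm\rangle\Ld$ acts properly and cocompactly on $N\cong N\nul\subset\partial S$ by affine-type maps. The closure of the $M$-components of this group is a compact abelian torus-times-finite group. If $m\neq e$ in the image, $\gm$ acts on $N$ as an isometry of the left-invariant Carnot metric, and this isometry is not a translation. By a Bieberbach-type argument for nilmanifolds (Auslander), some power $\gm^{k}$ lies in $\Ld$. Then $(\gm\ld^{-1})$ with $\ld = \gm^{k}$ \dots More directly: if $\gm^{k}=\ld\in\Ld\subset N$, write $\gm = nm$, so $\gm^{k} = n' m^{k}$ and $m^{k}=e$.

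Finally, compactness of $\Ld\backslash N$ gives a fixed point. The elliptic-type element $\gm$ fixes a point of $S$ (a compact-type element $nm$ with $m$ generating a compact group and having bounded orbits), so $\gm$ lies in a conjugate of $K$. Since $\gm$ lies in the discrete group $\Gm'$ and in a compact group, it has finite order. Torsion-freeness then gives $\gm=e$, and hence $\gm\in N$.

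The delicate point I expect is exactly this step: showing that an element $nm$ with $m\ne e$ whose powers stay at bounded distance modulo $\Ld$ must be elliptic. I would argue it by showing that the displacement $\dist(\gm^{j}x,x)$ stays bounded for a suitable $x$ high in the cusp.
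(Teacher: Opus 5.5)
\paragraph{A gap in Step 2.} Your Step~1 is fine; the covolume argument is more careful than the paper's one-line appeal to discreteness. In Step~2 you correctly reach $\gm^k=\ld\in\Ld$ and $m^k=e$, which is also where the paper's proof ends up.

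The argument breaks at the next claim, that $\gm=nm$ with $m\neq e$ is elliptic and fixes a point of $S$. Since $\gm^k=\ld$, the element $\gm$ can lie in a compact group only if $\ld=e$. When $\ld\neq e$, $\gm$ has infinite order and is a ``screw'' parabolic: it has no fixed point in $S$, and its displacement $\dist(\gm^j x,x)$ is unbounded. Your criterion of ``bounded modulo $\Ld$'' gives no information. It holds for every element of $\Gmm\cu$ once $\Ld$ has finite index, including the translations $\ld\in\Ld\setminus\{e\}$, which are certainly not elliptic.

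A concrete example: take $S=\mathbb{H}^4$, $G=\mathrm{SO}_0(4,1)$, $N\cong\RR^3$, $M\cong\mathrm{SO}(3)$. Let $\gm\colon x\mapsto mx+v$, where $m$ is the rotation by $\pi$ about the axis $\RR v$. Then $\gm^2$ is translation by $2v$ and $\gm$ has no fixed point. Together with two translations orthogonal to $v$, $\gm$ generates a torsion-free Bieberbach group, the fundamental group of an orientable non-toral flat $3$-manifold. In this group the translation subgroup has index $2$.

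\paragraph{The statement itself fails.} Groups of this kind do occur as $\Gmm\cu$ for torsion-free lattices $\Gm\subset\mathrm{SO}_0(4,1)$. Orientable cusped hyperbolic $4$-manifolds can have non-toral cusp cross-sections; by work of Long--Reid and McReynolds, every closed flat manifold occurs as a cusp cross-section of some finite-volume hyperbolic manifold. So the lemma cannot be derived from torsion-freeness alone. What is true, and what your argument essentially proves, is that $\Gmm\cu\subset g_\cu NMg_\cu^{-1}$ and that $\Gm\cap N_\cu$ has finite index in $\Gmm\cu$.

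The paper's own proof has the same gap. It deduces $m^k=e$ and then invokes torsion-freeness of $\Gm$ to conclude $k=1$. But $m\notin\Gm$, so torsion-freeness of $\Gm$ says nothing about $m$.

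The stated equality needs an extra hypothesis, for example that $\Gm$ is neat. In that case all eigenvalues of $\Ad(\gm)$ equal $1$, so $\gm$ is unipotent, and its image $m$ in the compact group $M$ must be trivial. Neatness can be arranged by passing to a subgroup of finite index.
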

\begin{lem}\label{lem-normgc}
We can (and do) normalize the choice of the elements $g_\cu$ such that
the lattice $\Ld_\cu= g_\cu^{-1} \Gmm \cu g_\cu$ in $N$ depends only on
the $\Gm$-orbit of $\cu$ in~$\Cu$.\end{lem}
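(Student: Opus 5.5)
The statement to prove is Lemma~\ref{lem-normgc}: the elements $g_\cu$ can be chosen so that $\Ld_\cu=g_\cu^{-1}\Gmm\cu g_\cu$ depends only on the $\Gm$-orbit of $\cu$. The plan is to fix one $g_\cu$ per orbit and transport it along the orbit by elements of~$\Gm$, then check that the result is well defined and yields the same lattice.

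First fix a set of representatives $\cu_1,\ldots,\cu_h$ of the finitely many $\Gm$-orbits in $\Cu$. For each representative choose some $g_{\cu_j}\in G$ with $g_{\cu_j}\inft=\cu_j$; then $N_{\cu_j}=g_{\cu_j}Ng_{\cu_j}^{-1}$. For an arbitrary cusp $\cu$ in the orbit of $\cu_j$, choose $\gm\in\Gm$ with $\cu=\gm\cu_j$ and put
\[ g_\cu = \gm\, g_{\cu_j}. \]
Then $g_\cu\inft=\gm\cu_j=\cu$, so $g_\cu$ is an admissible choice, and $N_\cu=\gm N_{\cu_j}\gm^{-1}$.

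Next we check that $g_\cu$ is well defined. If $\gm'\in\Gm$ also satisfies $\gm'\cu_j=\cu$, then $\gm^{-1}\gm'\in\Gmm{\cu_j}$. By Lemma~\ref{lem-Gmcu}, $\Gmm{\cu_j}=\Gm\cap N_{\cu_j}$, so $\gm^{-1}\gm'=g_{\cu_j}n g_{\cu_j}^{-1}$ with $n\in\Ld_{\cu_j}\subset N$. Hence
\[ \gm' g_{\cu_j}= \gm g_{\cu_j} n, \]
so the two possible choices differ by right multiplication by $n\in\Ld_{\cu_j}$. To avoid this ambiguity we simply fix one $\gm$ for each $\cu$; alternatively we observe that the ambiguity does not affect the lattice.

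Finally we compute the lattice. Since $\Gmm\cu=\gm\Gmm{\cu_j}\gm^{-1}$ (stabilizers are conjugate),
\[ g_\cu^{-1}\Gmm\cu g_\cu = g_{\cu_j}^{-1}\gm^{-1}\gm\Gmm{\cu_j}\gm^{-1}\gm g_{\cu_j}=\Ld_{\cu_j}. \]
Had we used $\gm'$ instead, the lattice would be $n^{-1}\Ld_{\cu_j}n=\Ld_{\cu_j}$, because $n\in\Ld_{\cu_j}$. So $\Ld_\cu=\Ld_{\cu_j}$ depends only on the orbit. That $\Ld_{\cu_j}$ is a lattice in $N$ follows from the definition of cusp together with Lemma~\ref{lem-Gmcu}.

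None of these steps is a real obstacle. The only point needing care is the use of Lemma~\ref{lem-Gmcu}, which guarantees that the stabilizer lies inside $N_{\cu_j}$ rather than merely inside the parabolic subgroup $g_{\cu_j}NAMg_{\cu_j}^{-1}$; this is where torsion-freeness of $\Gm$ enters.
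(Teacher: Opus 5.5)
Your proposal is correct and is essentially the paper's proof: fix a representative $\du$ of each orbit with some $g_\du$, put $g_\cu=\gm g_\du$ for $\cu=\gm\du$, and use $\Gmm\cu=\gm\Gmm\du\gm^{-1}$ to get $\Ld_\cu=\Ld_\du$, noting that the freedom $\gm\mapsto\gm\dt$ with $\dt\in\Gmm\du$ is harmless. (A minor remark: independence of the choice of $\gm$ is automatic, since $\gm^{-1}\Gmm\cu\gm=\Gmm\du$ holds for every $\gm$ with $\gm\du=\cu$. So Lemma~\ref{lem-Gmcu} is needed only to see that $\Ld_\cu$ lies in $N$ and is a lattice there.)
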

Proofs in \S\ref{sect3-prfs}.

\subsection{Automorphic forms and cusp forms}\label{sect3-autf}
We now define spaces of various types of $\Gm$-invariant functions on
$S$: invariant eigenfunctions, automorphic forms, and cusp forms.

\begin{defn}\label{def-invei}We define an \il{inveif}{invariant
eigenfunction}{\em invariant eigenfunction} as a function
$f\in C^\infty(S)$ that satisfies
\begin{enumerate}[label=$\mathrm{(\alph*)}$, ref=$\mathrm{\alph*}$]
\item $f(\gm x)
= f(x)$ for each $x\in S$ and each $\gm\in \Gm$.

\item $\Dt f = \ld f$ for some $\ld \in \CC$\,.
\end{enumerate}
\end{defn}

Since the Laplace operator is elliptic with analytic coefficients, all
its eigenfunctions are analytic. See \S\ref{sect3-ell}.

We will parametrize the eigenvalue as $\ld = \rho^2-\nu^2$, for a reason
that we will discuss after Proposition~\ref{prop-eivCasps}. We call
$\nu$ the \il{spparm}{spectral parameter}{\em spectral parameter}, and
denote by \il{EnuGm}{$\E_\nu^\Gm(S)$ \text{ space of invariant
eigenfunctions with eigenvalue
$\rho^2-\nu^2$}}$\E_\nu(S)^\Gm =  \E_{-\nu}(S)^\Gm$ the space of
invariant eigenfunctions with spectral parameter $\nu$.

In general, the spaces $\E_\nu(S)^\Gm$ have infinite dimension. We
define interesting subspaces by imposing growth conditions at the
cusps.

\begin{defn}\label{def-grcd}
We define \il{pgc1}{polynomial growth at a cusp}\emph{polynomial growth}
and \il{qdc1}{quick decay at a cusp}\emph{quick decay} at a cusp $\cu$
by
\badl{grcp} f\bigl( g_\cu \am(t) x \bigr) &= \oh (t^b \bigr)
\qquad\text{as }t\uparrow \infty \text{ uniformly for $x$ in compact
sets} \\
&\begin{cases} \text{ for some }b\in \RR & \text{\emph{polynomial
growth},}\\
\text{ for each }b\in \RR &\text{\emph{quick decay}.}
\end{cases}
\eadl
\end{defn}
For polynomial growth large values of $b$ are relevant, and for quick
decay, negative values of $b$ with $|b|$ large. See
\S\ref{sect3-gr-cond} for a further discussion of growth conditions.

\begin{defn}\label{def-af}
An invariant eigenfunction $f\in \E_\nu(S)^\Gm$ is an
\il{autf}{automorphic form}{\em automorphic form} if it has polynomial
growth at all cusps. It suffices to impose this growth condition for a
representative of each $\Gm$-orbit of cusps.

By \il{Ausp}{$\A(\Gm;\nu)$}$\A(\Gm;\nu) $ we denote the space of all
automorphic forms for $\Gm$ with spectral parameter $\nu$.
\end{defn}

Identifying an automorphic form $f$ on~$S$ with the function
$g\mapsto f(g\, \orgn)$ on $G$ we have an automorphic form of trivial
$K$-type, as defined by Harish-Chandra. The spaces $\A(\Gm;\nu)$ have
finite dimension. See \cite[p~7, and Theorem 1, p 8]{HCh68}.
\smallskip

If $f\in \A(\Gm;\nu)$, then the integral
\be \label{fcu}f_\cu (x) = \int_{n\in (\Gm\cap N_\cu)\backslash N_\cu}
f(n\,x)\, dn\ee
is well-defined for each cusp $\cu$, since
$(\Gm\cap N_\cu)\backslash N_\cu$ is compact.
\begin{defn}\label{def-cf}The space of \il{cf}{cusp form}\emph{cusp
forms} \il{A0}{$\A^0(\Gm;\nu)= A^0_\nu(\Gm)$}$\A^0(\Gm;\nu)$ is the
linear subspace of $f \in \A(\Gm;\nu)$ for which $f_\cu=0$ for all
cusps $\cu$.
\end{defn}

It suffices to require that $f_\cu=0$ for $\cu$ in a set of
representatives of the $\Gm$-orbits of cusps.

\begin{prop}[Harish-Chandra] Cusp forms have quick decay at each cusp.
\end{prop}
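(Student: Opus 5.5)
The plan is to reduce to a single cusp and use the Fourier (or Fourier–Jacobi) expansion along $N_\cu$, as in Harish-Chandra's treatment. Fix a cusp $\cu$ and replace $f$ by $L(g_\cu^{-1})f$. This turns the problem into one at $\inft$, with the lattice $\Ld=\Ld_\cu\subset N$ from Lemma~\ref{lem-normgc}, and the constant term $f_\cu$ becomes the average over $\Ld\backslash N$. Since $f$ is $\Ld$-invariant and analytic, I expand the function $n\mapsto f(n\am(y)\orgn)$ on the compact nilmanifold $\Ld\backslash N$ spectrally with respect to the right regular representation of $N$ on $L^2(\Ld\backslash N)$. Cuspidality says exactly that the trivial-character component vanishes.

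The next step is to feed each remaining isotypic component into the eigenvalue equation. By~\eqref{RCas-Kinv}, or the horospherical form of $\Dt$, each component satisfies an ordinary differential equation in $y$. I treat the two kinds of components separately.
\begin{itemize}
\item For a nontrivial character of $N/[N,N]$, the operators $L_1$ and $L_2$ act by scalars. The coefficient then solves a modified Bessel-type equation $-y^2\phi''+(2\rho-1)y\phi'+c\,y^2\phi=\ld\phi$ with $c>0$. Its solution space is spanned by $y^\rho K_\nu(\sqrt c\,y)$ and $y^\rho I_\nu(\sqrt c\,y)$.
\item For components nontrivial on the center $Z_N$, only after separating variables in $x^{(1)}$ through Hermite functions do we get a confluent-hypergeometric (Whittaker-type) equation in $y$. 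Its two solutions again split into one of exponential decay $\approx e^{-c' y^2}$ and one of exponential growth.
\end{itemize}
Polynomial growth of $f$ at $\cu$ forces the growing solution to be absent in every component. So every coefficient decays exponentially as $y\uparrow\infty$.

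The main obstacle is summing these estimates uniformly to get $f(g_\cu\am(t)x)=\oh(t^b)$ for every $b$, uniformly on compacta. I would first use that $f$ is an eigenfunction of $\Dt$. This gives that $(1+\Dt_N)^k f$, for the sub-Laplacian $\Dt_N$ along $N$, is again controlled by polynomial-growth bounds, via elliptic estimates on unit balls (all $\am(y)$-translates of a compact set have bounded geometry). Consequently the Fourier coefficients decay rapidly in the frequency, uniformly in $y\ge y_0$. Combining this with the uniform Bessel/Whittaker decay gives $|f(n\am(y)\orgn)|\ll_b y^{-b}$ for $y\ge y_0$ and $n$ in a fundamental domain of $\Ld$.

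Finally I transfer the estimate to $g_\cu\am(t)x$ with $x$ in a compact set. By~\eqref{anai}, writing $x=n\am(y)k\orgn$ gives $\am(t)x=\nm(tx^{(1)},t^2x^{(2)})\am(ty)\orgn$. The $N$-part is absorbed by $\Ld$-invariance, and the height $ty$ grows linearly in $t$. The required decay therefore follows.

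Alternatively, one could cite the general result of \cite{HCh68} that cuspidal automorphic forms are rapidly decreasing on Siegel sets, after checking that the Siegel sets there correspond to the regions $g_\cu N\am(t)\,\omega$, $t$ large, used in Definition~\ref{def-grcd}. The expansion argument above is what that reference does.
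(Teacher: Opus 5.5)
Your outline is sound, but it takes a different route from the paper. The paper gives no argument of its own and simply invokes \cite[Chap.~I, \S\S3--4]{HCh68}, which is your closing alternative. Your remark that the expansion argument is ``what that reference does'' is not accurate: Harish-Chandra's argument is soft, and runs as follows.
\begin{enumerate}
\item For a smooth function $\phi$ with mean zero on the compact nilmanifold $\Ld\backslash N$, one has $\sup|\phi|\le C\sum_i\sup|D_i\phi|$, where $D_i\phi(n)=\partial_s\phi\bigl(n\exp(s\XX_i)\bigr)|_{s=0}$.
\item Each $D_i\phi$ again has mean zero.
\item For $\phi(n)=f(n\am(y)\orgn)$, \eqref{anai} gives $D_i\phi(n)=y^{-j_i}\bigl(R(\XX_i)f\bigr)(n\am(y))$ with $j_i\in\{1,2\}$.
\end{enumerate}
Iterating $m$ times and using the uniform polynomial growth of $R(u)f$ from Proposition~\ref{prop-grwt} gives $f(n\am(y)\orgn)=\oh(y^{b-m})$ for $y\geq 1$, uniformly in $n$, for every $m$. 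This needs no harmonic analysis on $N$ and no special functions.

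Your Fourier--Bessel/Whittaker route costs considerably more. It requires:
\begin{itemize}
\item the decomposition of $L^2(\Ld\backslash N)$ into characters and Schr\"odinger-type representations with their multiplicities, including the cases $q=3,7$ where the centre of $N$ is not one-dimensional;
\item the Hermite separation;
\item above all, bounds on the ratio $W(y)/W(y_0)$ of the decaying solutions that are uniform in all frequency parameters.
\end{itemize}
For the last point you must take $y_0$ large in terms of $\nu$, since for $\nu\in i\RR$ the function $K_\nu(x)$ has zeros at small $x$. Once that uniformity is in hand, the rapid decay of the coefficients at the single height $y_0$ (automatic from smoothness) already suffices. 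So your uniform-in-$y$ elliptic estimates are not actually needed.

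In return, your route proves more than the statement. It gives an explicit expansion and exponential decay at every cusp, roughly $e^{-cy}$ for the character components and $e^{-cy^2}$ for the central ones. This holds also when $N$ is non-abelian, where the paper only records quick decay.
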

\begin{proof}Let $\cu\in \Cu$. Harish-Chandra \cite[Chap I, \S~3,
4]{HCh68} shows that if a $\Gmm \cu$-invariant eigenfunction of $\Dt$
with polynomial growth at the cusps has vanishing Fourier term of order
zero (i.e., $f_\cu=0$), then it has quick decay.
\end{proof}

We can characterize cusp forms by the condition of quick decay at the
cusps. If the unipotent subgroup $N$ of $ G$ is abelian, then cusp
forms have exponential decay. In more general situations we have to be
content with quick decay.

\begin{prop}\label{prop-cusp-sp}The set of $\nu\in \CC$ such that
$\A^0(\Gm;\nu)\neq \{0\}$ is a discrete set in
$\bigl\{\nu \in \CC \;:\; \rho^2-\nu^2 \geq 0\bigr\}$.
\end{prop}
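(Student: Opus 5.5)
The plan is to realize cusp forms as eigenfunctions of a self-adjoint operator with compact resolvent on a suitable Hilbert space. Two things then have to be shown. First, every eigenvalue $\ld=\rho^2-\nu^2$ is real and non-negative. Second, the possible $\ld$ form a discrete subset of $[0,\infty)$. The discreteness in the $\nu$-plane then follows, because $\nu\mapsto\rho^2-\nu^2$ is a proper, two-to-one map from $\bigl\{\nu:\rho^2-\nu^2\ge0\bigr\}$ (the union of the real interval $[-\rho,\rho]$ and the imaginary axis) onto $[0,\infty)$. The preimage of a discrete set under a proper continuous map is discrete.

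Step one is positivity and reality. Let $f\in\A^0(\Gm;\nu)$. By the proposition of Harish-Chandra just stated, $f$ has quick decay at every cusp. The same holds for its first derivatives: apply the same argument to $R(\XX)f$, or use interior elliptic estimates on translates of a fixed compact ball in the horospherical coordinates of \S\ref{sect2-hsphco}. Using a Siegel-type fundamental domain (a compact part plus finitely many cuspidal regions $g_\cu\{\nm(x)\am(t)\orgn: x\in$ compact$,\ t>t_0\}$), we get that $f$ and $\grad f$ lie in $L^2(\Gm\backslash S)$; here we use that the invariant measure $y^{-2\rho-1}dx\,dy$ grows only polynomially. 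Now apply Green's formula with $\Dt=-\div\grad$ from \eqref{divgrad} on the truncated fundamental domain. The boundary terms on the truncation horospheres tend to zero by quick decay. The side identifications cancel by $\Gm$-invariance. This gives
\[\ld\,\|f\|^2=\int_{\Gm\backslash S}\rmetric(\grad f,\overline{\grad f})\,d\mu\ \ge 0 .\]
So $\ld=\rho^2-\nu^2\in\RR_{\ge0}$ whenever $f\ne0$.

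Step two is discreteness. The natural input is Harish-Chandra's finiteness (or compactness) theorem \cite{HCh68}: the space $L^2_{\mathrm{cusp}}(\Gm\backslash G)$ decomposes discretely with finite multiplicities. Concretely, for a compactly supported bi-$K$-invariant smooth function $\phi$ on $G$, the convolution operator restricted to the cuspidal subspace is compact, indeed Hilbert–Schmidt. On a cusp form $f$ with eigenvalue $\ld$, this operator acts by the scalar $\hat\phi(\nu)$, the spherical transform of $\phi$. We can choose $\phi$ so that $\hat\phi$ is non-vanishing on any prescribed bounded region of the admissible $\nu$-set. Compactness then implies that only finitely many such $\ld$ can occur in each bounded interval, since eigenvalues of a compact operator accumulate only at $0$. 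Alternatively, we can cite directly that $-\Dt$ restricted to cuspidal functions has purely discrete spectrum; this is Selberg–Langlands, as in \cite{HCh68}.

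The main obstacle is the compactness statement on the cuspidal subspace in rank one with a possibly non-abelian $N$. The key estimate needed is that cuspidal functions in the range of convolution by $\phi$ are uniformly of rapid decay in the Siegel sets. This is exactly what Harish-Chandra proves, so I would invoke \cite[Chap.~I]{HCh68} rather than reprove it. The positivity computation in step one is routine once quick decay of the derivatives is secured.
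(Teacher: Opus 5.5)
Your proposal is correct and follows essentially the same route as the paper. Discreteness comes from Harish-Chandra's compactness of integral operators on the cuspidal subspace \cite{HCh68}, and $\rho^2-\nu^2\geq 0$ comes from Green's formula on a fundamental domain together with quick decay of $f$ and its derivatives. The paper applies the formula to a cut-off $\chi f\in C_c^\infty(\Gm\backslash S)$ rather than truncating with horosphere boundary terms, and it leaves implicit your explicit passage from discreteness in $\ld$ to discreteness in $\nu$; both differences are inessential.
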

The discreteness of the spectrum follows from the compactness of
appropriate integral operators. See, for instance, \cite[Theorems 2
(p~9), 3 (p~15), and the remarks on p~16]{HCh68}. For the inequality
$\rho^2-\nu^2\geq 0$ it is essential that we work with cusp forms on
$S$, corresponding to right $K$-invariant functions on~$G$. See the
completion of the proof in~\S\ref{sect3-prfs}.

The proposition implies that $\A^0(\Gm;\nu)$ might be non-zero for
infinitely many values $\nu \in i\RR$, and for at most finitely many
real values in $[-\rho,\rho]$. For cusp forms of non-trivial $K$-type,
the eigenvalues $\rho^2-\nu^2$ of the Casimir operator on cusp forms
can be negative.

\subsection{Growth conditions on the group}\label{sect3-gr-cond}
The growth conditions at a cusp can be formulated for all functions on
the group, and can be applied to functions on $S$ by considering them
as functions on the group that are right $K$-invariant. On the group,
polynomial growth and quick decay at the cusp $\cu$ can be formulated
as
\badl{grcpg} f\bigl( g_\cu \am(t) g\bigr) &= \oh (t^b \bigr)
\qquad\text{as }t\uparrow \infty \text{ uniformly for $g$ in compact
sets in $G$} \\
&\begin{cases} \text{ for some }b\in \RR_{> 0} & \text{\emph{polynomial
growth},}\\
\text{ for each }b\in \RR_{<0} &\text{\emph{quick decay}.}
\end{cases}
\eadl
By replacing the single element $g_\cu$ in \eqref{grcpg} by an element
$k$ running over~$K$ we obtain stronger conditions: polynomial growth
on $G$, and quick decay on $G$, defined by
\badl{grbdg} f( k \am(t) g) &= \oh(t^b) \qquad\text{as }t\uparrow
\infty\\
&\qquad\text{for each $k\in K$, uniformly for $g$ in compact sets in
$G$}\\
& \qquad\begin{cases}\text{for some }b\in \RR&\text{\emph{polynomial
growth}}\,,\\
\text{for each }b\in \RR&\text{\emph{quick decay}}\,.\end{cases}
\eadl
Applied to functions on $S$, we call these conditions
\il{pg1}{polynomial growth at the boundary}\emph{polynomial growth}
(respectively \il{qd1}{quick decay at the boundary}\emph{quick decay})
\emph{ at the boundary}.

\begin{prop}\label{prop-grL}The conditions of polynomial growth and
quick decay on $G$ are preserved under left translation by elements
of~$G$.
\end{prop}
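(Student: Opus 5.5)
Fix $h\in G$ and a function $f$ on $G$ satisfying \eqref{grbdg}; we must show that $L(h)f\colon g\mapsto f(h^{-1}g)$ again satisfies \eqref{grbdg}, with the same $b$ in the case of polynomial growth (up to a harmless constant) and for all $b$ in the case of quick decay. So we have to control $f\bigl(h^{-1}k\,\am(t)\,g\bigr)$ as $t\uparrow\infty$, for fixed $k\in K$ and $g$ in a compact set $C\subset G$. The plan is to rewrite $h^{-1}k\,\am(t)$ as $k'\,\am(t)\,g'(t)$ with $k'\in K$ \emph{fixed} (independent of $t$) and $g'(t)$ staying in a compact subset of $G$ for large $t$.

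Write $h^{-1}k = k'\,\am(s)\,n'$ by the opposite Iwasawa decomposition of Proposition~\ref{prop-Id}, i.e. $k'=\kI(h^{-1}k)$, $s=\tI(h^{-1}k)$, $n'=\nI(h^{-1}k)$. Then
\[
h^{-1}k\,\am(t)\,g = k'\,\am(s)\,n'\,\am(t)\,g = k'\,\am(st)\,\bigl(\am(t)^{-1} n'\,\am(t)\bigr)\,g .
\]
By \eqref{anai}, writing $n'=\nm(x^{(1)},x^{(2)})$, we get $\am(t)^{-1}n'\am(t)=\nm(t^{-1}x^{(1)},t^{-2}x^{(2)})$, which tends to $e$ as $t\uparrow\infty$; in particular for $t\geq 1$ it stays in the compact set $\{\nm(\tau x^{(1)},\tau^2x^{(2)}):0\le\tau\le1\}$. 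Further $\am(st)=\am(t)\am(s)$. Hence
\[
f\bigl(h^{-1}k\,\am(t)\,g\bigr) = f\bigl(k'\,\am(t)\,g''\bigr),\qquad g''=\am(s)\,\nm(t^{-1}x^{(1)},t^{-2}x^{(2)})\,g,
\]
and $g''$ ranges over a compact subset $C'$ of $G$ (depending only on $h,k,C$) for $t\geq1$ and $g\in C$.

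Now apply the hypothesis \eqref{grbdg} for $f$ with the element $k'\in K$ and the compact set $C'$: $f(k'\am(t)g'')=\oh(t^b)$ uniformly for $g''\in C'$, for some $b$ (resp.\ for every $b$). This gives $L(h)f(k\am(t)g)=\oh(t^b)$ uniformly for $g\in C$, which is the claim for each $k\in K$. The only point needing care is the uniformity: $k$ is fixed in \eqref{grbdg} and only $g$ varies, so no uniformity in $k$ is needed, and the compactness of $C'$ follows from continuity of multiplication. I expect no real obstacle; the key step is just the identity $\am(t)^{-1}N'\am(t)\to e$ from \eqref{anai}, which is why the opposite Iwasawa decomposition $KAN$ (rather than $NAK$) is the right one to use here.
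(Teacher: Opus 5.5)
Your proof is correct and is essentially the paper's argument: the opposite Iwasawa decomposition $h^{-1}k = k'\,\am(s)\,n'$, followed by moving $n'$ past $\am(t)$ via \eqref{anai}, so that $\nm(t^{-1}x^{(1)},t^{-2}x^{(2)})\,g$ stays in a compact set. The only cosmetic difference is that you absorb $\am(s)$ into the compact set $C'$, whereas the paper keeps $\am(st)$ and puts the resulting factor $s^b$ into the implicit constant.
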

\begin{proof}Left translation by $ h^{-1}$ turns the quantity to be considered
into
\[ \bigl( L(h^{-1}) f\bigr)(k \am(t) g) = f(hk\am(t) g) \,.\]
By the Iwasawa decomposition we can write
$ hk = k_1 \am(t_1) n_1$ with $k_1\in K$, $t_1>0$, and
$n_1=\nm(x^{(1)},x^{(2)})$ (in horospherical coordinates). The quantity
to estimate is
\[ f\bigl( k_1 \am(t_1) n_1 \am(t) g \bigr) = f\bigl( k_1 \am(t_1 t)
\nm(t^{-1}x^{(1)}, t^{-2}x^{(2)}) g\bigr)\,.\]
As $t\uparrow\infty$ the element $ \nm(t^{-1}x^{(1)}, t^{-2}x^{(2)}) g$
stays in a compact subset of $G$. The influence of $t_1$ is hidden in
the implicit constant in $\oh(t)$.\end{proof}

\begin{prop}\label{prop-grwt}Let the $K$-finite $f\in C^\infty(G)_K$ be
an eigenfunction of the differential operator $R(\Cas)$ determined by
the Casimir element in \S\ref{sect2-Cas}. If $f$ satisfies one of the
growth conditions mentioned above, then so does $R(u)f$ for all
$u\in U(\glie)$.\end{prop}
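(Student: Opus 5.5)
The plan is to reduce to a pointwise bound on $R(u)f$ in terms of sup-norms of $f$ on compact neighbourhoods, via a reproducing formula for $K$-finite Casimir eigenfunctions. By Harish-Chandra's theorem, a $K$-finite function that is $\mathcal Z(\glie)$-finite is reproduced by convolution with a suitable test function. Here $R(\Cas)f=\ld f$, $\Cas$ generates the centre for rank one up to finite index considerations, and $\mathcal Z(\glie)$-finiteness holds at least on each $K$-isotypic piece. So there is $\al\in C_c^\infty(G)$, supported in a fixed compact neighbourhood $W$ of $e$, with
\[ f = f\ast\al\,,\qquad\text{that is,}\qquad f(g)=\int_G f(gh^{-1})\,\al(h)\,dh\,. \]
This is \cite[Theorem~1]{HCh68}; alternatively one can argue by elliptic regularity of $R(\Cas)-2\Omega_K$ on the finite-dimensional $K$-span. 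Since $f$ is right $K$-finite, we can take $\al$ to depend only on the finite set of $K$-types occurring and on $\ld$.

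Differentiating under the integral, $R(u)f=f\ast R(u)\al$, where $R(u)\al$ is again smooth and supported in $W$. Hence
\[ |R(u)f(g)|\le C_u\,\sup_{h\in W}|f(gh^{-1})|\,. \]
For the growth conditions we take $g=g_\cu\am(t)g'$ (or $k\am(t)g'$) with $g'$ in a compact set $\Omega$. Then $gh^{-1}=g_\cu\am(t)\,g'h^{-1}$ with $g'h^{-1}\in\Omega W^{-1}$, which is again compact. The hypothesis that $f(g_\cu\am(t)x)=\oh(t^b)$ uniformly for $x$ in compact sets therefore gives $R(u)f(g_\cu\am(t)g')=\oh(t^b)$ uniformly on $\Omega$, with the same $b$. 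This covers both polynomial growth (some $b$) and quick decay (every $b$), at a cusp and, uniformly in $k\in K$, on $G$.

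The main obstacle is justifying the reproducing identity $f=f\ast\al$ without heavy machinery. My first route is this. Decompose $f=\sum_\tau f_\tau$ into finitely many right $K$-isotypic components; each is still a $\Cas$-eigenfunction because $\Cas$ is central. On each component, $R(\Cas)+2R(\Omega_\klie)$ (with $\Omega_\klie$ the Casimir of $\klie$) is an elliptic element of $U(\glie)$ acting by a scalar. Harish-Chandra's lemma, that $\mathcal Z(\glie)$- and $K$-finite functions satisfy $f=f\ast\al$ for some $\al$ that can be chosen independently of $f$ in the given class, then applies. If full $\mathcal Z(\glie)$-finiteness is needed, I would note that for rank one, on a fixed $K$-type the remaining central generators act through finitely many possibilities, so the relevant space of germs is finite-dimensional.

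Failing that, an alternative is local elliptic estimates: derivatives at a point are bounded by the sup of $f$ on a neighbourhood of fixed size, with constants invariant under left translation since the operator is left-invariant. This also yields the pointwise bound on $R(u)f$ displayed above.
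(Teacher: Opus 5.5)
Your main route is the paper's: Harish-Chandra's reproducing formula $f=f\ast\al$ with $\al\in C_c^\infty(G)$, derivatives moved onto $\al$, and the observation that $g_\cu\am(t)g'h^{-1}$ is again $g_\cu\am(t)$ times an element of a fixed compact set. The only difference is cosmetic. You write $R(u)f=f\ast R(u)\al$ in one step, whereas the paper treats a single $R(\XX)$ and iterates, using that $R(\XX)f$ is again a $K$-finite $\Cas$-eigenfunction.

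One justification is wrong, however. $\Cas$ does not generate $\mathcal{Z}(\glie)$, even ``up to finite index'', for rank-one $G$ in general, because the complex rank can exceed one. For $\PSL_2(\CC)$ (real hyperbolic $3$-space) or $\PU(2,1)$, $\mathcal{Z}(\glie)$ is a polynomial ring in two generators. Harish-Chandra's theorem is stated for $\mathcal{Z}(\glie)$-finite functions, so this point needs a real argument; the paper's sketch does not supply one either. There are two correct fixes:
\begin{enumerate}
\item For right $K$-invariant $f$, which is all that Corollary~\ref{cor-grwt-hc} and the cusp forms need, $\mathcal{Z}(\glie)$ acts through $\mathbb{D}(G/K)=\CC[\Dt]$, so a $\Cas$-eigenfunction is automatically $\mathcal{Z}(\glie)$-finite.
\item In general, rank one means $S(\plie)^K$ is generated by the single quadratic invariant. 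Kostant--Rallis then gives $U(\glie)=\mathrm{sym}(\mathcal H)\,\CC[\Cas]\,U(\klie)$, where the harmonics $\mathcal H$ have finite $K$-multiplicities. Hence the $(\glie,K)$-module generated by $f$ is admissible once $\Cas$ acts by a scalar, and $R(\mathcal{Z}(\glie))f$ lies in a finite-dimensional isotypic part.
\end{enumerate}
Your remark about ``finitely many possibilities on a fixed $K$-type'' points in this direction but is not yet an argument.

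Your fallback is in fact a genuinely different and more elementary proof, and it sidesteps the issue entirely. Let $\Omega_\klie$ be the positive Casimir of $\klie$, acting on the $K$-type $\tau$ by $c_\tau$. Then $\Cas-2\Omega_\klie$ is the sum of squares of an orthonormal basis of $\glie$ for $-B(\cdot,\tht\,\cdot)$. $K$-finiteness together with $R(\Cas)f=\ld f$ gives $\prod_{\tau\in F}\bigl(R(\Cas-2\Omega_\klie)-\ld+2c_\tau\bigr)f=0$, an elliptic equation whose operator commutes with left translations. Interior elliptic estimates on a fixed neighbourhood $W$ of $e$, transported by left translation, yield $|R(u)f(g)|\le C_u\sup_{w\in W}|f(gw)|$ with $C_u$ independent of $g$. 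That is exactly the pointwise bound your growth argument uses, and it needs neither the reproducing formula nor $\mathcal{Z}(\glie)$-finiteness. If you instead work one isotypic component at a time, note that $f_\tau(g)=d_\tau\int_K\overline{\chi_\tau(k)}\,f(gk)\,dk$ inherits the growth conditions, since $g'k$ stays in a compact set.
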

\begin{proof}[Indication of the proof] The proposition is a direct
consequence of the convolution representation of Harish-Chandra in
Theorem 1 on p~18 of \cite{HCh66}. It gives
\[ f(g) = \int_G f(g h^{-1})\,\al(h)\, dh = \int_G f(h) \,\al(h^{-1}
g)\, dh\,,\]
for a suitable compactly supported function $\al\in C_c^\infty(G)$. In
the second form of the convolution, the differentiation concerns only
the factor $\al(h^{-1}g)$. This allows us to show that the growth
property of $f$ is valid for $R(\XX)f$ for any $\XX\in \glie$. These
derivatives are eigenfunctions of $R(\Cas)$ with the same eigenvalue.
So we can repeat the argument to get preservation of the growth
property under all elements of $U(\glie)$.
\end{proof}

\begin{cor}\label{cor-grwt-hc}
Suppose that $f\in C^\infty(G)_K$ is an eigenfunction of $R(\Cas)$ and
has quick decay at the cusp $\cu$. Then all derivatives of $f$ of any
order in the horospherical coordinates $x_1,\ldots, x_{p+q},y$ have
quick decay at~$\cu$.
\end{cor}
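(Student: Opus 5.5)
We derive Corollary~\ref{cor-grwt-hc} from Proposition~\ref{prop-grwt} by writing each coordinate derivative at a point of~$S$ as a combination of right derivatives $R(u)f$, with coefficients that are polynomial in the coordinates and in $y,y^{-1}$.

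First we fix what quick decay at~$\cu$ means for derivatives. Let $h(x)=f(g_\cu x)$, with $x$ in normalized horospherical coordinates $(x_1,\ldots,x_{p+q},y)$. Quick decay of $f$ at~$\cu$ means that $h(\nm(x)\am(y)g)=\oh(y^{b})$ as $y\uparrow\infty$, for every $b$, uniformly for $x$ in compact sets and $g$ in compact subsets of~$G$. Since $\Gmm\cu$ acts by a lattice and the estimate is uniform on compacta, it suffices to bound $\partial^\al h$ for $x$ in a compact set and large~$y$. Replacing $f$ by $L(g_\cu^{-1})f$ changes neither the Casimir eigenfunction property nor $K$-finiteness, so we may take $\cu=\inft$ and $g_\cu=e$.

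The key step is to express the coordinate vector fields as right derivatives. On $G$, consider the function $F(x,y,k)=f(\nm(x)\am(y)k)$. From the computation of the transported frame in \S\ref{sect2-Rie}, and inverting as in~\eqref{ir}:
\begin{itemize}
\item $y\partial_y$ acts as $R(\H_0)$;
\item for $p<j\le p+q$, $\partial_{x_j}$ acts as $y^{-2}\sqrt2\,R(\XX_j)$;
\item for $j\le p$, $\partial_{x_j}$ acts as $y^{-1}\sqrt2\,R(\XX_j)$ minus $\sum_i y^{-2}c_{j,i}(x^{(1)})\sqrt2\,R(\XX_i)$.
\end{itemize}
These identities hold on functions on $NA\times K$ evaluated along $nak$ with $k$ fixed. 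This is because $R(\XX)$ for $\XX\in\nlie\oplus\alie$ is right differentiation in the $NA$-factor, and $NA$ is a group, so no $K$-component appears if we evaluate at $k=e$. By induction, $\partial_x^\al\partial_y^m F$ at $(x,y,e)$ is a finite sum of terms $P(x^{(1)},y^{-1},y)\,(R(u)f)(\nm(x)\am(y))$ with $u\in U(\glie)$. The coefficients $P$ are polynomial in $x^{(1)}$ and involve only powers $y^{-r}$ for spatial derivatives, together with $y^{-m}$ from $\partial_y^m=y^{-m}\cdot(\text{polynomial in }y\partial_y)$.

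The derivatives $R(u)f$ are still $K$-finite Casimir eigenfunctions. By Proposition~\ref{prop-grwt} they have quick decay at~$\cu$, in the group formulation~\eqref{grcpg}, uniformly on compacta. Each term is therefore $\oh(y^{b})$ for every~$b$, because the factor $|x^{(1)}|$ is bounded on compacta and the powers of $y$ in front are absorbed by choosing $b$ smaller.

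The only delicate point is that Proposition~\ref{prop-grwt} must really be applied to $R(u)f$ as functions on~$G$, not on~$S$. This is why we keep $F$ on $G$ and evaluate at $k=e$, and we check that~\eqref{grcpg} with $g=\nm(x)k$ in a compact set covers the points $\nm(x)\am(y)$. That coverage follows by writing $\nm(x)\am(y)=\am(y)\nm(y^{-1}x^{(1)},y^{-2}x^{(2)})$, which uses~\eqref{anai} and the same argument as in Proposition~\ref{prop-grL}.
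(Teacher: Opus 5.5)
Your proposal is correct and follows essentially the same route as the paper. You express the horospherical coordinate derivatives at $\nm(x)\am(y)$ through right derivatives $R(u)f$ whose coefficients are polynomial in $x^{(1)}$ and $y^{-1}$, and then you invoke Proposition~\ref{prop-grwt}; the paper's one-line proof states the relation in the direction from $R(\XX_i)$ to the $\partial_{x_i}$, whereas you spell out the inverted relations \eqref{ir}, which is the direction actually needed, and you also check that points $\nm(x)\am(y)$ have the form $\am(y)\,g$ with $g$ in a compact set.
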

\begin{proof}The derivatives
$\bigl(R(\XX_i)f\bigr)\bigl(g_\cu\nm(x^{(1)},x^{(2)})\am(y)\bigr)$ are
given by a linear combination of the $\partial_{x_i}$ with polynomial
coefficients in the $x_j$ and $y$.
\end{proof}

Definition~\ref{def-cf} characterizes the space of cusp forms in
$\A(\Gm;\nu)$ by the vanishing of the integrals $f_\cu$ in \eqref{fcu}
for all $\cu\in \Cu$. The following lemma gives another
characterization of cusps forms. We recall that non-zero cusp forms
occur only for $\rho^2-\nu^2\geq 0$, which implies that we have
$|\re\nu| \leq \rho$.

\begin{lem}\label{lem-cusp-crit}We can characterize cusp forms within
the space of bounded invariant eigenfunctions with eigenvalue
$\rho^2-\nu^2$ by the condition
\be \label{bco} f(g_\cu \am(y) \, x) = \oh\bigl( y^{b}\bigr) \qquad
\text{as }y\uparrow \infty \text{ for all cusps }\cu\ee
for some $b< \rho-|\re\nu|$.\end{lem}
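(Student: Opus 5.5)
One direction is immediate. If $f$ is a cusp form, then by the proposition of Harish-Chandra it has quick decay at every cusp. In particular \eqref{bco} holds for every $b$, for instance $b=0<\rho-|\re\nu|$; this uses $|\re\nu|<\rho$, and in the boundary case $|\re\nu|=\rho$ I would take $b$ negative. Quick decay together with continuity and cofiniteness also makes $f$ bounded. So the work is in the converse: a bounded invariant eigenfunction $f$ satisfying \eqref{bco} for some $b<\rho-|\re\nu|$ has $f_\cu=0$ for every cusp $\cu$.

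For the converse I first note that boundedness already gives polynomial growth, so $f\in\A(\Gm;\nu)$ and the constant term $f_\cu$ in \eqref{fcu} is defined. Fix a cusp $\cu$ and set $h(x)=f(g_\cu x)$. The function $h$ is invariant under the lattice $\Ld_\cu\subset N$, and its constant term $h_0(\am(y)\orgn)=\int_{\Ld_\cu\backslash N} h(n\am(y)\orgn)\,dn$ is an $N$-invariant function on $S$. Since averaging over $N$ commutes with $\Dt$ (by $G$-invariance of $\Dt$), $h_0$ is still an eigenfunction with eigenvalue $\rho^2-\nu^2$. In normalized horospherical coordinates the $x$-derivatives of an $N$-invariant function vanish, so the operators $L_1,L_2$ drop out of the formula for $\Dt$ in \S\ref{sect2-Lapl-horco}. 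What remains is the Euler equation
\[
-y^2h_0''+(2\rho-1)y\,h_0'=(\rho^2-\nu^2)h_0,
\]
whose indicial roots are $\rho\pm\nu$. Hence $h_0(y)=a\,y^{\rho+\nu}+c\,y^{\rho-\nu}$ for $\nu\neq0$, and $h_0(y)=y^\rho(a+c\log y)$ for $\nu=0$.

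It then remains to compare growth rates. Integrating \eqref{bco} over the compact set $\Ld_\cu\backslash N$ gives $h_0(y)=\oh(y^b)$ as $y\uparrow\infty$, where I take $x$ ranging over the compact image of a fundamental domain of $N$ to justify the uniformity. The two exponents satisfy $\re(\rho\pm\nu)\ge\rho-|\re\nu|>b$. If $\re\nu\neq0$, the exponents have distinct real parts, and I isolate the larger term first to get $a=c=0$. If $\re\nu=0$ with $\nu\neq0$, the sum $a\,y^{\nu}+c\,y^{-\nu}$ is almost periodic in $\log y$. It is $\oh(y^{b-\rho})$ and hence tends to zero, which forces $a=c=0$, since the only almost periodic function tending to $0$ is $0$. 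The case $\nu=0$ with its logarithm is handled the same way. Thus $f_\cu=0$ for every cusp, so $f$ is a cusp form.

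The main obstacle I expect is the careful justification that $h_0$ is a genuine solution of the ODE, in particular commuting the $N$-average with $\Dt$ when $N$ is non-abelian and $L_1$ involves $x$-dependent coefficients. To handle this I would note that $\Dt$ commutes with $L(n)$ for each $n\in N$, so $\Dt$ commutes with the average over $\Ld_\cu\backslash N$, and the average is right invariant under $N$.
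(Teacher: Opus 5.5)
Your proof is correct and follows the same route as the paper. The constant term at each cusp is a linear combination of $y^{\rho+\nu}$ and $y^{\rho-\nu}$ (or of $y^\rho$ and $y^\rho\log y$ when $\nu=0$), and the bound $\oh(y^b)$ with $b<\rho-|\re\nu|$ forces both coefficients to vanish. You also supply details the paper leaves implicit: the Euler equation derived from the horospherical form of $\Dt$, the uniformity needed to integrate the bound over $\Ld_\cu\backslash N$, the separate treatment of $\re\nu=0$, and the easy forward direction via quick decay.
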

\begin{proof}Since $f$ is bounded, it is an automorphic form. The
functions $f_\cu$ inherit the boundedness from the function $f$.
Moreover, $f_\cu (g_\cu n \am(y)\,\orgn)$ does not depend on $n\in N$.
(See the integral in \eqref{fcu} and use that $f$ is left-invariant
under $\Gm\cap g_\cu N g_\cu^{-1}$.)

For any $f\in \A(\Gm;\nu)$ the functions $f_\cu$ are linear combinations
of $g_\cu n \am(t) \, \orgn\mapsto t^{\rho\pm \nu}$, or of $t^{\rho}$
and $t^\rho \log t$ if $\nu=0$. Condition \eqref{bco} implies that all
$f_\cu$ have to vanish, and $f$ is a cusp form.
\end{proof}

\subsection{Proofs}\label{sect3-prfs}
\begin{proof}[Proof of Lemma \ref{lem-Gmcu}.]We have to show that if
$\Gm$ is torsion-free, then
$\Gmm \cu = \bigl\{ \gm\in \Gm\;:\; \gm \,\cu=\cu\}$ is equal to
$\Gm \cap g_\cu N g_{\cu}^{-1}$.

Since $NMA$ is the normalizer of $N$ in $G$, any element of $\Gmm \cu$
is of the form $g_\cu nma g_\cu^{-1}$, with $n\in N$, $a\in A$ and
$m\in M$. Since
\[ \am(t)^{-1} \nm(x^{(1)}, x^{(2)}) \am(t)= \nm(t^{-1} x^{(1)}, t^{-2}
x^{(2)}) \,,\]
the discreteness of $N_\cu$ implies that $t=1$, and $\gm = n m$. Since
$M$ is compact, the group $\Gm\cap g_\cu N g_\cu^{-1}$ has finite index
in $\Gm\cap g_\cu NM g_\cu^{-1}$. So there exists $k\geq 1$ such that
$g_\cu (nm)^k g_\cu^{-1} \in  g_\cu N g_\cu^{-1}$, and then $m^k\in N$
since $M$ normalizes $N$. However, $M\cap N = \{e\}$, and $k=1$ since
$\Gm$ is torsion-free. So $\gm \in \Gm\cap g_\cu N g_\cu^{-1} $.
\end{proof}

\begin{proof}[Proof of Lemma~\ref{lem-normgc}] For each $\Gm$-orbit in
$\Cu$ we choose a representative $\du$ and choose some $g_\du\in G$
such that $N_\du= g_\du N g_\du^{-1}$. If $\cu = \gm \du$ with
$\gm\in \Gm$, then
$N_\cu = \gm g_\du N  g_\du^{-1}  \gm^{-1}= \gm N_\du \gm^{-1}$, and
also $\Gmm\cu = \gm \Gmm\du \gm^{-1}$. So we can take
$g_\cu = \gm g_\du$. Then
\be \label{Ldrel}
\Ld_\cu = (\gm g_\du)^{-1} \Gmm \cu \gm g_\du = g_\du^{-1}\gm^{-1} \gm
\Gmm \du \gm^{-1} \gm g_\du=g_\du^{-1}\Gmm\du g_\du = \Ld_\du\,. \ee

The choice of $\gm$ has the freedom $\gm\mapsto \gm \dt$ with
$\dt \in \Gm_\du$, and has no effect on the relation~\eqref{Ldrel}.
\end{proof}

\begin{proof}[Completion of the proof of Proposition
\ref{prop-cusp-sp}.] After the statement of the proposition, we already
mentioned that \cite[Theorem 2, p~9]{HCh68} implies the discreteness of
the set of $\nu\in \CC$ such that $\A^0(\Gm;\nu)\neq \{0\}$.

For functions $h_1,h_2\in C_c^\infty(\Gm\backslash S)$ we have
\be\label{Dtgrd} \int_{\Gm\backslash S} (\Dt h_1)\,\bar h_2\, \z_S =
\int_{\Gm\backslash S} \sum_{i,j} \bigl( \rmetric^{-1}\bigr)_{ j,i }\,
(\partial_{x_j} h_1)\, (\overline{\partial_{x_i} h_2})\, \z_S\,.\ee
To obtain this, we work with a fundamental domain $\fd$ of $\Gm$ in~$S$,
for instance obtained by the Dirichlet method that we will also use in
\S\ref{sect4-stdtess}. Obtaining \eqref{Dtgrd} by partial integration
over $\fd$, we use that the boundary term is zero, since the
contribution of $\Gm$-equivalent boundary components cancel each other.

We apply this with $h_1=h_2=\ch f$ for a non-zero cusp form
$f\in \A^0(\Gm;\nu)$, with a cut-off function $\ch$ that equals $1$ on
most of $\fd$, and behaves near each cusp $\cu$ in the closure of $\fd$
in $S \cup (\partial S)$ like
$\ch\bigl( g_\cu n \am(t) \,\orgn) = \ph_\cu(t)$, where $\ph_\cu(t)$
goes down from $1$ to $0$ in some interval $[T,T+1]$. The quick decay
of cusp forms implies that $\ch f$ approaches $f$ and $\Dt(\ch f) $
approaches $\Dt f$ in $L^2(\Gm\backslash S)$ as $T\uparrow\infty$. So
\be\label{fappr}\int_\fd \bigl(\Dt (\ch f) \bigr) \,\overline{ \ch f}\,
\z_S\ee
approaches $(\rho^2-\nu^2)\,\int_S |f|^2\, \z_s$. On the other hand, the
quantity in~\eqref{fappr} is equal to
\[ \int_{\fd} \sum_{ j,i} \bigl( \rmetric^{-1}\bigr)_{i,j}
(\partial_{x_j}(\ch f)) \,(\overline{\partial_{x_i}(\ch f) })\z_S\,,\]
and is non-negative since $\rmetric^{-1}$ is positive definite.
\end{proof}
With more work we can show that $\rho^2-\nu^2=0$ can happen only for
constant functions, which are not cusp forms.

%% file: ccro2-4-cohom.tex

\bigskip

\def\flnm{ccro2-4-cohom}

\section{Cohomology}\label{sect4}

In this paper we relate spaces of cusp forms to parabolic cohomology
spaces $H^{\ds-1}_\pb(\Gm;V)$ for $\Gm$-modules $V$ that are
$\CC$-linear spaces. That implies that $H^j_\pb(\Gm;V)$ is a vector
space over $\CC$ for all $j$. We prefer to speak of
\il{cohsp}{cohomology space}`cohomology spaces' instead of the more
usual name `cohomology groups'.

We use methods that worked well in the case of $\ds=\dim S=2$, and we
follow as far as possible the approach in \cite[Sections 6 and
11]{BLZ15} and \cite[Chap.~9]{tahA}.

There are many ways to describe parabolic cohomology spaces and the more
usual cohomology spaces $H^j(\Gm;V)$. The description that we use here
has a geometrical flavor. It is based on tessellations of the symmetric
space $S$.

The first three subsections give a discussion of the concepts and
results that we will need in this paper, skipping some proofs. The
subsequent subsections give more details and provide some of the proofs
skipped earlier.

\subsection{Horoballs and horospheres}\label{sect4-gorbs}
We start with some geometrical definitions.

\begin{defn}\label{def-hor}Let $Y>0$. The
\il{hors}{horosphere}\emph{horosphere} $H_\inft(Y)$ at $\inft$ with
\emph{height}~$Y$, and the corresponding
\il{horb}{horoball}\emph{horoball} $B_\inft(Y)$ are the following
subsets of the symmetric space~$S$:
\bad H_\inft(Y) &= \bigl\{ n \am(Y) \, \orgn\;:\; n \in N\bigr\}\,,\\
B_\inft(Y) &= \bigl\{ n \am(y) \, \orgn\;:\; n \in N,\; y\geq
Y\bigr\}\,.
\ead
The corresponding subsets at the cusp $\cu$ of $\Gm$
are\il{HcuY}{$H_\cu(Y)$} \il{BcuY}{$B_\cu(Y)$}
\be H_\cu(Y) = g_\cu\, H_\inft(Y) \,,
\qquad B_\cu(Y) = g_\cu \, B_\inft(Y)\,. \ee
\end{defn}
In this definition it is important to use a normalization in
Lemma~\ref{lem-normgc} of the elements $g_\cu\in G$.

\begin{defn}The \il{extsp}{extended symmetric space}\emph{extended
symmetric space} of $\Gm$ is\il{Sast}{$\Sast$}
\be \Sast = \Sast_{\!\Gm} \;:=\; S \cup \Cu \subset S \cup \partial
S\,.\ee
For each cusp $\cu$ the \il{ehb}{horoball, extended}extended horoballs
\il{BacuY}{$B^\ast_\cu(Y)$}$B^\ast_\cu(Y)
:= B_\cu(Y)\cup\{\cu\}$ with $Y >0$ form a neighborhood basis of $\cu$
in $\Sast$.
\end{defn}
\begin{figure}
\begin{center}\includegraphics[width=9cm]{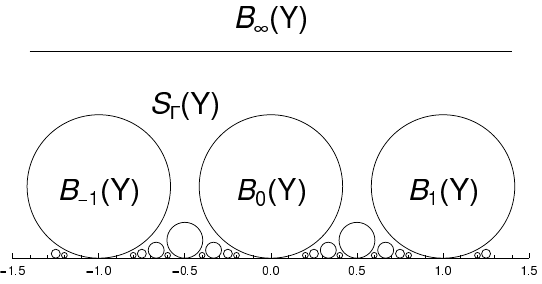}
\end{center}
\caption{Some horoballs for $\SL_2(\ZZ)\backslash \uhp$, with $Y=1.2$,
and $\uhp $ the complex upper half-plane.\\
Each horoball intersects $\partial S$ only in one cusp. Each pair of
points of $\partial S \setminus \Cu$ (i.e., a pair of irrational
numbers)
can be joined by a continuous path in $\SY$. }
\end{figure}

In this context, we say that the height \il{asl}{suitably large}$Y$ is
\emph{suitably large} if all horoballs $B_\cu(Y)$ with $\cu\in \Cu$ are
pairwise disjoint. The constructions in this section are based on the
choice of a suitably large number \il{Y}{$Y$}$Y$. We assume that
$Y\geq 4$, and we keep it fixed from here on.

\begin{defn}\label{def-SY}
For the fixed suitably large number $Y$ we define
\il{SY}{$\SY$}$\SY\subset S$ as the closure in $S$ of
$ S \setminus \bigcup_{\cu\in \Cu} B_\cu(Y)$.
\end{defn}
The set $\SY$ intersects each horoball $B_\cu(Y)$ in the horosphere
$H_\cu(Y)$. We obtain the decompositions
\badl{Sdcp} S = \SY \cup \bigcup_{\cu\in \Cu} B_\cu(Y)\,,
\\
\Sast = \SY \cup \bigcup_{\cu\in \Cu} B_\cu^\ast(Y)
\,.
\eadl

The quotient $\Gm\backslash S$ is a manifold, since we suppose $\Gm$ to
be torsion-free. The quotient $\Gm\backslash \SY$ is a manifold with
boundary. The quotient $\Gm\backslash \Sast$ is a compactification of
$\Gm\backslash S$, obtained by adding one point for each $\Gm$-orbit of
cusps.

The projection \il{pr}{$\pr\colon\Sast \rightarrow \Gm\backslash \Sast$}
$\pr\colon\Sast \rightarrow \Gm\backslash \Sast$ maps $\SY$ to a
compact part of $\Gm\backslash S$. The image of $B^\ast_\cu(Y) $ is a
closed neighborhood of the image of $\cu$ in $\Gm\backslash \Sast$. The
sets in the decomposition
\be \Gm\backslash\Sast = \pr\bigl( \SY\bigr) \cup\bigcup_{\cu \in
\Gm\backslash \Cu} \pr \bigl( B_\cu^\ast(Y) \bigr)
\ee
overlap only in their common boundary components.

\subsection{Tessellations}\label{sect4-tess}We will use a
$\Gm$-invariant tessellation $\tess$ of $\Sast$. The idea of a
tessellation is explained in \cite[\S6.2, \S11]{BLZ15}. Here we give an
informal discussion of the concept of a tessellation of $\Sast$.

We start with two tessellations in Figure~\ref{fig-tess2} of the upper
half-plane $\uhp$ (case $\ds=2$), both for the principal congruence
subgroup $\Gm(2)$ generated by $\matc 1201$ and $\matc 1021$. This is
the largest subgroup of $\Gm(1)=\PSL_2(\ZZ)$ that has no torsion.
\begin{figure}[ht]
\begin{center}\includegraphics[width=8cm]{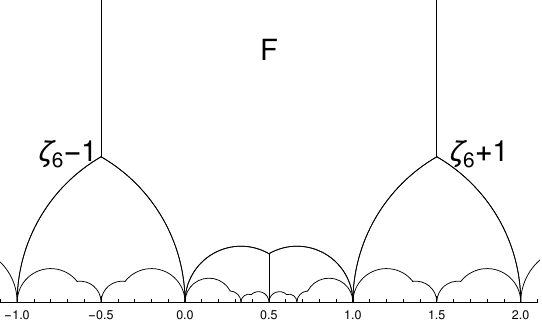}
\\
\includegraphics[width=8cm]{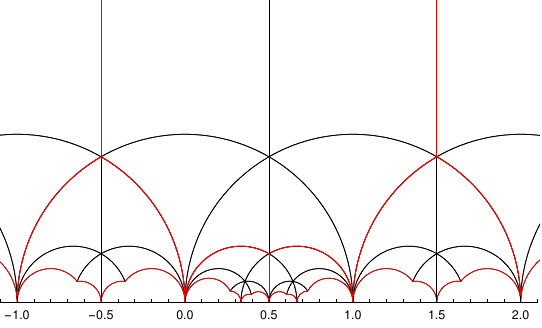}
\end{center}
\caption{Two $\Gm(2)$-invariant tessellations of
$\uhp$.}\label{fig-tess2}
\end{figure}
The upper tessellation $\tess_u$ consists of all $\Gm(2)$-translates of
a fundamental domain for $\Gm(2)$ in $\uhp$. The lower tessellation is
the refinement $\tess_l$, where each translate is divided into six
translates of the standard fundamental domain for $\Gm(1)$ in~$\uhp$.

A tessellation \il{tes2}{tessellation}\il{tess}{$\tess$}$\tess$ of
$\Sast$ provides us with collections
\il{Xtessi}{$\X^\tess_i$}$\X^\tess_i$ of \il{cell}{$i$-cell}$i$-cells,
for $1\leq i \leq \ds$. The following conditions should be satisfied:
\begin{enumerate}[label=$\mathrm{(\alph*)}$, ref=$\mathrm{\alph*}$]
\item {\it An $i$-cell, $0\leq i \leq \ds$, is a closed subset of
$\Sast$ that is homeomorphic to the closed unit disk in $\RR^i$. A
$0$-cell is a singleton in $\Sast$. }

In the upper tessellation $\tess_u$ in Figure~\ref{fig-tess2} the set
$\X^{\tess_u}_0$ consists of the cusps $\infty$, $0$ and $1$ and all
their $\Gm(2)$-translates, and all $\Gm(2)$-translates of the point
$\z_6=\frac12+\frac i2 \sqrt 3$. The set $\X^{\tess_u}_1$ consists of
the edges $e_{\z_6-1,\infty}$, $e_{0,\z_6}$, $e_{1, \z_6+1}$ and all
their $\Gm(2)$-translates. The set $\X^{\tess_u}_2$ consists of all
$\Gm(2)$-translates of the fundamental domain~$F$.

\item {\it For $1\leq i \leq \ds$ the boundary $\partial X$ of an
$i$-cell $X$ is the union
\[ \bigcup_{Y \in B(X)} Y\]
where $B(X) \subset \X^\tess_{i-1}$ is a finite collection of
$(i-1)$-cells.}

In $\tess_u$ the $2$-cell $F$ containing the cusps $\infty$ and $0$ has
\[ B(F) = \bigl\{ e_{\z_6-1,\infty} , t \, e_{\z_6-1,\infty},
e_{0,\z_6-1}, t' e_{0,\z_6-1}, e_{\z_6+1,1}, t't^{-1}\, e_{\z_6+1,1}
\bigr\}\,,\]
where $t= \matc 1201$, $t'=\matc1021$. Furthermore,
$B(e_{\z_6-1,\infty} ) = \{\z_6-1,\infty\}$.

\item {\it $\Sast = \bigcup_{X\in \X^\tess_\ds } X$.}
\item {\it For $1\leq i \leq \ds$, the pairwise intersection of
$i$-cells is equal to the intersection of their boundaries.}

\item {\it For $0\leq i \leq \ds-1$, each $i$-cell is a boundary
component of at least one $(i+1)$-cell.}

\item \label{gma}{\it For each $\gm \in \Gm$ and each $i$-cell~$X$, the
set $\gm \, X= \bigl\{\gm\, x\;:\; x\in X\bigr\}$ is an $i$-cell as
well. Furthermore $Y \in B(\gm X)$ if and only if $\gm^{-1}Y\in B(X)$.
This action of $\Gm$ on $\X^\tess_i$, $0 \leq i \leq \ds$, has finitely
many orbits.}

In the description of $\X^{\tess_u}_i$, $0\leq i \leq 2$ in (a) we have
given generating elements for the $\Gm(2)$-orbits in $\X^{\tess_u}_i$.
Condition \eqref{gma} requires that such finite choices can be made in
each tessellation. We can divide up $F$ into finitely many subsets
(intersecting only in their boundaries), obtaining a refinement of the
tessellation. A division of $F$ into infinitely many subcells is not
allowed.
\end{enumerate}

\rmrk{Orientation} We have to take into account the orientation of
$i$-cells. Each $i$-cell $X\in \X_i$ can be given two orientations.
(See \ref{sect4-or} for a further discussion.) We choose one of these
for each $X\in \X_i$, $0\leq i \leq \ds$, and denote by $-X$ the same
$i$-cell with the opposite orientation. For $i=0$ and $i=\ds$ there are
obvious choices of the orientation, for the other dimensions the choice
seems arbitrary.

For our purpose it is important that the chosen orientation is invariant
under the action of $\Gm$. If there were $\gm \in \Gm\setminus \{e\}$
such that $\gm X = X$, then it might happen that $\gm X$ and $X$ have
different orientations. Such an element $\gm$ would have finite order.
Since $\Gm$ is torsion-free, we see that $\gm=e$.

We proceed with a $\Gm$-invariant tessellations $\tess$ for which we
have chosen a $\Gm$-invariant orientation. For $0\leq i \leq \ds$, the
elements $X\in \X_i^\tess$ form an algebraic basis of the vector space
$\CC[\X_i^\tess]$. The basis elements $X$ have positive orientation,
and $-X$ is the same $i$-cell as $X$, but with negative orientation.
These spaces are $\CC[\Gm]$-modules for the action
$\gm \colon X \mapsto \gm X$.

\rmrk{Separation between \intitle{\SY} and cuspidal horoballs}The
tessellations that are handy for our purpose have the special property
that the boundary of $\SY$ in $S$ is covered by $(\ds-1)$-cells. An
example is sketched in Figure~\ref{fig-tessY}.
\begin{figure}[ht]\begin{center}
\includegraphics[width=9cm]{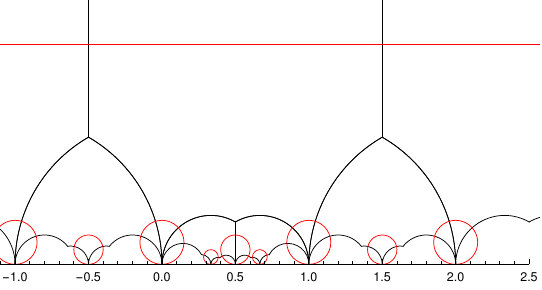}
\end{center}
\caption{A \intitle{\Gm(2)}-invariant tessellation of $\uhp$ in which
$\partial \SY$ is covered by $1$-cells.
(For clarity different values of $Y$ are used for the
horospheres.)}\label{fig-tessY}
\end{figure}

The procedure followed in this example works generally: Take a
fundamental domain $F$ that has only one cusp in each $\Gm$-orbit of
cusps and divide it up into $F\cap \SY$ and $F\cap B_\cu(Y)$ for all
cusps that occur in the closure of $F$ in~$\Sast$. This leads to a
tessellation that is a refinement of the tessellation obtained from all
$\Gm$-translates of~$F$.

For our purpose it does not matter if we move the intersections
$F\cap B_\cu(Y)$ by an element of $N_\cu$. See Figure~\ref{fig-plDYa}.
Of course, the boundary structure of the lower-dimensional cells will
become more complicated.
\begin{figure}\begin{center}
\includegraphics[width=8cm]{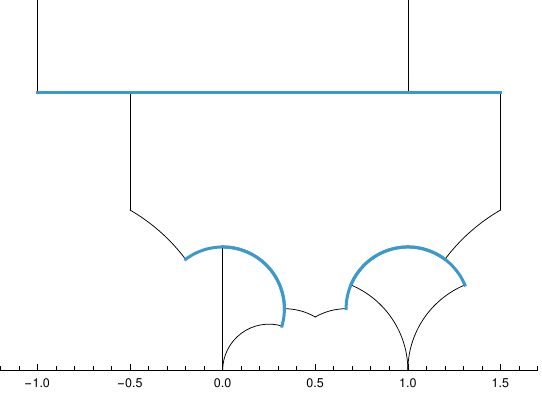}
\end{center}
\caption{The union of the four $2$-cells form a fundamental domain for
$\Gm(2)$ in $\uhp$. The $\Gm(2)$-translates of these cells generate a
tessellation for which $\partial S_{\!\Gm(2)}(Y)$ is covered by
$1$-cells. }\label{fig-plDYa}
\end{figure}

By a \il{sttess}{standard tessellation}\emph{standard tessellation}
\il{tessst}{$\tess_\st$}$\tess_\st$ we indicate a tessellation of this
type: The set $\X^{\tess_\st}_\ds$ consists of the (free) $\Gm$-orbits
of a cell \il{fd0a}{$\fd_0$}$\fd_0$ that is a fundamental domain for
$\Gm$ in~$\SY$, and the (free) $\Gm$-orbits of
\il{fdcu}{$\fd_\cu$}$\fd_\cu$ for a set of representatives of
$\Gm\backslash \Cu$. Each cell $\fd_\cu$ is a fundamental domain for
$\Gm_\cu$ in $B_\cu(Y)$ that has the product structure
$  F_{\!\cu} g_\cu A^+_Y$ with a fundamental domain $ F_{\!\cu}$ for
$\Gm_\cu$ in $N_\cu$.

For most of our discussions a standard tessellation will suffice.
Sometimes we need a refinement of a standard tessellation.

\subsection{Parabolic cohomology}\label{sec:parabcohom}
We describe parabolic cohomology spaces based on a tessellation, and
discuss their geometric relevance.
\smallskip

Let $\tess$ denote a standard tessellation, or a refinement of a
standard tessellation. We define the $\Gm$-modules
$F^\tess_i = \CC[\X^\tess_i]$ of \il{chn}{chain}$i$-chains for
$0\leq i \leq \ds$, and take $F^\tess_i=\{0\}$ for $i>\ds$.

For each $X\in \X^\tess_i$, $i\geq 1$, we define $\partial_i X$ as the
sum $\sum_{Y\in B(X)} \e_Y Y$ with $\e_Y\in \{1,-1\}$ such that the
orientation of $\e_Y Y$ is the orientation that the boundary component
$Y$ inherits from the orientation of $X$ (as will be discussed in
\S\ref{sect4-or}). The map $X \mapsto \partial_i X$ induces the
\il{bdop}{boundary operator}boundary operator $\partial_i$, which is a
$\CC[\Gm]$-equivariant map
\il{parti}{$\partial_i$}$\partial_i : F^\tess_i \rightarrow
F^\tess_{i-1}$. In this way we arrive at the resolution
\il{Ftb}{$F^\tess_\bullet$}$F^\tess_\bullet$
\be 0 \rightarrow F^\tess_\ds \stackrel{\partial_\ds}\longrightarrow
F^\tess_{\ds-1} \stackrel {\partial_{\ds-1}}\longrightarrow \cdots
F^\tess_1 \stackrel{\partial_1}\longrightarrow F^\tess_0 \stackrel
\e\longrightarrow \CC\,.\ee
The\il{augm}{augmentation} augmentation \il{e1}{$\e$ augmentation}$\e$
is determined by $\e X=1$ for all $X\in \X^\tess_0$. Giving $\CC$ the
structure of the trivial $\Gm$-module, we get a
\il{resol}{resolution}resolution of $\CC$ by $\CC[\Gm]$-modules.
(See for instance \cite[p~10]{Brown}.)
The resolution is exact: one can check that at each step
$\partial_{i}\partial
_{i+1} = 0$, and, moreover, the kernel of $\partial_{i}$ is equal to the
image of $\partial_{i+1}$, for $i < \ds$. (Take $\e$ as replacement for
$\partial_0$.) The resolution $F^\tess_\bullet $ is not a free
resolution: each cusp $\cu \in \X^\tess_0$ is fixed by the infinite
subgroup $\Gmm\cu \subset \Gm$.

The spaces of \il{coch}{cochain}cochains
\il{Cch}{$C^i(F^\tess_\bullet;\cdot)$}$C^i\bigl(F^\tess_\bullet;
V\bigr)$, $0 \leq i \leq \ds$, for a $\CC[\Gm]$-module $V$ consist of
the $\CC[\Gm]$-linear maps $F^\tess_i \rightarrow V$. An $i$-cochain
$c\in C^i\bigl(F^\tess_\bullet; V\bigr)$ is completely determined by
the $\Gm$-equivariant map $\X^\tess_i \rightarrow V$ given by
$X\mapsto c(X)$.

The boundary maps $\partial_{i+1}$ induce $\CC[\Gm]$-linear
\il{cbm}{coboundary map}coboundary maps \il{di}{$d^i$}$d^i
:C^i(F^\tess_\bullet;V) \rightarrow C^{i+1}(F^\tess_\bullet ;V)$
induced by $(d^i c)(X) = c\bigl(\partial_{i+1}
X)$ for $X\in \X^\tess_{i+1}$.

The spaces
\il{Coc}{$Z^i(F^\tess_\bullet;\cdot)$}$Z^i(F^\tess_\bullet;V)$ of
\il{coc}{cocycle}cocycles are defined as $\ker(d^i)$, and the subspaces
\il{Ccb}{$B^i(F^\tess_\bullet;\cdot)$}$B^i(F^\tess_\bullet;V)
\subset Z^i(F^\tess_\bullet;V)$ of \il{cob}{coboundaries}coboundaries
are defined as $\im(d^{i-1})$. Since there is no $C^{-1}$ we have
$B^0(F^\tess_\bullet;V)=\{0\}$.

The cohomology spaces
\il{Coh}{$H^i(F^\tess_\bullet;\cdot$)}$H^i(F^\tess_\bullet;V)$ are the
quotients $Z^i\bigl(
F^\tess_\bullet;V\bigr)/B^i\bigl(F^\tess_\bullet;V\bigr)$. For standard
tessellations $\tess_\st$ and its refinements, the cohomology spaces
are naturally isomorphic, and we can define the \il{pcs}{parabolic
cohomology space}parabolic cohomology spaces $H^i_\pb(\Gm;V)$ as
$H^i(F^{\tess_\st}_\bullet;V)$. In Subsection~\ref{sect4-tpc} we
will give a more detailed discussion.

\rmrk{Standard cohomology spaces} For each $\CC[\Gm]$-module $C$ the
space of \il{inv}{invariant}\emph{invariants} is
\il{V^Gm}{$V^\Gm$}$V^\Gm=\bigl\{
v\in V\;:\; \gm v=v \text{ for all }\gm\in \Gm\bigr\}$ , and the space
$V_\Gm$ of \il{coinv}{coinvariant}\emph{coinvariants}, is the space $V$
modulo the subspace generated by the elements $\gm\, v-v$ with
$\gm\in \Gm$ and $v\in V$.\il{V_Gm}{$V_\Gm = V \bmod
(\Gm-1)V$}

By defining \il{tessY}{$\X^{\tess,Y}_i$}$\X^{\tess,Y}_i =
\bigl\{ X\in \X^\tess_i\;:\; X\subset \SY\bigr\}$ we obtain a
tessellation of $\SY$. This leads to a resolution
\il{FtessY}{$F^{\tess,Y}_\bullet$}$F^{\tess,Y}_\bullet$ in an analogous
way. This is a free resolution, and the cohomology spaces
$H^i(F^{\tess,Y}_\bullet;V)$ are isomorphic to the usual cohomology
spaces $H^i(\Gm;V)$ for all suitably large $Y$. The restrictions of
cochains $c\in C^i(F^\tess_\bullet,V)$ to $F_i^{\tess,Y}$ induce linear
maps $H^i_\pb(\Gm;V) \rightarrow H^i(\Gm;V)$.
\begin{prop}\label{prop-H0d}The spaces $H^i_\pb(\Gm;V)$ and $H^i(\Gm;V)$
are zero for $i>\ds$, and
\badl{Hexpl} H^0\bigl( \Gm;V) &\cong H^0_\pb(\Gm;V\bigr) \cong
V^\Gm\,,\\
H^\ds_\pb\bigl( \Gm;V) &\cong V_\Gm \,,\qquad H^\ds\bigl( \Gm;V\bigr) =
\{0\}\,.
\eadl
\end{prop}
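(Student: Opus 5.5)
We work directly with the tessellation resolutions $F^{\tess_\st}_\bullet$ (for $H^\bullet_\pb$) and $F^{\tess,Y}_\bullet$ (for $H^\bullet$). The vanishing for $i>\ds$ is immediate, since both complexes have $F_i=\{0\}$ for $i>\ds$. All remaining statements are read off the two ends of the complexes, using that $\Gm$ acts freely on $\X_i$ for $i\geq 1$ and on the non-cusp $0$-cells.

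\emph{Degree $0$.} A $0$-cochain $c$ is a $\Gm$-equivariant map $\X_0\to V$. It is a cocycle iff $c(\partial_1 X)=0$ for every $1$-cell $X$, i.e.\ $c$ takes equal values at the two endpoints of $X$. The $1$-skeleton of $\Sast$ (respectively of $\SY$) is connected; this follows from the exactness of the resolution at $F_0$, which says $\ker\e=\im\partial_1$. Hence a cocycle is constant, say equal to $v$. Equivariance gives $\gm v=v$ for all $\gm$, so $v\in V^\Gm$. Conversely, each $v\in V^\Gm$ gives the constant cocycle. Since $B^0=\{0\}$, we get $H^0\cong H^0_\pb\cong V^\Gm$, and the restriction map corresponds to the identity.

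\emph{Top degree.} Here $Z^\ds=C^\ds$. For the standard tessellation, $\X_\ds$ consists of the free orbits of $\fd_0$ and of $\fd_\cu$ for representatives $\cu$ of $\Gm\backslash\Cu$. So $C^\ds\cong V^{1+\#(\Gm\backslash\Cu)}$ via $c\mapsto(c(\fd_0),c(\fd_\cu))$. We then compute $B^\ds=d^{\ds-1}C^{\ds-1}$ orbit by orbit on $\X_{\ds-1}$.

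For an interior $(\ds-1)$-cell $X$, it lies in exactly two $\ds$-cells $\gm_1 \fd$ and $\gm_2\fd'$ with opposite induced orientations. A cochain $b$ supported on the orbit of $X$, with $b(X)=w$, therefore changes the vector by $\pm(\gm_1^{-1}w-\gm_2^{-1}w)$ in the $\fd$-slot and the $\fd'$-slot respectively. In other words, the cells are identified with sign, up to terms in $(\Gm-1)V$.

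The cells on the horospheres $H_\cu(Y)$ separate $\fd_0$-translates from $\fd_\cu$-translates. Through them one can move the $\fd_\cu$-component entirely onto the $\fd_0$-component. The cells inside $B_\cu(Y)$ that run to $\cu$ pair $\fd_\cu$ with $\Gmm\cu$-translates of itself, and produce $(\Gmm\cu-1)V$ in that slot. The cells inside $\SY$ pair $\fd_0$ with translates of itself, across the face-pairings, which generate $\Gm$. By connectivity of the dual graph, the quotient $C^\ds/B^\ds$ collapses to $V$ modulo $(\gm-1)V$ for $\gm$ ranging over a generating set, i.e.\ to $V_\Gm$. The isomorphism is $c\mapsto\sum_{X}\pm c(X)$, summed over one representative per orbit, taken modulo $(\Gm-1)V$. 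This is the dual of $H_0$ of the chain complex: it is well defined because $\partial_\ds$ of a boundary cell contributes $\gm^{-1}w-w$, and it is surjective because $V\to V_\Gm$ is surjective.

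For $H^\ds(\Gm;V)$, computed with $F^{\tess,Y}_\bullet$, the only $\ds$-cells are the translates of $\fd_0$. Now the boundary cells on $\partial\SY$ belong to only one $\ds$-cell of $\SY$. A cochain supported on the orbit of such a cell, with value $w$, changes $c(\fd_0)$ by $\pm\gm^{-1}w$ with $w$ arbitrary. Hence $d^{\ds-1}$ is surjective and $H^\ds=\{0\}$.

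The main obstacle is making the top-degree computation rigorous without a full case analysis of the cell structure. I would do it via Poincar\'e--Lefschetz-type duality: $C^\ds(F_\bullet;V)\cong\mathrm{Hom}_\Gm(F_\ds,V)$, and exactness of $F_{\ds}\to F_{\ds-1}$ at the top together with freeness of $F_\ds$ reduces the claim to $\CC\otimes_\Gm$-type coinvariants of the cokernel of $\partial_\ds$. Alternatively, one can argue with the connected dual graph of $\ds$-cells described above.
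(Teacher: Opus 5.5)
Your proposal is correct and follows essentially the paper's route. Degree $0$ comes from connectivity of the $1$-skeleton, which you obtain from exactness at $F_0$. $H^\ds(\Gm;V)=\{0\}$ follows by prescribing an arbitrary value on the orbit of a $(\ds-1)$-cell of $\partial\SY$. $H^\ds_\pb(\Gm;V)\cong V_\Gm$ follows by first shifting the values on the $\fd_\cu$ onto $\fd_0$ through cells in the horospheres $H_\cu(Y)$ and then using the face-pairings of $\fd_0$. Beyond this, your explicit map $c\mapsto c(\fd_0)+\sum_\cu c(\fd_\cu) \bmod (\Gm-1)V$, with its well-definedness check and the observation that the face-pairings generate $\Gm$, spells out the well-definedness and injectivity that the paper's proof leaves implicit.
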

We will give a proof in \S\ref{prf-H0d}.

\rmrk{Relation to sheaf cohomology}The geometric relevance of parabolic
cohomology spaces $H^i_\pb(\Gm;V)$ is their interpretation as sheaf
cohomology spaces on the quotient $\Gm\backslash\Sast$. The global
functor $\sh \mapsto \sh(\Gm\backslash \Sast)$ has derived functors
$\sh\mapsto H^i\bigl( \Gm\backslash \Sast;\sh)$. These cohomology
groups can be computed as \u{C}ech cohomology. See, e.g.\cite[Chap III,
\S2, \S4]{Harts}.

If $V$ is a $\CC[\Gm]$-module, then $V\times\Sast$ can be considered as
a constant sheaf on $\Sast$. The group $\Gm$ acts on this sheaf by
$\gm \,(v,x) = \bigl(\gm v,\gm \, x\bigr)$. The
quotient\il{QV}{$\Q[V]$}
\be \Q[V] = \Gm \bigm\backslash (V\times \Sast)
\ee
is a locally constant sheaf on $\Gm\backslash\Sast$. The parabolic
cohomology spaces can be related by cohomology spaces for sheaves on
the quotient $\Gm\backslash \Sast$, like in \cite[Proposition
11.8]{BLZ15}.
\begin{prop}\label{prop-pcshc} In this situation, for all $j\geq 0$,
\be H^j_\pb\bigl( \Gm;V\bigr) \;\cong\; H^j\bigl( \Gm\backslash \Sast;
\Q[V]\bigr)\,.\ee
\end{prop}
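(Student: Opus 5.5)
The natural route is to compute the sheaf cohomology $H^j(\Gm\backslash\Sast;\Q[V])$ by a \u{C}ech complex built from the tessellation, and to identify that complex with the cochain complex $C^\bullet(F^\tess_\bullet;V)$. This is the strategy of \cite[Proposition 11.8]{BLZ15}. First fix a standard tessellation $\tess_\st$, or a sufficiently fine refinement of it. Since $\Gm$ is torsion-free and the cohomology spaces for refinements are naturally isomorphic, the choice costs nothing.

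\emph{Step 1: local structure of $\Q[V]$.} On the image of $S$ in $\Gm\backslash\Sast$, the sheaf $\Q[V]$ is a locally constant sheaf with stalk $V$, because $\Gm$ acts freely. At the image of a cusp $\cu$, sections over a small neighbourhood $\pr\bigl(B^\ast_\cu(Y')\bigr)$ are $\Gmm\cu$-invariant locally constant functions on the connected set $B^\ast_\cu(Y')$, so the stalk there is $V^{\Gmm\cu}$. The decisive local fact is acyclicity. For a point in $S$ and a small ball, the higher cohomology vanishes because the ball is contractible and the sheaf is constant on it. At a cusp, the set $\pr\bigl(B^\ast_\cu(Y')\bigr)$ is a cone over the compact nilmanifold $\Gmm\cu\backslash H_\cu(Y')$, with the cusp as apex. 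A cone over a nice space is contractible, and a locally constant sheaf on it has higher cohomology equal to zero. The sheaf is not locally constant at the apex, so I would check this directly: any \u{C}ech cocycle supported near the apex can be extended radially along the $A$-direction. This is exactly the reason the parabolic complex uses $\CC[\Cu]$ rather than a free resolution.

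\emph{Step 2: the cover.} Use the open stars of the cells. For each cell $X$ of the (refined, barycentrically subdivided if needed) tessellation, take the open star $U_X$ in $\Sast$. Star intersections are nonempty exactly along chains of cells, and each star is contractible in the sense of Step 1. Since $\Gm$ acts freely off the cusps, these stars project injectively to $\Gm\backslash\Sast$ when they avoid cusps. Stars of cusps project onto the cone neighbourhoods of Step 1. The resulting cover $\mathfrak U$ of $\Gm\backslash\Sast$ is Leray, by Step 1, so \u{C}ech cohomology of $\mathfrak U$ computes $H^j\bigl(\Gm\backslash\Sast;\Q[V]\bigr)$ (cf.\ \cite[Chap III, \S4]{Harts}).

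\emph{Step 3: comparison of complexes.} Lifting to $\Sast$, a \u{C}ech cochain on $\mathfrak U$ becomes a $\Gm$-equivariant assignment of elements of $V$ to nondegenerate simplices of the nerve, that is, to flags of cells. This identifies the \u{C}ech complex with $\mathrm{Hom}_{\CC[\Gm]}\bigl(F^{\mathrm{sd}}_\bullet,V\bigr)$, where $F^{\mathrm{sd}}_\bullet$ is the chain complex of the barycentric subdivision. In this identification, stalks at cusps automatically give $\Gmm\cu$-invariants, which matches $\mathrm{Hom}_{\CC[\Gm]}(\CC[\Gm\cu],V)=V^{\Gmm\cu}$. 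The subdivision is again a refinement of $\tess_\st$, so its cochain cohomology is $H^j_\pb(\Gm;V)$, and Proposition~\ref{prop-pcshc} follows.

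I expect the main obstacle to be Step 1 at the cusps: showing that the cone neighbourhoods, together with all their intersections with other stars, have vanishing higher cohomology for $\Q[V]$, even though $\Gmm\cu$ is infinite and the sheaf degenerates at the apex. I would first try a direct deformation retraction along the geodesic flow $x\mapsto g_\cu\am(t)g_\cu^{-1}x$ as $t\uparrow\infty$. This commutes with $\Gmm\cu$, because $A$ normalizes $N$ in a way compatible with the lattice only up to scaling. That point needs care, and if it fails I would instead compute the cohomology via the Leray spectral sequence for the inclusion of $\Gm\backslash S$.
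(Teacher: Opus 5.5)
Your plan is the paper's own. You cover $\Gm\backslash\Sast$ by images of open stars of a sufficiently fine refinement, apply Leray's theorem, identify the \v{C}ech complex with the cochains of the refined tessellation, and finish with Proposition~\ref{prop-pcht}. Using the vertex stars of a barycentric subdivision, so that the nerve is the subdivision itself, is a clean way to make explicit the identification the paper only sketches.

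The gap is the step you single out yourself, acyclicity at a cusp, and the tool you propose there fails. The map $x\mapsto g_\cu\am(t)g_\cu^{-1}x$ does not commute with $\Gmm\cu=g_\cu\Ld_\cu g_\cu^{-1}$. By \eqref{anai}, conjugation by $\am(t)$ rescales the coordinates of $\Ld_\cu$ by $(t,t^2)$ and changes its covolume by $t^{2\rho}$. So $g_\cu\am(t)g_\cu^{-1}$ does not even normalize $\Gmm\cu$ for $t\neq1$, and the flow does not descend to $\Gm\backslash\Sast$. The flow that does descend is the vertical one, $g_\cu n\am(y)\,\orgn\mapsto g_\cu n\am(sy)\,\orgn$, which commutes with left multiplication by $N_\cu$; it is the contraction used just before Proposition~\ref{prop-acycl}. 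Even with the right flow, contractibility of the cone $U=\pi\bigl(B^\ast_\cu(Y')\setminus H_\cu(Y')\bigr)$ gives nothing by itself. The sheaf $\Q[V]$ is not locally constant at $\pi\cu$. On $U^\circ=U\setminus\{\pi\cu\}\cong(\Gmm\cu\backslash N_\cu)\times(Y',\infty)$ it is the local system $\mathcal L$ of the $\Gmm\cu$-module $V$, whose monodromy is non-trivial in general.

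What closes the gap is the Leray spectral sequence of $\iota\colon U^\circ\hookrightarrow U$, which is your fallback made local. Your stalk computation shows $\Q[V]|_U=\iota_*\mathcal L$. For $b\geq1$, $R^b\iota_*\mathcal L$ is a skyscraper at $\pi\cu$ with stalk $H^b(\Gmm\cu;V)$. This is computed on the cofinal punctured neighbourhoods $(\Gmm\cu\backslash N_\cu)\times(a,\infty)$, since the nilmanifold is aspherical with fundamental group $\Gmm\cu$. The natural map $H^b(U^\circ;\mathcal L)\to(R^b\iota_*\mathcal L)_{\pi\cu}$ is an isomorphism, because these inclusions are homotopy equivalences. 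Hence in $E_2^{a,b}=H^a(U;R^b\iota_*\mathcal L)\Rightarrow H^{a+b}(U^\circ;\mathcal L)$ only the row $b=0$ and the column $a=0$ are non-zero, and the column survives intact and exhausts the abutment. Therefore $H^a(U;\Q[V])=E_2^{a,0}=0$ for $a\geq1$.

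The paper's own device at this point is the sequence $0\to\Q[V]|_U\to V\to j_\ast(V/V^{\Gmm\cu})\to0$. It treats $\Q[V]$ on $U^\circ$ as the constant sheaf $V$, which is correct only when $\Gmm\cu$ acts trivially on $V$, so it cannot simply be copied; the argument above is what actually gives the vanishing.

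Two smaller points. Injectivity of $\pi$ on the non-cuspidal stars comes from choosing the refinement fine enough, as the paper does, not from freeness of the action. And ``intersections are non-empty exactly along chains'' holds for the vertex stars of the subdivision (vertices $=$ barycentres of cells), not for the open stars of the cells of the original tessellation.
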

In \S\ref{prf-pcshc} we will give a sketch of a proof.

\begin{remark}
\label{sect4-sexsq}
This proposition shows that the parabolic cohomology spaces are
connected to a well-known cohomology theory. It implies that there are
long exact sequences of parabolic cohomology spaces associated to short
exact sequences of $\CC[\Gm]$-modules.\end{remark}
\bigskip

This finishes a discussion of parabolic cohomology and tessellations
that seems essential for the following sections of this paper. In the
further subsections of this section we give proofs of Propositions
\ref{prop-H0d} and~\ref{prop-pcshc}, and more details on standard
tessellations.

\subsection{Further discussion of \intitle{\Gm}-invariant
tessellations}\label{sect4-tess-f}The following definition complements
the informal discussion of tessellations in \S\ref{sect4-tess}.

\begin{defn}\label{def-tess}
A \il{tessdef}{tessellation} \emph{tessellation} $\tess $ of $\Sast$
consists of $\ds+1$ collections \il{XtessiA}{$\X^\tess_i$}$\X^\tess_i$,
$0\leq i \leq \ds$, of subsets of $\Sast$ with the following
properties.
\begin{enumerate}[label=$\mathrm{(\roman*)}$, ref=$\mathrm{\roman*}$]
\item\label{tess:i} \emph{Cells. } The elements $X\in \X^\tess_i$,
$0\leq i \leq \ds$, called \il{icA}{$i$-cell}\emph{$i$-cells}, are
closed subsets of $\Sast$ that have the following properties:
\begin{enumerate}[label=$\mathrm{(\alph*)}$, ref=$\mathrm{\alph*}$]
\item\label{tess:ia}\emph{Topological structure. } There is a
homeomorphism from $X$ to the closed unit ball
$\{ x\in \RR^i\;:\; \|x\|\leq 1 \bigr\}$.

By the \il{bdic}{boundary of $i$-cell}boundary $\partial X$ of $X$,
respectively the \il{iciX}{interior of $i$-cell}interior
\il{intX}{$\mathring X$}$\mathring X$ of $X$, we understand the image
of the unit $i$-sphere in $\RR^i$, respectively the image of the open
unit ball in $\RR^i$ under this homeomorphism.

\item\label{tess:ib}\emph{Analytic structure. }Each $i$-cell $X$ can be
described as
\be X= \bigl\{ x\in \Sast\;:\; f_j(x)=0 \text{ for }1\leq j \leq
a_{X}\,,\; h_j(x) \geq 0 \text{ for }1\leq j \leq b_{X}\bigr\}\,,\ee
with a finite number of continuous functions $f_j$, with
$1\leq j \leq a_X$, and $h_j$, $1\leq j\leq b_X$, on a neighborhood of
$X$ in $\Sast$, such that the restriction of each $f_j$ to $S$ is
analytic and  the $h_j$ are analytic on $\partial X \cap S$.
\end{enumerate}

\item\label{tess:bd} \emph{Boundary decomposition. }Let
$1\leq i \leq \ds$. For each $X\in \X^\tess_i$ there is a finite subset
\il{BX}{$B(X)$}$B(X) \subset \X^\tess_{i-1}$ such that
\be \partial X = \bigcup_{Y \in B(X)} Y\,.\ee

\item\label{tess:cov} \emph{Coverings.} Let $1\leq i \leq \ds$.
\begin{enumerate}[label=$\mathrm{(\alph*)}$, ref=$\mathrm{\alph*}$]
\item\label{tess:cova} The pairwise intersections of $i$-cells are
contained in the intersection of their boundaries.
\item\label{tess:covb} We have
\be \bigcup_{X\in \X^\tess_i } X =
\begin{cases} W&\text{if }i=\ds\,,\\
\bigcup_{Y\in \X^\tess_{i+1} } \partial Y &\text{ if }1\leq i \leq
\ds-1\,.
\end{cases}
\ee
\end{enumerate}

\item\label{tess:nmtn} \emph{All cells occur in a boundary.} If
$0\leq i \leq \ds-1$, then
\be \X^\tess_i = \bigcup_{Y\in \X^\tess_{i+1}}B(Y)\,.\ee
\end{enumerate}
\end{defn}
\rmrk{Remarks}
\begin{enumerate}[label=$\mathrm{(\arabic*)}$, ref=$\mathrm{\arabic*}$]
\item The $0$-cells are points in $W$.

\item The interior and boundary of an $i$-cell $X$ with
$1\leq i\leq \ds-1$ are not the interior and boundary of $X$ as a
subset of the surrounding topological space~$W$.

\item Condition \eqref{tess:nmtn} ensures that for $j>i$ each $i$-cell
is contained in the boundary of some $j$-cell.

\end{enumerate}

\begin{defn}\label{def-itess}
A tessellation $\tess$ of $\Sast $ is \il{Gmitess}{$\Gm$-invariant
tessellation}$\Gm$-invariant if the following conditions are satisfied:
\begin{enumerate}[label=$\mathrm{(\alph*)}$, ref=$\mathrm{\alph*}$]
\item Let $0\leq i \leq \ds$. If $X\in \X^\tess_i$, then
$\gm X = \bigl\{ \gm x\;:\; x\in X$, $\gm\in \Gm\bigr\}$ is in
$\X^\tess_i$ as well.
\item For each $\gm\in \Gm$ and $X\in \X^\tess_j$ with
$1\leq i \leq \ds$
\be B(\gm \, X) = \gm\, B(X) = \bigl\{ \gm Y \;:\; Y \in B(X)\bigr\}
\,.\ee
\end{enumerate}
\end{defn}

\subsubsection{Orientation}\label{sect4-or}
There are various ways to formulate what we mean by orientation of the
cells of a tessellation $\tess$. Here we use the analytic structure of
cells.

For points $p\in \X^\tess_0$ we just view $p$ as positively oriented,
and $-p$ as negatively oriented. In the top dimension $\ds$ we consider
$X\in \X^\tess_\ds$ as positively oriented.

For $X\in \X^\tess_i$, $1\leq i \leq \ds-1$, each choice of a
homeomorphism $\ph: X \rightarrow U \subset \RR^i$ implies the choice
of coordinates $(x_1,\ldots,x_i)$ on $X$. We use differential forms
$ \om =f(x_1,\ldots,x_i) \, dx_1\wedge \cdots\wedge dx_i$ with a
real-valued analytic function~$f$.

Let $\ph_1$ be another homeomorphism as in Definition~\ref{def-tess}
\tlref{tess:i}{tess:ia}. It can be viewed as the choice of other
coordinates $y_1,\ldots,y_i$. The coordinate transformation from
$(x_1,\ldots,x_i)$ to $(y_1,\ldots,y_i)$ is described by the map
$\ps = \ph_1\circ\ph^{-1}$. We have
\[ \om \circ \ps = (f\circ \ps)\; \det J(\ps)\, dy_1\wedge \cdots
\wedge dy_i\,,\]
where $\det J(\ps)$ is the \il{Jm}{Jacobi matrix}determinant of the
Jacobian matrix $\Bigl( \frac{\partial  \ps_j }{ \partial x_n} \Bigr)$
of the vector-valued function~$\ps$. Whatever we have chosen as the
orientation of $X$, the transformation preserves the orientation if
$ \det J(\ps) >0$, and reverses it if $\det J(\ps)<0$. This is clear
for dimensions $0$ and $\ds$, but not for the intermediate dimensions.

Suppose now that for each $X\in \X^\tess_i$ we have chosen coordinates
$\bigl (x_{X,1}, \ldots, x_{X,i}\bigr)$ as corresponding to the
positive orientation. For $\gm\in \Gm$ and $1\leq i \leq \ds-1$, we
have the transformation $s(\gm,i)$ fitting in the following scheme.
\be \xymatrix{ X \ar[r]^{\ph_X} \ar[d]^\gm & U_X\ar[d]^{s(\gm,i)}&
\bigl(x_{X,1},\ldots,x_{X,i}\bigr) \text{ coord.}
\\
\gm X \ar[r]_{\ph_{\gm X}} & U_{\gm X}
& \bigl(x_{\gm X,1},\ldots,x_{\gm X,i}\bigr)
\text{ coord.} } \ee
The action of $\gm$ sending $X$ to $\gm X$ determines a transformation
$s(\gm,i)$ of $i$-forms. We have
\be \bigl( dx_{X,1}\wedge \cdots\wedge dx_{X,i} \bigr) \circ s(\gm,i)
= \det J\bigl( s(\gm,i) \bigr)\, \bigl(dx_{\gm X,1} \wedge\cdots\wedge
dx_{\gm X,i}\bigr)\,.\ee
So the action of $\gm\in \Gm$ preserves the chosen orientation if and
only if $\det J\bigl( s(\gm,i)\bigr)>0$. For $1\leq i \leq \ds$ there
are no non-trivial $\gm \in \Gm$ stabilizing $X$ (by the fact that
$\Gm$ is torsion-free). So after we choose the orientation of $X$, this
induces a unique orientation of all $\gm X$ in the $\Gm$-orbit of~$X$.

\subsubsection{Refinements}We call a $\Gm$-invariant tessellation
$\tess'$ a refinement of the $\Gm$-invariant tessellation $\tess$ if
for $0\leq i \leq \ds$ each $X\in \X^\tess_i$ is the union of a finite
set of $X_1',\ldots,X_n' \in \X^{\tess'}_i$ where different $X_l'$
overlap only in their common boundary. This induces $\CC$-linear maps
$F_i^\tess \rightarrow F^{\tess'}_i$ that we require to be
$\CC[\Gm]$-linear and to be compatible with the boundary maps
$\partial_i$ in $F^\tess_\bullet$ and $F^{\tess'}_\bullet$.

\begin{lem}[Refining a tessellation]\label{lem-refinetess} Let $\tess$
be a $\Gm$-invariant tessellation of $W\subset S$, and let
$X\in \X^\tess_i$ with $1\leq i\leq \ds$. If
$X= X_1 \cup X_2\cup \ldots\cup X_n$ with $i$-cells $X_l$ such that the
pairwise intersections $X_l\cap X_{l'}$ have a structure as indicated
in part~\eqref{tess:i} of Definition~\ref{def-tess}, then there is a
refined $\Gm$-invariant tessellation $\tess'$ of $W$ such that
$X\in \X^\tess_i$ is replaced by $X_1,\ldots,X_n$ in $\X^{\tess'}_i$.
In this refinement, $\X^{\tess'}_j = \X^\tess_j$ for $i<j\leq \ds$. The
sets $B(X)$ with $X\in \X^\tess_j$ are unchanged for
$i+1<j\leq \ds$.\end{lem}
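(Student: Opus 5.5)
The plan is to construct $\tess'$ by replacing the whole $\Gm$-orbit of $X$ at once, using that the $\Gm$-action on $i$-cells is free. Since $\Gm$ is torsion-free, no non-trivial $\gm\in\Gm$ stabilizes $X$ (as noted in \S\ref{sect4-or}). Hence the translates $\gm X$, $\gm\in\Gm$, are pairwise distinct cells, and for each of them the decomposition $\gm X=\gm X_1\cup\cdots\cup\gm X_n$ is well defined. We put
\[ \X^{\tess'}_i = \bigl(\X^\tess_i\setminus \Gm X\bigr)\cup\bigl\{\gm X_l\;:\;\gm\in\Gm,\ 1\leq l\leq n\bigr\}\,. \]
For $j>i$ we keep $\X^{\tess'}_j=\X^\tess_j$. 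For $j<i$ we add to $\X^\tess_j$ the $\Gm$-translates of the new lower-dimensional cells occurring in the pairwise intersections $X_l\cap X_{l'}$. By hypothesis these intersections are themselves unions of cells with the structure of part~\eqref{tess:i} of Definition~\ref{def-tess}. Recursively, we also add their boundary cells, and we subdivide existing cells of $\partial X$ where the new cells cut them.

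The new collections are $\Gm$-invariant by construction, and the orbit count stays finite because only finitely many new orbits are introduced. I would then check the conditions of Definitions~\ref{def-tess} and~\ref{def-itess} one by one. Cells are homeomorphic to balls and are described by finitely many analytic equations and inequalities. For the new cells this comes from the hypothesis; for the old cells it is inherited. The covering and intersection conditions~\eqref{tess:cov} hold because the $X_l$ cover $X$ and meet only in their common boundaries. The boundary decomposition~\eqref{tess:bd} holds for the $X_l$ with $B(X_l)$ consisting of cells in $\partial X$ together with cells in the interior intersections.

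The sets $B(Y)$ with $Y\in\X^\tess_j$ can change only when $Y$ has a cell of dimension $j-1\leq i$ that was subdivided. This happens only for $j-1=i$, that is $j=i+1$. For $j=i+1$, $B(Y)$ is changed by replacing each $\gm X$ in it by $\gm X_1,\ldots,\gm X_n$. For $j>i+1$, $B(Y)$ consists of cells of dimension $j-1>i$, which are unchanged. This gives the last claim of the lemma.

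Next come orientations and the chain map. I would give each $X_l$ the orientation induced from $X$ via the coordinates of \S\ref{sect4-or}, and transport it to $\gm X_l$. Define $F^\tess_i\to F^{\tess'}_i$ by $\gm X\mapsto\sum_l\gm X_l$, and by the identity on the other cells. In lower degrees, an old cell that was subdivided is sent to the sum of its pieces with induced orientations. Compatibility with $\partial$ is the statement that the interior faces of the $X_l$ cancel in pairs, since each interior face carries opposite induced orientations from the two adjacent pieces. $\Gm$-equivariance is immediate.

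The main obstacle is the lower-dimensional bookkeeping: showing that the recursively added faces, and the subdivided cells of $\partial X$, again satisfy~\eqref{tess:bd} and~\eqref{tess:nmtn}, with pairwise intersections contained in boundaries. To handle this I would first try to argue by induction on dimension, refining the cells of $\partial X$ with the same lemma one dimension lower. I would rely on the assumption that the structure of the $X_l\cap X_{l'}$ is cellular.
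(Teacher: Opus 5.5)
Your proposal follows essentially the same route as the paper's very brief proof: replace each $\gm X$ in the free orbit $\Gm X$ by $\gm X_1,\ldots,\gm X_n$, adapt the boundary decompositions, and then go down dimension by dimension adapting the lower-dimensional cells. Your version is more explicit than the paper's, e.g.\ in checking orientations and the compatibility of $F^\tess_\bullet\to F^{\tess'}_\bullet$ with the boundary maps. One point worth stating explicitly: a cell $Z\in B(X)$ must be cut by \emph{all} translates $\gm X_l$ that meet it, i.e.\ you take a common refinement, because $Z$ may also lie in $\partial(\gm X)$, or $\gm Z$ may again lie in $B(X)$, and otherwise you cannot have both $\Gm$-invariance and condition~\tlref{tess:cov}{tess:cova} of Definition~\ref{def-tess} in dimension $i-1$.
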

\begin{proof}The process is straightforward. For each $\gm\in \Gm$
replace $\gm X$ by the new cells $\gm X_1,\ldots,\gm X_n$. Adapt the
boundary decomposition of all $Y\in B(X) $. Next go down dimension by
dimension, adapting $\X^\tess_j$ and the boundary decomposition in
dimension $j+1$.
\end{proof}
\remark{\em Condition on refinements. }\label{refine-cond}
  We use only refinements that respect the product form
\[\fd_\cu= g_\cu \fd(\Ld_\cu) A^+
_T \,\orgn \cong \fd(\Ld_\cu)
\times [T,\infty)\]
in the subdivisions of the generators $\fd_\cu$, $\cu \in \Cu$. 
We use the notation \il{A+T}{$A^+_T$}$ A^+_T=\bigl\{ \am(t)\;:\; t\geq T \bigr\}$. 
We split
up $\fd(\Ld_\cu)$ into a finite number of cells defined by inequalities
$a_i \leq x_i \leq b_i$ for $1\leq i \leq \ds-1$, in coordinates that
are linear forms in normalized horospherical coordinates, and split up
$[T,\infty)$ into a finite number of subintervals. This system of
coordinates need not be horospherical.

\subsubsection{Standard tessellations}\label{sect4-stdtess}On
p~\pageref{i-tessst} we mentioned standard tessellations. The idea is
illustrated in Figures \ref{fig-tess2}--\ref{fig-plDYa}.

\begin{prop}\label{prop-sttess}Let $Y$ be suitably large. There exist
standard $\Gm$-invariant tessellations $\tess=\tess_\st$ of $\Sast$
with the following properties.
\begin{enumerate}[label=$\mathrm{(\roman*)}$, ref=$\mathrm{\roman*}$]
\item\label{Ysplit} For all $i$, $0\leq i \leq \ds$, there is a subset
$\X_i^{\tess,Y}$ of $\X^\tess_i$ such that
  \begin{enumerate}[label=$\mathrm{(\alph*)}$, ref=$\mathrm{\alph*}$]
  \item \label{Ycond} For each $X\in \X^\tess_i$:
  \[ X\in \X^{\tess,Y}_i \quad\Leftrightarrow\quad X\subset \SY \,.\]
  \item \label{nYcond} If $X\in \X^\tess_i \setminus \X^{\tess,Y}_i$,
  then there is a unique cusp $\cu$ such that $X \subset B^\ast_\cu(Y)$.
  \end{enumerate}
\item The collection of sets $\X^{\tess,Y}_i$, $0\leq i\leq \ds$,
determines a $\Gm$-invariant tessellation of $\SY$.
\item \label{gen} Let $n$ denote the number of cuspidal $\Gm$-orbits.
The set $\X^\tess_\ds$ is generated by $n+1$ elements, namely
\begin{itemize}
\item A $\ds$-cell $\fd_0\in \X^{\tess,Y}_\ds$, which is a fundamental
domain for $\Gm$ in $\SY$.
\item A $\ds$-cell $\fd_c \in \X^\tess_d \setminus \X^{\tess,Y}_\ds$ for
each representative $\cu$ in a system of representatives of
$\Gm\backslash \Cu$. The set $\fd_\cu$ is a fundamental domain for
$\Gmm \cu$ in $B^\ast_\cu(Y)$.
\end{itemize}
\end{enumerate}
\end{prop}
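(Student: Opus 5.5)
The plan is to build the standard tessellation in three stages: a fundamental domain for the thick part $\SY$, a product cell for each cusp orbit, and then a refinement of the boundary into cells.

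\textbf{Stage 1: the compact part.} Since $Y$ is suitably large, the horoballs $B_\cu(Y)$ are pairwise disjoint. The set $\SY$ is $\Gm$-invariant, and $\Gm\backslash\SY$ is a compact manifold with boundary. I would take a Dirichlet domain $D=\{x\in S:\dist(x,x_0)\le \dist(x,\gm x_0)\ \forall\gm\in\Gm\}$ for a generic point $x_0\in\SY$.

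By local finiteness, only finitely many bisectors meet the compact set $D\cap\SY$. The bisectors $\{\dist(x,x_0)=\dist(x,\gm x_0)\}$ are analytic hypersurfaces, which gives the analytic description required in Definition~\ref{def-tess}\tlref{tess:i}{tess:ib}.

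The intersection $D\cap\SY$ need not be homeomorphic to a ball. I would therefore subdivide it into finitely many analytic pieces, using Lemma~\ref{lem-refinetess}-type arguments or an analytic triangulation of the compact semianalytic set $\Gm\backslash\SY$ (Łojasiewicz). The preferred route is to triangulate the compact manifold with boundary $\Gm\backslash\SY$ so that its boundary components $\pr(H_\cu(Y))$ are subcomplexes. The simplices are then lifted to $\SY$. Since $\Gm$ is torsion-free, the covering $\SY\to\Gm\backslash\SY$ is a genuine covering, and simplices lift homeomorphically when they are small enough.

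The lifts are arranged so that their union is a connected fundamental domain $\fd_0$. If $\fd_0$ itself is not a single cell, one can instead let $\X^{\tess,Y}_\ds$ be generated by finitely many cells. This is a refinement, which the rest of the paper allows.

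\textbf{Stage 2: the cusps.} For each representative $\cu$, Lemma~\ref{lem-Gmcu} gives $\Gmm\cu=\Gm\cap N_\cu$, and this is a lattice in $N_\cu$. I choose a compact fundamental domain $F$ for $\Ld_\cu$ in $N$ that is a polyhedral cell in the coordinates of $N$. For abelian $N$ this is a parallelepiped. In the nilpotent case I use Malcev coordinates: $\Ld_\cu$ has a normal series with quotients $\ZZ$, so $F$ can be taken as $\{0\le x_i'\le1\}$ in suitable linear coordinates.

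I then set
\[
\fd_\cu=g_\cu F A^+_Y\orgn\cup\{\cu\}.
\]
Its closure in $\Sast$ is homeomorphic to a cone over $F$ with apex $\cu$, hence to a closed ball. Its lower face $g_\cu F\am(Y)\orgn$ lies in $H_\cu(Y)$.

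The triangulation of $\partial\SY$ from Stage 1 and the face structure of $F$ must be made compatible. I would refine both to a common subdivision, since both are finite analytic decompositions of the compact quotient $\Gmm\cu\backslash H_\cu(Y)$. Lower-dimensional cells are then the faces, with cones to $\cu$ for the faces of $g_\cu F A^+_Y$.

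\textbf{Stage 3: checking the axioms.} Properties \eqref{Ysplit}\eqref{Ycond} and \eqref{nYcond} hold by construction, because every cell lies either in $\SY$ or in exactly one $B^\ast_\cu(Y)$. Disjointness of the horoballs gives uniqueness of $\cu$. The $\X^{\tess,Y}_i$ form a tessellation of $\SY$. Property \eqref{gen} holds with the $n+1$ generators. Finiteness of the orbits follows from compactness of the quotients, and freeness of the action follows from torsion-freeness.

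The main obstacle is the topological requirement that each cell be a closed ball, together with the analytic description, while keeping the cells within a single fundamental domain. I would handle this through an analytic triangulation of the compact quotient that is compatible with the boundary horospheres, and then lift.
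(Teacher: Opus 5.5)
Your proposal has a genuine gap: Stage 1 does not prove part (iii) as stated. The proposition asserts that $\X^{\tess,Y}_\ds$ has a \emph{single} generator $\fd_0$. This is a fundamental domain for $\Gm$ in $\SY$ that is a $\ds$-cell, i.e.\ homeomorphic to a closed ball.

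Lifting the simplices of a triangulation of $\Gm\backslash\SY$ (say along a spanning tree of the dual graph) gives a connected fundamental domain, but nothing more. Two lifted simplices that are not adjacent in the tree can still meet in $\SY$ along a vertex, an edge, or a facet not used by the tree. For instance, the tree may place two simplices around the same lift of a vertex $v$ while lifting the simplices between them elsewhere. The union then has pinch points or encloses holes, and is not a ball.

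Your fallback, generating $\X^{\tess,Y}_\ds$ by several cells, proves a strictly weaker statement. Part (iii) requires exactly $n+1$ generators, and the sole generator $\fd_0$ is used later, e.g.\ in the computation of $H^\ds$ in Proposition~\ref{prop-H0d}. That the paper allows refinements elsewhere does not help: what is claimed here is the existence of a standard tessellation with this particular generating set.

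The missing idea is to keep the Dirichlet domain instead of replacing it by a triangulation, which is what the paper does. Take $\fd_0=D(p_0)\cap\SY$ itself as the top cell, with $Y$ enlarged so that only finitely many bisectors meet it. The natural reason this is a single ball is star-shapedness. If $\dist(x,p_0)\le\dist(x,\gm p_0)$ and $z$ lies on the geodesic segment from $p_0$ to $x$, then $\dist(z,p_0)=\dist(x,p_0)-\dist(x,z)\le\dist(x,\gm p_0)-\dist(x,z)\le\dist(z,\gm p_0)$. So every geodesic ray from $p_0$ meets $D(p_0)$ in an initial segment, crossing each bisector transversally. For $Y$ large, truncating at the horospheres $H_\cu(Y)$ does not destroy this.

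Any subdivision you need should be done only in dimensions $\le\ds-1$; this leaves the $n+1$ top-dimensional generators intact. That covers the boundary decomposition and the matching of the faces of $\fd_0$ on $H_\cu(Y)$ with the bottom faces of the cones $\fd_\cu$.

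Your Stage 2 agrees with the paper except for one point. For non-abelian $N$, the opposite faces $x'_j=0$ and $x'_j=1$ ($j\le p$) of the cube are not matched by a single lattice element, because $\ld_j$ shears the centre coordinates. The paper subdivides these faces into pieces paired by elements $\mu\ld_j$ with $\mu\in\Ld\cap Z_N$. Doing your common subdivision in the compact quotient $\Ld_\cu\backslash N$ and extending it vertically along $A^+_Y$ would achieve the same.
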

\begin{proof}
We start with a fundamental domain of $\Gm$ in $S$. These domains can be
constructed in various ways. A nice example in the context of
$G=\PSU(2,1)$ is given by Falbel and Parker, \cite[Theorem 5.8 and
5.9]{FP06}. For the cofinite groups $\Gm$ under consideration the Ford
domains in \cite[Theorem 3.18]{Po10} also satisfy the requirements.
Here we use the Dirichlet method of construction of fundamental
domains; see the discussion of tessellations of type {\bf Dir} in
\cite[\S6.2]{BLZ15}.

\rmrk{Tessellation of \intitle{\SY}} We choose $p_0$ in the interior of
$ \SY$ for suitably large~$Y$. Its $\Gm$-orbit $\Gm p_0$ is an infinite
discrete subset of $S$. We use the distance function $\dist$ to define
the following fundamental domain:\il{Dp}{$D(p)$}
\be \label{Dp}D(p_0) = \bigl\{ x\in S\;:\; \dist(x,p_0) \leq \dist(x,p)
\text{ for all } p\in \Gm p_0 \setminus \{p_0\} \bigr\}\,.\ee
This is a fundamental domain for $\Gm$ on $ S$. Since $\Gm$ is discrete,
in the definition we need to take into account only a finite set
$\{p_1,\ldots,p_n\}\subset \Gm p_0 \setminus\{p_0\}$.

The functions $h_l(x) = \dist(x,p_l)-\dist(x,p_0)$, $1\leq l \leq n$,
are analytic on a neighborhood of the boundary of $D(p_0)$, and
\be \label{Dp0-ineq} D(p_0) = \bigl\{ x\in S\;:\; h_l(x)\geq0\text{ for
}l=1,\ldots,n\bigr\}\,.\ee

We intersect $D(p_0)$ with $\SY$, where $Y$ is such that all horoballs
$B_\cu(Y)$, $\cu \in \Cu$, are pairwise disjoint. The number of cusps
in the closure of $D(p_0)$ in $S\cup\partial S$ is finite. If we
further enlarge $Y$ sufficiently, then
\il{fd0}{$\fd_0$}$\fd_0  := D(p_0) \cap \SY$ has a description like
\eqref{Dp0-ineq}.

The $\Gm$-translates of $D(p_0) \cap \SY$ cover $\SY$, and lead to a
tessellation \il{tess-DY}{$\tess(D,Y)$}$\tess(D,Y)$ of $\SY$.\medskip

\rmrk{Tessellation of \intitle{N}} Next we construct a tessellation of
the $(\ds-1)$-dimensional group that is invariant under a lattice
$\Ld \subset N$. If $\glie_{2\al}=\{0\}$, then the group $N$ is
isomorphic to $\RR^p = \RR^{\ds-1}$ (as a group), $\Ld$ is isomorphic
to $\ZZ^p$, and $\Ld \backslash N$ is a torus, for which it is easy to
construct a fundamental domain, leading to a $\Ld$-invariant
tessellation of~$N$, for which the $i$-cells are determined by
$(n_1,\ldots,n_p) \in \ZZ^p$ and by a finite set
$J \subset \{1,\ldots,p\}$, namely
\be \bigl\{ x\in \RR^p\;:\; n_i \leq x_i \leq n_i+1 \text{ for } i \in
J\,, \text{ and } x_i=n_i\text{ for }i\not\in J \bigr\}\,.\ee

If $N$ is not abelian, we can choose coordinates
$x=(x_1,\ldots,x_{p+q})$ on $\RR^{p+q}$ such that the intersection
$Z_N \cap \Ld$ of the lattice with the center $Z_N$ of $N$ is generated
by specific elements $\ld_j$, $p+1\leq j \leq p+q$, such that the group
operation satisfies
\be\label{coord-trf1} \ld_j \, \nm[x] = \nm\bigl[x_1,\ldots,x_{j-1},
x_j+1,x_{j+1},\ldots , x_{p+q}\bigr]\,.\ee
We remind ourselves by the use of
\il{nm[]}{$\nm[x_1,\cdots,x_{p+q}]$}square
brackets that these coordinates are not necessarily horospherical
coordinates. We can complete the choice of generators of $\Ld$ by
choosing $\ld_j$, $1\leq j \leq p$, for which
\badl{coord-trf2} \ld_j\,\nm[x] &= \nm\bigl[ x_1,\ldots, x_{j-1}, x_j+1,
x_{j+1}, \ldots, x_p,\\
&\qquad\qquad\qquad\hbox{} x_{p+1}+ a_{p+1}(x^{(1)}), \ldots, x_{p+q}+
a_{p+q}( x^{(1)} ) \bigr]\,,\eadl
with linear forms $a_j$ on $\RR^p$ for $p+1\leq j \leq q$.

We can choose a fundamental domain $X(\Ld)$ for $\Ld \backslash N$ of
the form
$\bigl\{ x\;:\; 0\leq x_j \leq 1\text{ for }1\leq j \leq p+q\bigr\}$.
We can bring any $x\in \RR^{p+q}$ into $X(\Ld)$ by first applying
$\ld_j^n$ with $1\leq j \leq p$, $n\in \ZZ$, to arrive at
$0\leq x_j \leq 1$ for $1\leq j \leq p$. Next we apply $\ld_j^n$ with
$p+1\leq j \leq p+q$, $n\in \ZZ$, to adjust the coordinates with index
at least $p+1$. This fundamental domain has $(p+q-1)$-dimensional
boundary cells $B_{\e,j}$ with $1\leq j \leq p+q$ and $\e\in \{0,1\}$,
determined by $x_j = \e$ and $x_{j'}\in [0,1]$ for $j'\neq j$.

For $j \geq p+1$ we have the nice pairing of
$\ld_j\, B_{0,j} = B_{1,j}$. This does not work well if $j\leq p$. Then
the presence of the linear forms $a_{j'}$ will in general require
$\mu \ld_j$ with $\mu \in \Ld \cap Z_N$ to carry out the
transformation. The element $\mu$ depends on the values of $x$ in
$\nm[x]\in B_{0,j}$. The values that the $a_{j'}$ will take on elements
of $B_{0,j}$ are bounded, so finitely many elements of $\mu$ in the
center of $\Ld$ can occur. See Figure~\ref{fig-fdsh}.
\begin{figure}
\begin{center}\includegraphics[width=3cm]{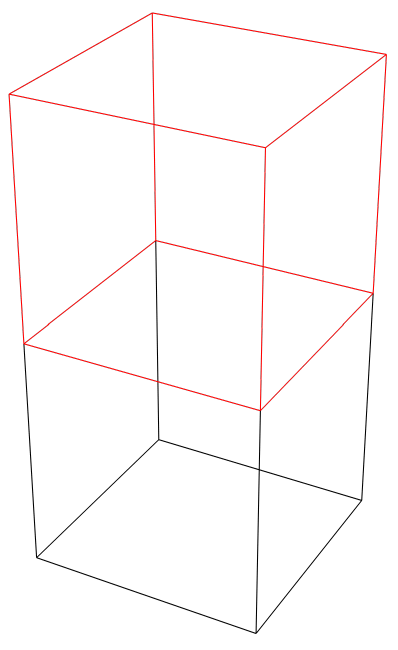}
\qquad \includegraphics[width=5cm]{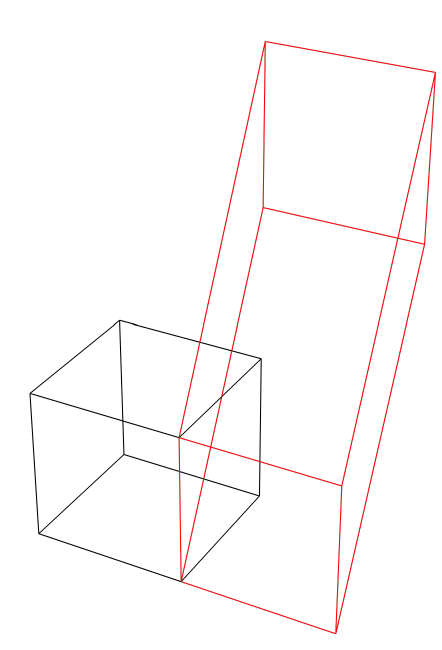}
\end{center}
\caption{The black cube is a fundamental domain $B$ for
$\Ld\backslash N$ with some lattice $\Ld$ in the context of
$G=\PU(2,1)$, for which $\ds=4$, $p=2$ and $q=1$. On the left, the red
cube is the image of $B$ under $\ld_3 = \ld_{p+1}$. On the right the
image $\ld_1 B$ is deformed. }\label{fig-fdsh}
\end{figure}
The fundamental domain $X(\Ld)$ intersects a finite number of translates
$\mu_i \ld_j X( \Ld )$ with $\mu_i \in \Ld \cap Z_N$,
$1\leq i \leq n_{0,j}$. This intersection is the union
$\bigcup_i B_{1,j}\cap \mu_i \ld_j B_{0,j}$. The elements of this union
have pairwise intersections contained in boundary components. We
replace the boundary component $B_{0,j}$ of $X(\Ld)$ by the subsets
$B_{0,j}\cap \ld_j^{-1}\mu_i^{-1} B_{1,j}$. (We recall that
$ \Ld\subset N$, and that $Z_N$ is center of $N$; so
$\ld_j,\mu_j \in N$.) We write the $\left(p+q-1\right)$-cells $B_{0,j}$
with $1\leq j \leq p$ as a union $\bigcup_i B_{0,j}^{(i)}$. The
subcells of dimension $\left(p+q-1\right)$ in this union intersect only
in their boundaries. If we increase the coordinate $x_j$ by $1$ we get
a subdivision of $B_{1,j}$. We can take the subdivisions in such a way
that $B_{1,j} $ is the union of cells $\mu_i \ld_j B_{0,j}$ with
$\mu_i\in \Ld\cap Z_N$.

This process increases the number of $(p+q-1)$-cells. The resulting
system of cells gives a $\Ld$-invariant tessellation
\il{UtessLd}{$\Utess(\Ld)$}$\Utess(\Ld)$ of $N$ that satisfies
conditions analogous to those in Definitions \ref{def-tess}
and~\ref{def-itess}. The set $\X^{\Utess(\Ld)}_{\ds-1}$ consists of one
$\Ld$-orbit. As a generator we may take the hypercube
\il{fdld}{$\fd(\Ld)$}$\fd(\Ld)=[0,1]^{\ds-1}$ in the coordinates
indicated above.
\smallskip

\rmrk{Tessellation of a horoball} We fix a cusp $\cu\in \Cu$ and
describe the horoball $B_\cu(Y)$ as
$g_\cu\bigl(N \times A_Y^+ \,\orgn\bigr)$, with
$A_T^+= \bigl\{\am(t)\in A\;: \; t\geq T\bigr\}$.

The group $\Gmm\cu$ stabilizing the cusp $\cu$ acts on $B_\cu(Y)$. This
action leaves $g_\cu$ and $y$ in
$g_\cu (n,y) \in g_\cu \bigl(N \times[Y,\infty)\bigr)$ invariant, and
changes only the element $n\in N$. Under conjugation by $g_\cu$ this
corresponds to the action of the lattice $\Ld_\cu$ on $N$ by left
translation, for which we constructed a fundamental domain in the
previous subsection. This corresponds to a fundamental domain
$g_\cu \bigl( \fd(\Ld_\cu ) \times [Y,\infty)\bigr)$ of $B_\cu(Y)$. We
close it in $B^\ast_\cu(Y)$ by adding the cusp $\cu$. This results in a
fundamental domain \il{fdcua}{$\fd_\cu$}$\fd_\cu$ for
$\Gmm \cu \backslash B^\ast_\cu(Y)$. It is a cone with top $\cu$ on the
hypercube $g_\cu \bigl(\fd(\Ld_\cu)\times \{\am(Y) \}\bigr)$. The
resulting tessellation of $B_\cu(Y)$ is a $\Gmm\cu$-invariant
tessellation.

If $\gm$ runs over $\Gm\backslash\Gmm\cu$ then $\gm\, B_\cu(Y)$ runs
over all horoballs $B_{\cu'}(Y)$ with $\cu'$ running over the
$\Gamma$-orbit $\Gm\,\cu$. Then the $\Gm$-translates $\gm \, \fd_\cu$
provide us with a $\Gm$-invariant tessellation $\tess_a$ of the union
$U_a=\bigcup_{\cu' \in a}B_{\cu'}(Y)$ for the $\Gm$-equivalence class
$a\in \Gm\backslash \Cu$ of~$\cu$.

\rmrk{Putting tessellations together} We have obtained $n+1$
tessellations: the tessellation $\tess(D,Y)$ of $\SY$ introduced at the
end of \S\ref{sect4-stdtess}, and, for each of the $n$ elements
$a\in \Gm\backslash \Cu$, the tessellation $\tess_a$ of a union $U_a$
of horoballs.

We have still to handle the intersections of $\SY$ with the $U_a$. This
intersection is a union of $(\ds-1)$-dimensional horospheres. The
common refinement of these $n+1$ tessellations forms a tessellation
$\tess_\st$ that satisfies the properties in Theorem~\ref{prop-sttess}.

As generating $\ds$-cells we can take $\fd_0 = D(p_0)\cap\SY$, and
$\fd_a$ as the generating element in $B_\cu(Y)$ for the representative
of $a$ that we used.
\end{proof}

\subsection{Tessellations and parabolic cohomology}\label{sect4-tpc} We
will justify the name parabolic cohomology by showing that the
following three ways to construct cohomology spaces lead to canonically
isomorphic results:
\begin{enumerate}[label=$\mathrm{(\roman*)}$, ref=$\mathrm{\roman*}$]
\item \label{ctess} The cohomology spaces based on the resolution
$F^\tess_\bullet$ for a standard tessellation or refinements of it.
\item \label{shcoh} Sheaf cohomology spaces for the quotient
$\Gm\backslash\Sast$.
\item \label{cuc} Use of a resolution $F^\pb_\bullet$ based on the set
of cusps $\Cu$ and the $\Gm$-action on it, to be defined in
\S\ref{sect4-def-Fpb}.
\end{enumerate}
The relation between \eqref{ctess} and \eqref{shcoh} is given by
Proposition~\ref{prop-pcshc}, which we will prove in \S\ref{prf-pcshc}.
Proposition~\ref{prop-pcht} will relate \eqref{cuc} to \eqref{ctess}.
We define the \il{pbcs}{parabolic cohomology space}parabolic cohomology
spaces \il{Hipb}{$H^i_\pb(\Gm;V)$}$H^i_\pb(\Gm;V)$ as the cohomology
spaces obtained from \eqref{ctess}--\eqref{cuc}.

The proof of Proposition~\ref{prop-H0d} is in~\S\ref{prf-H0d}.

\subsubsection{Acyclic resolutions} We consider refinements $\tess $ of
a standard tessellation $\tess_\st$ of the space $\Sast$. It leads to
the resolution $F^\tess_\bullet$ and to cohomology spaces
$H^i \bigl(F^\tess_\bullet; \cdot)$ with values in $\CC[\Gm]$-modules.
We have also the tessellation \il{tessYa}{$\tess_Y$}$\tess_Y$ of $\SY$,
connected to the resolution
$F^{\tess,Y}_\bullet = \bigl( \CC[\X^{\tess,Y}_i]\bigr)_{i\geq 0}$.

We mention some terminology and facts from \cite[Chap I]{Brown}. One
speaks of an \il{ac-res}{acyclic resolution}\emph{acyclic resolution}
if the complex is exact at each place. In each complex we have
$ \partial_{i-1}\partial_i = 0$, and hence $\im(\partial_{i})$
is contained in $\ker(\partial_{i-1})$.
The complex is acyclic if $\im(\partial_{i}) =\ker(\partial_{i-1})$ for
all $i\geq 2$, and in the case of a resolution of the trivial
$\Gm$-module $\CC$ the condition $\e\,\partial_1=0$ for $i=1$
(with the augmentation $\e$ instead of $\partial_0$).

Let $A_\bullet$ and $B_\bullet$ denote complexes. A \il{chmp}{chain
map}\emph{chain map} $f_\bullet: A_\bullet \rightarrow B_\bullet$ gives
$\CC[\Gm]$-linear maps $f_i : A_i \rightarrow B_i$ such that
$ \partial_i f_i = f_i \partial_i
$ for all $i$. (We allow ourselves to denote the boundary operator by
$\partial_i$ in all complexes.)

Two chain maps $f_\bullet: A_\bullet \rightarrow B_\bullet$ and
$g_\bullet : A_\bullet \rightarrow B_\bullet$ are
\il{hmtp}{homotopic}\emph{homotopic} if the difference
$f_\bullet-g_\bullet$ can be written as
\be \partial_{i+1} h_i + h_{i-1} \partial_i = f_i-g_i\ee
for some family $h_\bullet : A_\bullet \rightarrow B_\bullet$ of
homomorphisms $h_i\colon A_i\to B_{i+1}$ for $i\in\NN_0$. Such a family
$h_\bullet$ is called a \il{hmtpy}{homotopy}{\em homotopy}. Even if we
have a $\Gm$-action on the complexes, the homotopies $h_i$ need only be
$\CC$-linear.

\rmrk{Contractible spaces}A space $W$ is
\il{contr}{contractible}contractible if it is homotopic to a space with
one point. In that case tessellations of $W$ lead to acyclic
resolutions.

The horoballs $B^\ast_\cu (Y)$ are contractible by use of the map
\be [0,1]\times B^\ast_\cu (Y) \rightarrow B^\ast_\cu\;:\;
\begin{cases}
(t,\cu) \mapsto \cu\,,
\\
\bigl( t, g_\cu n \am(y)
\bigr) \mapsto
\begin{cases} g_\cu n \am\bigl( y/(1-t)\bigr)&\text{ if }t<1\,,\\
\cu&\text{ if }t=1\,.
\end{cases}
\end{cases}
\ee

The space $S$ is contractible as well:
\be \bigl(t,k \am(y)\,\orgn\bigr) \mapsto k
\am\bigl(y(1-t)\bigr)\,\orgn\,.\ee

\begin{prop}\label{prop-acycl}The resolutions $F^\tess_\bullet$ and
$F^{\tess,Y}_\bullet$ are exact.
\end{prop}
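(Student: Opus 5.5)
We will identify the chain complexes $F^\tess_\bullet$ and $F^{\tess,Y}_\bullet$ with the cellular chain complexes (with complex coefficients) of the CW-structures that the tessellations induce on $\Sast$ and on $\SY$. Exactness of the augmented complex is then the statement that the reduced cellular homology vanishes, and we will obtain this from contractibility of the underlying spaces. The $\Gm$-action plays no role here, since exactness is a statement about $\CC$-linear maps.

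\textbf{Step 1: CW-structure.} We check that the cells of Definition~\ref{def-tess} form a regular CW-complex. Each $i$-cell is homeomorphic to a closed $i$-ball, and its boundary is a finite union of $(i-1)$-cells by property~\eqref{tess:bd}. By~\eqref{tess:cova}, interiors of distinct cells are disjoint. The complex is locally finite: in $S$ this follows from the finiteness of $\Gm$-orbits of cells together with discreteness of $\Gm$. At a cusp, a neighbourhood $B^\ast_\cu(Y)$ contains only the cones $\gm\fd_\cu$ with $\gm\in\Gmm\cu$ and their faces.

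The incidence numbers $\e_Y=\pm1$ from \S\ref{sect4-or} are the degrees of the attaching maps, because the cells are regular. Hence $\partial_i$ is the cellular boundary operator. Consequently $H_i(F^\tess_\bullet)$ is the cellular homology, which equals the singular homology of $\Sast$, and the augmented complex computes reduced homology.

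\textbf{Step 2: contractibility.} For $\SY$, the closure of $S$ minus disjoint open horoballs, we will give a deformation retraction of $S$ onto $\SY$. On each horoball $B_\cu(Y)$ we push $g_\cu n\am(y)\orgn$ down to $g_\cu n\am(Y)\orgn$ linearly in $y$. Since $S$ is contractible (this is shown above), $\SY$ is contractible as well.

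For $\Sast$ we argue in the opposite direction. We use the contraction of each $B^\ast_\cu(Y)$ onto the horosphere, or equivalently we observe that $\Sast$ deformation retracts onto $\SY$. Concretely, on $B^\ast_\cu(Y)$ we send $g_\cu n\am(y)\orgn\mapsto g_\cu n \am\bigl(Y+(1-t)(y-Y)\bigr)\orgn$. At $t=1$, however, the cusp has to be sent somewhere, and it has no single image on the horosphere. So this map is not a retraction.

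A better route is to reverse the picture: $\Sast$ is the union of the contractible $\SY$ with the cones $B^\ast_\cu(Y)$. Each cone is attached along its base $H_\cu(Y)\cong N\cong\RR^{\ds-1}$, which is contractible, and each $B^\ast_\cu(Y)$ is itself contractible by the homotopy displayed before the proposition. We then apply Mayer--Vietoris, or van Kampen together with Whitehead, to the cover by $\SY$ and the disjoint cones. Gluing contractible pieces along contractible intersections gives a space with trivial reduced homology. This is exactly what exactness of the augmented complex requires, so full contractibility of $\Sast$ is not needed.

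\textbf{Main obstacle.} The main difficulty is the topology of $\Sast$ near the cusps. Its topology is not the subspace topology from $S\cup\partial S$, since the extended horoballs form the neighbourhood basis at each cusp. We must check two things:
\begin{enumerate}
\item that the cellular-equals-singular comparison still holds at the non-locally-compact-looking cusp points, which are the vertices of infinitely many cells;
\item that the Mayer--Vietoris argument applies to this topology.
\end{enumerate}
To avoid the first issue, we intend to compute homology directly from the cellular chains in a purely combinatorial way. The cellular complex of $\Sast$ splits as the complex of $\SY$ plus, for each cusp, the cone on the tessellation of $H_\cu(Y)$ with apex $\cu$. The cone complex has trivial reduced homology by the standard explicit chain contraction $X\mapsto\operatorname{cone}(X)$. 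A Mayer--Vietoris sequence of chain complexes then yields exactness of $F^\tess_\bullet$ from exactness of $F^{\tess,Y}_\bullet$ and of the tessellation complexes of the horospheres $\cong N$, which are contractible.

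Exactness for $F^{\tess,Y}_\bullet$ and for the horosphere complexes uses only standard cellular homology of locally finite regular complexes in manifolds. Refinements are handled the same way, using the condition in Remark~\ref{refine-cond} that subdivisions respect the product form of the cusp cells.
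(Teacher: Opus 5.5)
Your proposal is correct and is essentially the paper's argument. The paper also deforms $S$ onto $\SY$ to get exactness of $F^{\tess,Y}_\bullet$; it then splits a cycle of $F^\tess_\bullet$ into its $\SY$-part and its horoball parts and pushes the latter into $F^{\tess,Y}_\bullet$ modulo boundaries, which is a hands-on form of your chain-level Mayer--Vietoris sequence. Your explicit use of the acyclicity of the horosphere complexes and of the cone complexes on $B^\ast_\cu(Y)$ fills in the step where the paper applies the homotopy $h^Y$ to $a_\cu\notin F^{\tess,Y}_i$. It also makes your Step~1 claim of local finiteness at the cusps unnecessary, which is good because that claim is false: each cusp is a vertex of the infinitely many cells $\gm\fd_\cu$, $\gm\in\Gmm\cu$.
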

\begin{proof} Let $Y'< Y$ be suitably large: the horoballs $B_\cu(Y')$
for $\cu \in \Cu$ are pairwise disjoint.

We showed above that each extended horoball $B^\ast_\cu(Y)$ can be
contracted to $\{\cu\}$. In a similar way we can contract $B_\cu(Y')$
to $B_\cu(Y')\setminus B_\cu(Y)$. Doing this for all cusps and leaving
the points of $S_{\!\Gm}(Y')$ on their place, we get a continuous
deformation of $S$ into $\SY$. This implies that $\SY$ is contractible.
That implies that $F^{\tess,Y}_\bullet$ is acyclic.

The contractibility of $\SY$ implies that there is a homotopy
$h^Y_\bullet$ between the identity chain map and the zero chain map
from $F^{\tess, Y}_\bullet  $ in itself. It satisfies for all $i$
\be \partial_{i+1} h_i^Y \xi + h_{i-1}^Y \partial_i \xi = \xi\qquad
\text{for all }\xi \in F^{\tess, Y}_i\,.\ee

To show that the complex $F^\tess_ i$ is acyclic we take
$C\in F^\tess_i$ and assume that $\partial C=0$. We want to show that
$C \in \partial_{i+1} F^\tess_{i+1}$.
We can write
\be C= a_0 + \sum_\cu a_\cu \ee
with $a_0\in F^{\tess,Y}_i$, and $a_\cu \in F^{\tess(\cu)}_i$ for all
$\cu \in \Cu$, for the tessellation
\il{tesscu}{$\tess(\cu)$}$\tess(\cu)$ of $B^\ast_\cu(Y)$ induced by
$F^\tess_\bullet$. Since elements of $F^\tess_i$ are finite linear
combinations of elements of $\X^\tess_i$, all but finitely many $a_\cu$
are zero. Since
\be 0 = \partial _i a_0 + \sum_\cu \partial _i a_\cu,\ee
each chain $\partial _i a_\cu$ is a linear combination of
$X\in F^\tess_i$ that are contained in the horosphere $H_\cu(Y)$
(Definition~\ref{def-hor}), and hence
$\partial_i a_\cu \in F^{\tess,Y}_i \cap F^{\tess(\cu)}_i$.
For each cusp $\cu$ we put
\bad q_\cu &= h^Y_{i-1} \partial_i a_\cu && \in F^{\tess,Y}_i
\\
&= a_\cu- \partial_{i+1} h^Y_i a_\cu \,.\ead
Hence
$\partial_i(q_\cu-a_\cu) =- \partial_i\partial_{i+1} 
h_i^Y a_\cu=0$,
and
\be q_\cu-a_\cu\in \partial _{i+1} F^\tess_i\,.\ee
Hence
\[ C = a_0 + \sum_\cu q_\cu + \sum_c(a_\cu-q_\cu) \equiv a_0 + \sum_\cu
q_\cu \bmod \partial_{i+1} F^\tess_i\,.\]
The sum $C_1 =a_0 + \sum_\cu q_\cu$ is an element of $F^{\tess,Y}_i$,
and satisfies $\partial_i C_1 = \partial_i C=0$. Since the space $\SY$
is contractible, we conclude that
$C_1 \in \partial_{i+1}F^{\tess,Y}_i \subset \partial_{i+1} F^\tess_i$.
\end{proof}

\subsubsection{A resolution based on cusps}\label{sect4-def-Fpb}Define
\il{Fpb}{$F^\pb_\bullet$}$F^\pb_\bullet$ by
\be F^\pb_i = \CC\bigl[ \Cu^{i+1}\bigr]\,,\ee
with boundary operators $\partial_i : F^\pb_i\rightarrow \Fpb {i-1}$
determined by
\be \partial_i
(\cu_0,\cdots, \cu_i) = \sum_{j=0}^i (-1)^j\, (\cu_0,\ldots,\hat
\cu_j,\ldots ,\cu_i)\,.\ee
($\hat \cu_j$ means that the $j$-th coordinate is omitted. )
This gives a resolution
\be \label{Fpbres} \cdots \rightarrow \Fpb 2 \stackrel
{\partial_2}\rightarrow
\Fpb 1 \stackrel {\partial_1}\rightarrow \Fpb 0 \stackrel
\e\rightarrow \CC \rightarrow 0\ee of the trivial $\CC[\Gm]$-module
$\CC$. If we replace $\Cu$ by a set with a free $\Gm$-action, for
instance $\Gm$ itself, we have a free resolution, and its cohomology
spaces are the standard group cohomology spaces.

The action of $\Gm$ on $\cu$ is not free, and the resolution
$F^\pb_\bullet$ is not free. It is acyclic, which one can check with
the homotopy $h_i: \Fpb i \rightarrow \Fpb {i+1}$ determined by
\be h_i(\cu_0,\ldots, \cu_i) = (\bu, \cu_0,\ldots,\cu_i)\,,\ee
for some fixed cusp~$\bu$.

\begin{prop}{\rm(Parabolic cohomology and tessellations)
}\label{prop-pcht}If $\tess$ is a standard tessellation or a refinement
of it then
$H^i\bigl( F^\tess_\bullet;V\bigr) \cong H^i_\pb\bigl( \Gm;V\bigr)$ for
all $i \geq 0$ and all $\CC[\Gm]$-modules $V$.
\end{prop}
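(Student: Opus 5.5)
We will compare the two resolutions $F^\tess_\bullet$ and $F^\pb_\bullet$ of the trivial module $\CC$ by constructing $\CC[\Gm]$-linear chain maps in both directions. We then show that both composites are $\Gm$-equivariantly homotopic to the identity, which yields the isomorphism on cohomology with values in any $\CC[\Gm]$-module $V$. Since neither resolution is free, the usual comparison theorem does not apply directly. We will replace it with an argument that uses two facts: the only stabilizers in either resolution are the parabolic groups $\Gmm\cu$, and the cusps are common to both sides.

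\emph{Chain map $F^\tess_\bullet\to F^\pb_\bullet$.} For each $0$-cell $x\in \X^\tess_0$ we assign a cusp $c(x)$, equivariantly. For a cusp we take $c(\cu)=\cu$. On the $\Gm$-orbits of non-cuspidal $0$-cells, which are freely permuted, we choose $c$ arbitrarily on representatives and extend equivariantly. In higher degree we define $f_i$ on an $i$-cell $X$ by choosing a representative of each free $\Gm$-orbit in $\X^\tess_i$ for $i\geq1$; cells of positive dimension have trivial stabilizer because $\Gm$ is torsion-free. On the representative we solve $\partial f_i(X)=f_{i-1}(\partial X)$ inductively. The right-hand side is a cycle in $F^\pb_{i-1}$, so it is a boundary by acyclicity of $F^\pb_\bullet$ via the homotopy $h(\cu_0,\dots)=(\bu,\cu_0,\dots)$. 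Concretely, we may take $f_i(X)=h_{i-1}f_{i-1}(\partial X)$ and extend equivariantly. Freeness in degrees $\geq1$ makes this well defined.

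\emph{Chain map $F^\pb_\bullet\to F^\tess_\bullet$.} We send $\cu\mapsto\cu\in\X^\tess_0$. This is equivariant, and the cusps are exactly the $0$-cells with nontrivial stabilizer. In degree $i\geq1$ the tuples $(\cu_0,\dots,\cu_i)$ may have nontrivial stabilizer, namely a subgroup of $\Gmm\cu$ when all entries equal $\cu$; otherwise the stabilizer is trivial, since $\Gmm\cu\cap\Gmm{\cu'}$ consists of unipotent elements fixing two boundary points and is therefore trivial. For the degenerate tuple we set $g_i(\cu,\dots,\cu)=0$, or, compatibly for $i=1$, we note that $\partial(\cu,\cu)=0$. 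For tuples with trivial stabilizer we choose orbit representatives and fill in using acyclicity of $F^\tess_\bullet$ (Proposition~\ref{prop-acycl}). We will make this filling explicit by connecting $\cu_0$ and $\cu_1$ through $\SY$ and the horoballs. Checking that the degenerate tuples cause no inconsistency requires a short case analysis. Alternatively, we can work with the normalized (alternating) subcomplex of $F^\pb_\bullet$, which has the same cohomology.

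\emph{Homotopies and the main obstacle.} Both composites are identities on the cusps in degree $0$; on non-cusp $0$-cells, $g_0f_0(x)=c(x)$. To build the homotopies $s_i$ equivariantly, we again proceed orbit by orbit. On free orbits we use acyclicity of the target. On the cusps we need $s_0(\cu)$ with $\partial s_0(\cu)=0$, fixed by $\Gmm\cu$, and we may take $s_0(\cu)=0$. The main obstacle is the inductive step at cells or tuples with nontrivial stabilizer $\Gmm\cu$. There the equation $\partial s_i(X)=\text{(given cycle)}$ must be solved by a $\Gmm\cu$-\emph{invariant} chain. For this we will use the contractibility of $B^\ast_\cu(Y)$ to $\cu$: the tessellation restricted to $B^\ast_\cu(Y)$ is a cone with apex $\cu$. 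Coning off to the apex is $\Gmm\cu$-equivariant, so it gives a $\Gmm\cu$-equivariant contracting homotopy. This will provide invariant solutions. Checking that every relevant cycle is supported in the cusp's horoball is the delicate point; we expect it to hold after replacing cycles by homologous ones via the contraction of $S$ onto $\SY$ used in the proof of Proposition~\ref{prop-acycl}. Applying $\mathrm{Hom}_{\CC[\Gm]}(\cdot,V)$ then gives the isomorphism. Independence of the choice of tessellation follows because refinement maps are compatible with both chain maps.
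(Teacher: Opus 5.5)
Your scheme is the paper's: $\CC[\Gm]$-linear chain maps in both directions, plus homotopies. There is one difference. The paper only requires $\CC$-linear homotopies $k_\bullet,h_\bullet$, so stabilizers never come up there. A non-equivariant homotopy, however, induces nothing on $\mathrm{Hom}_{\CC[\Gm]}(\cdot,V)$, so your requirement of $\Gm$-equivariant homotopies is the correct one. Both of your chain maps are fine. In particular, $g_i(\cu,\ldots,\cu)=0$ for $i\geq 1$ is consistent, because $\partial_i(\cu,\ldots,\cu)$ is either $0$ or again a constant tuple.

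\textbf{The gap.} The gap is in the step you call the main obstacle. You leave it at ``we expect'', and you attack it in the wrong complex.

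In $F^\tess_\bullet$ the only cells with nontrivial stabilizer are the cuspidal $0$-cells, and there $s_0(\cu)=0$ already works. For $i\geq 1$ the module $F^\tess_i$ has no nonzero $\Gmm\cu$-invariant elements at all: chains are finite sums, and $\Gmm\cu$ acts on $\X^\tess_i$ with infinite orbits. So coning inside $B^\ast_\cu(Y)$ cannot produce invariant fillings in positive degree. Replacing a cycle by a homologous one is also not allowed, because the equation $\partial s_i(x)=(\text{given cycle})$ must be solved exactly.

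The elements that genuinely need invariant fillings are the constant tuples $(\cu,\ldots,\cu)\in F^\pb_i$, whose stabilizer is $\Gmm\cu$. They arise for the homotopy $s_\bullet$ on $F^\pb_\bullet$ between $f_\bullet g_\bullet$ and the identity, and the filling must be a $\Gmm\cu$-invariant element of $F^\pb_{i+1}$.

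The missing ingredient is that $(F^\pb_\bullet)^{\Gmm\cu}$ is acyclic. This subcomplex is spanned by the constant tuples, since a tuple with an entry $\cu'\neq\cu$ has trivial stabilizer in $\Gmm\cu$. Acyclicity holds because the cone operator $(\cu_0,\ldots,\cu_i)\mapsto(\bu,\cu_0,\ldots,\cu_i)$ with $\bu=\cu$ is $\Gmm\cu$-equivariant. Concretely, with your $g_\bullet$ you can take $s_i(\cu,\ldots,\cu)=-(\cu,\ldots,\cu)$ (a tuple of length $i+2$) for odd $i$, and $s_i(\cu,\ldots,\cu)=0$ for even $i$. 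A direct check against $\partial_i(\cu,\ldots,\cu)$ confirms this.

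With these fillings, and with free orbits handled as you describe, you obtain $\Gm$-equivariant homotopy equivalences between $F^\tess_\bullet$ and $F^\pb_\bullet$ over $\CC$. Applying $\mathrm{Hom}_{\CC[\Gm]}(\cdot,V)$ then gives the isomorphism. This is a more complete argument than the paper's.
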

A consequence is that $H^i_\pb(\Gm;V)=\{0\}$ for $i>\ds$.
\begin{proof}We will choose $\CC[\Gm]$-linear chain maps
$a_\bullet : F^\pb_\bullet \rightarrow F^\tess_\bullet $ and
$b_\bullet: F^\tess_\bullet \rightarrow F^\pb_\bullet$, and
$\CC$-linear homotopies $k_\bullet$ and $h_\bullet$ showing that
$b_\bullet \circ a_\bullet$ is homotopic to the identity in
$F^\pb_\bullet$ and that $a_\bullet \circ b_\bullet$ is homotopic to
the identity in $F^\tess_\bullet$. Explicitly, these four systems of
maps have to satisfy the following relations.
\begin{align}\label{acm}
\partial_i a_i \xi &= a_{i-1} \partial_i \xi && \text{ for }\xi \in
\Fpb i,\; i\geq 0\,,
\displaybreak[0]\\
\label{bcm}
\partial_i b_i \eta &= b_{i-1}\partial_i \eta && \text{ for }\eta \in
F^\tess_i,\; i\geq 0\,,
\displaybreak[0]\\
\label{bak}
b_i a_i \xi - \xi&= \partial_{i+1} k_i \xi + k_{i-1} \partial_i \xi
&&\text{ for }\xi \in \Fpb i,\; i\geq 0\,,
\displaybreak[0]\\
\label{abh}
a_i b_i \eta -\eta &= \partial_{i+1} h_i \eta +h_{i-1} \partial_i
\eta&& \text{ for }\eta \in F^\tess_i,\; i\geq0\,.
\end{align}

We start with $\Gm$-equivariant maps $a_0$ and $b_0$. We can make the
easy choice $a_0 \cu = \cu$ for all $\cu \in \Cu$. With the
interpretation of $\partial_0$ as the augmentation, this satisfies
\eqref{acm}, if we take $a_{-1}$ as the identity map in the trivial
$\Gm$-module $\CC$.

We choose representatives $\cu_1,\ldots , \cu_n$ of the $\Gm$-orbits of
cusps, and representatives $p_1,\ldots, p_m$ of the $\Gm$-orbits in
$\X_0^{\tess,Y}$. We take $b_0 \, \cu_r = \cu_r$ for $1\leq r\leq n$
and $b_0 \, p_s = \cu_1$ for $1\leq s \leq m$. This determines
$b_0 : F^\tess_0 \rightarrow \Fpb 0$ as a $\CC[\Gm]$-linear map, which
satisfies \eqref{bcm} for $i=0$, if we take $b_{-1}$ as the identity
map in~$\CC$.

\rmrk{Start of the induction} For $i=0$ we get from \eqref{bak} for all
$\cu\in \Cu$:
\[ 0=\cu- \cu = \partial_1 k_0 \cu + k_{-1} \partial_0 \cu= \partial_1
k_0 \cu + k_{-1}\, 1\,.\]
We use that $\partial_0$ is the augmentation, and sends all cusps $\cu $ to $ 1
\in \CC$ for all cusps $\cu$. 
We can satisfy this by taking $k_0 = k_{-1} = 0$.

Not all maps in \eqref{bak} and \eqref{abh} are necessarily
$\CC[\Gm]$-linear, and we have to let $\xi$ and $\eta$ run through all
$0$-cells. From \eqref{abh} we get for $i=0$
\begin{align*}
0 = a_0b_0 (\gm \cu_r) - \gm \cu_r &= \partial_1 h_0 (\gm c_r) +
h_{-1}1\,,\\
\gm\cu_1 - \gm p_s = a_0 b_0(\gm p_s) - \gm p_s &=\partial_1 h_0 (\gm
p_s) + h_{-1} \,1\,.
\end{align*}
Here we can take $h_{-1}=0 $ and $h_0(\cu) = 0$ for all $\cu\in \Cu$. We
take $-h_0 (\gm p_s)$ as a path in $ \ZZ\bigl[ \X^\tess_1]$ along edges
  in $X^\tess_1$ from $\gm p_s$ to $\gm \cu_1$ for
$\gm p_s \in \X^\tess_0$.

\rmrk{Induction steps} Now we assume that we have made choices so that
\eqref{acm}--\eqref{abh} are satisfied for all values of $i$ satisfying
$0 \leq i \leq j$.

Relation \eqref{acm} requires that
$\partial_{i+1} a_{i+1} \xi = a_i \partial_{i+1} \xi$.
This would imply that
\[ \partial_i a_i \partial_{i+1}\xi = \partial_i \partial_{i+1} a_{i+1}
\xi = 0 \, a_{i+1}\xi = 0\,.\]
By the acyclicity of the complex, there exist $\thv \in \Fpb {i+1}$
such that \[ \partial_{i+1}a_{i+1}\xi = a_i \partial_{i+1}\xi =
\partial_{i+1} \thv\,.\]
So we can take $ a_{i+1} \xi$ equal to this solution $\thv$. To get
$\Gm$-equivariance, we do this for each $\xi$ in a system of
representatives of $\Gm \backslash \Cu^{i+2} $, and extend this to a
definition of $a_{i+1}$. We note that this procedure is not
constructive, and needs the axiom of choice.

For $b_{i+1}$ we follow the same approach. For $i+1 >\ds$ the map
$b_{i+1}$ is automatically zero. For $i+1\leq \ds$, the sets
$\Gm\backslash\X^\tess_{i+1}$ are finite. (This illustrates an
advantage of the use of tessellations.)

We turn to the relation
\[ \partial_{i+2} k_{i+1}\xi = b_{i+1}a_{i+1}\xi - \xi - k_i
\partial_{i+1}\xi\]
for all $\xi \in \Cu^{i+2}$. We check that the right-hand side is in the
kernel of $\partial_{i+1}$. Hence we can pick a solution $k_{i+1} \xi$,
and extend it $\CC$-linearly to $\Fpb {i+1}$. For $h_{i+1}$ we proceed
similarly.
\end{proof}

\subsubsection{Proof of Proposition \ref{prop-H0d}}\label{prf-H0d}
We use a standard tessellation $\tess_\st$, and compute the parabolic
cohomology as $H^i\bigl( F^{\tess_\st}_\bullet;V)$.

For $i=0$ in \eqref{Hexpl} we note that any two points
$p_1,p_2\in \X^{\tess, Y}_0$ are connected by a path in
$\ZZ[\X^{\tess,Y}_1]$ along edges of the tessellation. If
$c\in Z^0\bigl(F^{\tess,Y}_\bullet ;V\bigr)$ then $c(p_1) = c(p_2)$ if
$p_1$ and $p_2\in X^{\tess,Y}_0$ are connected by an element of
$\X^{\tess,Y}_1$. So $c$ is a constant function on $\X^{\tess,Y}_0$.
Since $\Gm$ acts freely on $\X^{\tess,Y}_0$ the constant value is
$\Gm$-invariant. In dimension $0$ the space of coboundaries is zero. So
$H^0\bigl( F^\tess_\bullet;V\bigr) \cong V^\Gm$.

For $F^\tess_\bullet$ we can give a similar argument. The inclusion
$\X^{\tess,Y}_0 \subset \X^\tess_0$ induces a map sending a constant
function on $\X^\tess_0$ to its restriction to
$\X^{\tess,Y}_0$.\smallskip

Now let $i=\ds$. We use the generating $\ds$-cells indicated in part
\eqref{gen} of Proposition~\ref{prop-sttess}.

Let $c\in Z^\ds\bigl( F^{\tess,Y}_\bullet;V)$. It is determined by its
value on the $\ds$-cell $\fd_0$, which is the sole generator of
$\X^{\tess,Y}_\ds$. We can add a coboundary $b$ by giving an arbitrary
value on one boundary component $y_0$ of the first type, extending it
$\Gm$-equivariantly to its $\Gamma$-orbit, and by taking $ b(z)=0$ for $z$ in 
all other $\Gm$-orbits of $(\ds-1)$-cells. 
In this way we can add to
$c(\fd_0)$ an arbitrary value in $V$. This implies that
$H^\ds\bigl( F^{\tess,Y}_\bullet;V\bigr)=\{0\}$.

The cocycles in $Z^\ds\bigl( F^\tess_\bullet;V)$ are determined by their
values on $\fd_0$ and by the cells $\fd_a$, $a\in \Gm\backslash\Cu$.

For each $a\in \Gm\backslash \Cu$ the cell $\fd_a$ has boundary
components in common with $\fd_0$. Let $Y\in \X^\tess_{\ds-1}$ be one
of these components. We define a coboundary $db$, giving $b(Y)$ an
arbitrary value $\bt\in V$, extending this by
$b(\gm Y) = \tau(\gm) b(Y)$ to the orbit of $Y$, and taking $b$ equal
to zero on all other $\Gm$-orbits in $\X^\tess_{\ds-1}$. (Here we use
$\tau$ to indicate the representation of $\Gm$ in~$V$.) Since $Y$
occurs in $\partial \fd_a$ and in $\partial \fd_0$ with opposite signs,
we can move in this way the value of $c(\fd_a)$ to $\fd_0$. In this way
we can replace $c$ by a cocycle $c'$ in its cohomology class that
satisfies $c'(\fd_a)=0$ for all $a\in \Gm\backslash \Cu$. So the
cohomology class of $c'$ is determined by $c'(\fd_0)$.

We still have the freedom of adding a coboundary $db$ such that $b$ has
an arbitrary value $v$ on $Y\in B(\fd_0)$ that is not contained in a
horosphere. There is a boundary identification by some $\gm\in \Gm$
which adds $\tau(\gm) v-v$ to $c'(\fd_0)$. Since $\fd_0$ is a
fundamental domain for $\Gm$ in $\SY$, this changes $c(\fd_0)$ by a
trivial coinvariant. We may also change $c'$ by adding a coboundary
based on $(\ds-1)$-cells in the intersection of $\fd_0 \cap \fd_a$ for
some class $a$ of cusps. However, if we want to keep $c'(\fd_a)$ equal
to zero, this does not change the value $c(\fd_0)$.

Thus, we conclude that the cohomology class is determined by
$c'(\fd_0)\in V \bmod \sum_\gm \bigl( \tau(\gm)-1\bigr) V = V_\Gm$.
\qed

\subsection{Proof of Proposition~\ref{prop-pcshc}}\label{prf-pcshc}For
each $\Gm$-module $V$ we consider $ V\times \Sast$ as a constant sheaf
over $\Sast$. The quotient $\Q[V]$ determines a locally constant sheaf
on $\Gm\backslash\Sast$. We have to show that $H^j_\pb (\Gm;V) $ is
isomorphic to the sheaf cohomology space
$H^j\bigl( \Gm\backslash \Sast;\Q[V]\bigr)$.

We follow the approach of the proof of Proposition 11.8 in \cite{BLZ15},
with the references given there. The fact that here we do not use mixed
parabolic cohomology gives a simplification. For each $0$-cell $p$, we
form the open set $\Om_p$ that is the union of $\{p\}$ and of the
interiors (as defined in Definition~\ref{def-tess}
\tlref{tess:i}{tess:ia}) $\mathring X$ for all
$X \in \bigcup_{1\leq i\leq \ds} \X^\tess_i$ that contain $p$ . We use
a refinement $\tess$ of the standard tessellation $\sttess$ with the
property that for all $x\in \Om_p \cap S$ and all
$\gm \in \Gm\setminus\{e\}$ the point $\gm x$ is not in~$\Om_p$. If
$p=\cu\in \Cu$ then $\Om_\cu$ is contained in the union of the open
horoball $B^\ast_\cu(Y) \setminus H_\cu(Y)$, which does not contain
other cusps.

Let $\pi:\Sast \rightarrow \Gm\backslash\Sast$ be the natural
projection. The set
$\mathfrak{A} = \bigl\{ \pi \Om_p \;:\; p\in \X^\tess_0\bigr\}$ is a
finite covering of $\Gm\backslash \Sast$. Proceeding as in \cite{BLZ15}
this gives an isomorphism with \u{C}ech cohomology spaces:
\[ H^i_\pb\bigl( \Gm;V\bigr) \cong \check H^i\bigl(\mathfrak{A};
\Q[V]\bigr)\,.\]
See, eg, \cite[Chap III, \S4]{Harts} for \u{C}ech cohomology. Leray's
theorem \cite[Exerc. 4.11]{Harts} gives
\[H^i_\pb(\mathfrak{A};\Q[V]) = \check H^i(\Gm\backslash\Sast; \Q[V])\]
if $H^k(U;\Q[V]|_U)=\{0\}$ for all $k\geq 1$ for all intersections $U$
of elements of $\mathfrak{A}$.

If $U $ is in $\pi S$, the sheaf $\Q[V]$ is the constant sheaf $V$
on~$U$, and $H^k(U, \Q[V])=0$ for all $k\geq 1$.

Finally, suppose that $U$ contains the image $\pi \cu$ of a cusp. Then
$\Q[V]$ restricted to $U\setminus\{\cu\}$ is the constant sheaf $V$,
and we have the short exact sequence
\be\label{exs}
0 \rightarrow \Q[V]|_U \rightarrow V \rightarrow j_\ast(V / V^{\Gmm\cu})
\rightarrow 0\,, \ee
where $j: \{\cu\}\rightarrow U$ is the natural injection, and $j_\ast$
is the direct image functor determined by $j$, defined on \cite[p
65]{Harts}. To see this observe that the stalks at
$u\in U\setminus\{\cu\}$ form the sequence
\[ 0 \rightarrow V \rightarrow V \rightarrow 0 \rightarrow 0 \,,\]
and the stalk at $p$ gives
\[ 0 \rightarrow V^{\Gmm \cu} \rightarrow V \rightarrow V/V^{\Gmm \cu}
\rightarrow 0\,.\]
The sheaf $j_\ast( V/V^{\Gm\cu})$ is a skyscraper sheaf with support
$\{\cu\}$. To \eqref{exs} corresponds a long exact sequence that starts
with
\[\xymatrix{0 \ar[r]& H^0(U;(\Q[V])|_U ) \ar[r]\ar[d]^=& H^0(U;V)
\ar[r]\ar[d]^=& H^0(U;j_\ast(V/V^{\Gmm \cu}) )\ar[r]\ar[d]^{\cong} &
\cdots\\
0 \ar[r] & V^{\Gmm \cu}\ar[r] &V \ar[r] &V/V^{\Gmm\cu}\ar[r]& 0} \]
So the next part starts with
\[\xymatrix{ 0 \ar[r]& H^1(U;(\Q[V])|_U ) \ar[r] & H^1(U;V) \ar[r]
\ar[d]^=& \cdots\\
&&0 }\]
The constant sheaves have zero cohomology spaces in dimension $1$ and
larger. We see that $H^1(U;(\Q[V])|_U )=0$.

For $i\geq 2$
\[\xymatrix{H^{i-1}\bigl(U;j_\ast(V/V^{\Gmm \cu}) \bigr)\ar[r] \ar[d]^=&
H^i(U;
(\Q[V])|_U ) \ar[r]& H^i(U;V) \ar[r] \ar[d]^=&
\\
0&&0 }\]
In dimensions $i\geq 1$ the cohomology spaces of skyscraper sheaves are
zero. We conclude that $H^i(U ;(\Q[V])|_U)=0$ for all $i \geq 1$. This
gives the last ingredient to get the desired isomorphism.\qed

%% file: ccro2-5-ps.tex

\bigskip

\def\flnm{ccro2-5-ps}

\section{Spherical principal series}\label{sect5}
We will use the spherical principal series representations of $G$. In
particular, we need the realization in functions on $\partial S$
discussed in \S\ref{sect5-prs-bd}.

\subsection{The spherical principal series of \intitle{G}} The
\il{ps}{principal series}{\em principal series} $H^{\xi,\nu\al }_0$
depends on a spectral parameter $\nu \in \CC$ and a representation
$\xi$ of~$M$
(the centralizer of $A$ in $K$)
in a finite-dimensional vector space $V$. The space $H^{\xi,\nu\al}_0$
consists of the continuous functions $f:G\rightarrow V$ that satisfy
\be\label{fps} f\bigl( nm\am(t) g ) = t^{\rho+\nu}\, \xi(m) \,
f(g)\qquad(n\in N,\; t>0,\; m\in M)
\,,\ee
and becomes a representation of $G$ with the action $R$ by right
translation. In $t^{\rho+\nu} =\am(t)^{(\rho+\nu) \al} $ we tacitly
identify the number $\rho+\nu \in \CC$ with
$(\rho+\nu)\al \in \CC \al $, the space of linear forms on $\alie$.

In this paper we use the spherical principal series representations
$H^{1,\nu \al}_0$, in which $\xi$ is the trivial representation of $M$
in $V=\CC$. We denote
\il{Hnu}{$H^\nu_0 = H^{1,\nu\al}_0$}$H^\nu_0 = H^{1,\nu\al}_0$. One may
impose more strict regularity conditions on the functions
$f \in H^\nu_0$. We will use $H^\nu_\infty \subset C^\infty(G)$,
$H^\nu_\om \subset C^\om(G)$
(the space of (real-)analytic functions).

The space \il{Hnuio0K}{$ H^\nu_\infty,\; H^\nu_\om,\; H^\nu_K$}$H^\nu_K$
consists of $K$-finite functions. For each $f\in H^\nu_K$ the elements
$R(k) f$ with $k\in K$ lie in a finite-dimensional subspace. The Lie
algebra $\glie$ acts in $H^\nu_K$ by right differentiation. The
restriction of this action to $\klie$ corresponds to an action of $K$
by right translation.

The Casimir element $\Cas$ in the center of the universal enveloping
algebra $U(\glie)$ was discussed in \S\ref{sect2-Cas}.
\begin{prop}\label{prop-eivCasps}
The operator $R(\Cas)$ acts in $H^\nu_\infty $ as multiplication by the
factor $\nu^2-\rho^2$. \end{prop}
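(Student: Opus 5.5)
Since $\Cas$ is central in $U(\glie)$, $R(\Cas)$ commutes with right translations $R(g)$, $g\in G$. Using the Iwasawa decomposition $G=NAK$, a function $f\in H^\nu_\infty$ is determined by its restriction to $K$, and every function of the form $f(nm\am(t)g)=t^{\rho+\nu}f(g)$ is left $N$-invariant with a prescribed $MA$-behaviour. The plan is to first compute $R(\Cas)$ on the distinguished spherical vector $\phi_\nu(n\am(t)k)=t^{\rho+\nu}$, and then to pass to all of $H^\nu_\infty$ using an expression of $R(\Cas)$ through left derivatives.

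The cleanest route is to use that $\Cas$ is also invariant under the antipode and under $\Ad(G)$, so $R(\Cas)=L(\Cas)$ as differential operators on $C^\infty(G)$. (The element $\Cas^\vee=\sum_j\XX_j'\XX_j=\Cas$ gives $L(\Cas)$ with the same sign, since each term has two factors and $L(\XX)$ involves $-\XX$.) Thus it suffices to compute $L(\Cas)f$ for $f\in H^\nu_\infty$, using the left transformation law \eqref{fps}.

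Write $\Cas$ in the basis \eqref{basX}, \eqref{basXd}:
\[
\Cas=\H_0^2-\sum_j\UU_j^2-\sum_i\bigl(\XX_i\,\tht\XX_i+\tht\XX_i\,\XX_i\bigr).
\]
Rewrite $\tht\XX_i\,\XX_i=\XX_i\,\tht\XX_i-[\XX_i,\tht\XX_i]$, and use \eqref{bchXX} to get $[\XX_i,\tht\XX_i]=-\H_0$ or $-2\H_0$. This gives
\[
\Cas=\H_0^2-\sum_j\UU_j^2-2\sum_i\XX_i\,\tht\XX_i-(p+2q)\H_0
=\H_0^2-2\rho\H_0-\sum_j\UU_j^2-2\sum_i\tht\XX_i\,\XX_i,
\]
where the second form comes from moving $\XX_i\in\nlie$ to the right instead. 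For the left action, left $N$-invariance kills $L(\XX_i)$. Since $L$ is an anti-homomorphism of $U(\glie)$ up to sign, the ordering must be arranged so that the $\XX_i$ act first. I will fix the correct ordering and confirm it by the consistency check on $\phi_\nu$. The $M$-invariance kills $L(\UU_j)$, and $L(\H_0)f=-(\rho+\nu)f$ because $f(\am(e^{-s})g)=e^{-s(\rho+\nu)}f(g)$. The surviving terms then give $(\rho+\nu)^2\pm2\rho(\rho+\nu)$, which must equal $\nu^2-\rho^2=(\rho+\nu)^2-2\rho(\rho+\nu)$. The sign bookkeeping of $L$ versus $R$ and of the $\rho$-shift coming from $[\XX_i,\tht\XX_i]$ is the main obstacle. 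I would resolve it by checking directly with $R$ on $\phi_\nu$ via \eqref{RCas-Kinv}, applied at points $n\am(t)$. There $R(\H_0)\phi_\nu=(\rho+\nu)\phi_\nu$ and $R(\XX_i)\phi_\nu=0$, since $\am(t)\exp(s\XX_i)=\exp(st\XX_i)\am(t)$ (up to the root scaling) lies in $N\am(t)$. This yields $(\rho+\nu)^2-2\rho(\rho+\nu)=\nu^2-\rho^2$ at these points, and right $K$-invariance extends the identity to all of $G$.

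An alternative that avoids the left-action signs is the following. $R(\Cas)$ preserves $H^\nu_\infty$, since it commutes with left translations, and $H^\nu_\infty$ is defined by a left transformation law. The $K$-finite vectors in $H^\nu_K$ are generated as a $U(\glie)$-module by the spherical vector $\phi_\nu$ only for generic $\nu$. So rather than rely on that, I would use the $L(\Cas)=R(\Cas)$ computation above, which acts pointwise and gives the eigenvalue $\nu^2-\rho^2$ on every $f\in H^\nu_\infty$ uniformly, calibrated by the explicit check on $\phi_\nu$.
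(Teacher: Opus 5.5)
Your proof is correct, and it takes a genuinely different route from the paper.

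\textbf{Comparison with the paper.} The paper gets that $R(\Cas)$ acts by a scalar on $H^\nu_\infty$ from irreducibility for a dense set of $\nu$ (Kostant), a Schur-type argument, and a holomorphic-family continuation in $\nu$. Only then does it identify the scalar, by evaluating $R(\Cas)f$ at $e$. You instead get scalarity for every $\nu$ at once, from $R(\Cas)=L(\Cas)$ and the left transformation law \eqref{fps}.

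The identity $R(\Cas)=L(\Cas)$ deserves one line of justification:
\begin{itemize}
\item Both operators commute with all $L(g)$; for $L(\Cas)$ this holds because $\Ad(g)\Cas=\Cas$.
\item Hence each is determined by $f\mapsto (Df)(e)$.
\item One has $(L(u)f)(e)=(R(\check{u})f)(e)$ for the principal anti-automorphism $u\mapsto\check{u}$ of $U(\glie)$, and $\check{\Cas}=\Cas$.
\end{itemize}

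Your route avoids Schur's lemma on the Fr\'echet space $H^\nu_\infty$, Kostant's irreducibility theorem, and the continuation argument, which the paper only indicates. It treats the reducible points on the same footing as the others. It also shows more: $R(\Cas)$ is multiplication by $\nu^2-\rho^2$ on every smooth function satisfying \eqref{fps}. The paper's proof is shorter only because it imports scalarity from representation theory. Its evaluation at $e$ is, via $u\mapsto\check{u}$, the same Lie-algebra bookkeeping as yours.

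\textbf{One correction.} Your second displayed form of $\Cas$ has the wrong sign on the $\rho$-shift. Moving $\XX_i$ to the right uses $\XX_i\,\tht\XX_i=\tht\XX_i\,\XX_i+[\XX_i,\tht\XX_i]$, and \eqref{bchXX} then gives
\[
\Cas=\H_0^2+2\rho\H_0-\sum_j\UU_j^2-2\sum_i\tht\XX_i\,\XX_i .
\]
This is the image of your first form under $u\mapsto\check{u}$, as it must be.

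Also, the phrase ``anti-homomorphism up to sign'' is misleading. In the paper's convention $L(g_1g_2)=L(g_1)L(g_2)$, so $L$ is an algebra homomorphism of $U(\glie)$. Thus $L(\tht\XX_i\,\XX_i)=L(\tht\XX_i)L(\XX_i)$, and the corrected form above is exactly the one in which $\XX_i$ acts first. With $L(\H_0)f=-(\rho+\nu)f$ it gives directly
\[
L(\Cas)f=\bigl((\rho+\nu)^2-2\rho(\rho+\nu)\bigr)f=(\nu^2-\rho^2)f ,
\]
so the ``$\pm$'' and the calibration are unnecessary.

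As written, your calibration still closes the argument legitimately. The $L$-computation gives one scalar valid for all $f\in H^\nu_\infty$. Your check $R(\Cas)\Ph_\nu=(\nu^2-\rho^2)\Ph_\nu$ via \eqref{RCas-Kinv} is correct, and $\Ph_\nu\neq0$. Still, the sign should not be fixed by saying it ``must equal'' the target, and the incorrect displayed identity should be removed.
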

This parametrization, by the spectral parameter $\nu$, explains the use
of $\rho^2-\nu^2$ as eigenvalue of the Laplace operator $\Dt$,
corresponding to $-R(\Cas)$, in Definition \ref{def-af} of automorphic
forms.

See \S\ref{sect5-prfs} for a proof of this proposition.\smallskip

\begin{prop}[Kostant, {\cite[Theorem 2]{Kost69}}] \label{prop-irr-ps}
$H^\nu_K$ is an irreducible $(\glie,K)$-mod\-ule unless
\be
\begin{cases}
\nu+ \rho\in\ZZ \quad\text{ and }\quad \nu \not\in (-\rho,\rho)&\text{
if }q=0\,,\\
\nu + \rho \in 2\ZZ \quad\text{ and }\quad \nu \not \in \bigl( -\tfrac
p2-1,\tfrac p2+1\bigr)
&\text{ if }q\geq 1\,.
\end{cases}
\ee
\end{prop}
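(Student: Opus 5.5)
The statement is Kostant's theorem and the paper takes it from \cite{Kost69}; if I were to reprove it I would follow Kostant and Johnson--Wallach \cite{JoWa77} and reduce everything to an explicit computation on $K$-types. The first step is the $K$-type structure. By Frobenius reciprocity, restriction to $K$ identifies $H^\nu_K$ with the $K$-finite functions on $K/M$, independently of~$\nu$. Since $(K,M)$ is a Gelfand pair in rank one, every $K$-type occurs with multiplicity at most one. The $M$-spherical $K$-types can be parametrized by pairs $(l,m)$ of integers in a cone. For $q=0$ this is a single index $l\in\ZZ_{\geq0}$, corresponding to spherical harmonics on the sphere $K/M$. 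For $q\geq1$ one has $l\geq m\geq 0$ with a parity condition, coming from the fibration of $K/M$ over a projective space.

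The second step is the $\glie$-action. Since $\glie=\klie\oplus\plie$ and $\plie\otimes\tau_{(l,m)}$ decomposes into $K$-types adjacent to $(l,m)$, namely $(l\pm1,m\pm1)$ and the like, an element of $\plie$ maps the $(l,m)$-isotypic component into the sum of the neighbouring components. Each such component is one-dimensional in the $M$-spherical sense, so the relevant map is a scalar $c_{\pm}(l,m;\nu)$. I would compute these scalars by evaluating on the $M$-fixed (zonal) vectors at the identity, using the Iwasawa decomposition $\VV i=\sqrt2\,\XX_i-\WW i$ together with the transformation rule~\eqref{fps}. For $f\in H^\nu_\infty$ one has $R(\H_0)f(e)=(\rho+\nu)f(e)$ and $R(\XX_i)f(e)=0$. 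Hence the $\plie$-action on zonal vectors reduces to $\klie$-derivatives and a multiplication by $\rho+\nu$, and each coefficient comes out as an affine function of $\nu$, of the shape $\nu+\rho+2l$ or $\nu-\rho-2l$ in the first index, with analogous expressions involving $q$ and $m$ in the second index.

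The third step turns this into irreducibility. A $(\glie,K)$-submodule is a sum of $K$-isotypic components, so it corresponds to a set $\Sigma$ of lattice points $(l,m)$ that is closed under all transitions with nonzero coefficient. The module is irreducible exactly when the graph on the cone, with edges where $c_\pm\neq0$, is strongly connected. I would go through the zero sets of the coefficients to check when a ``wall'' appears, that is, an edge that vanishes in one direction while its reverse does not, or a whole line of vanishing edges. This happens exactly under the stated integrality conditions: $\nu+\rho\in\ZZ$ for $q=0$, and $\nu+\rho\in2\ZZ$ for $q\geq1$, where the parity constraint on $(l,m)$ accounts for the factor $2$. The intervals $(-\rho,\rho)$ and $\bigl(-\tfrac p2-1,\tfrac p2+1\bigr)$ are the ranges in which the vanishing coefficients would only occur at indices outside the cone, in particular at negative values of $l$ or $m$, so no wall is created there.

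I expect the main obstacle to be the explicit computation of the coefficients in the case $q\geq1$. There the $K$-types are indexed by two parameters, and $\glie_{2\al}$ contributes transitions that change $m$, so the branching of $\plie\otimes\tau_{(l,m)}$ and the normalization of the zonal vectors must be handled with care. To avoid doing this case by case, I would first try the Johnson--Wallach approach: compute the eigenvalues of the Casimir of $K$ and use $R(\Cas)=\nu^2-\rho^2$ (Proposition~\ref{prop-eivCasps}) to obtain the products $c_+(l,m;\nu)\,c_-(l+1,m+1;\nu)$ directly. Only the zero sets of these products are needed, and the uniform rank-one structure from \S\ref{sect2} should be enough to obtain them.
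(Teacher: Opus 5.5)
The paper gives no proof of this statement. It cites \cite[Theorem~2]{Kost69} and only supplies the dictionary in Table~\ref{tab-irr-ps}, so a reproof through $K$-type transitions in the style of \cite{JoWa77} is necessarily a different route. Your frame is the right one: multiplicity one, $\plie$ linking neighbouring $K$-types, and a submodule being a union of $K$-types closed under nonzero transitions. Since only irreducibility is asserted, you do not need the ``wall'' analysis; connectivity of the neighbour graph and nonvanishing of every transition scalar off the exceptional set suffice.

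The gap is that this one substantive step, determining the individual transition scalars, is never carried out, and neither of your two mechanisms can carry it out.
\begin{itemize}
\item Evaluating zonal functions at $e$ with $R(\H_0)f(e)=(\rho+\nu)f(e)$ and $R(\XX_i)f(e)=0$ yields a single number, $\sum_{\tau'}c(\tau\to\tau')=\rho+\nu$ (normalising $\phi_{\tau'}(e)=1$). It does not give the separate scalars.
\item The fallback via $R(\Cas)=\nu^2-\rho^2$ gives one identity per $K$-type, $\sum_{\tau'}\kappa(\tau,\tau')\,c(\tau\to\tau')\,c(\tau'\to\tau)=\nu^2-\rho^2+(\text{$K$-Casimir term})$, with $\nu$-independent constants $\kappa$. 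This can suffice when the $K$-types form a chain ($q=0$). For $q\geq1$, each $K$-type has up to four neighbours, so there are roughly two edges per vertex against one equation. The individual products, and hence their zero sets, are not determined.
\item Your guessed shape is wrong for $q=0$. The correct scalars are $\nu+\rho+l$ upward and $\nu-\rho-l+1$ downward. The form $\nu+\rho+2l$ would predict irreducibility at $\nu=-\rho-1$, where $H^\nu_K$ is reducible.
\end{itemize}

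The missing idea is to compute at every $k\in K$ and recognise the $\nu$-independent part of $R(X)$ as a commutator.
\begin{itemize}
\item For $X\in\plie$ write $\Ad(k)X=m_X(k)\H_0+\sum_ia_i(k)\VV i$, with $m_X(k)=B(\Ad(k)X,\H_0)$. Using $\VV i=\sqrt2\,\XX_i-\WW i$ and \eqref{fps}, on restrictions to $K$ you get $R(X)=(\rho+\nu)m_X-\sum_ia_i\,\partial_{\WW i}$, where $\partial_Zf(k)=\frac{d}{ds}f(\exp(sZ)k)|_{s=0}$.
\item From $[\WW i,\H_0]=-j_i\VV i$, with $j_i=1$ for $i\leq p$ and $j_i=2$ for $i>p$, you get $\partial_{\WW i}m_X=j_ia_i$.
\item Put $\mathcal D=\sum_{i\leq p}\partial_{\WW i}^2+\frac12\sum_{i>p}\partial_{\WW i}^2$. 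It is $\Ad(M)$-invariant, so by multiplicity one it acts by a scalar $\delta_\tau$ on each $K$-type of $C^\infty(M\backslash K)_K$. A direct computation gives $\mathcal D m_X=-2\rho\,m_X$.
\end{itemize}
With $P_\tau$ the projection onto the $\tau$-isotypic part, this yields
\[ R(X)=\nu\,m_X-\tfrac12\,[\mathcal D,m_X]\,,\qquad P_{\tau'}R(X)P_\tau=\bigl(\nu-\tfrac12(\delta_{\tau'}-\delta_\tau)\bigr)\,P_{\tau'}m_XP_\tau\,. \]
This gives every scalar separately, without needing $\dim\mathrm{Hom}_K(\plie\otimes V_\tau,V_{\tau'})=1$, which your ``one-dimensional in the $M$-spherical sense'' step tacitly assumed. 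The graph $\{P_{\tau'}m_XP_\tau\neq0\}$ is symmetric and connected. The reason is that the $m_X$ are the coordinates of the embedding $M\backslash K\hookrightarrow\plie$, $Mk\mapsto\Ad(k^{-1})\H_0$, and so generate all $K$-finite functions.

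For $q=0$, $\mathcal D$ is the round Laplacian and you recover the scalars above. For $q\geq1$ one has $\mathcal D=\Omega_K-\frac12\Omega_{K_1}$, where $\Omega_K$ and $\Omega_{K_1}$ are the Casimirs (for $-B$) of $\klie$ and of $\klie_1=\klie\cap(\glie_{-2\al}\oplus\glie_0\oplus\glie_{2\al})$. Computing $\delta_\tau$ and the adjacencies from this is the real work that remains. Also, for $SU(n,1)$ the fibre index of the $M$-spherical $K$-types takes both signs, so your cone $l\geq m\geq0$ is too small there.
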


We recall that $p$ and $q$ are the dimensions of $\glie_\al$ and
$\glie_{2\al}$, and that $\rho=\frac12p+q$; see~\S\ref{subs2-Lie-alg}.
In the region $i\RR \cup [-\rho,\rho]$ in which spectral parameters
$\nu$ of cusp forms can occur, the endpoints $\nu = \pm \rho$
correspond to reducibility of $H^\nu_K$ if $q=0$ or~$1$. If $q\geq 2$,
then reducibility occurs for $\pm \nu= \rho-2\ell$ with $\ell\in \ZZ$
and $0\leq 2\ell \leq q-1$.

Table~\ref{tab-irr-ps} may be helpful to compare the notation in
Kostant's statement with our notation.
\begin{table}[ht]\renewcommand\arraystretch{1.2}
\begin{center}\begin{tabular}{|c|c|c|}\hline
Kostant \cite[\S2]{Kost69}&\multicolumn{2}{|c|}{here}\\ \hline
$\Ld_+^1$ & \multicolumn{2}{|c|}{$\{\al\}$}\smallskip \\
$w_\al = w_{-\al}$ & \multicolumn{2}{|c|}{$\al$}\\
$\ld-\rho=\langle \ld-\rho, w_\al\rangle$ &
\multicolumn{2}{|c|}{$\nu$}\\
$m_\al $& \multicolumn{2}{|c|}{$\rho=\frac p2+q$}\\ \hline
& $q=0$ & $ q\geq 1$ \\ \hline
$n_\al $& $1$ & $2$ \\
$T_\al$&$ \bigl( -\tfrac p2,\tfrac p2 \bigr) $&
$ \bigl( -\tfrac p2-1,\tfrac p2 +1\bigr) $
\\ \hline
\end{tabular}\end{center}
\caption{Corresponding notations.}\label{tab-irr-ps}
\end{table}

\subsection{The realization on \intitle{\partial S}}\label{sect5-prs-bd}
Functions on $G$ that satisfy \eqref{fps} are determined by their values
on $K$, by the Iwasawa decomposition. In the spherical principal series
they satisfy $f(m k) = f(k)$ for $m\in M$, hence they are functions on
$M\backslash K$. So $k\mapsto f(k^{-1})$ is a function on
$K/M = \partial S$, and the transformation $\Ps_\nu f (k) = f(k^{-1})$
gives a linear bijection from the space $H^\nu_\infty$ to
$C^\infty(\partial S)$. Moreover,
$\Ps_\nu H^\nu_\om = C^\om(\partial S)$.

\rmrk{Functions on \intitle{\partial S}} When dealing with functions on
$\partial S$ we often use that $\partial S \cong K/M \cong G/MAN$.
We use $b$ as a variable on $\partial S$, and write
$f(b) = f(k\,\inft) = f(g\,\inft)$ if $k$ represents $b$ in $K/M$ or if
$g$ represents $b$ in $G/MAN$. Then
$f(g\,\inft)= f\bigl( \kI(g)\,\inft)$. By $db$ we denote the invariant
measure on $\partial S$ such that
$\int_{b\in \partial S}f(b)\, db = \int_{k\in K} f(k\, \inft)\, dk$. We
normalize the Haar measure $dk$ so that $K$ has volume $1$.
\rmrk{Action of \intitle{G}} We put for $g\in G$ and
$k\in K$\il{J}{$J(g,k)$ factor of automorphy on $\partial S$}
\be \label{Jdef}J(g,k\,\inft) =\tI(gk) \in \RR_{>0}^\ast\,,\ee
in the notation of Proposition~\ref{prop-Id}. To see that this is
well-defined we note that $m\in M$ normalizes $N$ and centralizes $A$,
and hence
\[ gk m = \kI(gk) \, \am\bigl( \tI(gk) \bigr) \, \nI(g k ) m \in K m
\am\bigl(\tI(gk) \bigr) N = K \am\bigl(\tI(gk) \bigr) N\,.\]
\begin{prop}\label{prop-autfct} $J(g,b)$ is analytic on $G \times(K/M)$,
and satisfies
\be\label{Jrel} J(g h, b) = J(g ,h\, b) J(h,b)\qquad (g,h\in G,\;
b\in\partial
S)\,.\ee
\end{prop}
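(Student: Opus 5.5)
The plan is to reduce both claims to uniqueness and analyticity of the opposite Iwasawa decomposition $G=KAN$ from Proposition~\ref{prop-Id}\eqref{Id:nak}.

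\emph{Analyticity.} The map $\tI\colon G\to(0,\infty)$ is analytic by Proposition~\ref{prop-Id}, so $(g,k)\mapsto \tI(gk)$ is analytic on $G\times K$. Its value does not change under $k\mapsto km$ with $m\in M$: this is exactly the computation after~\eqref{Jdef}, where $m$ centralizes $A$ and normalizes $N$. Hence the map factors through $G\times(K/M)$. The projection $K\to K/M$ is an analytic submersion with analytic local sections, so the descended function $J$ is analytic on $G\times(K/M)=G\times\partial S$.

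\emph{Cocycle relation.} Let $b=k\,\inft$ with $k\in K$. Write
\[
hk=\kI(hk)\,\am\bigl(\tI(hk)\bigr)\,\nI(hk),
\]
and put $k_1=\kI(hk)$, so that $h\,b=hk\,\inft=k_1\,\inft$. Recall that $MAN$ fixes $\inft$. Next decompose
\[
gk_1=\kI(gk_1)\,\am\bigl(\tI(gk_1)\bigr)\,\nI(gk_1).
\]
Substituting,
\[
ghk=\kI(gk_1)\,\am\bigl(\tI(gk_1)\bigr)\,\nI(gk_1)\,\am\bigl(\tI(hk)\bigr)\,\nI(hk).
\]
Move the $A$-factor to the left past $\nI(gk_1)$: since $A$ normalizes $N$, we have $n\,a=a\,(a^{-1}na)$ with $a^{-1}na\in N$. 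The two $A$-factors then multiply, and because $\am$ is a homomorphism this gives
\[
ghk\in \kI(gk_1)\,\am\bigl(\tI(gk_1)\,\tI(hk)\bigr)\,N.
\]
Uniqueness of the $KAN$ decomposition now yields $\tI(ghk)=\tI(gk_1)\,\tI(hk)$. In the notation of~\eqref{Jdef} this reads $J(gh,b)=J(g,h\,b)\,J(h,b)$, which is~\eqref{Jrel}.

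\emph{Main obstacle.} The only delicate point is the well-definedness and analyticity on the quotient $K/M$ rather than on $K$. This is handled by the $M$-invariance noted after~\eqref{Jdef} together with the existence of analytic local sections of $K\to K/M$, which holds because $M$ is a closed subgroup and so $K/M$ is an analytic homogeneous manifold.
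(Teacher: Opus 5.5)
Your proposal is correct and follows essentially the paper's own proof: analyticity comes from that of $\tI$ in Proposition~\ref{prop-Id}, and the cocycle relation comes from decomposing $hk$ and then $g\,\kI(hk)$ in $KAN$, moving the $A$-factor past $N$, and invoking uniqueness of the decomposition. Your explicit descent from $K$ to $K/M$ via analytic local sections fills in a step that the paper leaves implicit.
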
\begin{proof}Let $ b  =k\, \inft$ with some $k\in K$. Then for
$g\in G$ we have
\[ gk = \kI(gk) \am\bigl( \tI(gk) \bigr) \nI(gk)\in KAN\,,\]
and $\tI(gk) =J(g,k \, \inft)$. This shows, together with
Proposition~\ref{prop-Id}, that $J( g  ,k)$ is analytic in $( g ,k)$.

The action of $G$ on $K/M$ is given by $g \, (k\,\inft) = \kI(gk)$. We
apply the opposite form of the Iwasawa decomposition first to $hk$, and
then to $g\kI(hk)$:
\begin{align*}
g hk &\in g \kI(hk) \am(\tI(hk)) N = \kI\bigl( g\kI(hk) \bigr)
\am\bigl(\tI(g\kI(hk) \bigr) \nI\bigl( g\kI(hk) \bigr) \; \am(\tI(hk))
N \displaybreak[0]
\\
&\subset \kI\bigl( g\kI(hk) \bigr)
\am\bigl(\tI(g\kI(hk) \bigr)\am(\tI(hk))N \subset K \; \am\bigl(\tI(g\kI(hk) \bigr) \; \am(\tI(hk)) N\displaybreak[0]\\
& = K \,
\am\bigl( \tI(g\kI(hk) \, \tI(hk) \bigr) \, N\,,
\end{align*}
 where we use that $A$ normalizes $N$, and that $t\mapsto \am(t)$ is a group
homomorphism $\RR^\ast_{>0}\rightarrow A$. This implies that
$\am\bigl( \tI(g\kI(hk) \, \tI(hk) \bigr) $ is the component in $A$ in
the opposite Iwasawa decomposition of $ghk$, and hence that
\[ \tI(ghk) = \tI\bigl( g\kI(hk) \bigr)\, \tI( hk) \,.\]
This gives \eqref{Jrel}.
\end{proof}

Property \eqref{Jrel} shows that $J$ is a factor of automorphy, and,
since $J$ has positive values, powers $J^z$ with $z\in \CC$ are factors
of automorphy as well. We put for functions $f$ on
$\partial S$\il{taunu}{$\tau_\nu$ action of $G$ on $\V\om\nu$}
\be \label{taunu}
\tau_\nu(g) f:b\mapsto J(g^{-1},b)^{-(\rho+\nu)}\,f(g^{-1}\,
b)\qquad(g\in G,\; b\in \partial S)\,.\ee
Then $\tau_\nu$ is a representation of $G$ in the functions on
$\partial S$.

We have thus obtained the realization of the spherical principal series
in functions on $\partial S$:
\begin{prop}\label{prop-iso-VH}The operators $\tau_\nu(g)$ in
\eqref{taunu} define a representation $\tau_\nu$ of $G$ in each of the
spaces of (real-)analytic functions, and smooth functions on
$\partial S$. The resulting representations $\V \om \nu(\partial S)$
and $\V\infty\nu(\partial S)$
are isomorphic to $H^\nu_\om$ and $H^\nu_\infty$. The isomorphism is
given by the bijective linear operator \il{Psnu}{$\Ps_\nu$}$\Ps_\nu$
that associates to a function $f$ on $K/M$ the function $\Ps_\nu f$ on
$G$ given by
\be \label{iso-VH} (\Ps_\nu f)(n \am(t)k) = t^{\rho+\nu} \,
f(k^{-1})\,.\ee
\end{prop}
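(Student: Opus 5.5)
The plan is to verify directly that $\Ps_\nu$ intertwines $\tau_\nu$ with right translation $R$ on $H^\nu$, and then to read off the representation property and the regularity statements from this intertwining and the earlier results.

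First I check that $\Ps_\nu$ is a well-defined bijection onto the right spaces. For $g=n\am(t)k$ the Iwasawa decomposition is unique, so \eqref{iso-VH} is unambiguous once $f$ is viewed as a function on $K/M$. The translation $k\mapsto k^{-1}$ turns this into a function on $M\backslash K$. Transformation rule \eqref{fps} with trivial $\xi$ holds because $n'm\am(t')\,n\am(t)k = n''\am(t't)\,(mk)$ with $n''\in N$, using that $M$ centralizes $A$ and normalizes $N$, and because $f((mk)^{-1}) = f(k^{-1}m^{-1}) = f(k^{-1})$. The inverse is restriction to $K$ followed by inversion, as already noted at the beginning of \S\ref{sect5-prs-bd}.

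Smoothness and analyticity are preserved in both directions. The maps $\tJ,\kJ$ of Proposition~\ref{prop-Id} are analytic, and $(\Ps_\nu f)(g)=\tJ(g)^{\rho+\nu} f(\kJ(g)^{-1})$. Conversely, restriction to the analytic submanifold $K$ preserves regularity, and $K\to K/M$ is an analytic submersion.

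The main computation is the intertwining identity $\Ps_\nu(\tau_\nu(h)f)=R(h)\Ps_\nu f$. I would write $g=n\am(t)k$ and $kh=\nJ(kh)\am(\tJ(kh))\kJ(kh)$. Using that $A$ normalizes $N$, this gives
\[
(R(h)\Ps_\nu f)(g)=t^{\rho+\nu}\,\tJ(kh)^{\rho+\nu}\,f\bigl(\kJ(kh)^{-1}\bigr).
\]
Next I convert this to opposite-Iwasawa data via \eqref{ktni}: $\kJ(kh)^{-1}=\kI(h^{-1}k^{-1})$ and $\tJ(kh)=\tI(h^{-1}k^{-1})^{-1}$. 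Hence $\kJ(kh)^{-1}\inft = h^{-1}\,(k^{-1}\inft)$ and $\tJ(kh)=J(h^{-1},k^{-1}\inft)^{-1}$ by \eqref{Jdef}. Setting $b=k^{-1}\inft$, the right-hand side becomes $t^{\rho+\nu}J(h^{-1},b)^{-(\rho+\nu)}f(h^{-1}b)=t^{\rho+\nu}(\tau_\nu(h)f)(b)$, which is exactly $\Ps_\nu(\tau_\nu(h)f)(g)$.

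The representation property of $\tau_\nu$ can be taken either from this intertwining, since $R$ is a representation, or directly from the cocycle relation \eqref{Jrel}. Either way $\tau_\nu(g)$ preserves $C^\om(\partial S)$ and $C^\infty(\partial S)$, because $J$ is analytic and positive (Proposition~\ref{prop-autfct}), so $J^{-(\rho+\nu)}$ is analytic. $H^\nu_\om$ and $H^\nu_\infty$ are $R(G)$-invariant, so the isomorphism statements follow.

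I expect the main obstacle to be bookkeeping of inverses: keeping straight $\kI$ versus $\kJ$, the identification $b=k^{-1}\inft$, and the sign of the exponent. A slip there flips the exponent $-(\rho+\nu)$, so I would double-check with \eqref{ktni} at the step where $\tJ(kh)$ is rewritten as $J(h^{-1},b)^{-1}$.
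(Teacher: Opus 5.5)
Your proposal is correct and follows essentially the paper's route. The bijection comes from the Iwasawa decomposition (restrict to $K$ and invert), and the representation property comes from the cocycle relation \eqref{Jrel} of Proposition~\ref{prop-autfct}; the paper leaves the intertwining $\Ps_\nu\circ\tau_\nu(h)=R(h)\circ\Ps_\nu$ tacit (``we have thus obtained''), and your computation via \eqref{ktni} and \eqref{Jdef} supplies it with the correct exponent $-(\rho+\nu)$.
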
\il{VS}{$\V\om\nu(\partial S),\;\V \infty\nu(\partial S)$
\text{princ. series on } $\partial S$}

\rmrk{A \intitle{G}-equivariant sheaf}If $f$ is an analytic function on
an open subset $I \subset \partial S$, then the formula in
\eqref{taunu} gives the analytic function $\tau_\nu(g)f$ on $ gI$.

\begin{defn}\label{def-Vomsh}
We define \il{Vomsh}{$\V\om\nu$} $\V\om\nu$ as a sheaf on $\partial S$
by defining for open $I \subset \partial S$
the linear space $\V\om \nu(I)$ of analytic functions on~$I$. We make it
into a $G$-equivariant sheaf by the operators
$\tau_\nu(g) : \V \om\nu(I) \stackrel 
{\cong}{\longrightarrow} \V\om\nu( g\, I)$ for $g\in G$. \end{defn}

\subsection{Proof of Proposition~\ref{prop-eivCasps}}\label{sect5-prfs}
We determine the eigenvalue of $R(\Cas)$ in $H^\nu_\infty$. The Casimir
element $\Cas$ is in the center of the universal enveloping algebra
$U(\glie)$. As a differential operator on $G$ it commutes with left and
right translations. The representation $H^\nu_\infty$ is irreducible
for a dense set of values of $\nu$ in $\CC$. So for general values of
$\nu$ the action of $R(\Cas)$ in $H^{\nu}_\infty$ is given by an
eigenvalue $\ld_{\nu}$. Actually, this extends to all $\nu$, since the
spaces $H^\nu_\infty $ form a holomorphic family depending on $\nu$. To
compute the eigenvalue $\ld_{\nu}$ we consider $R(\Cas) f$ for a
function $f$ as in~\eqref{fps} with $f(e)\neq 0$. Then
$R(\Cas f)( e) = \ld_{\nu} f(e)$.

Since $\exp( \XX) \in N$ for all $\XX\in \nlie$ we have
\be R(\XX_i) f(e) = \partial_x f\bigl( \exp (x\XX_i) \bigr)
\bigm|_{x=0} = 0\,.\ee
In a similar way, we have $R(\UU_j) f(e)=0$. Using this and
\eqref{bchXX} in \S\ref{sect2-Kfbg} we obtain at the unit element
$e\in G$
\begin{align*} \bigl( R(\Cas) f \bigr)(e)
&= R \Bigl( \H_0^2 - \sum_i \UU_i^2-\sum_i 2 (\tht \XX_i) \XX_i +
(p+2q) \H_0 \Bigr) f (e)\\
&=R(\H_0)^2 f(e) -2\rho \,R(\H_0) f(e)
\,.
\end{align*}
We use that
$f\bigl( \exp(x \H_0) \bigr) = f\bigl(\am(e^x) \bigr) = e^{x(\rho+
\nu)}\, f(e)$, and hence
\begin{align*} R(\H_0) f (e) &= \partial_x f\bigl( \exp(x\H_0)\bigr)
\bigm|_{x=0} = \partial_x e^{x(\rho+\nu)} f(e) \bigm|_{x=0}\\
&= (\nu+\rho)\, f(e)\end{align*}
to get
\begin{align*}
R(\H_0)^2 f(e) &-2\rho \, R(\H_0) f(e)
= (\nu+\rho)^2 \,f(e) -2 \rho(\nu+\rho) \, f(e) \\
&= (\nu^2-\rho^2)
\, f(e)\,,
\end{align*}
and
\[\ld_{\nu} =\nu^2-\rho^2\,. \qedhere\]

%% file: ccro2-6-cf-coh.tex

\bigskip

\def\flnm{ccro2-6-cf-coh}

\section{From cusp forms to cocycles}\label{sect6}
The preparations in the previous sections are sufficient to construct
from a given cusp form a cocycle with values in the spherical principal
series. See Theorem~\ref{thm-btv}.

We now generalize the Green form in \cite[\S1.3]{BLZ15}: We define a
kernel function on $\left(\partial S\right)\times G$, a generalization
of the Green form, and the definition of a $(\ds-1)$-form on $S$ with
values in a spherical principal series representation. Integrating this
differential form over $(\ds-1)$-cells in a tessellation leads to
$(\ds-1)$-cocycles that describe the map from cusp forms to cohomology
classes.

\subsection{Kernel function}\label{sect6-kf} One of the tools in
\cite{BLZ15} to go from automorphic forms to cocycles is the kernel
function $R(t;z)^s$ in \cite[(1.6)]{BLZ15}. It relates functions of
$t\in \proj\RR$ (the boundary of the symmetric space) to functions of
$z\in \uhp$ (the symmetric space).

We start with the function $\Ph_\nu\in H^\nu_\om$ given, in the notation
in \eqref{am-char}, by\il{Phinu}{$\Ph_\nu \in H^\nu$}
\be\label{Phinu} \Ph_\nu(n \am(t) k ) = \am(t)^{(\rho+\nu)\al} =
t^{\rho+\nu}
\qquad(n\in N,\; k\in K,\; t>0)\,.\ee
It behaves according to a character of $NAM$ on the left, and is
invariant under $K$ on the right.

In \S\ref{sect5-prs-bd} we realized the principal series on the boundary
$\partial S$. The function $\Ph_\nu$ corresponds to the constant
function $k\,\inft \mapsto \Ph_\nu(k^{-1}) =1$ in this realization.
Proposition~\ref{prop-eivCasps} and equation~\eqref{Dt} imply that
$\Dt \Ph_\nu = (\rho^2-\nu^2) \Ph_\nu$.

The kernel function $r_\nu$ is defined by \il{rnu}{$r_\nu(\xi;x)$}
\be \label{rnudef}r_\nu(k\,\inft;g\, \orgn) = \Ph_\nu(k ^{-1}g)
\qquad\text{ for } k\in K,\;g\in G\,.\ee
To see that this is well-defined, we check that for $m\in M$ and
$k_1\in K$
\begin{align*}
m^{-1} k^{-1} g k_1 &= m^{-1} \nJ(k^{-1}g) \am\bigl( \tJ(k^{-1} g)\bigr)
\kJ(g^{-1}k)k_1 \in m^{-1}N m \, \am\bigl( \tJ(k^{-1} g)\bigr) \,m^{-1}
\, K
\\
&= N\,\am\bigl( \tJ(k^{-1} g)\bigr)
\, K\,.
\end{align*}
\begin{prop}\label{prop-Rnu}Let $\nu\in \CC$.
\begin{enumerate}[label=$\mathrm{(\roman*)}$, ref=$\mathrm{\roman*}$]
\item\label{Rnu:ana} The function $r_\nu(\cdot;\cdot)$ is a well-defined
analytic function on $(\partial S) \times S$.

\item\label{Rnu:eiggrow} Let $b\in \partial S$ and $g\in G$.
\begin{enumerate}[label=$\mathrm{(\alph*)}$, ref=$\mathrm{\alph*}$]
\item\label{Rnu:one} $r_\nu(\cdot ;g\,\orgn)= \tau_\nu(g)\, 1$
as an equality of functions in $\V\om\nu(\partial S)$.
\item\label{Rnu:eigen} The function $r_\nu(b;\cdot)$ on $S$ is an
   eigenfunction of $\Dt$ with eigenvalue $\rho^2-\nu^2$.
\item\label{Rnu:growth} The function $r_\nu(b;\cdot)$ has polynomial
   growth at the boundary.
\end{enumerate}

\item\label{Rnu:equi} For each $g\in  G $
\be\bigl( \tau_\nu(g) r_\nu(\cdot;x) \bigr)(b) = \bigl( L( g^{-1})
r_\nu(b;\cdot)\bigr) (x)\,, \ee
which is equivalent to
\be \label{Rgg}
\bigl(\tau_\nu(g)\times L(g) \bigr) r_\nu(\cdot;\cdot)
= r_\nu(\cdot;\cdot)\,. \ee
\end{enumerate}
\end{prop}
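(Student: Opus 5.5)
Every part of the statement will be reduced to properties of $\Ph_\nu$ and the Iwasawa projections from Proposition~\ref{prop-Id}. By definition and \eqref{Phinu}, $r_\nu(k\,\inft;g\,\orgn)=\tJ(k^{-1}g)^{\rho+\nu}$. Well-definedness in both variables is the computation given right after \eqref{rnudef}: replacing $k$ by $km$ and $g$ by $gk_1$ does not change the $A$-component, because $M$ normalizes $N$ and commutes with $A$.

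For \eqref{Rnu:ana}, the function $(k,g)\mapsto \tJ(k^{-1}g)$ is analytic and positive on $K\times G$ by Proposition~\ref{prop-Id}\eqref{Id:nak}, so its complex power is analytic. It is invariant under the right action of $M\times K$, and the quotient maps $K\to K/M=\partial S$ and $G\to S$ are analytic submersions with analytic local sections. Hence $r_\nu$ descends to an analytic function on $(\partial S)\times S$.

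I would prove the equivariance \eqref{Rnu:equi} next, since \eqref{Rnu:one} follows from it. For $h\in G$ one has $r_\nu(b;h^{-1}x)=\Ph_\nu(k^{-1}h^{-1}g)$ with $b=k\,\inft$ and $x=g\,\orgn$. Write $hk=\kI(hk)\,\am(\tI(hk))\,\nI(hk)$, so that $k^{-1}h^{-1}=\nI(hk)^{-1}\am(\tI(hk))^{-1}\kI(hk)^{-1}$. Left $NA$-covariance of $\Ph_\nu$, together with $\nu\mapsto$ character on $A$, gives
\[
\Ph_\nu(k^{-1}h^{-1}g)=\tI(hk)^{-(\rho+\nu)}\,\Ph_\nu\bigl(\kI(hk)^{-1}g\bigr)=J(h,b)^{-(\rho+\nu)}\,r_\nu(h\,b;x).
\]
With $h=g^{-1}$ this is exactly $\bigl(\tau_\nu(g)r_\nu(\cdot;x)\bigr)(b)$ as defined in \eqref{taunu}, so the identity $\bigl(\tau_\nu(g)r_\nu(\cdot;x)\bigr)(b)=\bigl(L(g^{-1})r_\nu(b;\cdot)\bigr)(x)$ holds, and \eqref{Rgg} is the same statement. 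Taking $x=\orgn$ and using $r_\nu(b;\orgn)=\Ph_\nu(k^{-1})=1$ then yields $r_\nu(\cdot;g\,\orgn)=\tau_\nu(g)1$, which is \eqref{Rnu:one}.

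For \eqref{Rnu:eigen}, observe that $r_\nu(k\,\inft;\cdot)=L(k)\Ph_\nu$ as a function on $S$. The function $\Ph_\nu$ is right $K$-invariant and lies in $H^\nu_\om$, so Proposition~\ref{prop-eivCasps} gives $R(\Cas)\Ph_\nu=(\nu^2-\rho^2)\Ph_\nu$. Proposition~\ref{prop-Cas-Lapl} and \eqref{Dt} convert this into $\Dt\Ph_\nu=(\rho^2-\nu^2)\Ph_\nu$, and since $\Dt$ commutes with $L(k)$ the same eigenvalue holds for $r_\nu(b;\cdot)$.

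The growth statement \eqref{Rnu:growth} is the only part requiring an estimate, and I expect it to be the main point. By Proposition~\ref{prop-grL} it suffices to treat $\Ph_\nu$ itself, i.e.\ to bound $\tJ(k\,\am(t)\,g)^{\re\nu+\rho}$ for $g$ in a compact set. The plan is to show
\[
\tJ(k\,\am(t)\,g)\le C\,t \quad\text{and}\quad \tJ(k\,\am(t)\,g)\ge C^{-1}t^{-1}
\]
uniformly for $k\in K$ and $g$ in a compact set. The factor $g$ contributes only a bounded multiplicative constant, because $\tJ(xg)\asymp\tJ(x)$ for $g$ in a compact set, which follows from the cocycle identity in the proof of Proposition~\ref{prop-autfct}. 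Using the Bruhat decomposition \eqref{Bd}, write $k=\nm(x)\,\wm\,man$, or $k\in MAN$; then \eqref{tJexpl} gives
\[
\tJ\bigl(\wm\,\nm(x)\,\am(t)\bigr)=\frac{t}{\sqrt{(t^2+c\|x^{(1)}\|^2)^2+4c\|x^{(2)}\|^2}},
\]
which lies between $0$ and $t^{-1}$. The lower bound instead comes from $\tJ(h)=\exp(\text{Iwasawa }A\text{-projection})$, whose absolute logarithm is at most the distance $\dist(h\,\orgn,\orgn)=\log t+O(1)$. These two bounds give $|r_\nu|=O(t^{|\re\nu|+\rho})$, which is polynomial growth.
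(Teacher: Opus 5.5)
Your proposal is correct and follows the paper's proof essentially step by step: analyticity is inherited from $\Ph_\nu$, the eigenvalue comes from $\Dt\Ph_\nu=(\rho^2-\nu^2)\Ph_\nu$ together with the left invariance of $\Dt$, (iii) is the same Iwasawa computation, and (ii)(a) is deduced from (iii) via $r_\nu(b;\orgn)=1$. For (ii)(c) the paper simply asserts that $\Ph_\nu$ has polynomial growth and invokes Proposition~\ref{prop-grL}, and your bound $|\log\tJ(h)|\le\dist(h\,\orgn,\orgn)$ already gives both inequalities uniformly in $k$, so you can drop the Bruhat step for the upper bound. That step is also slightly off as written: once the $A$-factor $\am(s)$ of the Bruhat decomposition of $k$ is restored, it only yields $\tJ(k\,\am(t))\le (st)^{-1}$, and $s\to0$ as $k\,\inft\to\inft$, so it is not uniform in $k$.
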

\begin{proof}
Since $\Ph_\nu$ is analytic, part~\eqref{Rnu:ana} follows from
\eqref{rnudef}.

The invariance of $\Dt$ under left translation implies
\tlref{Rnu:eiggrow}{Rnu:eigen}. The function $\Ph_\nu$ has polynomial
growth at the boundary. This is preserved under left translation. This
gives \tlref{Rnu:eiggrow}{Rnu:growth}.

The kernel $r_\nu$ on $(\partial S) \times S$ given in \eqref{rnudef}
satisfies $\bigl(L(k) \times L(k)\bigr) r_\nu=r_\nu$ for $k\in K$. Let
$b=k_b \,\inft$, $k_b\in K$ and $x=h_x\, \orgn$, $h_x\in G$. For
$g\in G$
\begin{align*}
\Bigl(\bigl(1\times L(g^{-1})\bigr)& r_\nu \Bigr)(b;x)
= \Ph_\nu\bigl( k_b^{-1} g h_x\bigr)
= \Ph_\nu \bigl( \nJ(k_b^{-1} g) \am\bigl( \tJ(k_b^{-1}g) \bigr)
\kJ(k_b^{-1}g) h_x\bigr)
\displaybreak[0]\\
&= \tJ\bigl( k_b^{-1} g \bigr)^{\rho+\nu} \, \Ph_\nu\bigl(\kJ(k_b^{-1}g)
h_x\bigr)
= \tI(g^{-1} k_b)^{-\rho-\nu} \,\Ph_\nu \bigl( \kI(g^{-1} k_b) h_x
\bigr)
\displaybreak[0]\\
&= J(g^{-1}, b)^{-\rho-\nu}\, r_\nu\bigl(g^{-1} \, b; x\bigr)=\bigl(
\tau_\nu(g)
\times 1) r_\nu(b;x)\,.
\end{align*}
This gives \eqref{Rnu:equi}.

For \tlref{Rnu:eiggrow}{Rnu:one} we note that
$r_\nu(b;\orgn) = \Ph_\nu(k_b^{-1})=1$. Hence
\[ r_\nu( b;g\,\orgn) =\bigl(L(g^{-1}) r_\nu(b;\cdot)\bigr)(\orgn)
= \bigl( \tau_\nu(g) r_\nu( \cdot'\orgn) \bigr)(b)
= \bigl( \tau_\nu(g) 1 \bigr)(b)\,. \qedhere \]
\end{proof}

\subsection{Generalization of Green's form} We now generalize the
Green's form in \cite[\S1.3]{BLZ15}.

We define for $f_1,f_2\in C^\infty(S)$ the antisymmetric bilinear form
in $f_1$ and $f_2$ with values in $C^\infty(S)$\il{MS}{$\MS(f_1,f_2)$}
\be \label{MS}\MS(f_1,f_2) = f_1 \, \Dt f_2 - f_2\, \Dt f_1\,. \ee
This bilinear form is the starting point in the derivation of the
Maass--Selberg relations. This works for more general Lie groups; see
for instance \cite[Chap IV, \S2]{HCh68}.

The $G$-invariant measure $d\mu$ on $S$ is obtained by integration of
the invariant volume form
\[ \z_S = \sqrt{\det \rmetric(x)}\, dx_1\wedge\cdots\wedge dx_\ds\]
in any choice of coordinates on $S$. The matrix $\rmetric$ describes the
Riemannian metric; see the discussion on \S\ref{sect2-Rie}. In terms of
horospherical coordinates, $\rmetric$ has the description
in~\eqref{Rmat}. Then $\MS(f_1,f_2)\, \z_S$ is a $\ds$-form on $S$ that
satisfies
$\MS\bigl( L(h^{-1}) f_1,L(h^{-1}) f_2\bigr)\z_S = \MS(f_1,f_2)\z_S \circ h$
for all $h\in G$.

The $\ds$-form $\MS(f_1,f_2) \, \z_S$ is closed (since $\ds=\dim S$),
and hence there are $(\ds-1)$-forms $\eta$ such that
$d\eta = \MS(f_1,f_2)
\z_S$. We can choose $\eta$ in the following way.
\begin{prop}\label{prop-eta}
Let $U\subset S$ be open. For $f_1,f_2\in C^\infty(U)$ there is a
$(\ds-1)$-form $\eta(f_1,f_2)$ on $U$ such that
\begin{enumerate}[label=$\mathrm{(\alph*)}$, ref=$\mathrm{\alph*}$]
\item\label{eta:diff} $d\eta(f_1,f_2) = \MS(f_1,f_2)\,\z_S$,
\item\label{eta:anti} $\eta(f_1,f_2)$ is skew symmetric bilinear in
$(f_1,f_2)$,
\item\label{eta:equi} $\eta\bigl( L(h^{-1}) f_1,
L(h^{-1}) f_2\bigr) =\eta(f_1,f_2)\circ h$ for all $h\in G$.
\item\label{eta:closed} If $\Dt f_1 = \ld f_1$ and $\Dt f_2=\ld f_2$ for
the same eigenvalue $\ld\in \CC$, then $\eta(f_1,f_2)$ is a closed
$(\ds-1)$-form.
\end{enumerate}
\end{prop}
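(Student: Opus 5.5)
The natural candidate is the Hodge-star construction: I would take
\[
\eta(f_1,f_2) \;=\; f_2\,\star df_1 \;-\; f_1\,\star df_2
\]
up to an overall sign fixed by the convention $\Dt=-\Dt_+$. Here $\star$ is the Hodge star of the $G$-invariant Riemannian metric $\rmetric$ on $S$, with the orientation determined by the invariant volume form $\z_S$.

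The plan is to verify (a)–(d) directly from the standard identities for $\star$.

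First I recall the basic identity $d\star d f = (\Dt_+ f)\,\z_S = -(\Dt f)\,\z_S$. This is the invariant form of \eqref{divgrad}: in local coordinates,
\[
\star df = \sum_{i,j} (\rmetric)^{-1}_{i,j}\,\partial_{x_j}f\,\sqrt{\det\rmetric}\;\iota_{\partial_{x_i}}\bigl(dx_1\wedge\cdots\wedge dx_\ds\bigr),
\]
and $d$ of this form produces exactly $\div\grad f$ times the volume form.

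Property (a) then follows from
\[
d\bigl(f_2\star df_1\bigr) = df_2\wedge\star df_1 + f_2\, d\star df_1 .
\]
The terms $df_2\wedge\star df_1 = \langle df_2,df_1\rangle\,\z_S$ and $df_1\wedge\star df_2$ are symmetric in $f_1,f_2$, so they cancel in the antisymmetric combination. What remains is $\pm\bigl(f_2\Dt_+f_1 - f_1\Dt_+f_2\bigr)\z_S = \pm\MS(f_1,f_2)\,\z_S$, and this fixes the sign.

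Property (b) is immediate from the formula for $\eta$.

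For (c), I use that $G$ acts on $S$ by orientation-preserving isometries. Pullback by the isometry $l_h\colon x\mapsto hx$ commutes with $d$, and with $\star$ because $l_h$ is an orientation-preserving isometry. Since $\bigl(L(h^{-1})f\bigr)(x)=f(hx)$, i.e.\ $L(h^{-1})f = f\circ l_h$, I get $\eta(f_1\circ l_h,f_2\circ l_h)=l_h^*\eta(f_1,f_2)$, which is the stated equivariance.

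For (d), if $\Dt f_1=\ld f_1$ and $\Dt f_2=\ld f_2$, then $\MS(f_1,f_2)=\ld f_1f_2-\ld f_2f_1=0$, so $d\eta=0$ by (a).

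The main obstacle is to make sure $\star$ really commutes with pullback by elements of $G$, i.e.\ that $G$ preserves orientation. This holds because $G$ is connected. Equivalently, $\z_S\circ h=\z_S$ was already noted in \S\ref{sect2-Rie}, so $\star$, being built from the metric and $\z_S$, is $G$-invariant. Beyond that, only the sign conventions need care.
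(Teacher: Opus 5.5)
Your proposal is correct and is essentially the paper's proof written invariantly: the paper's form \eqref{etadef} is $\iota_V\z_S$ with $V=f_2\grad f_1-f_1\grad f_2$, which is exactly your $f_2\star df_1-f_1\star df_2$; its verification of (a) via $\MS(f_1,f_2)=\div V$ in \eqref{MS-dg} is the coordinate version of your Leibniz computation, and (c) likewise rests on $G$ preserving the metric and $\z_S$. Incidentally, with $d\star df=(\Dt_+f)\,\z_S$ your sign already comes out as $+$, so no adjustment is needed.
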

\begin{proof}We recall the Riemannian structure on $S$, which for a
given choice of coordinates $x=(x_1,\ldots, x_\ds)$ is described by a
$\ds\times \ds$-matrix $\rmetric(x)$, as discussed
in~\S\ref{sect2-Rie}.

We check first, with \eqref{divgrad}, that for $f \in C^\infty(S)$ and
for any $C^\infty$ vector field $X$ on $S$, we have
\be \div(f X) = f \, \div X + X f\,,\ee
and, further, that for $f_1,f_2\in C^\infty(S)$
\be (\grad f_1)(f_2) = \sum_{i,j}( \rmetric(x)^{-1})_{i,j}
(\partial_{x_i}f_1)\,(\partial_{x_j} f_2)\ee
satisfies $(\grad f_1)(f_2)=(\grad f_2)(f_1)$. With
$-\Dt F = \div\, \grad F$ we get
\be \label{MS-dg}\MS(f_1,f_2) =\div \Bigl( f_2\,\grad f_1 - f_1 \,\grad
f_2\Bigr)\,.\ee

We note that $f_1\,\grad f_2-f_2\,\grad f_1$ is a vector field on $S$.
We can write it in the form $\sum_j a_j \partial_{x_j}$ with
$a_j \in C^\infty(S)$. The divergence looks like
\be \div \sum_j a_j \partial_{x_j} = (\det \rmetric (x))^{-1/2} \sum_j
\partial_{x_j}\bigl( (\det \rmetric(x))^{1/2} a_j\bigr)\,.\ee

With the differential forms of degree $\ds-1$:\il{upsj}{$\z[j]$}
\be \z[j] = dx_1 \wedge dx_2\wedge \cdots \wedge \widehat{dx_j} \wedge
\cdots
\wedge dx_\ds\ee
($dx_j$ omitted), we form for any vector
$b=(b_1,\ldots, b_\ds) \in C^\infty(S)^\ds$
\be \eta_b = \sum_j (-1)^{j+1} \, (\det \rmetric(x))^{1/2} b_j \,
\z[j]\,.\ee
The exterior derivative is
\bad d\eta_b &= \sum_j (-1)^{j+1}\, \partial_{x_j} \bigl((\det
\rmetric)^{1/2} b_j \bigr) \, dx_j\wedge \z[j]\\
&= \sum_j \partial_{x_j} \bigl((\det \rmetric(x))^{1/2} b_j \bigr)
\,dx_1\wedge \cdots \wedge dx_\ds\\
&= (\det \rmetric(x))^{-1/2} \sum_j \partial_{x_j} \bigl((\det
\rmetric(x))^{1/2} b_j \bigr) \, \z_S \,.
\ead

Taking $b$ with components
$b_j = \sum_i ( \rmetric(x)^{-1})_{i,j} \Bigl( f_2 \partial_{x_i} f_1- f_1
\partial_{x_i} f_2 \bigr)$ we get
\begin{align*}
\MS(f_1,f_2)\,\z_S &= (\det \rmetric(x))^{-1/2} \,\sum_j \partial_{x_j}
\bigl(
(\det \rmetric(x))^{1/2} b_j \bigr) \, \z_S
\displaybreak[0]\\
&= d \Bigl( \sum_j (-1)^{j+1} (\det \rmetric(x))^{1/2}\, b_j \z[j]
\Bigr)\,.
\end{align*}
So we put\il{eta}{$\eta(f_1,f_2)$}
\be\label{etadef} \eta (f_1,f_2) = \sum_{i,j} (-1)^{1+j}
( \rmetric(x)^{-1})_{i,j} \Bigl( f_2 \,\partial_{x_i} f_1- f_1
\,\partial_{x_i}
f_2\Bigr) (\det \rmetric(x))^{-1/2} \z[j]\,.\ee
This satisfies assertions~\eqref{eta:diff} and~\eqref{eta:anti}, and is
independent of the choice of the coordinates.\medskip

Let $h\in G$. Since $h$ preserves the Riemannian metric, taking the
gradient commutes with left translation of functions on $S$:
$\grad\bigl( L(h^{-1}) f\bigr) = (\grad f)\circ h$. Furthermore
$\z_S \circ h = \z_S$. Hence
$\eta(f_1,f_2) \circ h = \eta\bigl( L(h^{-1}) f_1, L(h^{-1}) f_2\bigr)$.
This gives part~\eqref{eta:equi}. If $f_1$ and $f_2$ are eigenfunctions
of $\Dt$ with the same eigenvalue, then $\MS(f_1,f_2)=0$. This gives
part~\eqref{eta:closed}.
\end{proof}

\subsection{Differential forms with values in the principal series}

We combine the kernel function $r_\nu$ in \eqref{rnudef} and the
differential form $\eta$ in \eqref{etadef} in\il{omnu}{$\omv_\nu$}
\be\label{omnu} \omv_\nu(f; b, \cdot) = \eta\bigl(
f(\cdot),r_\nu(b;\cdot) \bigr)\,,\ee
for $f\in C^\infty(S)$, $\nu\in \CC$, $b\in \partial S$.
The dot in $f(\cdot)$ and $r_\nu(b;\cdot) $ refers to the variable in
$S$ used in \eqref{etadef} to get a $(\ds-1)$-form on $S$. This
differential form depends on an additional variable $b$ in $
\partial S$.
The coefficient function of each $\z[j]$ in this differential form is a
$C^\infty$-function on $(\partial S)\times S$, which is analytic in the
variable~$\xi$.

Now we assume that $f$ satisfies $\Dt f = ( \rho^2-\nu^2) \, f$.
Proposition~\ref{prop-eta} \eqref{eta:closed} shows that $\omv_\nu$ is
a closed differential form. For $h\in G$, $b\in \partial S$ and
$x\in S$:
\begin{align*}
\bigl(\omv_\nu(f&; b,\cdot) \circ h^{-1}\bigr) (x) = \eta \bigl( L(h)
f(\cdot),
L(h)\,r_\nu(b;\cdot\bigr) (x)
\bigr)&&\text{Prop.~\ref{prop-eta}\eqref{eta:equi}}
\displaybreak[0]\\
&= \eta\bigl( L(h) f(x) , \bigl((\tau_\nu(h^{-1})r_\nu(\cdot;x)
\bigr)(b) \bigr)&& \text{Prop~\ref{prop-Rnu}\eqref{Rnu:equi}}
\displaybreak[0]\\
&= J(h,b)^{-\rho-\nu} \eta\bigl( L(h) f(x), r_\nu(h\, b;x) \bigr)
&&\text{\eqref{taunu}}\displaybreak[0]\\
&= \bigl( \tau_\nu(h^{-1} ) \omv_\nu(L(h)f; \cdot,x ) \bigr) (b)\,.
\end{align*}
This gives the following result:
\begin{prop}\label{prop-omtrf}If $f\in C^\infty(S)$ satisfies
$\Dt f = (\rho^2-\nobreak\nu^2) f$, then the
$\V\om\nu(\partial S)$-valued differential form $\omv_\nu(f;b,x)$ is
closed, and satisfies for $h\in G$
\be \label{omv-trf}
\omv_\nu( f;b,\cdot)\circ h^{-1} \;(x) = \bigl(\tau_\nu(h^{-1})\,
\omv_\nu\bigl(L(h) f; \cdot,x\bigr)\bigr) (b)\,. \ee
\end{prop}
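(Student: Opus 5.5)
The statement has two parts, closedness and the transformation rule~\eqref{omv-trf}; I will derive both pointwise in $b\in\partial S$ from Propositions~\ref{prop-eta} and~\ref{prop-Rnu}.

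\emph{Closedness.} Fix $b\in\partial S$. By Proposition~\ref{prop-Rnu}\tlref{Rnu:eiggrow}{Rnu:eigen}, $r_\nu(b;\cdot)$ satisfies $\Dt r_\nu(b;\cdot)=(\rho^2-\nu^2)\,r_\nu(b;\cdot)$. By hypothesis $f$ has the same eigenvalue. Hence $\MS\bigl(f,r_\nu(b;\cdot)\bigr)=0$, and Proposition~\ref{prop-eta}\eqref{eta:closed} shows that $\omv_\nu(f;b,\cdot)=\eta\bigl(f,r_\nu(b;\cdot)\bigr)$ is closed for every $b$. The coefficients are analytic in $b$ by Proposition~\ref{prop-Rnu}\eqref{Rnu:ana}, since the coefficients in \eqref{etadef} are built from $f$, $r_\nu$, and their $x$-derivatives. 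So the form takes values in $\V\om\nu(\partial S)$, and the exterior derivative in $x$ vanishes as a $\V\om\nu(\partial S)$-valued form.

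\emph{Transformation rule.} Apply Proposition~\ref{prop-eta}\eqref{eta:equi} with $h^{-1}$ in place of $h$. Using $L(h)=L\bigl((h^{-1})^{-1}\bigr)$, this gives
\[
\eta(f_1,f_2)\circ h^{-1}=\eta\bigl(L(h)f_1,L(h)f_2\bigr).
\]
Take $f_1=f$ and $f_2=r_\nu(b;\cdot)$. By Proposition~\ref{prop-Rnu}\eqref{Rnu:equi} with $g=h^{-1}$,
\[
\bigl(L(h)r_\nu(b;\cdot)\bigr)(x)=\bigl(\tau_\nu(h^{-1})r_\nu(\cdot;x)\bigr)(b).
\]
By \eqref{taunu} the right-hand side equals $J(h,b)^{-\rho-\nu}\,r_\nu(h\,b;x)$. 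The scalar $J(h,b)^{-\rho-\nu}$ does not depend on $x$, so bilinearity of $\eta$ (Proposition~\ref{prop-eta}\eqref{eta:anti}) lets us pull it out. This gives
\[
J(h,b)^{-\rho-\nu}\,\eta\bigl(L(h)f,r_\nu(h\,b;\cdot)\bigr)(x)=J(h,b)^{-\rho-\nu}\,\omv_\nu\bigl(L(h)f;h\,b,x\bigr).
\]
Reading \eqref{taunu} backwards, this is $\bigl(\tau_\nu(h^{-1})\,\omv_\nu(L(h)f;\cdot,x)\bigr)(b)$, which is \eqref{omv-trf}.

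The only delicate point is the pullback of a differential form by $h^{-1}$. In Proposition~\ref{prop-eta}\eqref{eta:equi}, ``$\circ h$'' must be read as pullback. Pulling back involves only the $x$-variable, while $\tau_\nu$ acts only on $b$. The two operations therefore commute on the coefficient functions of the $\z[j]$, and applying $\tau_\nu(h^{-1})$ coefficientwise is legitimate. Once the sign conventions $g=h^{-1}$ in Proposition~\ref{prop-Rnu}\eqref{Rnu:equi} and $J(h,b)$ versus $J(h^{-1},\cdot)$ in \eqref{taunu} are checked, the argument is complete. I do not expect any step to be a serious obstacle.
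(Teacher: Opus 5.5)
Your proposal is correct and is essentially the paper's own argument. Closedness comes from Proposition~\ref{prop-eta}\eqref{eta:closed}, since $r_\nu(b;\cdot)$ is an eigenfunction with the same eigenvalue. The transformation rule \eqref{omv-trf} is obtained exactly as you do: apply Proposition~\ref{prop-eta}\eqref{eta:equi} with $h^{-1}$, then Proposition~\ref{prop-Rnu}\eqref{Rnu:equi} with $g=h^{-1}$, and finally use \eqref{taunu} to extract the $x$-independent factor $J(h,b)^{-\rho-\nu}$.
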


\subsection{The linear map from \intitle{ \A^0(\Gm;\nu) } to a
cohomology space}\label{sect6-linAtocoh}

We will integrate the differential form $\omv_\nu(f;b,x)$ in
\eqref{omnu} along $(\ds-1)$-cycles of the standard tessellation
$\tess$ to obtain cocycles. The interpretation of the map to cohomology
leads us to define spaces of semi-analytic vectors in the spherical
principal series.

\begin{prop}\label{prop-omfint}
Let $\nu\in \CC$, $f\in \A^0(\Gm;\nu)$, $b \in \partial S$,
$C\in \X^\tess_{\ds-1}$ and $X\in \X^\tess_\ds$.
\begin{enumerate}[label=$\mathrm{(\roman*)}$, ref=$\mathrm{\roman*}$]
\item \label{omfint:i}The integral
\[ I(f,C;b) = \int_{x\in C} \omv_\nu(f;b,x)
\]
is absolutely convergent. For each $\gm\in\Gm$ it satisfies
\be\label{int-bd0} \int_{x\in \gm^{-1} C} \omv_\nu(f;\cdot,x)=
\tau_\nu(\gm^{-1})
\int_{x\in C}\omv_\nu(f;\cdot,x)
\,. \ee
\item\label{omfint:nii} The map $I(f,C;\cdot)\colon \partial S\to\CC$ is
in
$\V \infty \nu(\partial S) \cap \V \om\nu  (\partial S\setminus (C\cap \Cu))$.
In particular, if $C$ does not contain any cusp,
$I(f,C;\cdot)\in \V \om \nu (\partial S)$.
\item \label{omfint:niii}
The linear extension of $I(f,\cdot;b)$ to the boundary
$\partial X = \sum_{B\in B(X)}B$ vanishes; i.e.,
\[ \sum_{B\in B(X)} I(f,B;b) = \sum_{B\in B(X)} \int_{x\in B}
\omv_\nu(f;b,x) = 0\,. \]
\end{enumerate}
\end{prop}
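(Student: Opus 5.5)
The plan is to treat the three parts in order, splitting each $(\ds-1)$-cell $C$ into a compact piece $C\cap \SY$ and finitely many cuspidal pieces $C\cap B^\ast_\cu(Y)$. On the cuspidal pieces we use the product structure of the standard tessellation (and of its refinements, by the condition on refinements): in the coordinates $g_\cu\nm(x)\am(y)\,\orgn$ the cell is either contained in a horosphere $H_\cu(Y')$ or is a product $F\times[T,\infty)$ with $F$ a compact cell in $N$. Only the latter reaches the cusp $\cu$.

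\textbf{Part (i).} On a compact piece, $\omv_\nu(f;b,x)$ is continuous in $(b,x)$ by Proposition~\ref{prop-Rnu}\eqref{Rnu:ana}, so absolute convergence is immediate. On a non-compact piece we use \eqref{etadef} in normalized horospherical coordinates based at $\cu$. The coefficients of $\eta$ involve $(\rmetric)^{-1}$ and $(\det\rmetric)^{1/2}=y^{-2\rho-1}$, which are polynomial in $y$ and $x$. By Corollary~\ref{cor-grwt-hc}, $f$ and its derivatives have quick decay at $\cu$. By Proposition~\ref{prop-Rnu}\eqref{Rnu:growth} together with Proposition~\ref{prop-grwt} (transported to $g_\cu$ by Proposition~\ref{prop-grL}), $r_\nu(b;\cdot)$ and its derivatives grow at most polynomially, uniformly for $b$ in $\partial S$ since $\partial S$ is compact. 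Quick decay beats polynomial growth, so the integrand is $\oh(y^{-m})$ for every $m$ and the integral converges absolutely. The same bounds hold after differentiating in $b$ any number of times, because $b$-derivatives of $r_\nu$ are again polynomially bounded. For \eqref{int-bd0}, we change variables $x\mapsto\gm^{-1}x$, apply \eqref{omv-trf} with $h=\gm^{-1}$, and use $L(\gm)f=f$ for $f\in \A^0(\Gm;\nu)$. Orientation is preserved by the $\Gm$-invariant choice of orientations.

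\textbf{Part (ii).} Smoothness in $b$ follows from the uniform estimates above by differentiating under the integral sign. For analyticity at $b_0\notin C\cap \Cu$ we use that $r_\nu(b;x)=\Ph_\nu(k_b^{-1}g_x)$ is given by $\tI$-type expressions, as in \eqref{tJexpl}. These extend holomorphically to a complex neighbourhood $U$ of $b_0$ in the complexification of $K/M$, uniformly for $x$ in a compact set. The main obstacle is uniformity as $x\to\cu$ along $C$. When $b_0\ne\cu$, the kernel $r_\nu(b;g_\cu\nm(x)\am(y)\orgn)$ for $b$ near $b_0$ behaves like $y^{\rho+\nu}$ times an analytic function of $(b,x,1/y)$ with no singularity. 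This follows from \eqref{tJexpl} after moving $\cu$ to $\inft$: the denominator $\bigl((y^2+c\|x'^{(1)}\|^2)^2+4c\|x'^{(2)}\|^2\bigr)^{1/2}$ stays away from zero for complex $b$ near $b_0\neq\cu$. Hence the holomorphic extension in $b$ is still polynomially bounded in $y$, and quick decay of $f$ gives a locally uniformly convergent integral of holomorphic functions, which is holomorphic. At $b_0=\cu$ this fails, since the denominator can vanish; that is exactly why the points of $C\cap\Cu$ are excluded. The analytic structure of the cells (Definition~\ref{def-tess}) lets us parametrize $C$ nicely, and the $(\rmetric)$-factors are harmless.

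\textbf{Part (iii).} Since $\omv_\nu(f;b,\cdot)$ is closed (Proposition~\ref{prop-omtrf}), Stokes on the truncated $\ds$-cell $X_T=X\setminus \bigcup_\cu B_\cu(T)$ gives $\sum_{B\in B(X)}\int_{B\cap X_T}\omv_\nu+\int_{\partial_T}\omv_\nu=0$. Here $\partial_T$ consists of the pieces of the horospheres $H_\cu(T)$ inside $X$, which exist only when $X\ni\cu$, and on which the cell is a product. The horosphere term is bounded by $\mathrm{vol}(F)$ times polynomial growth times quick decay, so it tends to $0$ as $T\to\infty$. Absolute convergence from (i) then yields the stated identity.
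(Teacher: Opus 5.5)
Your proposal is correct and follows the paper's proof essentially step by step. You split into compact cells and cuspidal product cells via the refinement condition, play quick decay of $f$ and its derivatives against polynomial growth of $r_\nu$ (and of its $x$- and $b$-derivatives) for absolute convergence and smoothness, use \eqref{omv-trf} with $\Gm$-invariance of $f$ for \eqref{int-bd0}, and apply Stokes on $X$ truncated at a horosphere for part~(iii); the only addition is in part~(ii), where your explicit holomorphic extension of the line-model kernel in $b$, with bounds polynomial in $y$, supplies the complex-neighbourhood estimate that the paper's appeal to uniform bounds for real $b$ leaves implicit.
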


Regarding Proposition~\ref{prop-omfint} \eqref{omfint:nii}, we remark
that $C\cap \Cu$ contains at most one cusp. Thus, $I(f,C;\cdot)$ is
analytic on $\partial S$ except for at most one point.

\begin{proof}We use the description of tessellations in
Section~\ref{sect4}.

The integral $I(f,C;\xi) $ has an analytic integrand, since the cusp
form $f$ and the kernel function $r_\nu$ are analytic. So integration
over a compact $(\ds-1)$-cell $C$ is no problem, and the result is an
analytic function of the variable $\xi\in \partial S$.

We recall that $A^+_T=\bigl\{ \am(t) \;:\; t\geq T\}$, and that in
Definition~\ref{def-SY} we fixed the number $Y\geq 4$. If $C$ contains
a cusp, then $C\setminus\{ \cu\}$ has the form
$C=g_\cu D A^+_T \,\orgn$, where $D$ is a compact contractible subset
of $N$ and $T\geq Y$. Here we use that $\tess$ satisfies the condition
in Remark~\ref{refine-cond}. In view of \eqref{Rgg}, and part
\eqref{eta:equi} in Proposition~\ref{prop-eta} we can transform the
cusp $\cu$ to $\inft$, and also work with the cusp form
$x\mapsto f(g_\cu^{-1}\,x)$, which has quick decay at the cusp $\inft$.

We use a system of coordinates $(x_1,\ldots,x_{\ds-1})$ on $N$ given by
linear forms in the standard coordinates $(x^{(1)},x^{(2)})$ on $N$,
and moreover such that $x_{\ds-1}=0$ on $C$. With \eqref{omnu} and
\eqref{etadef}
\bad \int_{x\in D } & \int_{y=T}^\infty \omv_\nu(f;b,x)
= \int_{x\in D}\int_{y=T}^\infty (-1)^\ds \sum_{i=1}^{\ds-1} \bigl(
(\rmetric(x))^{-1})_{i,\ds-1} \\
&\qquad\hbox{} \cdot
   \Bigl( r_\nu(b;x) \,\partial_{x_i} f(x) - f(x) \,\partial_{x_i}
   r_\nu(b;x) \Bigr)
   \\
&\qquad\hbox{} \cdot
\det(\rmetric(x))^{-1/2} \, dx_1\, dx_2\, \ldots dx_{\ds-2}\, dy\,.
\ead
Here $\rmetric(x)$ describes the Riemannian structure in
$x=(x_1,\ldots , x_{\ds-1}, y)$ as discussed in~\eqref{Rmat'}, with $y$
in the role of $x_\ds$. The cusp form $f$ has quick decay at $\inft$.
Corollary~\ref{cor-grwt-hc} implies that the derivatives
$\partial_{x_i} f$ and $\partial_y f$ have quick decay at $\inft$ as
well.

Furthermore, $r_\nu(b;x) = \Ph_\nu(k_b^{-1} g_x)$ with the simple
function $\Ph_\nu$ in \eqref{Phinu}, $k_b\in K$ such that
$k_b \,\inft=b$, and $g_x\in G$ such that $g_x\,\orgn = x$. The
polynomial growth of $\Ph_\nu$ at the boundary implies that the
integral converges.

Let first $b\neq \inft$. Then $k_b\in K$ such that $k_b\,\inft = b$
satisfies $k_b\not\in M$, and hence $k_b^{-1}\,\inft\in N\, \nul$. As
$\nm(x^{(1)},x^{(2)})\am(y)$ varies through $C$ and $b$ is in a compact
set of $\partial S\setminus\{\inft\}$, then $r_\nu(b;x)$ has at most
polynomial growth, and the corresponding estimate is uniform in $b$.
The function $r_\nu(\cdot; x)$ is analytic, and hence the integral
$I(f;C;b)$ in~\eqref{omfint:i} converges absolutely and is analytic in
$b\in \partial S\setminus\{\inft\}$.

If $b$ varies through a neighborhood of $\inft$ in $\partial S$, then
still $r_\nu(b;x)$ is analytic in $(b,x)$ jointly, and the same holds
for all derivatives with respect to coordinates of $\xi$, all with
polynomial growth at the boundary. This shows that the integral is
continuous in $b$ on a neighborhood of $\inft$, and also that all its
derivatives of any order with respect to $b$ exist and are continuous
on a neighborhood of $\inft$. So $I(f,C;b)$ is in
$C^\infty(\partial S)$. We do not have information on the growth of the
derivatives, and hence cannot show analyticity at $b=\inft$.
\smallskip

Assertion \eqref{omfint:niii} is no problem if $X$ does not contain
cusps, since $\omv_\nu$ is a closed differential form, by
Proposition~\ref{prop-omtrf}. If $C$ contains the cusp $\cu$, then we
cut off $X$ at the horosphere $H_\cu(Z)$ for larger values of $Z$. That
adds $H_\cu(Z)\cap X$ as a boundary component. We use the estimates
discussed above to see that the contribution of this component tends to
zero as $Z$ tends to~$\infty$.

Relation~\eqref{int-bd0} follows from the transformation behavior in
\eqref{omv-trf}.
\end{proof}

\rmrk{Semi-analytic vectors in the spherical principal series}
Proposition \ref{prop-omfint} shows that to a given cusp form
$f\in \A^0(\Gm;\nu)$ we can associate a cocycle \il{psf}{$\ps^f$
cocycle associated to cusp form $f$}$\ps^f $ in the space
$Z^{\ds-1}\bigl( F^\tess_\bullet;\V\infty\nu(\partial S)\bigr)$
by
\be\label{psfdef} \ps^f(X) = I(f,X;\cdot)\qquad\text{ for each }X\in
\X^\tess_{\ds-1}\,.\ee
This cocycle has values in a submodule of $\V\infty\nu(\partial S)$ with
plenty of additional regularity: $\ps^f(X)\in \V\om\nu(U)$ for
$U=(\partial S)\setminus E$, where $E$ is a set with at most one point.

For each finite set $E$ of cusps of $\Gm$ we have the vector space
$ \V \om\nu\bigl( (\partial S) \setminus E)$.
The action of $\Gm$ induces
\be \tau_\nu(\gm) : \V \om\nu\bigl( (\partial S)
\setminus E\bigr)
\rightarrow \V \om\nu\bigl( (\partial S) \setminus \gm E\bigr)\,.\ee
We need to work with transformations in $\Gm$ to send cusps to cusps. We
put\il{Vom0}{$\V{\om^0}\nu$ semi-analytic vectors}
\be \label{Vom0}\V{\om^0}\nu = \lim_{\stackrel\longrightarrow E}
\V\om\nu\bigl(
(\partial S)\setminus E\bigr)\,,\ee
where $E$ runs over the finite sets of cusps of $\Gm$. The use of direct
limits ensures that we identify an analytic element
$f\in \V\om\nu\left(\partial S\right)$ with the restriction of $f$ to a
set $\left(\partial S\right)\setminus E$ for each finite set $E$ of
cusps. We call $\V{\om^0}\nu$ the $\Gm$-module of
\il{sav}{semi-analytic vectors}{\em semi-analytic vectors} in
$\V{}\nu(\partial S)$.
Here we use the submodule\il{Vsavs}{$\V\omi\nu$ smooth semi-analytic
vectors}
\be \V\omi\nu = \V{\om^0}\nu\cap \V\infty\nu(\partial S)\ee
of \il{ssav}{semi-analytic vectors, smooth}{\em smooth semi-analytic
vectors}. In this way, $\ps^f$ determines a class in the space
$H^{\ds-1}_\pb\bigl( \Gm;\V\omi\nu\bigr)$.

\rmrk{Recapitulation of principal series modules} Let us list the
principal series modules introduced up till now:
\begin{itemize}
\item The $G$-module $\V \om \nu(\partial S)$ of \emph{analytic
vectors}\,,
\item The $G$-module $\V \infty\nu(\partial S)$ of \emph{smooth
vectors}\,,
\item The $\Gm$-module $\V {\om^0}\nu$ of \emph{semi-analytic
vectors}\,,
\item The $\Gm$-module $\V {\om^0,\infty}\nu $ of \emph{smooth
semi-analytic vectors}.
\end{itemize}
\badl{Vovw} \xymatrix{ \V \om \nu(\partial S) \ar@{^(->}[r]
\ar@{^(->}[rd] & \V{\om^0,\infty}\nu(\partial S) \ar@{^(->}[r]
\ar@{^(->}[d]
& \V\infty\nu(\partial S)\\
& \V{\om^0}\nu(\partial S)
}
\eadl
The module $\V \om \nu(\partial S)$ is the space of global sections of
the sheaf $\V \om\nu$, and similarly for the smooth vectors. The
modules $\V {\om^0}\nu$ and $\V {\om^0,\infty}\nu$ are extension of the
module $\V\om\nu(\partial S)$. We have not defined these modules as
sections of a sheaf.

\begin{defn}\label{def-bdsing}
If $f\in \V{\om^0}\nu$, then we denote by \il{bsing}{$\bsing_\nu(\cdot)$
\text{ set of boundary singularities}}$\bsing_\nu(f)$ (the set of
\il{bs}{boundary singularity}\emph{boundary singularities} of $f$) the
minimal set $E$ of cusps such that
$f\in \V\om\nu\bigl( (\partial S)\setminus E\bigr)$.
So $\bsing_\nu(f) = \emptyset $ if and only if
$f\in \V \om\nu\bigl( \partial S\bigr)$.

We say that a cocycle
$\ps\in Z^{\ds-1}\bigl( F^\tess_\bullet; \V\omi\nu\bigr)$ satisfies the
\emph{boundary condition} \il{bdc}{$\bdc$ boundary condition}$\bdc$ if
for $C\in \X^\tess_{\ds-1}$
\be\label{bdc}
\bsing_\nu\bigl(\ps(C) \bigr) \subset \begin{cases}\emptyset &\text{ if
}C \subset S\,,\\
\{\cu\} &\text{ if }C \cap \Cu = \{\cu\}\,.
\end{cases}
\ee
By $Z^{\ds-1}\bigl( F^\tess_\bullet; \V\omi\nu\bigr)^\bdc$ we denote the
set of cocycles satisfying condition $\bdc$, and by
$H^{\ds-1}_\pb(\Gm;\V\omi\nu\bigr)^\bdc$ the space of those cohomology
classes that have a representative in
$Z^{\ds-1}\bigl( F^\tess_\bullet; \V\omi\nu\bigr)^\bdc$.
\il{H^bdc}{$ H^{\ds-1}_\pb(\Gm;\V\omi\nu\bigr)^\bdc$}
\il{Z^bdc}{$Z^{\ds-1}\bigl( F^\tess_\bullet; \V\omi\nu\bigr)^\bdc$}\end{defn}

So $\ps^f \in Z^{\ds-1}\bigl( F^\tess_\bullet; \V\omi\nu\bigr)^\bdc$ for
each cusp form $f\in \A^0(\Gm;\nu)$.

The above considerations lead to the first theorem relating cusp forms
to cohomology classes:
\begin{thm}\label{thm-btv}
Let $\nu \in \CC$. There is a linear map\il{btv}{$\btv_\nu$}
\be \btv_\nu: \A^0(\Gm;\nu) \rightarrow H^{\ds-1}_\pb\bigl( \Gm; \V \omi
\nu \bigr)^{\bdc}\,,\ee
associating to $f\in \A^0(\Gm;\nu)$ the cohomology class of $\ps^f$ (in
\eqref{psfdef}).
\end{thm}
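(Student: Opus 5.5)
The plan is to assemble the map from Proposition~\ref{prop-omfint} together with the identification of parabolic cohomology with the tessellation cohomology in Proposition~\ref{prop-pcht}. We fix a standard tessellation $\tess$ (or a refinement satisfying Remark~\ref{refine-cond}). For $f\in\A^0(\Gm;\nu)$ we take the cochain $\ps^f$ defined in \eqref{psfdef}. We extend it $\CC$-linearly to $F^\tess_{\ds-1}$, using $\ps^f(-X)=-\ps^f(X)$ for the opposite orientation. This is consistent because the integral $I(f,X;\cdot)$ changes sign when the orientation of $X$ is reversed.

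The first step is to check that $\ps^f$ is a $\CC[\Gm]$-linear cochain with values in $\V\omi\nu$. Part~\eqref{omfint:nii} of Proposition~\ref{prop-omfint} gives $\ps^f(X)\in\V\infty\nu(\partial S)\cap\V\om\nu\bigl((\partial S)\setminus(X\cap\Cu)\bigr)$. Since $X\cap\Cu$ contains at most one cusp, $\ps^f(X)$ lies in $\V\omi\nu$. For equivariance we use \eqref{int-bd0}, applied with $C=\gm X$, which gives $\ps^f(\gm X)=\tau_\nu(\gm)\ps^f(X)$. Here the orientation of $\gm X$ is the one induced from $X$ (see \S\ref{sect4-or}), and this matches because \eqref{omv-trf} is a pullback statement under $h$.

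The second step is the cocycle property: $(d^{\ds-1}\ps^f)(X)=\ps^f(\partial_\ds X)=\sum_{B}\e_B I(f,B;\cdot)$ should vanish for every $X\in\X^\tess_\ds$. This is part~\eqref{omfint:niii} of Proposition~\ref{prop-omfint}, read with the induced orientations, i.e.\ Stokes' theorem for the closed form $\omv_\nu$ (Proposition~\ref{prop-omtrf}). For cells containing a cusp, one truncates at $H_\cu(Z)$ and lets $Z\to\infty$, which is justified by the quick decay of $f$ and its derivatives (Corollary~\ref{cor-grwt-hc}). I expect this cuspidal Stokes argument, and the uniformity of the estimates needed for it, to be the only nontrivial point, but it is already contained in the cited proposition.

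The third step checks the boundary condition \eqref{bdc}. If $C\subset S$ then $C\cap\Cu=\emptyset$, so $\ps^f(C)\in\V\om\nu(\partial S)$. If $C\cap\Cu=\{\cu\}$ then $\bsing_\nu(\ps^f(C))\subset\{\cu\}$. Hence $\ps^f\in Z^{\ds-1}(F^\tess_\bullet;\V\omi\nu)^\bdc$, and its class lies in $H^{\ds-1}_\pb(\Gm;\V\omi\nu)^\bdc$ via Proposition~\ref{prop-pcht}. Linearity of $f\mapsto\ps^f$ follows from the bilinearity of $\eta$ (Proposition~\ref{prop-eta}\eqref{eta:anti}), and $f\mapsto[\ps^f]$ is therefore linear; this gives $\btv_\nu$.
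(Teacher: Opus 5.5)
Your proposal is correct and follows the paper's own route: the theorem is read off directly from Proposition~\ref{prop-omfint}, which gives values in $\V\infty\nu(\partial S)\cap\V\om\nu\bigl((\partial S)\setminus(C\cap\Cu)\bigr)$, $\Gm$-equivariance via \eqref{int-bd0}, and vanishing on boundaries of $\ds$-cells. From this it follows that $\ps^f\in Z^{\ds-1}\bigl(F^\tess_\bullet;\V\omi\nu\bigr)^\bdc$. Your explicit checks of equivariance (taking $C=\gm X$ in \eqref{int-bd0}) and of the orientation conventions spell out details the paper leaves implicit.
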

\rmrk{Remarks}
\begin{enumerate}[label=$\mathrm{(\arabic*)}$, ref=$\mathrm{\arabic*}$]
\item $\btv_\nu$ is the zero map for most $\nu\in \CC$. It might be
non-zero only if $\rho^2-\nu^2\geq 0$. (See Proposition
\ref{prop-cusp-sp}.)
\item In Sections \ref{sect8}--\ref{sect12} we aim at
\begin{itemize}
\item a characterization of the image $\btv_\nu \A^0(\Gm;\nu)$ in
cohomological terms,
\item the determination of a set of $\nu\in \CC$ for which $\btv_\nu$ is
injective.
\end{itemize}
\end{enumerate}

%% file: ccro2-7-asp.tex

\bigskip

\def\flnm{ccro2-7-asp}

\section{The extension \intitle{\hat S} of \intitle{S}}\label{sect7}

In the previous sections we dealt with $S$ and its boundary
$\partial S$, each with its own real-analytic structure. In the next
section we will need to go beyond the boundary $\partial S$ in an
analytic way. The aim of this section is to view $S$ and $\partial S$
as embedded in a larger analytic manifold $\hat S$. We view this as a
generalization of the embedding $\uhp \cup \proj\RR\subset\proj\CC$ in
the case $\ds=2$, for which $S=\uhp$ and $\partial S=\proj\RR $.

With this embedding in place, functions on $S$ may extend analytically
to a neighborhood of $S$ in~$\hat S$. Lemmas \ref{lem-distan}
and~\ref{lem-xya} give such extensions for some distance-related
functions.

\subsection{Construction of \intitle{\hat S}} The space $S$ is
isomorphic to $N\times A$ as an analytic manifold. The corresponding
coordinates are the normalized horospherical coordinates
$(x^{(1)},x^{(2)},y)$ in $\RR^p \times \RR^q \times (0,\infty)$.

We define the space $S^-$ as $\RR^p\times \RR^q \times(-\infty,0)$. It
is isomorphic to $S$ as a manifold under the involution
\il{j}{$j:(x^{(1)},x^{(2)},y) \mapsto (x^{(1)},x^{(2)},-y)$}
\be \label{jinv} j: (x^{(1)},x^{(2)},y) \leftrightarrow
(x^{(1)},x^{(2)},-y)\,.\ee
The action of $G$ on $x\in S^-$ is given by $g\, x = jgj \,x$. We put
\il{ogn-}{$\orgn^- = (0,0,-1)\in S^-$}$\orgn^- = j\, \orgn $.

To make the set $\hat S = S\cup (\partial S) \cup S^-$ into an analytic
manifold we cover it by the sets \il{hatS}{$\hat S$ \text{ analytic
manifold containing $S$ and $\partial S$}}
\bad \hat S\setminus \{\inft\}&= S \cup (N\,\nul) \cup S^-\,,\\
\hat S \setminus \{\nul\}&= \wm S \cup (\wm N \nul)\cup \wm S^-= S \cup
\wm N \wm^{-1} \inft \cup S^-\,,
\ead
forming an atlas.

We provide $\hat S\setminus\{\inft\}$ with the standard analytic
structure of $\RR^\ds$ such that
\bad \bigl( x^{(1)}, x^{(2)},y) &\in \RR^p\times\RR^q\times \RR\text{
corresponds to }\\
&\quad
\begin{cases}\label{an-Sminf}
\nm(x^{(1)}, x^{(2)})\am(y)\, \orgn&\text{ if } y>0\,,\\
\nm(x^{(1)}, x^{(2)})\,\nul&\text{ if } y=0\,,\\
\bigl(j \nm(x^{(1)}, x^{(2)})\am(y) j \bigr)\,\orgn^-&\text{ if }y<0\,.
\end{cases}\ead
We call $\bigl( x^{(1)}, x^{(2)},y)$ \il{exthorco}{extended
horospherical coordinates}{\em extended horospherical coordinates}.

The transformation $\wm$ is an involution in $S$, $\partial S$ and $S^-$
that interchanges $\inft$ and $\nul$. The set
$\hat S\setminus \{\infty\} \cong \RR^\ds$ has the analytic structure
determined by the extended horospherical coordinates
$(x_1,\ldots,x_{\ds-1},y)$. The functions
$x_1\circ \wm, \ldots, x_{\ds-1}\circ \wm, y\circ \wm$ are defined on
$\hat S\setminus \nul$. They determine an analytic structure on
$\hat S\setminus \{\nul\}$. Restriction of these analytic structures to
the intersection $\left( \hat S \setminus
\{\infty\}\right) \cap \left( \hat S \setminus\{\nul\}\right) = \hat S\setminus
\{\infty,\nul\}$ gives two analytic structures on
$\hat S\setminus\{\inft,\nul\}$.

\begin{lem}[Relation between the analytic structures] \label{lem-as}
\mbox{ }
\begin{enumerate}[label=$\mathrm{(\roman*)}$, ref=$\mathrm{\roman*}$]
\item \label{as:0i}On $\left(\hat S \setminus \{\nul\} \right)\cap 
\left( \hat S \setminus\{\inft\}\right)$ both analytic structures are
isomorphic.
\item \label{as:S}Both analytic structures induce the standard analytic
structure on $S$ and on $S^-$.
\item \label{as:bd}The restrictions to $\partial S \setminus \{\inft\}$
of the analytic structures of $\partial S = K/M$ and of
$\hat S\setminus\{\inft\}$ coincide.
\item \label{as:bd0}The restrictions to $\partial S \setminus \{\nul\}$
of the analytic structures of $\partial S = K/M$ and of
$\hat S\setminus\{\nul\}$ coincide.
\end{enumerate}
\end{lem}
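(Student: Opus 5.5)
The plan is to reduce everything to the analyticity of one transition map. Let $\phi$ be the map that sends $(x^{(1)},x^{(2)},y)$, the extended horospherical coordinates of a point $z\in\hat S\setminus\{\inft,\nul\}$, to the extended horospherical coordinates of $\wm z$. This is exactly the change of chart between the two atlas pieces, since $\wm^{-1}=\wm$. Part \eqref{as:0i} then amounts to showing that $\phi$ is analytic on the open set $\{(x,y)\;:\;(x^{(1)},x^{(2)})\neq(0,0)\}\subset\RR^\ds$ (this is the overlap), because $\phi\circ\phi=\mathrm{id}$ gives analyticity of the inverse for free. I split the overlap into its three parts $y>0$, $y<0$ and $y=0$.

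\emph{Case $y\neq0$.} On $S$ the group $G$ acts analytically. The map $NA\to S$, $(n,a)\mapsto na\,\orgn$, is an analytic diffeomorphism, and its inverse is given by the analytic projections $\nJ,\tJ$ of Proposition~\ref{prop-Id}. Hence
\[ \phi(x,y)=\bigl(\nJ(\wm\nm(x)\am(y)),\,\tJ(\wm\nm(x)\am(y))\bigr)\qquad(y>0) \]
is analytic. On $S^-$ the action is $g\mapsto jgj$, and $j$ only flips the sign of $y$. So $\phi(x,y)=j\,\phi(x,-y)$ for $y<0$, which is analytic as well. This also gives part \eqref{as:S}: both structures restrict on $S$ and on $S^-$ to the structure coming from $NA$.

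\emph{Case $y=0$ (the main obstacle).} Here I must show that the formula for $\phi$ valid for $y>0$ extends analytically across $y=0$ whenever $x\neq0$, and that this extension agrees with the definition on $y<0$. This requires explicit formulas.

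For the $A$-component, \eqref{tJexpl} gives
\[ \tJ\bigl(\wm\nm(x)\am(y)\bigr)=\frac{y}{\sqrt{(y^2+c\|x^{(1)}\|^2)^2+4c\|x^{(2)}\|^2}}\,. \]
The denominator does not vanish when $x\neq0$, so this is analytic in $(x,y)$ near $y=0$. It is odd in $y$, which matches the rule $\phi(x,y)=j\phi(x,-y)$ on the $y$-coordinate. At $y=0$ its value is $0$, consistent with $\wm$ mapping $N\nul\setminus\{\nul\}$ into $N\nul$.

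For the $N$-component, I would first write $\nm(x)\am(y)=\am(y)\,\nm(y^{-1}x^{(1)},y^{-2}x^{(2)})$ by \eqref{anai}. Then I would use the Bruhat decomposition \eqref{Bd} and the explicit form of the $N$-component in Helgason's computation behind Proposition~\ref{prop-Id}, in \cite[Theorem 1.14]{Hel70}. The expected outcome is that $\nJ(\wm\nm(x)\am(y))$ has coordinates that are rational in $x^{(1)},x^{(2)}$ and $y^2$, with denominator a power of $(y^2+c\|x^{(1)}\|^2)^2+4c\|x^{(2)}\|^2$. If so, the $N$-component is analytic across $y=0$ and even in $y$, as the $j$-symmetry requires.

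If that explicit formula is hard to extract, my fallback is to use the algebraic realization $G\subset\GL(\glie)$ together with the dilation $\am(y)$. The entries of $\wm\nm(x)\am(y)$ are Laurent polynomials in $y$. The Iwasawa components are then obtained by dividing by the norm-type expression above, which is nonvanishing.

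Finally, the value of $\phi$ at $y=0$ must be the coordinate of $\wm\nm(x)\nul$. By continuity this equals the limit $y\downarrow0$, since $\nm(x)\am(y)\orgn\to\nm(x)\nul$ by \eqref{inft-nul}. This completes \eqref{as:0i}.

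\emph{Parts \eqref{as:bd} and \eqref{as:bd0}.} By \eqref{N0->pS}, the inclusion $N\nul\to\partial S=K/M\cong G/MAN$ is $\nm(x)\nul\mapsto \nm(x)\wm\,MAN$. This is the restriction of the analytic projection $G\to G/MAN$ to the open Bruhat cell $N\wm MAN/MAN$. By \eqref{Bd} that cell equals $\partial S\setminus\{\inft\}$, and $N\to N\wm MAN/MAN$ is an analytic diffeomorphism, a standard property of the big cell. So the coordinates $x$ on $\partial S\setminus\{\inft\}$ from $\hat S$ are analytic coordinates for the structure of $K/M$, which proves \eqref{as:bd}. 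Part \eqref{as:bd0} then follows by transporting \eqref{as:bd} by the analytic diffeomorphism $\wm$ of $K/M$, using that the second chart is defined by composition with $\wm$.
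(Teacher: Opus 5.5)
Your plan follows the paper's proof. For $y\neq0$ you use the analytic $G$-action on $S$ and conjugation by $j$ on $S^-$, and for \eqref{as:bd}, \eqref{as:bd0} you use the big Bruhat cell; those parts are fine. One small slip: in extended horospherical coordinates the overlap is $\RR^\ds\setminus\{0\}$, not $\{x\neq0\}$, though your case split covers it.

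The gap is the step you call the main obstacle. For the $N$-component at $y=0$ you only state an expected outcome, and the source you cite does not provide it. The Helgason result used in Proposition~\ref{prop-Id} gives only the $A$-component, i.e.\ \eqref{tJexpl}. The fallback about Laurent polynomials in $\GL(\glie)$ is not an argument. So neither analyticity across $\partial S$ nor the evenness in $y$ needed to match the definition on $S^-$ is established. Your ``by continuity'' step also assumes that the actions on $S$ and on $K/M$ fit together continuously in the chart, which is part of what is to be shown. The paper's own proof is no more detailed here: its displayed formula only treats $y\neq0$. So you must supply this yourself.

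One way to close it, without explicit formulas for $\nJ$, is the following. For $x\neq0$ we have $\wm\nm(x)\wm\in N^-\setminus\{e\}\subset N\wm MAN$. By \eqref{Bd} this gives $\wm\nm(x)\wm=n_1\wm m a_1 n_2$ with $n_1=\nm(x_1)$, $n_2=\nm(x_2)$, $m\in M$ and $a_1^{-1}=\am(s)$, all analytic in $x$. Using $\wm\am(t)\wm^{-1}=\am(1/t)$ and \eqref{anai}, one gets for $y>0$
\[ \wm\nm(x)\am(y)\,\orgn=n_1\,m'\,\am(s)\,\am(y)\,\wm\,\nm(\delta_y x_2)\,\orgn,\qquad m'=\wm m\wm^{-1},\quad \delta_y(u^{(1)},u^{(2)})=(yu^{(1)},y^2u^{(2)}). \]
Write $\wm\nm(\epsilon)\,\orgn=\nm(\xi(\epsilon))\am(\eta(\epsilon))\,\orgn$, with $\xi,\eta$ analytic on all of $\RR^{p+q}$ by Proposition~\ref{prop-Id}. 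The coordinates of the point are then $\bigl(x_1\cdot\Ad(m')\,\delta_{sy}\,\xi(\delta_y x_2),\; sy\,\eta(\delta_y x_2)\bigr)$, where $\cdot$ is the group law \eqref{nprhc}. This expression is analytic on $\{x\neq0\}\times\RR$. At $y=0$ it equals $(x_1,0)$, the coordinate of $n_1\nul=\wm\nm(x)\nul$, which also settles the continuity step.

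Agreement with the definition on $S^-$ reduces to two identities. The first, $\eta(\delta_{-1}\epsilon)=\eta(\epsilon)$, is clear from \eqref{tJexpl}. For the second, $\xi(\delta_{-1}\epsilon)=\delta_{-1}\xi(\epsilon)$, use the automorphism of $\glie$ equal to $(-1)^j$ on $\glie_{j\al}$. It commutes with $\tht$ and fixes $A$. It also fixes $\wm$, since $\Ad(\wm)\glie_{j\al}=\glie_{-j\al}$. It maps $\nm(u)$ to $\nm(\delta_{-1}u)$. The isometry of $S$ it induces therefore yields exactly this symmetry.
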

\begin{proof}
The analytic structure on $\hat S \setminus \{\inft\}$ is given in
\eqref{an-Sminf} by the analytic structure on $\RR^{p+q} \times \RR$.
On $\hat S\setminus \{\nul\}$ we have this structure, transformed
by~$\wm=\wm^{-1}$. We have to consider the effect of conjugation by
$\wm$ on the structure on $\hat S\setminus \{\inft,\nul\}$. We write
$n=\nm( x^{(1)},x^{(2)})$ and $a=\am(y)$, and use
\be \wm n a \wm\,\orgn = \wm n \wm a^{-1} \,\orgn = \nJ\bigl( \wm n \wm
\bigr) \,\Bigl(\am\bigl(\tJ(\wm n \wm )\bigr)\, a^{-1}
\Bigr)\,\orgn\,.\ee
The element $\wm n \wm$ depends analytically on $(x^{(1)}, x^{(2)})$,
and $a^{-1}= \am(y^{-1})$ depends analytically on $y\in \RR_{\neq 0}$.
This shows that the relation between both analytic structures on
$S \cup\left(\partial S\setminus\{\inft,\nul\}\right)$ is analytic. Use
the isomorphism $j$ to get the same for
$S^- \cup\left(\partial S\setminus\{\inft,\nul\}\right)$.
This gives \eqref{as:0i}.

Assertion \eqref{as:S} is clear from the use of the extended
horospherical coordinates.

The analytic structure of $\hat S \setminus\{\infty\}$ determines the
analytic coordinates $(x^{(1)}, x^{(2)}) $ on $N \, \nul $. An analytic
map to $K/M$ sends $n=\nm(x^{(1)},x^{(2)})$ to $\kI(n\wm)$. For the
inverse relation we take $k\in K \setminus M$. Then $k$ is in the big
cell $N\wm MAN$ of the Bruhat decomposition. So $k \,\infty $ is sent
to an element of $N\wm \,\inft= N\,\nul$. In this way we obtain
\eqref{as:bd}, and analogously \eqref{as:bd0} as well.
\end{proof}

The analytic manifold $\hat S$ is the one-point-compactification of
$\hat S\setminus \{\inft\} \cong \RR^\ds$. We call it the
\il{rflex}{extension}\emph{extension} of $S$ by reflection. On
$\hat S \setminus \{\orgn,\orgn^-\}$ we have \il{extpcCh}{extended
polar coordinates}{\em extended polar coordinates}
$(b,\tau) \in K/M\times (-1,1)$ corresponding to
\be\label{pce} \begin{cases} k_b\am(1/\tau)\,\orgn&
\qquad\text{for }\ \tau\in (0,1)\,,\\
k_b\,\inft & \qquad\text{for }\ \tau=0\,,\\
j k_b \am(-1/\tau)j\, \orgn^-& \qquad\text{for }\ \tau\in (-1,0)\,,
\end{cases}
\ee
with $k_b\in K$ such that $k_b\,\inft=b$. \il{tau}{$\tau$ coordinate on
$\hat S \setminus \{\orgn,\orgn^-\}$}
\begin{lem}\label{lem-epc}The extended polar coordinates are compatible
with the analytic structure of $\hat S$.
\end{lem}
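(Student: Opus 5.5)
The plan is to show that the map $\Phi\colon (b,\tau)\mapsto$ (the point in~\eqref{pce}) from $(K/M)\times(-1,1)$ to $\hat S\setminus\{\orgn,\orgn^-\}$ is an analytic bijection whose Jacobian never vanishes. Then the inverse function theorem gives that the extended polar coordinates are analytic coordinates.

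\textbf{Bijectivity and the open parts.} Bijectivity follows from the Cartan decomposition $(K/M)\times A^+\cong S\setminus\{\orgn\}$ (see \S\ref{sect2-gr-smsp}), transported to $S^-$ by the involution $j$, together with the identity map on $\partial S$ at $\tau=0$. For $\tau\neq0$, analyticity and the non-vanishing of the Jacobian hold because the Cartan decomposition is a diffeomorphism and $\tau\mapsto 1/\tau$ is analytic. So everything reduces to a neighbourhood of $\tau=0$.

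\textbf{Near $\tau=0$, for $b\ne\inft$.} Use the chart $\hat S\setminus\{\inft\}$ with extended horospherical coordinates. Write $b=n_b\,\nul$ with $n_b\in N$. By Lemma~\ref{lem-as}\eqref{as:bd}, the map $b\mapsto n_b$ is an analytic chart of $\partial S$. By~\eqref{N0->pS} we may take $k_b=\kI(n_b\wm)$, so
\[
n_b\wm=k_b\,\am(t_0)\,n_0,\qquad t_0=\tI(n_b\wm),\quad n_0=\nI(n_b\wm),
\]
and $t_0$, $n_0$ are analytic in $b$. Since $A$ normalizes $N$, for $\tau>0$ this gives
\[
k_b\am(1/\tau)\,\orgn=n_b\wm\,n_0^{-1}\,\am\bigl(1/(t_0\tau)\bigr)\,\orgn=n_b\wm\,\am\bigl(1/(t_0\tau)\bigr)\,n'\,\orgn
\]
for some $n'=\nm(x^{(1)},x^{(2)})$ that depends analytically on $(b,\tau)$ and tends to $e$ as $\tau\to0$. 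Rescaling by $\am(s)$ with $s=t_0\tau$, the point is $n_b\cdot\wm\,\nm(s\,\cdot,s^2\cdot)\,\am(s)^{-1}\,\orgn$. The $A$-component is given by Proposition~\ref{prop-Id}\eqref{Id:expl}. Substituting $y=1/s$ in~\eqref{tJexpl} gives
\[
\frac{s}{\sqrt{(1+c s^2\|x^{(1)}\|^2)^2+4cs^4\|x^{(2)}\|^2}},
\]
which is analytic in $s$ near $0$ and equals $s+\oh(s^2)$. The $N$-component comes from $\nJ$ applied to $\wm\,\nm(s x^{(1)},s^2x^{(2)})\,\wm^{-1}\in N^-$, followed by the $s$-rescaling. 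It is analytic in $s$ and tends to $e$ as $s\to0$.

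Hence, in extended horospherical coordinates, the point is $\bigl(n_b\cdot\nu(b,\tau),\ y(b,\tau)\bigr)$ with $\nu$ and $y$ analytic for $\tau\geq0$, $\nu(b,0)=e$, $y(b,0)=0$, and $\partial_\tau y(b,0)=t_0>0$. For $\tau<0$ the definition uses $j k_b\am(-1/\tau)j\,\orgn^-$, and $j$ only flips the sign of $y$. So the same formula, read with $s=t_0\tau<0$, describes this side, provided the analytic expression above is odd in $y$ and even in the $N$-part under $s\mapsto -s$ (after accounting for $j$). I expect to check this from the explicit dependence on $s x^{(1)}$, $s^2x^{(2)}$. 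Then a single analytic expression covers $\tau\in(-\e,\e)$. At $\tau=0$ the Jacobian is block-triangular, with diagonal blocks the chart $b\mapsto n_b$ and $t_0\neq0$, so it is invertible.

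\textbf{Near $b=\inft$.} Use the chart $\hat S\setminus\{\nul\}$, which is defined through $\wm$. Since $\wm k_b\in K$ represents $\wm b\neq\inft$ and $\wm$ commutes with $j$, the previous case applies verbatim.

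The main obstacle I expect is the parity and sign bookkeeping across $\tau=0$: showing that the $\tau<0$ branch, defined via $j$ and the twisted action $g\mapsto jgj$, is exactly the analytic continuation of the $\tau>0$ formula. If the parity check fails directly, I would instead verify it through the explicit chart relations used in Lemma~\ref{lem-as}.
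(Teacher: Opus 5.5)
Your reduction is the right one, and it is more careful than the paper's own proof. The paper checks $\tau>0$ (via $\nJ,\tJ$ and the Cartan decomposition), $\tau=0$ (via Lemma~\ref{lem-as}\eqref{as:bd},\eqref{as:bd0}) and $\tau<0$ (via $j$) separately, and never addresses joint analyticity in $(b,\tau)$ across $\tau=0$. Your near-boundary computation is correct once the order of factors is fixed; your rescaled expression has $\am(s)^{-1}$ on the wrong side of $\nm(s\,\cdot,s^2\cdot)$. With $\nm(x_0)=n_0^{-1}$ and $s=t_0\tau$, the point is $n_b\,\am(s)\,\bar n_s\,\orgn$ with $\bar n_s=\wm\,\nm(sx_0^{(1)},s^2x_0^{(2)})\,\wm^{-1}$. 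Its coordinates are therefore $n_b\,\delta_s\bigl(\nJ(\bar n_s)\bigr)$ and $y=s\,\tJ(\bar n_s)$, where $\delta_s\nm(z)=\nm(sz^{(1)},s^2z^{(2)})$. Both are analytic for all small real $s$, and $y$ is odd by \eqref{tJexpl}.

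The gap is the step you postpone: evenness of the $N$-part, i.e.\ that the $\tau<0$ branch defined through $j$ is the analytic continuation of the $\tau>0$ branch. This does not follow from the dependence on $(sx^{(1)},s^2x^{(2)})$. Write $\sigma\nm(u^{(1)},u^{(2)})=\nm(-u^{(1)},u^{(2)})$ and $Z(n)=\nJ(\wm n\wm^{-1})$. Since $\delta_{-s}=\sigma\delta_s=\delta_s\sigma$, the $N$-part at $-s$ is $n_b\,\delta_s\,\sigma Z(\sigma\,\delta_s n_0^{-1})$. So evenness is equivalent to $Z\circ\sigma=\sigma\circ Z$, an extra symmetry of the Iwasawa projection on $N^-$ for which the paper has no formula. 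Without it you only obtain a piecewise analytic map. Your fallback does not directly supply it either. Lemma~\ref{lem-as}\eqref{as:0i} controls the transition map only on $\hat S\setminus\{\inft,\nul\}$, whereas $n_b^{-1}$ times your curve is $\wm n_0^{-1}\wm^{-1}\am(s)\,\orgn$, which runs into $\nul$ and passes near $\inft$ when rewritten through $\wm$.

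The missing idea is the automorphism $\sigma=e^{i\pi\ad\H_0}$ of $\glie$, acting by $(-1)^j$ on $\glie_{j\al}$. It is an automorphism because $[\glie_{i\al},\glie_{j\al}]\subset\glie_{(i+j)\al}$, and it has the following properties.
\begin{itemize}
\item It commutes with $\tht$ and with $\Ad(\wm)$, since both map $\glie_{j\al}$ to $\glie_{-j\al}$.
\item It is the identity on $\alie$.
\item By \eqref{nc-exp} it acts on $N$ as the map $\sigma$ above.
\end{itemize}
Since $G\cong\Ad(G)$, which is normal in $\mathrm{Aut}(\glie)$, the map $\tilde\sigma(g)=\Ad^{-1}\bigl(\sigma\Ad(g)\sigma^{-1}\bigr)$ is an automorphism of $G$. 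It preserves $N$ and $K$, fixes $A$ pointwise and fixes $\wm$. Applying it to $h=\nJ(h)\,\am(\tJ(h))\,\kJ(h)$ gives $\nJ\circ\tilde\sigma=\sigma\circ\nJ$ and $\tJ\circ\tilde\sigma=\tJ$. Together with $\tilde\sigma(\wm n\wm^{-1})=\wm\,\sigma(n)\,\wm^{-1}$ this yields $Z\circ\sigma=\sigma\circ Z$, and once more the oddness of $y$. Alternatively, you can quote the explicit inversion formulas of Cowling, Dooley, Kor\'anyi and Ricci for $H$-type groups, in which the height enters only through $y^2$.

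With this inserted, your argument closes. At $\tau=0$ the Jacobian is block triangular, with the chart $b\mapsto n_b$ and the entry $t_0>0$ on the diagonal. Points near $\inft$ are handled by transporting with $\wm$, which commutes with $j$.
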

\begin{proof}For $\tau=0$ the extended polar coordinates give the
standard analytic structure on $\partial S$, which is also the
structure induced by the analytic structure of $\hat S$ according to
\eqref{as:bd} and \eqref{as:bd0} in Lemma~\ref{lem-as}.

For $\tau>0$ we have the analytic map
$(b,\tau) \mapsto (x^{(1)},x^{(2)},y)$ determined by
\be \nm\bigl(x^{(1)},x^{(2)}\bigr) = \nJ\bigl( k_b\am(1/\tau)
\bigr)\,,\qquad y = \tJ\bigl( k_b\am(1/\tau) \bigr)\,.\ee
The inverse relation is determined by
$\orgn\neq \nm\bigl(x^{(1)},x^{(2)}\bigr)\, \am(y)  k = k_1 \am(1/\tau)$
with $\tau\in (0,1)$ and $k_1\in K$ determined up to
$k_1\mapsto k_1 m$, with $m\in M$, by the Cartan decomposition.
Application of $j: S \leftrightarrow S^-$ gives the correspondence on
$S^-\setminus\{\orgn^-\}$.
\end{proof}

\begin{prop}The space $\hat S = S\cup (\partial S)\cup S^-$ with the
analytic structure indicated above is a compact analytic manifold. The
group $G$ acts on it by analytic transformations.
\end{prop}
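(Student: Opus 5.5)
The plan is to treat the two assertions separately, relying on the charts already introduced and on Lemmas~\ref{lem-as} and~\ref{lem-epc}.

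\emph{Compact analytic manifold.} The sets $\hat S\setminus\{\inft\}$ and $\hat S\setminus\{\nul\}$ are open and cover $\hat S$. Each is identified with $\RR^\ds$: the first by extended horospherical coordinates, the second by the same coordinates composed with $\wm$. By Lemma~\ref{lem-as}~\eqref{as:0i}, the transition map on the overlap $\hat S\setminus\{\inft,\nul\}$ is analytic, so these two charts form an analytic atlas.

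For the Hausdorff property, the only pair of points that might fail to be separated by a single chart is $\inft,\nul$. The pair $\inft,\nul$ is not covered by one chart, so I separate them by hand. I take the sets of points with extended polar coordinate $\tau$ satisfying $|\tau|<\tfrac12$ and $b$ near $\inft$ resp.\ near $\nul$, with disjoint $b$-neighbourhoods; by Lemma~\ref{lem-epc} these sets are open and disjoint. Since $\hat S$ is covered by two charts, it is also second countable.

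For compactness, I write $\hat S$ as the union of three compact sets:
\begin{itemize}
\item the image of $(K/M)\times[-\tfrac12,\tfrac12]$ under the extended polar coordinates (continuous by Lemma~\ref{lem-epc}, and $K/M$ is compact);
\item the closed geodesic balls $\{k\am(t)\,\orgn : t\le 2\}$ in $S$;
\item their images under $j$ in $S^-$.
\end{itemize}
This union exhausts $\hat S$ because every point has $|\tau|\le\tfrac12$ or lies within distance $\log 2$ of $\orgn$ or $\orgn^-$.

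\emph{Analytic action.} I first check that the three pieces of the action glue to a single action of $G$ on $\hat S$: the action on $S$, the action $g\,x=jgj\,x$ on $S^-$, and the action on $\partial S=K/M$. Each piece is a group action, and each piece preserves its own part of $\hat S$, so the combined map is an action. It remains to show that each $g$ acts analytically. By the Cartan decomposition $G=KAK$, it suffices to treat $g\in A$ and $g\in K$.

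\emph{Case $g=\am(t)$.} In the chart $\hat S\setminus\{\inft\}$, \eqref{anai} gives
\[
(x^{(1)},x^{(2)},y)\mapsto (t\,x^{(1)},\,t^2x^{(2)},\,t\,y),
\]
which is linear. This formula is uniform in the sign of $y$, because $j$ commutes with $\am(t)$ on these coordinates, and at $y=0$ it reproduces the action on $N\,\nul$. In the chart $\hat S\setminus\{\nul\}$, $\am(t)$ acts as $\wm\am(t)\wm=\am(1/t)$, again linear. Since the two charts cover $\hat S$ and both are preserved by $A$, the action of $A$ is analytic.

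\emph{Case $k\in K$.} On $\hat S\setminus\{\orgn,\orgn^-\}$, in extended polar coordinates \eqref{pce}, $k$ acts by $(b,\tau)\mapsto(k\,b,\tau)$. This is analytic because $K$ acts analytically on $K/M$, and these coordinates are analytic by Lemma~\ref{lem-epc}. On neighbourhoods of $\orgn$ and $\orgn^-$, $k$ acts by a Riemannian isometry of $S$ (resp.\ by $jkj$ on $S^-$), and these are analytic by the analytic structure of $G$ and Lemma~\ref{lem-as}~\eqref{as:S}. Joint analyticity in $(g,x)$ then follows from the decomposition as well.

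I expect the main obstacle to be consistency at the boundary. One has to verify that the polar-coordinate description of the $K$-action at $\tau=0$ and the $\am(t)$-formula at $y=0$ both agree with the $G$-action on $\partial S$ given by $g\,(k\,\inft)=\kI(gk)\,\inft$. This reduces to the identification $N\,\nul\to\partial S$ in \eqref{N0->pS} and Lemma~\ref{lem-as}~\eqref{as:bd}, \eqref{as:bd0}. For $A$, the point to check is that $\am(t)\nm(x)\,\nul=\nm(tx^{(1)},t^2x^{(2)})\,\nul$, which holds because $\am(t)$ fixes $\nul$.
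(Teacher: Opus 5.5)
Your proposal is correct, but for the analyticity of the action you take a different route from the paper.

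\emph{The paper's route.} The paper treats the manifold structure as settled by Lemmas~\ref{lem-as} and~\ref{lem-epc}. Compactness appears only in the remark that $\hat S$ is the one-point compactification of $\hat S\setminus\{\inft\}\cong\RR^\ds$. For the action, the paper argues locally near the identity:
\begin{itemize}
\item $NA$ acts analytically on $\hat S\setminus\{\inft\}$, and $N^-A=\wm NA\wm^{-1}$ acts analytically on $\hat S\setminus\{\nul\}$;
\item elements of $K$ near $e$ act analytically near boundary points, via the Iwasawa projections $\nJ$, $\tJ$.
\end{itemize}
So each $g$ near $e$ acts analytically near each point, and connectedness of $G$ finishes the argument.

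\emph{Your route.} You use $G=KAK$ globally. $A$ acts by the linear maps $(x^{(1)},x^{(2)},y)\mapsto(tx^{(1)},t^2x^{(2)},ty)$ in both charts. $K$ acts by $(b,\tau)\mapsto(kb,\tau)$ in the extended polar coordinates~\eqref{pce}, and by isometries near $\orgn$ and $\orgn^-$.

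\emph{Comparison.} Your version is more explicit and avoids the local-to-global step. The price is that it rests entirely on Lemma~\ref{lem-epc}, i.e.\ on analyticity of the polar coordinates across $\tau=0$. You also check Hausdorffness, second countability and compactness, which the paper does not spell out. The paper's local argument is the natural one if you want the action map $G\times\hat S\to\hat S$ to be jointly analytic.

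\emph{Two small corrections.}
\begin{itemize}
\item In the compactness step the ball must be $\{k\am(t)\,\orgn : 1\le t\le 2\}$. With $0<t\le 2$ the set is all of $S$, because $\am(t)\,\orgn=\wm\am(1/t)\,\orgn$.
\item Your final claim, that joint analyticity in $(g,x)$ follows from the decomposition, is not justified as it stands. The map $K\times A\times K\to G$ is not a submersion at points with trivial $A$-component, so you cannot descend analyticity there. The statement only needs each $g$ to act analytically, so you can simply drop that sentence. If you do want joint analyticity, use the submersion on $K\times(A\setminus\{e\})\times K$, and for $g$ near $K$ write $g=(g\,\am(2))\,\am(2)^{-1}$.
\end{itemize}
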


The compact manifold $\hat S$ can be realized as a sphere in $\RR^{d+1}$
provided with an analytic action of the group~$G$.
\begin{proof}After Lemmas \ref{lem-as} and~\ref{lem-epc} only the group
action is left to be checked.

The action of $G$ on $S=G/K$ is analytic with respect to the standard
analytic structure of the homogeneous space; this structure determines
the analytic structure of $\hat S$. The action of $G$ on $S^-$ is
derived from the analytic action on $S$, since the transformation $j$
is analytic.

On $\partial S = K/M \cong G/MAN$ the action is analytic for the
structure as homogeneous space. This standard structure determines the
analytic structure of $\hat S$. We still have to check that these
actions on the three subsets are the restriction of one analytic action
on $\hat S$. The action of $NA$ on $\hat S\setminus \{\inft\}$ is
clearly analytic, in particular on $(\partial S)\setminus \{\inft\}$.

Let $V$ be an open neighborhood of $n_0 \,\nul$, $n_0\in N$, in
$\hat S$. For $k$ in a sufficiently small neighborhood of $e$ in $K$,
the action on points of $V$ is determined by
$k n a \, \orgn = \nJ(kna)\am(\tJ(kna))\,\orgn$, which is analytic on
$V$. Together, this shows that locally on $\hat S\setminus \{\inft\}$
the action of $g$ is analytic for $g$ in a sufficiently small
neighborhood of $e$ in $G$.

The action of $N^- A =  \wm NA\wm^{-1}$ is analytic on
$\hat S \setminus \{\nul\}$. The action of $k$ near $e$ in $K$ is
analytic near points $n_0^- \,\inft$ with $n_0^-\in N^-$ analogously to
the previous case.

All together, this shows that the action of $g\in G$ is analytic at any
point $z\in \hat S$ for $g$ in a small neighborhood, which depends on
the point~$z$. Since $G$ is connected, this implies analyticity of the
action of $G$ on~$\hat S$.
\end{proof}

\subsection{Distance function} The length of a differentiable curve
$p:[t_1,t_2]\rightarrow S$ is given by the integral in \eqref{length}.
The Riemannian distance $\dist(z,x)$ is the minimal value of the length
of all paths from $z$ to $x$. This minimal length is attained for a
geodesic segment, which can be described as the curve of the form
$p(s) = g \am(e^s)\,\orgn$, $0\leq s \leq s_1$, with $z=g\,\orgn$ and
$x=g \am(e^{s_1})$. This leads to the formula $\dist(z,x) = s_1$, or
equivalently
\be \dist\bigl( g \, \orgn, \,g\,\am(y) \, \orgn\bigr) =\max( y,
y^{-1})\qquad
(g\in G,\; y>0)\,,\ee
with $y=e^{s_1}$. See also \eqref{distln}. The distance tends to
$\infty$ as $y $ tends to $0$ or to $\infty$. The distance function
$\dist$ makes $S$ into a metric space, and
$\dist(g\, z,\, g\, x) = \dist(z,x)$ for all $x,z\in S$. Moreover,
$\dist$ is an analytic function on
$\bigl \{ (z,x) \in S\times S\;:\; x\neq z\bigr\}$.

The following lemmas extend the function $z\mapsto - \log\dist(z,x)$
into $\hat S$, and use the involution $j$ in~\eqref{jinv}.

\begin{lem}\label{lem-distan}There is a real-valued analytic function
$\dtst$ \il{dtst}{$\dtst(x,z)$} on
$ \bigl\{(x,z)\;:\; x\in S,\; z\in \hat S\setminus\{x,jx\}\bigr\}$ such
that if $z$ is in the smaller set $ S\setminus\{x\}$, then
\be \dtst(x,z) = e^{-\dist(x,z)}\,.\ee

For fixed $x\in S$ the function $z\mapsto \dtst(x,z)$ has a simple zero
along $\partial S$ and is non-zero elsewhere.
\end{lem}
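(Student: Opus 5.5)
By $G$-invariance of the distance, I will reduce to $x=\orgn$ and then transport. Write $x=h\,\orgn$ with $h\in G$. Then for $z\in S$,
\[
e^{-\dist(x,z)}=e^{-\dist(\orgn,h^{-1}z)},
\]
and the action of $G$ on $\hat S$ is analytic and commutes with the involution $j$, since $G$ acts on $S^-$ by $jgj$. So it suffices to construct an analytic function $\dt_0$ on $\hat S\setminus\{\orgn,\orgn^-\}$ with $\dt_0(z)=e^{-\dist(\orgn,z)}$ for $z\in S\setminus\{\orgn\}$. The candidate is then $\dtst(x,z):=\dt_0(h^{-1}z)$, and the remaining work is well-definedness and joint analyticity.

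For the construction I will use the extended polar coordinates $(b,\tau)\in K/M\times(-1,1)$ from \eqref{pce}, which Lemma~\ref{lem-epc} shows are analytic on $\hat S\setminus\{\orgn,\orgn^-\}$. For $\tau\in(0,1)$ the point is $k_b\am(1/\tau)\,\orgn$, and by $K$-invariance of the distance together with \eqref{distln},
\[
\dist\bigl(\orgn,k_b\am(1/\tau)\,\orgn\bigr)=\log(1/\tau).
\]
Hence $e^{-\dist(\orgn,z)}=\tau$ on this region. I therefore set $\dt_0(b,\tau)=\tau$ on all of $\hat S\setminus\{\orgn,\orgn^-\}$. This is an analytic coordinate function, it vanishes exactly on $\partial S=\{\tau=0\}$, and the zero is simple because $d\tau\neq0$ there. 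It is nonzero on $S\setminus\{\orgn\}$ and on $S^-\setminus\{\orgn^-\}$. The definition is independent of the choice of $k_b$ in $k_bM$.

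Next I will check well-definedness under the choice of $h$. If $h$ is replaced by $hk$ with $k\in K$, the question is whether $\dt_0$ is $K$-invariant on all of $\hat S\setminus\{\orgn,\orgn^-\}$, not only on $S$. In polar coordinates $k$ acts by $(b,\tau)\mapsto(kb,\tau)$ on $S$, on $\partial S$, and on $S^-$ (using $jkj$ on $S^-$ and \eqref{pce}). So $\dt_0$ is $K$-invariant. Consequently $\dtst(x,z)=\dt_0(h^{-1}z)$ depends only on $x$. Note also that $h^{-1}z\notin\{\orgn,\orgn^-\}$ exactly when $z\notin\{x,jx\}$, where $jx=jhj\,\orgn^-=h\cdot\orgn^-$ under the $S^-$-action.

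The last step, joint analyticity in $(x,z)$, is where I expect the main care to be needed. I will choose an analytic local section $x\mapsto h_x$ of $G\to G/K$, for instance $h_x=\nm(x^{(1)},x^{(2)})\am(y)$ from horospherical coordinates. Then
\[
(x,z)\mapsto h_x^{-1}z
\]
is analytic on $S\times\hat S$, because the $G$-action on $\hat S$ is analytic by the proposition preceding this lemma. Composing with the analytic function $\dt_0$ gives analyticity on the stated domain.

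The simple-zero statement for fixed $x$ is inherited from $\dt_0$, since $z\mapsto h^{-1}z$ is an analytic diffeomorphism preserving $\partial S$. The real-valuedness is clear. The one point to verify carefully is that on $S^-$ the formula $\tau$ really does agree with the $G$-action as defined through $j$, so that the invariance argument holds across $\partial S$.
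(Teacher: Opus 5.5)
Your proposal is correct and follows essentially the paper's argument. The paper also moves $x$ to $\orgn$ by an element of $NA$ depending analytically on $x$, rotates by $K$ onto the vertical axis, and takes the $y$-coordinate there, which is exactly your extended polar coordinate $\tau$; reading off $\tau$ directly via Lemma~\ref{lem-epc} is a slightly cleaner packaging of that step, and your $K$-invariance check on $S^-$ already settles the point you flagged at the end.
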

\begin{proof}For given $(x,z)$ in the prescribed domain take first
$p\in NA$ such that $p \,x= \orgn$. The element $p$ depends
analytically on $x$. The action of $K$ leaves $p\, x=\orgn$ fixed, and
can be used to bring $k p\, z$ on the line with normalized
horospherical coordinates $(0,0,y)$ with $-1<y<1$. The element
$p\in NA$ depends analytically on $ x$, and the element $k$ depends
analytically on $p\, x$ and $z$. So the value of $y$ in $(-1,1)$
depends analytically on $(x,z)$; we take $\dtst(x,z) = y$. We recall
that in the definition of $\hat S$ the coordinate function $y$, which
is positive on $S$, is zero along $N\,\nul \subset \partial S$
and is negative on $S^-$. The value $\dtst(x,z)$ is positive for
$z\in S\setminus\{x\}$, where $\dtst(x,z) = e^{-\dist(x,z)}$, and
negative for $z\in S^-$. The zero along $\partial S$ is a simple zero,
since letting $z =  k \am(1/\tau)$ move through $\partial S$ gives $y$
as a function of $\tau$ with positive derivative at $\tau=0$.

If $z\in S$, then $y\in (0,1)$, and
$\dist(x,z) = \dist(kp\, x,\,kp\, z) = \dist(\orgn, (0,0,y) ) = -\log y$.
\end{proof}

\begin{lem}\label{lem-xya}Let $x_1,x_2\in S$. The function
\[ (x_1,x_2,z) \,\mapsto\, e^{\dist(x_1,z) - \dist(x_2,z)} \]
defined for $z \in S \setminus \{x_1,x_2\}$, has values in the interval
$\bigl[ e^{-\dist(x_1,x_2) }, e^{\dist(x_1,x_2)} \bigr]$, and has a
positive analytic extension to the region
\[\bigl\{(x_1,x_2,z)\;:\; x_1,x_2\in S, \; z\in \hat S \setminus
\{x_1,x_2,jx_1,jx_2\}\bigr\} \,.\]
\end{lem}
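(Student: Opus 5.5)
The plan is to write the function through $\dtst$ from Lemma~\ref{lem-distan}. For $z\in S\setminus\{x_1,x_2\}$ we have
\[ e^{\dist(x_1,z)-\dist(x_2,z)} = \frac{\dtst(x_2,z)}{\dtst(x_1,z)}\,. \]
The bounds on $S$ are immediate from the triangle inequality: $|\dist(x_1,z)-\dist(x_2,z)|\leq \dist(x_1,x_2)$, so the quotient lies in $\bigl[e^{-\dist(x_1,x_2)},e^{\dist(x_1,x_2)}\bigr]$. It remains to extend the quotient positively and analytically.

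By Lemma~\ref{lem-distan}, numerator and denominator are analytic in $(x_1,x_2,z)$ on the region $z\in\hat S\setminus\{x_1,x_2,jx_1,jx_2\}$. Each is nonzero off $\partial S$ and has a simple zero along $\partial S$. On $\hat S\setminus\partial S$ the quotient is therefore analytic.

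\begin{itemize}
\item On $S$ it is positive.
\item On $S^-$ both factors are negative, so the quotient is again positive.
\end{itemize}

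The key step is removing the apparent singularity along $\partial S$. Locally near a point of $\partial S$, take a defining function for the hypersurface $\partial S$. For instance, use the coordinate $\tau$ of the extended polar coordinates~\eqref{pce}, which by Lemma~\ref{lem-epc} is analytic with $\partial S=\{\tau=0\}$ and $d\tau\neq0$ there. A simple zero along this smooth hypersurface means $\dtst(x_i,z)=\tau\, u_i(x_i,z)$ with $u_i$ analytic, by analytic division (Hadamard's lemma in the analytic category). Moreover $u_i\neq0$ on $\partial S$, since the zero is simple; the proof of Lemma~\ref{lem-distan} gives a nonvanishing $\tau$-derivative. This factorization must be established uniformly in the parameters $x_1,x_2$, which follows because the construction of $\dtst$ is jointly analytic.

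The quotient then equals $u_2/u_1$, which is analytic and nonvanishing near $\partial S$. It is positive there by continuity and connectedness from its positive values on $S$, or directly since the quotient is a limit of values in $\bigl[e^{-\dist(x_1,x_2)},e^{\dist(x_1,x_2)}\bigr]$.

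The main obstacle is making the simple-zero statement precise as a factorization, jointly analytic in $(x_1,x_2,z)$. The points $\orgn,\orgn^-$ excluded from the polar chart need no special care, since they lie off $\partial S$. Near $\partial S$, however, I would work in the charts $\hat S\setminus\{\inft\}$ and $\hat S\setminus\{\nul\}$ with the coordinate $y$, respectively $y\circ\wm$, as the defining function, and divide there.
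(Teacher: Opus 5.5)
Your proposal is correct and follows the paper's proof. You write the function as $\dtst(x_2,z)/\dtst(x_1,z)$, take the bounds from the triangle inequality, and let the simple zeros of numerator and denominator along $\partial S$ cancel; you also fill in details the paper leaves implicit, namely analytic division by a defining function jointly in $(x_1,x_2)$ and positivity on $S^-$ and on $\partial S$.
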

\begin{proof}The triangle inequality implies the bounds.

The quotient $\dtst(x_2,z)/\dtst(x_1,z)$ of analytic functions is
analytic, except on the zero set of the denominator. The functions
$z\mapsto \dtst(x_1,z) $ and $z\mapsto \dtst (x_2,z)$ both have a
simple zero along $\partial S$. These zeros cancel each other in the
quotient.
\end{proof}

\begin{lem}\label{lem-ddv}Let $x_1,x_2\in S$ be fixed. For each $k\in K$
we write $x_j=  k n_j \am(t_j) $ with $t_j>0$ and $n_j\in N$. Then
\be \dist(x_1,\,k \am(t)\,\orgn) - \dist(x_2,\,k\am(t)\,\orgn) = \log
t_2-\log t_1+ \oh_{n_1,n_2}(t^{-1}) \quad\text{ as }t\uparrow\infty.\ee
(The implicit constants in the estimate depend on $n_1 $ and $n_2$.)
Moreover, with $z(t) = k \am(t)\,\orgn$ and
$z_j(t) = k n_j \am(t)\,\orgn$ for $j=1,2$, we have
\be \label{distrel} \dist( x_1, z(t) \bigr) - \dist\bigl( x_2,z(t)\bigr)
- \log(t_2/t_1)
\begin{cases} \leq \dist\bigl(z_1(t),z(t)\bigr)\,,\\
\geq -\dist\bigl( z_2(t),z(t)\bigr)\,.
\end{cases}\ee
\end{lem}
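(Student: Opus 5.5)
By left invariance of $\dist$ we may replace $x_1,x_2,z(t),z_j(t)$ by their images under $k^{-1}$. So we may assume $k=e$, $x_j=n_j\am(t_j)\,\orgn$, $z(t)=\am(t)\,\orgn$ and $z_j(t)=n_j\am(t)\,\orgn$. The idea is to compare each $x_j$ with the point $z_j(t)$ on the same vertical geodesic. Then we use that the logarithm of the height is $1$-Lipschitz for $\dist$.

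\emph{Step 1 (vertical distance).} Since $\dist$ is $G$-invariant, \eqref{distln} gives
\[ \dist\bigl(x_j,z_j(t)\bigr)=\dist\bigl(\am(t_j)\,\orgn,\am(t)\,\orgn\bigr)=\log t-\log t_j \qquad\text{for } t\geq t_j. \]
The estimate is only claimed for large $t$, so we assume $t\geq\max(t_1,t_2)$.

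\emph{Step 2 (height is $1$-Lipschitz).} In normalized horospherical coordinates the metric matrix \eqref{Rmat} is block diagonal with the entry $y^{-2}$ for $\partial_y$. So for any path $p$ with $y$-coordinate $y(s)$ we have $\rmetric(p'(s))\geq y'(s)^2/y(s)^2$. By \eqref{length}, every path from a point at height $y_1$ to a point at height $y_2$ has length at least $|\log(y_2/y_1)|$. Hence
\[ \dist\bigl(x_2,z(t)\bigr)\geq \log t-\log t_2 \quad\text{and}\quad \dist\bigl(x_1,z(t)\bigr)\geq \log t-\log t_1 . \]

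\emph{Step 3 (triangle inequality).} The triangle inequality and Step~1 give
\[ \dist\bigl(x_1,z(t)\bigr)\leq \log t-\log t_1+\dist\bigl(z_1(t),z(t)\bigr). \]
Subtracting the lower bound for $\dist(x_2,z(t))$ from Step~2 yields the upper inequality in \eqref{distrel}. Symmetrically, $\dist(x_2,z(t))\leq \log t-\log t_2+\dist(z_2(t),z(t))$ together with $\dist(x_1,z(t))\geq\log t-\log t_1$ yields the lower inequality.

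\emph{Step 4 (the $\oh(t^{-1})$ term).} By \eqref{anai} and invariance,
\[ \dist\bigl(z_j(t),z(t)\bigr)=\dist\bigl(\am(t)^{-1}n_j\am(t)\,\orgn,\orgn\bigr)=\dist\bigl(\nm(t^{-1}x_j^{(1)},t^{-2}x_j^{(2)})\,\orgn,\orgn\bigr), \]
where $n_j=\nm(x_j^{(1)},x_j^{(2)})$. The map $u\mapsto\nm(u)\,\orgn$ is smooth with $\nm(0)\,\orgn=\orgn$. So on a compact neighbourhood of $0$ the distance $\dist(\nm(u)\,\orgn,\orgn)$ is bounded by a constant times $\|u\|$; one can bound it by the length of the straight coordinate path, using the metric matrix \eqref{Rmat} on a compact set. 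Hence this distance is $\oh_{n_j}(t^{-1})$ as $t\uparrow\infty$. Combined with \eqref{distrel}, this gives the asymptotic statement.

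There is no serious obstacle. The only point needing care is Step~2, which must be justified from the block structure of $\rmetric$ rather than from a geodesic formula.
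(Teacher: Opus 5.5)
Your proof is correct and follows the same route as the paper: reduce to $k=e$, use $\dist(x_j,z_j(t))=\log(t/t_j)$ for $t\geq\max(t_1,t_2)$, get the lower bound $\dist(x_j,z(t))\geq\log(t/t_j)$ from the $y^{-2}$ block of the metric, apply the triangle inequality, and bound $\dist(z_j(t),z(t))=\oh(t^{-1})$ via \eqref{Rmat}. Your Step~2 (the height is $1$-Lipschitz along any path) and your explicit conjugation by \eqref{anai} in Step~4 are slightly cleaner than the paper's versions: the paper parametrizes the geodesic by its height, and it only cites \eqref{Rmat} for the last estimate.
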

\begin{proof}The three points $z_1(t)$, $z_2(t)$ and $z(t)$ as defined
in the lemma tend to $k\,\inft$ as $t\uparrow\infty$. The invariance of
the distance function allows us to consider only $k=e$. Then we have
the situation sketched in Figure~\ref{fig-ddv}.
\begin{figure}\[\setlength\unitlength{.7cm}
\begin{picture}(5,7)(-.5,0)
\put(0,5){\circle*{.1}}
\put(2,5){\circle*{.1}}
\put(4,5){\circle*{.1}}
\put(2,1){\circle*{.1}}
\put(4,2){\circle*{.1}}
\put(-.5,0){\line(1,0){5}}
\put(0,5){\vector(0,1){2}}
\put(2,1){\vector(0,1){6}}
\put(4,2){\vector(0,1){5}}
\put(.1,5){$z(t)$}
\put(2.1,5){$z_1(t)$}
\put(4.1,5){$z_2(t)$}
\put(2.1,1){$x_1$}
\put(4.1,2){$x_2$}
\end{picture}\]
\caption{Sketch illustrating the proof of Lemma~\ref{lem-ddv}.\\
The vertical axis is the $y$-axis in horospherical coordinates. The
horizontal axis represents $N\,\nul$. }\label{fig-ddv}
\end{figure}
For $j=1,2$ we have
\[ \dist\bigl(x_j,z_j(t) \bigr)\leq\dist\bigl( x_j ,z(t)\bigr)\,. \]
To see this we note that $x_j$ and $z_j(t)$ are on the same geodesic
half-line to $\inft$. With \eqref{distln} we have
\[ \dist\bigl( x_j, z_j(t) \bigr) = \log\bigl( t/t_j\bigr)\,.\]
The geodesic segment from $x_j$ to $z(t)$ has the form
$\tau \mapsto \nm(x(\tau))\am(\tau ) \, \orgn$ with some function
$\tau\mapsto x(\tau)$. Its length is given by
\[ \int_{\tau=t_j}^t \sqrt{ Q(x(\tau),\tau)\,(x'(\tau),1) + \tau^{-2}
}\, d\tau \geq \log \bigl( t/t_j\bigr)\,, \]
since $Q(x,\tau)$ is positive definite; see \eqref{Rmat'}.

Together with the triangle inequality we obtain:
\be \label{distxz} \dist\bigl(x_j,z_j(t) \bigr)\leq\dist\bigl( x_j
,z(t)\bigr) \leq \dist\bigl(x_j,z_j(t) \bigr)+ \dist\bigl( z_j(t), z(t)
\bigr)\,.\ee
We have for $t\geq \max(t_1,t_2)$
\begin{align*}
\dist\bigl( x_1,z(t) \bigr) &\leq \dist\bigl( z_1(t_1), z_1(t)\bigr)+
\dist\bigl(z_1(t), z(t)\bigr)= \log(t/t_1)+ \dist\bigl(z_1(t),
z(t)\bigr)\,,\\
  \dist\bigl(x_2,z(t)\bigr) &\geq \dist(z_2(t_2),z_2(t)\bigr)-
  \dist\bigl( z(t), z_2(t)\bigr) \geq \log(t/t_2)\,.\end{align*}
Then
\[ \dist\bigl(x_1,z(t)\bigr)- \dist\bigl( x_2,z(t)\bigr)\leq
\log(t_2/t_1)
+ \dist\bigl( z_1(t),z(t)\bigr) \]
gives the first inequality in~\eqref{distrel}. Application of the same
approach to the difference
$\dist\bigl(x_2,x(z)\bigr) - \dist\bigl(x_1,z(t)\bigr)$ gives the other
inequality in~\eqref{distrel}.

Formula \eqref{Rmat} expressing the Riemannian structure in normalized
horospherical coordinates implies that
$\dist(z_j(t),z(t) )= \oh(t^{-1})$ for large values of~$t$, with an
implicit constant depending on $n_j$.
\end{proof}

%% file: ccro2-8-bgrms.tex

\bigskip

\def\flnm{ccro2-8-bgrms}

\section{Boundary germs}\label{sect8}
Theorem~\ref{thm-btv} has shown that there are maps from spaces of cusp
forms to cohomology groups of dimension $\ds-1$ with values in modules
of analytic functions on $\partial S$ with finitely many singularities.
To get hold of the images of the spaces of cusp forms it has turned out
to be useful to work with modules of analytic boundary germs.

The definition and study of these modules will take a number of
sections, leading to a reformulation of Theorem~\ref{thm-btv} in
Section~\ref{sect10}.

\subsection{Sheaves of boundary germs}\label{sect8-bg}For open
$U\subset S$ we put \il{EnuU}{$\E_\nu,\; \E_\nu(U)$}
\be \E_\nu(U) = \bigl\{ f\in C^\infty(U)\;:\; \Dt f = (\rho^2-\nu^2)\,
f\bigr\}\,. \ee
This defines $\E_\nu$ as a sheaf on~$S$.

Let $I$ be an open subset of $\partial S$.
The space of \il{bdgm}{boundary germ}\emph{boundary germs}
\il{Fnu}{$\F\nu, \; \F\nu(I)$}$\F\nu(I)$ on~$I$ is the direct limit
\be\label{Fnu} \F\nu(I) = \lim_{\stackrel \rightarrow U} \E_\nu \bigl( U
\cap S)\,,\ee
where $U$ runs through the open neighborhoods of $I $ in $\hat S$. So
$\F\nu$ is a sheaf on $\partial S$.

The action $L$ of $g\in G$, given by $L(g)f(x)=f(g^{-1}x)$, sends
functions $f$ on $I$ to functions $L(g)f$ on $g\,I$, and if $U$ is a
neighborhood of $I$ in $ \hat S $, then functions $f$ on $U \cap S$ are
sent to functions $L(g) f$ on the neighborhood $g\, U$ of $g\, I$
in~$ \hat S $. This induces a bijective linear map
$g^\ast : \F\nu(I) \rightarrow \F\nu(g\,I)$. This defines $\F\nu$ as a
$G$-equivariant sheaf. In particular, $ L$ is a representation of $G$
in $\F\nu(\partial S)$.

All elements of $\E_\nu(S)$ represent a unique element of
$\F\nu(\partial S)$. The embedding
$\E_\nu(S) \rightarrow \F\nu(\partial S)$ is compatible with the action
of $G$.

\rmrk{Analytic boundary germs}By an \il{abg}{boundary germ,
analytic}analytic boundary germ we mean a boundary germ that ``extends
across $\partial S$ analytically'' in the following way:

\begin{defn}\label{Womdef}Let $I$ be an open subset of $\partial S$. The
space
\il{Womnu}{$\W\om\nu\left(\partial S\right), \; \W\om\nu(I)$}$\W\om\nu(I)$
of \emph{$\nu$-analytic boundary germs on $I$} consists of the boundary
germs in $\F\nu(I)$ that are represented on $U \cap S$, for some
neighborhood $U$ of $I $ in $\hat S$, by a function $u$ of the form
\be \label{Audef} u(k \am(t) \, \orgn) = t^{-(\rho+\nu)}\, A_u( k
\am(t)\,\orgn) \qquad (k\in K, \; t>1 \text{ large})\,,\ee
where \il{uAu}{$A_u$}$A_u$ is an analytic function on $U$.
\il{abb}{analytic boundary behavior}
\end{defn}

We note that if $U$ is connected, the function $A_u$ on $U$ is uniquely
determined by $u$. Not every analytic function on some neighborhood $U$
of $\partial S$ in $\hat S$ can occur as a function $A_u$. The fact
that $\left( \Dt-\rho^2+\nu^2\right) u = 0$ on $U\cap S$ imposes a
partial differential equation for $A_u$ on $U\cap S$ with analytic
coefficients that extend to $U$ if $U$ is connected. The extended
differential operator is not elliptic on $U \cap \partial S$. The
condition that $A_u$ is analytic is an additional condition, which most
elements of $\F\nu(I)$ do not satisfy.

Suppose that the function $u\bigl( k \am(t) \,\orgn\bigr) =
t^{-(\nu+\rho)} \, A_u(k \am(t) \orgn)
$ represents an analytic boundary germ. Using that
$\lim_{t\uparrow\infty} k \am(t)\,\orgn = k \,\inft$, we obtain the
value
\[ A_u(k\,\inft)=\lim_{t\uparrow\infty} t^{\nu+\rho} u( k
\am(t)\,\orgn)\]
at $k\,\inft \in \partial S$.
If $\re\nu>-\rho$ the representatives $u$ of $\nu$-analytic boundary
germs are small near the boundary $\partial S$.
\smallskip

Definition~\ref{Womdef} is based on polar coordinates. It is useful to
have a formulation in horospherical coordinates as well.

\begin{prop}\label{prop-nuanal-hc}
Let $I$ be an open subset of $N \, \nul \subset \partial S$.
Let $U$ be a neighborhood of $I$ in $\hat S \setminus \{\inft\}$. Then
$u \in \E_\nu(U\cap S)$ represents a $\nu$-analytic boundary germ in
$\W\om\nu(I)$ if and only if there is an analytic function
\il{Bu}{$B_u$}$B_u$ on $U$ such that
\be \label{Anudef}u \bigl( n \am(y) \,\orgn\bigr) = y^{\rho+\nu} \,
B_u\bigl(n\am(y)\,\orgn\bigr)
\qquad n\in N,\; n\,\nul \in I, y\in (0,\e) \ee
for some $\e>0$.

There is a neighborhood $U_1\subset U$ of $I$ in $\hat S$ such that the
functions $A_u$ and $B_u$ are related by
\be B_u(z) = \DS(z) \,A_u(z)\,,\ee
where the function $\DS$ is non-zero and analytic. On $U_1\cap S$ it is
given by\il{DS}{$\DS(\cdot)$}
\be\label{DS}
\DS\bigl( k\am(t) \bigr) = e^{(\rho+\nu) \bigl( -\dist (\orgn,\, k
\am(t) \orgn) + \dist(\nJ(k\am(t))\,\orgn,\, k\am(t)\,\orgn)
\bigr)}\,.\ee
\end{prop}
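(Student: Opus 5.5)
The plan is to compare the two factorizations of the same function $u$ on $U\cap S$ and show that their ratio is a nonvanishing analytic function on a neighborhood of $I$ in $\hat S$. For a point $z=k\am(t)\,\orgn = n\am(y)\,\orgn$ of $S$ near $I$, the Iwasawa decomposition $k\am(t)=\nJ(k\am(t))\,\am\bigl(\tJ(k\am(t))\bigr)\kJ(k\am(t))$ gives $n=\nJ(k\am(t))$ and $y=\tJ(k\am(t))$. The identity $t^{-(\rho+\nu)}A_u(z)=y^{\rho+\nu}B_u(z)$ then amounts to
\[
B_u(z)=(ty)^{-(\rho+\nu)}A_u(z).
\]
So the whole task is to show that $D(z):=(ty)^{-(\rho+\nu)}$ extends to a nonvanishing analytic function on some neighborhood $U_1$ of $I$ in $\hat S$, and that it agrees with \eqref{DS}. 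Once this is known, both directions of the equivalence follow at once. If $A_u$ is analytic, put $B_u=D\,A_u$; conversely, put $A_u=D^{-1}B_u$, after shrinking $U$ to $U_1$, which does not change the germ.

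To identify $D$ with \eqref{DS}, I will use \eqref{distln}. We have $\dist(\orgn,k\am(t)\,\orgn)=\log t$ for $t>1$, since $K$ fixes $\orgn$. Also $\dist\bigl(n\,\orgn,\,n\am(y)\,\orgn\bigr)=|\log y|=-\log y$ for $y<1$. This gives
\[
(ty)^{-(\rho+\nu)}=e^{(\rho+\nu)\left(-\dist(\orgn,z)+\dist(n\,\orgn,z)\right)},
\]
with $n=\nJ(k\am(t))$, which is exactly \eqref{DS}. Near $I$ we have $t$ large and $y$ small, so the case distinctions in $|\log\cdot|$ cause no trouble once $U_1$ is small enough.

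For analyticity across $\partial S$, I will write $e^{-\dist(\orgn,z)+\dist(n\,\orgn,z)}=\dtst(\orgn,z)/\dtst(n\,\orgn,z)$ using Lemma~\ref{lem-distan}. The main obstacle is that the second point $n\,\orgn=\nJ(z)\,\orgn$ varies with $z$, whereas Lemmas~\ref{lem-distan} and \ref{lem-xya} treat fixed points $x_1,x_2$. I would avoid this by working directly in extended horospherical coordinates. The quantity $y$ is the coordinate function on $\hat S\setminus\{\inft\}$, so it is analytic, vanishes simply on $N\,\nul$, and is positive on $S$. The quantity $t^{-1}$ equals $\tau$ in the extended polar coordinates of Lemma~\ref{lem-epc}, so it is analytic and also vanishes simply on $\partial S$ away from $\orgn,\orgn^-$.

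It follows that $y/\tau$ is analytic and nonzero near $I$, with both simple zeros cancelling as in the proof of Lemma~\ref{lem-xya}, and it is positive on $S$. Hence $ty=y/\tau$ has an analytic positive extension across $I$, and so does its complex power $(ty)^{-(\rho+\nu)}$. To check that the zeros really are simple and cancel, I would use the explicit formula \eqref{tJexpl}, or the positive $\tau$-derivative argument from the proof of Lemma~\ref{lem-distan}. Throughout, the neighborhood $U_1$ is chosen inside $U$ and away from $\inft$, $\orgn$ and $\orgn^-$, so that both coordinate systems are valid on it.
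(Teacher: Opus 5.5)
Your proof is correct and is essentially the paper's argument. The paper also evaluates $u$ at $k\am(t)\,\orgn=\nJ(k\am(t))\am(y)\,\orgn$ in both ways, identifies the ratio $(ty)^{-(\rho+\nu)}$ with the distance expression in \eqref{DS}, and gets analyticity from the cancellation of two simple zeros along $\partial S$. Near $I$ your functions $\tau$ and $y$ are exactly $\dtst(\orgn,z)$ and $\dtst(\nJ(z)\,\orgn,z)$, whose quotient is what the paper feeds into Lemma~\ref{lem-xya}.

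The moving point $\nJ(z)\,\orgn$ is not a real obstacle there: Lemma~\ref{lem-xya} asserts joint analyticity in $(x_1,x_2,z)$, and $z\mapsto\nJ(z)\,\orgn$ is analytic in extended horospherical coordinates. Still, your direct use of the charts from Lemma~\ref{lem-epc} is a clean way to make the cancellation explicit.
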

We note that $t\uparrow\infty$ in \eqref{Audef}, whereas $y\downarrow 0$
in~\eqref{Anudef}.
\begin{proof}Let $n\in N$ and $k\in K$ vary subject to the condition
that $n\, \nul = k\, \infty$.
\[\setlength\unitlength{1.3cm}
\begin{picture}(2.5,1.6)(-1,-.4)
\put(-1,0){\line(1,0){2.5}}
\put(-.15,-.25){$n\,\nul$}
\put(-.84,.5){$n\am(y)\,\orgn$}
\put(.4,.5){$ k \am(t)\, \orgn$}
\thicklines
\put(0,0){\line(0,1){1.2}}
\qbezier(0,0)(0,.5)(.9,1.2)
\end{picture}\]
As $y\downarrow 0$, the point $n\am(y)\,\orgn$ approaches $n\, \nul$,
and as $t\uparrow \inft$ the point $k\am(t)\,\orgn$ approaches the same
point $n\,\nul$.

Let us compute $u$ at $k\am(t)\, \orgn$ in two ways. By \eqref{Audef}
\begin{align*}
u\bigl( k \am(t)\,\orgn \bigr) &= t^{-(\rho+\nu)} \, A_u\bigl( k \am(t)
\,\orgn \bigr)\,,\\
t&= e^{\dist( \orgn,\,\am(t)\,\orgn)}\,.
\end{align*}
We use \eqref{Anudef} and Proposition~\ref{prop-Id} to write
$k\am(t) \,\orgn = n_1 \am(y_1)\,\orgn$, with $n_1=\nJ(k\am(t))$ and
$y_1 = \tJ(k\am(t))$, to obtain
\begin{align*}
u\bigl( k \am(t) \orgn\bigr) &= u\bigl( n_1 \am(y_1) \orgn \bigr) =
y_1^{\rho+\nu}\, B_u\bigl(n_1 \am(y_1) \,\orgn\bigr)
= y_1^{\rho+\nu} \, B_u\bigl( k \am(t) \,\orgn\bigr)\,,\\
y_1 &= e^{-\dist(\orgn,\, \am(y_1)\,\orgn)} = e^{-\dist( k^{-1}
n_1\,\orgn,\, k^{-1} n_1\am(y_1)\,\orgn)}= e^{-\dist(
k^{-1}n_1\,\orgn,\,\am(t)\,\orgn)} \,.
\end{align*}
We consider the quotient of the factors:
\begin{align}\nonumber t^{-(\rho+\nu)}\bigm/y_1^{\rho+\nu}&
=\Bigl(e^{-\dist(\orgn,\,\am(t)\, \orgn)
+ \dist(k^{-1}n_1\,\orgn,\,\am(t)\,\orgn)} \Bigr)^{\rho+\nu}\\
\label{dtquot}&= \Bigl( \frac{\dtst(\orgn,\,\am(t)\,\orgn)}
{\dtst(k^{-1}n_1 \,\orgn,\am(t)\, \orgn)} \Bigr)^{ \rho+\nu }\,.
\end{align}
Lemma~\ref{lem-xya} shows that this function is analytic in
$k^{-1}n_1\,\orgn$ and in $\am(t)\,\orgn$, where $n_1$ depends on $t$
and $k$, and $k^{-1}n_1\, \orgn$ stays in $S$. Hence the factor extends
analytically as a function on a neighborhood of $U\cap\partial S$ in
$\hat S$. We denote this extension by $\DS$. The function $B_u$ extends
analytically as well. Conversely, if we know that $B_u$ extends to $U$,
then $A_u$ extends analytically.
\end{proof}

\rmrk{Example}The function
\be \label{exph} u\bigl( n \am(y) \, \orgn\bigr)=\Ph_\nu(n\am(y)k) = { y
}^{\rho+\nu}\ee
in \eqref{Phinu} (with $y\downarrow 0$) represents a $\nu$-analytic
boundary germ on $N \,\nul$, but not on $\partial S$.
Near points of $N\,\nul$ we have $B_u=1$, which clearly extends
analytically to $\hat S$. Near $\inft$ we get for $k\in M$ and $t>1$
\[ A_u( k\am(t)\,\orgn) = t^{\rho+\nu} u( \am(t)\,\orgn) =
t^{2(\rho+\nu)} \,.\]

\rmrk{Action of \intitle{G}}Let $u$ represent an element of
$\W\om \nu(I)$ for $I\subset \partial S$.
For each $g\in G$ the function $u_1=L(g^{-1})u: x\mapsto u(g \, x)$
represents an element of $\W \om \nu(g^{-1}I)$.

\begin{prop}\label{prop-aGA}{\rm (Action of $G$) }The action $L$ on the
representatives $u$ of $\nu$-analytic boundary germs corresponds to
$\tilde \tau_\nu$ on the corresponding analytic functions $A_u$ with
\badl{trfWt} \tilde \tau_\nu(g^{-1} ) A_u (z) &= \tilde J (g,
z)^{-(\rho+\nu)} \, A_u(g\, z)\,,\\
\tilde J_\nu( g,z ) &= \dtst( g^{-1}\,\orgn,\,z) \bigm/ \dtst(\orgn,\,z)
\,.
\eadl
The value $\tilde J(g,b)$ for $b\in \partial S$ is equal to the value
$J(g,b)$ of the factor of automorphy in the realization $\V\om\nu$ of
the principal series in \S\ref{sect5-prs-bd}.
\end{prop}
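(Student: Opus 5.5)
We work directly with the defining formula \eqref{Audef}. Let $u$ represent a germ in $\W\om\nu(I)$ on $U\cap S$, and put $u_1 = L(g^{-1})u$, so that $u_1(z)=u(g\,z)$ on $g^{-1}U\cap S$. For $z\in S$ near the boundary write $t(z) = e^{\dist(\orgn,z)} = \dtst(\orgn,z)^{-1}$. This reproduces the polar coordinate $t$ in $z=k\am(t)\,\orgn$, because $\dist(\orgn,k\am(t)\,\orgn)=\log t$ by \eqref{distln} and $K$-invariance. With this notation \eqref{Audef} reads
\[ u(z) = \dtst(\orgn,z)^{\rho+\nu}A_u(z). \]
We then compute
\[ u_1(z) = u(g\,z) = \dtst(\orgn,g\,z)^{\rho+\nu}A_u(g\,z) = \dtst(g^{-1}\orgn,z)^{\rho+\nu}A_u(g\,z), \]
using the $G$-invariance of $\dist$. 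Writing $\dtst(g^{-1}\orgn,z)=\tilde J(g,z)\,\dtst(\orgn,z)$ gives
\[ A_{u_1}(z)=\tilde J(g,z)^{\rho+\nu}A_u(g\,z). \]
One must be careful to match the sign of the exponent with the convention in \eqref{trfWt}. There the action is written as $\tilde\tau_\nu(g^{-1})$, and the exponent of $\tilde J$ is determined by which of $u$ and $u_1$ is regarded as the image. I would fix this by the definition $\tilde\tau_\nu(g^{-1})A_u = A_{L(g^{-1})u}$, and then check that the power of $\tilde J$ agrees with the one stated, possibly after replacing $\tilde J$ by its reciprocal.

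Next we must show that $A_{u_1}$ is analytic on a neighbourhood $g^{-1}U$ of $g^{-1}I$ in $\hat S$. The factor $A_u(g\,z)$ is analytic because $G$ acts analytically on $\hat S$, by the proposition at the end of \S\ref{sect7}. The quotient $\tilde J(g,z)=\dtst(g^{-1}\orgn,z)/\dtst(\orgn,z)$ is positive and analytic on $\hat S\setminus\{\orgn,g^{-1}\orgn,\orgn^-,j g^{-1}\orgn\}$ by Lemma~\ref{lem-xya}, since the simple zeros along $\partial S$ cancel. In particular it is analytic near $\partial S$, so its complex power is analytic there as well. Shrinking $U$ to avoid the finitely many excluded points therefore gives a valid representative, and we conclude that $L$ maps $\W\om\nu(I)$ to $\W\om\nu(g^{-1}I)$ compatibly with $\tilde\tau_\nu$.

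It remains to identify $\tilde J(g,b)$ with $J(g,b)$ for $b\in\partial S$; I expect this to be the main step. Take $b=k\,\inft$ and evaluate the limit as $t\uparrow\infty$ along $z=k\am(t)\,\orgn$:
\[ \tilde J(g,b)=\lim_{t\to\infty}\exp\bigl(\dist(\orgn,z)-\dist(g^{-1}\orgn,z)\bigr). \]
Lemma~\ref{lem-ddv} is designed for exactly this limit. Set $x_1=\orgn=k\,\am(1)\,\orgn$, so that $t_1=1$, and write $x_2=g^{-1}\orgn=k n_2\am(t_2)\,\orgn$. The lemma then shows that the exponent tends to $\log t_2-\log t_1=\log t_2$, hence $\tilde J(g,b)=t_2$.

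To compute $t_2$, write $k^{-1}g^{-1}=n_2\am(t_2)k'$; that is, $t_2=\tJ(k^{-1}g^{-1})$. By \eqref{ktni} this equals $\tI(gk)^{-1}=J(g,b)^{-1}$. So the limit gives $J(g,b)^{\pm1}$, and this sign must be reconciled with the sign in the first paragraph. Both convention issues come from the same ratio, so checking them together should yield $\tilde J(g,b)=J(g,b)$ exactly as stated; this consistency check is the place where the argument could require adjusting the stated formula's orientation. As a sanity check, the cocycle relation \eqref{Jrel} should be inherited by $\tilde J$ directly from its definition as a quotient of $\dtst$-values.
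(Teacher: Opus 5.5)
Your computations are correct, and they already settle the orientation question you leave open, so state the outcome instead of writing $J(g,b)^{\pm1}$. Take the quotient exactly as displayed, $\tilde J(g,z)=\dtst(g^{-1}\orgn,z)/\dtst(\orgn,z)=t(z)/t(g\,z)$. Your first paragraph then gives $A_{L(g^{-1})u}(z)=\tilde J(g,z)^{+(\rho+\nu)}A_u(g\,z)$. Your last paragraph gives $\tilde J(g,b)=\tJ(k^{-1}g^{-1})=\tI(gk)^{-1}=J(g,b)^{-1}$.

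These two results agree with each other and with \eqref{taunu}, which reads $\tau_\nu(g^{-1})f(b)=J(g,b)^{-(\rho+\nu)}f(g\,b)$. So the displayed quotient is inverted relative to the rest of the statement. The paper's own proof confirms this: it produces the ratio $t_1/t=\dtst(\orgn,z)/\dtst(g^{-1}\orgn,z)$ and writes $A_{u_1}=(t_1/t)^{-(\rho+\nu)}A_u(g\,z)$. With $\tilde J(g,z):=\dtst(\orgn,z)/\dtst(g^{-1}\orgn,z)$, both assertions of the proposition hold verbatim. The boundary value is then the reciprocal of the limit you computed via Lemma~\ref{lem-ddv}, namely $J(g,b)$.

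So your closing claim that the check yields $\tilde J(g,b)=J(g,b)$ ``exactly as stated'' is not what your computation gives for the printed quotient. Your proposed sanity check via \eqref{Jrel} cannot detect this either. Since $\tilde J$ takes values in the abelian group $\RR_{>0}$, $1/\tilde J$ is also a cocycle. The comparison with $\tau_\nu$ is the check that matters.

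Apart from this, your first half is the paper's argument. You phrase it with $\dtst$ rather than the Cartan decomposition $gk\am(t)\,\orgn=k_1\am(t_1)\,\orgn$, with the same appeal to Lemmas~\ref{lem-distan} and~\ref{lem-xya}.

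Your identification of the boundary value takes a different and cleaner route. The paper evaluates $A_{u_1}$ along $k_1\am(t)\,\orgn$, writes $gk_1=k_2n_2\am(t_2)$, and treats $t_2t$ as the polar radius of $k_2n_2\am(t_2t)\,\orgn$, which holds only up to a factor tending to $1$. It then cancels $A_u(g\,b)$ and extracts a $(\rho+\nu)$-th root to get $\tilde J(g,b)=t_2=\tI(gk_1)$. This tacitly needs a representative with $A_u(g\,b)\neq0$, and $\rho+\nu\notin i\RR$ for the root extraction to be unambiguous.

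You use only that $\tilde J(g,\cdot)$ is continuous up to $\partial S$ (Lemma~\ref{lem-xya}). Its value at $b=k\,\inft$ is therefore the limit along $k\am(t)\,\orgn$, which Lemma~\ref{lem-ddv} evaluates exactly. This argument is independent of $\nu$ and of any choice of germ $u$.
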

\begin{proof}For $g\in G$, let $u \in \E_\nu(U \cap S)$ represent an
analytic boundary germ, with $A_u $ analytic on $U$, and denote
$u_1=L(g^{-1})u$. If $u_1$ would represent a $\nu$-analytic boundary
germ as well, then the corresponding function $A_{u_1}$ would be given
for sufficiently large $t$ by
\bad A_{u_1}\bigl( k \am(t) \, \orgn \bigr)&= t^{(\rho+\nu)} u\bigl( g k
\am(t)\,\orgn\bigr) = t^{\rho+\nu} u\bigl( k_1 \am(t_1)
\,\orgn\bigr)
\\
& = t^{\rho+\nu} t_1^{-\rho-\nu} A_u\bigl( k_1 \am(t_1)
\,\orgn\bigr)
= (t_1/t)^{-(\rho+\nu)}\, A_u\bigl( gk \am(t)
\,\orgn\bigr)\,,\ead
with $k_1\in K$ and $t_1>1$ such that
$g k \am(t) \, \orgn = k_1 \am(t_1)\, \orgn$ (Cartan decomposition). We
have
\[ \log t_1 = \dist(\orgn, k_1\am(t_1)) = \dist ( g^{-1} \,\orgn,
k\am(t)\, \orgn)\,.\]

Lemmas \ref{lem-distan} and~\ref{lem-xya} show that
\[ t_1/t =e^{\dist ( g^{-1}\,\orgn ,\,k\am(t) \orgn)-\dist(\orgn,\,k
\am(t)\, \orgn) } = \dtst(\orgn, \,k \am(t)\,\orgn)\bigm/
\dtst(g^{-1}\,\orgn, \,k \am(t)
\,\orgn)
\]
extends analytically across $\partial S$. Since $A_u$ is analytic on
$U$, we conclude that $A_{L(g^{-1} )u}$ is analytic on a neighborhood
of $g^{-1} U \cap \partial S$.

We put $\tilde\tau_\nu(g^{-1}) A_u = A_{L(g^{-1})u}$. That leads to the
formula in \eqref{trfWt} for the factor of automorphy $\tilde J$.

The description as a quotient of analytic functions that are zero on
$\partial S$ is not handy to compute $\tilde J(g,b)$ for
$b\in \partial S$.
Instead, we use the analyticity of all quantities involved, to obtain
for $b\in \partial S\cap U$
\be \label{tldJ}\tilde J(g,b) ^{-(\rho+\nu)}\, \lim_{t\uparrow \infty}
A_u\bigl(p_2 \am(t)\,\orgn\bigr)
= \lim_{t\uparrow \infty} A_{u_1} \bigl(p_1 \am(t)\,\orgn\bigr)\ee
with $p_1,p_2\in G$ such that $p_1\,\inft= b$ and $p_2\,\inft=g\,b$. We
can take $p_1=k_1\in K$. Then $p_2 = g k_1$. We use
\[ A_{u_1}(k_1 \am(t)\,\orgn) = t^{\rho+\nu} \, u_1 (k_1\am(t)\,\orgn) =
t^{\rho+\nu}\, u( g k_1\am(t)\,\orgn)\,,\]

We write $gk_1 = k_2 n_2\am(t_2)$ with $k_2\in K$, $n_2\in N$ and
$t_2>0$, as a variant of Proposition~\ref{prop-Id}. Then
\begin{align*} u( g k_1 \am(t)\,\orgn) &= u\bigl( k_2 n_2
\am(t_2t)\,\orgn\bigr)= (t_2 t)^{-\rho-\nu} \, A_u\bigl( k_2 n_2
\am(t_2t),\orgn\bigr)= (t_2t)^{-\rho-\nu} A_u\bigl( gk_1\am(t)
\,\orgn)\,.\end{align*}
So
\[ A_{u_1}\bigl( p_1\am(t)\,\orgn) = (t_2)^{-\rho-\nu}\, A_u( g
k_1\am(t)\,\orgn)\,.\]
Taking the limit in~\eqref{tldJ} we find
\[ \tilde J(g,b) ^{-\rho-\nu} \,A_u(g\, b) = t_2^{-\rho-\nu} A_u(g\,
b)\,.\]
Hence $ \tilde J(g,b) = t_2 $. We have
$ k_2 n_2 \am(t_2) = k_2 \am(t_2) \, \bigl(\am(t_2)^{-1} n_2  \am(t_2) \bigr)$,
hence $\tilde J(g,b) = \tI(gk_1)$, which is just the value of $J(g,b)$
in~\eqref{Jdef}.
\end{proof}

\subsection{The restriction morphism}\label{sect8-restr}
The sheaf of $\nu$-analytic boundary germs $\W\om\nu$ turns out to be
isomorphic to the sheaf $\V\om\nu$ of analytic functions
on~$\partial S$, for general values of the spectral parameter. This
implies that the map $\btv_\nu$ in Theorem~\ref{thm-btv} from cusp
forms to $\V{\om^0,\infty}\nu$-valued cohomology classes can be
formulated in terms of $\nu$-analytic boundary germs. We will use this
formulation to get in \S\ref{sect10} an inverse of $\btv_\nu$.

\begin{defn}\label{def-res}For open subsets $I\subset \partial S$ the
\il{resmp}{restriction map}restriction map
\il{resnu}{$\rho_\nu$}$\rho_\nu:\W\om\nu(I) \rightarrow\V\om\nu(I)$
sends the boundary germ represented by $u$ to the analytic function
$A_u$ restricted to $\partial S$.
\end{defn} See Figure~\ref{fig-res}.
\begin{figure}[ht]
\[\setlength\unitlength{1cm}\begin{picture}(9,3)(-.8,-.5)
\qbezier(.5,0)(.5,1)(.8,2)
\put(.15,-.4){$u$}
\qbezier(3.5,0)(3.5,1)(3.8,2)
\qbezier(2.7,0)(2.7,1)(3,2)
\put(2.9,-.4){$A_u$}
\put(5.9,-.4){$\ph$}
\put(1.4,1){$\longrightarrow$}
\put(1.2,1.3){extend}
\put(4.6,1){$\longrightarrow$}
\put(4.4,1.3){restrict}
\thicklines
\qbezier(0,0)(0,1)(0.3,2)
\qbezier(3,0)(3,1)(3.3,2)
\qbezier(6,0)(6,1)(6.3,2)
\end{picture}\]
\caption{ Domains of $u$, $A_u$ and $\ph$ where $\ph$ is the image under
$\rho_\nu$ of the $\nu$-analytic boundary germ represented by $u$. The
thick line corresponds to an open part of $\partial S$.\\
The representative $u$ is defined on a region in $S$ near the boundary
$\partial S$. The function
$A_u\bigl( k \am(t) \, \orgn\bigr) = t^{\rho+\nu} u \bigl( k \am(t)\,\orgn)$
extends analytically into $\hat S$ across part of $\partial S$. The
restriction of $A_u$ to $\partial S$ is an analytic function $\ph$ on
an open subset of $\partial S$.
}\label{fig-res}
\end{figure}

\begin{thm}\label{thm-isoVW} The restriction map $\rho_\nu$ determines a
morphism $\W\om\nu \rightarrow \V\om\nu$ of $G$-equivariant sheaves,
with the action of $G$ on $\W\om\nu$ given by $L$ on representatives,
and the action $\tau_\nu$ of $G$ on $\V \om\nu$ in \eqref{taunu}.

If $\nu \in  \CC\setminus \frac12  \ZZ_{\leq -1}$, then $\rho_\nu$ is an
isomorphism of $G$-equivariant sheaves.
\end{thm}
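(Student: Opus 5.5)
The plan splits into two parts: the morphism property, which is formal, and bijectivity for $\nu\notin\frac12\ZZ_{\leq-1}$, which is the real content.

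\emph{Morphism property.} Compatibility with restrictions of open sets is immediate from Definition~\ref{def-res}, since $A_u$ is local. Equivariance follows from Proposition~\ref{prop-aGA}. The action $L$ on representatives corresponds to $\tilde\tau_\nu$ on $A_u$, with factor $\tilde J(g,z)^{-(\rho+\nu)}$. On $\partial S$ we have $\tilde J(g,b)=J(g,b)$, so restricting $\tilde\tau_\nu(g)A_u$ to $\partial S$ gives exactly $\tau_\nu(g)(A_u|_{\partial S})$ as in~\eqref{taunu}. Hence $\rho_\nu\circ L(g)=\tau_\nu(g)\circ\rho_\nu$.

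\emph{Bijectivity: reduction to a local problem.} Both sheaves are $G$-equivariant, and the $G$-translates of $N\,\nul$ cover $\partial S$. It therefore suffices to prove that $\rho_\nu$ is bijective on small open sets $I\subset N\,\nul$. There I will work in extended horospherical coordinates $(x,y)$ and use Proposition~\ref{prop-nuanal-hc}. Representatives are $u=y^{\rho+\nu}B_u(x,y)$ with $B_u$ analytic near $y=0$. Since $B_u=\DS A_u$ with $\DS$ analytic, nonvanishing, and (I expect) equal to $1$ on $\partial S$, it is enough to show that $B\mapsto B(\cdot,0)$ is a bijection. The domain is the space of germs of analytic $B$ near $I\times\{0\}$ for which $y^{\rho+\nu}B$ satisfies $\Dt u=(\rho^2-\nu^2)u$; the target is analytic functions on $I$.

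\emph{Bijectivity: the recursion.} Using the form of $\Dt$ from \S\ref{sect2-Lapl-horco}, namely $\Dt=-y^2\partial_y^2+(2\rho-1)y\partial_y-y^2L_1-y^4L_2$, the eigen-equation for $u=y^{\rho+\nu}B$ becomes a Fuchsian-type equation in $y$:
\[ y^2\partial_y^2B+(1+2\nu)\,y\,\partial_yB = -y^2L_1B-y^4L_2B\,. \]
Here $L_1$ and $L_2$ are analytic differential operators in $x$ alone. Writing $B=\sum_{k\ge0}b_k(x)y^k$, the left side acts on $y^k$ by $k(k+2\nu)$. This gives
\[ k(k+2\nu)\,b_k=-L_1b_{k-2}-L_2b_{k-4}\,. \]
By this recursion the odd coefficients vanish, and $b_0$ determines all $b_k$ uniquely provided $k+2\nu\neq0$ for all even $k\geq2$, that is, $\nu\notin\ZZ_{\le-1}$. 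The stated exclusion is the larger set $\frac12\ZZ_{\leq-1}$. The extra half-integers presumably come from a finer structure of $L_1$ with the $x^{(2)}$-variables weighted $y^2$, or from non-horospherical choices. I will adjust the indexing to the paper's normalization if needed, but the mechanism is this resonance.

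Uniqueness of the expansion gives injectivity: if $A_u|_{\partial S}=0$ then $b_0=0$, so $B=0$, and $u=0$ near $I$. Surjectivity requires solving the recursion for a given analytic $b_0=\ph$ and proving that $\sum b_ky^k$ converges on a neighborhood of $I\times\{0\}$.

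\emph{Main obstacle: convergence.} The hard step is the analytic convergence of the formal solution, since each step applies a second-order operator, so derivative losses accumulate. The divisors $k(k+2\nu)\sim k^2$ compensate two derivatives per two steps, which is exactly the borderline Cauchy--Kovalevskaya situation for a characteristic (non-elliptic) boundary. I would first try a majorant argument with nested Cauchy estimates on shrinking complex polydiscs: show $\sup|b_{2m}|\le C R^{-2m}$ on a slightly smaller domain using Nagumo-type norms, where $|L_1 f|\lesssim \|f\|/\delta^2$ is balanced by $1/(2m(2m+2\nu))$. If that becomes messy, the alternative is to realize $\ph$ via the Poisson transform, $u(x)=\int r_\nu(b;x)\ph(b)\,db$ extended from $\partial S$. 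This produces global eigenfunctions whose leading $y^{\rho+\nu}$ term is $c(\nu)\ph$, with the Harish-Chandra $c$-function's poles and zeros tied to the excluded set. It would need to be localized by analytic-functional arguments, so I keep the majorant method as the primary route.
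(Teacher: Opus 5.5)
Your plan is correct and has the same skeleton as the paper: equivariance from Proposition~\ref{prop-aGA}, localization to $N\,\nul$ via Proposition~\ref{prop-nuanal-hc}, the equation \eqref{deqAn}, the recursion $m(m+2\nu)a_m=-L_1a_{m-2}-L_2a_{m-4}$, and injectivity read off from $a_0$. The paper passes to the stalk at $\nul$ and you to small open sets of $N\,\nul$; these are equivalent.

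The genuine difference is the convergence proof for surjectivity.
\begin{itemize}
\item The paper rewrites the formal solution as $\sum_n\Lt^n\ph$ (Lemma~\ref{lem-H}) and expands $\Lt^k\ph$ into at most $N^k$ monomials $x^\beta\partial^\alpha\ph$. It bounds each $\partial^\alpha\ph$ by a single Cauchy estimate on a fixed smaller polydisc (Lemma~\ref{lem-diffph}). It then tracks the factors $\alpha!$ and the multiplicities $d_n$ created when derivatives fall on the polynomial coefficients of $L_1$, and balances everything against $k!\,|(\nu+1)_k|$ by Stirling.
\item Your Nagumo-norm induction works on the coefficients themselves. Suppose $|b_{2m-2}(x)|\le M_{m-1}\,d(x)^{-(2m-2)}$ on a complex polydisc of radius $R$, where $d(x)$ is the distance to its boundary. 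Two Nagumo differentiations cost a factor $O(m^2)$, the coefficients of $L_1$ are just bounded multipliers, and the term $L_2b_{2m-4}$ is absorbed using $d\le R$. Since $|2m(2m+2\nu)|\ge c_\nu m^2$ (this needs only $\nu\notin\ZZ_{\le-1}$), you get $M_m\le K(M_{m-1}+R^2M_{m-2})$, hence convergence on $\{|y|<\kappa\, d(x)\}$.
\end{itemize}
Your route avoids all the combinatorial bookkeeping and is the cleaner argument. The paper's route buys an explicit product neighbourhood $[-\delta,\delta]^{p+q}\times[-\eta,\eta]$ and explicit dependence on $\|\ph\|_\e$.

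Three corrections, none fatal for the theorem as stated.
\begin{itemize}
\item Your diagnosis of the exceptional set is wrong. The odd coefficients vanish only if $k+2\nu\ne0$ also for odd $k$. At $\nu=-k/2$ with $k$ odd, the recursion at step $k$ reads $0\cdot b_k=0$, so $b_k$ is free. This gives genuine nonzero elements of $\ker\rho_\nu$: there the second indicial exponent $\rho-\nu$ equals $\rho+\nu+k$. Thus $\frac12\ZZ_{\le-1}$ is exactly the set where some $k(k+2\nu)$ with $k\ge1$ vanishes. No finer structure of $L_1$ is involved, and under the stated hypothesis your argument goes through as written.
\item Your expectation about $\DS$ is also off: $\DS$ equals $1$ at $\nul$ but not on all of $N\,\nul$. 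This is harmless, since only its non-vanishing is needed.
\item Discard the Poisson-transform fallback. By Proposition~\ref{prop-supp-sing}, for $\nu\notin\frac12\ZZ$ and $|\re\nu|<\rho$, the transform $\poiss\nu\ph$ is $\nu$-analytic exactly off $\supp\ph$. Near the boundary it has a $y^{\rho-\nu}$-component whose coefficient is a nonzero multiple of $\ph$; this is what the proof of Lemma~\ref{lem-anal-nul} exhibits. So it can never realize a germ with prescribed restriction $\ph$.
\end{itemize}
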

This result generalizes \cite[Theorem 5.6]{BLZ13}.

\begin{proof}We first prove that $\rho_\nu$ gives a morphism of
$G$-equivariant sheaves. It will take the rest of this section to
establish the isomorphism $\W\om\nu \cong\V\om\nu$.

\rmrk{Morphism of \intitle{G}-equivariant sheaves} For $g\in G$, the
operator $L(g)$ on $u $ corresponds to $\tilde \tau_\nu(g)$ on $A_u$,
by Proposition~\ref{prop-aGA}. This proposition also implies that
$\tilde \tau_\nu(g)$ acts as $\tau_\nu(g)$ on the restriction to
$\partial S$.

\rmrk{Reduction of the statement to an isomorphism of stalks at one
point}The local structure of sheaves implies that morphisms of sheaves
are isomorphisms if the induced morphisms of the stalks at all points
of $\partial S$ are isomorphisms. The action of $G$ induces
isomorphisms of stalks at different points of $\partial S$.
Hence it suffices to prove that $\rho_\nu$ induces an isomorphism
$\bigl( \W\om\nu\bigr)_x \rightarrow \bigl( \V\om\nu\bigr)_x$ for one
point $x\in \partial S$. We choose $x=\nul$, for which we will use
Proposition~\ref{prop-nuanal-hc}.

Subsubsection~\ref{sect2-Lapl-horco} gives a description of the Laplace
operator in normalized horospherical coordinates. A computation shows
that $\Dt u = (\rho^2-\nu^2) u$ leads for $u=y^{\rho+\nu}B_u$ to the
relation
\be \label{deqAn}\Bigl( y \, \partial_y^2 +(2\nu+1) \partial _y +y\, L_1
+y^3 \, L_2 \Bigr) B_u= 0\,.\ee
This relation extends to the domain of $B_u$ by analyticity. We recall
that the operators $L_1$ and $L_2$ are second order differential
operators in the $x_j$ without constant term, with coefficients that
have at most total degree $2$ in the $x_j$. Actually, consultation of
\eqref{Dt}, \eqref{Dtp}, and \eqref{gRi} shows that
$L_2 = \sum_{i=p+1}^{p+q} \partial_{x_i}^2$.

\rmrk{Injectivity} Suppose that $U$ is an open neighborhood of $\nul$ in
$\hat S$, and that $u \in \E_\nu(U)$ represents a $\nu$-analytic
boundary germ. Then $B_u$ has a power series expansion that is
absolutely convergent on $[-\e,\e]^\ds$ for some $\e\in (0,1)$. We
write this expansion as\il{anc}{$a_n$ in \S\ref{sect8}}
\be\label{Aser} B_u(x,y) = \sum_{m\geq 0} a_m(x) y^m\,,\ee
where $x$ denotes $(x^{(1)},x^{(2)}) = (x_1,\ldots,x_{\ds-1})$.
Insertion into the differential equation
$\Dt B_u - (\rho^2-\nu^2) B_u=0$ yields
\be m(m+2\nu) a_m + L_1 a_{m-2} + L_2 a_{m-4}=0\,,\ee
for all $m\in \ZZ_{\geq 0}$. It is practical to take $a_m=0$ for $m<0$
to make the relation valid for all $m\in \ZZ$.

Since we assume that $\nu \not\in \frac12\ZZ_{\leq -1}$, we can use this
relation as a recursion relation. The relation does not impose
conditions on $a_0$, and shows that $a_1=0$. Recursively this implies
that $a_m=0$ for all odd $m$, and that \eqref{Aser} takes the form
\be \label{aser1} B_u(x,y) = \sum_{k\geq 0} a_{2k}(x) \, y^{2k}\,.\ee

Since $a_0(x) = B_u(x,0)$, the function $a_0$ is equal to the
restriction $\ph=\rho_\nu u$. We can use the relations to express each
$a_m $ in terms of $\ph$ and its derivatives. In this way we have shown
the injectivity of the map
$(\W\om\nu)_\nul \rightarrow (\V\om\nu)_\nul$, which implies the
injectivity of $\rho_\nu$ as a morphism of sheaves.
\medskip

\rmrk{Surjectivity} We start with $a_0(x) = \ph(x)$, analytic on the
region $B_\e$ indicated above. The recursion relation
\be \label{recur-k} a_{2k}= \frac{-1}{4k(k+\nu)} \bigl( L_1 a_{2(k-1)}
+ L_2 a_{2(k-2)}\bigr)\ee
describes all $a_{2k}$ in terms of $\ph$ and its derivatives with
respect to $x_1,\ldots,x_{p+q}$. Then we have a formal series
\be\label{Aserk} F =\sum_{k=0}^\infty a_{2k}(x) \, y^{2k}, \ee
which satisfies the differential equation for elements $B_u$. The
question is whether the series $F$ converges absolutely on some
neighborhood of $\nul$ in $\hat S$. If that is the case, then $F$ can
be taken as an analytic function $B_u$ such that $u= y^{\rho+\nu}B_u$
represents a boundary germ with restriction~$\ph$.

This reduces our task to showing that if $\ph$ is analytic on $B_\e$,
then the series in \eqref{Aserk} converges absolutely on some
neighborhood of $\nul\in \hat S$.
\smallskip

We use multi-indices in $\ZZ_{\geq 0}^{p+q}$, with
$\partial^\al = \partial_{x_1}^{\al_1} \partial_{x_2}^{\al_2}\cdots \partial _{x_{p+q}}^{\al_{p+q}}$,
$x^\al= x_1^{\al_1}\cdots x_{p+q}^{\al_{p+q}}$. Further
$|\al|:= \sum_{j=1}^{p+q}\al_j$,
$\al! := \al_1!\; \al_2!\;\cdots \al_{p+q}!$ and
$\max(\al) = \max(\al_1,\ldots,\al_{p+q}\bigr)$.
\il{mi}{$\al, \; |\al|,\;\al!,\; \max(\al)$ for multi-indices}

The description in \eqref{Dtp} and \eqref{gRi} shows that
$L_2 = \sum_{i=p+1}^{p+q} \partial_{x_i}^2$. The operator $L_2$ is a
linear combination of operators
$\partial_{x_i} \circ p_{i,k}(x) \circ \partial_{x_k}$ with
$i,k\in \{1,\cdots,p+q\}$. The polynomials have degree at most $2$ in
$x$, with coefficients determined by the linear forms $c_{j,i}$ in
\S\ref{sect2-Rie}. We rewrite
$\partial_{x_i} \circ p_{i,k}(x) \circ \partial _{x_k} = p_{i,k}(x) \partial_{x_k} \partial_{x_i} + \frac{\partial p_{i,k}}{\partial x_i} (x) \partial_k$.
In total, the operator $L_1$ is a linear combination of finitely many
operators of the form $x^\tau \partial^\s$, with multi-indices $\s$ and
$\tau$ satisfying $|\s|\leq 2$, $|\tau|\leq 2$.

The recursion relation \eqref{recur-k} can be used to write all
coefficients $a_{2k}$ as linear combinations of basis functions
$x^\bt \partial^\al \ph$, with $\al$ and $\bt$ running through the
multi-indices. It turned out to be helpful to give another form of the
formal series $F$, of which we want to show analyticity on a
neighborhood of $\nu$ in $\hat S$. We define the linear operator $\Lt$
by prescribing it on the basis functions.\il{Lt}{$\Lt$}
\badl{Lt-def} \Lt \bigl({ y }^{2h}\, x^\bt \, \partial^\al
\ph\bigr) &= \frac{-y^{2h+2} }{4(h+1)(h+1+\nu)} L_1 \bigl( x^\bt \,
\partial^\al
\ph\bigr)\\
&\qquad\hbox{}
+ \frac{-y^{2h+4} }{4(h+2)(h+2+\nu) } L_2\bigl( x^\bt \, \partial^\al
\ph\bigr)\,.\eadl

\begin{lem}\label{lem-H}The formal sum $F$ in \eqref{Aserk} is equal to
the formal sum
\be \label{Fdef1} H=\sum_{n=0}^\infty \Lt^n \ph \,,\ee
and for each $m\geq 0$ the function $a_{2m}$ is equal to the (finite)
sum of all terms in $H$ of degree $2m$ in $ y $.
\end{lem}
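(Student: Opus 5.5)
We show by induction on $m$ that the $y^{2m}$-component $H_{2m}$ of $H$ equals $a_{2m}$ and that the components satisfy the same recursion \eqref{recur-k}. The first step is to check that $H$ is a well-defined formal sum graded by powers of $y$. By \eqref{Lt-def}, $\Lt$ sends each basis term $y^{2h}x^\bt\partial^\al\ph$ to a sum of terms of degree $2h+2$ and $2h+4$ in $y$. Hence $\Lt^n\ph$ has only terms of $y$-degree between $2n$ and $4n$. In particular only $n\le m$ contribute to degree $2m$, so the $y^{2m}$-part of $H$ is a finite sum. Since $\Lt$ is defined on the basis functions, we extend it linearly. It acts only through $L_1,L_2$ on the $x$-part, multiplies by a power of $y$, and divides by a factor that depends only on the $y$-degree. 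So for any function $c(x)$ in the span of the $x^\bt\partial^\al\ph$ we have
\[
\Lt\bigl(y^{2h}c\bigr)= \frac{-y^{2h+2}}{4(h+1)(h+1+\nu)}L_1c+\frac{-y^{2h+4}}{4(h+2)(h+2+\nu)}L_2c .
\]
This formula holds whether or not the representation of $c$ in the basis is unique. We work formally, so the non-independence of the spanning set causes no harm, because $L_1,L_2$ are linear.

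Next, write $H_{2m}(x)\,y^{2m}$ for the degree-$2m$ part of $H$. Since $H=\ph+\Lt H$ formally, with $\Lt$ applied termwise, taking degree-$2m$ parts gives $H_0=\ph$. For $m\ge1$ the terms of $\Lt H$ of degree $2m$ come from two sources: the $L_1$-part applied to the degree-$2(m-1)$ component, with $h=m-1$, and the $L_2$-part applied to the degree-$2(m-2)$ component, with $h=m-2$. Both denominators then equal $4m(m+\nu)$. Therefore
\[
H_{2m}= \frac{-1}{4m(m+\nu)}\bigl(L_1H_{2(m-1)}+L_2H_{2(m-2)}\bigr),
\]
with $H_{2k}=0$ for $k<0$.

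This is exactly \eqref{recur-k} with the same initial value $a_0=\ph$. The denominators are nonzero because $\nu\notin\frac12\ZZ_{\le -1}$. Induction on $m$ then gives $H_{2m}=a_{2m}$ for all $m$, and consequently $F=H$ as formal sums.

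The main care point is to justify the identity $H=\ph+\Lt H$ degree by degree without any convergence. This holds because each degree component is a finite sum, by the degree bound from the first step. It also requires making sure the denominators in \eqref{Lt-def} depend on the degree $h$ of the input term and not on the particular term. Both facts are immediate from the definition, so the proof is essentially bookkeeping.
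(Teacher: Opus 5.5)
Your proposal is correct and follows essentially the same route as the paper. Both arguments proceed by induction on the $y$-degree: the degree-$2m$ part of $\Lt H$ comes only from the $L_1$-part applied to degree $2m-2$ and the $L_2$-part applied to degree $2m-4$, both with denominator $4m(m+\nu)$, which reproduces the recursion \eqref{recur-k}. Your packaging via the identity $H=\ph+\Lt H$, taken degree by degree, and your remark that $\Lt$ is well defined on each homogeneous piece even though the ``basis functions'' need not be linearly independent, make explicit points that the paper leaves implicit.
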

\begin{proof}The second statement implies that $F$ is equal to
$\sum_{n\geq 0} \Lt^n \ph$. We show it by recursion. The statement
holds for $m=0$, since each application of $\Lt$ increases the degree
in~$y$.

Let $S(n)$ denote the sum of the terms in $H$ of degree $2n$ in $ y$. By
\eqref{recur-k}
\be \label{asumS}
a_n = \frac{-1 }{4n(n+\nu)} \Bigl( \sum_{f\in S(n-1)}L_1 f
+ \sum_{g\in S(n-2)} L_2 g \Bigr)\,. \ee
We use that $L_1$ and $L_2$ are differential operators in $x$, and that
they do not change the dependence on~$ y $. If $f\in S(n-1)$ then
\[ \Lt f = \frac{-y^{2n}}{4n(n+\nu)} L_1\bigl( f/y^{2n-2}\bigr) +
\frac{-y^{2n+ 2 }}{4(n+1)(n+1+\nu)} L_2 \bigl( f/y^{2n-2}\bigr)\,.\]
The first of these two terms has degree $2n$ in $y$, and the second one
has degree $2n+2$. In an analogous way, we obtain that for
$g\in S(n-2)$ the function $\Lt g$ is the sum of two terms, only one of
which has degree $2n$. Hence the right-hand side of \eqref{asumS} gives
  the part of degree $2n$ in $y$ of
\[ \Lt\Bigl( \sum_{f\in S(n-1)}f + \sum _{S(n-2)} g \Bigr)\,.\]
Since all terms in $H$ of degree $2n$ in $y$ arise from applying $\Lt$
to terms of degree $2n-2$ or degree $2n-4$, this shows by induction the
second assertion in the lemma.
\end{proof}

This lemma shows that for the surjectivity it suffices to show the
convergence of the formal sum in~\eqref{Fdef1}. We start with a
holomorphic function $\ph$ on \[ B_\e=\bigl\{x\in \CC^{p+q}\;:\;
|x_i|\leq \e\text{ for }1\leq i \leq p+q\bigr\}\]
for some $\e\in (0,1)$. We want to find $\dt\in (0,\e]$, $\dt<1$, and
$\eta\in (0,1)$ such that $H$ converges absolutely for
$\bigl(x,y\bigr)$ in $ B_\dt\times [-\eta,\eta]$.

For $\z>0$ and for holomorphic functions $f$ on $B_\z$ we put
\be \|f\|_\z = \sup_{x\in B_\z} |f(x)|\,.\ee
\begin{lem} \label{lem-diffph}If $\ph$ is analytic on $B_\e$, then for
$\dt\in (0,\e)$
\be\label{pdphest} \|\partial^\al \ph \|_\dt \leq \al!\,\e^{-|\al|
}\,(1-\dt/\e)^{-|\al|-p-q}\, \|\ph\|_\e\,.\ee
\end{lem}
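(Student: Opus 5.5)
The plan is to use the multivariable Cauchy integral formula on a polydisc. Fix $x\in B_\dt$, with $B_\dt$ understood as the closed complex polydisc $\{x\in\CC^{p+q}:|x_i|\le\dt\}$, as in the text. Put $r=\e-\dt>0$. For each coordinate, the circle $|\z_i-x_i|=r$ satisfies $|\z_i|\le|x_i|+r\le\e$, so the closed polydisc
\[ P(x,r)=\bigl\{\z\in\CC^{p+q}\;:\;|\z_i-x_i|\le r \text{ for } 1\le i\le p+q\bigr\} \]
lies in $B_\e$, where $\ph$ is holomorphic. (If $\ph$ is only known on the closed polydisc $B_\e$, I will shrink $r$ slightly and let it tend to $\e-\dt$ at the end.)

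Cauchy's formula for derivatives, iterated in each variable, gives
\[ \partial^\al\ph(x)=\frac{\al!}{(2\pi i)^{p+q}}\int_{|\z_i-x_i|=r}\frac{\ph(\z)}{\prod_i(\z_i-x_i)^{\al_i+1}}\,d\z_1\cdots d\z_{p+q}. \]
From this one gets the standard Cauchy estimate
\[ |\partial^\al\ph(x)|\le \al!\,r^{-|\al|}\,\|\ph\|_\e. \]
Since $r=\e-\dt=\e(1-\dt/\e)$, we have $r^{-|\al|}=\e^{-|\al|}(1-\dt/\e)^{-|\al|}$. Because $0<1-\dt/\e<1$, we can further bound this by $\e^{-|\al|}(1-\dt/\e)^{-|\al|-p-q}$. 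Taking the supremum over $x\in B_\dt$ then yields \eqref{pdphest}.

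The statement is actually weaker than the Cauchy estimate by the extra factor $(1-\dt/\e)^{-p-q}$. That factor is presumably there to accommodate a version of the argument that integrates over the fixed torus $|\z_i|=\e$, expanding $1/(\z_i-x_i)$ as a geometric series; that approach produces exactly these factors. There is no real obstacle here. The only point needing care is that the polydisc around $x$ stays inside the domain of holomorphy, and this is checked coordinatewise as above.
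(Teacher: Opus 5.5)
Your proof is correct, but it takes a different route from the paper. The paper does not apply Cauchy's formula at each point $x$. Instead:
\begin{itemize}
\item it expands $\ph(x)=\sum_{\al} c_\al x^\al$ at the origin and uses the coefficient bound $|c_\al|\le \e^{-|\al|}\|\ph\|_\e$, which is Cauchy on the fixed torus $|\z_i|=\e$;
\item it differentiates termwise and bounds $|x^\al|\le \dt^{|\al|}$ on $B_\dt$;
\item it sums $\sum_{k\ge 0}(\dt/\e)^k (k+b)!/k! = b!\,(1-\dt/\e)^{-1-b}$ separately in each coordinate.
\end{itemize}
The factor $(1-\dt/\e)^{-p-q}$ is exactly the ``$-1$'' in that exponent, one per coordinate, as you guessed.

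Your polydisc of radius $\e-\dt$ centred at $x$ gives directly $\|\partial^\al\ph\|_\dt\le \al!\,(\e-\dt)^{-|\al|}\|\ph\|_\e$. This avoids the series summation and is sharper by the factor $(1-\dt/\e)^{p+q}$. The paper's version, in turn, needs only one fixed torus and elementary power-series algebra. Both arguments need the same control of $\ph$ up to the boundary of $B_\e$, which you handle with your limiting argument in $r$.

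For the later use, the convergence of $\sum_n \Lt^n\ph$ in the proof of Theorem~\ref{thm-isoVW}, the difference is immaterial: the extra factor is independent of $\al$ and only enters the implied constant.
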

\begin{proof}We have an expansion
$\ph(x) = \sum_{\al\geq 0} c_\al x^\al$. Then
$|c_\al| \leq \e^{-|\al|} \, \|\ph\|_\e$.

For $\al\in \ZZ_{\geq 0}^{p+q}$
\begin{align*}
\partial^\bt \ph(x) &= \sum_{\al \geq \bt} c_\al x^{\al-\bt} \frac{
\al!}{(\al-\bt)!} = \sum_{\al\geq 0} c_{\al+\bt} x^\al
\frac{(\al+\bt)!}{\al!}\\
&\ll \sum_{\al\geq 0} \e^{-|\al+\bt|} \dt^{|\al|}
\frac{(\al+\bt)!}{\al!}\,\|\ph\|_\e = \e^{-|\bt|} \Bigl(
\prod_{j=1}^{p+q} \sum_{\al_j\geq 0}\Bigl(\frac \dt\e\Bigr)^{\al_j}
\frac{(\al_j+\bt_j)!}{\al_j!}\Bigr)\|\ph\|_\e\,.\end{align*}
We have used that $\bigl|x^\al| \leq \dt^{|\al|} $. For each factor in
the product we use that
\[\sum_{k\geq 0}(\dt/\e)^k (k+b)!/k! = (1- \dt/\e)^{-1-b} \, b!\,.\]
Then
\begin{align*}
\partial^\bt \ph(x)
&\ll \e^{-|\bt|} \Bigl( \prod_{j=1}^{p+q} \bigl(1-\dt/\e)^{-1-\bt_j}\,
\bt_ j! \Bigr)
\|\ph\|_\e = \e^{-|\bt|} \bigl(1-\dt/\e\bigr)^{-|\bt|-p-q}
\bt!\,\|\ph\|_\e\,.\qedhere
\end{align*}
\end{proof}

We use this lemma to compute $\Lt^n\ph$. First we consider the action of
$\Lt$ on one basis function
$ f_{n-1}=t^{2h(n-1)} \, x^{\bt(n-1)}\,\partial^{\al(n-1)} \ph$, with
$h(n-1) \in \ZZ_{\geq 0}$ and multi-indices $\al(n-1)$ and $\bt(n-1)$
in $\ZZ_{\geq 0}^{p+q}$. The image $\Lt f_{n-1}$ is a linear
combination of many basis functions, arising from the many simple
differential operators $c_{\tau,\s} x^\tau\partial^\s$ involved in
$L_1$ and $L_2$. That gives a finite number of terms, each of which we
can write as $c_n \, t^{2h(n)}\, x^{\bt(n)} \,\partial^{\al(n)} \ph$
with $h(n) \in \ZZ_{\geq 0}$ and $\al(n), \bt(n)\in \ZZ_{\geq 0}^{p+q}$.
We fix $C$ such that $|c_{\tau,\s}|\leq C$ for all $(\s,\tau)$. The
total number of terms is bounded by a large number $N$ obtained by
considering the number of terms $n_{\s,\tau}$ arising for each
$(\s,\tau)$. We use the abbreviations ${\bf a}(n) = |\al(n)|$,
${\bf b}(n) = |\bt(n)|$.

\begin{lem}In these notations:\il{aan}{${\bf a}(n) = |\al(n)|$ in
\S\ref{sect8}} \il{bbn}{${\bf b}(n) = |\bt(n)|$ in \S\ref{sect8}}
\begin{align}\label{he}h(n-1)+2 &\geq h(n) \geq h(n-1) +1 \,,\\
\label{ae}{\bf a}(n-1)&\leq {\bf a}(n) \leq {\bf a}(n-1)+2\,,\\
\label{be}{\bf b}(n) &\leq {\bf b}(n-1) +2\,,\qquad {\bf b}(n) \leq {\bf
a}(n)\,,\\
\label{cnest}
|c_n| &\leq \frac{2C \z_\nu \, d_n}{4 n\, \bigl|\nu+n\bigr|}\,,
\end{align}
where
\be \z_\nu = \max\Bigl\{ \frac{|\nu+1+m|}{|\nu+1+l|} \;:\; 0 \leq m \leq
l \leq 2n\Bigr\} \,,\ee
and
\be\label{de} |d_n| \leq \begin{cases}1&\text{ if }{\bf a}(n) = {\bf
a}(n-1)+2\,,\\
\max(\bt(n-1))&\text{ if } {\bf a}(n) = {\bf a}(n-1)+1\,,\\
\max(\bt(n-1))^2 & \text{ if } {\bf a}(n)= {\bf a}(n-1)\,.
\end{cases}
\ee
\end{lem}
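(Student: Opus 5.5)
We track the effect of a single application of $\Lt$ on a single basis function $f_{n-1}= y^{2h(n-1)}\,x^{\bt(n-1)}\,\partial^{\al(n-1)}\ph$. By \eqref{Lt-def}, $\Lt f_{n-1}$ is a sum of two groups of terms. The first group is $L_1 f_{n-1}$ multiplied by $y^{2h(n-1)+2}$. The second group is $L_2 f_{n-1}$ multiplied by $y^{2h(n-1)+4}$. This immediately gives \eqref{he}: $h(n)$ is $h(n-1)+1$ for the $L_1$-terms and $h(n-1)+2$ for the $L_2$-terms. Everything else comes from analyzing one elementary operator $c_{\tau,\s}x^\tau\partial^\s$ at a time, with $|\s|\le2$ and $|\tau|\le2$. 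As noted before Lemma~\ref{lem-H}, both $L_1$ and $L_2$ are finite sums of such operators, and for $L_2=\sum\partial_{x_i}^2$ we have $\tau=0$.

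The key step is the Leibniz rule applied to $\partial^\s\bigl(x^{\bt}\partial^{\al}\ph\bigr)$. Each derivative in $\partial^\s$ either falls on $x^\bt$ or falls on $\partial^\al\ph$.
\begin{itemize}
\item If it falls on $x^\bt$, it lowers $|\bt|$ by one and produces a factor $\bt_i\le\max(\bt)$.
\item If it falls on $\partial^\al\ph$, it raises $|\al|$ by one and produces the factor $1$.
\end{itemize}
Multiplying by $x^\tau$ then raises $|\bt|$ by $|\tau|\le2$. The terms therefore fall into three cases:
\begin{itemize}
\item both derivatives hit $\ph$: ${\bf a}$ increases by $2$, and the combinatorial factor is $1$;
\item one derivative hits $\ph$: ${\bf a}$ increases by $1$, and the factor is at most $\max(\bt(n-1))$;
\item neither derivative hits $\ph$: ${\bf a}$ is unchanged, and the factor is at most $\max(\bt(n-1))^2$.
\end{itemize}
We call this combinatorial factor $d_n$. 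This yields \eqref{ae} and \eqref{de}, and also the first inequality in \eqref{be}. Here we should check that when $|\s|=1$ the bound is even better. That case occurs in the rewriting $\partial_{x_i}p_{i,k}\partial_{x_k}=p_{i,k}\partial_{x_k}\partial_{x_i}+(\partial_ip_{i,k})\partial_k$, where the lower-order term has $|\tau|\le1$.

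The inequality ${\bf b}(n)\le{\bf a}(n)$ needs a different argument, because it is not a one-step statement. We prove it by induction, starting from ${\bf b}(0)={\bf a}(0)=0$. The point is a structural feature of the operators: each elementary operator $x^\tau\partial^\s$ occurring in $L_1$ and $L_2$ has $|\tau|\le|\s|$. For $L_2$ this holds since $\tau=0$. For $L_1$ it should follow from the homogeneity of the Laplacian under the dilations $\am(t)$, via \eqref{anai}. Under the scaling $x^{(1)}\mapsto tx^{(1)}$, $x^{(2)}\mapsto t^2x^{(2)}$, the coefficients $c_{j,i}$ are linear in $x^{(1)}$. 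Consequently each $x^{(1)}$-factor in a coefficient is paired with an extra derivative, namely a $\partial_{x^{(2)}}$ in place of a $\partial_{x^{(1)}}$. With this, every step changes ${\bf b}$ by $|\tau|-(\#\text{derivatives on }x^\bt)$ and changes ${\bf a}$ by $|\s|-(\#\text{derivatives on }x^\bt)$. Hence ${\bf b}-{\bf a}$ does not increase, and the inequality propagates. The main obstacle is to confirm $|\tau|\le|\s|$ from the explicit matrix \eqref{gRi}, including the first-order terms produced by the rewriting. If it fails for some term, we would instead weaken the claim to ${\bf b}(n)\le{\bf a}(n)+\text{const}$ and check whether that suffices later.

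It remains to prove \eqref{cnest}. The coefficient $c_n$ is a product of three factors: $c_{\tau,\s}$, which is bounded by $C$; the Leibniz factor $d_n$; and the prefactor $-1/\bigl(4(h+1)(h+1+\nu)\bigr)$ or $-1/\bigl(4(h+2)(h+2+\nu)\bigr)$ from \eqref{Lt-def}. We express the prefactor in terms of $n$. By \eqref{he} we have $n\le h(n)\le2n$, with $h(n)$ equal to the new index $h+1$ or $h+2$. Write the prefactor as $\frac{1}{4h(n)|h(n)+\nu|}$. Then $h(n)\ge n$ gives $\frac1{h(n)}\le\frac1n$, and $\frac{|n+\nu|}{|h(n)+\nu|}\le\z_\nu$ follows from the definition of $\z_\nu$ with indices in $[0,2n]$. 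This gives $\frac{1}{4h(n)|h(n)+\nu|}\le\frac{\z_\nu}{4n|n+\nu|}$. The extra factor $2$ in \eqref{cnest} absorbs Leibniz multiplicities, such as $\partial_i^2(x^\bt\cdots)$ producing the cross term twice. We must verify this, and if necessary enlarge $C$ accordingly.
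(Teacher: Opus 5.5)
Your argument is the paper's proof written out in more detail. It applies $\Lt$ once to a single basis function, expands by the Leibniz rule term by term, takes the factor $C$ from $c_{\tau,\s}$ and the factor $2d_n$ (the $2$ covers the doubled cross term in $\partial_{x_i}^2$), and bounds the prefactor using $n\le h(n)\le 2n$ together with $\z_\nu$. Your invariant $|\tau|\le|\s|$ proves ${\bf b}(n)\le{\bf a}(n)$ more cleanly than the paper's one-line remark, and the obstacle you flag does disappear. By \eqref{anai} and the $G$-invariance of $\Dt$, the operator $y^2L_1$ is invariant under $(x^{(1)},x^{(2)},y)\mapsto(tx^{(1)},t^2x^{(2)},ty)$. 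So every monomial $x^\tau\partial^\s$ of $L_1$ has weighted degree $w(\tau)-w(\s)=-2$, with weight $1$ for indices $\le p$ and weight $2$ for indices $>p$. Hence $|\tau|\le w(\tau)=w(\s)-2\le 2|\s|-2\le|\s|$ whenever $|\s|\le 2$.

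One step fails as you wrote it: the claim that the case $|\s|=1$ is ``even better''. A first-order term $\partial_{x_l}$ falling on $\partial^\al\ph$ gives ${\bf a}(n)={\bf a}(n-1)+1$ with Leibniz factor $1$. But \eqref{de} then demands $|d_n|\le\max(\bt(n-1))$, which is $0$ when $\bt(n-1)=0$, and this would already happen at $n=1$, starting from $\ph$.

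The lemma survives because no first-order terms actually occur. Since $\det\rmetric=y^{-4\rho-2}$ does not depend on $x$, the only candidate first-order coefficients produced by the rewriting are $\partial_{x_j}c_{j,l}=b(\e_j,\e_j)_l$. These vanish: in the coordinates \eqref{nc-exp}, the bilinear map $b$ in \eqref{nprhc} is the commutator term of the Baker--Campbell--Hausdorff formula, hence antisymmetric. Alternatively, you could replace $\max(\bt(n-1))$ by $\max\bigl(1,\max(\bt(n-1))\bigr)$ in the middle case of \eqref{de}; this is harmless for the later estimate of $\prod_n d_n$.
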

\begin{proof}Definition~\eqref{Lt-def} implies that
$n\leq h(n) \leq 2n$. The positive quantity $\z_\nu$ allows us to
estimate $\frac1{|\nu+n|}{\bigl|\nu +1 + h(n-1)\bigr|} $ by
$ \frac{\z_\nu}{n}$. We recall that we work under the assumption that
$\nu \not \in \ZZ_{\leq -1}$.

The differential operator $c_{\tau,\s} x^\tau\partial^\s$ with
$|\s|,|\tau|\leq 2$, gives the factor $C$ in \eqref{cnest}, and
increases $|\al|$ and $|\bt|$ by at most $2$. If $|\bt(n)|>0$, a
differentiation with respect to some $x_i$ occurs. Then $|\al|$
increases less, $|\bt|$ decreases, and the differentiation gives a
factor estimated by $2d_n$.
\end{proof}

The function $\Lt^k \ph$ can be computed by making $k$ choices of
operators $f_j $ of the form $c_{\s,\tau} x^\tau \partial^\s$ for
$j=1,\ldots,k$. That gives at most $N^k$ terms
$f_k f_{k-1}\cdots f_1 \ph$ of the form
\be g=D \, t^{2h(k)} \, x^{\bt(k)} \, \partial^{\al(k) } \ph\,.\ee
We aim at an estimate that is valid for all the terms arising in this
way. We have $h(n) \geq n$ by \eqref{he}, and get
\be D \leq \Bigl( \frac {C \z_\nu}2 \Bigr)^k \frac{\prod_{n=1}^k d_n}{
k!\; \bigl| (\nu+1)_k \bigr|} \,, \ee
with the increasing Pochhammer symbol $(z)_n = \prod_{j=0}^{n-1} (z+j)$.

Let $e_1$ be the number of steps for which ${\bf a}(n)={\bf a}(n-1)+1$
and $e_2$ be the number of steps for which ${\bf a}(n) = {\bf a}(n-1)$.
Since ${\bf b}(n) \leq {\bf a}(n)$ by \eqref{be} we have
$\max(\bt)\leq \max(\al)$. Furthermore, $\max(\al) \leq |\al|$. We have
${\bf a}(k) = 2k-e_1-2e_2$ by \eqref{ae} and \eqref{de}.
\be \prod_{n=1}^ k d_n \leq |{\bf a}(k)|^{2e_2+e_1}\leq ( {\bf a}(k)+1)
\cdots (2k)
= \frac{(2k)!}{{\bf a}(k)!}\,. \ee

We obtain with Lemma~\ref{lem-diffph}
\begin{align*} \bigl\| g \bigr\|_\dt &\leq D \, |t| ^{2h(k) } \,
|x^\bt|\, \bigl\| \, \partial^{\al(k)} \ph \bigr\|_\dt\\
&\leq \frac{ (C \z_\nu/2)^k }{ k!\, |(\nu+1)_k|} \frac{(2k)!}{{\bf
a}(k)!} \eta^{2h(k)} \, \|x\|^{{\bf b}(k)} \,\al(k)! \, \e^{-{\bf a(k)}
}
(1-\dt/\e)^{-{\bf a}(k)-p-q} \, \|\ph\|_\e \,.\end{align*}

By Stirling's formula we have
$\frac{(2k)!}{k!\;(\nu+1)_k }\ll 2^{2k} \, k^{-1/2-\re\nu}$. The
influence of $\nu$ is in the implicit constant. We have
$\frac{\al(k)!}{{\bf a}!} \leq 1$, and $\|x\|^{{\bf b}(k) }\ll 1$ since
$0<\dt<1$. Using ${\bf a}(k) \leq 2k$, $h(k)\leq 2k$, and
$|t|\leq \eta$, we obtain
\[ \|g\|_\dt \ll |t|^{2k} \Bigl( 2C\z_\nu \e^2 (1-\dt/\e)^2 \Bigr)^ k\,
k^{-1/2-\re\nu} \,(1-\dt/\e)^{-p-q}\, \|\ph\|_\e\,. \]
The estimates that we carried out were wasteful for a single term.
However they are uniform over the $\oh(N^k)$ choices
$f_k f_{k-1}\cdots f_1$ of operators. With
$0<\eta < \bigl( 2 N C \z_\nu (\e-\dt) ^2\bigr)^{-1/2}$ the norms
$\|\Lt^k \ph \|_\dt$ have linear exponential decay in $k$ on the
product $[-\dt,\dt]^{p+q} \times [-\eta,\eta]$, which gives absolute
convergence of the series $F$ in \eqref{Aserk}, and hence analyticity.

This concludes the proof of the surjectivity of the restriction map
$\bigl( \W\om\nu\bigr)_\nul\rightarrow \bigl( \V\om\nu\bigr)_\nul$, and
ends the proof of Theorem~\ref{thm-isoVW}.
\end{proof}

%% file: ccro2-9-cohbg.tex

\bigskip

\def\flnm{ccro2-9-cohbg}

\section{Cohomology with values in boundary germs}\label{sect9}
We proceed under the assumption that $\nu \not\in  \frac12\ZZ_{\leq -1}$.

The isomorphism of $G$-equivariant sheaves
$ \rho_\nu: \W\om\nu \rightarrow \V\om\nu$ in Theorem~\ref{thm-isoVW}
provides us with isomorphisms between $G$-modules of analytic functions
on the boundary $\partial S$ with modules of $\nu$-analytic boundary
germs, on the condition $\nu \not\in \frac12\ZZ_{\leq -1}$.

The module $\V{\om^0}\nu $ in \eqref{Vom0} is isomorphic under
$\rho_\nu$ to
\be\label{Wom0} \W{\om^0}\nu = \lim_{\stackrel\rightarrow E}\W\om\nu
\bigl(
(\partial S) \setminus E\bigr)\,,\ee
where $E$ runs over the finite sets of cusps. The submodule
$\W{\om^0,\infty}\nu = \rho_\nu^{-1} \V{\om^0,\infty}\nu$ has no
independent description in the context of
$\W\om\nu$.\il{Womn0}{$\W{\om^0}\nu,\; \W{\om^0,\infty}\nu$}

Theorem~\ref{thm-btv} yields, under the condition
$\nu \not\in  \tfrac12 \ZZ_{\leq -1}$, a linear map
\be \btw_\nu : \A^0(\Gm;\nu) \rightarrow H^{\ds-1}_\pb\bigl(
\Gm;\W{\om^0,\infty}\nu\bigr)^\bdc \ee
by composing $\btv_\nu$ with the natural map corresponding to
$\rho_\nu^{-1}$.

In this section we will see that $\btw_\nu$ can be described directly by
integrals.

\subsection{Free space resolvent}\label{secta0-fsr}
We use results from the paper \cite{MiWa92} by Miatello and Wallach:
\begin{prop}{(\rm $K$-bi-invariant functions) }\label{prop-Q}
There is a family $\CC\ni \nu \mapsto Q_\nu$ of functions on
$G\setminus K$ such that \il{Qnu}{$Q_\nu, \; q_\nu(\cdot;\cdot)$}
\begin{enumerate}[label=$\mathrm{(\arabic*)}$, ref=$\mathrm{\arabic*}$]
\item\label{Q:holmer} $Q_\nu$ is holomorphic on $\re\nu>-\frac12$, with
a meromorphic extension to $\CC\setminus \frac12\ZZ_{\leq -1}$,
\item $Q_\nu(k_1 \am(t) k_2) = Q_\nu(\am(t))$ for all $t>1$,
\item\label{Q:CasQ} $ R(\Cas)  Q_\nu = -(\rho^2-\nu^2) Q_\nu$,
\item\label{Q:inf}
The function $ k\am(t)\,\orgn\mapsto Q_\nu\bigl(k\am(t)\bigr)$
represents a $\nu$-analytic boundary germ with restriction equal to the
constant function $1$ on~$\partial S$.
\item\label{Q:e}
There is $d_0>0$ such that
$Q_\nu(z) = \oh\bigl(\dist(z,\orgn)^{-d_0}\bigr)$ as
$z\rightarrow \orgn$. If $\ds\geq 3$, then we can take $d_0=\ds-2$.
\end{enumerate}
We put $q_\nu(x_1;x_2) = Q_\nu(g_1^{-1} g_2)$ for
$(x_1,x_2) \in S^2 \setminus \diag$, $g_j\in G$ such that
$x_j= g_j\,\orgn$.
\begin{enumerate}[label=$\mathrm{(\arabic*)}$, ref=$\mathrm{\arabic*}$]
\setcounter{enumi}{5}
\item $q_\nu(g x_1;gx_2)=q_\nu(x_1;x_2)$ for all $g\in G$,
\item $q_\nu(x_2;x_1) = q_\nu(x_1;x_2)$,
\item\label{Q:nu-anal}
$q_\nu(\cdot ,x_2) \in \E_\nu( S\setminus\{x_2\})$
represents an element of $\W \om\nu(\partial S)$,
\item\label{Q:Delta}$\Dt q_\nu(\cdot,x_2) = (\rho^2-\nu^2)\, q_\nu(\cdot;x_2)$
and $\Dt q_\nu(x_1;\cdot) = (\rho^2-\nu^2)\, q_\nu(x_1;\cdot)$,
\item \label{Q:resolv} if $f\in C_c^\infty(S)$, then
\be \label{resolve}\int_S q_\nu(x;y) \, \bigl( (\Dt-\rho^2+\nu^2)
f\bigr)(y)\; d\mu(y) = 2\nu c(\nu) f(x)\,,\ee
where $c$ is the $c$-function of Harish-Chandra.
\end{enumerate}
\end{prop}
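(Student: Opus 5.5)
The plan is to construct $Q_\nu$ explicitly as a $K$-bi-invariant eigenfunction of $R(\Cas)$, obtained as the \emph{second-kind} solution of the radial ODE. All listed properties then come from the explicit formula together with a Green's-identity argument, following the approach of Miatello--Wallach~\cite{MiWa92}.

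\emph{Radial ODE and construction.} Write $\am(t)=\exp(r\H_0)$ with $t=e^r$. From \eqref{RCas-Kinv}, or directly from the radial part of $\Dt$ in polar coordinates, a $K$-bi-invariant $F$ satisfies $R(\Cas)F=-(\rho^2-\nu^2)F$ if and only if
\[ F''(r)+\bigl(p\coth r+2q\coth 2r\bigr)F'(r)=(\nu^2-\rho^2)F(r)\,. \]
This is a Jacobi-type equation. The substitution $z=\cosh^{-2}r$ turns it into a hypergeometric equation. Among its solutions I take the one at $z=0$ behaving like $(\cosh r)^{-(\rho+\nu)}$:
\[ Q_\nu(\am(t))=c_1(\nu)\,(2\cosh r)^{-(\rho+\nu)}\,\hypg21\bigl(a,b;\nu+1;\cosh^{-2}r\bigr)\,, \]
with $a,b$ explicit in terms of $\rho,\nu,p,q$. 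The normalizing factor $c_1(\nu)$ will be fixed below by property~\eqref{Q:inf}. The hypergeometric factor, together with the $\Gamma$-factors coming from that normalization, is meromorphic in $\nu$, and its poles come only from $\nu+1\in\ZZ_{\leq 0}$ and from half-integer shifts. This gives~\eqref{Q:holmer}. $K$-bi-invariance holds by construction, and~\eqref{Q:CasQ} holds by the ODE.

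\emph{Boundary and local behavior.} For~\eqref{Q:inf}, note that $t^{\rho+\nu}(2\cosh r)^{-(\rho+\nu)}=(1+t^{-2})^{-(\rho+\nu)}$ and $\cosh^{-2}r=4t^{-2}(1+t^{-2})^{-2}$. Both are analytic functions of $\tau^2=t^{-2}$ near $\tau=0$, where $\tau$ is the extended polar coordinate of Lemma~\ref{lem-epc}. Hence $A_u$ is analytic across $\partial S$, and $A_u|_{\partial S}=c_1(\nu)$; I choose $c_1(\nu)=1$. For~\eqref{Q:e}, I look at the regular singular point $z=1$ (that is, $r=0$). The exponents there are $0$ and $1-\ds/2$ in the variable $1-z\sim r^2$, so the second-kind solution behaves like $r^{2-\ds}$ for $\ds\geq3$, and like $\log r$ for $\ds=2$.

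\emph{Two-point properties.} Properties~(6)--\eqref{Q:Delta} then follow formally. Left-invariance of $q_\nu$ holds by definition. Symmetry follows from $Q_\nu(g^{-1})=Q_\nu(g)$, since by the Cartan decomposition $g^{-1}=k_2^{-1}\am(t)^{-1}k_1^{-1}$ and $\am(t)^{-1}=\wm\am(t)\wm$ with $\wm\in K$. Property~\eqref{Q:nu-anal} follows from~\eqref{Q:inf} and Proposition~\ref{prop-aGA}, by translating the base point $\orgn$ to $x_2$. Property~\eqref{Q:Delta} follows from Proposition~\ref{prop-Cas-Lapl} applied in each variable.

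\emph{Resolvent identity and main obstacle.} For~\eqref{Q:resolv}, fix $x$ and apply Green's identity, via the form $\eta$ of Proposition~\ref{prop-eta}, on $S$ minus a small geodesic ball $B_\e(x)$. The volume term vanishes off $x$ because $q_\nu(x;\cdot)$ is an eigenfunction, and $f$ has compact support. On the sphere $\partial B_\e(x)$ only the term $f\,\partial_r q_\nu$ survives as $\e\to0$, and it yields $-\mathrm{vol}(S^{\ds-1})\lim_{r\to0}r^{\ds-1}Q_\nu'(r)\,f(x)$. The main obstacle is identifying this constant with $2\nu c(\nu)$. I would handle it via the Wronskian: $W(r)=\Delta(r)\bigl(Q_\nu\,\ph_\nu'-Q_\nu'\,\ph_\nu\bigr)$, where $\Delta(r)=\sinh^p r\,\sinh^q 2r$ and $\ph_\nu$ is the spherical function, is constant in $r$. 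Evaluated at $r\to0$, where $\ph_\nu(0)=1$, it gives exactly the limit above. Evaluated at $r\to\infty$, Harish-Chandra's expansion $\ph_\nu\sim c(\nu)t^{\nu-\rho}+c(-\nu)t^{-\nu-\rho}$ and $Q_\nu\sim t^{-\rho-\nu}$ give $2\nu c(\nu)$ times the constant relating $\Delta(r)$ to $t^{2\rho}$, which is absorbed into the normalization of $d\mu$. Matching the two evaluations proves~\eqref{resolve}.
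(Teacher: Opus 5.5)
Your proposal is correct, but it takes a different route from the paper. The paper gives no argument of its own here. It imports (1)--(3) from \cite[Theorem 1.1]{MiWa92}, locating the singularities via the relation with the spherical function and hence via zeros and poles of $c$. It takes (4) from \cite[Theorem 1.2]{MiWa92}, (5) from part (d) of Theorem 1.1 there, and \eqref{resolve} from \cite[Lemma 2.2]{MiWa92}, and treats (6)--(9) as immediate.

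You instead write $Q_\nu$ down as the rank-one Jacobi function of the second kind,
$Q_\nu(\am(e^r))=(2\cosh r)^{-\rho-\nu}\hypg21\bigl(\tfrac{\rho+\nu}2,\tfrac{p/2+1+\nu}2;\nu+1;\cosh^{-2}r\bigr)$,
and obtain everything from this formula plus a Green/Wronskian argument. This is self-contained apart from classical facts (the radial part of $\Dt$ and Harish-Chandra's expansion of the spherical function $\varphi_\nu$). It also yields more than the citation:
\begin{itemize}
\item poles only in $\ZZ_{\le-1}$, since with your normalization no $\Gamma$-factors enter;
\item the expansion $t^{-\rho-\nu}\times(\text{analytic in }t^{-2})$, which the paper later uses in \eqref{qnuas};
\item a transparent reason why the constant is a multiple of $\nu c(\nu)$.
\end{itemize}
The citation route avoids the rank-one computations and comes with more general statements, for instance \eqref{resolve} for non-compactly supported $f$ satisfying growth conditions.

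Two points need tightening.

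\emph{The Wronskian at infinity.} Compute it from the exact decomposition $\varphi_\nu=c(\nu)\Phi_\nu+c(-\nu)\Phi_{-\nu}$ for $\nu\notin\ZZ$, where $\Phi_{\pm\nu}$ are the Harish-Chandra series. Together with $Q_\nu=\Phi_{-\nu}$, this makes the $c(-\nu)$-part contribute exactly zero. The remaining $\nu$ then follow by meromorphic continuation of both sides of \eqref{resolve}.

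\emph{The normalization.} It cannot simply be ``absorbed''. The paper fixes $d\mu$ as the Riemannian measure $\zeta_S$. Its radial density is $\omega_{\ds-1}2^{-q}\sinh^p r\,\sinh^q 2r$, with $\omega_{\ds-1}$ the volume of the unit sphere in $\RR^\ds$. Carrying this density through both ends of your Wronskian gives the constant $\omega_{\ds-1}2^{-2\rho}\cdot 2\nu c(\nu)$ rather than $2\nu c(\nu)$. For the hyperbolic plane the extra factor is $\pi$, which a comparison with the classical free-space Green function confirms. So \eqref{resolve} holds as written only for a Haar measure normalized to have radial density $\sim t^{2\rho}$ at infinity. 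With the paper's $d\mu$ this positive factor appears, but it is harmless downstream: only $\nu c(\nu)\neq 0$ and consistent use of the constant matter.
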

 In the case of $\PSL_2(\RR)$ the function $Q_\nu$ is given explicitly 
 as $Q_{\nu+1/2,0}$ in \cite[(1.3b)]{BLZ15}. 

Theorem 1.1 in \cite{MiWa92} gives \eqref{Q:holmer}--\eqref{Q:CasQ},
except the restriction  of the singularities to $\frac12\ZZ_{\leq -1}$. 
See also \cite[Theorem 2.2]{MiWi99} and \cite[Proposition 3.2]{HHP19}.
The relation in \cite[Theorem 1.1, b)]{MiWa92} with the spherical
function reduces the position of the singularities to zeros and
singularities of the Harish-Chandra $c$-function. Theorem 1.2 in
\cite{MiWa92} implies part~\eqref{Q:inf}. Part (d) in Theorem 1.1
implies a more precise result than \eqref{Q:e}. In fact we can take
$d_0=\ds-2$ if $\ds\geq 3$, and any $d_0>0$ if $\ds=2$.

The resolvent relation \eqref{Q:resolv} follows from \cite[Lemma
2.2]{MiWa92}. Actually, this result of Miatello and Wallach is valid
for more general elements of $C^\infty(S)$ that satisfy a growth
condition at the boundary. For compactly supported functions we need
only local integrability, which is given by \cite[Lemma 2.1]{MiWa92}
for all values of $\nu$ at which $Q_\nu$ is defined.

The \il{cHC}{$c$ Harish-Chandra's $c$-function}$c$-function of
Harish-Chandra is given in \cite[(3.7)]{Hel18}. For rank one we get
\be \label{cHC} c(\nu) = \frac{ 2^{\rho-\nu} \,\Gf\bigl(
(p+q+1)/2\bigr)\, \Gf(\nu) } { \Gf\bigl( (p/2+1+\nu)/2\bigr)\,\Gf\bigl(
(p/2+q+\nu) /2\bigr)}\,,\ee
with $p$ and $q$ as in~\eqref{qq}. We see that it has a first order
singularity at $\nu=0$, and further it has zeros and poles in points of
$\frac12\ZZ_{\leq -1}$. The factor $\nu\,c(\nu)$ in \eqref{resolve} in
the proposition is holomorphic and non-zero for
$\nu \in \CC\setminus \frac12\ZZ_{\leq -1}$. It seems sensible to
impose the further restriction $\nu\not\in \frac12\ZZ_{\leq -1}$.
\medskip

We form the $(\ds-1)$-form\il{omW}{$\omw_\nu$}
\be \label{omWdef}\omw_\nu(f;x,\cdot) = \eta( f(.),
q_\nu(x,\cdot)\bigr)\,,\ee
on $S$ depending on $f\in C^\infty(S)$ and $x\in S$, analogously to the
definition of $\omv_\nu$ in \eqref{omnu}. See
Proposition~\ref{prop-eta} for $\eta(\cdot,\cdot)$. Like in
\eqref{omv-trf} we have for each $g\in G$:
\be \label{omW-trf}
\omw_\nu( f;x,\cdot) \circ g^{-1} = \omw_\nu( f|g^{-1}; gx,
\cdot)\qquad(g\in
G)\,.\ee

\begin{prop}\label{prop-psC}
Let $\nu \in \CC\setminus \frac12\ZZ_{\leq -1}$, and let $U \subset S$
be open. Let $D \subset U$ be a compact manifold with a contractible
interior $\mathring D$, and with a piecewise smooth boundary
$\partial D$.
Let $\partial D$ be oriented according to the outward normal.

For each $h\in \E_\nu(U)$ and $x\in S \setminus  D $
\be \int_{ \partial D} \omw_\nu(h;x,\cdot) =
\begin{cases} 2\nu\, c(\nu)\, h(x) &\text{ if } x\in \mathring D\,,\\
0&\text{ if }x\in S\setminus D \,.
\end{cases}\ee
\end{prop}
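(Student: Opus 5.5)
The plan is to reduce everything to Stokes' theorem for the closed form $\omw_\nu(h;x,\cdot)=\eta\bigl(h,q_\nu(x;\cdot)\bigr)$, and to get the constant $2\nu c(\nu)$ from the resolvent identity~\eqref{resolve}, with no local computation near the singularity. I read the hypothesis as $x\in S\setminus\partial D$, since the interior case is part of the conclusion. By Proposition~\ref{prop-Q}\eqref{Q:Delta}, $q_\nu(x;\cdot)\in\E_\nu(S\setminus\{x\})$, and $h\in\E_\nu(U)$ has the same eigenvalue $\rho^2-\nu^2$. So by Proposition~\ref{prop-eta}\eqref{eta:closed} the $(\ds-1)$-form $\omw_\nu(h;x,\cdot)$ is closed on $U\setminus\{x\}$.

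\emph{Case $x\in S\setminus D$.} The form is smooth and closed on a neighbourhood of the compact manifold $D$, since $x\notin D$. Stokes' theorem gives $\int_{\partial D}\omw_\nu(h;x,\cdot)=\int_D d\,\omw_\nu(h;x,\cdot)=0$.

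\emph{Case $x\in\mathring D$.} Choose a cut-off $\chi\in C_c^\infty(\mathring D)$ with $\chi=1$ on a geodesic ball $B_r(x)$, and put $f=\chi h\in C_c^\infty(S)$. The function $(\Dt-\rho^2+\nu^2)f$ vanishes on $B_r(x)$ and outside $\operatorname{supp}\chi$, so the integral in~\eqref{resolve} is a genuine integral over the compact set $D\setminus B_r(x)$, where $q_\nu(x;\cdot)$ is smooth. Since $(\Dt-\rho^2+\nu^2)q_\nu(x;\cdot)=0$ there, definition~\eqref{MS} gives
\[
q_\nu(x;\cdot)\,(\Dt-\rho^2+\nu^2)f=-\MS\bigl(f,q_\nu(x;\cdot)\bigr)\quad\text{on }D\setminus B_r(x).
\]
By Proposition~\ref{prop-eta}\eqref{eta:diff}, the right-hand side times $\z_S$ equals $-d\,\eta\bigl(f,q_\nu(x;\cdot)\bigr)$. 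Apply Stokes on $D\setminus B_r(x)$. The boundary term on $\partial D$ vanishes, because $f=0$ near $\partial D$. On the sphere $\partial B_r(x)$ we have $f=h$ near it, so the term there is $\int_{\partial B_r(x)}\omw_\nu(h;x,\cdot)$ up to an orientation sign. Combining with~\eqref{resolve},
\[
2\nu c(\nu)\,h(x)=\pm\int_{\partial B_r(x)}\omw_\nu(h;x,\cdot).
\]

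It remains to compare the sphere with $\partial D$. Apply Stokes again, now to the closed form $\omw_\nu(h;x,\cdot)$ on $D\setminus B_r(x)$. This gives $\int_{\partial D}\omw_\nu(h;x,\cdot)=\int_{\partial B_r(x)}\omw_\nu(h;x,\cdot)$, with the sphere oriented by the normal pointing away from $x$. Hence $\int_{\partial D}\omw_\nu(h;x,\cdot)=\pm2\nu c(\nu)h(x)$. The growth bound Proposition~\ref{prop-Q}\eqref{Q:e} is never needed, because $f$ is constant times $h$ near the singularity and the integrand of~\eqref{resolve} vanishes there. The only analytic input besides Stokes is the local integrability implicit in~\eqref{resolve}.

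The main obstacle I expect is the sign bookkeeping, which must produce $+2\nu c(\nu)$. Three conventions interact: $\Dt=-\Dt_+$ in~\eqref{Dt}; the antisymmetry and order of arguments in $\eta(f_1,f_2)$ from~\eqref{etadef}, together with the identity~\eqref{MS-dg}; and the orientation of $\partial\bigl(D\setminus B_r(x)\bigr)$, in which the inner sphere carries the orientation opposite to the one coming from the outward normal of $B_r(x)$. I would fix these by checking $d\eta(f_1,f_2)=\MS(f_1,f_2)\z_S$ against~\eqref{etadef} and carrying the minus signs through carefully. If a sign mismatch remains, I would recheck it in the case $\ds=2$ against the corresponding formula of \cite{BLZ15}, where $Q_\nu$ is explicit.
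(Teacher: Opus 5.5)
Your proof is correct and follows essentially the paper's argument: apply the resolvent identity \eqref{resolve} to a cut-off of $h$, turn it via Proposition~\ref{prop-eta}\eqref{eta:diff} and Stokes into an integral of $\omw_\nu(h;x,\cdot)$ over a small sphere around $x$, and move that sphere to $\partial D$ by closedness (the paper takes $f=h$ on a neighbourhood of $D$ and an annulus $\e\le\dist(y,x)\le R$; your cut-off supported in $\mathring D$ only removes the trivial limit $\e\downarrow0$). The sign you flag is no real obstacle: the sphere $\partial B_r(x)$, oriented away from $x$, enters $\partial\bigl(D\setminus B_r(x)\bigr)$ with a minus sign, so \eqref{resolve} and Proposition~\ref{prop-eta}\eqref{eta:diff} alone give $2\nu c(\nu)h(x)=-\int_{D\setminus B_r(x)}\MS\bigl(f,q_\nu(x;\cdot)\bigr)\,\z_S=\int_{\partial B_r(x)}\omw_\nu(h;x,\cdot)$, and you need not go back to \eqref{etadef} or to the case $\ds=2$.
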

\begin{proof}

Let $f\in C_c^\infty(S)$. With~\eqref{Q:Delta} in
Proposition~\ref{prop-Q} and~\eqref{MS} we obtain, on all of
$S\setminus\{x\}$,
\begin{align*}
q_\nu(x,\cdot)\, (\Dt-\rho^2+\nu^2) f = q_\nu(x,\cdot)\, \Dt f - f \,\Dt
q_\nu(x,\cdot) = - \MS\bigl( f, q_\nu(x,\cdot)
\bigr)\,.
\end{align*}
Hence \eqref{resolve} takes the form
\badl{-MS-int} 2\nu\, c(\nu) \, f(x) &= - \int_{y\in S} q_\nu(x,y)\,
\bigl((\Dt-\rho^2+\nu^2) f \bigr)(y)
\, d\mu(y) \\
&= - \lim_{\e\downarrow 0} \int_{y,\; \e \leq \dist(y,x) \leq R }
q_\nu(x,y)\, \bigl((\Dt-\rho^2+\nu^2) f \bigr)(y)\, d\mu(y) \,,
\eadl
for each $R$ such that the support of $f$ is contained in $\bigl\{ y\in
S \;:\; \dist(y,x) < R\bigr\}$. Part~\eqref{eta:diff} of
Proposition~\ref{prop-eta} allows us to formulate this as
\bad 2\nu\, c(\nu)\, f(x) &=-I(\nu;x,R;f)
+ \lim_{\e\downarrow 0} I(\nu;x,\e;f)\,,\\
I(\nu;x,r;f) &= \int_{y\in H(x,r)} \eta(f,q_\nu(x,\cdot))\,,
\ead
with the $(\ds-1)$-dimensional hypersurface $H(x,r)=\bigl\{y \in
S\;:\;\dist(y,x)=r\bigr\}$ oriented according to the normal directed
away from $x$. See Figure~\ref{fig-psC} a). The hypersurface $H(x,R)$
is outside the support of $f$. Hence
\be 2\nu\, c(\nu)\, f(x) = \lim_{\e\downarrow 0} I(\nu;x,\e;f)\,.\ee
\begin{figure}[t]

\begin{center}
\includegraphics[width=6cm]{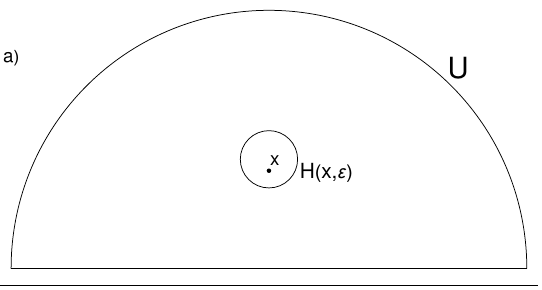}
\medskip\\
\includegraphics[width=5.5cm]{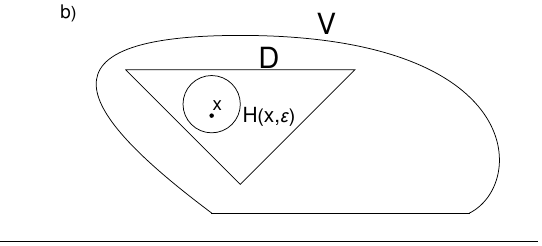}
\qquad \includegraphics[width=5.5cm]{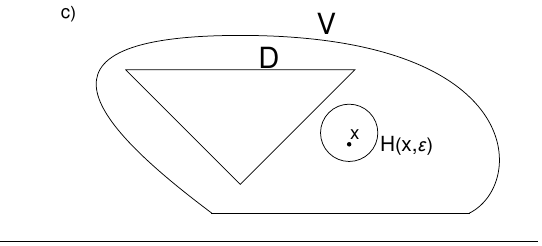}
\end{center}
\caption{Illustrations for the proof of
Proposition~\ref{prop-psC}.}\label{fig-psC}
\end{figure}

Under the assumptions in the proposition we take $f$ equal to $h$ on a
neighborhood $V$ of $D  $ such that $V\subset U$, and extend it to an
element $f\in C^\infty_c(S)$. Then
\[
(\Dt f)(x) = (\rho^2-\nu^2) f(x)
\]
for all $ x  \in V$. Now the differential form $\eta(f,q_\nu(x,\cdot))$
is closed on $V \setminus\{x\}$.

If $x\in \mathring D$ the integral $I(\nu;x,\e;f)$ does not depend on
$\e$, as long as $H(x,\e)$ stays inside $ D$. $\bigl($See
Figure~\ref{fig-psC} b).$\bigl)$ Deforming $H(x,\e)$ into $\partial D$
gives one part of the assertion, taking into account the minus sign
in~\eqref{-MS-int}. If $x$ is outside $D$ (case c) in the figure), then
the integral $\int_D \eta\bigl( f,q_\nu(x,\cdot)\bigr)$
is zero, since then the differential form is closed on $D$.\end{proof}

\subsection{Cohomology with values in analytic boundary
germs}\label{sect2-coh-abg}
Theorem~\ref{thm-isoVW} implies that if
$\nu \not \in \frac12\ZZ_{\leq -1}$, then the linear map $\btv_\nu$ can
be lifted to a map from cusp forms to a cohomology space with values in
boundary germs. Here we give an integral transformation describing this
map, under the same assumption $\nu\not\in \frac12\ZZ_{\leq -1}$.

\begin{lem}\label{lem-rmph}Let
$\nu \in \CC \setminus\frac12\ZZ_{\leq -1}$. The restriction morphism
$ \rho_\nu: \W\om\nu\rightarrow \V\om\nu$ induces the following
relations:
\begin{enumerate}[label=$\mathrm{(\roman*)}$, ref=$\mathrm{\roman*}$]
\item \label{rmph:Rq} The kernel functions $r_\nu$ in \eqref{rnudef} and
$q_\nu$ in Proposition~\ref{prop-Q} satisfy
\be \rho_\nu q_\nu(\cdot;x) = r_\nu(\cdot;x)\qquad (x\in S)\,. \ee
\item\label{rmph:omrel}The differential forms $\omv_\nu(f; \cdot,x)$ in
\eqref{omnu} and $\omw_\nu(f;\cdot,x)$ in \eqref{omWdef} satisfy
\be \bigl(\rho_\nu \omw_\nu(f;\cdot,x)\bigr)(y) = \omv_\nu(f;y,x)\,.\ee
\end{enumerate}
\end{lem}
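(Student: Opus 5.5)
Part~\eqref{rmph:Rq} will be reduced to the base point $x=\orgn$ by equivariance, and part~\eqref{rmph:omrel} will then follow by linearity and the $x$-locality of $\eta$.

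For part~\eqref{rmph:Rq} I first compare both kernels at $x=\orgn$. By Proposition~\ref{prop-Q}\eqref{Q:nu-anal}, $q_\nu(\orgn;\cdot)$ represents an element of $\W\om\nu(\partial S)$. By part~\eqref{Q:inf}, the function $k\am(t)\,\orgn\mapsto Q_\nu(k\am(t))$ represents a $\nu$-analytic boundary germ with restriction the constant function $1$. Since $q_\nu(\orgn;k\am(t)\orgn)=Q_\nu(k\am(t))$, we get $\rho_\nu q_\nu(\orgn;\cdot)=1$. On the other side, $r_\nu(\cdot;\orgn)=\Ph_\nu(k^{-1})=1$ by Proposition~\ref{prop-Rnu}\eqref{Rnu:one}, so the two agree at $x=\orgn$.

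For general $x=g\,\orgn$ I use equivariance on both sides. The invariance $q_\nu(gx_1;gx_2)=q_\nu(x_1;x_2)$ gives $q_\nu(g\orgn;\cdot)=L(g)\,q_\nu(\orgn;\cdot)$. Theorem~\ref{thm-isoVW} says $\rho_\nu$ intertwines $L$ on $\W\om\nu$ with $\tau_\nu$ on $\V\om\nu$. Hence
\[\rho_\nu q_\nu(g\orgn;\cdot)=\tau_\nu(g)\,\rho_\nu q_\nu(\orgn;\cdot)=\tau_\nu(g)1=r_\nu(\cdot;g\orgn),\]
where the last equality is again Proposition~\ref{prop-Rnu}\eqref{Rnu:one}. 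The one point needing care is that $q_\nu$ is symmetric, so the boundary-germ variable may be placed in either slot consistently with the notation $\rho_\nu q_\nu(\cdot;x)$.

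For part~\eqref{rmph:omrel} I use that $\eta(f_1,f_2)$ in \eqref{etadef} is bilinear and built pointwise from $f_1,f_2$ and their first derivatives $\partial_{x_i}$ at the point $x$. Fix $x\in S$ and a small neighbourhood $V$ of $x$ with $x'\in V$. Then $\omw_\nu(f;\cdot,x)$ is a finite sum of terms $f(x)\,\partial_{x_i}q_\nu(\cdot;x)$ and $\partial_{x_i}f(x)\,q_\nu(\cdot;x)$, with coefficients that are analytic in $x$ and independent of the boundary variable. For the boundary germ $q_\nu(\cdot;x')$, the analytic function $A$ depends analytically on the parameter $x'$; this follows from the construction via invariance $q_\nu(g\cdot;g\cdot)$, with $g$ depending analytically on $x'$. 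So $\partial_{x_i}$ in $x'$ commutes with taking the germ and with the restriction to $\partial S$. Applying $\rho_\nu$, which is linear, termwise and using part~\eqref{rmph:Rq}, turns each $q_\nu$-term into the corresponding $r_\nu$-term, and this gives $\omv_\nu(f;y,x)$ by \eqref{omnu}.

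I expect the main obstacle to be justifying that differentiation in the $S$-variable $x$ commutes with $\rho_\nu$. The way I would handle it: since $\rho_\nu$ is $G$-equivariant and $L(\exp s\XX)$ depends analytically on $s$, the derivative $R(\XX)$ or $L(\XX)$ of $q_\nu(\cdot;x)$ in $x$ corresponds under $\rho_\nu$ to the derived action $d\tau_\nu(\XX)$ applied to $r_\nu(\cdot;x)$.
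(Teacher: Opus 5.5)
Your proof is correct and follows essentially the same route as the paper. In part (i) you reduce to the base point $\orgn$ using the $G$-equivariance of $\rho_\nu$ (Theorem~\ref{thm-isoVW}) and of $r_\nu$ (Proposition~\ref{prop-Rnu}); the paper checks only the value at $(\inft,\orgn)$ and then uses transitivity of $G$ on $(\partial S)\times S$, whereas you obtain $\rho_\nu q_\nu(\cdot;\orgn)=1$ at once from Proposition~\ref{prop-Q}. In part (ii), as in the paper, the point is that $\rho_\nu$ acts only in the germ variable, and your justification that the $x$-derivatives in $\eta$ commute with $\rho_\nu$ fills in a step the paper leaves implicit: it rests on the joint analyticity of $A$ in the parameter $x'$, which comes from the invariance of $q_\nu$ together with Lemma~\ref{lem-xya} and the analytic $G$-action on $\hat S$.
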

\begin{proof}
For part~\eqref{rmph:Rq} we check in the definitions that for $t\geq 1$
\[ r_\nu(\inft;\orgn) = \Phi_{0,\nu}(k^{-1})=1\,,\qquad q_\nu( \am(t)
\, \orgn, \orgn) \;\sim\; t^{\rho+\nu}\,.\]
Hence $\rho_\nu q_\nu( \cdot,\orgn) (\inft) = r_\nu( \infty;\orgn)$.

Each point $(b,x)\in \left(\partial S \right)\times S$
can be written as $(g\,\inft,g\, \orgn)$, by taking $g=kan\in KAN$:
first choose $k\in K$ such that $k\,\inft=b$, \ and then choose
$an\in AN$ such that $an\,\orgn = k^{-1}\, x$. To complete the proof of
part \eqref{rmph:Rq} we use Theorem~\ref{thm-isoVW} and~\eqref{Rgg}:
\bad q_\nu ( \cdot,\cdot)\bigl( |g\times |g\bigr)(\am(t)\, \orgn,\orgn)
&= q_\nu( g \am(t)\, \orgn,g\, \orgn)\,,\\
\bigl( q_\nu(\cdot,\cdot) \bigr)\bigl( |_\nu g \times|g\bigr)
(\inft,\orgn) &= \bigl((\rho_\nu^{-1} r_\nu
)(\cdot;g\,\orgn)\bigr)(g\,\inft)\,,
\\
r_\nu(\cdot,\cdot) \bigl( |_\nu g\times |g)(\inft,\orgn) &=r_\nu( g\,
\inft;g\, \orgn)\,.
\ead

Part~\eqref{rmph:omrel} considers $(\ds-1)$-forms in the
variable~$x\in S$, with values in $\W\om\nu\left(\partial S\right)$ and
$\V\om\nu\left(\partial S\right)$,
respectively. The restriction map involves only the parameter $y$.
\end{proof}

A consequence of this result is an explicit description of the map from
cusp forms to cocycles. The image of this map has even a more precise
description.

\begin{prop}\label{prop-omfintW}Let
$\nu \in \CC\setminus \frac12\ZZ_{\leq - 1}$. The linear map
\il{btw}{$\btw_\nu$ }\be \label{btw}\btw_\nu : \A^0(\Gm;\nu)
\rightarrow H^{\ds-1}_\pb \bigl( \Gm;\W{\om^0,\infty}\nu\bigr)^\bdc \ee
assigns to $f\in \A^0(\Gm;\nu)$ the cohomology class of the cocycle
determined by
\be \label{Iintf}\X^\tess_{\ds-1} \ni X \mapsto I_X(f;y) = \int_{x\in X}
\omw_\nu(f;y,x)\,.\ee
\begin{enumerate}[label=$\mathrm{(\roman*)}$, ref=$\mathrm{\roman*}$]
\item\label{omfintW:i} The integral $I_X(f;y)$ is well-defined for
$y\in S \setminus X$ and satisfies
\begin{enumerate}[label=$\mathrm{(\alph*)}$, ref=$\mathrm{\alph*}$]
\item\label{omfintW:ia} For each $\gm\in \Gm$
\be\label{int-bd0W} \int_{x\in \gm^{-1} X} \omw_\nu(f; y,x)= \int_{x\in
X}\omw_\nu(f;\gm y,x)
\,. \ee
\item\label{omfintW:ib} If $X$ contains no cusps (equivalently, if
$X\in \X^{\tess,Y}_{\ds-1}$), then $b \mapsto I_X(f;b)$ represents an
element of $\W \om \nu\left(\partial S\right)$. If
$X\cap \Cu = \{\cu\}$, then $b \mapsto I_X(f;b)$ is in
$\W{\om^0,\infty}  \nu$ and represents an element of
$\W\om\nu \left( \left(\partial S\right)\setminus \{\cu\} \right)$.
\item \label{omfintW:ic} For all $X\in \X^\tess_{\ds-1}$ the integral is
in $\E_\nu(S\setminus X)$.
\end{enumerate}
\item\label{omfintW:ii}
Let $Z\in \X_\ds^\tess$ have boundary
$\partial Z = \sum_{X\in B( Z)} \e_X\,X$ with $\e_X\in \{1,-1\}$
oriented by the outward pointing normals. Then the function
\be \label{bd-eq}
y \mapsto \sum_{X\in B(Z)} \int_{x\in \e_X X} \omw_\nu(f;y,x) =
\begin{cases} 0 &\text{ if }x\in S\setminus Z\,,\\
2\nu \,c(\nu)\,f(x) &\text{ if }x\in \mathring Z\,.
\end{cases}
\ee
on $S \setminus \partial Z $ represents the zero $\nu$-analytic boundary
germ. By $\mathring Z$ we denote the interior of $Z$.
\end{enumerate}
\end{prop}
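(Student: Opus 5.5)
The proof transfers the properties of $\omv_\nu$ in Proposition~\ref{prop-omfint} to $\omw_\nu$ through the kernel relation $\rho_\nu q_\nu(\cdot;x)=r_\nu(\cdot;x)$ of Lemma~\ref{lem-rmph}. The order is: well-definedness and convergence, then equivariance, then the boundary-germ properties, then the boundary relation \eqref{bd-eq}. Once (i) and (ii) are in place, the cocycle $X\mapsto I_X(f;\cdot)$ is a cocycle with values in boundary germs, and its image under $\rho_\nu$ is $\ps^f$. Hence its class is $\rho_\nu^{-1}\btv_\nu f=\btw_\nu f$, and the description \eqref{btw}--\eqref{Iintf} follows.

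\emph{Well-definedness and convergence.} For $y\notin X$ the kernel $q_\nu(y;\cdot)$ is analytic near $X$ and is a $\Dt$-eigenfunction by Proposition~\ref{prop-Q}\eqref{Q:Delta}. So on the compact part of $X$ there is nothing to check. If $X$ meets a cusp $\cu$, we move $\cu$ to $\inft$ by $g_\cu$, using \eqref{omW-trf}, and argue as in the proof of Proposition~\ref{prop-omfint}. The cusp form $f$ and all its derivatives have quick decay by Corollary~\ref{cor-grwt-hc}. The kernel $q_\nu(y;\cdot)$ and its derivatives have at most polynomial growth towards $\inft$, by Proposition~\ref{prop-Q}\eqref{Q:inf} together with Lemma~\ref{lem-ddv}, and this holds uniformly for $y$ in compact subsets of $S\setminus X$. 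Therefore the integral converges absolutely. Differentiation under the integral sign then gives \eqref{omfintW:ic}, since $q_\nu(\cdot;x)\in\E_\nu(S\setminus\{x\})$. Relation \eqref{int-bd0W} follows from \eqref{omW-trf} and the $\Gm$-invariance of $f$.

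\emph{Boundary-germ properties \eqref{omfintW:ib}.} The main obstacle is to show that $y\mapsto I_X(f;y)$ is a $\nu$-analytic germ, that is, that $A_{I_X}$ extends analytically across $\partial S\setminus(X\cap\Cu)$. I would write $q_\nu(y;x)=t^{-(\rho+\nu)}A(y,x)$ for $y=k\am(t)\,\orgn$, where $A$ is jointly analytic in $(y,x)$ for $y$ near a boundary point $b_0\notin X\cap\Cu$ and $x\in X$. Joint analyticity comes from the $G$-equivariance of $q_\nu$, from Proposition~\ref{prop-Q}\eqref{Q:inf}, and from the factor of automorphy $\tilde J$ of Proposition~\ref{prop-aGA}. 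Integrating against $f$ then produces an analytic function of $y$ on a neighbourhood $U$ of $b_0$ in $\hat S$. This is uniform in $x$ on compacta, and also near the cusp because of quick decay of $f$, provided $b_0\neq\cu$. At $b_0=\cu$ itself only smoothness is available, exactly as in Proposition~\ref{prop-omfint}\eqref{omfint:nii}. Applying $\rho_\nu$ and Lemma~\ref{lem-rmph}\eqref{rmph:omrel} identifies the restriction with $\ps^f(X)$, which lies in $\V{\om^0,\infty}\nu$. Hence the germ lies in $\W{\om^0,\infty}\nu=\rho_\nu^{-1}\V{\om^0,\infty}\nu$, and the boundary condition $\bdc$ holds.

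\emph{Part \eqref{omfintW:ii}.} For cells $Z$ inside $S$ this is Proposition~\ref{prop-psC} with $D=Z$ and $h=f$. For a cuspidal cell we truncate $Z$ at the horosphere $H_\cu(T)$. We apply Proposition~\ref{prop-psC} to the truncated cell and let $T\to\infty$; the contribution of $H_\cu(T)\cap Z$ tends to $0$ by the same decay estimates. The resulting function of $y$ on $S\setminus\partial Z$ vanishes for $y$ outside $Z$. Every neighbourhood of $\partial S$ in $\hat S$ meets $S$ eventually outside the compact part of $Z$. Near the cusp $\cu$ of $Z$, the boundary points other than $\cu$ have neighbourhoods avoiding $Z$, while the germ is only considered up to the point $\cu$. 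Therefore the function represents the zero $\nu$-analytic boundary germ, and summing over the boundary gives the cocycle property.
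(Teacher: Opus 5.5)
Your proposal is correct and follows the paper's own argument. Convergence comes from the quick decay of $f$, as in Proposition~\ref{prop-omfint}; (i)(a) comes from \eqref{omW-trf}; the germ property comes from the $\nu$-analyticity of the kernel $q_\nu$ (Proposition~\ref{prop-Q}); and part (ii) comes from Proposition~\ref{prop-psC} applied to a cell truncated at the cusp, followed by a limit. You give somewhat more detail than the paper, notably the identification $\rho_\nu I_X(f;\cdot)=\ps^f(X)$ via Lemma~\ref{lem-rmph}, which places the germ in $\W{\om^0,\infty}\nu$ and identifies the class with $\btw_\nu f$.
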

\begin{proof}
Cells $X\in \X^{\tess,Y}_{\ds-1}$ are compact, and the integration over
$X$ poses no convergence problems. By Proposition~\ref{prop-Q}
\eqref{Q:nu-anal} the integrand represents an analytic boundary germ.
It is defined on $S \setminus X$. If the cell $X\in \X^\tess_{\ds-1}$
contains a cusp $\cu$, then the convergence has to be checked with a
limit procedure, like in the discussion leading up to
Proposition~\ref{prop-omfint}. Then the integral represents a
$\nu$-analytic boundary germ on
$\left(\partial S\right) \setminus \{\cu\}$.
This gives part (i)(b). Part (i)(c) is clear, and part (i)(a) follows
from~\eqref{omW-trf}.\medskip

Part \eqref{omfintW:ii} follows directly from Proposition~\ref{prop-psC}
if $Y \in \X^{\tess,Y}_\ds$. If $Y$ contains a cusp $\cu$ we first
apply Proposition~\ref{prop-psC} to a truncated version of $Y$, and
then use a limit procedure based on the quick decay at $\cu$ of $f$ and
its derivatives.
\end{proof}

For $(\ds-1)$-cells $X$ containing a cusp $\cu$ the integral in
\eqref{Iintf} represents a $\nu$-analytic boundary germ on
$(\partial S)\setminus\{\cu\}$. This implies that for
$k\,\infty\neq \cu$ the integral $h=I_X(f;\cdot)$ satisfies
$h\bigl(k\am(t)\,\orgn) = \oh(t^{-\rho-\nu})$ as $t\uparrow\infty$. We
need also to understand how $h(z)$ behaves as $z\rightarrow \cu$
through $S\setminus X$.

We use the norm on $\RR^{p+q}$ given by
\il{nrmN}{$\|\cdot\|$}$\|\xi\|^2 = \sum_{j=1}^{p+q}\xi_j^2$, and recall
that $A_Y^+= \bigl\{ \am(t)\;:\; t\geq Y\bigr\}$.

\begin{prop}\label{prop-eh}
Let $\nu \in \CC \setminus \frac 12\ZZ_{\leq -1}$, $\re\nu>-\rho$, and
suppose that $X\in \X^\tess_{\ds-1}$ contains the cusp $\cu$. Then
there is a $(\ds-2)$-dimensional compact subset $C\subset N$ such that
$X\cap S= g_\cu C A_Y^+$. For $\e>0$ let
\il{Deps}{$D_\e$}$D_\e = \bigl\{ \nm(\xi) \;:\; 
\xi\in \RR^{p+q},\; \|\xi-x\|< \e${ for all }$x\in \RR^{p+q}$\text{
such that }$ \nm(x) \in C \bigr\}$.

For each cusp form $f\in \A^0(\Gm;\nu)$ the integral $h=I_X(f;\cdot)$ in
\eqref{Iintf} satisfies the following bounds.
\begin{enumerate}[label=$\mathrm{(\alph*)}$, ref=$\mathrm{\alph*}$]
\item \label{eh:polc} The function $h$ is bounded on
$S \setminus g_\cu D_\e  A^+_{a} $ for each $\e>0$ and for each
$a\in (0,Y)$.
\item\label{eh:vertl}
$h(g_\cu n \am(t) \, \orgn) = \oh\bigl(t^{-\rho-\nu}\bigr)$ as
$t\uparrow\infty$, uniformly for $n$ in compact sets contained in
$N \setminus C$
\end{enumerate}
\end{prop}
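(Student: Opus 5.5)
We first reduce to the cusp $\inft$. Relation \eqref{omW-trf} and the invariance $q_\nu(gx_1;gx_2)=q_\nu(x_1;x_2)$ show that $h(g_\cu z)$ equals the analogous integral over $g_\cu^{-1}X$ of $\omw_\nu(f\circ g_\cu;z,\cdot)$. Here $f\circ g_\cu$ has quick decay at $\inft$, together with all its derivatives (Corollary~\ref{cor-grwt-hc}). So we may take $\cu=\inft$ and $X\cap S=C A^+_Y\,\orgn$. The product form of $X$ comes from Remark~\ref{refine-cond} and the construction in Proposition~\ref{prop-sttess}. Writing the integral in the coordinates of the proof of Proposition~\ref{prop-omfint}, we get
\[ h(z)=\int_{\xi\in C}\int_{y\geq Y}\sum_i (\ast)\,\Bigl(q_\nu(z;x)\,\partial_i f(x)-f(x)\,\partial_i q_\nu(z;x)\Bigr)\,dy\,d\xi\,, \]
with $x=\nm(\xi)\am(y)\,\orgn$. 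The coefficients $(\ast)$ come from $\rmetric^{-1}$ and $(\det\rmetric)^{-1/2}$, and by \eqref{Rmat'}, \eqref{gRi} and $\det\rmetric^{1/2}=y^{-2\rho-1}$ they are bounded by polynomials in $y$ (for $\xi\in C$ compact). The $f$-factors decay faster than any power of $y$. Everything therefore reduces to polynomial bounds for $q_\nu(z;x)$ and $\nabla_x q_\nu(z;x)$, uniform in the relevant ranges of $z$ and $x$.

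For (a), take $z\notin D_\e A^+_a\,\orgn$. Then $\dist(z,x)$ is bounded below uniformly for $x\in X$: either the horospherical height of $z$ is below $a<Y$, or its $N$-coordinate is at Euclidean distance $\geq\e$ from $C$. In the second case the metric \eqref{Rmat} shows that the distance grows like $\log$ of a positive quantity. So the singularity of $Q_\nu$ at $\orgn$ (Proposition~\ref{prop-Q}\eqref{Q:e}) plays no role. Away from the diagonal, $Q_\nu(\am(t))=\oh(t^{-\rho-\re\nu})$ as $t\to\infty$ by \eqref{Q:inf}, and its derivatives satisfy the same bound. Hence
\[ |q_\nu(z;x)|+|\nabla_xq_\nu(z;x)|\ll e^{-(\rho+\re\nu)\dist(z,x)}\,, \]
and this is bounded because $\re\nu>-\rho$. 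Combined with the rapid decay of $f$, the $y$-integral converges uniformly in $z$, and $h$ is bounded on the stated set.

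For (b), fix $z=n\am(t)\,\orgn$ with $n$ in a compact set $\Omega\subset N\setminus C$, and let $t\to\infty$. The main step is a uniform lower bound
\[ \dist\bigl(n\am(t)\,\orgn,\ \nm(\xi)\am(y)\,\orgn\bigr)\geq \log t-c_1\log(2+y)-c_2\,, \]
valid for $\xi\in C$, $y\geq Y$ and $n\in\Omega$. Then $q_\nu$ and its gradient are $\oh\bigl(t^{-\rho-\re\nu}(2+y)^{c_3}\bigr)$, and quick decay of $f$ in $y$ gives $h=\oh(t^{-\rho-\nu})$. To get this bound, apply the isometry $\am(t)^{-1}$, using \eqref{anai}. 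The pair of points becomes $\nm(t^{-1}n^{(1)},t^{-2}n^{(2)})\,\orgn$ and $\nm(t^{-1}\xi^{(1)},t^{-2}\xi^{(2)})\am(y/t)\,\orgn$. Their relative position is then read off from $\tJ$ in \eqref{tJexpl}, equivalently from $\dtst$ in Lemma~\ref{lem-distan}. Since $n$ and $\xi$ are separated in $N$ by a positive amount, the Cygan-type gauge $\bigl(\|x^{(1)}\|^4+c'\|x^{(2)}\|^2\bigr)^{1/4}$ of $\nm(\xi)^{-1}n$ is bounded below. This makes the distance at least about $\log t$, up to the $\log y$ correction.

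The main obstacle is making this geometric estimate clean in the non-abelian case $q>0$, where the left-invariant gauge must be used. A fallback is to note that $z\mapsto h(z)$ represents an element of $\W\om\nu\bigl((\partial S)\setminus\{\inft\}\bigr)$ by Proposition~\ref{prop-omfintW}. Then $h=y^{\rho+\nu}B_h$ with $B_h$ analytic near the compact set $\Omega\,\nul$ (Proposition~\ref{prop-nuanal-hc}). This gives the bound $\oh(t^{-\rho-\nu})$ along $\wm$-conjugated vertical lines, and one transfers it to vertical lines at $\inft$ via $\wm$ and the factor $J$.
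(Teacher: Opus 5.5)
Your reduction to $\inft$ and your far-field estimate are fine. However, both (a) and (b) rest on the assumption that $z$ stays a definite distance from $X$, so that only the large-distance behaviour $e^{-(\rho+\nu)\dist}$ of the kernel matters. That assumption is false.

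By \eqref{Rmat} (cf.\ Lemma~\ref{lem-dilub}), horizontal distances at height $y$ shrink like $\|\xi-x\|/y$. So for $z=\nm(\xi)\am(t)\,\orgn$ with $\xi$ at Euclidean distance $\e$ from $C$, the nearest $x\in C$ gives $\dist\bigl(z,\nm(x)\am(t)\,\orgn\bigr)\ll \e/t\to 0$ as $t\uparrow\infty$. Hence in (a) your claimed uniform lower bound for $\dist(z,X)$ on $S\setminus g_\cu D_\e A^+_a$ fails in your second case. In (b), the points $n\am(t)\,\orgn$ approach $X$ however well $n$ is separated from $C$ in $N$. Your own rescaling by $\am(t)^{-1}$ shows this: when $y\asymp t$ the separation becomes $\oh(1/t)$.

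In that region $q_\nu(z;\cdot)$ is singular. It is of size $\dist^{2-\ds}$, with first derivatives of size $\dist^{1-\ds}$ (logarithmic if $\ds=2$). So the bound $|q_\nu|+|\partial q_\nu|\ll e^{-(\rho+\re\nu)\dist}$ is not available there. A bound of the form $\dist\geq\log t-c_1\log(2+y)-c_2$ does not rescue this. It holds with $c_1=1$, $c_2=0$ simply because $\dist\geq|\log(t/y)|$ (Lemma~\ref{lem-d3e}), but it is negative for $y\asymp t$ and cannot give $q_\nu=\oh\bigl(t^{-\rho-\re\nu}(2+y)^{c_3}\bigr)$ there.

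The missing step is a separate near-diagonal estimate, which is what the paper supplies. Suppose $\dist(z,w)\leq 1$ for some $w=\nm(x)\am(y)\,\orgn\in X$. Then $t/e\leq y\leq et$ and $\dist(z,w)\geq e^{-1/2}\|\xi-x\|\,y^{-1}\geq e^{-1/2}\e\,y^{-1}$ (Lemma~\ref{lem-estd}). Combining this with Proposition~\ref{prop-Q}\eqref{Q:e} and Lemma~\ref{lem-ek}, the kernel and its first horospherical derivatives are bounded by $\e^{-c}$ times a power of $y$, with $c=\ds-1$ (any $c>1$ if $\ds=2$). This polynomial growth is absorbed by the quick decay of $f$ and its derivatives. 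The contribution is $\ll \e^{-c}\int_{\max(Y,t/e)}^{et} y^{c'-N}\,dy$ for some fixed $c'$. This is bounded uniformly for (a), and it is $\oh(t^{1+c'-N})$, hence $\oh(t^{-\rho-\re\nu})$ for $N$ large, for (b). Off the range $[t/e,et]$ your argument goes through, and $\dist\geq|\log(t/y)|$ already suffices there; no Cygan gauge is needed.

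Your fallback for (b) does not work either. The curves $n\am(t)\,\orgn$, $t\uparrow\infty$, all tend to the cusp $\inft$ itself, the only boundary point where $h$ is not known to be a $\nu$-analytic boundary germ. Conjugating by $\wm$ just moves this to $\nul=\wm\,\inft$, again the singular point. Analyticity on $(\partial S)\setminus\{\inft\}$ says nothing about the approach to $\inft$, which is exactly what (b) asserts.
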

We note that if the cusp form $f$ is non-zero, then
Proposition~\ref{prop-cusp-sp} implies that $\nu$ satisfies the
stronger condition $\rho^2-\nu^2\geq0$. It is worthwhile to keep $\nu$
as general as we can. Then we can use Theorem~\ref{thm:main1} to conclude
that certain cohomology spaces are zero if $\nu$ does not satisfy
$\rho^2-\nu^2\geq 0$.

The proof takes the remainder of this subsection.

\rmrk{Transformation to \intitle{\inft} and notation for coordinates}
Without loss of generality we can work with the cusp~$\inft$. Then $X$
has in horospherical coordinates the description $C \times [Y,\infty)$
for some compact part $C$ of a $(\ds-2)$-dimensional subvariety of $N$.
We use $w=\nm(x)\am(y)\,\orgn$ for the integration variable in
$I_X(f,\cdot)$.
For $h$ we use $ h\bigl( \nm(\xi) \am(t) \,\orgn)$.

\rmrk{The kernel function}The function $q_\nu$ (introduced in
Proposition~\ref{prop-Q})
satisfies
\be q_\nu(g_1\,\orgn, g_2\,\orgn) = Q_{\nu}(g_1^{-1}g_2) \,,\ee
and depends only on $\dist( g_1\,\orgn,g_2\orgn)$. The function
$Q_{\nu}\bigl(k_1\am(t)k_2\bigr)$ is equal to $t^{-\rho-\nu}$ times an
analytic function of $t^{-2}$ on a neighborhood of
$\frac 1t\in (-1,1)$. If the distance is large, then
\badl{qnuas}q_\nu\bigl(z;w\bigr)&= e^{-(\rho+\nu)\dist(z,w)}\,
A_\nu\bigl( e^{-2\dist(z,w)}\bigr)\\
&=
e^{-(\rho+\nu)\dist\bigl(z,w\bigr)}\Bigl(1+\oh(e^{-2\dist(z,w)})\Bigr)\qquad
\text{as } \dist(z,w) \rightarrow\infty\,, \eadl
with an analytic function $A_\nu$ on a neighborhood of $0$ in $\CC$,
that satisfies $A_\nu(0)=1$.

As $t\downarrow 1$ the function $Q_\nu(\am(t))$ is $(t-1)^{2-d}$ times
an analytic function in $t-1$. So the kernel function $q_\nu$ can be
estimated by $\dist(z,w)^{1-c }$ with $c=\ds-1$. In the case $\ds=2$
there is also a logarithmic term in the expansion as $t\downarrow 0$.
Then we can use any positive value of $c$ in the exponent.

\rmrk{First derivative of the kernel function}In the integral
representation
\be \label{h-int} h(z)= \int_{w\in X}\eta\bigl( f(w), q_\nu(z,w\bigr)
\bigr)\ee
not only the function $q_\nu(z,w)$ occurs, but also its first
derivatives with respect to $w$. See~\eqref{etadef}.

\begin{lem}\label{lem-ek}
Let $x\in \RR^{ p+q}$ and $y>0$, and let $z\neq \nm(x)\am(y)\,\orgn$. Then
\bad
\partial_{x_j} \dist\bigl(z,\nm(x)\am(y)\bigr)
&\ll y^{-1} + \bigl( 1+\|x^{(1)}\|\bigr)y^{-2}&\quad& 1\leq j \leq
p\,,\\
\bigl| \partial_{x_j}\dist\bigl(z,\nm(x)\am(y)\bigr)\bigr| &\leq
y^{-2}&&j\geq p+1\,,\\
\bigl| \partial_{y}\dist\bigl(z,\nm(x)\am(y)\bigr)\bigr| &\leq y^{-1}\,.
\ead
\end{lem}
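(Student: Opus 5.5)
The plan is to reduce the statement to the basic fact that the gradient of the distance function has unit length, $\|\grad_w \dist(z,w)\|=1$, for $w\neq z$. Write $w=\nm(x)\am(y)\,\orgn$ and $F(w)=\dist(z,w)$. Away from $w=z$, $F$ is smooth, and since $S$ is a Hadamard manifold the Riemannian gradient of $F$ is the unit tangent vector of the geodesic from $z$ to $w$. Hence for every tangent vector $v$ at $w$ we have $|dF(v)|\leq \rmetric(v,v)^{1/2}$. Taking $v$ to be the coordinate fields $\partial_{x_j}$ and $\partial_y$, the partial derivatives are bounded by the Riemannian norms of these vector fields, i.e.\ by the square roots of the diagonal entries of $\rmetric$ in~\eqref{Rmat}.

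Next I read these norms off from~\eqref{Rmat}. For $\partial_y$ the diagonal entry is $y^{-2}$, which gives $|\partial_y F|\leq y^{-1}$. For $p+1\leq j\leq p+q$ the entry is $y^{-4}$, which gives $|\partial_{x_j}F|\leq y^{-2}$. For $1\leq j\leq p$ the entry is $y^{-2}+y^{-4}\sum_i c_{j,i}(x^{(1)})^2$. The $c_{j,i}$ are linear forms, so $\sum_i c_{j,i}(x^{(1)})^2\ll \|x^{(1)}\|^2$, and the entry is bounded by $y^{-2}+O\bigl((1+\|x^{(1)}\|)^2y^{-4}\bigr)$. Taking the square root and using $\sqrt{a+b}\leq\sqrt a+\sqrt b$ gives $\partial_{x_j}F\ll y^{-1}+(1+\|x^{(1)}\|)y^{-2}$, which is the first asserted bound. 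The implicit constant depends only on the $c_{j,i}$.

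Equivalently, one can invert~\eqref{ir}. We have $\partial_{x_j}=y^{-1}m_j-\sum_i y^{-2}c_{j,i}(x^{(1)})m_i$ with the $m_i$ orthonormal, and $|dF(m_i)|\leq1$; this gives the same estimates term by term. I would present this version, since it uses only~\eqref{ir} and the unit gradient property.

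The only point that needs justification is $|dF(v)|\leq\|v\|$. This follows from the triangle inequality: $|F(w_1)-F(w_2)|\leq\dist(w_1,w_2)$, so $F$ is $1$-Lipschitz, and a $1$-Lipschitz function has differential of operator norm at most $1$ wherever it is differentiable. Smoothness of $F$ at $w\neq z$ holds because $S$ has no conjugate points; this also follows from the analyticity of $\dist$ off the diagonal, which was stated in \S\ref{sect7}. I expect no real obstacle here. The only care needed is in tracking the linear dependence on $x^{(1)}$ in the first bound, which is why the estimate there is only up to a constant rather than an exact inequality.
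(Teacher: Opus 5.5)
Your proposal is correct and follows the paper's proof: both bound the derivative of $\dist(z,\cdot)$ along each vector of the orthonormal frame $m_j$ by $1$ and then substitute the inverse relations~\eqref{ir}. The only difference is how that unit bound is justified: the paper uses the first-variation formula at $\orgn$, where the derivative in direction $v$ equals minus the inner product of $v$ with the unit vector pointing to $z$, whereas you use the $1$-Lipschitz property of the distance.
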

\begin{proof}
Let $z\neq \orgn$. The  tangent  vectors $\bigl( \partial_{x_j}\bigr)_\orgn$ and
$\bigl(\partial _y\bigr)_\orgn$  at $\orgn$
form an orthonormal basis of
$\Ta\orgn S$. Let $v_z\in \Ta\orgn S$ be the unit vector pointing to
$z$. Then
\[\partial_y\dist(z,\nm(x)\am(y)\,\orgn)\bigr|_{x=0,y=1}= -\rmetric\bigl(v_z, (\partial_y)_\orgn\bigr) \in [-1,1]\,.\]
The same holds for $(\partial_{x_j}\bigr)_\orgn$.

For general $z\neq \nm(x)\am(y)\,\orgn$ the derivatives $m_j$ in
\eqref{mj} of the distance function have also values in $[-1,1]$. With
the inverse relations in \eqref{ir} we arrive at the estimates in the
lemma.
\end{proof}

\rmrk{General approach to the estimations} In the integral
\be\label{hintX} h(z) = \int_{(n,y)\in C\times[Y,\infty)} \eta\bigl(
f(n\am(y)\,\orgn), q_\nu(z,n \am(y)\,\orgn)\bigr)\ee
the integrand is linear in $f$ and its first derivatives in
horospherical coordinates, and also linear in $q_\nu(z, \cdot)$ and its
first derivatives. Like in the proof of Proposition~\ref{prop-omfint}
we use a bound \il{CfN}{$C_{f,N}$}$C_{f,N} \,y^{-N}$ for the influence
of $f$ for each large $N$. To keep track of uniformity of estimates we
do not use $\oh(\cdot)$ and $\ll$, but indicate the constants
explicitly.

For each $z\in S\setminus X$ there exists
\il{w0}{$w_0=w_0(z)$}$w_0=w_0(z) \in X$ such that
$\dist(z,w_0) \leq \dist(z,w)$ for all $w\in  X $. We
put\il{Bzy}{$B(z,y)$}
\be \label{Bzy} B(z,y) = C_B\cdot\begin{cases} \sup_{n\in
C}e^{-(\rho+\nu)\dist( z,n\am(y)\,\orgn)}& \text{ if } \dist(z,w_0)
\geq 1\,,\\
\sup_{n\in C} \dist(z, n\am(y)\,\orgn)^{-c}&\text{ if }\dist(z, w_0)
<1\,,
\end{cases}\ee
with $c=\ds-1$ if $\ds\geq 3$, and $c>1$ if $\ds=2$. (The choice of $c$
takes into account the first order derivatives of the kernel function.)
The constant \il{CB}{$C_B$}$C_B$ takes into account the implicit
constants in the estimates of the kernel function, and depends on
$\re\nu$, on the volume of $C$, and on constants in the application
of~\eqref{etadef}. In this way we can estimate the integral
in~\eqref{hintX} by
\be \label{hint} \bigl|h(z) \bigr|\leq C_{f,N} Y^{-N} \int_{y=Y}^\infty
B(z,y)\, dy\,.\ee
The factor $Y^{-N}$ arises from the influence of the function $f$.

\rmrk{On the distance function}The triangle inequality implies that
\[ \dist\bigl(\nm(\xi)\am(t)\,\orgn,\, \nm(x)\am(y)\,\orgn\bigr)
\leq \dist\bigl( \nm(\xi) \am(t)\,\orgn,\, \nm(\xi)\am(y)\,\orgn\bigr) +
\dist\bigl( \nm(\xi)\am(y)\,\orgn,\,\nm(x)\am(y)\,\orgn\bigr)\,.\]
The first term on the right is easy. With \eqref{distln}
\be \dist\bigl( \nm(\xi) \am(t)\,\orgn,\, \nm(\xi)\am(y)\,\orgn\bigr) =
\bigl|\log(y/t)\bigr|\,.\ee

The `horizontal distance' in the second term is the length of a path
that we do not know precisely. This path can be parametrized as
\[ p(\tau) = \nm\bigl(\Xi(\tau)\bigr)\am\bigl(\eta(\tau)\bigr)\,\orgn\]
where $\Xi(\tau)= x+\tau(\xi-x)$, and where $\eta$ is a $C^\infty$
function $[0,1]\rightarrow (0,\infty)$ that satisfies
$\eta(0) = \eta(1) = y$. The length of this path is given by the
integral in \eqref{length}, which with help of \eqref{Rmat} can be
written as
\be\label{dst-int} \int_{\tau=0}^1 \sqrt{
\frac{\|\xi^{(1)}-x^{(1)}\|^2}{\eta(\tau)^2} + \frac{
Q(\Xi(\tau)^{(1)})[\xi-x]}{\eta(\tau)^4} +
\frac{\eta'(\tau)^2}{\eta(\tau)^2 } } \,d\tau\,.\ee
We denote by $Q(x)[\cdot]$
the positive definite quadratic form on $\RR^{p+q}$ with $p\times q$
block matrix
\be \begin{pmatrix} C(x^{(1)}) C(x^{(1)})^t& - C(x^{(1)})\\-C(x^{(1)})^t
& I_q\end{pmatrix}\,. \ee
The matrix elements are at most quadratic in the coordinates of
$x^{(1)}$. If $q=0$, then the middle term under the square root
in~\eqref{dst-int} vanishes.\smallskip

The integral in \eqref{dst-int} leads directly to the following bounds.
\begin{lem}\label{lem-dilub}Let $\xi, x\in \RR^{p+q}$, $y\geq 0$,
then\il{bsl}{$b_l(\xi,x), \, b_s(\xi,x)$}
\bad \dist\bigl(\nm(\xi)\am(y)\,\orgn,\,\nm(x)\am(y)\,\orgn\bigr)
& \leq \bigl( y^{-1} + y^{-2} b_l(\xi,x) \bigr) \bigl\|
\xi-x\bigr\|\,,\\
&\geq \bigl( \eta_m ^{-1} + \eta_m^{-2} \, b_s(\xi,x) \bigr)
\bigl\|\xi-x\bigr\|\,,
\ead
where
\bad b_l(\xi,x) &= \max_{0\leq \tau\leq1} \sup_{\xi\neq 0}
\frac{\bigl|Q(\Xi(\tau)^{(1)})[\xi-x]\bigr|}{\|\xi-x\|}\,,\\
b_s(\xi,x) &= \min_{0\leq \tau\leq1} \inf_{\xi\neq 0}
\frac{\bigl|Q(\Xi(\tau)^{(1)})[\xi-x]\bigr|}{\|\xi-x\|}\,,
\ead
and where $\eta_m = \max _{0\leq \tau\leq 1} \eta(\tau)$ for the
function $\eta$ describing the geodesic segment between
$\nm(\x)\am(y)\,\orgn$ and $\nm(x)\am(y)\,\orgn$.
\end{lem}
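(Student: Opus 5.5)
The plan is to read both bounds off the length integral \eqref{dst-int}, as the sentence before the lemma suggests. For each bound we choose the path suitably and estimate the integrand pointwise in $\tau$. Throughout we write $v=\xi-x$. Since $\Xi(\tau)=x+\tau v$, the vector $\Xi'(\tau)=v$ does not depend on $\tau$. So the only $\tau$-dependence in the first two terms under the square root is through $\eta(\tau)$ and through the base point $\Xi(\tau)^{(1)}$ of the form $Q$. By construction of $b_l(\xi,x)$ and $b_s(\xi,x)$ as a maximum and minimum over $\tau\in[0,1]$, we get
\[ b_s(\xi,x)\,\|v\| \;\leq\; Q(\Xi(\tau)^{(1)})[v] \;\leq\; b_l(\xi,x)\,\|v\| \qquad(0\leq\tau\leq1)\,. \]
The middle term of \eqref{dst-int} enters only through these inequalities. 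If $q=0$ that term vanishes, and the claims reduce to the elementary first-term estimates below.

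\emph{Upper bound.} The distance is the infimum of lengths over all admissible paths. So it suffices to exhibit one path. We take the horizontal path with $\eta(\tau)\equiv y$, for which $\eta'\equiv 0$. Then the integrand in \eqref{dst-int} is
\[ \sqrt{ y^{-2}\|v^{(1)}\|^2 + y^{-4} Q(\Xi(\tau)^{(1)})[v] }\,. \]
We bound this using $\sqrt{A+B}\leq \sqrt A+\sqrt B$ and $\|v^{(1)}\|\leq\|v\|$. For the second summand we use the displayed upper inequality in the form in which it enters the $y^{-4}$-term. This yields a bound, uniform in $\tau$, of the shape $\bigl(y^{-1}+y^{-2}b_l(\xi,x)\bigr)\|v\|$. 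Integrating over $\tau\in[0,1]$ gives the first inequality.

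\emph{Lower bound.} Here we take the geodesic segment itself, described as $p(\tau)=\nm(\Xi(\tau))\am(\eta(\tau))\,\orgn$ as in the text. Its length equals the distance. We drop the nonnegative term $\eta'(\tau)^2/\eta(\tau)^2$ and use $\eta(\tau)\leq\eta_m$, which gives $\eta(\tau)^{-2}\geq\eta_m^{-2}$ and $\eta(\tau)^{-4}\geq\eta_m^{-4}$. Then we insert the lower inequality for $Q$ from the display. This produces a $\tau$-independent lower bound for the integrand in terms of $\eta_m^{-1}$, $\eta_m^{-2}b_s(\xi,x)$ and $\|v\|$. Integrating over $[0,1]$ gives the second inequality.

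The main obstacle is this last step: passing from the square root of a sum to the sum $\eta_m^{-1}+\eta_m^{-2}b_s$ exactly as written. The reverse triangle inequality for square roots only gives $\sqrt{A+B}\geq\max(\sqrt A,\sqrt B)$, which is weaker than $\sqrt A+\sqrt B$. There is a second difficulty in the same step: the first term carries $\|v^{(1)}\|$ rather than $\|v\|$, and the middle term involves $b_s$ scaled linearly in $\|v\|$. So the lower inequality, taken literally, does not follow directly from these estimates. I would first try to combine the two terms into $\eta(\tau)^{-2}$ times a single positive definite form in $v$, with one smallest eigenvalue bounding both terms at once. I expect the inequality as stated to hold only in the qualitative sense in which it is used later, namely up to the implicit constants absorbed into $C_B$ in \eqref{Bzy}. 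The final write-up will therefore have to say precisely at which constant level the lower bound is established.
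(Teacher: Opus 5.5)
You follow the same route as the paper: its only justification is the remark that both bounds can be read off from \eqref{dst-int}. Your proposal, however, leaves the lower bound unproved. Your suspicion about it is justified in a stronger form: for $q\geq1$ the lower bound is false as stated, so no write-up can close this gap.

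Here is a counterexample. One has $Q(z)[v]=\|C(z^{(1)})^tv^{(1)}-v^{(2)}\|^2$, so $Q$ is only positive semidefinite and $Q(\cdot)[(0,w)]=\|w\|^2$. Take $x=0$, $\xi=(0,w)$ with $w\in\RR^q\setminus\{0\}$, and $y>1$. The horizontal path $\eta\equiv y$ has length $y^{-2}\|w\|$, so $\ell:=\dist\bigl(\nm(0,w)\am(y)\,\orgn,\am(y)\,\orgn\bigr)\leq y^{-2}\|w\|$. The last term in \eqref{dst-int} shows that rising from $y$ to $\eta_m$ and back costs at least $2\log(\eta_m/y)$, hence $\eta_m\leq y\,e^{\|w\|/(2y^2)}$. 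Since $b_s\geq0$, the asserted lower bound is at least $y^{-1}e^{-\|w\|/(2y^2)}\|w\|$. This exceeds $y^{-2}\|w\|\geq\ell$ whenever $\|w\|<2y^2\log y$.

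So the defect is not a matter of constants absorbed into $C_B$. The square root of a sum costs at most a factor $\sqrt2$. The real obstruction is the one you noticed second: $v^{(2)}$ is seen only by the $\eta^{-4}$-part of the length element $y^{-2}\|dx^{(1)}\|^2+y^{-4}\|C(x^{(1)})^tdx^{(1)}-dx^{(2)}\|^2+y^{-2}dy^2$.

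What one can prove is the following. Take the geodesic, or in fact any path, with horizontal part $\Xi$, height $\eta\leq\eta_m$ and length $\ell$. Put $K=\max_\tau\|C(\Xi(\tau)^{(1)})\|$ and $v=\xi-x$. Then
\[
\begin{aligned}
\|v^{(1)}\|&\leq\int_0^1\|(\Xi^{(1)})'\|\,d\tau\leq\eta_m\,\ell\,,\\
\|v^{(2)}\|&\leq\int_0^1\bigl\|(\Xi^{(2)})'-C(\Xi^{(1)})^t(\Xi^{(1)})'\bigr\|\,d\tau+K\int_0^1\|(\Xi^{(1)})'\|\,d\tau\leq(\eta_m^2+K\eta_m)\,\ell\,.
\end{aligned}
\]
Hence $\ell\geq\|\xi-x\|/\bigl((K+2)\eta_m^2\bigr)$ for $\eta_m\geq1$, which is automatic for $y\geq1$. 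The example shows that the central directions genuinely carry the weight $\eta_m^{-2}$.

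This argument does not use the text's unproved assumption that the geodesic has the affine horizontal part $x+\tau(\xi-x)$. It is also all that is needed later. In part (b) of Lemma~\ref{lem-estd} one has $\eta_m\leq e^{1/2}y$ and $K\ll1+\|x^{(1)}\|+y$, so it yields $\|\xi-x\|$ times a fixed negative power of $y$ instead of $e^{-1/2}\|\xi-x\|\,y^{-1}$. In the proof of Proposition~\ref{prop-eh} this only raises the power of $y$ in the bound for $B(z,y)$, which the factor $y^{-N}$ with arbitrary $N$ absorbs.

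Your upper-bound step needs repair too. From your displayed inequality $Q(\Xi(\tau)^{(1)})[v]\leq b_l\|v\|$ you only get $y^{-2}(b_l\|v\|)^{1/2}$, not $y^{-2}b_l\|v\|$. Moreover, for $q\geq1$ and $b_l$ literally as defined, the upper bound is false. Choose $x$ with $C(x^{(1)})\neq0$ and $v=s\,(u,0)$ with $C(x^{(1)})^tu\neq0$. As $s\downarrow0$ one has $b_l=\oh(s)$, whereas $\ell=s\bigl(y^{-2}\|u\|^2+y^{-4}\|C(x^{(1)})^tu\|^2\bigr)^{1/2}+\oh(s^2)$. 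This is larger than $(y^{-1}+y^{-2}b_l)\|v\|=s\,y^{-1}\|u\|+\oh(s^2)$.

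The fix is to read $b_l$ and $b_s$ as bounds for $Q[v]/\|v\|^2$ and to keep the root whole. The horizontal path then gives $\ell\leq(y^{-2}+y^{-4}b_l)^{1/2}\|v\|\leq(y^{-1}+y^{-2}b_l)\|v\|$ as soon as $2y+b_l\geq1$. This is automatic for $y\geq\frac12$. It holds for all $y$ if $b_l$ is the largest eigenvalue of $Q$ along the segment, which is $\geq1$ because of the block $I_q$. For $q=0$ your argument is complete as it stands.
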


\begin{lem}\label{lem-d3e}
With the notations introduced above:
\begin{align*}
\dist\bigl( \nm(\xi) &\am(t)\,\orgn,\nm(x)\am(y) \,\orgn\bigr)\\
&\geq \max\Bigl( \bigl|\log(y/t) \bigr|, 2\log(\eta_m/y) , \,
\|\xi-x\|\, \bigl( \eta_m^{-1}+ b_s(\xi,x) \eta_m^{-2} \bigr)\Bigr)\,.
\end{align*}
\end{lem}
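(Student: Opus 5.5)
The claimed lower bound is a maximum of three quantities, so I will prove each one separately as a lower bound for $D:=\dist\bigl(\nm(\xi)\am(t)\,\orgn,\nm(x)\am(y)\,\orgn\bigr)$. Throughout I work with the geodesic segment joining the two points, written as $\tau\mapsto\nm(\Xi(\tau))\am(\eta(\tau))\,\orgn$ as in the discussion before Lemma~\ref{lem-dilub}. Here $\eta_m$ is the maximum of the height function $\eta$ along this segment, and $D$ is the length integral \eqref{length} evaluated with the metric \eqref{Rmat}.

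\emph{First bound.} By \eqref{Rmat} the metric is block diagonal, with the $y$-block equal to $y^{-2}$. Dropping the $N$-part of the integrand in \eqref{length} gives $D\ge\int|\eta'(\tau)|/\eta(\tau)\,d\tau\ge|\log(y/t)|$, the same argument already used in the proof of Lemma~\ref{lem-ddv}.

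\emph{Second bound.} The height starts at $t$, climbs to $\eta_m$ and ends at $y$. Again using only the term $\eta'^2/\eta^2$, the length is at least the total variation of $\log\eta$. This total variation is at least $\log(\eta_m/t)+\log(\eta_m/y)$, and for $t\le y$ that quantity is $\ge2\log(\eta_m/y)$. If $t>y$, the total variation is at least $|\log(\eta_m/t)|+\log(\eta_m/y)$. When $\eta_m\ge t$ this still gives $2\log(\eta_m/y)-\log(t/y)$, so some care is needed here. If the inequality as stated fails in that subcase, I will fall back on the bound $\log(\eta_m/y)+\log(\eta_m/t)$, which is what the later estimates use with $t$ and $y$ comparable. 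I expect this bookkeeping, and deciding which version is actually needed, to be the main nuisance of the proof.

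\emph{Third bound.} Drop the $\eta'$ term in \eqref{dst-int}. This leaves the integrand $\sqrt{\|\xi^{(1)}-x^{(1)}\|^2\eta^{-2}+Q[\xi-x]\eta^{-4}}$, where $\Xi$ is affine in $\tau$ in the parametrization of Lemma~\ref{lem-dilub}. Since $\eta\le\eta_m$, this integrand is at least $\eta_m^{-1}$ times the Euclidean speed plus a contribution from $Q$ weighted by $\eta_m^{-2}$, bounded below via $b_s$. Combining the two terms with $\sqrt{a^2+b^2}\ge(a+b)/\sqrt2$ yields the claimed bound, up to constants that are absorbed into the definition of $b_s$. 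This is exactly the lower bound of Lemma~\ref{lem-dilub} applied to the actual geodesic.

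Taking the maximum of the three lower bounds then gives the lemma.
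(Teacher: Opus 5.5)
Your first bound is correct: it follows from the block-diagonal form of \eqref{Rmat}, which is a better justification than an appeal to the triangle inequality. The gap is in the second bound, and it cannot be closed. You have silently changed the meaning of $\eta_m$. In Lemma~\ref{lem-dilub}, and hence here, $\eta_m$ is the maximal height of the geodesic joining $\nm(\xi)\am(y)\,\orgn$ and $\nm(x)\am(y)\,\orgn$, two points at the \emph{same} height $y$. It is not the maximal height of the geodesic joining your two points. Under your reading the inequality $D\ge 2\log(\eta_m/y)$ is false for $t>y$: if $\xi=x$ the geodesic is vertical, so $\eta_m=t$ and $D=\log(t/y)<2\log(t/y)$. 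So the subcase you flagged is not a bookkeeping nuisance, and falling back on $\log(\eta_m/y)+\log(\eta_m/t)$ proves a different statement. Your third bound has the same defect. Lemma~\ref{lem-dilub} concerns the horizontal geodesic at height $y$ (that is where $\eta(0)=\eta(1)=y$ and $\Xi$ is affine), so it does not apply to a geodesic running from height $t$ to height $y$. Moreover, a factor $1/\sqrt2$ cannot be absorbed into $b_s$, which is an explicitly defined quantity, nor into the coefficient of $\eta_m^{-1}$.

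The paper uses the correct reading of $\eta_m$. It argues that $D\ge\bigl|\log(y/t)\bigr|$ and $D\ge D':=\dist\bigl(\nm(\xi)\am(y)\,\orgn,\nm(x)\am(y)\,\orgn\bigr)$. It then bounds $D'$ from below by $2\log(\eta_m/y)$ (the height rises from $y$ to $\eta_m$ and falls back to $y$) and by Lemma~\ref{lem-dilub}. However, $D\ge D'$ is not a consequence of the triangle inequality, and it fails, taking the lemma as stated with it. In the upper half-plane ($p=1$, $q=0$, where \eqref{Rmat} is $y^{-2}(dx^2+dy^2)$) take $y=1$ and $t=L=|\xi-x|$. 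Then $\cosh D=1+\frac{L^2+(L-1)^2}{2L}=L+\frac1{2L}$ and $\eta_m=\sqrt{1+L^2/4}$. For $L=100$ this gives $D\approx5.30$, while $2\log(\eta_m/y)=\log 2501\approx7.82$. What the triangle inequality does give is $D\ge D'-\bigl|\log(y/t)\bigr|$, hence $D\ge\max\bigl(|\log(y/t)|,\,D'-|\log(y/t)|\bigr)\ge\tfrac12 D'$. So every valid lower bound for $D'$ transfers to $D$ with a factor $\tfrac12$. Your total-variation argument, applied to the horizontal geodesic at height $y$, gives $D'\ge2\log(\eta_m/y)$ and hence $D\ge\log(\eta_m/y)$. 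The third term transfers the same way, halved, given a valid lower bound for $D'$ such as the one Lemma~\ref{lem-dilub} is meant to supply. This halved statement is what can actually be proved. It suffices for Lemma~\ref{lem-estd} after adjusting constants, e.g.\ $\eta_m\le e\,y$ instead of $e^{1/2}y$.
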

\begin{proof}The triangle inequality implies that
$\dist\bigl( \nm(\xi) \am(t)\,\orgn,\nm(x)\am(y) \,\orgn\bigr) $ is
larger than $
\dist\bigl(\nm(\xi) \am(t)\,\orgn, \nm(\xi)\am(y) \,\orgn\bigr)
= \bigl|\log(y/t)\bigr|$ and also larger than
$ \dist\bigr(\nm(\xi)\am(y) , \nm(x)\am(y)\bigr)$. In the shortest path
realizing the latter distance, the function $\eta$ goes up from $y$ to
$\eta_m$ and then down again to~$y$. Working only with the last term
under the square root in~\eqref{dst-int} we get the second lower bound.
Lemma~\ref{lem-dilub} gives the third lower bound.
\end{proof}

\begin{lem}\label{lem-estd}We use the notations discussed above. Let
$w=\nm(x)\am(y)\,\orgn$ with $y\geq Y$, and let
$z=\nm(\xi)\am(t)\,\orgn$.
\begin{enumerate}[label=$\mathrm{(\alph*)}$, ref=$\mathrm{\alph*}$]
\item \label{estd:a} If
$\dist( \nm(\xi)\am(t)\,\orgn, \nm(x) \am(y)\,\orgn) \leq 1$, then
\be \label{esth}
e^{-1}t \leq y \leq e t\quad\text{ and} \quad \eta_m \geq \|\xi-x\|
\,(1+e^{-1/2} b_s(\xi,x)/y\bigr) \,. \ee
\item \label{estd:b} If the conditions in~\eqref{esth} hold, then
\be \dist( \nm(\xi)\am(t)\,\orgn, \nm(x) \am(y)\,\orgn) \geq e^{-1/2}
\|\xi-x\| y^{-1}\,. \ee
\end{enumerate}
\end{lem}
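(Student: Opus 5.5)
The whole argument rests on the three lower bounds in Lemma~\ref{lem-d3e}, together with the observation that the quantity $\eta_m$ of the geodesic segment can be controlled from above by the distance itself. Throughout I write $d=\dist\bigl(\nm(\xi)\am(t)\,\orgn,\nm(x)\am(y)\,\orgn\bigr)$.

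For part~\eqref{estd:a}, assume $d\le 1$.
\begin{enumerate}[label=$\mathrm{(\arabic*)}$, ref=$\mathrm{\arabic*}$]
\item The first lower bound in Lemma~\ref{lem-d3e} gives $|\log(y/t)|\le d\le1$, which is exactly $e^{-1}t\le y\le e\,t$.
\item The second lower bound gives $2\log(\eta_m/y)\le 1$, that is, $\eta_m\le e^{1/2}y$. This upper bound on $\eta_m$ is the key auxiliary fact.
\item The third lower bound gives $1\ge\|\xi-x\|\bigl(\eta_m^{-1}+b_s(\xi,x)\eta_m^{-2}\bigr)$. Multiplying by $\eta_m$ yields $\eta_m\ge\|\xi-x\|\bigl(1+b_s(\xi,x)/\eta_m\bigr)$.
\item Since $b_s\ge0$, inserting $\eta_m\le e^{1/2}y$ from step~(2) into the denominator gives $\eta_m\ge\|\xi-x\|\bigl(1+e^{-1/2}b_s(\xi,x)/y\bigr)$, which is the second inequality in~\eqref{esth}.
\end{enumerate}

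For part~\eqref{estd:b}, I would again start from the third bound of Lemma~\ref{lem-d3e}, keeping only its first term: $d\ge\|\xi-x\|\,\eta_m^{-1}$. It then suffices to have $\eta_m\le e^{1/2}y$, since this gives $d\ge e^{-1/2}\|\xi-x\|\,y^{-1}$ directly.

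This reduction is the delicate point, because the hypothesis~\eqref{esth} does not state the upper bound on $\eta_m$ explicitly. I would handle it by splitting into two cases.
\begin{itemize}
\item If $d\le1$, the bound $\eta_m\le e^{1/2}y$ was obtained in step~(2) of part~\eqref{estd:a}, and we are done.
\item If $d>1$, I would use the general form of the second lower bound, $\eta_m\le e^{d/2}y$. This gives $d\,e^{d/2}\ge\|\xi-x\|\,y^{-1}$. I would then check that, in the regime $e^{-1}t\le y\le e\,t$ relevant to the later estimates, this still implies the claimed linear lower bound. Alternatively, if the intended reading is that~\eqref{estd:b} is only applied inside the regime $d\le1$, I would simply restrict the statement to that regime.
\end{itemize}

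I expect this second case of~\eqref{estd:b} to be the only real obstacle. All the other steps are direct rearrangements of Lemma~\ref{lem-d3e}.
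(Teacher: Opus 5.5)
Your part (a) is exactly the paper's argument. Your treatment of (b) when $d\le 1$ is also the paper's proof. There, the step from $\|\xi-x\|\,\eta_m^{-1}$ to $\|\xi-x\|\,(e^{1/2}y)^{-1}$ silently uses $\eta_m\le e^{1/2}y$; that bound is derived from $d\le1$ in the proof of (a), but it is not among the conditions in \eqref{esth}. You have spotted this hidden hypothesis correctly, and the right resolution is your fallback. Read (b) under $d\le 1$, or equivalently with $\eta_m\le e^{1/2}y$ added to \eqref{esth}. This is all Proposition~\ref{prop-eh} needs, since a lower bound is only required there for points at distance less than $1$.

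The branch $d>1$, however, cannot be completed as you propose. Lemma~\ref{lem-d3e} gives no upper bound on $d$. Writing $u=\|\xi-x\|/y$, the inequality $d\,e^{d/2}\ge u$ does not imply $d\ge e^{-1/2}u$ for large $u$; take $u=100$, $d=6$. You could add $\|\xi-x\|\le\eta_m$, which follows from the second inequality in \eqref{esth} because $b_s\ge0$. This only yields $u\le e^{d/2}$, and $e^{d/2}\le e^{1/2}d$ fails once $d$ exceeds about $3.5$. The condition $e^{-1}t\le y\le et$ carries no information about the horizontal displacement. So drop that branch.

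A further caution applies to the paper's proof just as much as to yours. Both rest on the third lower bound in Lemma~\ref{lem-d3e} (taken from Lemma~\ref{lem-dilub}). For $q\ge1$ that bound fails when $\xi-x$ has a component in the center direction. Take $t=y$, $x^{(1)}=\xi^{(1)}=0$ and $\|\xi^{(2)}-x^{(2)}\|=L$.
\begin{itemize}
\item Since $C(0)=0$, \eqref{Rmat} shows that the straight horizontal path between $\nm(\xi)\am(y)\,\orgn$ and $\nm(x)\am(y)\,\orgn$ has length $L/y^2$, so $d\le L/y^2$.
\item Since $\eta_m\ge y$, for small $L$ both $d\le 1$ and the two conditions in \eqref{esth} hold.
\item Part (b) would then give $L/y^2\ge e^{-1/2}L/y$, which is false because $y\ge Y\ge 4>e^{1/2}$.
\end{itemize}
So (b) as stated holds only for $q=0$. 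For $q\ge1$, one does get a weaker bound in the regime $d\le1$. Translate $w$ to $\orgn$ by $(\nm(x)\am(y))^{-1}$ and compare with Euclidean coordinates on the unit ball around $\orgn$. This gives $d\gg\|\xi-x\|\,y^{-2}$, with a constant depending on the compact set $C$. That bound still suffices for Proposition~\ref{prop-eh}, since the extra power of $y$ is absorbed by the factor $y^{-N}$.
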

\begin{proof}The assumption that the distance is at most $1$ implies by
Lemma~\ref{lem-d3e}
\[ e^{-1}t \leq y \leq et\,,\quad \eta_m \leq e^{1/2}y\,, \quad \eta_m
\geq \|\xi-x\|\bigl(1+ b_s(\xi,x)/\eta_m\bigr) \,. \] This implies
directly the first two statements in \eqref{esth}. Furthermore, with
$\eta_m \leq e^{1/2} y$ we get
\[ \eta_m \geq \|\xi-x\| \bigl( 1+ b_s(\xi,x) /\eta_m \bigr) \geq
\|\xi-x\|\,\bigl( 1+ e^{-1/2} b_s(\xi,x) y^{-1}\bigr) \,.\]
This implies \eqref{esth}.

With Lemma~\ref{lem-d3e}
\[\dist( \nm(\xi)\am(t)\,\orgn, \nm(x) \am(y)\,\orgn) \geq \|\xi-x\|
\eta_m\,. \]
If the conditions in~\eqref{esth} hold, then we proceed with
\[ \geq\|\xi-x\| (e^{1/2} y)^{-1}\,.\qedhere \]
\end{proof}
\medskip

\begin{proof}[Proof of part \eqref{eh:polc} of
Proposition~\ref{prop-eh}] Let $z = \nm(\xi)\am(t)\,\orgn \in D_\e$ for
a given $\e>0$, with $D_\e$ as defined in the proposition.

Suppose first that the distance $\dist(z,w) \geq 1$ for all $w\in X$.
Then we have in \eqref{Bzy}
\[ B(z,y) \leq C_B\, 1 = C_B\,.\]
With \eqref{hint} we have for each $N\geq 2$
\be \bigl|h(z) \bigr| \leq C_{f,N} \int_{y=Y}^\infty y^{-N} \, C_B\, dy
= \frac{Y^{1-N}}{N-1} \, C_{f,N}\, C_B\,.\ee

Suppose next that the minimal distance $\dist(z,w_0(z)\bigr)$ of $z$ to
$X$ is smaller than~$1$. By Lemma~\ref{lem-estd} this is relevant for
$t/e\leq y \leq et$, and
\[ \dist(z, \nm(x)\am(y)\,\orgn) \geq e^{-1/2} \,\bigl\|\xi-x\bigr\| \,
y^{-1} \geq e^{-1/2} \,\e y^{-1}\,.\]
In \eqref{Bzy} we can take
\be B(z,y) \geq C_B \, e^{c/2}\, \e^{-c}\, y^c\,.\ee
We get the following contribution to a bound for $h(z)$:
\be \label{intpt} C_{f,N} \int_{y=\max(Y,t/e)}^{et} y^{-N}\, C_B \,
e^{c/2} \, \e^{-c}\, y^c \,dy \leq C_{f,N} C_B e^{c/2}
\e^{-c}\,\frac{Y^{1+c-N}}{N-c+1} \ee
for $N>2+c$. (Since $1+c-N<0$ we could replace $\max(Y,t/e)$ by $Y$.)
On the remainder of $[Y,\infty)$ we integrate $C_{f,N} y^{-N} C_B$. This
gives
\be \bigl| h(z) \bigr| \leq C_{f,N}\, C_B \Bigl( \frac{Y^{1-N}}{N-1} +
e^{c/2} \e^{-c} \frac{Y^{1+c-N}}{N-c-1}\Bigr)\,,\ee
for any choice of $N>c+1$.

Hence $h(z)$ is bounded on $S\setminus D_\e A^+_{Y}$.\end{proof}

\begin{proof}[Proof of part \eqref{eh:vertl} of
Proposition~\ref{prop-eh}] We consider
$z=\nm(\xi) \am(t) \,\orgn \not\in X$ with $t\geq eY$.

If the minimal distance $\dist(z,w_0) \leq 1$, then this happens for
$t/e\leq y \leq et$. We put $\e=\min_{x\in C} \|\xi-x\|$. Like in
\eqref{intpt} this gives the contribution
\be \label{e-intm}C_{f,N} C_B e^{c/2}
\e^{-c}\,\frac{(t/e)^{1+c-N}}{N-c+1}\ee
to a bound of $h(z)$, where we can take any $N>2+c+ \rho+\re\nu$.

For $y\in [Y,\infty] \setminus [t/e,et]$ we use
\be B(z,y) = C_B\, e^{-(\rho+\re\nu)\bigl|\log y/t \bigr|}\,.\ee
In this way we arrive at the following contributions, with
$N > 1+\rho+\re\nu$.
\begin{align*} \int_{y=et}^\infty &C_{f,N} y^{-N} \, C_B\,
(y/t)^{-\rho-\re\nu}\, dy \leq C_{f,N} C_B\, t^{\rho+\re\nu}
\frac{(et)^{1-N-\rho-\re\nu}}{N+\rho+\re\nu-1}\\
&\ll  e^{1-N-\rho-\re\nu} C_{f,N}\, C_B \, t^{1-N}\,,\displaybreak[0]\\
\int_{y=Y}^{t/e} &C_{f,N}\, C_B\, y^{-N+\rho+\re\nu}\,
t^{-\rho-\re\nu}\, dy\\
&\leq e^{N-1-\rho-\re\nu}\, C_{f,N}\, C_B\,
\frac{Y^{1-N-\rho-\re\nu}}{N-\rho-\re\nu-1}\, t^{-\rho-\nu}\,.
\end{align*}
Combining these estimates with \eqref{e-intm}, we obtain the bound
in~\eqref{eh:vertl}.\end{proof}

\subsection{Spaces of \intitle{\nu}-semi-analytic boundary
germs}\label{sect-spacesgerms}
\begin{defn}By a \il{Siedm}{Siegel domain}Siegel domain for $\Gm$ at the
cusp $\cu$ we mean a set of the form\il{Sie-cu}{$\Sie_\cu(C,T)$}
$ \Sie_\cu(C,T) = g_\cu C  A^+_T \subset S$ with some compact subset
$C\subset N$ and $ A^+_T= \bigl\{ \am(t)\;:\; t\geq T \bigr\}$ for some
$T>0$.
\end{defn}

For $\gm\in \Gm$ the translate $\gm \, \Sie_\cu(C,T)$ is a Siegel domain
at the cusp $\gm \,\cu$. For each $X\in \X^\tess_{\ds-1}$ containing
the cusp $\cu$, the set $X\cap S$ is contained in a suitably chosen
Siegel domain at~$\cu$.

\begin{defn}\label{exc-nbh}Let $E$ be a finite set of cusps of $\Gm$. We
call a neighborhood $U$ of $\left( \partial S\right) \setminus E$ in
$\hat S$ an \il{excnbh}{excised neighborhood}\emph{excised
neighborhood} of $\left( \partial S\right) \setminus E$ if it is of the
form
\be\label{Uen}
U' \setminus \bigcup_{\cu\in E} \Bigl( \Sie_\cu (C_\cu,T_\cu) \cup j
\Sie_\cu(C_\cu,T_\cu)\Bigr)
\ee
with a usual neighborhood $U'$ of $ \partial S$ in $\hat S$, and Siegel
domains (as defined above) that depend on the cusp.
(We recall that $j$ is the involution in $\hat S$ that fixes
$\partial S$ and interchanges $S$ and~$S^-$.)
\end{defn}

An excised neighborhood is a standard neighborhood of $\partial S$ from
which we have cut out relatively small sets in $S$ and $S^-$. See
Figure~\ref{fig-sau1} for an illustration in case $\ds=2$. (Adapted
from \cite[Figure 9.1]{BLZ15}.
\begin{figure}[ht]
\[\setlength\unitlength{.8cm}
\begin{picture}(10,8)(0,-4)
\put(0,0){\line(1,0){10}}
\put(3.1,.1){$\cu_1$}
\put(5.1,.1){$\cu_2$}
\put(4,.1){$\Om$}
\put(.5,-1){$\Om$}
\put(9.3,.1){$\Om$}
\put(3,0){\circle*{.1}}
\put(5,0){\circle*{.1}}
\put(3,1){\vector(0,-1){.5}}
\put(2.8,1.2){$S_{\!\cu_1}$}
\put(3,-1){\vector(0,1){.5}}
\put(2.8,-1.6){$jS_{\!\cu_1}$}
\put(4,3){$S_{\!\inft}$}
\put(4.2,3.5){\vector(0,1){.6}}
\put(4,-3.3){$jS_{\!\inft}$}
\put(4.2,-3.5){\vector(0,-1){.6}}
\thicklines
\put(2.5,0){\oval(1,1)[r]}
\put(3.5,0){\oval(1,1)[l]}
\put(4.5,0){\oval(1,1)[r]}
\put(5.5,0){\oval(1,1)[l]}
\put(2,-1){\oval(1,1)[tl]}
\put(3.5,.5){\line(1,0){1}}
\put(3.5,-.5){\line(1,0){1}}
\put(2,-.5){\line(1,0){.5}}
\put(1.5,-1){\line(0,-1){3}}
\put(1,1){\oval(1,1)[lb]}
\put(1,.5){\line(1,0){1.5}}
\put(.5,1){\line(0,1){3}}
\put(8.5,1){\oval(1,1)[rb]}
\put(9,1){\line(0,1){3}}
\put(5.5,.5){\line(1,0){3}}
\put(8.5,-1){\oval(1,1)[rt]}
\put(9,-1){\line(0,-1){3}}
\put(5.5,-.5){\line(1,0){3}}
\end{picture}
\]
\caption{An excised neighborhood~$U$ of $\partial S
\setminus\{\cu_1,\cu_2,\inft\}$ in the case $\ds=2$. By $S_{\!\cu}$ we
indicate the Siegel domain $\Sie_\cu(C_\cu,T_\cu)$ in
Definition~\ref{exc-nbh}. }\label{fig-sau1}
\end{figure}

In the context of $\PSL_2(\RR)$ and of the embedding
$\uhp \subset \proj\CC$ the name \emph{rounded neighborhood} is better
than the name \emph{excised neighborhood}; see \cite[Figures 9.1 and
9.2]{BLZ15}. Here we keep the older terminology.

\begin{defn}\label{def-Woe}\il{Woexi}{$\W{\om^0,\exc}\nu\supset \W{\om^0,\excg}\nu
\supset \W{\om^0,\infty,\excg}\nu$}

We define the following $\Gm$-modules of semi-analytic boundary germs
contained in $\W{\om^0}\nu$.
\begin{enumerate}[label=$\mathrm{(\roman*)}$, ref=$\mathrm{\roman*}$]
\item\label{Woe:exc} $\W{\om^0,\exc}\nu$ is the submodule of those
boundary germs $[u]\in \W{\om^0}\nu$ that can be represented by a
function $u\in \E_\nu\bigl( U \cap S\bigr)$ for an excised neighborhood
$U$ of $(\partial S)\setminus E$ for some finite set $E$ of cusps.

\item \label{Woe:excg}$\W{\om^0,\excg}\nu$ is the submodule of the
boundary germs $[u]\in \W{\om^0,\exc}\nu$ represented by
$u\in \E_\nu\bigl( U \cap S\bigr)$ as in \eqref{Woe:exc}, that satisfy
\begin{enumerate}[label=$\mathrm{(\alph*)}$, ref=$\mathrm{\alph*}$]
\item \label{Woe:bdd}$u$ is bounded on $U \cap S$,
\item
\label{Woe:vertl}$u\bigl( g_\cu n \am(t)\,\orgn) = \oh(t^{-\rho-\nu})$
as $t\uparrow\infty$ uniformly for $n$ in compact sets such that
$g_\cu n \am(t) \,\orgn \in U$ for all sufficiently large values
of~$t$.
\end{enumerate}
\item \label{Woe:inf}$\W{\om^0,\infty,\excg}\nu$ is the submodule of the
$[u]\in \W{\om^0,\excg}\nu$ such that $\rho_\nu [u] \in \V{\om^0}\nu$
extends as an element of $\V\infty\nu(\partial S)$.
\end{enumerate}\il{Voexi}{$\V{\om^0,\exc}\nu\supset \V{\om^0,\excg}\nu
\supset \V{\om^0,\infty,\excg}\nu$} The restriction map gives
corresponding $\Gm$-modules
$\V{\om^0,\ast}\nu = \rho_\nu \W{\om^0,\ast}\nu$.
\end{defn}

We use Definition~\ref{def-bdsing} of boundary singularities for these
modules as well. For $h\in \W{\om,\infty,\excg}\nu$ the assertion
$\bsing_\nu(h)=\emptyset$ is equivalent to the statement
$f\in \W\om\nu(\partial S)$.

Propositions \ref{prop-omfintW} \deqref{omfintW:i}{omfintW:ib}
and~\eqref{prop-eh} imply that integration of cusp forms leads to
elements of $\W{\om^0,\excg}\nu$.
\begin{prop}
\label{prop-btimg}Let $\nu \in \CC$ satisfy the conditions
$\nu \not\in \frac12\ZZ_{\leq-1}$ and $\re\nu>-\rho$. Then the linear
map in \il{btwe}{$\btw_\nu$}\eqref{btw} is a linear map
\be\label{btimg} \btw_\nu: \A^0(\Gm;\nu) \rightarrow H^{\ds-1}_\pb\bigl(
\Gm; \W{\om^0,\infty,\excg}\nu\bigr)^{\bdc}\,.\ee
\end{prop}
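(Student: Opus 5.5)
We show that for each $f\in\A^0(\Gm;\nu)$ the cocycle $X\mapsto I_X(f;\cdot)$ from \eqref{Iintf} already takes values in $\W{\om^0,\infty,\excg}\nu$ and satisfies $\bdc$. Then its class lies in $H^{\ds-1}_\pb\bigl(\Gm;\W{\om^0,\infty,\excg}\nu\bigr)^\bdc$, and the map of \eqref{btw} factors through this space. The cocycle property and the $\Gm$-equivariance are already supplied by Proposition~\ref{prop-omfintW}, \eqref{omfintW:ia} and \eqref{omfintW:ii}. Those statements hold pointwise for the representing functions on $S$, so they remain valid when the values are viewed in the smaller module. What remains is a cell-by-cell check that each value $h=I_X(f;\cdot)$ has the required properties.

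\emph{Cells without a cusp.} Suppose $X\in\X^{\tess,Y}_{\ds-1}$. Then $X$ is a compact subset of $S$, and $h\in\E_\nu(S\setminus X)$ represents an element of $\W\om\nu(\partial S)$ by \tlref{omfintW:i}{omfintW:ib}. Take $E=\emptyset$ and a neighborhood $U$ of $\partial S$ in $\hat S$ with $\overline{U}\cap X=\emptyset$; this is an excised neighborhood. On $U\cap S$ we have $h(k\am(t)\,\orgn)=t^{-\rho-\nu}A_h(k\am(t)\,\orgn)$ with $A_h$ analytic on $U$, so after shrinking $U$ the function $A_h$ is bounded there. Since $\re\nu>-\rho$, this makes $h$ bounded on $U\cap S$. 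The vertical condition (b) of Definition~\ref{def-Woe} holds for the same reason, because $g_\cu n\am(t)\,\orgn$ approaches the boundary point $g_\cu\,\inft$ along a path on which the polar parameter grows like $t$. Finally $\rho_\nu h$ is analytic on all of $\partial S$, hence smooth, so $h\in\W{\om^0,\infty,\excg}\nu$ with $\bsing_\nu=\emptyset$.

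\emph{Cells containing a cusp $\cu$.} Write $X\cap S=g_\cu CA^+_Y$ as in Proposition~\ref{prop-eh}. Take $E=\{\cu\}$ and the excised neighborhood
\[U=U'\setminus\bigl(\Sie_\cu(\overline{D_\e},a)\cup j\,\Sie_\cu(\overline{D_\e},a)\bigr),\]
where $a\in(0,Y)$ and $U'$ is a neighborhood of $\partial S$ that meets $X$ only inside the excised Siegel domain. This is possible because $X$ accumulates on $\partial S$ only at $\cu$. On $U\cap S$ the function $h$ is an eigenfunction, since $U\cap S\subset S\setminus X$. It represents the germ of \tlref{omfintW:i}{omfintW:ib} on $(\partial S)\setminus\{\cu\}$. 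Proposition~\ref{prop-eh}\eqref{eh:polc} gives boundedness on $U\cap S$. Proposition~\ref{prop-eh}\eqref{eh:vertl} gives the $\oh(t^{-\rho-\nu})$ bound for $n$ in compact subsets of $N\setminus C$. These are exactly the $n$ for which $g_\cu n\am(t)\,\orgn$ stays in $U$ for large $t$, provided $\e$ is small relative to the compact set of $n$ considered. At cusps $\cu'\neq\cu$, condition (b) follows from analyticity of $A_h$ near $\cu'$, as in the first case. Smoothness of $\rho_\nu h$ at $\cu$ follows from Lemma~\ref{lem-rmph}\eqref{rmph:omrel}: it identifies $\rho_\nu h$ with $I(f,X;\cdot)$, which is in $\V\infty\nu(\partial S)$ by Proposition~\ref{prop-omfint}\eqref{omfint:nii}. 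Therefore $h\in\W{\om^0,\infty,\excg}\nu$ with $\bsing_\nu(h)\subset\{\cu\}$, which is the boundary condition $\bdc$.

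\emph{Expected difficulty.} The main point needing care is the uniformity in condition (b). A single excised neighborhood has to work for all compact sets of $n$, but Proposition~\ref{prop-eh}\eqref{eh:vertl} is stated only for compacta disjoint from $C$. I would handle this by noting that the germ does not depend on the particular representative. For any compact set $K'$ with $g_\cu K'\am(t)\,\orgn\in U$ for large $t$, we have $K'\cap \overline{D_\e}=\emptyset$, so $K'\subset N\setminus C$ and the proposition applies directly. A secondary check is that $\Gm$-translates of such representatives again satisfy Definition~\ref{def-Woe}. This holds because translates of Siegel domains are Siegel domains, and because $L(\gm)$ preserves boundedness and transforms the vertical growth rate at $\gm\cu$ compatibly.
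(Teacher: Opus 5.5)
Your proof is correct and follows essentially the paper's route. The paper obtains the statement directly from Proposition~\ref{prop-omfintW}\,\deqref{omfintW:i}{omfintW:ib} and Proposition~\ref{prop-eh}, with the smoothness of $\rho_\nu$ of the values inherited from $\btw_\nu=\rho_\nu^{-1}\circ\btv_\nu$ and Proposition~\ref{prop-omfint}\,\eqref{omfint:nii}; you make explicit the cell-by-cell check that the paper leaves implicit, namely the cusp-free cells and the choice of excised neighborhood that makes Proposition~\ref{prop-eh}\,\eqref{eh:vertl} applicable to every admissible compact set of $n$.
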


\subsubsection{Recapitulation of
\intitle{\Gm}-modules}\label{sect9-recap}
We have introduced two sheaves on $\partial S$: The sheaf $\V\om\nu$ of
real-analytic functions in Definition \ref{def-Vomsh}, and the sheaf of
$\nu$-analytic boundary germs $\W\om\nu$ in Definition~\ref{Womdef}.
With these sheaves we defined the $\Gm$-modules $\V{\om^0}\nu$ and
$\W{\om^0}\nu$, in \eqref{Vom0} and \eqref{Wom0}.
Theorem~\ref{thm-isoVW} gives, for $\nu \not\in \frac12\ZZ_{\leq -1}$,
an isomorphism of $G$-equivariant sheaves
$\rho_\nu:\W\om\nu \rightarrow \V\om\nu$. We have defined several more
modules, partly under the assumption
$\nu \not\in \frac12\ZZ_{\leq -1}$. We give an overview in
Table~\ref{tab-recap}.
\begin{table}[ht]
\[ \xymatrix{ \V{\om^0}\nu
&& \W {\om^0}\nu\ar[ll]^{\rho_\nu}_\cong & \text{\eqref{Vom0},
\eqref{Wom0}}\\
\V{\om^0,\exc}\nu = \rho_\nu \W{\om^0,\exc} \nu \ar@{^(->}[u]&&
\W{\om^0,\exc}\nu \ar[ll]^{\rho_\nu}_\cong \ar@{^(->}[u] &
\text{Defn.~\ref{def-Woe}}\\
\V{\om^0,\excg}\nu = \rho_\nu \W{\om^0,\excg} \nu \ar@{^(->}[u]&&
\W{\om^0,\excg}\nu \ar[ll]^{\rho_\nu}_\cong \ar@{^(->}[u] &
\text{Defn.~\ref{def-Woe}}\\
\V{\om^0,\infty,\excg}\nu = \V{\om^0\excg}\nu\cap \V\infty\nu(\partial
S)
\ar@{^(->}[u] \ar[rr]_{\rho_\nu^{-1}}^= && \W{\om^0,\infty,\excg}\nu
\ar@{^(->}[u]
&\text{Defn.~\ref{def-Woe}}\\
\V\om\nu(\partial S) \ar@{^(->}[u]
&& \W \om\nu(\partial S) \ar[ll]^{\rho_\nu}_\cong \ar@{^(->}[u]\\
} \]
\caption{Overview of $\Gm$-modules, for
$\nu \in \CC\setminus \frac12\ZZ_{\leq -1}$.}\label{tab-recap}
\end{table}

%% file: ccro2-10-coh-cf.tex

\bigskip

\def\flnm{ccro2-10-coh-cf}

\section{From cocycles to cusp forms} \label{sect10}

Proposition~\ref{prop-btimg} shows that the image
$\btw_\nu \A^0(\Gm;\nu)$ is contained in the cohomology space
$H^{\ds-1}_\pb\bigl( \Gm; \W{\om^0,\infty,\excg}\nu\bigr)^{\bdc}$. We
turn to the construction of a linear map in the opposite direction.

\begin{thm}\label{thm-alW}Let $\nu\in \CC\setminus\frac12\ZZ_{\leq0}$,
$|\re\nu|<\rho$.

There is a linear map \il{alwnu}{$\alw_\nu$}
\be \alw_\nu: H^{\ds-1}_\pb\bigl( \Gm; \W{\om^0,\excg}\nu\bigr)^\bdc
\rightarrow \A^0(\Gm;\nu)\ee
such that $\alw_\nu \circ \btw_\nu$ is the identity on $\A^0(\Gm;\nu)$,
for the linear map $\btw_\nu$ in Proposition~\ref{prop-btimg} composed
with the natural map
\[H^{\ds-1}_\pb\bigl( \Gm;
\W{\om^0,\infty,\excg}\nu\bigr)^{\bdc}\rightarrow H^{\ds-1}_\pb\bigl(
\Gm; \W{\om^0,\excg}\nu\bigr)^{\bdc}\,.\]
\end{thm}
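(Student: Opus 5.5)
Given a cocycle $\psi\in Z^{\ds-1}\bigl(F^\tess_\bullet;\W{\om^0,\excg}\nu\bigr)^\bdc$, we construct a global $\Gm$-invariant eigenfunction $u_\psi$ on $S$, in the spirit of the construction in \cite{BLZ15}. Fix a standard tessellation and, for each $(\ds-1)$-cell $X$, a representative $u_X\in\E_\nu(U_X\cap S)$ of $\psi(X)$. Here $U_X$ is an excised neighborhood of $(\partial S)\setminus(X\cap\Cu)$, and $u_X$ satisfies the boundedness and vertical decay of Definition~\ref{def-Woe}. The representatives are chosen $\Gm$-equivariantly on orbit representatives.

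The guiding identity is Proposition~\ref{prop-omfintW}\eqref{omfintW:ii}: for a $\ds$-cell $Z$, the sum $\sum_{X\in B(Z)}\e_X I_X(f;\cdot)$ equals $2\nu c(\nu) f$ on $\mathring Z$ and $0$ outside $Z$, and it represents the zero germ. The cocycle relation $\psi(\partial Z)=0$ says exactly that $\sum_{X\in B(Z)}\e_X u_X$ represents the zero germ near $\partial S$. Following this, we define $u_\psi$ on $\mathring Z$ by continuing the germ-level function $\sum_{X\in B(Z)}\e_X u_X$ inward. Concretely, we use Proposition~\ref{prop-psC}: on a compact region $D$ with $Z\subset\mathring D$ and $\partial D$ inside the common domain of the $u_X$, set
\[
2\nu c(\nu)\,u_{\psi}(x)=\sum_{X\in B(Z)}\e_X\int_{\partial D}\omw_\nu(u_X;x,\cdot)\,.
\]
The factor $\nu c(\nu)$ is nonzero for $\nu\notin\frac12\ZZ_{\le 0}$. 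For this we need the $u_X$ to extend as eigenfunctions into $S$ along suitable hypersurfaces. This is where $\W{\om^0,\excg}\nu$ enters: a representative lives on $U\cap S$, and we first show that each germ $\psi(X)$ has a representative defined on all of $S\setminus X$. To do this, write the germ on a large geodesic sphere and apply the reproducing formula of Proposition~\ref{prop-psC} together with the fact that $q_\nu(\cdot,x)$ is a $\nu$-analytic germ (Proposition~\ref{prop-Q}\eqref{Q:nu-anal}).

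The key intermediate result, in the spirit of Proposition~\ref{prop-cohE}, is then the following. There is a unique $u_\psi\in\E_\nu(S)$ such that, for every $Z$, $u_\psi|_{\mathring Z}$ equals $(2\nu c(\nu))^{-1}\sum_{X\in B(Z)}\e_X\tilde u_X$, where $\tilde u_X\in\E_\nu(S\setminus X)$ are the global extensions. Continuity across common faces comes from the cocycle relation: two adjacent cells share $X$ with opposite signs, so the difference of the formulas is $\pm\tilde u_X$ on each side of $X$. This mirrors the jump of $2\nu c(\nu)f$ across $\partial Z$ in \eqref{bd-eq}. Equivariance of $\psi$ together with \eqref{int-bd0W} gives $\Gm$-invariance. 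Changing $\psi$ by a coboundary $db$ modifies the sum by $\sum\e_X\,b(\partial X)$, which telescopes to $0$ because $\partial\partial=0$; hence $u_\psi$ depends only on the class of $\psi$.

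Next we show $u_\psi\in\A^0(\Gm;\nu)$. Boundedness on $\SY$ is clear. At a cusp $\cu$, only cells through $\cu$ contribute singular terms. The excg conditions (boundedness and $u(g_\cu n\am(t)\orgn)=\oh(t^{-\rho-\re\nu})$) give a bounded function with decay $\oh(t^{b})$, where $b=-\rho-\re\nu$ up to the Siegel-domain pieces. In horospherical coordinates at $\cu$ this translates to $\oh(y^{b})$ with $b<\rho-|\re\nu|$ precisely because $|\re\nu|<\rho$. Lemma~\ref{lem-cusp-crit} then yields a cusp form. Finally, when $\psi=\psi^f$, we take $\tilde u_X=I_X(f;\cdot)$, and Proposition~\ref{prop-omfintW}\eqref{omfintW:ii} gives $u_\psi=f$ on each $\mathring Z$, so $\alw_\nu\circ\btw_\nu=\mathrm{id}$.

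The main obstacle is the global extension step together with the cusp estimates. Showing that a germ on an excised neighborhood extends to $S\setminus X$ with controlled growth near $\cu$ (uniformly along the Siegel domain) needs estimates for $\omw_\nu$ on surfaces approaching the cusp, analogous to Proposition~\ref{prop-eh}. I would first attempt this by truncating at horospheres $H_\cu(T)$ and letting $T\to\infty$, using the vertical decay hypothesis.
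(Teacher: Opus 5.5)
\paragraph{A genuine gap: the global extension step is false.} Your construction rests on the claim that each germ $\psi(X)$ has a representative $\tilde u_X\in\E_\nu(S\setminus X)$. This is false for general cocycles, so no argument, sphere integrals or otherwise, can supply it.

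Here is a counterexample. Take a $(\ds-2)$-cell $Y_0\subset\SY$, a point $x_0\in S$, and the $\Gm$-equivariant $(\ds-2)$-cochain $\kappa$ with $\kappa(\gm Y_0)$ the germ of $q_\nu(\cdot;\gm x_0)$ and $\kappa=0$ on the other orbits.
\begin{itemize}
\item By Proposition~\ref{prop-Q}, $\kappa$ takes values in $\W\om\nu(\partial S)$, so $\psi=d\kappa$ is a cocycle satisfying $\bdc$.
\item For $X$ with $Y_0\in B(X)$, the value $\psi(X)$ is the germ of a finite sum $\sum_i\pm q_\nu(\cdot;\gm_ix_0)$. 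The points $\gm_ix_0$ are distinct, and for generic $x_0$ they lie off $X$.
\item A representative on $S\setminus X$ would coincide with this sum on the connected set $S\setminus(X\cup\{\gm_ix_0\}_i)$, by unique continuation of eigenfunctions.
\item That contradicts the singularity of $q_\nu$ on the diagonal (cf.~\eqref{resolve}).
\end{itemize}

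Your ``concrete'' formula has a related defect. On the region near $\partial S$ where all $u_X$ are defined, $\sum_X\e_Xu_X$ vanishes identically by the cocycle relation and unique continuation. Hence $\sum_X\e_X\int_{\partial D}\omw_\nu(u_X;x,\cdot)=\int_{\partial D}\omw_\nu\bigl(\sum_X\e_Xu_X;x,\cdot\bigr)=0$. The value $2\nu c(\nu)f$ on $\mathring Z$ in \eqref{bd-eq} is not an inward continuation of a germ. It comes from the singularities of the specific functions $I_X(f;\cdot)$ along $X$, which the germs do not record. For the same reason, your coboundary argument only telescopes at the level of germs, not of functions on $\mathring Z$.

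\paragraph{The missing idea: smooth lifts and a far-away cycle.} You need to give up eigenfunction representatives near the cells. The paper lifts $\psi$ to a cochain $\tilde\psi$ with values in $\G{\om^0,\excg}\nu$, i.e.\ smooth functions on all of $S$ obtained by cutting off representatives (Proposition~\ref{prop-seqs}). These are eigenfunctions only outside a neighborhood $N_R(C)$ of $C$, but they are bounded with the required decay on all of $S$.

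For a region $Z$, one takes a much larger union $\tilde Z$ of $\ds$-cells whose boundary cells $D_i$ satisfy $N_R(D_i)\cap Z=\emptyset$. One then sets $u_\psi=(2\nu c(\nu))^{-1}\tilde\psi(\partial\tilde Z)$ on $Z$. The singularities are thus kept far from the point of evaluation rather than pushed onto $\partial Z$. The well-definedness then follows:
\begin{itemize}
\item The cocycle relation gives independence of $\tilde Z$.
\item Two lifts differ by a cochain with values in $\N{\om^0,\excg}\nu$, which vanishes on $Z$.
\item A coboundary $d\kappa$ may be lifted as $d\tilde\kappa$, so that $\tilde\psi(\partial\tilde Z)=\tilde\kappa(\partial\partial\tilde Z)=0$ as functions.
\end{itemize}

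With these lifts, your cusp estimate via Lemma~\ref{lem-cusp-crit} goes through as in Proposition~\ref{prop-cE}. Your check $u_{\psi^f}=f$ goes through as in Proposition~\ref{prop-pi}, taking $\tilde\psi(C)=I_C(f;\cdot)$ away from a thin neighborhood of $C$. The growth on Siegel domains that you flagged as the main obstacle is built into the definition of $\G{\om^0,\excg}\nu$.
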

This theorem is a direct consequence of Propositions \ref{prop-cohE},
\ref{prop-cE}, and~\ref{prop-pi}.

\subsection{Modules of functions on \intitle{S} representing
\intitle{\nu}-analytic boundary germs}\label{sect10-reprs}
We recall that semi-analytic boundary germs $[u]\in\W{\om^0,\excg}\nu$
are represented by functions $u\in \E_\nu( U \cap S)$ for some open
neighborhood $U$ in $\hat S$ of
$(\partial S) \setminus \bsing_\nu \bigl([u]\bigr)$. The function
$t \mapsto u\bigl( k \am(t) ,\orgn)$ should extend analytically as a
function of $t^{-1}$ on a neighborhood of $ t^{-1}=0$ 
for those $k\in K$ 
for which
$k\,\inft \not\in \bsing_\nu([u])$, 
in such a way that
there is an analytic function $A_u$ on $U$ such that
$u\bigl(k\am(t) \,\orgn\bigr) 
= t^{-(\rho+\nu) }\, A_u\bigl( k \am(t)\,\orgn)$ for all
$k\am(t)\,\orgn \in U \cap S$. The relation $\Dt u = (\rho^2-\nu^2) u$
on $U\cap S$ is equivalent to $\tilde\Dt_\nu A_u=0$ on $U$, with a
differential operator $\tilde\Dt_\nu$ that we described in terms of
normalized horospherical coordinates in~\eqref{deqAn}.

In \eqref{exph} we saw an example in which we can choose the open set
$U\subset \hat S$ in such a way that it contains~$S$. For
$[u]\in \W{\om^0,\excg}\nu$ the set $U$ is fairly large. Then $U$ is an
excised neighborhood of
$(\partial S) \setminus \bsing_\nu\bigl([u]\bigr)$,
for which $U\cap S$ is the union of a compact set and a finite number of
Siegel domains. For our purpose it is convenient to work with
representatives of $\nu$-analytic boundary germs that are defined on
the whole symmetric space~$S$.

\begin{defn}\label{def-GN}
Let $\G{\om^0,\excg}\nu$ be the space of $f\in C^\infty(S)$ for which
there are a finite set $E\subset \Cu$ and an excised neighborhood $U$
of $(\partial S) \setminus E$ such that
\begin{enumerate}[label=$\mathrm{(\alph*)}$, ref=$\mathrm{\alph*}$]
\item $f$ represents a $\nu$-analytic boundary germ in
$\W\om\nu\left( (\partial S) \setminus E\right)$,
\item \label{GN:bdd}$f$ is a bounded function on~$S$,
\item \label{GN:dac} for each $\cu\in E$
\be \label{def-GN-gc}
f\bigl( g_\cu n \am(t) \, \orgn\bigr) = \oh \bigl(
t^{-(\nu+\rho)}\bigr)\qquad \text{ as } t\uparrow\infty \ee
uniformly for $n$ in compact sets in $N$.
\end{enumerate}

Further we define
\begin{align*} \G\om\nu &= \bigl\{ f\in C^\infty(S) \;:\; \text{$f$
represents an element of $\W\om\nu(\partial S)$}\bigr\}\subset
\G{\om^0,\excg}\nu \,,\\
\N {\om^0,\excg}\nu &= \bigl\{ f\in \G{\om^0,\excg}\nu\;:\; \text{$f$
represents the zero boundary germ}\bigr\}\,,\\
\N\om\nu &= \G\om\nu\cap \N{\om^0,\excg}\nu\,.
\end{align*}
\il{Greprnw}{$\G\om\nu \subset \G{\om^0,\excg}\nu$}
\il{Nreprnw}{$\N\om\nu \subset \N{\om^0,\excg}\nu$}
\end{defn}
So $\G\om\nu$ is the case $E=\emptyset$ in the definition of
$\G{\om^0,\excg}\nu$. The support of elements of $\N{\om^0,\excg}\nu$
is the union of a compact subset of $S$ and finitely many Siegel
domains. The space $\N{\om^0,\excg}\nu$ depends on $\nu$, since $\nu$
occurs in the growth conditions at the cusps in $E$. The space
$\N\om\nu $ is equal to $C_c^\infty(S)$, and it does not depend
on~$\nu$.

\begin{prop}\label{prop-seqs}The spaces in Definition~\ref{def-GN} are
$\Gm$-modules, related by
\badl{exseq-NGW} \xymatrix{ 0 \ar[r] & \N\om\nu\ar@{^(->}[d] \ar[r] & \G
\om\nu \ar@{^(->}[d]\ar[r] & \W\om\nu(\partial S)\ar@{^(->}[d] \ar[r]
&0\\
0 \ar[r] & \N{\om^0,\excg}\nu \ar[r] & \G{\om^0,\excg}\nu \ar[r]
& \W{\om^0,\excg}\nu \ar[r] &0 }
\eadl
in which the rows are exact.
\end{prop}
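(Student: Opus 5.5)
The plan is to check three things in turn: the spaces are $\Gm$-modules, the horizontal maps are well defined, and each row is exact. The key inputs are the smooth cut-off functions on $S$ and the fact that $\W\om\nu$ is a direct limit.

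\emph{$\Gm$-invariance.} The action of $\Gm$ on all spaces is $L(\gm)$, and on boundary germs it is the action defining the $G$-equivariant sheaf $\W\om\nu$. For $f\in\G{\om^0,\excg}\nu$ with exceptional set $E$ and excised neighbourhood $U$, the translate $L(\gm)f$ is handled as follows.
\begin{itemize}
\item It represents a germ on $(\partial S)\setminus\gm E$, on the neighbourhood $\gm U$. Since $\gm$ maps Siegel domains at $\cu$ to Siegel domains at $\gm\cu$, the set $\gm U$ is again excised.
\item Boundedness is preserved.
\item For the growth condition at $\gm\cu$, write $g_{\gm\cu}=\gm g_\cu$ using the normalisation in Lemma~\ref{lem-normgc}. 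Then $(L(\gm)f)(g_{\gm\cu}n\am(t)\orgn)=f(g_\cu n\am(t)\orgn)$, so \eqref{def-GN-gc} transfers directly.
\end{itemize}
Hence $\G{\om^0,\excg}\nu$ is $\Gm$-stable. The subspace $\G\om\nu$ is $\Gm$-stable by the case $E=\emptyset$, and the $\N$-spaces are stable because $L(\gm)$ sends zero germs to zero germs. The vertical inclusions are clear from the definitions.

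\emph{Maps and exactness on the left and in the middle.} The map $\G{\om^0,\excg}\nu\to\W{\om^0,\excg}\nu$ sends $f$ to its germ $[f]$. This germ lies in $\W{\om^0,\excg}\nu$, because the restriction of $f$ to $U\cap S$ is a representative satisfying conditions \eqref{Woe:bdd} and \eqref{Woe:vertl} of Definition~\ref{def-Woe}. The map is $\Gm$-equivariant. By definition its kernel is $\N{\om^0,\excg}\nu$, and injectivity of the inclusion $\N\to\G$ is trivial. The same argument applies to the top row. There the kernel consists of smooth functions on $S$ vanishing near $\partial S$, which is $C_c^\infty(S)$ as noted after the definition.

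\emph{Surjectivity (the main step).} Take $[u]\in\W{\om^0,\excg}\nu$, represented by $u\in\E_\nu(U\cap S)$ with $U$ excised for some finite set $E$. Assume $u$ is bounded and satisfies the vertical estimate at the cusps of $E$. I will construct $f=\chi u$, extended by zero, for a cut-off $\chi\in C^\infty(S)$ with three properties:
\begin{enumerate}
\item $\chi$ is supported in $U\cap S$;
\item $\chi=1$ on $U_1\cap S$ for a smaller excised neighbourhood $U_1$ of $(\partial S)\setminus E$;
\item $\chi$ has bounded values.
\end{enumerate}

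To build $\chi$, work in extended polar coordinates. Away from $E$, take $\chi=\ph(\tau)$, so $\chi=1$ near $\tau=0$ and $\chi=0$ for small $t$, i.e.\ outside a neighbourhood of $\partial S$. Near each $\cu\in E$, transport to $\inft$ by $g_\cu$ and use horospherical coordinates. There, multiply by a function of $n\in N$ alone that vanishes on the compact set $C_\cu$ and equals $1$ outside a slightly larger compact set $C'_\cu$. Because this factor does not depend on $t$, the product $\chi$ is smooth.

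The resulting $f$ has the required properties:
\begin{itemize}
\item $f$ is smooth on $S$.
\item $f$ equals $u$ near $(\partial S)\setminus E$ inside $U_1$, so $[f]=[u]$.
\item $f$ is bounded, since $u$ and $\chi$ are bounded.
\item For the estimate \eqref{def-GN-gc} at $\cu\in E$: uniformly for $n$ in a compact set, $f(g_\cu n\am(t)\orgn)$ is either $0$ (where $n\in C_\cu$) or $\chi u$ with $u=\oh(t^{-\rho-\nu})$ by condition \eqref{Woe:vertl}. On the transition region $C'_\cu\setminus C_\cu$ the points still lie in $U$ for large $t$, so the same bound applies.
\end{itemize}
For the top row one takes $E=\emptyset$, and the same cut-off gives $f\in\G\om\nu$.

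The main obstacle is making $\chi$ compatible with the excised geometry near the cusps in $E$, so that it is smooth on $S$ and still supported in $U$. I would first shrink $U$ to the standard form \eqref{Uen} with slightly enlarged compact sets $C'_\cu$ and larger heights $T_\cu$. The product-form cut-off is then well defined, because horoballs at distinct cusps are disjoint, as guaranteed by the choice of suitably large $Y$.
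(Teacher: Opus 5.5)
Your treatment of the $\Gm$-module structure and of exactness at the first two places is fine. You obtain surjectivity in the lower row the same way the paper does: multiply a representative $u\in\E_\nu(U\cap S)$ by a smooth cut-off that passes from $1$ to $0$ in a transition zone, and extend by zero. The paper's zone is $\Sie_\cu(D_\cu\setminus A_\cu,Y)$ together with shells around a compact set $Q$ below the Siegel domains, so it includes a cut-off at the bottom. The gap is in your cut-off near the cusps of $E$.

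Horospherical coordinates at $\cu$ cover all of $S$, so a factor $\psi_\cu(n)$ depending on $n$ alone is smooth on $S$. However, its zero set is the whole cylinder $g_\cu C_\cu A\,\orgn$, that is, the union of the geodesics from the boundary points $g_\cu n\,\nul$, $n\in C_\cu$, up to $\cu$. Its transition region likewise runs down to the points $g_\cu n\,\nul$ with $0<\psi_\cu(n)<1$. All these points lie in $(\partial S)\setminus E$, and $\ph(\tau)=1$ near them. Hence $\chi$ is not $1$ on any excised neighbourhood of $(\partial S)\setminus E$, so $[\chi u]\neq[u]$ in general. Worse, $\chi u$ is not even a $\Dt$-eigenfunction near the boundary points over the transition region, so $f\notin\G{\om^0,\excg}\nu$. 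If instead you use $\psi_\cu$ only inside the horoball at $\cu$ and the value $1$ outside, as your last paragraph suggests, then $\chi$ jumps across the horosphere at the points $g_\cu n\am(Y)\,\orgn$ with $\psi_\cu(n)\neq1$. Independence of $t$ gives no smoothness there.

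What is missing is a cut-off in the vertical direction as well, so that the zero set and the transition zone near $\cu$ are Siegel domains. Their closures in $\hat S$ meet $\partial S$ only in $\cu$. For instance:
\begin{itemize}
\item Take a bounded open $A_\cu\supset C_\cu$ and a compact $D_\cu\supset A_\cu$ in $N$.
\item Take $\psi_\cu\in C^\infty(N)$ with $\psi_\cu=0$ on $A_\cu$ and $\psi_\cu=1$ off $D_\cu$.
\item Take $\theta_\cu\in C^\infty\bigl((0,\infty)\bigr)$ with $\theta_\cu=1$ on a neighbourhood of $[T_\cu,\infty)$ and $\theta_\cu=0$ on $(0,T'_\cu]$, for some $T'_\cu<T_\cu$.
\item Put $\chi_\cu(g_\cu n\am(t)\,\orgn)=1-\bigl(1-\psi_\cu(n)\bigr)\theta_\cu(t)$ and $\chi=\ph(\tau)\prod_{\cu\in E}\chi_\cu$, where $\ph(\tau)=1$ for $0<\tau<\e$ and $\ph$ vanishes on a neighbourhood of $S\setminus U'$.
\end{itemize}
Then $\chi$ is smooth and vanishes on a neighbourhood of $S\setminus U$, so $\chi u$ extended by zero is smooth. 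It equals $1$ on $U_1\cap S$ for the excised neighbourhood $U_1=\{|\tau|<\e\}\setminus\bigcup_{\cu\in E}\bigl(\Sie_\cu(D_\cu,T'_\cu)\cup j\,\Sie_\cu(D_\cu,T'_\cu)\bigr)$.

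Letting $\psi_\cu$ vanish on the open set $A_\cu$, not just on $C_\cu$, also matters for your growth estimate. At large height, $\chi\neq0$ forces $n\in D_\cu\setminus A_\cu$, a compact set disjoint from $C_\cu$. On that set, condition~\eqref{Woe:vertl} of Definition~\ref{def-Woe} gives a uniform bound. On the non-compact set $C'_\cu\setminus C_\cu$ it need not.
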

\begin{proof}We show the surjectivity in the lower exact sequence. The
other statements are clear.

Any element of $\W{\om^0,\excg}\nu$ has a representative $f_0$ in
$\E_\nu\left( U \cap S\right)$ for some excised neighborhood $U$ of
$(\partial S)\setminus E$ for some finite set of cusps. It also
satisfies the conditions in part~\eqref{Woe:excg} in
Definition~\ref{def-Woe}, which are similar to the conditions in parts
\eqref{GN:bdd} and~\eqref{GN:dac} in Definition~\ref{def-GN}, but
concern only points in an excised neighborhood.

We have to extend a modification of $f_0$ to all of $S$. Near a cusp
$\cu \in E$ we have the situation sketched in Figure~\ref{fig-Sie}.
\begin{figure}[ht]
\[\setlength{\unitlength}{1cm}
\begin{picture}(2.4,3)(-1.2,2)
\put(-.7,3.9){$\Sie_\cu(C_\cu,Y)$}
\put(-.1,2.2){$Q$}
\thicklines
\put(-1,3){\line(1,0){2}}
\put(-1,3){\line(0,1){2}}
\put(1,3){\line(0,1){2}}
\put(-1,3){\line(-1,-1){1}}
\put(1 ,3){\line(1,-2){.5}}
\end{picture}
\]
\caption{The Siegel domain $\Sie_\cu(C_\cu,Y)$ transported to $\inft$.}
\label{fig-Sie}
\end{figure}
If $C_\cu$ is chosen suitably, then
$\Dt f_0 = (\rho^2-\nobreak\nu^2) f_0$ outside $\Sie(C_\cu,Y)$ and
outside the compact set $Q$. For each $\cu \in E$ we take a bounded
open set $A_\cu\subset N$ containing the compact set $C_\cu$, and we
take a compact set $D_\cu \subset N$ containing $A_\cu$. Transporting
$\cu$ to $\inft$ we have the situation sketched in
Figure~\ref{fig-Siedms}.
\begin{figure}[ht]\[\setlength{\unitlength}{1cm}
\begin{picture}(7.6,3)(-3.3,2)
\put(-.7,4.3){$\Sie_\cu(C_\cu,Y)$}
\put(-3.2,4.3){$\Sie_\cu(D_\cu\setminus A_\cu$}
\put(-2.5,3.9){$,Y)$}
\put(1.4,4.3){$\Sie_\cu(D_\cu\setminus A_\cu$}
\put(2,3.9){$,Y)$}
\put(-.1,2.2){$Q$}
\put(-3.4,2.5){$1\geq \ch\geq 0$}
\put(1.8,2.5){$0\leq \ch\leq 1$}
\thicklines
\put(-3.3,3){\line(1,0){6.6}}
\put(-1,3){\line(0,1){2}}
\put(1,3){\line(0,1){2}}
\put(-1,3){\line(-1,-1){1}}
\put(1,3){\line(1,-2){.5}}
\put(-1.3,3){\line(0,1){2}}
\put(-3.3,3){\line(0,1){2}}
\put(1.3,3){\line(0,1){2}}
\put(3.3,3){\line(0,1){2}}
\put(-1,3){\line(-1,-1){1}}
\put(1,3){\line(1,-2){.5}}
\put(-1.3,3){\line(-1,-1){1}}
\put(1.3,3){\line(1,-2){.5}}
\put(-3.3,3){\line(-1,-1){1}}
\put(3.3,3){\line(1,-2){.5}}
\end{picture}
\]
\caption{Inclusion of various Siegel domains.}\label{fig-Siedms}
\end{figure}
Similarly, the compact set $Q \subset \SY$ is embedded into larger
compact sets. In this way we arrive at a transition zone in $U\cap S$
on which we can multiply $f_0$ by a smooth function $\ch$ going from
$1$ to zero. Then $f=\ch\, f_0$ extended by zero on the remaining
region is a smooth function representing the same $\nu$-analytic
boundary germ as $f_0$. Moreover the estimates in \eqref{GN:bdd} and
\eqref{GN:dac} in Definition~\ref{def-GN} are valid on each Siegel
domain $\Sie_\cu(D_\cu,Y)$ with $\cu\in \Cu$.

We can add to $f$ any smooth function with compact support not
intersecting the Siegel domains $\Sie_\cu(C_\cu,Y)$. The resulting
function represents the same boundary germ as $f_0$.
\end{proof}

\begin{defn}For any function $f:S\rightarrow \CC$ the set of
\il{sngset}{$\nu$-singularity}\emph{$\nu$-singularities}
\il{sing}{$\sing_\nu(\cdot)$}$\sing_\nu(f)$ of $f$ is the complement of
the maximal open set $W\subset S$ such that $f\in \E_\nu(W)$.
\end{defn}
This concept of singularity is very wide. Any function that is not an
eigenfunction of the Laplace operator with eigenvalue $\rho^2-\nu^2$ on
a neighborhood of $x\in S$ has a $\nu$-singularity at~$x$.

\subsection{Construction of invariant eigenfunctions from cocycles}\label{sect10-constr} We
turn to the construction that will allow us to prove
Theorem~\ref{thm-alW}. Actually, the construction that we describe is
stronger than what we need for the present purpose. This construction
forms the essential step from cocycles back to automorphic forms. The
method is analogous to the approach in \cite{BLZ15} for automorphic
forms on cofinite discrete subgroups of $\PSL_2(\RR)$. First cocompact
discrete subgroups are handled in \S7.1. The presence of cusps needs
more work, carried out in \S12.2.

\rmrk{Discussion} If we deal with a cocycle $\ps$ coming from a cusp
form, then Proposition~\ref{prop-psC} tells us what to do. On a union
of fundamental domains $Z$ we integrate $\ps$ over $\partial Z$.
We get back the cusp form on the interior of~$Z$, and we can extend the
result to $S$ in a $\Gm$-invariant way. See Figure~\ref{fig-illZ}.
\begin{figure}[ht]
\[ \setlength\unitlength{1cm}
\begin{picture}(10,3)(-5,0)
\put(-5,0){\line(1,0){10}}
\put(-.35,1.85){$Z$}
\put(-.75,3){\line(0,-1){2}}
\qbezier(-.75,1)(-.5,.5)(-.5,0)
\qbezier(-.5,0)(-.5,.5)(-.25,1)
\qbezier(-.25,1)(0,.5)(0,0)
\qbezier(0,0)(0,.5)(.25,1)
\put(.25,3){\line(0,-1){2}}
\end{picture}\]
\caption{Two-dimensional illustration of the approach of the proof of
Proposition~\ref{prop-psC}. A union $Z$ of fundamental
domains.}\label{fig-illZ}
\end{figure}

We have to modify this idea, to be able to apply it to cocycle that
might not come from cusp forms.

The difference between $\nu$-analytic boundary forms and their
representative functions on $S$ is confusing. It is difficult to work
with cocycles with values in $\nu$-analytic boundary forms. More
convenient would be cocycles with values in the space $\G\om\nu$ of
representatives, which are actual functions on~$S$. This is not
possible with cocycles, however we can find cochains with values in
$\G\om\nu$ that represent the cocycle with which we start. If we have
an arbitrary cocycle it may happen that the value of its
representatives on $\partial Z$ has singularities in the interior of
$Z$, and the application of Proposition~\ref{prop-psC} breaks down.

A cocycle that satisfies the condition `exc' has representatives on
$\partial Z$ that have no singularities in an excised neighborhood of
$\partial S \setminus E$, where the finite set $E$ of cusps depend on
the components of $\partial Z$.
It turns out that if we take a very wide cycle $C$ around $Z$ a suitable
representative of the cocycle can be chosen such that its value on
$\partial C$
has no singularities on the interior of $Z$. The choice of $C$ has to
take into account the set $Z$ and the choice of the representative of
the cocycle. See Figure~\ref{fig-illZC}.
\begin{figure}[ht]
\[ \setlength\unitlength{1cm}
\begin{picture}(10,3)(-5,0)
\put(-5,0){\line(1,0){10}}
\put(-.35,1.85){$Z$}
\put(-3.35,1.85){$C$}
\put(-.75,3){\line(0,-1){2}}
\qbezier(-.75,1)(-.5,.5)(-.5,0)
\qbezier(-.5,0)(-.5,.5)(-.25,1)
\qbezier(-.25,1)(0,.5)(0,0)
\qbezier(0,0)(0,.5)(.25,1)
\put(.25,3){\line(0,-1){2}}
\thicklines
\put(-3,0){\line(0,1){3}}
\put(3,0){\line(0,1){3}}
\put(-2.5,0){\oval(1,1)[t]}
\put(-1.75,0){\oval(.5,.5)[t]}
\put(-1.25,0){\oval(.5,.5)[t]}
\put(-.927,0){\oval(.167,.167)[t]}
\put(-.75,0){\oval(.167,.167)[t]}
\put(-.583,0){\oval(.167,.167)[t]}
\put(-.427,0){\oval(.167,.167)[t]}
\put(-.25,0){\oval(.167,.167)[t]}
\put(-.083,0){\oval(.167,.167)[t]}
\put(.427,0){\oval(.167,.167)[t]}
\put(.25,0){\oval(.167,.167)[t]}
\put(.083,0){\oval(.167,.167)[t]}
\put(.75,0){\oval(.5,.5)[t]}
\put(1.25,0){\oval(.5,.5)[t]}
\put(2,0){\oval(1,1)[t]}
\put(2.75,0){\oval(.5,.5)[t]}
\end{picture}\]
\caption{Two-dimensional illustration of the approach of the proof of
Proposition~\ref{prop-psC}. A cycle around $Z$.
\\
Figure copied from \cite[p 75]{BLZ15}.} \label{fig-illZC}
\end{figure}

Our main task is to adapt the two-dimensional reasoning in \cite{BLZ15}
to the present $\ds$-dimensional context.

\begin{prop}\label{prop-cohE}Let
$\nu \in \CC\setminus \frac12\ZZ_{\leq -1}$. There is an explicit
construction $\ps\mapsto u_\ps$ that provides for each cocycle
$\ps\in Z^{\ds-1}(F^\tess_\bullet;\W{\om^0,\exc}\nu\bigr)^\bdc$ a
function $u_\ps \in \E_\nu(S)$ such that
\begin{enumerate}[label=$\mathrm{(\alph*)}$, ref=$\mathrm{\alph*}$]
\item \label{cohE:lin}$u_\ps$ depends linearly on $\ps$,
\item \label{cohE:idp} $u_\ps$ depends only on the cohomology class of
$\ps$ in $Z^{\ds-1}(F^\tess_\bullet;\W{\om^0,\exc}\nu\bigr)^\bdc$,
\item \label{cohE:inv} $u_\ps|\gm = u_\ps$ for all $\gm\in \Gm$.
\end{enumerate}
\end{prop}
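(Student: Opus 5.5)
The construction follows the ``cycle around $Z$'' picture sketched above. First lift the cocycle to representatives: by the lower exact row of Proposition~\ref{prop-seqs} (with $\W{\om^0,\exc}\nu$ and the analogous representative spaces), choose for each $\Gm$-orbit in $\X^\tess_{\ds-1}$ a representative cell $X$ and a function $g(X)\in C^\infty(S)$ representing $\ps(X)$. By the boundary condition $\bdc$, this function is an eigenfunction on $U_X\cap S$ for an excised neighborhood $U_X$ of $(\partial S)\setminus(X\cap\Cu)$. Extend $\Gm$-equivariantly by $g(\gm X)=L(\gm)g(X)$. Because there are finitely many orbits, we get a $\Gm$-equivariant cochain $g$ with values in functions on $S$. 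Its coboundary $dg(Z)=\sum_{X\in B(Z)}\e_X g(X)$ represents the zero boundary germ for every $Z\in\X^\tess_\ds$. Hence $dg(Z)$ vanishes on $U\cap S$ for an excised neighborhood $U$, so its $\nu$-singularities lie in a compact set together with finitely many Siegel domains.

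To define $u_\ps$ near a point $x$, fix a finite union $Z$ of $\ds$-cells (translates of $\fd_0$ and truncated $\fd_\cu$) containing a neighborhood of $x$. Then choose a very large $\ds$-chain $C\supset Z$ whose boundary $\partial C$ lies in the region where every $g(X)$ with $X\in B(C)$, and every $dg(Y)$ with $Y\subset C$, are genuine eigenfunctions. This region is near $\partial S$ but avoids the finitely many excised Siegel domains. Set
\[ u_\ps(x)=\frac{1}{2\nu\,c(\nu)}\Bigl(\sum_{X\in B(C)}\e_X\, g(X)(x)-\sum_{Y\subset C} dg(Y)(x)\Bigr)\,, \]
i.e.\ $u_\ps=\frac{1}{2\nu c(\nu)}\bigl(g(\partial C)-dg(C)\bigr)$ evaluated where everything is regular. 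A cleaner choice is to realize the value as an integral via Proposition~\ref{prop-psC}, applying $\omw_\nu$-integrals to representatives that are eigenfunctions on a neighborhood of the compact region between $Z$ and $\partial C$. Since $g(\partial C)=\sum_{Y\subset C}dg(Y)$ as chains, the definition reduces to a sum of $dg(Y)$-contributions. These vanish near $x$ except for cells on which $dg(Y)$ is singular, and we compensate by choosing $C$ so large that these cells lie far from $Z$.

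The key verifications are as follows. First, the result is independent of the choice of $C$: enlarging $C$ by a cell $W$ changes the expression by $g(\partial W)-dg(W)=0$. Second, it is independent of the lift $g$: changing $g$ by a cochain with values in $\N{\om^0,\exc}\nu$ alters the expression only by terms supported away from $Z$, once $C$ is large. Third, on overlapping $Z$'s the definitions agree, so $u_\ps$ is a well-defined element of $\E_\nu(S)$. Linearity (a) is immediate from the construction. For (b), if $\ps=d\phi$, lift $\phi$ to $h$ and take $g=dh$; the expression then telescopes to $h(\partial\partial C)=0$. For (c), $\Gm$-invariance follows from the equivariance of $g$ together with $\gm C$ being an admissible chain for $\gm Z$, which yields $u_\ps(\gm x)=u_\ps(x)$.

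The main obstacle is proving that a large enough $C$ exists with $\partial C$ avoiding all singular sets simultaneously. Both $\partial C$ and the collection of relevant $g(X)$ depend on $C$, while each excised neighborhood depends on its cell. I would handle this as follows. Using that $\Gm$ acts with finitely many orbits, every excised neighborhood for a translate $\gm X$ is the $\gm$-translate of one for $X$, and the Siegel domains cut out shrink toward their cusps under the contraction of $\Gm$ near $\partial S$. Taking $C$ as the union of all cells meeting a large ball $\{\dist(\orgn,\cdot)\le R\}$ (truncated suitably at horospheres $H_\cu(T)$ with $T$ growing), the boundary cells are far out, and their excised Siegel domains lie inside horoballs disjoint from $Z$. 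This is the $\ds$-dimensional analogue of \cite[\S12.2]{BLZ15}, and the cusp bookkeeping, including cells of $\partial C$ that contain cusps, is where the care is needed.
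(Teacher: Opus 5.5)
There is a genuine gap: your defining formula is identically zero. For every $\ds$-chain $C$ one has $dg(C)=g(\partial C)$ by the definition of the coboundary, and also $\sum_{Y\subset C}dg(Y)=g(\partial C)$ because interior faces cancel. So $\frac1{2\nu c(\nu)}\bigl(g(\partial C)-dg(C)\bigr)\equiv 0$, and your construction assigns $u_\ps=0$ to every cocycle.

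Read literally, (a)--(c) are satisfied by the zero map. The proposition only has content for the specific construction used afterwards. That construction must give back the cusp form in Proposition~\ref{prop-pi}, which yours cannot do.

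Your check that enlarging $C$ by $W$ changes the expression by $g(\partial W)-dg(W)=0$ is a symptom of this. It holds for every $W$, near $Z$ or not. The heuristic behind the formula is also misplaced. Each $dg(Y)$ represents the zero germ, so it vanishes near $\partial S$, not near $x$. The cells $Y$ with $dg(Y)(x)\neq 0$ are those for which $x$ lies in the singular sets $N_R(X)$ of faces $X\in B(Y)$, i.e.\ cells close to $x$. Their singularities cancel, and their sum is $2\nu c(\nu)u_\ps(x)$ (this is Proposition~\ref{prop-ups-sum}). Enlarging $C$ cannot push these cells away from $Z$; what it pushes away is $\partial C$.

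The correct definition, which is the paper's, has no subtraction. Set $u_\ps=\frac1{2\nu c(\nu)}\,g(\partial C)$ on $Z\cap S$, where $C$ is a finite union of $\ds$-cells with $Z\cap N_R(X)\cap S=\emptyset$ for all $X\in B(C)$. The paper first arranges $\sing_\nu g(X)\subset N_R(X)$ by taking $R$ large, and then obtains such $C$ from the finiteness lemma for $R$-neighborhoods. With this choice $g(\partial C)$ is an eigenfunction on $Z$.

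Independence of $C$ is then no longer a tautology. For two admissible choices you need $g(\partial(C'-C))=0$ on $Z$. This uses the cocycle property, namely that $g(\partial(C'-C))$ represents $\ps(\partial(C'-C))=0$, together with the fact that its $\nu$-singularities stay away from $Z$.

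With this correction the rest of your plan matches the paper's argument:
\begin{itemize}
\item lifting via the exact sequence and extending $\Gm$-equivariantly;
\item independence of the lift, since differences of lifts are supported near the cells;
\item $\Gm$-invariance from $g(\gm\,\partial C)=L(\gm)\,g(\partial C)$;
\item part (b) via $dh(\partial C)=h(\partial\partial C)=0$.
\end{itemize}
The suggested detour through Proposition~\ref{prop-psC} would be circular. For a general cocycle, no eigenfunction on the region between $Z$ and $\partial C$ is available other than the $u_\ps$ being constructed.
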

\begin{proof}After a geometrical preparation we will describe the
construction, and prove it properties.

\rmrk{\intitle{R}-neighborhoods of \intitle{(\ds-1)}-cells} Before
proving the proposition, we consider the influence of the condition
`exc' on cocycles. We define for each $C\in \X^\tess_{\ds-1}$ the
$R$-neighborhood $N_R(C)$ for each positive real
parameter~$R$.\il{NCR}{$N_R(C)$}

If $C\in \X^{\tess,Y}_{\ds-1}$ we take
\be N_R(C)= \bigl\{ x\in S\;:\; \dist(x,C) \leq R\bigr\}\,.\ee
This is a compact set, since $C$ is a compact set in $S$.

If $C\not\in \X^{\tess,Y}_{\ds-1}$ then $C$ contains a unique cusp
$\cu$, and $C$ is contained in some Siegel domain $\Sie_\cu(B,T)$ with
$B$ compact in $N$ and $T>0$. We define
\[ B_R = \bigl\{ \nm(x) \;:\; \bigl| x_i - x'_i\bigr|\leq R\text{ for }
1\leq i \leq p+q \text{ for all }\nm(x')\in B\bigr\}\]
in horospherical coordinates $x=(x^{(1)},x^{(2)}) \in \RR^{p+q}$, with
$n(x') = n(x_1', \cdots, x_{p+q}')$.
We take
\be N_R(C) = \Sie_\cu\bigl(B_R,(T^{-1}+R)^{-1}\bigr)\,.\ee

These $R$-neighborhoods satisfy the following statements:
\begin{lem}
For any $C_0 \in \X^{\tess,Y}_{\ds-1}$ there are only finitely many
$C\in \X^\tess_{\ds- 1}$ such that $N_R(C_0) \cap N_R(C)$ is non-empty.

If $C_0 \in \X^{\tess}_{\ds-1}\setminus \X^{\tess,Y}_{\ds-1}$, then
there is a cusp $\cu\in C_0$. For all $\gm\in \Gmm\cu$ we have
$\cu \in \gm C_0$. For any $C_0 \in \X^\tess_{\ds-1}$ there are only
finitely many $C\in \X^\tess_{\ds-1}$ such that
\be N_R(C_0) \cap N_R(C) \cap S \neq\emptyset\,.\ee
\end{lem}
\begin{proof}If $C_0 \in \X^{\tess,Y}_{\ds-1}$ then $N_R(C_0)$ is a
compact set, and intersects only finitely many $\Gm$-translates of the
fundamental domain $D(p_0)$ in \eqref{Dp} on which we based the
tessellation. Each $\Gm$-translate of $D(p_0)$ intersects only finitely
many $(\ds-1)$-cells in $\X^\tess_{\ds-1}$.

If $C_0$ contains a cusp $\cu$, then $N_R(C_0)$ is not compact, since it
contains a region in $S$ with $\cu$ in its closure. The construction of
$N_R(C_0)$ takes care that it intersects only finitely many $\ds$-cells
containing $\cu$.
\end{proof}

\rmrk{Construction}Let
$\ps\in Z^{\ds-1}\bigl( F^\tess_\bullet; \W {\om^0,\excg}\nu\bigr)^\bdc$.
The cocycle is determined by its values $\ps(C_j)$ on a finite set of
$(\ds-1)$-cells $C_1,\ldots, C_n$ that generate $\X^\tess_{\ds-1}$.

We use the exact sequences in~\eqref{exseq-NGW} to choose
representatives $\tilde \ps(C_j)  \in \G\om\nu$ for $j=1,\ldots,n$. If
$C_j \in \X^{\tess,Y}_{\ds-1}$ we can take
$\tilde\ps(C_j)\in \G\om\nu$. If $C_j$ contains a cusp $\cu$, then
$\tilde\ps(C_j) \in \G{\om^0,\excg}\nu$ with
$\bsing_\nu\bigl( \tilde\ps(C_j)\bigr) \subset \{\cu\}$. We extend
$\tilde\ps$ in a $\Gm$-equivariant way to a $\CC[\Gm]$-linear map
$\tilde\ps: F_{\ds-1}^\tess\rightarrow \G{\om^0,\excg}\nu$. This is
only a cochain, not a cocycle.

We arrange that $\sing_\nu\bigl(\tilde\ps(C_j)\bigr) \subset N_R(C_j)$
for $1\leq j \leq n$ by choosing $R$ sufficiently large. This implies
$\sing_\nu\bigl(\tilde\ps(C)\bigr) \subset N_R(C)$ for all
$C\in \X^{\tess}_{\ds-1}$, by $\Gm$-equivariance.\medskip

To construct a function in $\E_\nu(S)$ we consider first a compact
region $Z$ that is contained in the union $Z_1$ of finitely many
$\ds$-cells $Y \in \X^\tess_\ds$. The boundary $\partial  Z_1$ is a
cycle in $ \ZZ[\X^\tess_{\ds-1}]$. We take a much larger union
$\tilde Z \supset Z_1$ of finitely many $\ds$-cells
$Y \in \X^\tess_\ds$ such that its boundary
$\partial \tilde Z = \sum_i \e_i D_i \in \ZZ[\X^\tess_{\ds-1}]$
satisfies for all $i$
\be Z \cap N_R(D_i) \cap S = \emptyset\,.\ee
Hence
\be \sing_\nu \bigl( \tilde\ps(\partial\tilde Z ) \bigr) \subset
\bigcup_i \sing_\nu\bigl( d \tilde \ps(D_i)\bigr)\ee
does not intersect $Z\cap S$. The function
\be \label{utZtp} u_{\tilde Z,\tilde\ps} = \frac1{2\nu
c(\nu)}\tilde\ps(\partial\tilde Z) = \frac1{2\nu c(\nu)} \sum_i
(d\tilde\ps)(D_i)\ee
on $Z\cap S$ determines an element of $\E_\nu\bigl( \mathring Z\bigr)$.
We do not need factors in $\{1,-1\}$ to fix the orientation. The
notation of the $\ds$-cells $D_i$ is automatically positive, and
$\partial \tilde Z$ gets its orientation from $\tilde Z$. We note that
on $Z\cap S$ the functions $\tilde \ps(D_i)$ are representatives of
$\ps(D_i)$. The division by $\nu\, c(\nu)$ causes no problems.
See~\eqref{cHC}.

If $\tilde Z'$ is another $(\ds-1)$-cycle satisfying the same
conditions, then $\partial \tilde Z' - \partial \tilde Z$ is the
boundary of a sum of $\ds$-cells far away from $Z$, and on $Z\cap S$
\[ u_{\tilde Z',\tilde\ps}-u_{\tilde Z,\tilde \ps} =
\frac1{2\nu\,c(\nu)} \tilde \ps\bigl( \partial (\tilde Z'-\tilde
Z)\bigr)
= \frac1{2\nu\,c(\nu)}\ps\bigl( \partial (\tilde Z'-\tilde
Z)\bigr)=0\,\]
since $\ps$ is a cocycle. This shows that we can write
$u_{Z,\tilde \ps}$, independently of the choice of~$\tilde Z$.

If $Z_1\supset Z$ is a union of more $\ds$-cells, then a suitable choice
of $\tilde Z_1$ works fine for $Z$, and we have
\[ u_{\tilde Z_1,\tilde\ps} = u_{Z,\tilde \ps}\qquad\text{on }Z\cap
S\,.\]
This implies that we can define
\be u_{\tilde \ps}(z) = \lim_Z u_{Z,\tilde \ps}(z)\,,\ee
where $Z$ runs over larger and larger sums of finitely many $\ds$-cells.
This defines $u_{\tilde\ps} \in \E_\nu (S)$.

\rmrk{Properties}We have still to show that the constructed
eigenfunction $u_{\tilde\ps}$ does not depend on the choice of the
cochain $\tilde\ps$ and that it has properties
\eqref{cohE:lin}--\eqref{cohE:inv}.

Let $\gm\in \Gm$. For $z\in S$ we take $Z$ such that it contains $\gm z$
and $z$, and we take $\tilde Z$ suitable for~$Z$.
\begin{align*}
2\nu\, c(\nu)&\, u_{\tilde \ps}(\gm z)= \tilde \ps(\partial \gm \tilde
Z)(\gm z)
= \tilde \ps (\gm \partial \tilde Z) |\gm^{-1} \, (z)\\
&= \tilde \ps(\partial \tilde Z) (z) = 2\nu\, c(\nu)\,
u_{\tilde\ps}(z)\,.
\end{align*}
So $u_{\tilde\ps} \in \E_\nu(\Gm\backslash S)$.

Two different choices of cochains $\tilde \ps $ representing $\ps$
differ by a $(\ds-1)$-cochain $b$ with values in $\N{\om^0,\excg}\nu$.
In the situation of \eqref{utZtp} this gives on $Z\cap S$ the zero
function. So we can write
\be u_{\tilde\ps} = u_\ps\,.\ee
This gives a $\Gm$-invariant function $u_\ps$ in $\E_\nu( S)$. This is
part~\eqref{cohE:inv}. For assertion \eqref{cohE:lin} we note that we
can take lifts of cocycles in a linear way.

Let
$\ps=d\eta \in B^{\ds-1}(F^\tess_\bullet; \W{\om^0,\excg}\nu) \cap Z^{\ds-1}(F^\tess_\bullet; \W{\om^0,\excg}\nu)^\bdc$.
Choose $\tilde\eta(D) \in \G{\om^0,\excg}\nu$ with image
$\eta(D)\in \W{\om^0,\excg}\nu$ for representatives of the $\Gm$-orbits
in $X^\tess_{\ds-2}$, and extend $\tilde \eta$ as a cochain. Then
$d\tilde \eta $ can play the role of $\tilde \ps$. Since
$\tilde\ps(\partial C) = d\tilde\ps  (C) = d^2\tilde\eta(C)=0$,
we conclude that $u_{\tilde\ps}=0$, and hence $u_\ps=0$. This gives
assertion~\eqref{cohE:idp}.
\end{proof}

The construction gives for classes
$[\ps] \in H^{\ds-1}_\pb\bigr(\Gm;\W{\om^0,\exc}\nu\bigr)^{\bdc}$ a
specific element of $\E_\nu(\Gm\backslash S)$ without condition on the
growth at the cusps. In Theorem~\ref{thm-alW} we deal with cocycles
with values in the smaller module $\W{\om^0,\infty,\excg}\nu$. So there
is further work to do. Furthermore, we have no guarantee that the
construction might not give zero functions.

\subsection{Cocycles and cusp forms}\label{sect10-coc-cf}

\begin{prop}\label{prop-cE}Let
$\nu \in \CC\setminus \frac12\ZZ_{\leq -1}$, $|\re\nu|<\rho$. If
$\ps\in Z^{d-1}\bigl(F^\tess_\bullet;\W{\om^0,\excg}\nu\bigr)^\bdc$,
then $u_\ps\in \A^0(\Gm;\nu)$.
\end{prop}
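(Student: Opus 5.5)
We already know from Proposition~\ref{prop-cohE} that $u_\ps\in\E_\nu(S)$ and that it is $\Gm$-invariant. By Lemma~\ref{lem-cusp-crit} it therefore suffices to show two things: that $u_\ps$ is bounded on $S$, and that at each cusp $\cu$
\[ u_\ps(g_\cu\am(y)x)=\oh(y^{b}) \]
for some $b<\rho-|\re\nu|$. Because of $\Gm$-invariance and the finiteness of $\Gm\backslash\Cu$, it is enough to bound $u_\ps$ on the compact set $\pr(\SY)$, which is automatic, and on a fundamental domain $\fd_\cu=g_\cu F_\cu A^+_Y$ for $\Gmm\cu$ in each cuspidal horoball. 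After conjugating by $g_\cu$ we may take $\cu=\inft$.

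The plan is to use the explicit formula \eqref{utZtp} with a well-chosen cycle. Take $z=\nm(\xi)\am(t)\,\orgn$ with $\nm(\xi)\in F_\cu$ and $t$ large. Choose $\tilde Z$ to be a large finite union of $\ds$-cells containing a neighbourhood of the compact set $F_\cu$ thickened horizontally. Its boundary $\partial\tilde Z=\sum_i\e_iD_i$ then splits into two groups of cells.
\begin{itemize}
\item Cells $D_i$ in $\SY$ or at other cusps. Their values $\tilde\ps(D_i)$ lie in $\G\om\nu$ or in $\G{\om^0,\excg}\nu$, with singularities away from $\inft$. They are bounded by \eqref{GN:bdd}, and since $\inft\notin\bsing_\nu$ they are $\oh(t^{-\rho-\re\nu})$ as $t\to\infty$, uniformly for $\xi$ in compacta. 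This is exactly the decay used in \eqref{Woe:vertl}.
\item Vertical cells $D_i$ containing $\inft$. These form the side walls $g_\cu(\partial\text{-strips})A^+_Y$ of the Siegel domain. Their values are bounded by Definition~\ref{def-GN}\eqref{GN:bdd} and are $\oh(t^{-\rho-\re\nu})$ by \eqref{GN:dac}, uniformly for $n$ in compacta.
\end{itemize}

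The key point is that $\tilde Z$ must be chosen once, so that $N_R(D_i)$ misses a full vertical strip $F_\cu\times[T,\infty)$ above the fundamental set. This is what makes \eqref{utZtp} valid for all large $t$ simultaneously, with only finitely many cochain values involved. The $R$-neighbourhoods at $\inft$ are Siegel domains $\Sie(B_R,\cdot)$, so we need the horizontal projections $B_R$ of the side-wall cells to be disjoint from a neighbourhood of $F_\cu$. We achieve this by letting the side walls of $\tilde Z$ lie in $\Ld_\cu$-translates of $F_\cu$ that are far away in $N$. Then $u_\ps(z)$ is a finite sum of $\oh(t^{-\rho-\re\nu})$ terms plus bounded terms. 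This gives boundedness and the estimate $u_\ps=\oh(t^{-\rho-\re\nu})$ on the cusp region.

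We next compare exponents. Since $\re\nu>-\rho$, the exponent $b=-\rho-\re\nu$ is negative. The condition $b<\rho-|\re\nu|$ then becomes $-\rho-\re\nu<\rho-|\re\nu|$. This holds because $|\re\nu|<\rho$, and this is exactly where the hypothesis $|\re\nu|<\rho$ enters. Lemma~\ref{lem-cusp-crit} then gives $u_\ps\in\A^0(\Gm;\nu)$.

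The main obstacle is the uniformity in the choice of $\tilde Z$. In \eqref{utZtp}, $\tilde Z$ depended on a compact $Z$, whereas here $z$ runs up to the cusp. So we must check that a single finite $\tilde Z$ works on the whole non-compact region $g_\cu F_\cu A^+_T$. We must also check that the horizontal cells of $\partial\tilde Z$ at height $T$, and the cusp cell $\{\inft\}$-adjacent walls, do not pick up singularities near $z$. I would first try to handle this by taking $\tilde Z$ to be a union of $\fd_\cu$-translates $\gm\fd_\cu$ with $\gm$ in a large finite box of $\Ld_\cu$, together with enough compact cells. Then every cell of $\partial\tilde Z$ meeting the horoball is vertical and horizontally far from $F_\cu$. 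If the $N_R$ control of Siegel domains is too crude, for example because $B_R$ is defined with a fixed $R$, the fallback is to enlarge the box of $\Ld_\cu$. Since $N$ is unimodular, a finite box of lattice translates can surround any compact $B_R$.
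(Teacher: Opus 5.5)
Your proposal is correct and follows essentially the same route as the paper. Both use a single finite union $\tilde Z$ of $\ds$-cells covering a Siegel domain at $\cu$, chosen so that $N_R(D_i)$ misses that domain for every boundary cell $D_i$. Both split $\partial\tilde Z$ into cells not containing $\cu$, which contribute $t^{-\rho-\nu}$ times a bounded analytic function, and cells containing $\cu$, which are controlled by~\eqref{GN:dac} in Definition~\ref{def-GN}. Both finish with the exponent comparison $-\rho-\re\nu<\rho-|\re\nu|$ for Lemma~\ref{lem-cusp-crit}.

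Your explicit construction of $\tilde Z$ from a large box of $\Ld_\cu$-translates of $\fd_\cu$ fills in a step the paper only asserts. The reason it works is not unimodularity of $N$. It is that $\Ld_\cu$ is a cocompact lattice acting properly, so only finitely many $\ld\in\Ld_\cu$ have $\ld B'$ meeting a given compact set.
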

\begin{proof}The additional assumption concerning the cocycle $\ps$
implies that we can use condition~\eqref{GN:dac} in
Definition~\ref{def-GN}. By Definition~\ref{def-cf} we need quick decay
for $u\in\A^0(\Gm;\nu)$ at the cusps. Actually, any decay of size
$\oh\bigl(t^{\rho-|\re\nu|-\e}\bigr)$, $\e>0$, suffices by
Lemma~\ref{lem-cusp-crit}.

For a given cusp $\cu$ it suffices to give an estimate of $u_\ps$ on a
Siegel domain $\Sie_\cu(C,T)$, where $C$ contains a fundamental domain
of $\Gmm\cu\backslash N$. We take $\tilde Z$ as the boundary of a union
of $\ds$-cells covering $\Sie_\cu(C,T)$ such that
$N_R(D_i) \cap \Sie_\cu(C,T)= \emptyset$ for all the finitely many
$(\ds-1)$-cells occurring in $\tilde Z$.

For boundary components $D_i$ for which $\cu\not\in D_i$ the
contribution $\tilde\ps(D_i)$ represents a $\nu$-analytic boundary germ
on a neighborhood $U_i$ of $\cu $ in $\partial S$. Then we have
$\tilde\ps(D_i) \bigl( g_\cu \nm(x) \am(t) \bigr) 
= t^{-\rho-\nu} A\bigl( g_\cu \nm(x) \am(t) \bigr) $ for all
$ g_\cu \nm(x) \am(t) \in \Sie_\cu(C,T)$, for some analytic function on
a neighborhood of $U_i$ in $\hat S$. The analytic function $A$ is
bounded on $U_i$, and
\be \lim_{t\uparrow\infty} t^{\rho+\nu} \tilde\ps(D_i) \bigl( g_\cu
\nm(x) \am(t) \bigr) = A_u( g_\cu \,\inft)= A_u(\cu)\,. \ee
This gives an estimate of the desired strength of the contribution of
$D_i$.

The remaining boundary components $D_i$ contain the cusp~$\cu$. The
growth condition \eqref{GN:dac} in Definition~\ref{def-GN} gives for
the contribution of these components the behavior
$\oh\bigl(t^{-(\rho+\nu)}\bigr)$ as well.

To satisfy the condition in \label{appl3110} Lemma~\ref{lem-cusp-crit}
we need $-\rho-\re\nu < \rho-\bigl|\re\nu\bigr|$. Under the additional
condition $|\re\nu|<\rho$ we conclude that $u_\ps$ is a cusp form.
\end{proof}

\begin{prop}\label{prop-pi}Suppose that $\nu \in \CC$ satisfies
$\nu\not\in \frac12\ZZ_{\leq -1}$ and $|\re \nu|<\rho$.

Let $u\in \A^0(\Gm;\nu)$ and let the $(\ds-1)$-cocycle $\ps$ be
determined by $\ps(C)  = I_C(u;\cdot)$ for all $C\in \X^\tess_{\ds-1}$,
as in \eqref{Iintf}. Then $u_\ps=u$.
\end{prop}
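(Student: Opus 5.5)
The plan is to apply the construction in the proof of Proposition~\ref{prop-cohE} with a particularly convenient choice of the lifting cochain $\tilde\ps$. For $\ps(C)=I_C(u;\cdot)$ we take as representative the actual function $\tilde\ps(C)(y)=I_C(u;y)$, which by Proposition~\ref{prop-omfintW}~\eqref{omfintW:ic} lies in $\E_\nu(S\setminus C)$. This is only singular along $C$, so it is not yet an element of $\G{\om^0,\excg}\nu$.

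First we modify it. By Proposition~\ref{prop-omfintW}~\eqref{omfintW:ib} and Proposition~\ref{prop-eh}, $I_C(u;\cdot)$ is bounded away from a neighbourhood of the cusp part of $C$. It has the decay $\oh(t^{-\rho-\nu})$ in Siegel domains near $\cu$ avoiding $C$. Multiplying by a cut-off $\ch_C$ that equals $1$ outside $N_R(C)$ and vanishes near $C$, and doing so $\Gm$-equivariantly on orbit representatives, gives $\tilde\ps(C)=\ch_C\, I_C(u;\cdot)\in\G{\om^0,\excg}\nu$. This function has $\sing_\nu(\tilde\ps(C))\subset N_R(C)$ and represents $\ps(C)$. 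Here we need $\re\nu>-\rho$ so that Proposition~\ref{prop-eh} applies; this holds by hypothesis. By Proposition~\ref{prop-cohE} the result $u_\ps$ does not depend on the choice of lift, so this particular lift suffices.

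Next, fix $z\in S$ not on any $(\ds-1)$-cell, and a compact $Z\ni z$. Choose $\tilde Z$ as in the construction, so that $Z\cap N_R(D_i)=\emptyset$ for all boundary cells $D_i$ of $\tilde Z$. On $Z$ every cut-off equals $1$, hence
\[
2\nu c(\nu)\,u_\ps(z)=\sum_i \e_i\, I_{D_i}(u;z)=\int_{\partial\tilde Z}\omw_\nu(u;z,\cdot).
\]
Now apply Proposition~\ref{prop-omfintW}~\eqref{omfintW:ii}, equivalently Proposition~\ref{prop-psC}, summed over the $\ds$-cells composing $\tilde Z$. Interior faces cancel, and exactly one cell contains $z$ in its interior, so the sum equals $2\nu c(\nu)u(z)$. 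Since $\nu c(\nu)\neq0$ for $\nu\notin\frac12\ZZ_{\le -1}$, we get $u_\ps(z)=u(z)$. Both functions are continuous (real analytic) on $S$, so equality extends across the cells.

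The main obstacle is the case where $\tilde Z$ contains cells with cusps. There the integrals over cuspidal $(\ds-1)$-cells and the identity in Proposition~\ref{prop-omfintW}~\eqref{omfintW:ii} require the truncation-and-limit argument. We will truncate each cuspidal $\ds$-cell at a horosphere $H_\cu(T)$, apply Proposition~\ref{prop-psC} to the truncated compact region, and use the quick decay of $u$ and its derivatives (Corollary~\ref{cor-grwt-hc}) together with the polynomial bounds on $q_\nu$ and its first derivatives for $z$ fixed. This shows that the horospherical contributions vanish as $T\to\infty$.

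We also have to check that the cut-off modification does not affect the value at $z$. This holds because $z\notin N_R(D_i)$ for every $i$.
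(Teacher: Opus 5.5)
Your proof is correct. It uses the same ingredients as the paper's: a lift obtained by cutting off $I_C(u;\cdot)$ near $C$, the construction of Proposition~\ref{prop-cohE}, and Proposition~\ref{prop-psC}. What differs is where you evaluate.

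The paper fixes one compact cell $X\in\X^{\tess,Y}_\ds$ and the open set $V_\e$ of points of $X$ at distance $>\e$ from $\partial X$. It takes the lift with $R<\e$, which your cut-off construction allows, since $\chi_C$ may differ from $1$ only very close to $C$. Then $X$ itself is an admissible $\tilde Z$ for compact subsets of $V_\e$. A single application of Proposition~\ref{prop-psC} to a compact cell gives $2\nu c(\nu)\,u_\psi(y)=\tilde\psi(\partial X)(y)=\int_{x\in\partial X}\omw_\nu(u;y,x)=2\nu c(\nu)\,u(y)$ for $y\in V_\e$. Real-analyticity then gives $u_\psi=u$ on all of $S$. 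No cuspidal cell ever appears, so the step you single out as the main obstacle is avoided.

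Your route instead proves the identity pointwise at every $z$ off the $(\ds-1)$-skeleton, using a large $\tilde Z$ and additivity over its cells. This costs you the cuspidal case of Proposition~\ref{prop-omfintW}\,(ii), which the paper only sketches. Your truncation argument supplies it: for fixed $z$, the quick decay of $u$ and its first derivatives beats the at most polynomial size of $q_\nu(z,\cdot)$ and its first derivatives on $H_\cu(T)$. In return you need only continuity and density at the end, rather than unique continuation.
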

\begin{proof}We consider a $\ds$-cell $X\in \X^{\tess,Y}_\ds$. Let
$V_\e$ be the open set of points $z\in X$ with distance to $\partial X$
larger than $\e$. By taking $\e$ sufficiently small, we arrange that
$V_\e$ has a non-empty interior, which is open.

The equality $I_C(u;y)=\int_{x\in C} \omw_\nu(u; y,x) $ and the
inclusion $\sing\bigl( I_C(u;\cdot) \bigr) \subset C$ hold for each
boundary component $C\in B(X)$. We take a lift $\tilde\ps $ of $\ps$
like in the proof of Proposition~\ref{prop-cohE}, and now can take the
parameter $R$ smaller than $\e$. Then $\tilde\ps(C)(y) = \ps(C)(y)$ for
$y\in \mathring V_\e$. This gives for $y\in V_\e$
\bad 2\nu \, c(\nu) & u_\ps(y) = \tilde \ps(\partial X)(y) =
\ps(\partial X)(y)\\
&= \int_{x\in \partial X}\omw_\nu(u;y,x) = 2\nu\,c(\nu) \, u(y)\,,
\ead
with use of Proposition~\ref{prop-psC} in the last step. Since
$\nu \, c(\nu) \neq 0$ under the condition on~$\nu$, we conclude that
$u_\ps=u$ on the non-empty open set $\mathring V_\e$. By analyticity,
$u_\ps=u$ on~$S$.
\end{proof}

%% file: ccro2-11-inj.tex

\bigskip

\def\flnm{ccro2-11-inj}

\section{Injectivity of the map from cocycles to cusp
forms}\label{sect11}
Theorem~\ref{thm-alW} provides a linear map $\alw_\nu$ from
$H^{\ds-1}_\pb(\Gm;\W{\om^0,\excg}\nu\bigr)^\bdc$ that is only a
one-sided inverse to the linear map $\btw_\nu$ in
Proposition~\ref{prop-btimg}.

In the case $\ds=2$ the map $\alw_\nu$ is a bijection. See
\cite[Proposition 12.6]{BLZ15}. For $\ds\geq 3$ we do not know yet
whether the kernel of $\alw_\nu$ is zero or not. In this section we
describe the kernel as the image of a cohomology space with values in a
specific space of representatives of $\nu$-analytic boundary forms.
This leads, for $\nu$ satisfying $|\re\nu|<\rho$ and
$\nu\not\in \frac12\ZZ_{\leq-1}$, to the result in
part~\eqref{mnthm:nsp} of Theorem~\ref{mainthm} stated in the
introduction. The less complicated statement in \eqref{mnthm:nugrl}
will be considered in the next section.\smallskip

In Subsection~\ref{sect11-co-fs} we transform the construction of
$u_\ps$ in Subsection~\ref{sect10-constr} into a description as a
locally finite sum. This description is used in
Subsection~\ref{sect11-kernel} to relate the subspace of
$H^{\ds-1}_\pb\bigl( \Gm;\W{\om^0,\infty,\excg}\nu\bigr)^\bdc$ that we
have to divide out, to the image of the cohomology space
$H^{\ds-1}_\pb(\Gm;\G{\om^0,\excg}\nu)$. With help of sheaf cohomology
we will show that
$H^{\ds-1}_\pb(\Gm;\G{\om^0,\excg}\nu) = H^{\ds-1}_\pb
\bigl(\Gm;  \G{\om^0,\excg}\nu \cap \E_\nu(S)\bigr)$. This leads to
completion of the proof of part~\eqref{mnthm:nsp} of
Theorem~\ref{mainthm}.

\subsection{Coinvariants and infinite sums}\label{sect11-co-fs} The
construction in the proof of Proposition~\ref{prop-cohE} of the cusp
form $u_\ps$ associated to a cocycle
$\ps\in Z^{\ds-1}\bigl( F^\tess_\bullet;\W{\om^0,\excg}\nu\bigr)^\bdc$
is based on a lift
$\tilde\ps\in C^{\ds-1}\bigl( F^\tess_\bullet;\G{\om^0,\exc^0}\nu\bigr)$,
and on evaluation of $\tilde\ps$ on $(\ds-1)$-cycles. The next
proposition gives an alternative formulation, which will be important
later on in this section.

\begin{prop}\label{prop-ups-sum}Let
$\nu \in \CC\setminus \frac12\ZZ_{\leq-1}$, $ \re\nu>-\rho$. Let
$\ps\in Z^{\ds-1}\bigl( F^\tess_\bullet; \W{\om^0,\excg}\nu)^{\bdc}$
and let
$\tilde\ps \in C^{\ds-1}\bigl( F^\tess_\bullet; \G{\om^0,\excg}\nu)^\bdc$
be a lift as in the proof of Proposition~\ref{prop-cohE}. Then
\be \label{ups-sum}
u_\ps (x) = \sum_{D\in \X^\tess_d} d\tilde \ps(D)(x)\ee
represents $u_\ps$ as a locally finite sum of elements of
$\G{\om^0,\excg}\nu$.
\end{prop}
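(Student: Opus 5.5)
The plan is to compare the sum in \eqref{ups-sum} with the construction of $u_{\ps}$ in the proof of Proposition~\ref{prop-cohE}. There, for a compact region $Z$ contained in a finite union of $\ds$-cells, $u_\ps$ on $Z\cap S$ equals $\frac1{2\nu c(\nu)}\tilde\ps(\partial\tilde Z)$, where $\tilde Z=\sum_{D\in\mathcal D}D$ is a large finite union of $\ds$-cells. By linearity,
\[\tilde\ps(\partial\tilde Z)=\sum_{D\in\mathcal D}\tilde\ps(\partial D)=\sum_{D\in\mathcal D}(d\tilde\ps)(D).\]
I read \eqref{ups-sum} with the normalizing factor $\frac1{2\nu c(\nu)}$ absorbed into $\tilde\ps$, or else I insert this factor explicitly. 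The identity to prove is then that for $x\in Z\cap S$ the full sum over $\X^\tess_\ds$ has only finitely many nonzero terms at $x$, and that these are exactly the terms with $D\in\mathcal D$, for every admissible $\tilde Z$.

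The key step is a support statement. Each $d\tilde\ps(D)=\sum_{C\in B(D)}\pm\tilde\ps(C)$ is an element of $\G{\om^0,\excg}\nu$ that represents $(d\ps)(D)=0$, i.e.\ it lies in $\N{\om^0,\excg}\nu$. Its support is therefore contained in a compact set together with finitely many Siegel domains at the cusps of $D$. I would make this quantitative through the choice of lift, exactly as in the proof of Proposition~\ref{prop-cohE}: with $R$ fixed so that $\sing_\nu(\tilde\ps(C))\subset N_R(C)$, I would additionally arrange that for each generator $D_j$ of $\X^\tess_\ds$ the support of $d\tilde\ps(D_j)$ lies in $\bigcup_{C\in B(D_j)}N_{R'}(C)$ for some $R'$. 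This is possible because a function representing the zero germ on an excised neighborhood vanishes there. By $\Gm$-equivariance the same bound then holds for every $D$. The lemma on $R$-neighborhoods, which says that only finitely many $N_R(C)$ meet a given compact set in $S$, then gives local finiteness of the sum on $S$.

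With local finiteness in hand, given $Z$ I would choose $\tilde Z$ so large that every $D\notin\mathcal D$ has $\operatorname{supp} d\tilde\ps(D)\cap Z=\emptyset$. Then the sum in \eqref{ups-sum} restricted to $Z$ equals $\tilde\ps(\partial\tilde Z)$, which is $u_\ps$ on $Z$ up to the constant. Letting $Z$ exhaust $S$ proves the claim.

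The main obstacle is the support control near cusps. Every cusp $\cu$ lies in infinitely many cells $\gm D$ with $\gm\in\Gmm\cu$, and the supports of the corresponding terms are Siegel domains at $\cu$, which are not compact. I must check that for $x$ in a compact $Z$ only finitely many of these translates meet $x$. This uses the shape $N_R(C)=\Sie_\cu(B_R,(T^{-1}+R)^{-1})$ together with the fact that $\Gmm\cu$ acts on $N$ as a lattice, so only finitely many translates of $B_R$ meet a given compact subset of $N$. This is where the hypotheses $\re\nu>-\rho$ and condition $\bdc$ would enter, by guaranteeing that the lifted representatives can be cut off within such Siegel domains.
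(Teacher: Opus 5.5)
Your proposal is correct and follows the paper's proof. Both write $u_\ps$ on a compact $Z$ as $\frac1{2\nu c(\nu)}\tilde\ps(\partial\tilde Z)=\frac1{2\nu c(\nu)}\sum_{D}d\tilde\ps(D)$, summed over a large finite set of $\ds$-cells, and then let that set grow using local finiteness. You justify local finiteness more explicitly, via the support bound of $d\tilde\ps(D)\in\N{\om^0,\excg}\nu$ in $\bigcup_{C\in B(D)}N_{R'}(C)$ and the finiteness lemma for $R$-neighborhoods, where the paper only cites finitely many pairwise intersections of the singular sets; you also rightly note that \eqref{ups-sum} as stated omits the factor $\frac1{2\nu c(\nu)}$ that appears in the paper's own proof.
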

The cancellation of the $\nu$-singularities is remarkable. Since
$u_\ps\in \E_\nu(S)$ we have $\sing_\nu(u_\ps)=\emptyset$. However, the
choice of the lift $\ps$ implies that in general
$\sing_\nu\bigl( d\tilde\ps(D)\bigr) \neq \emptyset$, and hence that
singularities of neighboring cells $D$ have to cancel each other.
\begin{proof} Let $\tilde \ps$ be a cochain representing $\ps$, like in
the proof of Proposition~\ref{prop-cohE}. Since $d\ps(D)=0$ for each
$\ds$-cell, we have $d\tilde\ps(D) \in \N{\om^0,\excg}\nu$.

We recall that the union of the $\ds$-cells is equal to $S^\ast$, with
overlap only in boundary components. We consider a finite union
$U_E=\bigcup_{D\in E} D$ with a finite subset $E \subset \X^\tess_d$.
For suitable choice of $E$ we can use
$\tilde Z = \sum_{D\in E}\partial D$ as a $(\ds-1)$-dimensional set
surrounding a given set $Z$, as in the proof of
Proposition~\ref{prop-cohE}. Then
$u_\ps = \frac1{2\nu\, c(\nu)}\, \tilde\ps\bigl( \partial  U_E  \bigr)$
on $Z$. If we add more and more $\ds$-cells to $E$ we get a
representation of $u_\ps$ as a sum over larger and larger sets~$D$.
Since the sets $\sing(\tilde\ps(D))$ have only finitely many non-empty
pairwise intersections, the sum $\sum_{D \in E} d\tilde\ps(D)(x) $
becomes locally constant in $x$ as $E\subset \X^\tess_d$ expands, and
the infinite sum in~\eqref{ups-sum} converges absolutely on $S$ and
represents $u_\ps$.
\end{proof}

The group $\Gm$ acts freely on the set $\X^\tess_{\ds}$ of $\ds$-cells,
with finitely many orbits. We can generate these orbits by
$\fd_0 \in \X^{\tess,Y}_{\ds}$ and $\fd_\cu$ where $\cu$ runs over a
set \il{RGm}{$R(\Gm)$ representatives of $\Gm\backslash \Cu$}$R(\Gm)$
of representatives of $\Gm\backslash \Cu$. So we can write the
assertion \eqref{ups-sum} as
\be \label{ups-sum1} u_\ps = \sum_{\gm\in \Gm}L(\gm) \Bigl(
d\tilde\ps(\fd_0) + \sum_{\cu\in R(\Gm)} d\tilde\ps(\fd_\cu)
\Bigr)\,.\ee
By Proposition~\ref{prop-H0d} the space
$ H^\ds_\pb(\Gm;\N{\om^0,\excg}\nu)$ is isomorphic to the space of
coinvariants $(\N{\om^0,\excg}\nu)_\Gm$, which is
$\N{\om^0,\excg}\nu\bigm/ \left( 1-L(\Gm) \right)\N{\om^0,\excg}\nu$,
where we denote by $\left( 1-L(\Gm) \right)\N{\om^0,\excg}\nu$ the
space generated by the elements $L(\gm)v-v$, where $\gm$ runs through
$\Gm$ and $v$ runs through $\N{\om^0,\excg}\nu$. The coinvariant
associated to the cohomology class of $d\tilde\ps$ can be represented
by
\be\label{dsum} c\bigl( d\tilde \ps) =d\tilde\ps(\fd_0) + \sum_{\cu\in
R(\Gm)}d\tilde\ps(\fd_\cu)\,.\ee
Other choices of generating $\ds$-cells lead to the same quantity modulo
the space $\left( 1-L(\Gm) \right)\N{\om^0,\excg}\nu$.

\begin{prop}\label{prop-upxcoinv}Let
$\nu \in \CC\setminus \frac12\ZZ_{\leq-1}$, $\re\nu>-\rho$. Let
$\ps\in Z^{\ds-1}\bigl( F^\tess_\bullet; \W{\om^0,\excg}\nu)^{\bdc}$
and let
$\tilde\ps \in C^{\ds-1}\bigl( F^\tess_\bullet; \G{\om^0,\excg}\nu)^\bdc$
be a lift as in the proof of Proposition~\ref{prop-cohE}.

The cusp form $u_\ps$ is zero if and only if the cohomology class of
$d\tilde\ps$ is equal to the trivial coinvariant
$\left( 1-L(\Gm) \right)\N{\om^0,\excg}\nu$.
\end{prop}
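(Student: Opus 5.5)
The plan is to compare the cusp form with a $\Gm$-averaging operator on $\N{\om^0,\excg}\nu$. For $v\in\N{\om^0,\excg}\nu$ put
\[ \mathrm{Av}(v)=\sum_{\gm\in\Gm}L(\gm)v. \]
By \eqref{ups-sum1} we have $u_\ps=\mathrm{Av}\bigl(c(d\tilde\ps)\bigr)$, with $c(d\tilde\ps)$ as in \eqref{dsum}. The statement then amounts to two claims. First, $\mathrm{Av}$ is well defined as a locally finite sum. Second, the kernel of $\mathrm{Av}$ is exactly $\bigl(1-L(\Gm)\bigr)\N{\om^0,\excg}\nu$.

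\emph{Local finiteness.} An element $v\in\N{\om^0,\excg}\nu$ represents the zero boundary germ. Hence its support lies in the union of a compact set $Q\subset S$ and finitely many Siegel domains $\Sie_\cu(C_\cu,T_\cu)$ with $C_\cu\subset N$ compact.
\begin{itemize}
\item Only finitely many $\Gm$-translates of $Q$ meet a given compact subset of $S$.
\item A $\Gm$-translate of a Siegel domain at $\cu$ is a Siegel domain at another cusp, and horoballs at distinct cusps are disjoint high up.
\item Within $\Gmm\cu$, the compactness of $C_\cu$ and the lattice property of $\Ld_\cu$ give that any point lies in only finitely many translates.
\end{itemize}
This is the same bookkeeping as in the proof of Proposition~\ref{prop-ups-sum}. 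I take it as the routine part.

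\emph{The easy direction.} If $c(d\tilde\ps)=\sum_j\bigl(L(\gm_j)v_j-v_j\bigr)$ is a finite sum with $v_j\in\N{\om^0,\excg}\nu$, then $\mathrm{Av}$ kills each term by reindexing the locally finite sum. So $u_\ps=0$. Different choices of generating $\ds$-cells change $c(d\tilde\ps)$ only modulo this subspace, so the criterion is well posed.

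\emph{The converse: construct a cutoff.} I build a $\Gm$-partition of unity. This is a smooth $\ch\ge 0$ on $S$ with $\sum_\gm L(\gm)\ch=1$, obtained by smoothing the indicator of the standard fundamental domain $\fd_0\cup\bigcup_{\cu\in R(\Gm)}\fd_\cu$. I choose $\ch$ so that two conditions hold.
\begin{itemize}
\item On $\SY$ it has compact support.
\item In each horoball $B_\cu(Y)$ it has the product form $g_\cu\,\nm(x)\am(t)\,\orgn\mapsto\ph_\cu(x)$, where $\ph_\cu\in C_c^\infty(N)$ and $\sum_{\ld\in\Ld_\cu}\ph_\cu(\ld x)=1$.
\end{itemize}
This $\ch$ is bounded, and its support is a compact set together with finitely many Siegel domains.

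\emph{The converse: rewrite $c$ as a coinvariant.} Suppose $u_\ps=\mathrm{Av}(c)=0$ with $c=c(d\tilde\ps)$. Write
\[ c=\sum_\gm \bigl(L(\gm)\ch\bigr)c=\sum_\gm L(\gm)w_\gm,\qquad w_\gm=\ch\cdot L(\gm^{-1})c. \]
Only finitely many $w_\gm$ are nonzero. This follows from the same finiteness as above: a Siegel domain with compact base meets only finitely many $\Gmm\cu$-translates of $\operatorname{supp}\ph_\cu\times A$. Each $w_\gm$ lies in $\N{\om^0,\excg}\nu$, because multiplying by the bounded smooth function $\ch$ preserves three things.
\begin{itemize}
\item Smoothness, boundedness, and the growth condition \eqref{def-GN-gc}.
\item The support type: a compact set plus Siegel domains.
\item The property of representing the zero germ.
\end{itemize}
Hence, modulo $\bigl(1-L(\Gm)\bigr)\N{\om^0,\excg}\nu$,
\[ c\equiv\sum_\gm w_\gm=\ch\cdot\sum_\gm L(\gm^{-1})c=\ch\cdot\mathrm{Av}(c)=\ch\, u_\ps=0. \]

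\emph{Main obstacle.} The step I expect to need the most care is the behaviour at the cusps. I must check that $\ch$ can be chosen in product form so that the relevant sums are genuinely finite and not merely locally finite. I must also check that $w_\gm$ still satisfies the growth condition \eqref{def-GN-gc} uniformly on compacta in $N$. I would handle both by transporting each cusp to $\inft$ and using the $\Ld_\cu$-periodic partition $\ph_\cu$ on $N$ crossed with $A^+_T$.
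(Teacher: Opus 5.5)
Your proposal is correct and follows essentially the paper's own proof. Both use a $\Gm$-partition of unity $\ch$ of product form in the cuspidal horoballs, the identity $c-\ch\,u_\ps=\sum_{\gm\in\Gm}\bigl(L(\gm)-1\bigr)\bigl(\ch\cdot L(\gm^{-1})c\bigr)$ with only finitely many nonzero terms, and reindexing of the locally finite sum for the other direction.

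You correctly check membership in $\N{\om^0,\excg}\nu$ only for the products $\ch\cdot L(\gm^{-1})c$, which is all the argument uses. The paper instead asserts it for $\ch$ itself, which is not literally true: high in the horoballs $\ch$ is constant in the vertical variable, so it violates \eqref{def-GN-gc} when $\re\nu>-\rho$.
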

\begin{proof}We construct a partition of unity on $S$. This is a
function $\ch\in C^\infty(S)$ such that
$\sum_{\gm\in \Gm}\ch(\gm\, x)=1$ for all $x\in S$, as a locally finite
sum.

We choose $a>Y$. Then
$S_{\!\Gm}(a) = S \setminus \bigcup_{\cu\in \Cu}B_\cu(a)$ is a
neighborhood of $\SY$. We choose $\ch_0 \in C_c^\infty(S)$ such that
\be \sum_{\gm\in \Gm} \ch_0( \gm\, x)=1\qquad \text{for }x\in
S_{\!\Gm}(a)\,.\ee

For each $\cu \in R(\Gm)$ we take $\kp_\cu \in C_c^\infty(N)$ such that
$\sum_{\ld\in \Ld_\cu} \kp_\cu(\ld^{-1}n)=1$ for $n\in N$, and then put
for $g_\cu n \am(t) \,\orgn\in B_\cu(Y)$
\be \ch_\cu \bigl(g_\cu n \am(t) \,\orgn) = \kp_\cu(n) \,.\ee
Then
\be \sum_{\gm\in \Gmm\cu} \ch_\cu(\gm^{-1} x)=1\qquad\text{for }x\in
B_\cu(Y)\,.\ee
We note that $\Ld_\cu$ is a lattice in $N$, and that
$\Gmm \cu = g_\cu \Ld_\cu g_\cu^{-1}\subset\Gm  $. We define
$\ch_\cu=0$ on $S\setminus B_\cu(Y)$.

We glue $\ch_0$ and the functions $\ch_\cu$ together with a function
$\thv \in C^\infty(S)$ determined by
\be \thv(x) = 1\text{ for }x\in \SY\,,\quad \thv(g_\cu n \am(t) \,\orgn)
= \thv_0(t) \text{ for } g_\cu n \am(t)\,\orgn \in B_\cu(Y)\,,\ee
with $\thv_0 \in C^\infty\bigl([Y,\infty)\bigr)$ satisfying
$\thv_0\geq 0$, $\thv_0(Y)=1$ and $\thv_0(t) =0$ for $t\geq a$, where
$a>y$ has been chosen at the start of the proof. Then we obtain
$\ch\in C^\infty(S)$ by
\bad \ch(x) &= \ch_0(x) & &\text{for } x\in \SY\,,\\
&= \thv(x) \,\ch_0(x) + \sum_{\cu \in \Gm\backslash \Cu} \bigl(1-
\thv(x) \bigr)\ch_\cu(x)
&& \text{for } x \in \gm B_\cu(Y),\; \gm\in \Gm\,.
\ead
The resulting function $\ch$ is in $\N{\om^0,\excg}\nu$, and
$\sum_{\gm\in \Gm} L(\gm) \ch=1$, as a locally finite sum.

We put
$f= d\tilde \ps(\fd_0) + \sum_{\cu \in R(\Gm)} d\tilde\ps(\fd_\cu)$.
With \eqref{ups-sum1} and \eqref{dsum} we have
\be \label{upsf}u_\ps = \sum_{\gm\in \Gm} L(\gm) f\,.\ee
We apply the following computation, with locally finite sums. We use
that left translation of products satisfies
$L(\gm)(a\cdot b) = \bigl( L(\gm) a\bigr)\cdot
\bigl( L(\gm) b\bigr)$.
\begin{align}\nonumber
f - \ch \cdot u_\ps &= f\cdot \sum_{\gm\in \Gm} L(\gm^{-1}) \ch
&&\text{partition of unity}\\
\nonumber
&\qquad\hbox{} - \ch\cdot \sum_{\gm\in \Gm} L(\gm)
f&&\text{with \eqref{upsf}}
\displaybreak[0]\\
\nonumber
&= \sum_{\gm\in \Gm} f\cdot L(\gm^{-1})\ch - \sum_{\gm\in \Gm} L(\gm)
\Bigl( f \cdot L(\gm^{-1})\ch \Bigr)
\displaybreak[0]\\
&= \sum_{\gm\in \Gm} \Bigl( 1 - L(\gm)\Bigr) \Bigl(f \cdot
L(\gm^{-1})\ch \Bigr)\,.
\end{align} The function $ f \cdot \bigl(L(\gm^{-1})\ch
\bigr)$ can be non-zero only if
$\supp(f) \cap \bigl( \gm^{-1} \supp (\ch) \bigr)\neq \emptyset$. This
happens only for finitely many $\gm\in \Gm$. Hence $f-\ch\cdot u_\ps$
is an element of $\left( 1-L(\Gm)\right) \N{\om^0,\excg}\nu$.

Under the assumption that $u_\ps=0$ the function
$f=d\tilde\ps(\fd_0) + \sum_{\cu\in R(\Gm)} d\tilde\ps(\fd_\cu)$
represents the trivial coinvariant in $(\N{\om^0,\excg}\nu)_\Gm$.
Conversely, if $f$ represents the trivial coinvariant, then the sum of
its $\Gm$-translates vanishes.
\end{proof}

\subsection{The kernel of the map from cohomology to
cocycles}\label{sect11-kernel}
Under the conditions $\nu \in \CC\setminus\frac12\ZZ_{\leq -1}$,
$|\re\nu|<\rho$ we now have the following scheme of linear maps
\badl{Hdiag} \xymatrix@C=.5cm{ \A^0(\Gm,\nu) \ar[r]^-{\btw_\nu}&
H^{\ds-1}_\pb\bigl (\Gm;\W{\om^0,\infty,\excg}\nu\bigr)^{\bdc}
\ar@{^(->}[d]^\ii\\
& H^{\ds-1}_\pb\bigl( \Gm;\W{\om^0,\excg}\nu\bigr)^{\bdc}
\ar[ul]_{\alw_\nu} \ar@{^(->}[d]
\\
H^{\ds-1}_\pb\bigl( \Gm;\G{\om^0,\excg}\nu \bigr)\ar[r] &
H^{\ds-1}_\pb\bigl( \Gm;\W{\om^0,\excg}\nu\bigr)
\ar[r]& H^\ds_\pb\bigl( \Gm;\N{\om^0,\excg}\nu\Bigr)
}
\eadl
See Proposition~\ref{prop-btimg} and Theorem~\ref{thm-alW}. The upper
vertical map is the natural map associated to the injective linear map
\il{i}{$\ii$}
$\ii:\W{\om^0,\infty,\excg}\nu \rightarrow \W{\om^0,\excg}\nu$.
Proposition~\ref{prop-pi} states that the composition of the three maps
in the triangle starting at $\A^0(\Gm;\nu)$ is the identity. This
implies that $\btw_\nu$ and the composition $\ii\circ \btw_\nu$ are
injective.

The bottom row is part of the long exact sequence associated to the
short exact sequence
\[ 0 \rightarrow \N{\om^0,\excg}\nu \rightarrow \G{\om^0,\excg}\nu
\rightarrow \W{\om^0,\excg}\nu \rightarrow 0\,.\]
See Remark~\ref{sect4-sexsq}. The space
$H^\ds_\pb\bigl( \Gm;\N{\om^0,\excg}\nu\Bigr) $ is isomorphic to the
space of coinvariants $(\N{\om^0,\excg}\nu)_\Gm$; see
Proposition~\ref{prop-H0d}. So a class in
$H^{\ds-1}_\pb\bigl( \Gm;\G{\om^0,\excg}\nu \bigr)^\bdc$ represented by
$\ps\in Z^{\ds-1}\bigl ( F^\tess_\bullet;\W{\om^0,\excg}\nu\bigr)^\bdc$
is in the kernel of $\alw_\nu$ if its image $[d\tilde\ps]$ in
$H^\ds_\pb\bigl( \Gm;\N{\om^0,\excg}\nu\bigr) $ vanishes. See
Proposition~\ref{prop-upxcoinv}. The part of the long exact sequence in
the diagram \eqref{Hdiag} shows that this happens if $[\ps]$ is in the
image of $H^{\ds-1}_\pb\bigl( \Gm;\G{\om^0,\excg}\nu \bigr)$ under the
natural map associated to
$\G{\om^0,\excg}\nu \rightarrow \W{\om^0,\excg}\nu$. In this way, we
arrive at the following result.

\begin{prop}\label{prop-qIwe}If
$\nu \in \CC\setminus\frac12\ZZ_{\leq -1}$, $|\re\nu|<\rho$ then the
kernel of the surjective map
$\alw_\nu: H^{\ds-1}_\pb\bigl( \Gm;\W{\om^0,\excg}\nu\bigr)^\bdc
\rightarrow\A^0(\Gm;\nu)$ is equal to the
intersection\il{Iwe1}{$I^{W,\excg}_\nu$}
\badl{Iwe1} I^{W,\excg}_\nu &:=H^{\ds-1}_\pb\bigl( \Gm;
\W{\om^0,\excg}\nu\bigr)^\bdc\\
& \quad\hbox{}\cap \im\Bigl( H^{\ds-1}_\pb \bigl(
\Gm;\G{\om^0,\excg}\nu\bigr) \rightarrow H^{\ds-1}_\pb \bigl(
\Gm;\W{\om^0,\excg}\nu\bigr)\Bigr)\,.\eadl
\end{prop}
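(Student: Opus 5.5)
We prove the two inclusions separately, using the description of $\ker\alw_\nu$ in Proposition~\ref{prop-upxcoinv} together with the exact row in~\eqref{Hdiag}. Surjectivity of $\alw_\nu$ is already given by Proposition~\ref{prop-pi}: every $u\in\A^0(\Gm;\nu)$ equals $u_\ps$ for $\ps=\ps^u$, which lies in $H^{\ds-1}_\pb(\Gm;\W{\om^0,\excg}\nu)^\bdc$ by Proposition~\ref{prop-btimg}.

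\emph{Kernel is contained in $I^{W,\excg}_\nu$.} Take a class $[\ps]$ in the kernel, represented by $\ps\in Z^{\ds-1}(F^\tess_\bullet;\W{\om^0,\excg}\nu)^\bdc$. Choose a lift $\tilde\ps$ with values in $\G{\om^0,\excg}\nu$, as in the proof of Proposition~\ref{prop-cohE}. Then $d\tilde\ps$ is a $\ds$-cocycle with values in $\N{\om^0,\excg}\nu$. Its class is exactly the image of $[\ps]$ under the connecting map $H^{\ds-1}_\pb(\Gm;\W{\om^0,\excg}\nu)\to H^\ds_\pb(\Gm;\N{\om^0,\excg}\nu)$; this is the standard snake-lemma description, applied to the resolution $F^\tess_\bullet$ and the short exact sequence in Proposition~\ref{prop-seqs}. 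Under the isomorphism $H^\ds_\pb\cong(\N{\om^0,\excg}\nu)_\Gm$ of Proposition~\ref{prop-H0d}, this class corresponds to the coinvariant of $c(d\tilde\ps)$ in~\eqref{dsum}. Since $u_\ps=0$, Proposition~\ref{prop-upxcoinv} says this coinvariant is trivial. Exactness of the long sequence then places $[\ps]$ in the image of $H^{\ds-1}_\pb(\Gm;\G{\om^0,\excg}\nu)$. It also lies in the $\bdc$-subspace by assumption, so it lies in $I^{W,\excg}_\nu$.

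\emph{$I^{W,\excg}_\nu$ is contained in the kernel.} Run the same argument backwards. If $[\ps]$ is the image of a $\G{\om^0,\excg}\nu$-valued cocycle $\phi$, we may take the lift to be $\tilde\ps=\phi$. The lift in Proposition~\ref{prop-cohE} needs only $\sing_\nu(\tilde\ps(C))\subset N_R(C)$ for some $R$; for a $\Gm$-equivariant cochain this holds once $R$ is large, because there are finitely many generating orbits. With this choice $d\tilde\ps=0$, so every term in the locally finite sum~\eqref{ups-sum} vanishes and $u_\ps=0$.

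Where I expect the main obstacle is the representative issue. $\phi$ is cohomologous to $\ps$ in $\W{\om^0,\excg}\nu$ but perhaps not equal to a $\bdc$ representative, and by Proposition~\ref{prop-cohE}\eqref{cohE:idp} $u_\ps$ is only known to be independent of representatives within the $\bdc$ cocycles. To handle this I would check directly that the construction of $u_\ps$ never uses $\bdc$ beyond the choice of lifts. If $\ps'=\ps+d\eta$, then lifting $\eta$ to $\tilde\eta$ and using $\tilde\ps+d\tilde\eta$ as the lift of $\ps'$ gives $\tilde\ps'(\partial\tilde Z)=\tilde\ps(\partial\tilde Z)$. So $u$ is the same for both, and the kernel equals $I^{W,\excg}_\nu$ as stated.
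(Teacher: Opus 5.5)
Your inclusion $\ker\alw_\nu\subset I^{W,\excg}_\nu$ is the paper's argument. The connecting map sends $[\ps]$ to $[d\tilde\ps]$, which corresponds to the coinvariant $c(d\tilde\ps)$ of \eqref{dsum}. Proposition~\ref{prop-upxcoinv} turns $u_\ps=0$ into triviality of that coinvariant, and exactness finishes.

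The paper gets the reverse inclusion from the same chain, because each link is an equivalence. Work with the $\bdc$ representative $\ps$ and its lift $\tilde\ps$ from Proposition~\ref{prop-cohE}. If $[\ps]$ lies in the image of $H^{\ds-1}_\pb(\Gm;\G{\om^0,\excg}\nu)$, exactness gives $[d\tilde\ps]=0$. Then $c(d\tilde\ps)$ is a trivial coinvariant, and Proposition~\ref{prop-upxcoinv} gives $u_\ps=0$. You never leave $\bdc$ representatives, so the ``representative issue'' you worry about does not arise.

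Your direct version of the reverse inclusion rests on a false claim. A $\Gm$-equivariant $\G{\om^0,\excg}\nu$-valued cochain $\phi$ need not satisfy $\sing_\nu(\phi(C))\subset N_R(C)$ for large $R$. The set $N_R(C)$ is compact if $C\subset S$, and otherwise a Siegel domain at the unique cusp of $C$. But $\phi(C)$ is only required to be a $\Dt$-eigenfunction off a compact set and off Siegel domains at the cusps of its own finite set $E$, and nothing forces $E\subset C\cap\Cu$:
\begin{itemize}
\item the image of $\phi$ in $\W{\om^0,\excg}\nu$ need not be $\bdc$;
\item even when it is, $\phi(C)$ is only determined modulo $\N{\om^0,\excg}\nu$. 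For example, replace $\phi$ by $\phi+d\theta$, where $\theta$ is an $\N{\om^0,\excg}\nu$-valued $(\ds-2)$-cochain whose values are supported in Siegel domains at foreign cusps.
\end{itemize}
Having finitely many generating orbits does not help, since the obstruction is non-compactness towards a cusp not on $C$.

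The same defect affects the repair in your last paragraph. It shows only that the lift $\tilde\ps+d\tilde\eta$ of $\ps+d\eta$ reproduces $u_\ps$. But $\phi$ differs from that lift by an $\N{\om^0,\excg}\nu$-valued cochain $b$, and you would still need $b(\partial\tilde Z)=0$ on $Z$, which the $N_R$-argument does not give. The way out is to note that $db=-d\tilde\ps$, so $d\tilde\ps$ is a coboundary, i.e.\ a trivial coinvariant. Proposition~\ref{prop-upxcoinv} then gives $u_\ps=0$, which is exactly the paper's route.
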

In the case $\ds=2$ we have
$H^1_\pb \bigl( \Gm;\G{\om^0,\excg}\nu\bigr) =\{0\}$ by
\cite[Proposition 12.5]{BLZ15}.
\medskip

We recall that $\N\om\nu = C^\infty_c(S)$, and that $\N{\om^0,\excg}\nu$
is the space of functions $f\in C^\infty(S)$ with support contained in
the union of a finite number of Siegel domains and a compact subset of
$S$ satisfying the growth conditions in parts \eqref{GN:bdd}
and~\eqref{GN:dac} in Definition~\ref{def-GN}.

We abbreviate
\il{Dtnu}{$\N\Dt\nu\subset \N{\om^0}\nu$}$\Dt_\nu = \Dt - \rho^2+\nu^2$,
and define $\N\Dt\nu$ as the image $\Dt_\nu \G{\om^0,\excg}\nu$. The
elements of $\N\Dt\nu$ have support in the union of a finite number of
Siegel domains and a compact subset of $S$. The differentiation will in
general destroy the growth conditions that elements of
$\G{\om^0,\excg}\nu$ satisfy.

The kernel of $\Dt_\nu:\G{\om^0,\excg}\nu \rightarrow \N\Dt\nu$ is the
$\Gm$-module \il{Eomex}{$\E^{\om^0,\excg}\nu$}$\E^{\om^0,\excg}_\nu 
= \E_\nu(S)\cap \G{\om^0,\excg}\nu$. We have the exact sequence
\be \label{exEGN}
0 \longrightarrow \E^{\om^0,\excg}_\nu\longrightarrow \G{\om^0,\excg}\nu
\stackrel{\Dt_\nu}\longrightarrow \N\Dt\nu\longrightarrow 0\,.\ee

\begin{lem}\label{lem-iDt}Let
$\nu \in \CC\setminus \frac12\ZZ_{\leq -1}$, $|\re\nu|<\rho$. For each
$h\in C^\infty_c(S)$ there are $f\in C^\infty(S)$ such that
$\Dt_\nu f = h$.
\end{lem}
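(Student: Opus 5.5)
The natural candidate for a solution of $\Dt_\nu f=h$ is the convolution of $h$ with the free space resolvent kernel $q_\nu$ from Proposition~\ref{prop-Q}. We put
\[ f(x) \;=\; \frac{1}{2\nu\,c(\nu)} \int_{S} q_\nu(x;y)\, h(y)\, d\mu(y)\,, \]
which makes sense since $\nu\,c(\nu)\neq 0$ for $\nu\notin\frac12\ZZ_{\leq -1}$ (see the discussion after~\eqref{cHC}). The plan is to show, in this order, that the integral converges, that $f$ is smooth, and that $\Dt_\nu f=h$.

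\emph{Convergence and smoothness.} For fixed $x$ the integrand has support in the compact set $\supp h$. The only singularity is at $y=x$, where $q_\nu(x;y)=\oh\bigl(\dist(x,y)^{-d_0}\bigr)$ with $d_0=\ds-2<\ds$ (or any $d_0>0$ if $\ds=2$) by part~\eqref{Q:e} of Proposition~\ref{prop-Q}. This singularity is locally integrable, so the integral converges absolutely. For smoothness we rewrite the integral with the group structure. Using the invariance $q_\nu(gx;gy)=q_\nu(x;y)$, and writing $x=g\,\orgn$ and $y=g\,y'$, we get
\[ f(g\,\orgn)=\frac{1}{2\nu\,c(\nu)}\int_S q_\nu(\orgn;y')\,h(g\,y')\,d\mu(y')\,. \]
All derivatives in $g$ now fall on the smooth compactly supported $h$, and the kernel $q_\nu(\orgn;\cdot)$ stays fixed and locally integrable. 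Differentiation under the integral sign is justified for $g$ in compact sets, because $\supp h(g\,\cdot)$ then stays in a fixed compact set. Hence $f\in C^\infty(S)$.

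\emph{The equation $\Dt_\nu f=h$.} We use the symmetry $q_\nu(x;y)=q_\nu(y;x)$ and duality. For any test function $\ph\in C_c^\infty(S)$ we compute $\int_S f\,\Dt_\nu\ph\,d\mu$ by Fubini. The integrand is absolutely integrable on $\supp h\times\supp\ph$, since the singularity of order $d_0<\ds$ along the diagonal is locally integrable. After Fubini, the inner integral is $\int_S q_\nu(y;x)\,\Dt_\nu\ph(x)\,d\mu(x)$, which equals $2\nu c(\nu)\ph(y)$ by the resolvent relation~\eqref{resolve}. This gives
\[ \int_S f\,\Dt_\nu\ph\,d\mu=\int_S h\,\ph\,d\mu\,. \]
Since $\Dt_\nu$ is formally self-adjoint and $f$ is smooth, we conclude $\Dt_\nu f=h$ pointwise.

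\emph{Main obstacle.} The step I expect to need the most care is the justification of the resolvent relation and of Fubini at the diagonal singularity, including the case $\ds=2$ with its logarithmic term. This is exactly what \cite[Lemma 2.1, Lemma 2.2]{MiWa92} provide, as quoted after Proposition~\ref{prop-Q}, so it can be taken from there. The hypothesis $|\re\nu|<\rho$ is not needed for existence. It does say that $f$ decays like $\dist^{-\rho-\re\nu}$ away from $\supp h$, by~\eqref{qnuas}, which is the growth relevant for the later use of $\N\Dt\nu$.
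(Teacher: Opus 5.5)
Your proof is correct and follows the paper's route: $\int_S q_\nu(\cdot\,;y)\,h(y)\,d\mu(y)$ is a weak solution of $\Dt_\nu f=2\nu c(\nu)h$, by local integrability of $Q_\nu$, Fubini, the symmetry of $q_\nu$ and the resolvent relation~\eqref{resolve}. The only difference is that you prove smoothness directly, by moving all derivatives onto $h$ in $f(g\,\orgn)=\frac{1}{2\nu c(\nu)}\int_S q_\nu(\orgn;y')\,h(g\,y')\,d\mu(y')$, whereas the paper appeals to elliptic regularity; also, in your closing side remark the decay away from $\supp h$ is exponential, of size $e^{-(\rho+\re\nu)\,\dist(x,y)}$ by~\eqref{qnuas}, not a power of the distance, though this plays no role in the proof.
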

\begin{proof}Since $Q_\nu$ is locally integrable, \cite[Lemma
2.1]{MiWa92}, the function
\[ f_1 (x) = \int_{y\in S} q_\nu(x;y) h(y)\, d\mu(y) \]
is well-defined.
We test against $\ph \in C^\infty_c(S)$. So
$\Dt_\nu \ph \in C_c^\infty(S)$, and with use of \eqref{resolve} in
Proposition~\ref{prop-Q},
\begin{align*}
\int_{x\in S} (\Dt_\nu \ph)(x) \, f_1(x)\, d\mu(x)
&= \int_{x\in S} (\Dt_\nu \ph)(x) \int_{y\in S} q_\nu(x;y) \, h(y)\,
d\mu(y) \, d\mu(x)
\displaybreak[0]\\
&= \int_{y\in S} \int_{x\in S} q_\nu(x;y) \Dt_\nu \ph(x) \, d\mu(x)\,
h(y)\, d\mu(y)
\displaybreak[0]\\
& = 2\nu\, c(\nu) \int_{y\in S} \ph(y) \, h(y)\, d\mu(y)\,.
\end{align*}
So $f_1$ is a weak solution of the differential equation
$\Dt_\nu f = 2\nu \,c(\nu)\, h$. Since $\Dt$ is elliptic and
$\nu\,c(\nu)\neq 0$ the equation $\Dt_\nu f = h$ has a smooth solution.
\end{proof}

\subsection{Sheaf cohomology} We recall Proposition~\ref{prop-pcshc},
which states that parabolic cohomology spaces with values in a
$\Gm$-module $V$ are isomorphic to sheaf cohomology spaces on
$\Gm\backslash \Sast$ with values in a locally constant sheaf
associated to $V$. By
$\Q[\N\Dt\nu]= \Gm\backslash \left( \N \Dt\nu \times \Sast\right)$ we
denote the locally constant sheaf associated to~$\N\Dt\nu$.

Let \il{Cinfsh}{$\C^\infty,\; \C^\ast$}$\C^\infty$ denote the sheaf of
$C^\infty$-functions on $\Gm\backslash S$, and let $j$ denote the
embedding $\Gm\backslash S \rightarrow \Gm\backslash \Sast $. We put
$\C^\ast = j_\ast \C^\infty : U \mapsto \C^\infty\bigl( j^{-1}U\bigr)$.

\begin{lem}\label{lem-ordst}The restriction of $\Q[\N\Dt\nu]$ to
$\Gm\backslash S$ is isomorphic to the sheaf $\C^\infty$.
\end{lem}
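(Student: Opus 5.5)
The plan is to compare the two sheaves stalk by stalk through an explicit morphism, using that $\N\Dt\nu$ is a module of genuine smooth functions on $S$ which, near points of $S$, contains all smooth germs. Since $\N\Dt\nu$ is not a discrete set of values, I read $\Q[\N\Dt\nu]$ on $\Gm\backslash S$ as the paper intends here: the sheaf of germs of $\Gm$-equivariant local sections $x\mapsto(v,x)$, with $v$ identified by its germ at $x$. I will make this reading explicit at the start of the proof.

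First, localize. Let $\pr\colon S\to\Gm\backslash S$ be the projection. Because $\Gm$ is torsion-free and acts properly discontinuously, every $x\in S$ has a small open ball $B_x$ with $\gm B_x\cap B_x=\emptyset$ for $\gm\neq e$. Then $\pr$ maps $B_x$ homeomorphically onto an open set $U_x$. A section of $\Q[\N\Dt\nu]$ over $U_x$ is given by an element of $\N\Dt\nu$ restricted to $B_x$, transported to the other lifts $\gm B_x$ by $L(\gm)$. This yields a smooth function on $\pr^{-1}U_x$ that is $\Gm$-invariant, which is the same as an element of $\C^\infty(U_x)$. These local maps agree on overlaps because they are defined by restriction of functions, and they commute with $L(\gm)$. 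Hence they glue to a morphism of sheaves $\Q[\N\Dt\nu]|_{\Gm\backslash S}\to\C^\infty$.

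Second, check the stalks at $\pr(x)$. The stalk on the left is the space of germs at $x$ of elements of $\N\Dt\nu$, and the stalk on the right is the space of germs of smooth functions at $x$.

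\emph{Injectivity} is immediate, since the map on stalks is the identity on germs of functions.

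\emph{Surjectivity} is the real content and the main obstacle. Given a smooth germ at $x$, choose a representative $h\in C^\infty_c(S)$. By Lemma~\ref{lem-iDt} there is $f_1\in C^\infty(S)$ with $\Dt_\nu f_1=h$; this is where the hypotheses $\nu\notin\frac12\ZZ_{\leq-1}$ and $|\re\nu|<\rho$ enter. This $f_1$ need not lie in $\G{\om^0,\excg}\nu$, since it has no controlled growth. To fix this, take a cutoff $\ch\in C^\infty_c(S)$ equal to $1$ near $x$. Then $\ch f_1\in C^\infty_c(S)=\N\om\nu\subset\G{\om^0,\excg}\nu$. Moreover $\Dt_\nu(\ch f_1)$ lies in $\N\Dt\nu$ and has the same germ at $x$ as $h$. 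So every smooth germ is attained.

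A morphism of sheaves that is bijective on all stalks is an isomorphism, and this proves the lemma. The only points to be careful about are the choice of $B_x$ small enough that its $\Gm$-translates are disjoint, and the cutoff step that keeps the preimage inside $\G{\om^0,\excg}\nu$.
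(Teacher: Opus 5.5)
Your argument follows the same route as the paper. You compare stalks at $\pr(x)$ using a relatively compact, connected neighbourhood $U_0$ of $x$ whose $\Gm$-translates are pairwise disjoint, and you identify sections over $\pr(U_0)$ with smooth functions on $U_0$. The paper only asserts that the sections over $U_0$ are ``precisely'' the smooth functions there. You actually prove that every smooth germ at $x$ is the germ of an element of $\N\Dt\nu$, via Lemma~\ref{lem-iDt} and the cutoff $\ch f_1\in C_c^\infty(S)=\N\om\nu\subset\G{\om^0,\excg}\nu$. The cutoff also spares you from checking that the solution itself lies in $\G{\om^0,\excg}\nu$. For the sheaf of germs you describe, this is a complete proof.

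The genuine problem is the reading you adopt at the outset, and the paper's proof shares it. In Section~\ref{sect4}, $\Q[V]=\Gm\backslash(V\times\Sast)$ is the \emph{locally constant} sheaf with fibre $V$. That is the sheaf Proposition~\ref{prop-pcshc} is about, and the lemma is used (through Proposition~\ref{prop-exseq}) precisely so that Corollary~\ref{cor-vcoh} can invoke that proposition.

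For this sheaf, $\Q[\N\Dt\nu](\pr U_0)\cong\N\Dt\nu$, so the stalk at $\pr(x)$ is all of $\N\Dt\nu$. Your step ``injectivity is immediate'' then fails: if $0\neq\ph\in C_c^\infty(S)$ vanishes near $x$, then $\Dt_\nu\ph$ is a non-zero element of that stalk with zero germ at $x$. No other morphism can repair this. For connected open sets $U'\subset U\subset\pr(U_0)$, the restriction maps of a locally constant sheaf are bijective, while those of $\C^\infty$ are not injective. So under the paper's definition the lemma is false.

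Conversely, your germ sheaf is not covered by Proposition~\ref{prop-pcshc}; here you extend it to $\Gm\backslash\Sast$ by zero stalks at the cusps. Every element of $C_c^\infty(\Gm\backslash S)$ is a global section of it, whereas $H^0_\pb(\Gm;\N\Dt\nu)=(\N\Dt\nu)^\Gm=\{0\}$.

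The difference has real consequences. The average $h\mapsto\sum_{\gm\in\Gm}L(\gm)h$ is locally finite on $\N\Dt\nu$, since supports lie in a compact set together with finitely many Siegel domains. It kills $(1-L(\Gm))\N\Dt\nu$. It sends $\Dt_\nu\ph$, with $0\neq\ph\in C_c^\infty(U_0)$, to $\Dt_\nu\sum_{\gm}L(\gm)\ph\neq0$. Hence $H^\ds_\pb(\Gm;\N\Dt\nu)\cong(\N\Dt\nu)_\Gm\neq\{0\}$, contrary to Corollary~\ref{cor-vcoh} and the remark following it.

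So you should present your result as a statement about the image of $\Q[\N\Dt\nu]$ in the sheaf of functions. It is not a proof of the lemma for $\Q[\N\Dt\nu]$ as defined.
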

\begin{proof}It suffices to check that the stalks at all points
$ x \ \in \Gm\backslash S$ are isomorphic. By
$\pi :\Sast \rightarrow \Gm\backslash \Sast$ we denote the natural
projection. Since $\Gm$ is torsion-free, each point $x\in S$ has small
open neighborhoods $U_0$ that are relatively compact in $S$ and for
which $\gm \,U_0 \cap U_0 = \emptyset$ for all
$\gm\in \Gm\setminus\{e\}$. Then $\bigsqcup \gm\, U_0$ is open in $S$
and the image $U=\pi U_0 = \pi\bigl(\bigsqcup \gm\, U_0 \bigr) $ is a
neighborhood of $x\in \Gm\backslash S$.

Each element $h\in \N\Dt\nu$ is a $C^\infty$-function on $S$ with
support contained in the union of a compact set and a finite number of
Siegel domains. Taking the compact set large so that it contains $U_0$,
we see that the sections in $\Q[\N\Dt\nu](U_0)$ are precisely the
sections of $\C^\infty$ on $U_0$. Hence for $p\in \Gm\backslash S$ we
have equality of stalks
\[ \Q[\N\Dt\nu]_p = (\C^\infty)_p\,.\qedhere\]
\end{proof}

\begin{lem}\label{lem-cuspst}Let
$p\in (\Gm\backslash \Sast)\setminus(\Gm\backslash S) = \Gm\backslash \Cu$.
Then
\begin{enumerate}[label=$\mathrm{(\alph*)}$, ref=$\mathrm{\alph*}$]
\item \label{cuspst:Q}the stalk $\Q[\N\Dt\nu]_p$ is zero;
\item \label{cuspst:C}the stalk $(\C^\ast)_p$ is non-zero, and each germ
in $(\C^\ast)_p$ is given by a function in $C^\infty(\Gm\backslash S)$
with support in $\bigcup_{\cu\in p} B_\cu(Y)$.
\end{enumerate}
\end{lem}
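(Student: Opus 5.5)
The plan is to compute both stalks directly from the definitions, using the neighbourhood basis of the point $p=\pi\cu$ furnished by the extended horoballs. The sets $\pi\bigl(B^\ast_\cu(T)\bigr)$ with $T\geq Y$ form a neighbourhood basis of $p$ in $\Gm\backslash\Sast$. Moreover, $\pi^{-1}\pi B^\ast_\cu(T)$ is the disjoint union of the extended horoballs $B^\ast_{\cu'}(T)$ for $\cu'$ in the orbit $\Gm\cu$. So the stalk at $p$ of each sheaf is the direct limit over $T\uparrow\infty$ of its sections over these sets.

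\emph{Part (a).} By the definition of $\Q[V]$ and the argument of \S\ref{prf-pcshc} (see the stalk computation there), the stalk of $\Q[V]$ at $\pi\cu$ is $V^{\Gmm\cu}$. For a neighbourhood of the form $\pi B^\ast_\cu(T)$, a section of $\Q[V]$ is a locally constant $\Gm$-equivariant $V$-valued function on the connected preimage components. Each component contains the cusp, so the section is constant there with a value fixed by $\Gmm\cu$. Thus I reduce to showing $(\N\Dt\nu)^{\Gmm\cu}=\{0\}$.

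Let $h\in\N\Dt\nu$ be $\Gmm\cu$-invariant. The support of $h$ lies in a compact set $Q$ together with finitely many Siegel domains $\Sie_{\cu_i}(C_i,T_i)$. I transport $\cu$ to $\inft$ and consider the horoball $B_\cu(T)$ for $T$ large.
\begin{itemize}
\item $Q$ is disjoint from $B_\cu(T)$ once $T$ is large.
\item A Siegel domain at a cusp $\cu_i\neq\cu$ meets $B_\cu(T)$ in a bounded region, since its closure touches $\partial S$ only at $\cu_i$. Hence it too misses $B_\cu(T)$ for $T$ large.
\item A Siegel domain $g_\cu C A^+_{T'}$ at $\cu$ itself has $N$-projection contained in the compact set $C$.
\end{itemize}
So, after translating back, $h(g_\cu n\am(t)\orgn)$ for $t\geq T$ is supported in $n\in C$, where $C\subset N$ is compact. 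On the other hand $h$ is invariant under the lattice $\Ld_\cu$ acting on the $n$-variable. A $\Ld_\cu$-invariant function on $N$ supported in a compact set vanishes, because $N=\bigcup_\ld \ld C'$ for any compact fundamental set $C'$ and $\Ld_\cu$ is infinite. Hence $h=0$ on $B_\cu(T)$.

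This only shows that $h$ vanishes near the cusp, not that $h=0$ globally. I resolve this by working at the level of the direct limit: the relevant statement is that every section over a neighbourhood of $p$ restricts to zero on a smaller neighbourhood. In the locally constant sheaf, the value of a section on the component containing $\cu$ is a single element $h\in V$. That element must be $\Gmm\cu$-invariant, and as a constant section its germ at $\cu$ is $h$ itself. So I must in fact prove $h=0$ globally. For this I use invariance once more: $h$ is $\Ld_\cu$-invariant on all of $S$. Its support is a finite union of compact pieces and Siegel domains, and the $\Gmm\cu$-translates of a compact set or of a Siegel domain at $\cu'\neq\cu$ are infinitely many pairwise distinct subsets. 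Invariance would then force infinitely many support pieces, unless the relevant pieces are empty. The Siegel domains at $\cu$ are handled by the $N$-projection argument above. Hence $h=0$, which gives (a).

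\emph{Part (b).} Here $(\C^\ast)_p$ is the direct limit of $C^\infty\bigl(\Gm\backslash\pi^{-1}(\pi B^\ast_\cu(T)\setminus\{p\})\bigr)$, that is, of $\Gmm\cu$-invariant smooth functions on $B_\cu(T)$. This limit is non-zero: the function $\thv$ from the proof of Proposition~\ref{prop-upxcoinv}, or simply a function of $t$ alone, gives a non-trivial germ. For the support claim, I take a representative $\phi$ on $B_\cu(T)$. I multiply it by a cutoff $\thv_0(t)$ that vanishes for $t$ near $Y$ and equals $1$ for $t\geq T+1$, and extend by zero. The result is $\Gmm\cu$-invariant with support in $B_\cu(Y)$. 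Spreading it over the orbit $\Gm\cu$ then gives a function in $C^\infty(\Gm\backslash S)$ with the stated support and the same germ.

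I expect the main obstacle to be the global vanishing step in (a), namely the bookkeeping of how invariance interacts with the support structure of $\N\Dt\nu$.
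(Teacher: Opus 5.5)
Your proposal is correct and follows the paper's route. For (a) you identify the stalk with $(\N\Dt\nu)^{\Gmm\cu}$ and use that $\Ld_\cu$-periodicity is incompatible with support in a compact set plus finitely many Siegel domains; for (b) you use the same $N_\cu$-invariant cut-off in the height variable. You also treat the global vanishing, which the paper's brief argument leaves implicit.

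Two small repairs are needed.
\begin{enumerate}
\item The global step should be made pointwise, since pairwise distinctness of translates is not enough by itself. If $h(x)\neq0$, the infinite orbit $\Gmm\cu x$ lies on a single horosphere at $\cu$. It meets a compact set, a Siegel domain at $\cu$, or a Siegel domain at $\cu'\neq\cu$ in only finitely many points. This contradicts $\Gmm\cu$-invariance of $h$ together with its support condition.
\item The function $\thv$ vanishes on $B_\cu(a)$, so its germ at $p$ is zero. Use $1-\thv$ or a constant instead to see that $(\C^\ast)_p\neq\{0\}$.
\end{enumerate}
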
 In this statement, $p$ denotes a class in $\Gm\backslash \Cu$.
So $p$ is a point on the quotient $\Gm\backslash \Sast$, and $\cu\in p$
runs through the cusps in the class $p$.
\begin{proof}
The extended horoballs $B^\ast_\cu(a)$ with $a>Y$ form a neighborhood
basis of $ \cu$ in $\Sast$. Each germ in $(\C^\ast)_p$ can be
represented by a $\Gm$-invariant $C^\infty$-function $f$ on the union
$\bigcup_{\cu\in p} B_\cu(a)$ for some $a>Y$. Let $\cu\in p$. The
restriction of $f$ to $B_\cu(a)$ is $\Gm_\cu$-invariant, and
multiplication by a suitable $N_\cu$-invariant cut-off function $\ch$
gives another representative $\ch f$ with compact support in the
interior of $B_\cu(a)$. This determines a $\Gm$-invariant function in
$C^\infty(S)$ with support in the union $\bigcup_{\cu\in p} B_\cu(Y)$.
This gives part~\eqref{cuspst:C}.

If we start with a non-zero element $h\in \Q[\N\Dt\nu]_p$ its
restriction to $B_\cu(a)$ cannot be periodic, since the support of $h$
is contained in a Siegel domain at $\cu$. This gives
part~\eqref{cuspst:Q}.
\end{proof}

We recall that for a sheaf $A$ and a subsheaf $B$ we get a presheaf
$U \mapsto A(U)/ B(U)$ that may not be a sheaf. The quotient sheaf
$A\bigm/ B$ is the sheaf associated to this presheaf. See for instance
\cite[Chap.~I, \S1, Proposition-Definition 1.2]{Harts}.

A sheaf $F$ on $X$ is a skyscraper sheaf at $p\in X$ if for some vector
space $V$
\[ F(U)=V \quad\text{ if }p\in U,\quad F(U)=\{0\}\quad\text{ if }
p\not\in U\,.\]

\begin{prop}\label{prop-exseq}The quotient sheaf
$\,\C^\ast \bigm/ \Q[\N\Dt\nu]$ is a finite direct sum of sky\-scraper
sheaves on the finitely many points of
$\Gm \backslash \Cu \subset \Gm\backslash \Sast$.
\end{prop}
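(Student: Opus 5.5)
We first need a morphism of sheaves $\Q[\N\Dt\nu]\rightarrow \C^\ast$ on $\Gm\backslash\Sast$, and then we compute the quotient stalk by stalk. For an open set $U\subset \Gm\backslash\Sast$, a section of $\Q[\N\Dt\nu]$ over $U$ is a locally constant $\Gm$-equivariant map from $\pi^{-1}U$ to $\N\Dt\nu$. Over $\pi^{-1}U\cap S$ we send such a section to the $\Gm$-invariant smooth function obtained by evaluating: near a point $x$, use the element $h\in\N\Dt\nu$ attached to $x$ and take $h$ on a small neighborhood of $x$. This is the identification already used in the proof of Lemma~\ref{lem-ordst}, and equivariance makes the result descend to $j^{-1}U$. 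We check that this assignment is compatible with restrictions, and that over $\Gm\backslash S$ it is exactly the isomorphism of Lemma~\ref{lem-ordst}.

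Next we compute the stalks of the quotient sheaf. Sheafification preserves stalks, so $\bigl(\C^\ast/\Q[\N\Dt\nu]\bigr)_p = (\C^\ast)_p\big/\mathrm{im}\,\Q[\N\Dt\nu]_p$. At a point $p\in\Gm\backslash S$ we have $j_\ast\C^\infty = \C^\infty$ near $p$, because $j$ is an open embedding. By Lemma~\ref{lem-ordst} the map on stalks is then an isomorphism, and the quotient stalk vanishes. At a point $p\in\Gm\backslash\Cu$, Lemma~\ref{lem-cuspst} shows that $\Q[\N\Dt\nu]_p=0$, so the quotient stalk equals $(\C^\ast)_p$, which is non-zero. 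Consequently the quotient sheaf $F$ is supported on the finite set $\Gm\backslash\Cu$; this set is finite because there are finitely many cuspidal orbits.

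It remains to pass from the description of the stalks to the skyscraper statement. The points $p\in\Gm\backslash\Cu$ are closed and isolated within the support of $F$. We can choose pairwise disjoint neighborhoods $W_p=\pi\bigl(B^\ast_\cu(Y)\bigr)$; they are disjoint because $Y$ is suitably large. For every open $U$ we have $F(U)\cong\prod_{p\in U\cap \Gm\backslash\Cu} F_p$. Indeed, a section of a sheaf whose stalks vanish off a discrete closed set is determined by its germs at the points of that set. Conversely, any family of germs glues, since each germ $s_p$ is represented on a small $W'_p\subset W_p$. On $W'_p\setminus\{p\}$ the representative has zero germs everywhere, so it agrees with the zero section there, and on the cover $\{W'_p\}\cup\{U\setminus\Gm\backslash\Cu\}$ the local sections are compatible. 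This gives $F\cong\bigoplus_p (i_p)_\ast F_p$, a finite direct sum of skyscraper sheaves.

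The main obstacle is the first step, namely making the map $\Q[\N\Dt\nu]\rightarrow\C^\ast$ well defined near the cusps. We expect it to be harmless: since $\Q[\N\Dt\nu]_p=0$ at a cusp class, every section near $p$ vanishes on a small neighborhood of $p$, so its image is simply an invariant smooth function on a punctured neighborhood, and no compatibility condition arises at $p$ itself.
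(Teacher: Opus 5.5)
Your proof is correct and follows the paper's argument with the details written out. The quotient stalks vanish over $\Gm\backslash S$ and equal $(\C^\ast)_p$ at the cusp classes by Lemmas~\ref{lem-ordst} and~\ref{lem-cuspst}, and your gluing over the pairwise disjoint neighbourhoods $\pi\bigl(B^\ast_\cu(Y)\bigr)$ is a precise version of the paper's splitting of representatives according to their supports in $\bigcup_{\cu\in p}B_\cu(Y)$.

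One point should be stated more carefully. At $p\in\Gm\backslash S$ the stalk of the locally constant sheaf $\Q[\N\Dt\nu]$ is the whole module $\N\Dt\nu$, so the evaluation map to $(\C^\infty)_p$ is surjective but not injective. Surjectivity holds because every germ is $\Dt_\nu$ of a compactly supported function, by local solvability of $\Dt_\nu$. Since you only divide by the image, surjectivity is all your stalk computation needs, even though Lemma~\ref{lem-ordst} calls this map an isomorphism.
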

\begin{proof}If $\Gm$ has only one $\Gm$-orbit of cusps, then Lemmas
\ref{lem-ordst} and~\ref{lem-cuspst} show that we get a sky-scraper
sheaf as the quotient. Suppose that there are more than one $\Gm$-orbit
of cusps. The description of the support of representatives in
part~\eqref{cuspst:C} of Lemma~\ref{lem-cuspst} shows that we can split
up representatives of the quotient into representatives corresponding
to the different points of $\Gm\backslash \Cu$.
\end{proof}
\begin{cor} \label{cor-vcoh}If $\ds\geq 3$, then
$H^i\bigl( \Gm\backslash \Sast, \Q[\N\Dt\nu]\bigr)\cong
H^i_\pb\bigl( \Gm;\N \Dt\nu\bigr) =\{0\}$ for $i \geq 2$.
\end{cor}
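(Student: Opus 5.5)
The isomorphism $H^i\bigl(\Gm\backslash\Sast,\Q[\N\Dt\nu]\bigr)\cong H^i_\pb(\Gm;\N\Dt\nu)$ is Proposition~\ref{prop-pcshc}, so the content is the vanishing for $i\geq2$. I would use the short exact sequence of sheaves on $\Gm\backslash\Sast$
\[ 0\longrightarrow \Q[\N\Dt\nu]\longrightarrow \C^\ast\longrightarrow \C^\ast\bigm/\Q[\N\Dt\nu]\longrightarrow 0\,. \]
The inclusion is injective on stalks: over $\Gm\backslash S$ it is the isomorphism of Lemma~\ref{lem-ordst}, and at cusp points the left stalk is zero by Lemma~\ref{lem-cuspst}\eqref{cuspst:Q}. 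By Proposition~\ref{prop-exseq} the quotient $\mathcal K$ is a finite direct sum of skyscraper sheaves at the points of $\Gm\backslash\Cu$. Skyscraper sheaves are flasque, so $H^i(\Gm\backslash\Sast;\mathcal K)=0$ for $i\geq1$.

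The long exact sequence then gives, for $i\geq2$,
\[ 0=H^{i-1}(\mathcal K)\to H^i\bigl(\Q[\N\Dt\nu]\bigr)\to H^i(\C^\ast)\to H^i(\mathcal K)=0\,, \]
so $H^i(\Q[\N\Dt\nu])\cong H^i(\Gm\backslash\Sast;\C^\ast)$. It therefore suffices to show $H^i(\Gm\backslash\Sast;\C^\ast)=0$ for $i\geq1$.

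For this, note that $\C^\ast=j_\ast\C^\infty$ is a module over the sheaf of rings $\C^\ast$ of smooth functions on $\Gm\backslash S$ pushed forward. Partitions of unity subordinate to a finite open cover of the compact space $\Gm\backslash\Sast$ exist in $\C^\ast(\Gm\backslash\Sast)=C^\infty(\Gm\backslash S)$. Near a cusp point one takes a cut-off depending only on the height $t$ in $B_\cu(a)$, which is $\Gm_\cu$-invariant; this is the function $\thv$ used in the proof of Proposition~\ref{prop-upxcoinv}. Hence $\C^\ast$ is fine, and in particular acyclic.

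The main obstacle is this fineness at the cusp points. One must check that a smooth $\Gm$-invariant partition of unity can be subordinate to any open cover, including neighbourhoods of cusps. Here the horoball neighbourhoods $B^\ast_\cu(a)$ form a basis, and the $t$-dependent cut-offs are smooth on $\Gm\backslash S$, so I expect this to go through. The hypothesis $\ds\geq3$ should only be needed to ensure that $\Gm\backslash\Sast$ behaves as a compact (paracompact, Hausdorff) space with isolated cusp points, or possibly to treat $i=\ds$ compatibly with Proposition~\ref{prop-H0d}. In fact the argument above seems to give vanishing for all $i\geq2$ in any dimension, and for $i=1$ it gives $H^1(\Q[\N\Dt\nu])\cong\operatorname{coker}\bigl(H^0(\C^\ast)\to H^0(\mathcal K)\bigr)$.
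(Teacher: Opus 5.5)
There is a genuine gap, and it is the same one in the paper's own proof, which follows exactly your route: the sequence $0\to\Q[\N\Dt\nu]\to\C^\ast\to\C^\ast/\Q[\N\Dt\nu]\to0$, acyclicity of $\C^\ast$ and of the skyscraper quotient, then the long exact sequence. Your fineness argument for $\C^\ast$ and the flasqueness of skyscrapers are fine. The failure is that the first arrow is not injective.

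By definition $\Q[V]=\Gm\backslash(V\times\Sast)$ is locally constant. Take $x\in S$ and a small connected neighbourhood $U_0$ with $\gm U_0\cap U_0=\emptyset$ for $\gm\neq e$. The sections over $\pi(U_0)$ are just $V$, so the stalk at $\pi(x)$ is the whole module $V=\N\Dt\nu$. The natural map to $\C^\ast$ sends $v$ to its germ at $x$, and so kills every $h\in\N\Dt\nu$ vanishing near $x$. Lemma~\ref{lem-ordst}, which you invoke for injectivity, cannot hold as stated. A sheaf isomorphic to a locally constant sheaf has bijective restriction maps between small connected open sets, while $\C^\infty$ does not (bump functions). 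So the long exact sequence gives no information on $H^i(\Gm\backslash\Sast;\Q[\N\Dt\nu])$.

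Moreover, the claimed vanishing is false in the top degree $i=\ds\geq3$, so no repair of this route can prove the statement as written. By Proposition~\ref{prop-H0d}, $H^\ds_\pb(\Gm;\N\Dt\nu)\cong(\N\Dt\nu)_\Gm$, and these coinvariants are nonzero:
\begin{enumerate}
\item Choose a nonzero $\Gm$-invariant $u\in C^\infty(S)$ vanishing on all horoballs $B_\cu(a)$ for some $a>Y$.
\item Every $h\in\N\Dt\nu$ is supported in a compact set together with finitely many Siegel domains. Hence $h\,u$ has compact support, and $\Lambda(h)=\int_S h\,u\,d\mu$ is a well-defined $\Gm$-invariant linear form on $\N\Dt\nu$. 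In particular $\Lambda$ vanishes on $(1-L(\Gm))\N\Dt\nu$.
\item Since $C_c^\infty(S)\subset\G{\om^0,\excg}\nu$, we have $\Dt_\nu f\in\N\Dt\nu$ for every $f\in C_c^\infty(S)$.
\item For such $f$, $\Lambda(\Dt_\nu f)=\int_S f\,\Dt_\nu u\,d\mu$. This is nonzero for suitable $f$, because $\Dt_\nu u\not\equiv0$: otherwise $u$ would be real-analytic and vanish on an open set.
\end{enumerate}
Hence $(\N\Dt\nu)_\Gm\neq0$.

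Your own remark that the argument never uses $\ds\geq3$ was the warning sign. It would give $H^\ds_\pb(\Gm;\N\Dt\nu)=0$ in every dimension, contradicting Proposition~\ref{prop-H0d} together with this functional. The remark after the corollary in the paper, which derives $(\N\Dt\nu)_\Gm=0$, is affected in the same way. The degrees $\ds-2$ and $\ds-1$ actually needed for \eqref{HisoEG} would require an argument that controls the kernel of the evaluation map.
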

\begin{proof}
We use the long exact sequence of cohomology spaces on
$\Gm\backslash \Sast$ associated to the short exact sequence
\be 0 \longrightarrow \Q[\N\Dt\nu] \longrightarrow \C^\ast
\longrightarrow \Y\longrightarrow 0\label{Qexseq} \ee
with $\Y =   \C^\ast\bigm/ \Q[\N\Dt\nu]$.

The sheaves $\C^\ast$ and $\Y$ are soft, which means that all germs in
all stalks are represented by global sections. See \cite[Chap.~A,
\S4]{GR79}. The sheaf $\C^\infty$ of smooth functions is soft, and the
definition of the direct image implies that $\C^\ast$ is soft. A finite
direct sum of skyscraper sheaves is soft as well.

For soft sheaves, the sheaf cohomology spaces vanish in all dimensions
$i\geq 1$. In the long exact sequence associated to~\eqref{Qexseq} we
have for $i\geq 2$ the part
\[ H^{i-1}\bigl( \Gm\backslash \Sast; \Y\bigr)
\rightarrow H^{i}\bigl( \Gm\backslash \Sast; \Q[\N\Dt\nu]\bigr)
\rightarrow H^{i}\bigl( \Gm\backslash \Sast; \C^\ast\bigr)\,.\]
The two outer spaces are zero, hence
$H^{i}\bigl( \Gm\backslash \Sast; \Q[\N\Dt\nu]\bigr)$ vanishes as well.
We use Proposition~\ref{prop-pcshc} for the relation with parabolic
cohomology spaces.
\end{proof}

\rmrk{Remarks}Taking $i=\ds$ in the corollary we get
$H^\ds_\pb\bigl( \Gm; \N\Dt\nu\bigr)=\{0\} $. With
Proposition~\ref{prop-H0d} we conclude that the space of coinvariants
$(\N\Dt\nu)_\Gm$ is zero. This is in contrast with the fact that
$H^\ds_\pb\bigl( \Gm; \N{\excg}{} \bigr) = (\N{\excg}{})_\Gm$ is
non-zero. Otherwise the construction of $u_\ps$ in
Proposition~\ref{prop-cohE} would not work.

A consequence for the case $\ds\geq 3$ is that the exact sequence in
\eqref{exEGN} implies that
\be \label{HisoEG}H^{\ds-1}_\pb\bigl( \Gm;\G{\om^0,\excg}\nu\bigr) \cong
H^{\ds-1}_\pb \bigl( \Gm;\E^{\om^0,\excg}_ \nu\bigr)\,.\ee

\subsection{The image of spaces of cusp forms in parabolic cohomology
spaces}\label{sect11-icf}
We have the morphisms of $\Gm$-modules
\be \E^{\om^0,\excg}_\nu \rightarrow \G{\om^0,\excg}\nu\,,\quad\text{
and } \G{\om^0,\excg}\nu\rightarrow\W{\om^0,\excg}\nu\,.\ee
The former morphism is injective and the latter one surjective. The
composition is injective, and we can consider $\E^{\om^0,\excg}_\nu$ as
a submodule of $\W{\om^0,\excg}\nu$. Similarly, we have
$\E^{\om^0,\infty,\excg}_\nu$ as a submodule of
$\W{\om^0,\infty,\excg}\nu$.

\begin{prop}\label{prop-dsc}Let
$\nu \in \CC\setminus \frac12\ZZ_{\leq -1}$, $|\re\nu|<\rho$. We have
the direct sum decompositions of vector spaces
\badl{dsW} H^{\ds-1}_\pb \bigl( \Gm;\W{\om^0,\infty,\excg}\nu\bigr)^\bdc
&= \btw_\nu \A^0(\Gm;\nu) \oplus H^{\ds-1}_\pb \bigl(
\Gm;\E^{\om^0,\infty,\excg}_\nu\bigr)^\bdc \,,\\
H^{\ds-1}_\pb \bigl( \Gm;\W{\om^0,\excg}\nu\bigr)^\bdc &= \ii\,\btw_\nu
\A^0(\Gm;\nu) \oplus H^{\ds-1}_\pb \bigl(
\Gm;\E^{\om^0,\excg}_\nu\bigr)^\bdc\,,
\eadl
where $\A^0(\Gm;\nu) \cong \btw_\nu \A^0(\Gm;\nu)  \cong 
\ii\,\btw_\nu \A^0(\Gm;\nu) $.
\end{prop}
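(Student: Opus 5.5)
The second decomposition comes first. It is the standard splitting attached to a surjection with a section, namely $\alw_\nu$ together with $\ii\circ\btw_\nu$. The first decomposition then follows by restricting to the smooth submodule.

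By Theorem~\ref{thm-alW} and Proposition~\ref{prop-pi}, the map $\alw_\nu\colon H^{\ds-1}_\pb(\Gm;\W{\om^0,\excg}\nu)^\bdc\to\A^0(\Gm;\nu)$ satisfies $\alw_\nu\circ\ii\circ\btw_\nu=\mathrm{id}$. Linear algebra therefore gives
\[
H^{\ds-1}_\pb(\Gm;\W{\om^0,\excg}\nu)^\bdc=\ii\,\btw_\nu\A^0(\Gm;\nu)\oplus\ker\alw_\nu ,
\]
and $\ii\circ\btw_\nu$ is injective, which gives the stated isomorphisms $\A^0(\Gm;\nu)\cong\btw_\nu\A^0(\Gm;\nu)\cong\ii\,\btw_\nu\A^0(\Gm;\nu)$.

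The remaining task is to identify $\ker\alw_\nu$. By Proposition~\ref{prop-qIwe}, it equals $I^{W,\excg}_\nu$, the intersection of the $\bdc$-classes with the image of $H^{\ds-1}_\pb(\Gm;\G{\om^0,\excg}\nu)$. Using \eqref{HisoEG}, which comes from the exact sequence \eqref{exEGN} and the vanishing in Corollary~\ref{cor-vcoh}, the image of $H^{\ds-1}_\pb(\Gm;\G{\om^0,\excg}\nu)$ coincides with the image of $H^{\ds-1}_\pb(\Gm;\E^{\om^0,\excg}_\nu)$, via the composite $\E^{\om^0,\excg}_\nu\to\G{\om^0,\excg}\nu\to\W{\om^0,\excg}\nu$. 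Here one has to check carefully that the isomorphism \eqref{HisoEG} is induced by the inclusion. That holds because, by Corollary~\ref{cor-vcoh}, the terms $H^{\ds-2}_\pb(\Gm;\N\Dt\nu)$ and $H^{\ds-1}_\pb(\Gm;\N\Dt\nu)$ vanish when $\ds\ge3$. The case $\ds=2$ is handled by citing \cite[Proposition 12.5]{BLZ15}, which makes both sides zero.

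The expression $H^{\ds-1}_\pb(\Gm;\E^{\om^0,\excg}_\nu)^\bdc$ is then to be read as the image of this cohomology in $H^{\ds-1}_\pb(\Gm;\W{\om^0,\excg}\nu)$, intersected with the $\bdc$-classes, viewing $\E^{\om^0,\excg}_\nu$ as a submodule of $\W{\om^0,\excg}\nu$. With that reading, the second line of \eqref{dsW} is established.

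For the first line, intersect the second decomposition with $H^{\ds-1}_\pb(\Gm;\W{\om^0,\infty,\excg}\nu)^\bdc$, regarded via $\ii$ as a subspace. I expect this step to be the main obstacle, because $\ii$ on cohomology need not be injective a priori. I would handle it directly. Let $[\ps]$ be a class in the $\infty$-space and write $[\ps]=\btw_\nu(u)+([\ps]-\btw_\nu(u))$ with $u=\alw_\nu(\ii[\ps])$. The second summand lies in $H^{\ds-1}_\pb(\Gm;\W{\om^0,\infty,\excg}\nu)^\bdc$ and is killed by $\alw_\nu\circ\ii$. Running the arguments of Propositions~\ref{prop-upxcoinv} and \ref{prop-qIwe} with the smooth modules $\G{\om^0,\infty,\excg}\nu$ (the preimage of $\W{\om^0,\infty,\excg}\nu$) and $\E^{\om^0,\infty,\excg}_\nu$ in place of the non-smooth ones, together with the same sheaf-cohomology vanishing (whose kernel sequence has the same $\N\Dt\nu$-type cokernel), shows that this summand comes from $H^{\ds-1}_\pb(\Gm;\E^{\om^0,\infty,\excg}_\nu)$.

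Finally, the sum is direct. An element of the intersection is of the form $\btw_\nu u$, and $u=\alw_\nu\ii\btw_\nu u=0$ because $\alw_\nu$ vanishes on the $\E$-part (Proposition~\ref{prop-qIwe}). Hence the intersection is zero.
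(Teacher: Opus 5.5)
Your second decomposition follows the paper's argument. The section $\ii\circ\btw_\nu$ of $\alw_\nu$ splits off $\ker\alw_\nu$, and Proposition~\ref{prop-qIwe} together with \eqref{HisoEG} identifies that kernel with the $\E$-part.

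You diverge on the first decomposition, and your version is the more complete one. The paper only says ``pull back along $\ii$''. Pulling back formally gives $H^{\ds-1}_\pb(\Gm;\W{\om^0,\infty,\excg}\nu)^\bdc=\btw_\nu\A^0(\Gm;\nu)\oplus\ker(\alw_\nu\circ\ii)$. But $\ker(\alw_\nu\circ\ii)=\ii^{-1}(\ker\alw_\nu)$ is not obviously the image of $H^{\ds-1}_\pb(\Gm;\E^{\om^0,\infty,\excg}_\nu)$; for one, it contains $\ker\ii$. Your argument supplies exactly this identification. Let $\mathcal G^\infty\subset\G{\om^0,\excg}\nu$ be the preimage of $\W{\om^0,\infty,\excg}\nu$. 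The kernel of $\mathcal G^\infty\to\W{\om^0,\infty,\excg}\nu$ is still $\N{\om^0,\excg}\nu$. So naturality of the connecting map and Proposition~\ref{prop-upxcoinv} lift every class in $\ker(\alw_\nu\circ\ii)$ to $H^{\ds-1}_\pb(\Gm;\mathcal G^\infty)$. Vanishing of $H^{\ds-1}_\pb(\Gm;\Dt_\nu\mathcal G^\infty)$ then moves it into the $\E^{\om^0,\infty,\excg}_\nu$-image.

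The price is a check you only gesture at. $\Dt_\nu\mathcal G^\infty$ is merely a submodule of $\N\Dt\nu$, so you must confirm that the proofs of Lemmas~\ref{lem-ordst}, \ref{lem-cuspst} and Corollary~\ref{cor-vcoh} apply to it. They do, for two reasons:
\begin{itemize}
\item its elements are still supported in a compact set plus finitely many Siegel domains;
\item it still contains $C^\infty_c(S)$, because the solution built from $q_\nu$ in Lemma~\ref{lem-iDt} lies in $\G\om\nu\subset\mathcal G^\infty$.
\end{itemize}

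Two small corrections. First, Corollary~\ref{cor-vcoh} covers only degrees $i\geq 2$, so it does not give $H^{\ds-2}_\pb(\Gm;\N\Dt\nu)=\{0\}$ when $\ds=3$. This is harmless: equality of images only uses surjectivity of $H^{\ds-1}_\pb(\Gm;\E^{\om^0,\excg}_\nu)\to H^{\ds-1}_\pb(\Gm;\G{\om^0,\excg}\nu)$, which needs only the degree-$(\ds-1)$ vanishing. Second, for $\ds=2$ it is not necessarily $H^1_\pb(\Gm;\E^{\om^0,\excg}_\nu)$ itself that vanishes. What vanishes is its image in $H^1_\pb(\Gm;\W{\om^0,\excg}\nu)$, because that image factors through $H^1_\pb(\Gm;\G{\om^0,\excg}\nu)=\{0\}$.
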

\begin{proof}The isomorphism in \eqref{HisoEG} and
Proposition~\ref{prop-qIwe} show that the kernel of $\alw_\nu$ in the
diagram~\ref{Hdiag} is equal to the image of
$H^{\ds-1}\bigl( \Gm;\E^{\om^0,\excg}_\nu\bigr)$ in
$H^{\ds-1}_\pb\bigl( \Gm;\W{\om^0,\excg}\nu)$ intersected with
$H^{\ds-1}_\pb\bigl( \Gm;\W{\om^0,\excg}\nu)^\bdc$. The condition `bdc'
concerns the position of the boundary singularities, and can be applied
to $\E^{\om^0,\excg}_\nu$ as well as to $\W{\om^0,\excg}\nu$. Together
with Proposition~\ref{prop-pi} this gives the second decomposition.
Pulling back along $\ii$ in diagram~\eqref{Hdiag} we obtain the first
decomposition.
\end{proof}

\begin{proof}[Proof of part~\eqref{mnthm:nsp} of Theorem~\ref{mainthm}]
The recapitulation in \S\ref{sect9-recap} shows that for
$\nu \in \CC\setminus \frac12\ZZ_{\leq -1}$
\[\V{\om^0,\infty,\excg}\nu \subset \V {\om_0}\nu =
\mathop{\lim}_{\longrightarrow}\V\om\nu\left( (\partial S) \setminus
E\right)\,.\]
In this direct limit $E$ runs over finite sets of cusps. Elements of
$\E^{\om^0,\infty,\excg}_\nu$ represent boundary germs in
$\W\om\nu\left( \partial S \setminus E\right)$ for such a finite set
$E$. We indicate by $\rho_\nu \E^{\om^0,\infty,\excg}_\nu$ the space of
restrictions of these boundary germs. (See Definition~\ref{def-res} and
Theorem~\ref{thm-isoVW} for the restriction maps $\rho_\nu$.)

Application of the restriction map $\rho_\nu$ to \eqref{dsW} leads for
$\nu \in \CC\setminus\frac12\ZZ_{\leq-1}$ to
\bad \btv_\nu &= \ii \btw_\nu: \A^0(\Gm,\nu) \rightarrow
H^{\ds-1}_\pb\bigl( \Gm;
\V{\om^0,\infty,\excg}\nu\bigr)^\bdc\qquad\text{(injective)}\,,\\
H^{\ds-1}_\pb\bigl( \Gm;& \V{\om^0,\infty,\excg}\nu\bigr)^\bdc =
\btv_\nu\A^0(\Gm,\nu) \oplus H^{\ds-1}_\pb\bigl( \Gm; \rho_\nu
\E^{\om^0,\infty,\excg}_\nu\bigr)^\bdc \,.
\ead
This gives the injectivity of $\bt_\nu$ in Theorem~\ref{mainthm}, and
the assertion in part~\eqref{mnthm:nsp}.
\end{proof}

%% file: ccro2-12-gef.tex

\bigskip

\def\flnm{ccro2-12-gef}

\section{Global eigenfunctions of the Laplace operator} \label{sect12}

We defined in \eqref{exEGN} the space
$\E^{\om^0,\excg}_\nu\subset \E_\nu(S)$ as the kernel of
$\Dt_\nu: \G{\om^0,\excg}\nu \rightarrow \N\Dt\nu$. The subspace
$\E^{\om^0,\infty,\excg} $ is characterized by the condition
$\rho_\nu f\in \V\infty\nu(\partial S)$.
To complete the proof of the main result, Theorem~\ref{mainthm}, stated
in the introduction, we prove in this section the following result.

\begin{thm}\label{thm-E} If $\nu\in \CC\setminus \frac12\ZZ$ and
$|\re\nu|<\rho$, then $\E^{\om^0,\excg}_\nu=\{0\}$.
\end{thm}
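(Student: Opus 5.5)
Let $f\in\E^{\om^0,\excg}_\nu$. Then $f$ is a global eigenfunction of $\Dt$ on all of $S$ with spectral parameter $\nu$, it is bounded on $S$, and it represents a $\nu$-analytic boundary germ on $(\partial S)\setminus E$ for a finite set $E$ of cusps. At each $\cu\in E$ it satisfies $f(g_\cu n\am(t)\,\orgn)=\oh(t^{-\rho-\nu})$. The plan is to use the Poisson-type description of eigenfunctions on $S$, in the spirit of Helgason. Since $f$ is bounded and $|\re\nu|<\rho$, $f$ has moderate growth. Hence $f=\mathcal P_\nu T$ for a distribution (hyperfunction) $T$ on $\partial S$, with $\mathcal P_\nu T(x)=\int_{\partial S} r_{-\nu}(b;x)\,dT(b)$ built from the kernel $r$ of \eqref{rnudef}. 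Because $\nu\notin\frac12\ZZ$, the Harish-Chandra $c$-functions $c(\pm\nu)$ are finite and nonzero (see \eqref{cHC}). The asymptotics of $\mathcal P_\nu T$ near any boundary point $b$ then split as
\[
f\bigl(k\am(t)\,\orgn\bigr)\sim c(\nu)\,t^{\nu-\rho}\,(\dots)+c(-\nu)\,t^{-\nu-\rho}\,T(b)+\dots,
\]
where $(\dots)$ is a local operator in $T$, essentially the intertwined distribution $A_\nu T$. The two exponents $-\rho\pm\nu$ differ by $2\nu\notin\ZZ$, so the two series $t^{-\rho\pm\nu-2k}$ cannot mix.

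Step one: on $(\partial S)\setminus E$ the function $f$ has the pure $t^{-\rho-\nu}$ expansion of Definition~\ref{Womdef}, with no $t^{-\rho+\nu}$ component. Comparing this with the above asymptotics, using $2\nu\notin\ZZ$ to separate the two series term by term, shows that $A_\nu T$, the coefficient of the $t^{\nu-\rho}$ series, vanishes on $(\partial S)\setminus E$. Here one uses the explicit series \eqref{aser1} (only even powers) and the recursion \eqref{recur-k}. Step two: at each cusp $\cu\in E$, condition \eqref{def-GN-gc} together with boundedness rules out a $t^{\nu-\rho}$ component at $\cu$ when $\re\nu>0$. When $\re\nu\le 0$, boundedness and the decay along vertical lines still force the point-supported part of $A_\nu T$ at $\cu$, a finite sum of derivatives of $\delta_\cu$, to vanish. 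To see this, transport $\cu$ to $\inft$ via $g_\cu$ and compute $\mathcal P$ of derivatives of $\delta_\inft$ explicitly: they grow like $y^{\rho-\nu}$ times polynomials in horospherical coordinates, which contradicts the $\oh(t^{-\rho-\nu})$ decay. Hence $A_\nu T=0$ on all of $\partial S$. Since $\nu\notin\frac12\ZZ$, the intertwining operator $A_\nu$ is injective (its composition with $A_{-\nu}$ is a nonzero multiple of the identity), so $T=0$ and therefore $f=0$.

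An alternative I would try first, if the hyperfunction route proves too heavy, is an energy argument. Use the boundary germ to write $f=t^{-\rho-\nu}A_f$ near $(\partial S)\setminus E$, and apply the Green identity from Proposition~\ref{prop-eta} to $\eta(f,\bar f)$ or to $\eta(f,r_{\bar\nu}(b;\cdot))$ on large truncated balls. The boundary terms on spheres of radius $\log t$ scale like $t^{2\rho}\,t^{-2\rho-2\re\nu}$ times an integral over $K/M$ with a $t$-independent coefficient, and the local contributions near $E$ are controlled by \eqref{def-GN-gc}. The aim is to show that the Fourier coefficients $\int_K f(k\am(t)\orgn)\,\ph(k)\,dk$ for $K$-types $\ph$ satisfy the radial ODE, whose two solutions behave like $t^{-\rho\pm\nu}$, and that only the one regular at $\orgn$ (the spherical-function type) survives globally. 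Its coefficient of $t^{-\rho+\nu}$ is a nonzero multiple of $c(\nu)$ times the coefficient at $\orgn$, and it must vanish. This reduces everything to showing that each $K$-Fourier component of $f$ has no $t^{-\rho+\nu}$ term, despite the finitely many cusps in $E$.

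The main obstacle is the behaviour at the cusps in $E$. Near them $f$ is not a $\nu$-analytic germ, and the $K$-Fourier components (or the restriction of $T$) may pick up contributions concentrated at $\cu$. The real work is a careful local analysis at $\inft$ in horospherical coordinates: one must show that a hyperfunction supported at a point whose Poisson transform is bounded and $\oh(y^{\rho+\nu})$ along vertical lines is zero. I expect this to require $\nu\notin\frac12\ZZ$ to avoid resonances between $y^{\rho\pm\nu}$ and the integer powers produced by derivatives of $\delta$.
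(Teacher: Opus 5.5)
Your architecture is the paper's. You write the bounded $f$ as a Poisson transform of a distribution (Lewis's theorem). You use $\nu$-analyticity on $(\partial S)\setminus E$ to confine the relevant boundary distribution to $E$ (Proposition~\ref{prop-supp-sing}). Then a distribution supported on finitely many points with bounded Poisson transform must vanish (Proposition~\ref{prop-Edist}).

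Two differences are harmless. First, you use the kernel $r_{-\nu}$, so the distribution confined to $E$ is $A_\nu T$ and you need injectivity of $A_\nu$ at the end. With the paper's kernel one has $f=\poiss\nu\beta$, and $\beta$ (a nonzero multiple of your $A_\nu T$) is itself supported in $E$, so no intertwining operator appears. Second, in Step one you import weak (distributional in $b$) asymptotic expansions of Poisson transforms, in the style of van den Ban--Schlichtkrull, and separate the two exponent families via $2\nu\notin\ZZ$. That is legitimate and shorter. The paper instead proves this implication by hand, through Lemma~\ref{lem-conv} and the explicit truncated line-model integral in Lemma~\ref{lem-anal-nul}. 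Note that $A_\nu$ is the nonlocal Knapp--Stein intertwiner, not a local operator; in your normalization it is the $t^{-\rho-\nu}$ coefficient that is local.

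Step two, however, does not work as written.
\begin{itemize}
\item The $t^{\nu-\rho}$ coefficient exists only weakly in $b$. A distribution supported at $\cu$ produces no pointwise $t^{\nu-\rho}$ term along vertical lines into $\cu$. Moreover \eqref{def-GN-gc} is uniform only for $n$ in compact sets, so it does not control the weak pairings near $\cu$. Your argument for $\re\nu>0$ is therefore unsupported.
\item The explicit computation is the right idea, but it is applied to the wrong transform. You have $f=\mathcal P_{r_{-\nu}}T$ with $T$ not point-supported, so the $y^{\rho-\nu}\cdot(\text{polynomial})$ transforms are beside the point.
\end{itemize}

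What you need is the intertwining identity $\mathcal P_{r_{-\nu}}T=\kappa\,\poiss\nu(A_\nu T)$. Here $\kappa$ is a $c$-function factor, finite and nonzero for $\nu\notin\frac12\ZZ$. After moving $\cu$ to $\inft$ you then get
$f\bigl(\nm(x)\am(y)\,\orgn\bigr)=y^{\rho+\nu}P(x,y)+(\text{bounded})$, for three reasons:
\begin{itemize}
\item $\poiss\nu\delta_{\inft}\bigl(\nm(x)\am(y)\,\orgn\bigr)=y^{\rho+\nu}$;
\item its derivatives give $y^{\rho+\nu}$ times polynomials in $(x,y)$;
\item the contributions of the other points of $E$ are $\nu$-analytic, hence bounded, near $\cu$ (Lemma~\ref{lem-supp-anal}).
\end{itemize}
Since $P\neq0$ by injectivity of the Poisson transform and $\re(\rho+\nu)>0$, $f$ is unbounded along a suitable vertical line.

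So boundedness alone kills $A_\nu T$ for all $|\re\nu|<\rho$. There is no need for a case split and no use of \eqref{def-GN-gc}; Proposition~\ref{prop-E} assumes only boundedness. With this repair, or by working with $r_\nu$ from the outset, your proof becomes the paper's.
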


\begin{proof}[Proof of part \eqref{mnthm:nugrl} of
Theorem~\ref{mainthm}] Immediate from Theorem \ref{thm-E}, together
with part \eqref{mnthm:nsp} of Theorem~\ref{mainthm}.
\end{proof}

We will use only part of the properties of functions in
$\E^{\om^0,\excg}_\nu$, and will actually prove
\begin{prop}\label{prop-E}
Let $\nu\in \CC\setminus \frac12\ZZ$ and $|\re\nu|<\rho$. If
$f\in \E_\nu(S)$ is bounded and represents a $\nu$-analytic boundary
germ on $\partial S \setminus E$ for some finite subset
$E\subset \partial S$, then $f=0$.
\end{prop}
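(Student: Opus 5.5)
Our plan is to use the Helgason-type theory of eigenfunctions on rank-one symmetric spaces. Every $f\in\E_\nu(S)$ is a Poisson transform $f(x)=\int_{\partial S} r_{-\nu}(b;x)\,d T(b)$ of a hyperfunction $T$ on $\partial S=K/M$, and for $\nu\notin\frac12\ZZ$ we expect the $c$-function factors $c(\pm\nu)$ to be finite and non-zero. In that case $f$ has an asymptotic expansion near $\partial S$ with two independent leading behaviours. Along a radial ray $t\uparrow\infty$ the function looks like $t^{-(\rho+\nu)}$ times a boundary value, plus $t^{-(\rho-\nu)}$ times another boundary value; in the $K$-finite or distributional sense these are $c(\nu)T$ and $c(-\nu)\,\mathcal{I}T$, where $\mathcal{I}$ is the standard intertwiner. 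For $2\nu\notin\ZZ$ the exponents $-(\rho\pm\nu)+2k$ never collide, so the two series separate uniquely. This is the analogue, for each boundary point, of the recursion used in the proof of Theorem~\ref{thm-isoVW}: there the second Frobenius solution, with exponent $\rho-\nu$, was excluded because $B_u$ was required to be analytic.

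First we use the hypothesis of the proposition. On $(\partial S)\setminus E$ the function $f$ represents a $\nu$-analytic boundary germ, so near every $b\notin E$ only the $t^{-(\rho+\nu)}$ series occurs. Hence the ``$\rho-\nu$ boundary value'' $\mathcal{I}T$ restricted to $(\partial S)\setminus E$ vanishes as a hyperfunction, so $\mathcal{I}T$ (equivalently $T$, after applying the inverse intertwiner, which exists for $\nu\notin\frac12\ZZ$) is supported on the finite set $E$. A hyperfunction supported at a point is an infinite-order differential operator applied to $\delta_b$. So $f$ is a sum over $b\in E$ of such operators applied to the Poisson kernel $r_{\pm\nu}(b;\cdot)$. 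Using $G$-equivariance we move $b$ to $\inft$, where $r_{\pm\nu}(\inft;n\am(y)\,\orgn)=y^{\rho\pm\nu}$ and its transversal derivatives are explicit in horospherical coordinates.

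Second we use boundedness. Fix $b\in E$ and move it to $\inft$. The part of $f$ supported at $\inft$, evaluated along $\am(y)\orgn$ with $y\uparrow\infty$, grows like $y^{\rho+\re\nu'}$ times polynomial/homogeneous factors in $x$, where $\nu'=\pm\nu$ is the parameter attached to that piece. Since $|\re\nu|<\rho$, both $\rho\pm\re\nu>0$, so any non-zero term grows. The contributions from the other points of $E$ stay bounded near $\inft$ in suitable directions, so this growth cannot be cancelled. Concretely, we integrate $f$ over horospheres $n\am(y)\orgn$ against suitable test functions in $n$, use the dilation $\am(y)$ to separate the homogeneous pieces, and conclude that each coefficient vanishes. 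Hence $T=0$ and $f=0$.

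We expect the main obstacle to be the rigorous link between ``represents a $\nu$-analytic boundary germ on an open set'' and ``the second boundary value vanishes there'' for a hyperfunction $T$. That is, we need a local, not merely $K$-finite, version of the Kashiwara–Oshima boundary value theory, valid for exponents with difference $2\nu\notin\ZZ$. We would try to derive it from the Frobenius analysis of~\eqref{deqAn}, applied to the equation in the variable $y$ with operator coefficients in $x$. We would combine this with Oshima's theorem that the boundary value maps are local and injective for non-integral exponent difference, rather than rebuild the theory from scratch.
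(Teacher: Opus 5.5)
Your architecture is the paper's. You represent $f$ as a Poisson transform, use $\nu$-analyticity on $(\partial S)\setminus E$ to force the boundary data into $E$ (Proposition~\ref{prop-supp-sing}), and use boundedness to kill point-supported data (Proposition~\ref{prop-Edist}). The real difference is the middle step.

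The paper works only with distributions. A bounded $f$ has polynomial growth, so Lewis's theorem gives $f=\poiss\nu\beta$ with $\beta\in C^{-\infty}(\partial S)$ and kernel $r_\nu$. The implication ``$\poiss\nu\beta$ is $\nu$-analytic near $b$ $\Rightarrow$ $b\notin\supp\beta$'' is then proved by hand in two steps:
\begin{enumerate}
\item smooth $\beta$ with a Dirac sequence on $N$ (Lemma~\ref{lem-conv});
\item compute $y^{-\rho-\nu}\poiss\nu\varphi(\am(y)\,\orgn)$ modulo functions analytic at $y=0$, using the hypergeometric evaluations of Proposition~\ref{prop-PI}. This isolates the non-analytic term $b_0\,\varphi(\nul)\,y^{-2\nu}$ with $b_0\neq 0$ (Lemma~\ref{lem-anal-nul}).
\end{enumerate}
You replace this computation by the locality of Kashiwara--Oshima boundary values for non-integral exponent difference, together with the hyperfunction Poisson theorem. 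That is conceptually cleaner: the paper's $b_0\varphi(\nul)$ is exactly the local boundary value $c(\nu)\beta$ at $\nul$. But it imports much heavier machinery, whereas the paper needs only Lewis's theorem and explicit integrals.

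Two points in your sketch need repair.

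First, the intertwiner $\mathcal I$ is not a local operator. So ``$\mathcal I T$ is supported on $E$'' does not give ``$T$ is supported on $E$''. Choose the parameter so that the boundary value killed by $\nu$-analyticity is the local one. Concretely, write $f=\poiss\nu\beta$ with the paper's kernel $r_\nu$, whose $y^{\rho-\nu}$-boundary value is $c(\nu)\beta$. Equivalently, use $\poiss{-\nu}\propto\poiss\nu\circ\mathcal I$ and put $\beta\propto\mathcal I T$. Then $\supp\beta\subset E$, and $f$ is the $r_\nu$-transform of point-supported data.

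Second, your growth argument does not handle infinite-order operators at a point. With infinitely many homogeneous pieces there is no top weight to isolate, so separation by dilation is not justified. First observe that boundedness implies moderate growth, so by Lewis's theorem $\beta$ is a distribution. Each point piece is then a finite sum $\sum_\alpha c_\alpha\partial^\alpha\delta_b$. Now your dilation idea works. Move the point to $\nul$. The piece of maximal weight $|\alpha^{(1)}|+2|\alpha^{(2)}|$ is a non-zero homogeneous eigenfunction. It dominates along a curve $s\mapsto \nm(s\xi_0^{(1)},s^2\xi_0^{(2)})\,\am(s)\,\orgn$ as $s\downarrow 0$, for a suitable $\xi_0$. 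The vertical geodesic alone is not enough: for example $\poiss\nu(\partial_{x_1}\delta_\nul)$ vanishes identically on it. This is essentially the argument of Proposition~\ref{prop-Edist}.
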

This implies Theorem~\ref{thm-E}, and will be a consequence of
Proposition~\ref{prop-Edist}.

\subsection{Poisson transformation}\label{sect-Poi-tr}Central in this
section is the Poisson transformation. It sends functions
$\ph \in C^\infty(\partial S)$ to elements of $\E_\nu(S)$:
\be \poiss\nu \ph(g\,\orgn)
= \int_{b\in \partial S} r_\nu(b; g\,\orgn) \, \ph(b)\, db = \int_{k\in
K }\Ph_\nu\bigl( k^{-1} g\bigr)\, \ph(k\,\infty)\, dk \,.\ee
This transformation makes sense more generally: we may replace $\ph$ by
a distribution on $\partial S$, or even by a linear form on
$C^\om(\partial S)$.

\rmrk{Distributions}We recall that a distribution on $\partial S$ is a
linear form on $C^\infty(\partial S)$ that is continuous for the
topology determined by the supremum norm of all derivatives on compact
sets.
(Since $\partial S$ is compact we can work with the supremum norms of
derivatives on $\partial S$.) A function $\ph\in C^\infty(\partial S)$
determines the distribution $\ph:\ch \mapsto \langle \ph,\ch\rangle = 
\int_{k\in K} \ph(k\,\infty)\,\ch(k\,\infty)\, dk$. By
\il{C-inf}{$C^{-\infty}(\cdot)$} $C^{-\infty}(\partial S) $ we indicate
the space of distributions on $\partial S$.

The action $\tau_\nu$ on $C^\infty(\partial S)$ satisfies
$\bigl\langle \tau_\nu(g) \ps , \ph\bigr\rangle =
\bigl\langle \ps , \tau_{-\nu}(g^{-1})\ph\bigr\rangle$. This relation
extends to distributions. Providing $C^{-\infty}(\partial S)$ with the
action $\tau_{-\nu}$ we obtain the dual $\V{-\infty}{-\nu}(\partial S)$
of the representation $\V\infty\nu(\partial S)$.

For a distribution $\bt$ on $\partial S$ we use the notation $\bt(f)$ if
$f\in C^\infty(\partial S)$. If we have a function given by some
expression $E$, then we use the notations
$\bt\left( b \mapsto E(b) \right)$ and $\bt\bigl( E(\cdot)\bigr)$.

\rmrk{Poisson transformation}The \il{Ptrf}{Poisson
transformation}Poisson transform $\poiss\nu\bt$ is the function on $S$
obtained by applying $\bt$ to
$b\mapsto r_\nu(b;x)$:\il{poiss}{$\poiss\nu$}
\be \poiss\nu\bt(x) = \bt\bigl( r_\nu(\cdot; x) \bigr)\,.\ee
Property \tlref{Rnu:eiggrow}{Rnu:eigen} in Proposition~\ref{prop-Rnu}
implies that $\poiss\nu \bt \in\E_\nu(S)$. The Poisson transformation
intertwines the action $\tau_{-\nu}$ with the action $L$ on
$\E_\nu(S)$. Lewis has shown \cite[Theorem 5.3]{Le78} that the image
$\poiss \nu C^{-\infty}(\partial S)$ is equal to the space of all
elements $f\in \E_\nu(S)$ with polynomial growth at the boundary. He
works under the condition that $\nu \not\in \ZZ$ and that $\ld=-i\nu$
is ``simple''. He discusses that these two conditions lead to the
requirements $\nu \not\in \ZZ$ and $\nu \not \in -p/2+\ZZ_{\leq 0}$. So
the use of this theorem forces us to add $\nu \not\in \ZZ_{\geq 0}$ to
the conditions on the spectral parameter.

The \il{supp}{support}support \il{sp}{$\supp(\cdot)$}$\supp(\ph)$
of a function $\ph \in C^\infty(\partial S)$ is the closure of the set
$x\in \partial S$ such that $\ph(x) \neq 0$. The support of a
distribution $\bt$ on $\partial S$ consists of the points
$x\in \partial S$ such that $\bt(\ph) \neq 0$ for some
$\ph \in C^\infty(\partial S)$ with $x\in \supp \ph$.

A large part of this section will be needed to establish the following
result.
\begin{prop}\label{prop-supp-sing}Let $\nu\in \CC\setminus \frac12\ZZ$
and $|\re\nu|<\rho$. Let $\bt\in C^{-\infty}(\partial S)$. The set
$\partial S$ is the disjoint union of the sets
\[ \supp(\bt) \qquad\text{ and }\qquad \bigl\{ x\in \partial S\;:\;
\poiss\nu \bt \in
(\W\om\nu)_x\bigr\}\,.\]
\end{prop}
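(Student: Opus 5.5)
The plan is to prove the two inclusions separately: points outside $\supp(\bt)$ are points of $\nu$-analyticity, and points of $\supp(\bt)$ are not. This shows that the two sets cover $\partial S$ and are disjoint. By $G$-equivariance of $\poiss\nu$ (it intertwines $\tau_{-\nu}$ with $L$) and of the sheaf $\W\om\nu$ (Proposition~\ref{prop-aGA}), it suffices to treat the single point $x=\nul$. There I can use the horospherical description of Proposition~\ref{prop-nuanal-hc}.

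\emph{First inclusion.} Suppose $\nul\notin\supp(\bt)$. I choose an open neighbourhood $I$ of $\nul$ with $\overline I\cap\supp(\bt)=\emptyset$, and a cut-off $\ch\in C^\infty(\partial S)$ that equals $1$ near $\supp(\bt)$ and vanishes on $I$. Then $\poiss\nu\bt(z)=\bt\bigl(\ch(\cdot)\,r_\nu(\cdot;z)\bigr)$.

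For $b$ in a compact set $Q$ disjoint from $\overline I$, I will show that $z\mapsto r_\nu(b;z)$ has the form $y^{\rho+\nu}B(b;z)$ near $\nul$. Here $B$ is jointly analytic in $(b,z)$ on $Q\times U$, for a neighbourhood $U$ of $\nul$ in $\hat S$. This should follow from \eqref{rnudef}, from $r_\nu(b;\nm(x)\am(y)\,\orgn)=\Ph_\nu(k_b^{-1}\nm(x)\am(y))$, and from the explicit formula \eqref{tJexpl} after conjugating with $\wm$. The denominator there does not vanish at $y=0$ as long as the point $\nm(x)\,\nul$ stays away from $b$.

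Applying the distribution $\bt$ to an analytic family that depends smoothly on the test-function variable $b$ then gives $\poiss\nu\bt=y^{\rho+\nu}B_{\bt}$ with $B_{\bt}(z)=\bt\bigl(\ch\,B(\cdot;z)\bigr)$. To see that $B_\bt$ is analytic near $\nul$, I will use Cauchy estimates for the holomorphic extension of $B$ in $z$, uniformly in the $b$-derivatives. By Proposition~\ref{prop-nuanal-hc}, $\poiss\nu\bt$ then represents an element of $(\W\om\nu)_\nul$.

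\emph{Second inclusion.} Suppose $\nul\in\supp(\bt)$ and, for contradiction, that $\poiss\nu\bt\in(\W\om\nu)_\nul$. I write $\bt=\bt_1+\bt_2$, with $\bt_1=\ch_1\bt$ supported in a small neighbourhood $I$ of $\nul$ and $\nul\notin\supp(\bt_2)$. By the first inclusion, $\poiss\nu\bt_2$ is $\nu$-analytic at $\nul$, so $\poiss\nu\bt_1$ is $\nu$-analytic at $\nul$ as well. Also by the first inclusion, $\poiss\nu\bt_1$ is $\nu$-analytic on $\partial S\setminus\supp(\bt_1)$. Shrinking $I$ and using compactness, $\poiss\nu\bt_1$ then represents a $\nu$-analytic germ on a neighbourhood of $\overline I$, hence on all of $\partial S$.

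Next I use the two-term asymptotic behaviour of Poisson transforms. For a distribution $\al$ the expected behaviour is
\[
\poiss\nu\al\bigl(k\am(t)\,\orgn\bigr)\sim c(\nu)\,t^{\nu-\rho}\,\al(k\,\inft)+t^{-\nu-\rho}\,\bigl(\text{intertwined }\al\bigr)(k\,\inft),
\]
in the weak sense after pairing with test functions in $k$, with the full series $t^{\nu-\rho-2m}$ and $t^{-\nu-\rho-2m}$. A global $\nu$-analytic germ only contains exponents $-\rho-\nu-2m$, by \eqref{aser1}. Since $\nu\notin\frac12\ZZ$, the exponents $\nu-\rho-2m$ never coincide with $-\nu-\rho-2m'$. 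Comparing the two series therefore forces $c(\nu)\,\bt_1=0$. By \eqref{cHC}, $c(\nu)\neq0$ for $\nu\notin\frac12\ZZ$, so $\bt_1=0$ near $\nul$, contradicting $\nul\in\supp(\bt)$.

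\emph{Main obstacle.} The hard step is making the weak asymptotic expansion rigorous for distributions. My plan is to pair $\poiss\nu\bt_1(k\am(t)\,\orgn)$ with a test function $\ps(k)$. This rewrites it as $\bt_1$ applied to the smooth function $b\mapsto\int_K r_\nu(b;k\am(t)\,\orgn)\,\ps(k)\,dk$, whose asymptotics in $t$ follow from Harish-Chandra's analysis of the spherical-type integral, and the leading coefficient $c(\nu)$ comes out of that analysis. Alternatively, I would invoke Lewis' boundary value theorem \cite[Theorem 5.3]{Le78}: it identifies the $t^{\nu-\rho}$ boundary value of $\poiss\nu\al$ with $c(\nu)\al$, and it is available since $\nu\notin\ZZ$. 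The resulting uniformity in $t$ must be strong enough to separate the two exponent families, and this is where the hypotheses $\nu\notin\frac12\ZZ$ and $|\re\nu|<\rho$ are used.
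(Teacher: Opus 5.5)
Your first inclusion is the paper's Lemma~\ref{lem-supp-anal}, done more carefully. In the converse direction, however, the claim that $\poiss\nu\bt_1$ is $\nu$-analytic on all of $\partial S$ does not follow from what you have at that point. Let $J$ be the neighbourhood of $\nul$ on which $\poiss\nu\bt$ is $\nu$-analytic. You only know that $\poiss\nu\bt_1=\poiss\nu\bt-\poiss\nu\bt_2$ is $\nu$-analytic on $J\setminus\supp(\bt_2)$ and on $\partial S\setminus\supp(\bt_1)$. Points of $\supp(\bt)$ where $0<\ch_1<1$ lie in $\supp(\bt_1)\cap\supp(\bt_2)$, so neither statement covers them. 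Shrinking $I$ or appealing to compactness does not remove this collar; take for instance $\supp(\bt)=\partial S$.

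The decomposition is unnecessary, though. Pair $\poiss\nu\bt$ itself with test functions $\ps$ supported in a compact subset of $J$. By Lemma~\ref{lem-epc} this pairing is $t^{-\rho-\nu}$ times a function analytic in $t^{-1}$ near $0$. In the weak expansion, the coefficient of $t^{\nu-\rho}$ is $c(\nu)\,\bt(\ps)$. Since $2\nu\notin\ZZ$ and $c(\nu)\neq 0$, you get $\bt=0$ on $J$.

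With this repair your argument is a valid route, different from the paper's, but its weight rests on an external theorem. You need an expansion of $b\mapsto\int_K r_\nu(b;k\am(t)\,\orgn)\,\ps(k)\,dk$ in both families $t^{\pm\nu-\rho-2m}$, with remainder $o(t^{\re\nu-\rho})$ in a $C^N(\partial S)$-norm, where $N$ is the order of $\bt$ near $\nul$. For $\re\nu>0$ the leading term would suffice. For $\re\nu\le 0$ you must expand the dominant $t^{-\nu-\rho}$ family past its leading term. Such expansions are known (e.g.\ from van den Ban--Schlichtkrull), but they are more than the statement of \cite[Theorem 5.3]{Le78} quoted in \S\ref{sect-Poi-tr}.

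The paper avoids distributional asymptotics altogether. Lemma~\ref{lem-conv} convolves $\bt$ with a Dirac sequence on $N$ and uses the $N$-equivariance \eqref{rnulnm-sub} to pass to $\ph\in C_c^\infty(N\,\nul)$. Lemma~\ref{lem-anal-nul} then looks only along $y\mapsto\am(y)\,\orgn$:
\begin{itemize}
\item the part of the integral outside $\ell(\dt)$ lies in $\anal$;
\item the constant $\ph(\nul)$ on $\ell(\dt)$ gives $\ph(\nul)\,b_0\,y^{-2\nu}$ modulo $\anal$, by the hypergeometric computation of Proposition~\ref{prop-PI};
\item $\ph-\ph(\nul)$ contributes only $\oh(y^{1-2\nu})$, by Lemma~\ref{lem-I-estm}.
\end{itemize}
There the nonzero $b_0$ plays the role of your $c(\nu)$. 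Your route is more conceptual and explains where $c(\nu)$ comes from; the paper's is self-contained and needs only one singular coefficient at one point.
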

The statement that $\poiss\nu \bt$ is an element of the stalk
$(\W\om\nu)_p$ means that $\poiss\nu\bt$ represents a $\nu$-analytic
boundary germ on a neighborhood of $p$ in~$\partial S$.

Proposition~\ref{prop-E} will follow from
Proposition~\ref{prop-supp-sing} and the following result:
\begin{prop}\label{prop-Edist}Let $\nu\in \CC$ satisfy $\rho+\re\nu>0$
and $\nu \not\in \frac12\ZZ_{\leq-1}\cup \ZZ_{\geq 0}$. Suppose that
the support of $\bt\in C^{-\infty}(C)$ consists of finitely many
elements of $\partial S$.
If $\poiss\nu\bt$ is bounded on $S$, then $\bt=0$.
\end{prop}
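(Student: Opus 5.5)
Since the support of $\bt$ is a finite set $\{b_1,\ldots,b_m\}\subset\partial S$, I would split $\bt=\sum_j\bt_j$ with $\supp\bt_j=\{b_j\}$, using a partition of unity on $\partial S$. By the structure theorem for distributions with point support, each $\bt_j$ is a finite linear combination of derivatives of the Dirac measure at $b_j$. I would transport $b_j$ to $\inft$ by some $k_j\in K$, using the intertwining property $\poiss\nu\circ\tau_{-\nu}(g)=L(g)\circ\poiss\nu$, which preserves boundedness on $S$.

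The first step is to show that $\poiss\nu\bt$ bounded forces each $\poiss\nu\bt_j$ bounded, or at least bounded along directions approaching $b_j$. For $i\neq j$ the function $\poiss\nu\bt_i$ is a finite sum of derivatives in $b$ of $r_\nu(b;x)$ at $b=b_i$. Near $b_j\neq b_i$, the kernel $r_\nu(b_i;\cdot)$ and its $b$-derivatives represent $\nu$-analytic boundary germs, by Proposition~\ref{prop-Rnu} and Lemma~\ref{lem-rmph}. Hence they are $\oh(t^{-\rho-\re\nu})$ as $k\am(t)\,\orgn\to b_j$ (recall $\rho+\re\nu>0$), so they are bounded there and decay. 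The same argument shows that $\poiss\nu\bt_j$ is itself bounded away from $b_j$. Therefore boundedness of $\poiss\nu\bt$ near $b_j$ transfers to $\poiss\nu\bt_j$, and it suffices to treat a single point $b=\inft$.

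For $\bt$ supported at $\inft$, I would compute explicitly in horospherical coordinates. Write $x=\nm(\xi)\am(y)\,\orgn$. Then $r_\nu(\inft;x)=y^{\rho+\nu}$, up to the normalization coming from \eqref{Phinu}, where we use that $\Ph_\nu(k^{-1}g)$ with $k\in M$ gives the $A$-component. Moving $b$ near $\inft$ amounts to acting by $N^-=\wm N\wm$. Derivatives of $r_\nu(b;x)$ in $b$ at $\inft$ are then obtained by applying $L(\YY)$ for $\YY\in U(\nlie^-)$ to $y^{\rho+\nu}$, modulo the $J$-factor terms, which produce a triangular system. Such derivatives are polynomials in $\xi$ and $y$ times $y^{\rho+\nu}$. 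Concretely, $\poiss\nu\bt$ equals $y^{\rho+\nu}P(\xi,y)$ with $P$ a polynomial, and a $\nu$-dependent ordering by degree (via the $\ad(\H_0)$-weights) lets one read off the coefficients of $\bt$ from the leading terms of $P$. Boundedness as $y\to\infty$ with $\xi$ fixed, together with $\re(\rho+\nu)>0$, then kills the top-degree term. Inductively this kills all terms, provided no cancellation occurs between terms of different weights. Such cancellation could only happen when an exponent $\rho+\nu+j$ coincides with a permitted exponent at another order, which is excluded by $\nu\notin\ZZ_{\geq0}\cup\frac12\ZZ_{\leq-1}$.

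The main obstacle is this last step for non-abelian $N$. There the $N^-$-derivatives mix the $\glie_{-\al}$ and $\glie_{-2\al}$ directions, and $P$ has weighted rather than ordinary degree. I would first try to grade by the $A$-weight: $\am(s)$ acts on $\nm(\xi)\am(y)\,\orgn$ by scaling $(\xi^{(1)},\xi^{(2)},y)\mapsto(s\xi^{(1)},s^2\xi^{(2)},sy)$, and a derivative of order $w$ in weight scales by $s^{\rho+\nu+w}$ under $L(\am(s)^{-1})$. Taking the top weight component, and restricting to $\xi=0$ and $y\to\infty$ or to a suitable curve, isolates a nonzero homogeneous eigenfunction with growth $s^{\rho+\re\nu+w}$. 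Such growth is unbounded unless that component vanishes, which it does by Lewis's injectivity of the Poisson transform (\cite{Le78}) on distributions for the allowed $\nu$. Alternatively, I would argue that a bounded $\poiss\nu\bt$ has a Poisson representation with $L^\infty$ boundary data by Lewis's theorem, forcing $\bt$ to be a function and hence zero since its support is finite.
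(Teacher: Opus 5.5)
Your architecture matches the paper's: split $\bt$ into point-supported pieces, move each support point by $K$, write each piece as a finite combination of derivatives of a Dirac distribution, show that a nonzero such combination has an unbounded Poisson transform, and use Lewis's theorem only to know that $\poiss\nu$ does not annihilate a nonzero distribution.

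Your reduction step is more careful than the paper's one-line partition of unity. You justify that boundedness of $\poiss\nu\bt$ passes to each $\poiss\nu\bt_j$, because the other pieces represent $\nu$-analytic boundary germs near $b_j$ and are therefore $\oh(t^{-\rho-\re\nu})$ there. The right reference for this is the argument in the proof of Lemma~\ref{lem-supp-anal}, cf.\ \eqref{exph}; Lemma~\ref{lem-rmph} concerns the boundary variable of $q_\nu$ and is not what you need.

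The decisive step is done differently. The paper works at $\nul$ with the line-model kernel $y^{2b}N(x,y)^{-b}$ and differentiates to get $y^{2b}P(x,y)N(x,y)^{-b-M}$. It then uses degree counting to find a curve into the origin along which this blows up as $y\downarrow0$. Your $A$-grading at $\inft$ is the conceptual version of that degree count, and it is cleaner:
\begin{itemize}
\item Decompose $\bt=\sum_w\bt_w$ into eigendistributions with $\tau_{-\nu}(\am(s))\bt_w=s^{-(\rho+\nu)-w}\bt_w$ and $w\in\ZZ_{\geq0}$. This is possible because $\Ad(A)$ acts semisimply on $U(\nlie^-)$.
\item Then $\poiss\nu\bt(\am(s)x_0)=\sum_w s^{\rho+\nu+w}\,\poiss\nu\bt_w(x_0)$, with distinct exponents of real part $\rho+\re\nu+w>0$.
\item Boundedness as $s\to\infty$ therefore forces $\poiss\nu\bt_w(x_0)=0$ for all $w$ and all $x_0\in S$, and injectivity of $\poiss\nu$ gives $\bt=0$.
\end{itemize}
This treats non-abelian $N$ uniformly. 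It needs neither the polynomial form of $\poiss\nu\bt$ (which you assert without proof) nor any choice of curves. It also shows that $\rho+\re\nu>0$ is exactly what produces the growth.

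Three of your side remarks should be discarded.
\begin{itemize}
\item Components of different weight cannot cancel, since their exponents differ by nonzero integers. The hypothesis $\nu\notin\frac12\ZZ_{\leq-1}\cup\ZZ_{\geq0}$ enters only through the injectivity of $\poiss\nu$, not in the way you describe.
\item Restricting to $\xi=0$ can miss the top component: for $\ds=2$ the transform of a first derivative of $\dt_\inft$ is a multiple of $\xi\,y^{\rho+\nu}$. Use the $A$-orbit of a generic point instead.
\item The alternative via $L^\infty$ boundary data is not valid. For $|\re\nu|<\rho$ a bounded eigenfunction need not have bounded boundary data. For example, $\poiss\nu f$ with $f\in L^r(\partial S)\setminus L^\infty(\partial S)$ and $r<\infty$ large is bounded by H\"older's inequality. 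By contrast, $\poiss\nu L^\infty(\partial S)$ lies in the smaller class $|u|\ll\poiss{\re\nu}1$.
\end{itemize}
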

We give a proof in the next subsection.

\subsection{Distributions on \intitle{N\,\nul}}Functions
$\ph \in C^\infty(\partial S)$ determine a function $\ph^\prj$ on
$N\,\nul\cong \RR^{p+q}$ by
\il{prjm}{$\ph^\prj$}$\ph^\prj(x) = \ph\bigl( \kI(\nm(x)\wm)\,\inft)$.
If $\ph$ has support in $N\,\nul$, then integration over $\partial S$
can be formulated as integration over
$(\partial S)\setminus\{\inft\} = N\,\nul \cong \RR^{p+q}$
\be \int_{\partial S} \ph(b) \, db = \int_{\RR^{p+q}} \ph^\prj(x) \, a_0
\, \bigl((1+c\|x^{(1)}\|^2)^2+4c\|x^{(2)}\|^2\bigr)^{-2\rho} \, dx\,.
\ee
with the standard Lebesgue measure $dx$ on $\RR^{p+q}$, and where
$a_0>0$ is chosen in accordance with the total volume $1$ of
$\partial S$ for the invariant measure \il{db}{$db$}$db$ on $K/M$.
(Compare \cite[(3) p~1, Theorem 1.14 p~52]{Hel70}.)
We have seen in Proposition~\ref{prop-Id} that
\be (1+c\|x^{(1)}\|^2)^2+4c\|x^{(2)}\|^2 = \tJ\bigl(
\wm\nm(x)\bigr)^{-2}\,.\ee
In this way we can work with functions on $\partial S$ in terms of
functions on $N\,\nul$. In the context of $\PSL_2(\RR)$ this is called
the \emph{projective model} in \cite[\S2.1]{BLZ15}.

Helgason gives in \cite[Corollary 1.18 p 65]{Hel70} an explicit formula
for the Poisson kernel on $N\,\nul\cong \RR^{p+q}$. Helgason works with
$\tI\bigl( \am(y)^{-1}k\bigr)$, and we take
$\tJ\bigl( k^{-1}\am(y) \bigr)$. For $x=(x^{(1)},x^{(2)})\in \RR^{p+q}$
and $y>0$ we obtain\il{rnu-pl}{$r_\nu^\prj;\; r_\nu^\lnm$}
\be\label{Poiss-expl} r_\nu^\prj \bigl( x; \, \am(y)\, \orgn)
= y^{\rho+\nu} \Bigl( \frac{ (1+c\|x^{(1)}\|^2)^2+ 4 c \|x^{(2)}\|^2} {
(y^2+c\|x^{(1)}\|^2)^2+ 4 c \|x^{(2)}\|^2} \Bigr)^{(\rho+\nu)/2}\,.\ee
The positive constant $c$ is not normalized here in the same way as in
\cite{Hel70}; see the proof of Proposition~\ref{prop-Id}.
\smallskip

Although conceptually nice, the projective model is less convenient than
the \emph{line model} of the functions \il{lnm}{$\ph^\lnm$}
$\ph^\lnm(x) =\bigl((1+c\|x^{(1)}\|^2)^2+
4 c \|x^{(2)}\|^2\bigr)^{-(\rho+\nu)/2} \ph^\prj(x)$. In the line model
the kernel of the Poisson transformation takes the form
\bad\label{Poiss-lnm}
r_\nu^\lnm(x; \am(y) \,\orgn) &= y^{\rho+\nu} \Bigl( \bigl(
y^2+c\|x^{(1)}\|^2)^2+ 4c \|x^{(2)}\|^2\Bigr)^{-(\rho+\nu)/2}\\
& =\tJ \bigl( \wm \nm(x) \am(y) \bigr)^{\rho+\nu} \,.\ead
See \eqref{tJexpl}.

If $n, n_1\in N$, then
\be \label{rnulnm-sub}
r_\nu^\lnm(n;n_1\am(y)\,\orgn) = r_\nu^\lnm(n_1^{-1} n ;\am(y)\, \orgn)\,.
\ee

\begin{proof}[Proof of Proposition~\ref{prop-Edist}] Let $\bt$ be a
distribution with finite support. With a partition of unity we can
write it as a sum of distributions with support consisting of one
point. The action of $K$ enables us to consider distributions with
$\supp(\bt)=\{\nul\}$.

In this way we have a distribution $\bt$ on $\RR^{p+q} \cong N \,\nul$
with support $\{0\}$. Such distributions are a linear combination of
finitely many derivatives of the $\dt$-distribution
$\dt: \ph \mapsto \ph(0)$. We use multi-indices
$\al=(\al_1,\ldots,\al_{p+q})$ to describe derivatives as
$\partial^\al =
\partial_{x_1}^{\al_1}\cdots \partial_{x_{p+q}}^{\al_{p+q}}$,
and put $|\al|= \sum_{j=1}^{p+q}\al_j$. We put $N(x,y)=
N(x^{(1)},x^{(2)},y)= \bigl(y^2+c \|x^{(1)}\|^2\bigr)^2+ 4 c
\|x^{(2)}\|^2$ and write $\rho+\nu = 2b$. By \eqref{rnulnm-sub} we have
\be \poiss\nu \bigl( \partial ^\al \dt \bigr)
\bigl(\nm(-x)\am(y)\,\orgn\bigr)
=(-1)^{|\al|} \partial^\al \frac {y^{2b}}{N(x,y)^b} \,.\ee
Since $\partial ^\al \dt $ is a non-zero distribution, its Poisson
transform is non-zero, and is analytic on~$S$. So
$\poiss\nu \bigl( \partial ^\al \dt \bigr)$ is not identically zero
near $\orgn$. Hence the polynomial $p_\al\bigl( x,y\bigr)$ such that
\be (-1)^{|\al|}\partial^\al \frac{y^{2b}}{N(x,y)^b} = y^{2b} p_\al(x,y)
N(x,y)^{-b-|\al|} \ee
is not the zero polynomial. In each differentiation step the total
degree of $p_\al$ increases at most by $3$.

If $p_\al(0,y) \neq 0$ the function
$\poiss\nu \bigl( \partial ^\al \dt \bigr) 
\bigl(\nm(-x)\am(t)\,\orgn\bigr)$ is given by
$p_\al(0,y) y^{-2b-4|\al|}$ with $3|\al|+\re(-2b-4|\al|)<0$, since the
total degree of $p_\al$ is smaller than $3|\al|$, and since $N(x,y)$
has degree $4$ in~$y$. It is unbounded as $y\downarrow 0$. If
$p_\al(0,y)=0$ we take a curve
\be \label{vcxy} y \mapsto \bigl( a_1 y^{d_1},\ldots, a_{p+q}
y^{d_{p+q}}\bigr)\ee
with $d_j=\frac12$ for $1\leq j \leq p$ and $d_j=1$ for
$p+1\leq j \leq p+q$, and $a_j$ such that $p_\al$ is not identically
zero along this curve by similar degree arguments. Then the Poisson
transform is unbounded along this curve. So the Poisson transforms of
the simple derivatives $\partial^\al\dt$ are unbounded.

So let us consider a finite linear combination
$\bt = \sum_i c_i \partial^{\al(i)}\dt$. Its Poisson transform at
$\nm(x)\am(y)\,\orgn$ is the linear combination
\[ \poiss\nu \bt=\sum_i c_i y^{2b} p_{\al(i)}(x,y) N(x,y)^{-b-|\al(i)|}\,.\]
We can write $\poiss \nu \bt\bigl( \nm(-x)\am(y)\,\orgn)$ as
$ y^{2b} P(x,y)\, N(x,y)^{-b-M}$, with $M\in \ZZ_{\geq 0}$, and a
polynomial $P$ with total degree smaller than $M$. This new function
cannot be identically zero if $\bt$ is not the zero linear combination,
by Lewis's theorem. We again construct a curve of the same form as in
\eqref{vcxy} through $\RR^{p+q}\times(0,\infty)$ approaching
$0\in \RR^{p+q+1}$ along which $\poiss\nu\bt$ is unbounded.

So a bounded function in $\E_\nu(S)$ cannot be of this form.
\end{proof}

\subsection{Approximation of distributions} We still have to prove
Proposition~\ref{prop-supp-sing}. It can be localized to a neighborhood
of $\nul$ in~$N\,\nul$. First we discuss the approximation of
distributions by functions.

The convolution of functions $\ph,\ps\in C_c^\infty(N)$ is defined by
\be \ps\ast \ph (n) = \int_N\ps(nn_1^{-1}) \ph(n_1)\, dn_1 = \int_N
\ps(n_1) \ph(n_1^{-1} n ) \,dn_1\,.\ee

The linear map $\ph \mapsto \ps\ast \ph$ is continuous for the supremum
norm of all derivatives of $\ph$ up to a given order. For
$\ch\in C^\inft(N)$ we have
\be\int_N \ch(n) (\ps\ast \ph)(n)\, dn = \int_N (\check \ps \ast
\ch)(n)\, \ph(n)\, dn\,,\ee
with $\check \ps(n)= \ps(n^{-1})$. This shows how to deal with the
convolution of $\ps\in C_c^\infty(N)$ with a distribution $\bt$ with
compact support:
\be\label{convdist} (\ps \ast \bt) : \ch \mapsto \bt\bigl( \check \ps
\ast \ch\bigr)\,. \ee
The distribution $\check\ps \ast\bt $ is given by a function in
$C_c^\infty(N)$; see eg.~\cite[17-10, Thm~7.5]{Tre67}.

A Dirac sequence with shrinking support is a sequence
$\bigl( \ps_m \bigr)_m$ of elements in $C_c^\infty(N)$ such that
$\ps_m\geq 0$, and $\int_N \ps_m(n)\, dn=1$ for all $m$, for which the
supports $\supp\ps_m$ form a shrinking sequence of compact sets with
intersection $\{e\}$. See \cite[Chap VIII, \S3]{Lng93}. We can choose
the Dirac sequence such that $\check \ps_m = \ps_m$. The important
property is that $\ps_m \ast \ch$ converges uniformly to
$\ch \in C^\infty_c(N)$, and the same holds for all derivatives
(under right differentiation).

\begin{lem}\label{lem-conv}Let $\bt$ be a distribution on $N$ with
compact support. If $\ps_m$ is a Dirac sequence with shrinking support,
then
\begin{enumerate}[label=$\mathrm{(\alph*)}$, ref=$\mathrm{\alph*}$]
\item \label{conv:lim} For each $\ch\in C^\infty(N)$
\[ \bt(\ch) = \lim_{m\rightarrow \infty }\int_{n\in N}(\ps_m\ast\bt)(n)
\, \ch(n)\, dn\,.\]

\item \label{conv:supp} Let $n\in N$. Then $n\in \supp(\bt)$ if and only
if $n\in \supp(\ps_m\ast \bt)$ for all sufficiently large $m$.

\item \label{conv:anal} If $\poiss\nu \bt$ represents an element of the
stalk $(\W\om\nu)_x$, then $\poiss\nu(\ps_m\ast \ch)$ represents an
element of this stalk for all sufficiently large~$m$.
\end{enumerate}
\end{lem}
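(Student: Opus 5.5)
We treat the three parts in turn. Part \eqref{conv:lim} and the easy half of part \eqref{conv:supp} are routine. The content lies in the other half of \eqref{conv:supp} and in \eqref{conv:anal}. Throughout we work in the coordinates $N\cong\RR^{p+q}$ and use the definition \eqref{convdist} of $\ps\ast\bt$. In \eqref{conv:anal} we read $\ps_m\ast\ch$ as $\ps_m\ast\bt$, which is evidently what is meant.

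For \eqref{conv:lim} we apply \eqref{convdist} to get $\int_N(\ps_m\ast\bt)(n)\,\ch(n)\,dn=\bt(\check\ps_m\ast\ch)=\bt(\ps_m\ast\ch)$, using $\check\ps_m=\ps_m$. Since $\bt$ has compact support, it is continuous for the seminorm $\sup_L|\partial^\al\cdot|$, $|\al|\le k$, on a compact neighbourhood $L$ of $\supp\bt$. The Dirac property gives $\ps_m\ast\ch\to\ch$ uniformly with all derivatives on $L$, and the limit follows.

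For \eqref{conv:supp}, the implication ``$n\in\supp(\ps_m\ast\bt)$ for all large $m$ $\Rightarrow$ $n\in\supp\bt$'' comes from the inclusion $\supp(\ps_m\ast\bt)\subset\supp(\ps_m)\cdot\supp(\bt)$. This inclusion holds because $\bt(\ps_m\ast\ch)=0$ whenever $\supp\ch$ avoids $\supp(\ps_m)\supp(\bt)$. Since the compact sets $\supp\ps_m$ shrink to $\{e\}$ and $\supp\bt$ is compact, any point lying in all these products lies in $\supp\bt$.

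For the converse, that $n\in\supp\bt$ forces $n\in\supp(\ps_m\ast\bt)$ for every large $m$, we argue by contradiction. Suppose that for infinitely many $m$ the function $\ps_m\ast\bt$ vanishes on an open neighbourhood $V_m$ of $n$. If the $V_m$ contain a fixed neighbourhood $V$, then \eqref{conv:lim} with $\supp\ch\subset V$ gives $\bt|_V=0$, a contradiction. The danger is that the $V_m$ shrink. To control this we plan to use the local finite-order structure of $\bt$: near $n$ we have $\bt=\sum_{|\al|\le k}\partial^\al g_\al$ with continuous $g_\al$. With right-invariant derivatives, which commute with left convolution, this gives $\ps_m\ast\bt=\sum\partial^\al(\ps_m\ast g_\al)$. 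We would combine this with the Titchmarsh--Lions support theorem for convolutions on $\RR^{p+q}$, transported to $N$ via exponential coordinates, in which the group law is polynomial. The aim is to show that the nonzero set of $\ps_m\ast\bt$ accumulates at every point of $\supp\bt$, uniformly in $m$. This step is where I expect the main obstacle. If the uniform version resists, the fallback is the statement actually used below: every neighbourhood of $n$ meets $\supp(\ps_m\ast\bt)$ for all large $m$. That weaker statement follows at once from \eqref{conv:lim} by choosing $\ch$ supported near $n$ with $\bt(\ch)\neq0$.

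For \eqref{conv:anal} we use the equivariance \eqref{rnulnm-sub}. This gives $\poiss\nu(\ps_m\ast\bt)(x)=\int_N\ps_m(n_1)\,\bigl(L(n_1)\poiss\nu\bt\bigr)(x)\,dn_1$, that is, an average of left translates of $\poiss\nu\bt$. By hypothesis $\poiss\nu\bt=y^{\rho+\nu}B$ near $x$, with $B$ analytic on a neighbourhood $U$ of $x$ in $\hat S$ (Proposition~\ref{prop-nuanal-hc}). Choose a smaller neighbourhood $U'$ with $n_1^{-1}U'\subset U$ for all $n_1$ near $e$. The group $N$ acts analytically on $\hat S\setminus\{\inft\}$ and preserves $y$, so each translate $L(n_1)\poiss\nu\bt$ equals $y^{\rho+\nu}B(n_1^{-1}\cdot)$ on $U'$. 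This holds for every $n_1\in\supp\ps_m$ once $m$ is large. Integrating against $\ps_m$ gives $y^{\rho+\nu}\int\ps_m(n_1)B(n_1^{-1}\cdot)\,dn_1$. The integrand extends holomorphically to a fixed complex neighbourhood, with $\sup$-bounds uniform in $n_1$, so the integral is analytic on $U'$. Hence $\poiss\nu(\ps_m\ast\bt)$ represents an element of $(\W\om\nu)_x$.
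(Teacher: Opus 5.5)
Your parts (a) and (c) are correct and follow the paper's argument. For (a) you use $\int_N(\ps_m\ast\bt)\ch\,dn=\bt(\check\ps_m\ast\ch)$ together with $\ps_m\ast\ch\to\ch$ with all derivatives near $\supp\bt$. For (c) you write $\poiss\nu(\ps_m\ast\bt)$ as the $\ps_m$-average of the translates $y^{\rho+\nu}B(n_1^{-1}\,\cdot)$. Your proof of the backward implication in (b), via $\supp(\ps_m\ast\bt)\subset\supp(\ps_m)\supp(\bt)$, is also fine.

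The gap is your main plan for the forward implication in (b). The uniform statement you hope to get from Titchmarsh--Lions is false for general Dirac sequences, so that step cannot be completed. In any case, Titchmarsh--Lions only controls convex hulls of supports, not individual points.

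Here is a counterexample. Take $\bt\colon\ch\mapsto(R(\XX_1)\ch)(e)$, so $\supp\bt=\{e\}$. Let $\ps_m(\nm(x))=c_m h(m\|x\|)$, where $h\ge0$ is smooth, $h=1$ on $[0,\frac12]$ and $h=0$ on $[1,\infty)$. This is an admissible symmetric Dirac sequence. Since $N$ is unimodular, a short computation gives $\ps_m\ast\bt=-R(\XX_1)\ps_m$. This vanishes on a neighbourhood of $e$ for every $m$, so $e\notin\supp(\ps_m\ast\bt)$ even though $e\in\supp\bt$.

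What is true in this direction is exactly your fallback. Every fixed neighbourhood of a point of $\supp\bt$ meets $\supp(\ps_m\ast\bt)$ for all $m$ large enough, with the threshold depending on the neighbourhood. That is also all the paper's one-line appeal to (a) actually establishes.

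The fallback suffices for the use in the proof of Proposition~\ref{prop-supp-sing}, provided the neighbourhood in (c) does not shrink with $m$. Your choice of $U'$, with $n_1^{-1}U'\subset U$ for all $n_1$ near $e$, guarantees this. Lemma~\ref{lem-anal-supp} then gives $\ps_m\ast\bt=0$ on the fixed set $U'\cap\partial S$ for all large $m$, and (a) forces $\bt=0$ there.

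So drop the Titchmarsh--Lions step, state (b) in the neighbourhood form, and keep the $m$-independence of $U'$ explicit in (c).
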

We note that part~\eqref{conv:supp} gives an equivalence, whereas part
\eqref{conv:anal} gives only one implication.
\begin{proof}
With the relation
\[\int_N (\ps_m\ast \bt)(n)\ch(n)\, dn = \int_N \bt(n_1)\, (\check
\ps_m(n_1)\ast\ch)(n_1)\, dn_1\,,\]
part \eqref{conv:lim} follows directly from the convergence of
$\ps_m\ast \ch$ to $\ch$.

For part~\eqref{conv:supp} we consider $\ch\in C_c^\infty$. Part
\eqref{conv:lim} shows that $\bt(\ch) \neq 0$ if and only if
$(\ps_m\ast \bt)(\ch)\neq0$ for all large values of $m$.

For part \eqref{conv:anal} suppose that $\poiss\nu \bt( n\am(y)\,\orgn)$
is $\nu$-analytic on an open set $I\subset N\,\nul$ containing $x$.
Then there is an analytic function $B$ on an open set $U\subset \hat S$
containing $I$ such that
\be \bt \bigl( r^\lnm_\nu(\cdot, \nm(\xi)\am(y)\,\orgn) \bigr) =
y^{\rho+\nu} B(\nm(\xi)\am(y)\,\orgn)
\ee
for $\nm(\xi)\am(y)\,\orgn\in S \cap U$. Since the support of $\ps_m$
shrinks to $e$, we have for all large values of $m$
\be \poiss\nu(\ps_m\ast \bt)(\nm(\xi)\am(t)\,\orgn) = y^{\rho+\nu}
\int_N \ps_m( n_1) B\bigl( n_1^{-1} \nm(\xi)\am(y)\,\orgn\bigr)\, dn_1
\ee
for all $\nm(\xi)\am(y)\,\orgn\in S \cap U_m$ for some neighborhood
$U_m$ of $x$ in $\hat S$.
\end{proof}

\subsection{Proof of Proposition~\ref{prop-supp-sing}} To prove
Proposition~\ref{prop-supp-sing} we have to show that for all
$b\in \partial S$
\be \label{supp-sing}
b\not \in \supp(\bt)
\; \Leftrightarrow\; \poiss\nu \bt \text{ represents an element of
}(\W\om\nu)_b \,.\ee
We first consider the forward implication.
\begin{lem}\label{lem-supp-anal}Let $\nu \in \CC\setminus\frac12\ZZ$,
and $\bt\in \V{-\infty}\nu(\partial S)$. If
$b\in (\partial S)\setminus \supp(\bt)$, then $\poiss \nu\bt$
represents an element of the stalk $(\W\om\nu)_b$.
\end{lem}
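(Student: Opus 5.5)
Fix $b\in(\partial S)\setminus\supp(\bt)$. Using the action of $K$ and the equivariance of the Poisson transformation ($\poiss\nu$ intertwines $\tau_{-\nu}$ with $L$, and $\W\om\nu$ is a $G$-equivariant sheaf), we may assume $b=\nul$. Choose an open neighbourhood $I$ of $\nul$ in $N\,\nul$ whose closure is disjoint from $\supp(\bt)$, together with a compact neighbourhood $I'\subset I$ of $\nul$. Take $\ch\in C^\infty(\partial S)$ with $\ch=1$ on a neighbourhood of $\supp(\bt)$ and $\ch=0$ on $I$. Then $\bt=\ch\bt$, so that
\[ \poiss\nu\bt(x) = \bt\bigl( \ch(\cdot)\, r_\nu(\cdot;x)\bigr)\qquad(x\in S)\,. \]
The test function $\ch\, r_\nu(\cdot;x)$ depends only on the values of $r_\nu(b';x)$ for $b'$ in the compact set $L=\supp(\ch)$, which lies at positive distance from $I'$.

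The main step is to show that, for $b'\in L$, the kernel $x\mapsto r_\nu(b';x)$ represents a $\nu$-analytic boundary germ near $I'$, uniformly in $b'$. Concretely, in horospherical coordinates $x=\nm(\xi)\am(y)\,\orgn$ we want
\[ r_\nu(b';x) = y^{\rho+\nu} B(b';\xi,y)\,, \]
with $B$ jointly real-analytic on $L\times U$ for a neighbourhood $U$ of $I'$ in $\hat S$. Proposition~\ref{prop-nuanal-hc} shows this is the right form. For $b'=\inft$ we have $r_\nu(\inft;\nm(\xi)\am(y)\,\orgn)=y^{\rho+\nu}$ by \eqref{exph}, so $B=1$. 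For $b'\in N\,\nul$ we use the line model \eqref{Poiss-lnm} together with \eqref{rnulnm-sub}, after multiplying by the analytic nonvanishing factor relating the projective and line models. With $b'=\nm(x')\,\nul$, the kernel becomes $y^{\rho+\nu}$ times $N(x'',y)^{-(\rho+\nu)/2}$, where $x''$ denotes the coordinates of $\nm(\xi)^{-1}\nm(x')$. Since $b'\notin I'$, we have $x''$ bounded away from $0$. Hence $N(x'',y)$ is bounded away from $0$ for $|y|$ small and complex-nearby $\xi$, so $N^{-(\rho+\nu)/2}$ extends holomorphically in $(x',\xi,y)$, with $y$ allowed to be real of either sign. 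A neighbourhood of $\inft$ inside $L$ is handled in the same way via conjugation by $\wm$, or directly from \eqref{Poiss-expl}. This gives joint analyticity of $B$, with a uniform complex neighbourhood on which it is bounded, by compactness of $L\times I'$.

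Next we apply the distribution. Define
\[ \tilde B(\xi,y)=\bt\bigl(\ch(\cdot)\,B(\cdot;\xi,y)\bigr)\,. \]
A distribution applied to a smooth family depending holomorphically on parameters gives a holomorphic function of those parameters. The reason is that $\bt$ is continuous for finitely many $C^k$ seminorms, and the difference quotients in the parameters converge in $C^k(L)$ by the Cauchy estimates on the uniform complex neighbourhood. Thus $\tilde B$ is analytic on a neighbourhood $U$ of $I'$ in $\hat S$, and $\poiss\nu\bt=y^{\rho+\nu}\tilde B$ on $U\cap S$. Since $\poiss\nu\bt\in\E_\nu(S)$, Proposition~\ref{prop-nuanal-hc} gives that $\poiss\nu\bt$ represents an element of $(\W\om\nu)_\nul$. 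Note that the hypothesis $\nu\notin\frac12\ZZ$ is not really used here beyond keeping the setting of the section.

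The step I expect to need the most care is the uniform holomorphic extension of $B(b';\xi,y)$ across $y=0$ and to complex $\xi$, uniformly in $b'\in L$, including $b'$ near $\inft$. Concretely, one must check that $N(x'',y)$ has no zeros for complex $\xi$ near $I'$ and small $y$ whenever $b'$ stays away from $I'$, and that the chart change near $\inft$ preserves this. Compactness of $L$ together with positivity of $N$ on the real locus away from $x''=0$ should suffice.
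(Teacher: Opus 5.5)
Your proposal is correct and follows essentially the paper's argument: write the kernel $r_\nu(b';x)$ as a power of the height (or radial) variable times a function that extends analytically across $\partial S$ near $b$, uniformly for $b'$ in a neighbourhood of $\supp(\beta)$ away from $b$, and then pull the power out of the distribution. Where you differ, it is only in presentation. You work in horospherical coordinates at $\nul$ via Proposition~\ref{prop-nuanal-hc} instead of the paper's polar coordinates, and you supply the details the paper leaves implicit: the cut-off $\chi$, the uniform analytic extension including boundary points $b'$ near $\inft$, and holomorphy of $\beta$ applied to an analytically parametrized family.
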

\begin{proof}If $\supp(\bt) = \partial S$ the lemma holds trivially. So
let $\supp(\bt)$ be a compact subset of $\partial S$ not containing a
given point $b$. The Poisson transform
\[ \poiss\nu \bt( k\am(t)\,\orgn) = \bt\Bigl( b_1 \mapsto
r_\nu(b_1;k\am(t)
\,\orgn)\Bigr)\]
depends only on the values of $r_\nu(b_1;k\am\,\orgn)$ for
$b_1\in \supp(\bt)$. We write
\[r_\nu(b_1;k\am(t)\,\orgn) = t^{-\rho-\nu} A(b_1;k\am(t)\,\orgn)\,,\]
since the factor $t^{-\rho-\nu}$ is left out of consideration in the
definition of $\nu$-analytic boundary germs. So for $k\in K$ such that
$k\,\infty$ varies through a neighborhood of $b$ we have
\begin{align*}\poiss\nu \bt( k\am(t)\,\orgn) & =\bt\Bigl( b_1 \mapsto
t^{-\rho-\nu} A(b_1, k \am(t)\,\orgn) \Bigr)\\
&= t^{-\rho-\nu} \,\bt\Bigl( b_1 \mapsto A(b_1,k\am(t)\,\orgn) \Bigr)\,.
\end{align*}
The function $A(b_1, x)$ extends analytically from $x\in S $ near $b$ to
a neighborhood of $b$ in $\hat S$. So the Poisson transform
$\poiss\nu \bt$ represents a $\nu$-analytic boundary germ on a
neighborhood of $b$ in $\partial S$.\end{proof}

We are left with the converse implication in \eqref{supp-sing}.

\rmrk{Reduction to functions} We can localize $\bt$ such that
$\supp(\bt) \subset N\,\nul$, and $b=\nul$. Parts \eqref{conv:anal} and
\eqref{conv:supp} in Lemma~\ref{lem-conv} implies that there are
functions $\ph\in C_c^\infty(N\,\nul)$ approximating $\bt$ such that
the converse implication in \eqref{supp-sing} is implied by the
analogous implication with $\bt$ replaced by
$\ph \in C_c^\infty(N\,\nul)$. So we have to prove:
\begin{lem}\label{lem-anal-supp} Let $\nu\in \CC\setminus \frac12\ZZ$,
$|\re\nu|<\rho$. If $\poiss\nu \ph$ represents a $\nu$-analytic
boundary germ on an open set $I\subset N\,\nul$, then $\ph(b) = 0$ for
all $b\in I$.
\end{lem}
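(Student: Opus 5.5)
We plan to relate the boundary behavior of $\poiss\nu\ph$ near $I$ to $\ph$ via the asymptotic expansion of the Poisson transform as $y\downarrow 0$. For $\ph\in C_c^\infty(N\,\nul)$, written in the line model, and $\nu\notin\frac12\ZZ$, the standard expansion of a Poisson integral has two families of terms:
\[ \poiss\nu\ph\bigl(\nm(\xi)\am(y)\,\orgn\bigr) \sim y^{\rho+\nu}\sum_{k\geq0} y^{2k} a_k(\xi) \;+\; y^{\rho-\nu}\sum_{k\geq 0} y^{2k} b_k(\xi)\,. \]
The second family has $b_0 = c'(\nu)\,\ph^\lnm$, where $c'(\nu)$ is a Harish-Chandra $c$-function type constant. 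We will derive this by substituting $x\mapsto$ the dilated variable in $r^\lnm_\nu$ from \eqref{Poiss-lnm}, using \eqref{anai}. The term $y^{\rho-\nu}\ph(\xi)\int_N r^\lnm_\nu(n;\am(1)\,\orgn)\,dn$ arises from the scaling $x^{(1)}\mapsto yx^{(1)}$, $x^{(2)}\mapsto y^2x^{(2)}$ and the homogeneity of $N(x,y)$, which has degree $4$ and gives Jacobian $y^{2\rho}$. The first family comes from the far region, where the kernel is $y^{\rho+\nu}$ times an analytic function of $y^2$. The constant $\int_N N(x,1)^{-b}\,dx$ converges precisely for $\re\nu>-\rho+\ldots$; we need $\re(\rho+\nu)>\rho$, i.e.\ $\re\nu>0$, for absolute convergence. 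For $|\re\nu|<\rho$ in general we will instead use analytic continuation in $\nu$, or the standard theory (Helgason, Kashiwara et al.) of boundary values, which gives $b_0 = c(-\nu)\ph$ up to normalization. Its nonvanishing holds for $\nu\notin\frac12\ZZ$ by \eqref{cHC}: the relevant $\Gf$-quotient has zeros only at points of $\frac12\ZZ$.

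Next, we suppose $\poiss\nu\ph$ represents a $\nu$-analytic boundary germ on $I$. Then by Proposition~\ref{prop-nuanal-hc}, $\poiss\nu\ph = y^{\rho+\nu}B(\xi,y)$ with $B$ analytic near $I\times\{0\}$, hence it has a convergent expansion in integer powers $y^{\rho+\nu+m}$. Since $\nu\notin\frac12\ZZ$, the exponents $\rho-\nu+2k$ never coincide with $\rho+\nu+m$, because $2\nu\notin\ZZ$. Uniqueness of asymptotic expansions in the scale $\{y^{\rho\pm\nu+m}\}$, uniformly in $\xi$ on compacta of $I$, then forces all $b_k=0$ on $I$. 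In particular $c(-\nu)\ph=0$ on $I$, so $\ph=0$ on $I$.

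The main obstacle is making the expansion rigorous with a remainder estimate uniform in $\xi$ and valid throughout $|\re\nu|<\rho$. Our first attempt will split $\ph=\ph\ch+\ph(1-\ch)$ with $\ch$ supported near $\xi$. The part away from $\xi$ contributes only terms $y^{\rho+\nu}\cdot(\text{analytic in }y^2)$, as in Lemma~\ref{lem-supp-anal}. For the local part, we will Taylor expand $\ph$ at $\xi$ in the homogeneous coordinates and compute the scaled integrals term by term as meromorphic functions of $\nu$. The non-integrable pieces are handled by analytic continuation, or by writing the kernel via an Euler-type integral. We expect the residues to land only at $\nu\in\frac12\ZZ$, which are excluded.
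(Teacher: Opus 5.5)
Your proposal is correct and uses the same mechanism as the paper. You discard the far part of the Poisson integral, which lies in $y^{\rho+\nu}\anal$ (cf.\ Lemma~\ref{lem-Idt}), and Taylor-split $\ph$ at the point. The coefficient of the non-analytic term $y^{-2\nu}$ in $y^{-\rho-\nu}\poiss\nu\ph$ is then a nonzero $\Gamma$-quotient times the value of $\ph$ there, and $2\nu\notin\ZZ$ prevents it from being absorbed by the analytic part.

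The execution differs in two places. The paper uses the $N$-action (cf.\ \eqref{rnulnm-sub}) to reduce to $b=\nul$ and looks only along the ray $\am(y)\,\orgn$ (Lemma~\ref{lem-anal-nul}). So it needs neither uniformity in $\xi$ nor a full two-family expansion. It computes the coefficient from the truncated integral over $\ell(\dt)$ via the hypergeometric formulas of Proposition~\ref{prop-PI}, applied twice if $q\geq 1$. You instead identify the coefficient with the meromorphic continuation of the Gindikin--Karpelevich integral $\int_{\RR^{p+q}}\bigl((1+c\|u^{(1)}\|^2)^2+4c\|u^{(2)}\|^2\bigr)^{-(\rho+\nu)/2}\,du$. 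This is more conceptual and uniform in $q$. The cost is importing that evaluation and checking that the truncated coefficient equals this continuation. In the normalization \eqref{cHC} the constant is a multiple of $c(\nu)$, not $c(-\nu)$; this is harmless, since both are finite and nonzero off $\frac12\ZZ$.

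What your plan genuinely buys is control of the error term. The paper splits only $\ph=\ph(0)+\ph_1$ and applies Lemma~\ref{lem-I-estm} at $\dt=y$. That bound is stated for $\dt\le y$, while the congruences modulo $\anal$ hold for fixed $\dt$. So the region $\ell(\dt)\setminus\ell(y)$ still needs treatment modulo $\anal$. Crude size bounds suffice there when $\re\nu>0$, because $|y^{-2\nu}|\to\infty$, but not when $\re\nu\le 0$. Your high-order Taylor expansion handles this: each homogeneous term of weighted degree $j$ gives exactly a multiple of $y^{j-2\nu}$ plus an element of $\anal$. The remainder gives a function that is $C^k$ up to $y=0$ for some $k>-2\re\nu$. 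Together these cover the whole range $|\re\nu|<\rho$ cleanly.
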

\begin{proof}
With the action of $N$ we can transform any $b\in I$ to $b=\nul$. Then
Lemma~\ref{lem-anal-nul} below implies that $\ph(\nul)=0$. We can carry
out the transformation of $b$ to $\nul$ for any $b\in I$. Hence each
$b\in I$ is not in $\supp \ph$.\end{proof}

\begin{lem}\label{lem-anal-nul}Let $\nu\in \CC\setminus \frac12\ZZ$ and
$|\re\nu|<\rho$. If
\[ y \mapsto y^{-\rho-\nu}\,\poiss\nu \ph( \am(y)\,\orgn)\]
extends analytically to $y=0$, then $\ph(\nul)=0$.
\end{lem}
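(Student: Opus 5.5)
We compute the Poisson transform of $\ph$ along the vertical geodesic $y\mapsto\am(y)\,\orgn$ in the line model, where it is a Mellin-type integral. From \eqref{Poiss-lnm}, up to the factor relating $\ph^\prj$ and $\ph^\lnm$ (absorbed into a smooth compactly supported function, still denoted $\ph$, with the same value at $0$ up to a non-zero constant), we have
\[
y^{-\rho-\nu}\,\poiss\nu\ph(\am(y)\,\orgn)=\int_{\RR^{p+q}} \ph(x)\,\Bigl(\bigl(y^2+c\|x^{(1)}\|^2\bigr)^2+4c\|x^{(2)}\|^2\Bigr)^{-(\rho+\nu)/2}\,dx .
\]
We substitute the homogeneous scaling $x^{(1)}=y\,u^{(1)}$, $x^{(2)}=y^2u^{(2)}$, which has Jacobian $y^{p+2q}=y^{2\rho}$ and turns the kernel into $y^{-2(\rho+\nu)}N(u,1)^{-(\rho+\nu)/2}$. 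This gives
\[
y^{-\rho-\nu}\poiss\nu\ph(\am(y)\,\orgn)= y^{\rho-\nu}\cdot y^{-2\rho}\!\int \ph\bigl(yu^{(1)},y^2u^{(2)}\bigr)\,N(u,1)^{-(\rho+\nu)/2}\,y^{2\rho}\,du ,
\]
which we rewrite as $y^{-2\nu}\,F(y)$. The aim is to show that $F(y)$ has an asymptotic expansion as $y\downarrow0$ of the form
\[
F(y)\sim c_\nu\,\ph(0)+\sum_{j\ge1}\bigl(\text{terms } y^{j}\bigr)\;+\;y^{2\nu}\sum_{k\ge0}a_k\,y^{2k},
\]
with $c_\nu\neq0$. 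The non-analytic summand of the left-hand side is then $c_\nu\ph(0)\,y^{-2\nu}$ plus integer powers, while the analytic part comes from the $y^{2\nu}$-series, which is the germ $\sum a_k y^{2k}$.

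The cleanest way to obtain this expansion is to split $\ph=\ph(0)\chi+(\ph-\ph(0)\chi)$, where $\chi$ is a fixed cutoff equal to $1$ near $0$. The two pieces are handled differently.
\begin{itemize}
\item For the first piece, $N(u,1)^{-(\rho+\nu)/2}$ is integrable over $\RR^{p+q}$ exactly when $\re\nu>0$, since the homogeneous dimension is $2\rho$ and the decay is of order $|u|^{-2(\rho+\re\nu)}$ in homogeneous norm. We define $c_\nu=\int N(u,1)^{-(\rho+\nu)/2}\,du$ for $\re\nu>0$ and continue it meromorphically in $\nu$. It is a product of Beta/Gamma factors, essentially the Harish-Chandra $c$-function in \eqref{cHC} up to a factor. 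The first piece then contributes $c_\nu\,y^{-2\nu}$ plus a term analytic in $y$ from the cutoff. In the regime $\re\nu\le0$ we use the meromorphic continuation and Taylor-subtract $\ph$ to higher order, treating it as a Hadamard finite part.
\item For the second piece, we Taylor-expand $\ph$ at $0$ in the anisotropic homogeneous degree. The monomial $x^\al$ of homogeneous degree $h$ contributes $y^{h-2\nu}$ times a meromorphic constant, which is a Mellin/finite-part computation. The remainder of high degree contributes an analytic, or sufficiently smooth, function of $y$ up to $y^{\text{large}}$ terms.
\end{itemize}

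Putting these together, $y^{-\rho-\nu}\poiss\nu\ph(\am(y)\,\orgn)$ equals
\[
y^{-2\nu}\Bigl(\sum_h b_h\,y^h\Bigr)+G(y),
\]
with $G$ analytic and $b_0=c_\nu\ph(0)$. Since $\nu\notin\frac12\ZZ$, the exponents $-2\nu+h$ with $h\in\ZZ_{\ge0}$ are never integers. So analyticity at $y=0$ forces every $b_h=0$, by uniqueness of asymptotic expansions in the distinct exponent families $\ZZ$ and $-2\nu+\ZZ$. In particular $c_\nu\ph(0)=0$.

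The main obstacle is to check that $c_\nu\neq0$, in its meromorphic sense, for all $\nu$ with $|\re\nu|<\rho$ and $\nu\notin\frac12\ZZ$. We will try to compute $c_\nu$ explicitly via the same integral that yields the Harish-Chandra $c$-function, namely $\int_{\bar N}\tJ(\wm\bar n)^{\rho+\nu}$ (compare Helgason). This expresses it as the ratio of Gamma factors in \eqref{cHC}. The Gamma factors in the denominator have poles only at $\nu\in -\frac p2-2\ZZ_{\ge1}$ and $\nu\in-\frac p2-q-2\ZZ_{\ge0}$. These lie in $\frac12\ZZ$, or, when $p$ is odd, in $\frac12\ZZ$ as well, so they are excluded, and $\Gamma(\nu)\neq0$. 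Hence $c_\nu\neq0$.

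A secondary obstacle is to justify that the remainder $G$ is genuinely analytic and not merely smooth. Here we may instead argue only with the asymptotic expansion: an analytic function has an expansion in integer powers, and the coefficient of $y^{-2\nu}$ is forced to vanish. This needs only finitely many terms of the expansion, plus an $o(y^{\re(-2\nu)})$ remainder estimate, which follows from $\ph\in C_c^\infty$.

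Therefore $\ph(\nul)=0$.
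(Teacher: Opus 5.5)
Your argument is correct, and its skeleton is the paper's: isolate the contribution of $\ph(0)$, show that it is a non-zero constant times $y^{-2\nu}$ modulo functions analytic at $y=0$, and use $2\nu\notin\ZZ$ to rule out cancellation. Both ingredients are, however, obtained differently.

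For the constant, the paper evaluates the constant part over the box $\ell(\dt)$ with the hypergeometric connection formula of Proposition~\ref{prop-PI}, applied twice (first in $x^{(2)}$, then in $x^{(1)}$) when $q\ge1$. You instead recognise the coefficient as $c_\nu=\int_N \tJ(\wm n)^{\rho+\nu}\,dn$, a positive multiple of the $c$-function~\eqref{cHC}. Non-vanishing for $\nu\notin\frac12\ZZ$ then follows from the Gindikin--Karpelevich formula; the paper's $b_0$, resp.\ $b_1b_2$, is the same number. Two slips, both harmless: the zeros coming from $\Gf\bigl((p/2+1+\nu)/2\bigr)$ lie at $\nu\in-\frac p2-1-2\ZZ_{\ge0}$, not $-\frac p2-2\ZZ_{\ge1}$; and the scaling gives the prefactor $y^{-2\nu}$ directly, as in~\eqref{fctA}, not the product $y^{\rho-\nu}y^{-2\rho}y^{2\rho}$ you wrote.

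For $\ph-\ph(0)$, the paper uses only the crude bound of Lemma~\ref{lem-I-estm}. You expand to high anisotropic order with finite-part moments. The precise form you need for the Taylor remainder is ``polynomial in $y^2$ plus $O(y^K)$ with $K>-2\re\nu$''; this follows by differentiating under the integral sign in $\tau=y^2$.

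Your route costs more bookkeeping, but it works uniformly on $|\re\nu|<\rho$. The paper's shortcut is really adequate only for $\re\nu>0$. Lemma~\ref{lem-I-estm} is proved for $\dt\le y$, while the congruence modulo $\anal$ of Lemma~\ref{lem-Idt} needs $\dt$ fixed. For fixed $\dt$ the remainder is still $o(y^{-2\re\nu})$ when $\re\nu>0$. For $\re\nu\le0$, however, $y^{-2\nu}$ no longer dominates a bounded remainder, so one must split off the analytic part of the remainder up to order beyond $-2\re\nu$ --- which is exactly what your expansion does.
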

The proof requires auxiliary lemmas, and is completed at the end of this
subsection.

\rmrk{Explicit Poisson integral}To prove Lemma~\ref{lem-anal-nul} we use
the explicit formula in \eqref{Poiss-lnm}, which is based on
$N\,\nul \cong \RR^{p+q}$.
\badl{pph1} \poiss\nu \ph(\am(y)\,\orgn) &= y^{\rho+\nu} \int_{x\in
\RR^{p+q}}\ph\bigl( x^{(1)},x^{(2)}\bigr)\\
&\qquad\hbox{} \cdot
\Bigl( \bigl( y^2+c\|x^{(1)}\|^2\bigr)^2+4
c\|x^{(2)}\|^2\Bigr)^{-(\rho+\nu)/2}\, dx\,.\eadl
We have $\ph\in C_c^\infty(\RR^{p+q})$ and $y>0$. Convergence is
guaranteed by the compactness of $\supp(\ph)$. The integral is analytic
in $y\in (0,\infty)$.

The transformation of variables $x^{(1)}\mapsto y x^{(1)}$,
$x^{(2)} \mapsto y^2 x^{(2)}$, implies $dx \mapsto y^{2\rho} \, dx$.
This leads to the alternative explicit formula
\badl{pph2} \poiss\nu \ph(\am(y)\,\orgn) &= y^{\rho-\nu} \int_{x\in
\RR^{p+q}}\ph\bigl( yx^{(1)},y^2x^ {(2)}\bigr) \\
&\qquad\hbox{} \cdot
\Bigl( \bigl( 1+c\|x^{(1)}\|^2\bigr)^2+4
c\|x^{(2)}\|^2\Bigr)^{-(\rho+\nu)/2}\, dx\,.\eadl

For our purpose it is important to leave the factor $y^{\rho+\nu}$ out
of consideration. We study the following
function\il{Ap}{$A(\ph,\rho+\nu,y), \; A_\dt(\ph;\rho+\nu,y)$}
\badl{fctA} A(\ph&;\rho+\nu, y) = y^{-\rho-\nu}\, \poiss \nu \ph \bigl(
\am(y)\,\orgn\bigr)\\
&= \int_{x\in\RR^{p+q}}\ph\bigl( x^{(1)},x^{(2)}\bigr)
\Bigl( \bigl( y^2+c\|x^{(1)}\|^2\bigr)^2+4
c\|x^{(2)}\|^2\Bigr)^{-(\rho+\nu)/2}\, dx
\\
&= y^{-2\nu} \int_{x\in\RR^{p+q}}\ph\bigl( yx^{(1)},y^2x^ {(2)}\bigr) \\
&\qquad\hbox{} \cdot
\Bigl( \bigl( 1+c\|x^{(1)}\|^2\bigr)^2+4
c\|x^{(2)}\|^2\Bigr)^{-(\rho+\nu)/2}\, dx
\eadl
modulo the subspace \il{an}{$\anal$}$\anal$ of functions on $(0,\infty)$
that extend analytically to a neighborhood of $[0,\infty)$ in~$\RR$.

We split up the Poisson integral by integration, separately over the
region
\be \label{ell-def} \ell(\dt) = \bigl\{ x\in \RR^{p+q}\;:\;
\|x^{(1)}\|\leq \dt \text{ and } \|x^{(2)}\|\leq \dt^2\bigr\}\,,\ee
and over its complement in $\RR^{p+q}$, for $\dt>0$.

\begin{lem}\label{lem-Idt}For $\dt>0$
put\il{Idt}{$I_\dt(\ph;\rho+\nu,y)$}
\bad I_\dt(\ph; \rho+\nu,y) &= \int_{x\in \ell(\dt)} \ph(x^{(1)},
x^{(2)}) \\
&\qquad\hbox{} \cdot
\Bigl( \bigl( y^2+c\|x^{(1)}\|^2\bigr)^2+4
c\|x^{(2)}\|^2\Bigr)^{-(\rho+\nu)/2}\, dx\,.\ead
Then $\nu \mapsto I_\dt(\ph;\rho+\nu,y)$ is holomorphic on $\CC$, and
\be y^{-\rho-\nu} \poiss \nu(\ph)\bigl( \am(y)\, \orgn) \equiv \,
I_\dt(\ph;\rho+\nu,y)
\bmod \anal\,.\ee
\end{lem}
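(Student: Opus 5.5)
The plan is to split the integral in \eqref{fctA} (second line, with the kernel $N(x,y)^{-(\rho+\nu)/2}$, where $N(x,y)=(y^2+c\|x^{(1)}\|^2)^2+4c\|x^{(2)}\|^2$) into the part over $\ell(\dt)$ and the part over the complement $\RR^{p+q}\setminus\ell(\dt)$. The first part is by definition $I_\dt(\ph;\rho+\nu,y)$. So it suffices to prove two things: $I_\dt$ is entire in $\nu$, and the complementary integral lies in $\anal$ as a function of $y$.

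\emph{Holomorphy in $\nu$.} For fixed $y>0$ the base satisfies $N(x,y)\geq y^4>0$ on all of $\RR^{p+q}$. On the compact set $\ell(\dt)$ it is also bounded above, by $(y^2+c\dt^2)^2+4c\dt^4$. Hence $N(x,y)^{-(\rho+\nu)/2}=\exp\bigl(-\tfrac{\rho+\nu}2\log N(x,y)\bigr)$ is entire in $\nu$, locally uniformly in $\nu$ and uniformly in $x\in\ell(\dt)$. Since $\ph$ is bounded and $\ell(\dt)$ has finite measure, differentiation under the integral sign (or Morera's theorem) gives that $\nu\mapsto I_\dt(\ph;\rho+\nu,y)$ is holomorphic on $\CC$.

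\emph{The complementary integral.} On $\RR^{p+q}\setminus\ell(\dt)$ we have either $\|x^{(1)}\|>\dt$ or $\|x^{(2)}\|>\dt^2$. In both cases $N(x,0)=c^2\|x^{(1)}\|^4+4c\|x^{(2)}\|^2\geq m_\dt>0$, with $m_\dt=\min(c^2\dt^4,4c\dt^4)$. The function $N(x,y)$ is a polynomial in $y^2$. For complex $y$ with $|y|\leq\eta$ and $x$ in the compact set $\supp\ph$, we have $|N(x,y)-N(x,0)|\leq C_\ph(\eta^2+\eta^4)$, which is smaller than $m_\dt/2$ once $\eta$ is small enough. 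Then $N(x,y)$ stays in a disc around $N(x,0)>0$ that avoids $(-\infty,0]$, so the principal branch of $N(x,y)^{-(\rho+\nu)/2}$ is holomorphic in $y$ on $\{|y|<\eta\}$ and uniformly bounded for $x\in\supp\ph\setminus\ell(\dt)$. Integrating against the compactly supported $\ph$ therefore gives a holomorphic function of $y$ on $|y|<\eta$. This function agrees for real $y\in(0,\eta)$ with the real-analytic integral on $(0,\infty)$, which is analytic because $N>0$ there. Hence it extends analytically to a neighbourhood of $[0,\infty)$, so it lies in $\anal$.

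\emph{Conclusion and main obstacle.} Combining the two parts with the identity $A(\ph;\rho+\nu,y)=y^{-\rho-\nu}\poiss\nu\ph(\am(y)\,\orgn)$ from \eqref{fctA} gives the stated congruence modulo $\anal$. The only delicate point is the uniform lower bound for $N(x,0)$ off $\ell(\dt)$. This bound is what lets the non-integer power be extended to complex $y$ near $0$, and it uses the anisotropic shape of $\ell(\dt)$ (radius $\dt$ in $x^{(1)}$, radius $\dt^2$ in $x^{(2)}$), matched to the homogeneity of $N$. Everything else is routine.
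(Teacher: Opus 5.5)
Your proof is correct and follows the paper's route. You split the integral in \eqref{fctA} into the part over $\ell(\dt)$ and the part over its complement, obtain holomorphy of $I_\dt$ in $\nu$ from the compactness of $\ell(\dt)$ together with $N(x,y)\geq y^4>0$, and show the complementary piece lies in $\anal$. Your justification of that last step, through the $y$-independent bound $N(x,0)\geq \min(c^2\dt^4,4c\dt^4)$ off $\ell(\dt)$, is the one actually needed. The paper only cites the bound $N(x,y)>y^4$, which by itself does not give analyticity at $y=0$.
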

\begin{proof}
We have
\be \poiss \nu \ph(\am(y)\,\orgn) = y^{\rho+\nu} I_\dt(\ph;\rho+\nu,y) +
y^{\rho+\nu} A_\dt(\ph;\rho+\nu,y)\,,\ee
where
\bad A_\dt(\ph;\rho+\nu,y) &= \int_{\|x^{(1)}\|> \dt \text{ or }
\|x^{(2)}\|> \dt^2}\ph\bigl( x^{(1)},x^{(2)}\bigr)\\
&\qquad\hbox{} \cdot
\Bigl( \bigl( y^2+c\|x^{(1)}\|^2\bigr)^2+4
c\|x^{(2)}\|^2\Bigr)^{-(\rho+\nu)/2}\, dx\,.\ead
The compact support of $\ph$ takes care of convergence of the integrals
for $I_\dt$ and $A_\dt$, and implies that both depend holomorphically
on $\nu \in \CC$. Furthermore, the quantity
$ \bigl( y^2+c\|x^{(1)}\|^2\bigr)^2+4 c\|x^{(2)}\|^2$ is larger than
$y^4$. Hence the integral for $A_\dt$ depends analytically on
$y\in \RR$ and $A_\dt(\ph;\rho+\nu,y) \in \anal$.
\end{proof}

\begin{lem}\label{lem-I-estm}Let $\re\nu>-\rho$. Suppose that
$\ph (x) = \oh \bigl(\|x\|\bigr)$ on its domain. If $0<\dt\leq y\leq 1$, then
\bad I_\dt\bigl( \ph;\rho+\nu,y) &\ll \dt^{2\rho+1} y^{-2(\rho+\nu)}\\
&\ll y^{-2\nu+1}\qquad\text{ if } \dt=y\,.\ead
\end{lem}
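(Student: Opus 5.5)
We bound the integrand pointwise and then estimate the volume of $\ell(\dt)$. Write
\[
N(x,y)=\bigl(y^2+c\|x^{(1)}\|^2\bigr)^2+4c\|x^{(2)}\|^2 .
\]
Since every term is non-negative, $N(x,y)\geq y^4$ for all $x$. Put $\s=\re\nu$. Then
\[
\bigl|N(x,y)^{-(\rho+\nu)/2}\bigr| = N(x,y)^{-(\rho+\s)/2}.
\]
The hypothesis $\re\nu>-\rho$ makes the exponent $-(\rho+\s)/2$ negative. Hence the lower bound $N\geq y^4$ gives the uniform bound $|N(x,y)^{-(\rho+\nu)/2}|\leq y^{-2(\rho+\s)}$, valid for all $x$ and all $y>0$. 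No hypothesis $\dt\leq y$ is needed for this step.

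Next comes the factor $\ph$. For $x\in\ell(\dt)$ we have
\[
\|x\|^2=\|x^{(1)}\|^2+\|x^{(2)}\|^2\leq \dt^2+\dt^4\leq 2\dt^2 ,
\]
because $\dt\leq y\leq1$. The assumption $\ph(x)=\oh(\|x\|)$ therefore gives $|\ph(x)|\ll\dt$ on $\ell(\dt)$.

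The volume of $\ell(\dt)$ is the product of a $p$-ball of radius $\dt$ and a $q$-ball of radius $\dt^2$. So it equals a constant times
\[
\dt^{p}\,\dt^{2q}=\dt^{2\rho},
\]
using $\rho=\tfrac p2+q$. Multiplying the three bounds gives
\[
I_\dt\ll \dt\cdot\dt^{2\rho}\cdot y^{-2(\rho+\s)} = \dt^{2\rho+1}y^{-2(\rho+\s)} .
\]
This is the first estimate; the expression $y^{-2(\rho+\nu)}$ in the statement is read in absolute value.

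For the second estimate, substitute $\dt=y$ into the first. This gives $y^{2\rho+1-2\rho-2\s}=y^{-2\s+1}$, which is the claimed $y^{-2\nu+1}$ in absolute value.

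The only delicate point is the sign of the exponent. The bound $N\geq y^4$ can be used only because $\rho+\re\nu>0$; that is exactly where the hypothesis $\re\nu>-\rho$ enters. Everything else is routine. The implicit constants depend on $\ph$, $c$, $p$ and $q$, but not on $\dt$ or $y$.
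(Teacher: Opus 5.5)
Your proof is correct and follows essentially the same route as the paper. Both arguments bound the kernel by $y^{-2(\rho+\re\nu)}$ using $N(x,y)\geq y^4$ together with $\rho+\re\nu>0$, and both show $\int_{\ell(\dt)}|\ph|\,dx\ll\dt^{2\rho+1}$. The one cosmetic difference is that the paper integrates $\|x^{(1)}\|+\|x^{(2)}\|$ over $\ell(\dt)$ in polar coordinates on the two balls, while you use the sup bound $|\ph|\ll\dt$ times $\mathrm{vol}\,\ell(\dt)\asymp\dt^{2\rho}$; this gives the same power of $\dt$.
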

\begin{proof}On the domain of integration the integrand is estimated by
\[\oh \left(
\bigl(\|x^{(1)}\|+\|x^{(2)}\|\bigr)y^{-2(\rho+\nu)}\right)\,.\]
The integral of $x^{(1)}$ over a ball in dimension $p$ is, up to a
factor, the integral of $r_1=\bigl\|x^{(1)}\|$ against the measure
$r_1^{p_1 -1}\, dr_1$, and similarly for the ball in $\RR^q$. In this
way, we obtain:
\begin{align*}
\int_{\ell(\dt) }& \bigl( \|x^{(1)}\|+\| x^{(2)}\|\bigr)\, dx\\
&\ll \int_{r_1=0}^\dt \int_{r_2=0}^{\dt^2} \, (r_1+r_2)
r_1^{p-1}r_2^{q-1}\, dr_1\,dr_2\\
&= \int_{r_1=0}^\dt r_1^p \, dr_1\, \int_{r_2=0}^{\dt^2} r_2^{q-1}\,
dr_2
+ \int_{r_1=0}^\dt r_1^{p-1}\, dr_1 \,\int_{r_2=0}^{\dt^2} r_2 ^q\, dr_2
\\
&\ll \dt^{p+1}\dt^{2q} + \dt^p \dt^{2(q+1)} \ll \dt^{2\rho+1}\,.
\end{align*}
This gives the estimate
\[ I_\dt \ll \dt^{2\rho+1} y^{-2(\rho+\nu)} \ll y^{1-2\nu}\,. \]
In the last step we use $\dt\leq y$.
\end{proof}

We can compute $I_\dt(\ph;\rho+\nu,y)$ explicitly if $\ph$ is polynomial
on the domain of integration. To carry this out for $\ph(x)=1$ on
$\ell(\dt)$, we first consider the
integral\il{PI}{$\PI{n,\dt}(h,\cdot)$}
\be \label{PIdef}\PI{n,\dt}(h,F) : t\mapsto \int_{x\in\RR^n ,\,
\|x\|\leq \dt} F(t^2+h\|x\|^2)\, dx \ee
for $F$ analytic on $(0,\infty)$. We take $\dt>0$ and
$n\in \ZZ_{\geq 1}$ and denote by $c_n$ the positive factor such that
\il{cn}{$c_n$}$dx= c_n \, r^{n-1}\, dr $ for $r=\|x\|$. To avoid long
notations we indicate the power function $u\mapsto |u|^{-w}$ as
$|\cdot|^{-w}$.

\begin{prop}\label{prop-PI}{\rm The integral $\PI{n,\dt}$ for some
choices of $F$.}
\begin{enumerate}[label=$\mathrm{(\roman*)}$, ref=$\mathrm{\roman*}$]
\item \label{PI:wp} Let $ F(\xi) = \xi^w$ with $\re w>0$.
\begin{enumerate}[label=$\mathrm{(\alph*)}$, ref=$\mathrm{\alph*}$]
  \item\label{PI:gen} If $ w-\frac n2 \not \in \ZZ_{\leq 0}$ then
\badl{PIw-gen} \PI{n,\dt}\bigl( h,|\cdot|^{-w} \bigr)(t) &= \frac{ c_n
\,\Gf\bigl( \frac n2\bigr)\,\Gf\bigl( w-\frac n2\bigr)}{2\,
h^{n/2}\,\Gf(w)} t^{n-2w}\\
&\qquad\hbox{} +\frac{ c_n\,\dt^{n-2w}\, h^{-w}}{n-2w } \hypg21\Bigl( w,
w-\frac n2, w-\frac n2+1; -\frac{t^2}{h \dt^2}\Bigr)\,.
\eadl
\item \label{PI:w2}If $w=\frac n2$ then
\be \PI{n,\dt}\bigl( h,|\cdot|^{-w} \bigr)(t) =- c_n h^{-n/2} \log t +
\bigl( c_n h^{-n/2}\,\log\dt+(\mathrm{cst})\bigr)\,,\ee
where $(\mathrm{cst})$ is a real constant, not depending on $\dt$.
\item \label{PI:w2<}
If $w=\frac n2-k$ with $k\in \ZZ\cap [1,n/2-1]$ then
\bad \PI{n,\dt}\bigl(h,|\cdot|^{-w} \bigr)(t) &= - \frac{c_n\,(-1)^k
\Gf\bigl( \frac n2\bigr)}{h^{n/2} \, k!\; \bigl( \frac n2-k-1\bigr)!}
\, t^{2k}\, \log t\\
&\qquad\hbox{}
+ \frac{c_n \,(-1)^k \Gf\bigl( \frac n2\bigr)}{ h^{n/2} \, k!\;\bigl(
\frac n2-k-1\bigr)!}\bigl( \log \dt +(\mathrm{cst})\bigr)
\, t^{2k}\,.
\ead
\end{enumerate}
\item\label{PI:anal}If $F\in \anal$, then $\PI{n,\dt}(h,F) \in \anal$.
\end{enumerate}
\end{prop}

\begin{proof}We rewrite \eqref{PIdef} as
\be \PI{n,\dt}(h,F)(t)= c_n \int_{r=0}^\dt F(t^2+hr^2)\, r^{n-1}\, dr\,.
\ee
Applied with $F(\xi)=\xi^{-w}$ we get
\be\label{PI2F1} = \frac{ c_n \dt^n}{n} \hypg21\Bigl( w, \frac n2;\frac
n2+1; - \frac{\dt^2 h}{t^2}\Bigr)\,.\ee
This hypergeometric function can be written as a linear combination of
hypergeometric functions that are holomorphic at $0$. We use the
relation in \cite[Chap II, 2.9]{MOT53} expressing $u_1$ in terms of
$u_3$ and $u_4$ (in the notations used there). This relation writes the
holomorphic function $\PI{n,\dt}$ in $w$ as the sum of two meromorphic
terms. The singularities occur at
$w\in \frac{n}2+ \ZZ_{\leq 0}\cup \ZZ_{\leq 0}$. We use that
$\hypg21 (a,0;c;z) = 1$, and arrive at \eqref{PIw-gen}.

If $w\in\frac n2+\ZZ_{\leq 0}$ we have to work with another basis of the
solutions of the hypergeometric differential equation. The expression
in \eqref{PI2F1} is holomorphic in $w$ for $w\in \CC$. Figuring out how
the singularities in both terms in the right-hand side cancel each
other, we arrive at \tlref{PI:wp}{PI:w2} and \tlref{PI:wp}{PI:w2<}.

In part \eqref{PI:anal} we get the integral
\[ \int_{x\in \RR^n,\,\|x\|\leq \dt} F( t^2+h \|x\|^2) \, dx =c_n
\int_{r=0}^\dt F(t^2+h r^2) r^{n-1}\, dr\]
of an analytic function on a compact region, which results in an
analytic function of $t>0$. If $F$ extends analytically to a
neighborhood of $0$ in $\RR$, then the integral is analytic in $t$ on a
neighborhood of $0$.
\end{proof}

In the three cases in part \eqref{PI:wp} of this proposition the
integral $\PI{n,\dt}$ is the sum of two terms. The first term is a
multiple of a simple function of $t$, which is not analytically
extendable to $t=0$. The second term is analytically extendable to a
neighborhood of $t=0$. (We note that $k$ is positive in
\tlref{PI:wp}{PI:w2<}.)
In each of the three cases, the first term does not depend on the
truncation parameter~$\dt$.

\begin{proof}[Proof of Lemma~\ref{lem-anal-nul} for \intitle{q=0}]In
this case $\rho=\frac p2$, and $|\re\nu|<\frac p2$.

We write the function $\ph \in C_c^\infty(\RR^p)$ on the region
$\|x\|\leq \dt$ as the constant function $\ph(0)$ and
$\ph_1(x) = \ph(x)-\ph(0)$. The Poisson integral in \eqref{pph1}
simplifies to
\bad y^{-\rho-\nu}&\poiss\nu\ph\bigl( \am(y)\,\orgn) = \int_{x\in \RR^p}
\ph(x)
\,\bigl( y^2+c\|x\|^2\bigr)^{-\rho-\nu}\, dx\\
&\;\equiv\; I_\dt(\ph(0) + \ph_1;\rho+\nu,y) \bmod\anal
\ead
by Lemma~\ref{lem-Idt}. We take $0\leq \dt \leq y\leq 1$. The main term
is given by $\PI{p,\dt}(c,|\cdot|^{-p/2-\nu})$. So in
Proposition~\ref{prop-PI} \eqref{PI:wp} we use $n=p$ and
$w=\frac p2+\nu$. Hence $w-\frac n2=\nu$. The parts \eqref{PI:w2} and
\eqref{PI:w2<} occur for values of $\nu$ in $\frac12\ZZ_{\leq 0}$ which
we have to exclude anyhow, since we use Lewis's theorem in
\S\ref{sect-Poi-tr}. The main term is
\be \ph(0) \Bigl( b_0 y^{-2\nu} +\dt^{-2\nu} a_0(t^2/\dt^2)\Bigr)\,,\ee
with $b_0\in \RR$ and $a_0\in \anal$. The dependence on $\dt$ is
explicit.

Lemma~\ref{lem-I-estm} gives
\be\label{er-est} I_\dt( \ph_1;\rho+\nu,y) = \oh
\bigl(\dt^{p+1}y^{-p-2\nu}\bigr)\,. \ee
The assumption that $\poiss\nu\ph(\am(y))\, \in \anal$ extends
analytically to $y=0$ implies that modulo $\anal$ with
$0<\dt\leq y\leq 1$
\bad 0&\equiv \poiss \nu \ph\bigl( \am(y)\, \orgn)\\
&\equiv I_\dt(\ph(0)+\ph_1;\rho+\nu;y)&&\text{by Lemma~\ref{lem-Idt}}\\
&\equiv \ph(0) b_0 y^{-2\nu}+I_\dt(\ph_1;\rho+\nu;y)
&\quad&\text{by Proposition~\ref{prop-PI}}\\
&=\ph(0) b_0 y^{-2\nu}+ \oh \bigl( y^{1-2\nu}) &&\dt=y\text{ in
Lemma~\ref{lem-I-estm}}\,.
\ead
For $\nu \not\in \frac12 \ZZ$ the non-analytic factor $y^{-2\nu}$ cannot
cancel with an element of $\anal$. As $y\downarrow0$ it is larger than
the $\oh$-term. Since $b_0\neq0$ we need that $\ph(0)=0$.
\end{proof}

\begin{proof}[Proof of Lemma~\ref{lem-anal-nul} for \intitle{q\geq 1}]
The idea of the proof is the same as for $q=0$, except that now the
operators in \eqref{PIdef} have to be applied twice.

We start with $\PI{q,\dt^2}(4c,|\cdot|^{-(\rho+\nu)/2} \bigr)(t) $ for
$t>0$. We have $w=\frac{\rho+\nu}2 $ and $n=\frac q2$. We use $\dt^2$,
  in view of the condition $\|x^{(2)}\|\leq \dt^2$ in the definition of
$\ell(\dt)$ in~\eqref{ell-def}. So $w-\frac n2=\frac p4+\frac\nu 2$.
Since we work under the condition $\nu \not\in \frac12\ZZ_{\leq 0}$ the
exceptional values in part \tlref{PI:wp}{PI:gen} of Proposition
\ref{prop-PI} are avoided, and we get
\be \PI{q,\dt^2}(4c,|\cdot|^{-(\rho+\nu)/2} \bigr)(t) \equiv b_1
t^{-\nu-p/2} \bmod \anal\,,\ee
with a nonzero quantity $b_1$.

We apply Proposition~\ref{prop-PI} \tlref{PI:wp}{PI:gen} to
$I_{p,\dt}(c,F)$ with $F(\xi) = \xi^{-p/2-\nu}$. So $w-n=\nu$ and we
get
\be \PI{p,\dt}\bigl(c,|\cdot|^{-p/2-\nu}\bigr)(y) \equiv b_2
y^{-2\nu}\bmod \anal\,,\ee
with $b_2\neq0$ if $\nu \not\in \ZZ_{\leq 0}$. Part \eqref{PI:anal}
shows that the other term from the first stage, which is in $\anal$, is
sent to an element of $\anal$. Now we can proceed like in the case
$q=0$.\end{proof}

%% file: ccro2-13-concl.tex

\bigskip

\def\flnm{ccro2-13-concl}

\section{Concluding remarks}\label{sect13}

\rmrk{Comparison with \cite{BLZ15}} The case of cofinite discrete
subgroups of $\PSL_2(\RR)$ acting on the complex upper half-plane is
treated in \cite{BLZ15}. We followed its approach as far as possible.
We restricted the generalization to symmetric spaces of non-compact Lie
groups of real rank one in several ways:
\begin{itemize}
\item We did not consider cocompact discrete groups.
\item We simplified our work by assuming that the cofinite discrete
subgroup $\Gm$ has no torsion.
\item We considered only cusp forms, whereas in \cite{BLZ15} more
general invariant eigenfunctions were considered.
\item We did not use mixed parabolic cohomology; as in \cite[Definition
10.1]{BLZ15}. It seemed simpler to work with cohomology spaces
satisfying the condition~``$\bdc$''; see~\eqref{bdc}. The integrals in
Propositions \ref{prop-omfint} \eqref{omfint:i} and \ref{prop-omfintW}
give directly cocycles that satisfy this condition.
\end{itemize}

Theorem B in \cite{BLZ15} states that spaces of Maass cusp forms are
isomorphic to several mixed parabolic cohomology spaces, with values in
various modules,  without  a condition ``bdc'' that we use here.
Proposition 13.1 in \cite{BLZ15} may be called `separation of singularities'. It implies
 that 
if the boundary singularities of an element $f\in \W{\om^0}\nu$ form a
finite set $E$, we can decompose  $f$ as   $\sum_{e\in E} f_e$ with
$f_e\in \W{\om^0}\nu$ with $\bsing_\nu(f_e) \subset \{e\}$. 
 In the context of groups of real rank we did not establish a similar result. 
Hence the
concept of mixed parabolic cohomology groups in \cite{BLZ15} had to be
replaced by the condition ``$\bdc$'' determining a subspace of
parabolic cohomology spaces related to $\W{\om^0}\nu$ and
$\V{\om^0}\nu$. Various steps are much more complicated than
in~\cite{BLZ15}. Compare, for instance, the proof of the isomorphism
$\W\om \nu \cong \V\om\nu$ in Theorem~\ref{thm-isoVW} with the proof of
\cite[Theorem 5.6]{BLZ13}, or the discussion in \S\ref{sect11} with
\cite[Proposition 5.3 and Lemma 5.4]{BLZ13}.

\rmrk{The spectral parameter} The spectral parameter $\nu$ parametrizes
the characters of $A$, the spherical principal series representations,
and the eigenvalues of the Laplace operator.

Proposition~\ref{prop-cusp-sp} states that cusp forms can occur only for
$\nu \in i\RR \cup [-\rho,\rho]$. Proposition~\ref{prop-irr-ps} cites a
result of Kostant on the values of $\nu$ for which the spherical
principal series is irreducible.

The main result Theorem~\ref{mainthm} concerns values of $\nu\in \CC$
that satisfy $|\re\nu|<\rho$ and $\nu \not\in \frac12\ZZ_{\leq-1}$,
with the nicest result for $\nu \not\in \frac12\ZZ$. These restrictions
arise in the following way.
\begin{itemize}
\item The linear map $\btv_\nu$ in Theorem~\ref{thm-btv} for the space
$\A^0(\Gm;\nu)$ of cusp forms to a parabolic cohomology space can be
constructed for all $\nu\in \CC$. It is based on the Poisson kernel
$r_\nu$ in Proposition~\ref{prop-Rnu}. Of course, $\btv_\nu$ is the
zero map if $\nu$ does not satisfy $\rho^2-\nu^2\geq 0$.
\item The $\nu$-analytic boundary germs in Section~\ref{sect8} can be
defined for all $\nu \in \CC$. They are useful for our purpose only if
we have the isomorphism in Theorem~\ref{thm-isoVW}, for which
$\re\nu \not\in  \frac12 \ZZ_{\leq -1}$ is required.

\item In Section~\ref{sect9} we construct a map $\btw_\nu$ from spaces
of cusp forms to cohomology spaces with values in a module of
$\nu$-analytic boundary germs. We need the kernel function $q_\nu$ in
Proposition~\ref{prop-Q}, which,  like Theorem~\ref{thm-isoVW},  requires
$\nu\not\in \frac12\ZZ_{\leq -1}$.

\item Proposition~\ref{prop-eh} shows that under the additional
condition $\re\nu>-\rho$ the map $\btw_\nu$ has image in a module of
$\nu$-analytic boundary germs with a special behavior at some cusps. We
need the condition $\re\nu>-\rho$  in the proof of Proposition~\ref{prop-cohE} to characterize cusp forms with help of Lemma~\ref{lem-cusp-crit}.

\item Section~\ref{sect10} constructs cusp forms from given cocycles. In
the proof of Proposition~\ref{prop-cohE} we have to show that the
eigenfunction of $\Dt$ that we have constructed is a cusp form. 
We use Lemma~\ref{lem-cusp-crit} to show this,
now needing $\re\nu<\rho$ as well.

The conditions $|\re\nu |<\rho$ and $\nu \not\in \frac12\ZZ_{\leq -1}$
 allow us to prove in Section~\ref{sect11}
part~\eqref{mnthm:nsp} in Theorem~\ref{mainthm}.

\item To show part \eqref{mnthm:nugrl} of the main result we use the
Poisson transformation in Section~\ref{sect12}.  We use a  result of Lewis in
\cite{Le78}  that  needs $\nu \not\in \frac12\ZZ_{\leq -1}\cup \ZZ_{\geq 0}$.
We have worked under the condition $\nu \not\in \frac12\ZZ$, thus
avoiding complicated case distinctions. If one is able to extend
Lewis's result to $\nu=0$ it might be worthwhile to attempt to try to
show that Proposition~\ref{prop-cohE} works for $\nu=0$ as well.
\end{itemize}

\rmrk{Allowing torsion}The restriction to torsion-free discrete groups
$\Gm$ of motions in $S$ is practical. Several technical complications
are avoided. By going over to vector-value cusp forms, our results
probably can be extended to more general discrete groups that contain a
torsion-free subgroup of finite index.

\rmrk{Transfer operator} For $\PSL_2(\ZZ)$ a parabolic cocycle $\ps$ can
be determined by the value on $S= \matr0{-1}10$, which gives an
analytic function on $\proj\RR \setminus \{0,\infty\}$. The cocycle
relations give rise to a three term relation satisfied by the
restriction of $\ps_S$ to the interval $(0,\infty)$. In the study of
the associated transfer operator, see Mayer \cite{May91}, it is
important that this relation is contracting on the functions
on~$(0,\infty)$.

In the case $\ds\geq 3$ removal of a finite number of cusps from
$\partial S$ leaves us with a connected set. It remains to be seen
whether our result can help in the study of discretization of the
geodesic flow on~$S$.

%% file: ccro2-ind.tex

\def\flnm{ccro2-ind}
\newcommand\ind[2]{\item #1\quad\ #2}
\renewcommand\il[1]{\pageref{i-#1}}

\section*{Index}
\begin{multicols}{2}
\raggedright
\begin{trivlist}\footnotesize
\ind{analytic}{\il{anal}}
\ind{--- boundary behavior}{\il{abb}}
\ind{augmentation}{\il{augm}}
\ind{automorphic form}{\il{autf}}
\indexspace
\ind{boundary germ}{\il{bdgm}}
\ind{---, analytic}{\il{abg}}
\ind{boundary of cell}{\il{bdic}}
\ind{boundary operator}{\il{bdop}}
\ind{boundary singularity}{\il{bs}}
\ind{Bruhat decomposition}{\il{Bd}}
\indexspace
\ind{Cartan decomposition}{\il{Cd}}
\ind{Cartan involution}{\il{Ci}}
\ind{Casimir element}{\il{Ce}}
\ind{cell}{\il{cell}, \il{icA}}
\ind{chain}{\il{chn}}
\ind{chain map}{\il{chmp}}
\ind{coboundary}{\il{cob}}
\ind{coboundary map}{\il{cbm}}
\ind{cochain}{\il{coch}}
\ind{cocycle}{\il{coc}}
\ind{cohomology space}{\il{cohsp}}
\ind{---, parabolic}{\il{pcs}, \il{pbcs}}
\ind{coinvariants}{\il{coinv}}
\ind{contractible}{\il{contr}}
\ind{cusp}{\il{cusp}}
\ind{cusp form}{\il{cf}}
\indexspace
\ind{excised neighborhood}{\il{excnbh}}
\ind{extension $\hat S$ of $S$}{\il{rflex}}
\indexspace
\ind{homotopic}{\il{hmtp}}
\ind{homotopy}{\il{hmtpy}}
\ind{horoball}{\il{horb}}
\ind{---, extended}{\il{ehb}}
\ind{horosphere}{\il{hors}}
\ind{horospherical coordinates}{\il{horsco}}
\ind{---, extended}{\il{exthorco}}
\ind{---, normalized}{\il{nhorsco}}
\indexspace
\ind{interior of cell}{\il{iciX}}
\ind{invariants}{\il{inv}}
\ind{invariant eigenfunction}{\il{inveif}}
\ind{invaiant tessellation}{\il{Gmitess}}
\ind{Iwasawa decomposition}{\il{Id}}
\indexspace
\ind{Jacobian matrix}{\il{Jm}}
\indexspace
\ind{Killing form, normalized}{\il{Kf}}
\ind{$K$-finite}{\il{Kfin}}
\indexspace
\ind{Laplace operator}{\il{Laop}}
\ind{lattice}{\il{latt}}
\indexspace
\ind{Poisson transformation}{\il{Ptrf}}
\ind{polar coordinates}{\il{polco}}
\ind{---, extended}{\il{extpcCh}}
\ind{polynomial growth}{\il{pg1}}
\ind{--- at a cusp}{\il{pgc1}}
\ind{principal series, spherical}{\il{ps}}
\indexspace
\ind{restriction map}{\il{resmp}}
\indexspace
\ind{quick decay}{\il{qd1}}
\ind{--- at a cusp}{\il{qdc1}}
\indexspace
\ind{rank one, real}{\il{ro}}
\ind{resolution}{\il{resol}}
\ind{---, acyclic}{\il{ac-res}}
\indexspace
\ind{Siegel domain}{\il{Siedm}}
\ind$\nu$-{singularity}{\il{sngset}}
\ind{spectral parameter}{\il{spparm}}
\ind{semi-analytic vector in princ.\ series}{\il{sav}}
\ind{---, smooth}{\il{ssav}}
\ind{suitably large, for the parameter $Y>0$}{\il{asl}}
\ind{support}{\il{supp}}
\ind{symmetric space}{\il{ssp}}
\ind{---, extended}{\il{extsp}}
\indexspace
\ind{tessellation}{\il{tes2}, \il{tessdef}}
\ind{---, standard}{\il{tessst}}
\indexspace
\ind{Weyl group}{\il{Wg}}
\end{trivlist}
\end{multicols}


\renewcommand\ind[2]{\item $#1$\quad\ #2
}
\section*{List of notations}
\begin{multicols}{2}
\raggedright
\begin{trivlist}\footnotesize
\ind{ \A(\Gm;\nu)\text{ space of automorphic forms}}{\il{Ausp}}
\ind{\A^0(\Gm;\nu)\text{ space of cusp forms}}{\il{A0}}
\ind{\Ad(g) : \XX\mapsto g\XX g^{-1}}{\il{Ad}}
\ind{\anal}{\il{an}}
\ind{A \subset G \text{ split torus}}{\il{A}}
\ind{A^+_T= \bigl(\am(t) \;:\; t\geq T\}}{\il{A+T}}
\ind{A^+=\bigl\{\am(t)\;:\; t>1\bigr\}}{\il{A+}}
\ind{A_u, \text{ repres. of $\nu$-anal. boundary
germ}}{\il{uAu}}
\ind{A(\ph;\rho+\nu,y),\; A_\dt(\ph;\rho+\nu,y)}{\il{Ap}}
\ind{{\bf a}(m) = |\al(m)| \text{ in \S\ref{sect8}}}{\il{aan}}
\ind{\alie=\Lie(A)}{\il{alie}}
\ind{\am(t)\in A}{\il{am}}
\ind{\ad(\YY): \XX \mapsto[\YY,\XX]}{\il{ad}}
\ind{a_n\text{ coefficient in \S\ref{sect8}}}{\il{anc}}
\indexspace
\ind{B(\cdot,\cdot) \text{ normalized Killing form} }{\il{B}}
\ind{B(X) \text{ set of boundary components}}{\il{BX}}
\ind{B_\cu(Y) \text{ horoball}}{\il{BcuY}}
\ind{B_u \text{ anal. fct. corresp. to $u$}}{\il{Bu}}
\ind{B^\ast_\cu(Y) \text{ extended horoball}}{\il{BacuY}}
\ind{B^i(F^\tess_\bullet;\cdot)}{\il{Ccb}}
\ind{B(z,y)}{\il{Bzy}}
\ind{{\bf b}(m) = |\bt(m)| \text{ in \S\ref{sect8}}}{\il{bbn}}
\ind{\bdc \text{ boundary condition on cocycles}}{\il{bdc}}
\ind{\bsing_\nu(\cdot) \text{ set of boundary
singularities}}{\il{bsing}}
\ind{b_l(\xi,x), \, b_s(\xi,x)}{\il{bsl}}
\ind{db \text{ invariant measure on }\partial S}{\il{db}}
\indexspace
\ind{\C^\infty,\; \C^\ast}{\il{Cinfsh}}
\ind{\Cas\text{ Casimir element}}{ \il{Cas}}
\ind{\Cu \text{ set of cusps for $\Gm$}}{\il{Cu}}
\ind{C(x^{(1)})}{\il{Cx1}}
\ind{C^i(F^\tess_\bullet;\cdot)}{\il{Cch}}
\ind{C^\infty(G)_K\text{ $K$-finite functions}}{\il{Gi_K}}
\ind{C^{-\infty}(\cdot)}{\il{C-inf}}
\ind{C_{f,N}}{\il{CfN}}
\ind{C_B}{\il{CB}}
\ind{\cu \text{ general notation for a cusp}}{\il{cu}}
\ind{c(\cdot)\text{ Harish-Chandra's $c$-function}}{\il{cHC}}
\ind{c \text{ explicit constant in Iwasawa decomposition}}{\il{c}}
\ind{c_n}{\il{cn}}
\ind{c_{j,i}\text{ linear form on $\RR^p$}}{\il{cji}}
\indexspace
\ind{\DS}{\il{DS}}
\ind{D(p) \text{ Dirichlet fundamental domain}}{\il{Dp}}
\ind{D_\e}{\il{Deps}}
\ind{\div}{\il{div}}
\ind{\dist(\cdot,\cdot)\text{ distance function on $S$}}{\il{dist}}
\ind{\ds=\dim S}{\il{d}}
\ind{d^i\text{ coboundary map}}{\il{di}}
\indexspace
\ind{\E_\nu(S)^\Gm \text{ space of $\Gm$-invariant} \text{
eigenfunctions}}{\il{EnuGm}}
\ind{\E_\nu \text{ sheaf of eigenfunctions of $\Dt$}}{\il{EnuU}}
\ind{\E^{\om^0,\excg}_\nu}{\il{Eomex}}
\ind{\orgn = e K \in S}{\il{orgn}}
\ind{\orgn^- =j\,\orgn}{\il{ogn-}}
\ind{e \text{ unit element}}{\il{e}}
\indexspace
\ind{\F\nu, \; \F\nu(I) \text{ boundary germs}}{\il{Fnu}}
\ind{\fd_\cu \text{ fundamental domain for $\Gmm \cu$ in
$B^\ast_\cu(Y)$}}{\il{fdcu}, \il{fdcua}}
\ind{\fd(\Ld) \text{ fundamental domain for $\Ld$ in $N$}}{\il{fdld}}
\ind{\fd_0 \text{ fundamental domain for $\Gm$ in $\SY$}}{\il{fd0}, \il{fd0a}}
\ind{F^\pb_\bullet = \CC\bigl( \Cu ^{1+\bullet} \bigr) \text{ cuspidal
resolution}}{\il{Fpb}}
\ind{F^\tess_\bullet \text{ resolution based on a tessellation of
$\Sast$}}{\il{Ftb}}
\ind{F^{\tess,Y}_\bullet \text{ resolution based on a tessellation of
$\SY$}}{\il{FtessY}}
\indexspace
\ind{\G\om\nu \subset \G{\om^0,\excg}\nu}{\il{Greprnw}}
\ind{G}{\il{G}}
\ind{\glie=\Lie(G)}{\il{glie}}
\ind{\glie_c= \CC\otimes_\RR \glie}{\il{gliec}}
\ind{\glie_{j \al} \text{ root space for $\glie,\alie)$}}{\il{glieal}}
\ind{\grad}{\il{grad}}
\ind{g_\cu \in G \text{ satisfies }g_\cu\,\infty = \cu}{\il{gcu}}
\ind{\rmetric }{\il{Riemat}}
\indexspace
\ind{\H_0 \in \alie}{\il{H0}}
\ind{H^\nu=H^{1,\nu\al} \text{ spherical principal series}}{\il{Hnu}}
\ind{H^\nu_0 \supset H^\nu_\infty \supset H^\nu_\om \supset H^\nu_K
\text{ spaces in $H^\nu$}}{\il{Hnuio0K}}
\ind{H_\cu(Y) \text{ horosphere}}{\il{HcuY}}
\ind{H^i(F^\tess_\bullet;\cdot)}{\il{Coh}}
\ind{H^i_\pb(\Gm;\cdot) \text{ parabolic coh. space}}{\il{Hipb}}
\ind{H^{\ds-1}_\pb(\Gm;\V\omi\nu\bigr)^\bdc}{\il{H^bdc}}
\indexspace
\ind{I^{W,\excg}_\nu}{\il{Iwe1}}
\ind{I_\dt(\ph;\rho+\nu,y)}{\il{Idt}}
\ind{\ii}{\il{i}}
\indexspace
\ind{J(g,x)\text{ factor of automorphy}}{\il{J}}
\ind{j : (x^{(1)},x^{(2)},y) \mapsto (x^{(1)},x^{(2)},-y)}{\il{j}}
\indexspace
\ind{K\subset G \text{ maximal compact}}{\il{K}}
\ind{\klie=\Lie(K)}{\il{klie}}
\ind{\kJ(g)\,,\;\kI(g) \text{ components in Iwasawa decomp.}}{\il{kIJ}}
\indexspace
\ind{\Lt\text{ operator in \S\ref{sect8}}}{\il{Lt}}
\ind{L\text{ action by left translation/differentiation}}{\il{L}}
\indexspace
\ind{\MS(f_1,f_2)\text{ Maass--Selberg form}}{\il{MS}}
\ind{M\subset K\text{ centralizes } A}{\il{M}}
\indexspace
\ind{\N\om\nu,\;\N{\om^0,\excg}\nu}{\il{Nreprnw}}
\ind{\N\Dt\nu\subset \N{\om^0}\nu}{\il{Dtnu}}
\ind{N\subset G \text{ maximal unipotent}}{\il{N}}
\ind{N_\cu = g_\cu N g_\cu^{-1}}{\il{Ncu}}
\ind{N_R(C)}{\il{NCR}}
\ind{\nlie=\Lie(N)}{\il{nlie}}
\ind{\nJ(g),\; \nI(g) \text{ components in $N$ in Iwasawa
dec.}}{\il{nIJ}}
\ind{\nm\bigl( x^{(1)},x^{(2)}\bigr) \in N}{\il{nm}}
\ind{\nm[x]}{\il{nm[]}}
\indexspace
\ind{\poiss\nu}{\il{poiss}}
\ind{\PI{n,\dt}(h,\cdot)}{\il{PI}}
\ind{\pr \text{ projection }\Sast \rightarrow \Gm\backslash
\Sast}{\il{pr}}
\ind{p = \dim(N/Z_N)}{ \il{pp}}
\ind{\ph^\prj}{\il{prjm}}
\indexspace
\ind{\Q[V]}{\il{QV}}
\ind{Q_\nu, \; q_\nu(\cdot;\cdot)}{\il{Qnu}}
\ind{Q(x',y)}{\il{Qxay}}
\ind{q=\dim(Z_N)}{\il{qq}}
\indexspace
\ind{R\text{ action by right translation/differentiation}}{\il{R}}
\ind{r_\nu(\cdot,\cdot) \text{ kernel function on }(\partial S)\times
S}{\il{rnu}}
\ind{r_\nu^\prj(\cdot;\cdot),\; r^{\lnm}(\cdot;\cdot)}{\il{rnu-pl}}
\ind{R(\Gm)\text{ syst. of repr. of }\Gm\backslash \Cu}{\il{RGm}}
\indexspace
\ind{\Sie_\cu(C,T)\text{ Siegel domain at $\cu$}}{\il{Sie-cu}}
\ind{S = G/K \text{ symmetric space}}{\il{S}}
\ind{\hat S \text{ analytic manifold containing $S$ and
$\partial S$}}{\il{hatS}}
\ind{\SY \text{ $=S$ minus all horoballs $B_\cu(Y)$}}{\il{SY}}
\ind{\Sast \text{ extended symmetric space}}{\il{Sast}}
\ind{\sing_\nu(\cdot) \text{ set of singularities}}{\il{sing}}
\ind{\supp(\cdot)}{\il{sp}}
\ind{\partial S}{\il{bdS}}
\indexspace
\ind{\tess \text{ general symbol for a tessellation of
$\Sast$}}{\il{tess}}
\ind{\sttess \text{ standard tessellation of $\Sast$}}{\il{sttess}}
\ind{\tess(\cu) \text{ tessellation of a horoball}}{\il{tesscu}}
\ind{\tess(D,Y)}{\il{tess-DY}}
\ind{\Ta x X \text{ tangent space of $X$ at $x$}}{\il{Ta}}
\ind{\tJ(g)\,,\; \tI(g) \text{ determine comp. in $A$ in Iw.
decomp.}}{\il{tIJ}}
\indexspace
\ind{\Utess(\Ld) \text{ $\Ld$-invariant tessellation of
$N$}}{\il{UtessLd}}
\ind{\UU_j \in \mlie}{\il{UUi}}
\ind{U(\glie)\text{ universal enveloping algebra of }\glie}{\il{Ug}}
\indexspace 
\ind{\V\om\nu\text{ $G$-equivariant sheaf on $\partial S$}}{\il{Vomsh}}
\ind{\V\om\nu(\partial S),\; \V\infty\nu(\partial S) \text{ anal.\ and
smooth vectors}}{\il{VS}}
\ind{\V{\om^0}\nu \text{semi-analytic vectors}}{\il{Vom0}}
\ind{\V\omi\nu \text{ smooth semi-analytic vectors}}{\il{Vsavs}}
\ind{\V{\om^0,\exc}\nu\supset \V{\om^0,\excg}\nu \supset
\V{\om^0,\infty,\excg}\nu}{\il{Voexi}}
\ind{\VV i \in \plie}{\il{Vi}}
\ind{V^\Gm \text{ $\Gm$-invariants}}{ \il{V^Gm}}
\ind{V_\Gm \text{ $\Gm$-coinvariants}}{ \il{V_Gm}}
\indexspace
\ind{\W\om\nu(\partial S),\; \W\om\nu(I) \text{ $\nu$-analytic boundary
germs}}{\il{Womnu}}
\ind{\W{\om^0}\mu,\; \W{\om^0,\infty}\nu}{\il{Womn0}}
\ind{\W{\om^0,\exc}\nu\supset \W{\om^0,\excg}\nu \supset
\W{\om^0,\infty,\excg}\nu}{\il{Woexi}}
\ind{\WW i \in \klie}{\il{Wi}}
\ind{\wm\in K\;:\; \wm a \wm^{-1} = a^{-1} \text{ for all }a\in
A}{\il{wm}}
\ind{w_0=w_0(z)}{\il{w0}}
\indexspace
\ind{\XX_i\in \nlie}{\il{XXi}}
\ind{\X^\tess_i \text{ set of $i$-cells}}{\il{Xtessi}, \il{XtessiA}}
\ind{\X^{\tess,Y}_i}{\il{tessY}, \il{tessYa}}
\ind{x^{(1)}\,, x^{(2)}\text{ coordinate vectors on $N$}}{\il{x12}}
\indexspace
\ind{Y}{\il{Y}}
\indexspace
\ind{Z_X \text{ center of group }X}{\il{Z}}
\ind{Z^i(F^\tess_\bullet;\cdot)}{\il{Coc}}
\ind{Z^{\ds-1}\bigl( F^\tess_\bullet; \V\omi\nu\bigr)^\bdc}{\il{Z^bdc}}
\indexspace
\indexspace
\ind{\al \text{ simple root for $(\glie,\alie)$}}{\il{al}}
\ind{\alw_\nu\text{ linear map from cohomology to cusp
forms}}{\il{alwnu}}
\indexspace
\ind{\btv_\nu,\;\btw_\nu} {\il{btv}, \il{btw}, \il{btwe}}
\indexspace
\ind{\Gm\text{ discrete subgroup of $G$}}{\il{GM}}
\ind{\Gmm\cu = \bigl\{\gm\in \Gm \;:\; \gm \, \cu = \cu\}}\il{Gmcu}
\indexspace
\ind{\Dt \;=\; - \Dt_+\text{ Laplace operator}}{\il{Dt0}, {\bf\il{Dt}}}
\ind{\dtst(x,z) \text{ logarithm of distance function}}{\il{dtst}}
\indexspace
\ind{\e \text{ augmentation}}{\il{e1}}
\indexspace
\ind{\z_S \text{ invariant $\ds$-form on $S$}}{\il{ztS}, {\bf\il{ups}}}
\ind{\z[j]\,\; 1\leq j \leq \ds : \text {$(\ds-1)$-forms $S$}}{\il{upsj}}
\indexspace
\ind{\eta(f_1,f_2) \text{ $(\ds-1)$-form on $S$}}{\il{eta}}
\indexspace
\ind{\tht\text{ Cartan involution}}{\il{tht}}
\indexspace
\ind{d\mu\text{ invariant measure on $S$}}{\il{dmu}}
\indexspace
\ind{\rho = \frac12 p + q}{\il{rho}}
\ind{\rho_\nu \index{ restriction
$\W\om\nu \rightarrow\V\om\nu$}}{\il{resnu}}
\indexspace
\ind{\tau \text{ coordinate on }\hat S}{\il{tau}}
\ind{\tau_\nu\text{ representation of $G$ in $\V\om\nu$}}{\il{taunu}}
\indexspace
\ind{\Ph_\nu \in H^\nu}{\il{Phinu}}
\indexspace
\ind{\Ps_\nu : H^{\nu \al} \rightarrow \V{}\nu}{\il{Psnu}}
\ind{\ps^f \text{cocycle associated to cusp form $f$}}{\il{psf}}
\indexspace
\ind{\omv_\nu (f;\xi,\cdot) \text{ $\V\om\nu(\partial S)$-valued
differential} \text{ form} }{\il{omnu}}
\ind{\omw_\nu(f;\xi,\cdot) \text{ $\W\om\nu(\partial S)$-valued
differential form}}{\il{omW}}
\indexspace
\indexspace 
\ind{\partial_i \text{ boundary map}}{\il{parti}}
\ind{\mathring X \text{ interior of $i$-cell $X$}}{\il{intX}}
\ind{\al, \; |\al|,\;\al!,\; {\mathrm mx}(\al)\text{ for
multi-indices}}{\il{mi}}
\ind{\nul,\, \inft \text{ points in $ \partial S$}}{\il{inftnul}}
\ind{\|\cdot\|\text{ norm on $\RR^n$}}{\il{nrmN}}
\end{trivlist}
\end{multicols}